\titleformat{\chapter}{\normalfont\huge}{\thechapter.}{20pt}{\huge\bf}
\DeclareMathOperator*{\Argmin}{Argmin}
\DeclareMathOperator*{\ba}{ba}
\DeclareMathOperator*{\capa}{cap}
\DeclareMathOperator*{\CL}{CL}
\DeclareMathOperator*{\cl}{cl}
\DeclareMathOperator*{\clco}{\overline{co}}
\DeclareMathOperator*{\co}{co}
\DeclareMathOperator*{\dist}{dist}
\DeclareMathOperator*{\DIV}{div}
\DeclareMathOperator*{\dom}{dom}
\DeclareMathOperator*{\esscap}{ess-\bigcap}
\DeclareMathOperator*{\essinf}{ess-inf}
\DeclareMathOperator*{\esssup}{ess-sup}
\DeclareMathOperator*{\epi}{epi}
\DeclareMathOperator*{\graph}{graph}
\DeclareMathOperator*{\interior}{int}
\DeclareMathOperator*{\lev}{lev}
\DeclareMathOperator*{\lin}{lin}
\DeclareMathOperator*{\LS}{LS}
\DeclareMathOperator*{\supp}{supp}
\def\N{{\mathbb N}}
\def\Q{{\mathbb Q}}
\def\R{{\mathbb R}}
\def\AA{{\mathcal A}}
\def\BB{{\mathcal B}}
\def\DD{{\mathcal D}}
\def\EE{{\mathcal E}}
\def\FF{{\mathcal F}}
\def\GG{{\mathcal G}}
\def\HH{{\mathcal H}}
\def\II{{\mathcal I}}
\def\JJ{{\mathcal J}}
\def\KK{{\mathcal K}}
\def\LL{{\mathcal L}}
\def\MM{{\mathcal M}}
\def\PP{{\mathcal P}}
\def\SS{{\mathcal S}}
\def\TT{{\mathcal T}}
\def\XX{{\mathcal X}}
\def\YY{{\mathcal Y}}
\def\Om{\Omega}
\def\e{\varepsilon}
\def\i{\infty}
\def\p{\partial}
\def\dd{\mathfrak{d}}
\def\ee{\mathfrak{e}}
\def\mm{\mathfrak{m}}
\def\ss{\mathfrak{s}}
\def\tt{\mathfrak{t}}
\def\om{\omega}
\def\st{\, \middle| \,}
\def\ddto{\overset{\dd}{\to}}
\def\dddto{\overset{\dd^*}{\to}}
\def\eeto{\overset{\ee}{\to}}
\def\ssto{\overset{\ss}{\to}}
\def\ttto{\overset{\tt}{\to}}
\def\weak{\rightharpoonup}
\def\weakast{\overset{\ast}{\rightharpoonup}}
\newtheorem{corollary}{Corollary}
\newtheorem{definition}{Definition}
\newtheorem{lemma}{Lemma}
\newtheorem{proposition}{Proposition}
\newtheorem{theorem}{Theorem}
\def\Xint#1{\mathchoice
	{\XXint\displaystyle\textstyle{#1}}%
	{\XXint\textstyle\scriptstyle{#1}}%
	{\XXint\scriptstyle\scriptscriptstyle{#1}}%
	{\XXint\scriptscriptstyle\scriptscriptstyle{#1}}%
	\!\int}
\def\XXint#1#2#3{{\setbox0=\hbox{$#1{#2#3}{\int}$} 
		\vcenter{\hbox{$#2#3$}}\kern-.5\wd0}} 
\def\dashint{\Xint-}
\title{Orlicz-type Function Spaces and Generalized Gradient Flows with Degenerate Dissipation Potentials in Non-Reflexive Banach Spaces: Theory and Application}
\author{Thomas Daniel Ruf}
\begin{document}
	
	\begin{titlepage}
		\centering

		\LARGE\textbf{Orlicz-type Function Spaces and Generalized Gradient Flows with Degenerate Dissipation Potentials in Non-Reflexive Banach Spaces: Theory and Application}
		
		\vspace{1.0cm}
		\Large\textbf{Dissertation}
		
		\vspace{1cm}
		\large\textit{Eingereicht zur Erlangung des Grades}
		
		\vspace{0.5cm}
		\large\textbf{Dr. rer. nat.}
		
		\vspace{1.0cm}
		\large\textbf{Mathematisch-Naturwissenschaftlich-Technische Fakultät}
		
		\vspace{0.5cm}
		\large\textbf{Universität Augsburg}
		
		\vspace{1.5cm}
		\large\textbf{Vorgelegt von}
		
		\vspace{0.5cm}
		\large\textbf{Thomas Daniel Ruf}
		
		\vspace{2cm}
		\large\textbf{\today}
		
	\end{titlepage}
	
	\tableofcontents
	
	\newpage
	
	\begin{abstract}
		This thesis explores two important areas in the mathematical analysis of nonlinear partial differential equations: Generalized gradient flows and vector valued Orlicz spaces. The first part deals with the existence of strong solutions for generalized gradient flows, overcoming challenges such as non-coercive and infinity-valued dissipation potentials and non-monotone subdifferential operators on non-reflexive Banach spaces. The second part focuses on the study of Banach-valued Orlicz spaces, a flexible class of Banach spaces for quantifying the growth of nonlinear functions. Besides improving many known results by imposing minimal assumptions, we extend the theory by handling infinity-valued Orlicz integrands and arbitrary Banach-values in the duality theory. The combination of these results offers a powerful tool for analyzing differential equations involving functions of arbitrary growth rates and leads to a significant improvement over previous results, demonstrated through the existence of weak solutions for a doubly nonlinear initial-boundary value problem of Allen-Cahn-Gurtin type.
	\end{abstract}
	
	\begin{abstract}
		Diese Arbeit untersucht zwei wichtige Bereiche der mathematischen Analysis nichtlinearer partieller Differentialgleichungen: Generalisierte Gradientenflüsse und vektorwertige Orlicz-Räume. Der erste Teil befasst sich mit der Existenz starker Lösungen für generalisierte Gradientenflüsse und bietet eine Lösung für Herausforderungen wie nicht-koerzive und unendlichwertige Dissipationspotentiale und nicht-monotonen Subdifferentialoperatoren auf nicht-reflexiven Banachräumen. Der zweite Teil konzentriert sich auf die Untersuchung von Banach-wertigen Orlicz-Räumen, einer flexiblen Klasse von Banachräumen zur Quantifizierung des Wachstums nichtlinearer Funktionen. Neben der Verbesserung vieler bekannter Ergebnisse durch minimale Annahmen erweitern wir die Theorie, indem wir uns mit unendlichwertigen Orlicz-Integranden und beliebigen Banach-Werten in der Dualitätstheorie befassen. Die Kombination dieser Ergebnisse bietet ein leistungsstarkes Werkzeug zur Analyse von Differentialgleichungen, die Funktionen beliebiger Wachstumsraten beinhalten, und führt zu einer signifikanten Verbesserung gegenüber früheren Ergebnissen, wie sie durch die Existenz schwacher Lösungen für ein doppelt nichtlineares Anfangs-Randwertproblem vom Allen-Cahn-Gurtin-Typ gezeigt wird.
	\end{abstract}
	
	\chapter{Introduction}
	
	\section{Subject matters}
	
	This thesis consists of two parts that may be read separately. The first part establishes existence and stability of global solutions to the doubly nonlinear evolution inclusion
	\begin{equation} \label{eq: DNI}
		\p \phi_{t, u(t) } \left( u'(t) \right) + \p \EE_t \left( u(t) \right) \ni 0 \text{ for a.e. } t \in (0, T), \quad u(0) = u_0
	\end{equation}
	on a Banach space $V$. Here, $\phi \colon \left(0, T \right) \times \dom \left( \EE_t \right) \times V \to \left[0 , + \i \right]$ is a time-and-state-dependent family of convex dissipation potentials with superlinear growth and $\EE \colon \left( 0, T \right) \times V \to \left(- \i, + \i \right]$ is a time-dependent family of lower semicontinuous energies. The subdifferential for $\phi$ is that of convex analysis, whereas we allow a broad class of subdifferentials for the nonconvex energy $\EE$.
	
	We focus on dissipation potentials such that $\phi$ or $\phi^*$ are \emph{degenerate} in various ways, e.g., by assuming infinite values, by depending on time or state in a non-uniform matter, or by lacking compact sublevel sets. Let us illustrate these phenomena with examples.
	
	\begin{enumerate}
		
		\item In \cite{BoPe} the authors consider the ordinary differential equation
		\begin{equation} \label{eq: BP ODE}
			x'(t) = - \sinh \left( \nabla E_t \left( x(t) \right) \right),
		\end{equation}
		which may be written in the form (\ref{eq: DNI}) by inverting the hyperbolic sine. This gives the dissipation potentials
		\begin{equation} \label{eq: BP dissi}
			\psi(v) = v \log \left( v + \sqrt{v^2 + 1} \right) - \sqrt{v^2 + 1} + 1 \text{ and } \psi^*(\xi) = \cosh(\xi) - 1.
		\end{equation}
		If one wishes to extend \cite{BoPe} to an infinite dimensional state space and thus to partial differential equations, one arrives at the analogous dissipation potentials
		\begin{equation} \label{eq: BP dissi PDE}
			\phi_0(v) = \int_\Om \psi \left( v(x) \right) \, dx \text{ and } \phi_0^*(\xi) = \int_\Om \psi^* \left( \xi(x) \right) \, dx, \quad \Om \subset \R^d \text{ open.}
		\end{equation}
		The natural state space for $\phi$ is the Orlicz space $L_\psi(\Om)$. However, since $\psi^* \not \in \Delta_2$, the conjugate $\phi^*$ is not finite on the dual space $L_{\psi^*}(\Om)$. This observation actually applies to a much broader class of dissipation potentials. If $\varphi \colon \Om \times \R^d \to \left[ 0, + \i \right]$ is an Orlicz integrand such that $\varphi \not \in \Delta_2$ or $\varphi^* \not \in \Delta_2$, then one of the dissipation potentials
		\begin{equation} \label{eq: Orl dissi}
			\phi_1(v) = \int_\Om \varphi(x, v(x) ) \, dx \text{ and } \phi_1^*(\xi) = \int_\Om \varphi^*(x, \xi(x) ) \, dx
		\end{equation}
		will be unbounded on its generalized Orlicz space $L_\varphi(\Om)$ or $L_{\varphi^*}(\Om)$. Particular choices for (\ref{eq: Orl dissi}) have been studied, for example, in \cite{Ak, ASch} under assumption requiring $\varphi, \varphi^* \in \Delta_2$. However, more general choices for $\varphi$ and $\varphi^*$ continue to be physically plausible, cf. \cite{Gu}. Therefore, treating rapid growth integrands such as (\ref{eq: BP dissi PDE}) or (\ref{eq: Orl dissi}) motivates us to handle infinity valued dissipation potentials.
		
		\item Irreversible evolution inclusions such as in damage models \cite{MiRo} lead to dissipation potentials such as
		\begin{equation} \label{eq: dmg dissi}
			\begin{aligned}
				\phi_2(v) & =
				\begin{cases}
					\frac{1}{2} \int_\Om \left| v(x) \right|^2 \, dx & \text{ if } v \le 0 \text{ a.e. on } \Om, \\
					+ \i & \text{ else;}
				\end{cases}
				\\
				\phi_2^*(\xi) & =
				\frac{1}{2} \int_\Om \left| \max\{ - \xi(x), 0 \} \right|^2 \, dx.
			\end{aligned}
		\end{equation}
		Notably, the dual dissipation potential $\phi_2^*$ provides no control on the positive part of $\xi$, resulting in a lack of weak coercivity for $\phi^*$ on $L_2(\Om)$. However, if $1 < p < + \i$ and
		\begin{equation} \label{eq: dmg toy en}
			\EE(u) =
			\begin{cases}
				\frac{1}{p} \int_\Om \left| \nabla u(x) \right|^p \, dx & \text{ if } u \in W^{1, p} \left( \Om \right) \cap L_2(\Om), \\
				+ \i & \text{ else,}
			\end{cases}
		\end{equation}
		then $D \EE(u) = - \Delta_p u$ so that if $\sup_n \EE(u_n) + \phi_2^* \left( - D \EE(u_n) \right) < + \i$, we have a bound on $\Delta_p u$ in $W^{1, p}(\Om)^*$ and control the positive part of $\Delta_p u$ in $L_2(\Om)$. Combining this information and using that a signed distribution is Radon regular, we can bound $\Delta_p u_n$ as a Radon measure in the total variation norm, which may provide sufficient compactness. Therefore, treating unilateral constraints such as (\ref{eq: dmg dissi}) motivates us to study dissipation potentials where the coercivity of $\phi^*$ fails unless it is conditioned on the energy and its subdifferential. As the example (\ref{eq: dmg dissi}) shows, this may lead to a primal dissipation potential $\phi$ lacking even a single point of continuity on the natural dissipation space, which here is $L_2(\Om)$.
		
		\item In \cite[§2.3.2]{LMPR}, a nonlinear reaction-diffusion system is discussed. Let $k^f_r$ and $k^b_r$ be the forward and backward reaction coefficients and $\alpha^r_i, \beta^r_i$ non-negative integers, $i \in \left\{1, \dots, N \right\}$, $r \in \left\{ 1, \dots, R \right\}$ such that there holds the detailed balance condition
		$$
		\exists w = \left( w_i \right)_i \colon w_i > 0 \langle k^f_r w^{\alpha^r} = k^b_r w^{\beta^r} \text{ for } r \in \left\{ 1, \dots, R \right\}.
		$$
		They consider (\ref{eq: DNI}) with the dual dissipation potential
		\begin{equation} \label{eq: LMPR dua dissi}
			\begin{gathered}
				\phi^*_u(\xi) = \int_\Om \frac{1}{2} \sum_{i = 1}^N \left| u_i(x) \right| \| \nabla \xi_i(x) \|^2 + \sum_{r = 1}^R H^r(u) \left| \left( \alpha^r - \beta^r \right) \cdot \xi(x) \right|^2 \, dx
				\\
				\text{ with } H^r(u) = \frac{k^b_r u^{\beta^r} - k^f_r u^{\alpha^r} }{\log \left( k^b_r u^{\beta^r} \right) - \log \left( k^f_r u^{\alpha^r} \right) }
			\end{gathered}
		\end{equation}
		and the energy
		\begin{equation} \label{eq: LMPR en}
			\EE(u) = \int_\Om \sum_{i = 1}^N \lambda_B \left( u_i(x) / w_i \right) w_i \, dx \text{ with } \lambda_B{z} = z \log z - z + 1.
		\end{equation}
		Again, the potential $\phi^*_u(\xi)$ fails to provide coercivity unless we insert the relation $\xi_i = - D \EE(u)_i = \log \left( u_i / w_i \right)$. More importantly now, the presence of the factor $\left| u_i \right|$ in the first term of $\phi^*$ will involve the integrand $\left| u_i \right|^{-1} \left| v_i \right|^2$ in the primal dissipation potential, which arises as the infimal convolution of the addends of $\phi^*$. Thus, as we cannot rule out that $\left| u_i \right|$ might be large, no uniform control on $v_i$ will be available as $u_i$ varies, leading to problems in checking the chain rule (in)equality in the abstract existence theory of, for example, \cite{MRS}. This motivates us to treat Ljapunov solutions to (\ref{eq: DNI}).
		
	\end{enumerate}
	
	In the second part of the thesis we complement the abstract existence theory for (\ref{eq: DNI}) by studying Orlicz spaces whose function elements take values in an arbitrary Banach space. Precisely, we let $\left( \Om, \AA, \mu \right)$ be a measure space, $X$ be a Banach space and $\varphi \colon \Om \times X \to \left[ 0, \i \right]$ be an even, convex integrand such that $\lim_{x \to 0} \varphi(\om, x) = 0$ and $\lim_{| x | \to \i} \varphi(\om, x) = \i$ for a.e. $\om \in \Om$. Then our Orlicz space $L_\varphi(\mu)$ is defined as those strongly measurable mappings $u \colon \Om \to X$ for which the norm
	$$
	\| u \|_\varphi = \inf \left\{ \alpha > 0 \st \int_\Om \varphi(\om, \alpha^{-1} u(\om) ) \, d \mu(\om) \le 1 \right\}
	$$
	is finite. By being the smallest Banach subspace of the strongly measurable functions $\LL_0 \left( \Om ; X \right)$ on which the integral functional
	$$
	I_\varphi \colon u \mapsto \int_\Om \varphi(\om, u(\om) ) \, d \mu(\om)
	$$
	has a point of continuity (at the origin), the spaces $L_\varphi(\mu)$ serve as a powerful tool to facilitate subdifferential calculus involving $I_\varphi$. Thus, the key idea behind Orlicz spaces is to let the tuple $\left( \varphi, \mu \right)$ generate a suiting function space for studying $I_\varphi$ instead of imposing artificial growth assumptions on $\varphi$.
	
	Our initial interest in $L_\varphi(\mu)$ arose due to the so-called Weighted Energy-Dissipation (WED) principle, which constructs solutions to evolution equations by studying the limits of minimizers to certain integral functionals, whose Euler-Lagrange equation represents an elliptic-in-time regularization of the target equation, cf., e.g., \cite{AkSt}. Though we eventually found minimizing movements more easily applicable in our existence theory for (\ref{eq: DNI}), the generalized Orlicz spaces nevertheless proved an extremely useful tool for applying the results obtained in the first part of the thesis. This is due to two factors: Firstly, many dissipation potentials $\phi_t$ in concrete instances of (\ref{eq: DNI}) happen to be or are close to being Orlicz integrands; Secondly, the subdifferential calculus of energies $\EE_t$ involving complicated growth conditions is greatly facilitated by Orlicz spaces. While an application of Orlicz spaces with an infinite-dimensional underlying Banach space $X$ does not come up in the present thesis, we still consider this an important problem in light of future applications. To give an example beyond the WED principle, we remark that Theorem \ref{thm: conjugate A} in our opinion very strongly suggests a suitable alternative form of the energy (in)equality
	\begin{equation*}
		\begin{gathered}
			\int_s^t \phi_{r, u(r) } \left( u(r) \right) + \phi^*_{r, u(r) } \left( - \xi(r) \right) \, dr + \EE_t \left( u(t) \right)
			\\
			= \EE_s \left( u(s) \right) + \int_s^t \p_t \EE_r \left( u(r) \right) \, dr \quad \forall s, t \in \cl \left[ 0, T \right)
		\end{gathered}
	\end{equation*}
	that solutions to (\ref{eq: DNI}) formally enjoy, by including an additional non-integral term that would unify the treatment of rate-dependent and rate-independent instances of (\ref{eq: DNI}), namely, we conjecture that the present theory of the first part of the thesis could be extended to the case of $\phi$ growing linearly. The time derivative of such a solution to (\ref{eq: DNI}) would no longer be a Radon measure, i.e., a curve taking values in the dual of a space of continuous vector-valued functions, but a curve taking values in the dual of a generalized vector-valued Orlicz space.
	
	What open problems in the theory of generalized Orlicz spaces do we address and how do we solve them? Firstly, fundamental measurability problems are known to arise if $X$ is not separable. For example, no effective representation for the convex conjugate of an integral functional on $L_\varphi(\mu)$ was known for this case in general, though some sufficient conditions for this situation to match the separable case exist in specific constellations with additional topological structure, see, e.g., \cite[Ch. VII, §3]{CaKa}. We circumvent these difficulties by employing measure theoretical operations of an essential nature, e.g., the essential infimum of a family of functions and the essential intersection of a family of sets, instead of the set theoretical equivalents. If we index such an essential infimum or intersection over the uncountable family of all separable subspaces of $X$, then the result will be measurable, which may fail for the set theoretical version. Using this technique, we prove in particular that $L_\varphi(\mu)$ is in a certain measure theoretical sense almost a subspace of $L_1 \left( \mu; X \right)$ and almost a superspace of $L_\i \left( \mu; X \right)$ if $\mu$ is finite. This innocuous result will allow us to systematically derive many important properties of $L_\varphi(\mu)$ from the better understood Bochner-Lebesgue spaces. For example, this idea plays an important role in solving the second major open problem we address: The duality theory was limited to various restrictive special constellations in previous works. For example, \cite{Ko1, Ko2} and \cite{Gi1} treated rather general Orlicz integrands if $X$ had a separable dual, whereas \cite{Scha} considered reflexive $X$ for autonomous integrands. By locally sandwiching $L_\varphi(\mu)$ between $L_1$ and $L_\i$ and patching the result together in a globally well-defined way, we manage to do away with any restrictions on $X$ and need no additional assumptions on $\varphi$ to represent the function component of the dual. Our duality theory is not limited to real-valued Orlicz integrands. The central ingredient in avoiding this assumption is already contained in \cite{Gi1}, which was kindly pointed out to us by E. Giner. Most regrettably, his idea seems to be absent from the recent literature on Orlicz spaces and we hope that the present work can contribute to righting this wrong. He observed that in order to decompose a functional $\ell \in L_\varphi(\mu)^*$ into addends for which a representation can be obtained, it is most useful to study the set functions $A \mapsto \ell \left( \chi_A u \right)$ for an arbitrary, fixed $u \in L_\varphi(\mu)$. These set functions can be decomposed as a direct sum of set functions having distinct properties, e.g., being $\sigma$-additive or strictly finitely additive, and this decomposition carries over to every $\ell$, e.g., there exist unique $\ell_\sigma$ and $\ell_f$ such that $\ell = \ell_\sigma + \ell_f$ and, for every $u \in L_\varphi(\mu)$, the set function $A \mapsto \ell_\sigma \left( \chi_A u \right)$ is $\sigma$-additive, the set function $A \mapsto \ell_f \left( \chi_A u \right)$ is strictly finitely additive. An essential gain of this decomposition is that we obtain a class of functionals that may be reasonably expected to be identical with the function component of the dual space, making its representation a meaningful undertaking in the first place. Other works on the duality of vector-valued Orlicz spaces were frequently limited to real-valued integrands, cf., e.g. \cite{Ko1, Ko2, Scha}.
	
	Third, three subspaces of the Orlicz space are generally considered to be important: The maximal linear subspace $M_\varphi(\mu)$ of $\dom I_\varphi$,
	the linear subspace $C_\varphi(\mu)$ of elements having absolutely continuous norm in $L_\varphi(\mu)$, i.e.,
	\begin{equation} \label{eq: abs cnt nrm}
		u \in C_\varphi(\mu) \iff u \in L_\varphi(\mu) \land \lim_{A \to \emptyset} \| u \chi_A \|_\varphi = 0,
	\end{equation}
	and $E_\varphi(\mu)$, the closure of simple functions in $L_\varphi(\mu)$. Strikingly, not all these subspaces are considered equally important by all authors. For example, $C_\varphi(\mu)$ does not explicitly show up in \cite{Ko1, Ko2, Mu}, where frequently conditions on $\varphi$ are invoked such as the local integrability
	\begin{equation} \label{eq: loc int}
		\mu(A) < + \i \implies \int_A \varphi(\om, x) \, d \mu(\om) < + \i \quad \forall x \in X,
	\end{equation}
	which guarantees that $C_\varphi(\mu)$ coincides with its superspace $E_\varphi(\mu)$. On the contrary, we found it more natural to let $C_\varphi(\mu)$ take center stage among the prominent subspaces, because firstly, we found it to coincide with $M_\varphi(\mu)$ if $\varphi$ is real on atoms of finite measure and, secondly, it allows us to avoid uniformity assumptions such as (\ref{eq: loc int}) all-together. A frequent key tool of ours in implementing the latter observation is the nice interplay between (\ref{eq: abs cnt nrm}) and the Egorov theorem. Avoiding uniformity assumptions on $\varphi$ has several advantages, starting with the aesthetic and convenient aspect of not having to check a superfluous assumption in all subsequent applications of the theory. Also, for state-dependent Orlicz integrands $\varphi_v$, we found that imposing a seemingly harmless uniformity assumption such as (\ref{eq: loc int}) on every function $x \mapsto \varphi(\om, v(\om), x)$ is no longer a negligible restriction. Consider the innocuous situation where $v \colon \Om \to \left( 0, + \i \right)$ is positive and
	$$
	\varphi(\om, v, u) = v \left| u \right|^2.
	$$
	In the same spirit, we remain frugal with our conditions on $\varphi$ throughout, thereby achieving many results under minimal assumptions that improve even the known case $X = \R^d$. To give two popular examples, we never impose superlinear vanishing at the origin or superlinear growth at infinity on $\varphi$. Also, the measure $\mu$ is kept rather general throughout the investigation, the only restriction being that we sometimes rule out atoms of infinite measure. From a perspective of pure mathematics, such a disciplined approach has the advantage of compelling to paint a truer picture of the subject and its essential requirements.
	
	Fourth and finally, for many important properties such as separability or reflexivity, we could find in the literature conditions in special cases of $L_\varphi(\mu)$ only, for example, if $X = \R^d$ \cite{ChGSW}, if $\Om \subset \R^d$ is a Borel set carrying the Lebesgue measure \cite{ChGSW}, if the structure of a scalar-valued Banach function space is present \cite{BeSh}. Often, the expositions were limited to proving sufficiency of these conditions. We rectify this by giving characterizations that equivalently describe many basic properties of a generalized vector-valued Orlicz space, including separability, reflexivity, or the Asplund property. Thereby, we not only provide criteria to check these properties in applications, but also demonstrate the limitations implicit in assuming them.
	
	As an application, we synthesize both theories to establish a new existence result for weak solutions to doubly nonlinear equations of Allen-Cahn-Gurtin type. In a number of ways, we surpass previous results in terms of admissible growth conditions in Orlicz spaces. The result provides a glimpse at the potential of our theory without exhausting its limits, as alluded to by the various potential applications discussed above.
	
	\paragraph{Open and follow-up questions.}
	
	An extension of the theory to even more general non-reflexive Banach spaces is essentially hindered by open measurability issues arising if the state space lacks the Suslin property. Can these problems be circumvented by employing a generalized integration based on Hahn-Banach extensions of the Lebesgue integral to non-measurable functions? Theorem \ref{thm: conjugate B} makes a strong suggestion how the energy (in)equality should look if the dissipation potential $\phi$ in (\ref{eq: DNI}) has merely linear growth from below. Can this case be included in the theory, unifying the treatment of rate-dependent and rate-independent doubly nonlinear inclusions?
	
	We assume the Orlicz integrand $\varphi$ to vanish continuously at the origin. However, the Orlicz space remains a Banach space if this assumption is dropped. What then is the dual space of $L_\varphi(\mu)$? At least for $\varphi(\om, x) = \varphi(x)$ we know that a discontinuity at the origin can always be avoided by adapting the range space, but in general we cannot say. The assumption of an even Orlicz integrand restricts. Can an analogue theory be developed for Orlicz cones generated by a non-even convex Orlicz integrand $\phi \colon \Om \times X \to \left[ 0, \i \right]$? Can one devise an associated subdifferential calculus for functionals on locally convex cones that, together with the Orlicz cones $L_\phi(\mu)$, provides a systematic, more powerful approach to treating unilateral constraints such as obstacle conditions on spaces of smooth functions or other strong asymmetries of the integrand?
	
	\section{Structure of the thesis}
	
	\paragraph{Generalized Gradient Flows}
	
	In section \ref{sec: Setting and Assumptions}, we introduce and discuss the setting in which we prove existence and stability for the generalized gradient flow
	\begin{equation} \label{eq: ggf}
		\p \phi_{t, u(t) } \left( u'(t) \right) + \p \EE_t(u(t) ) \ni 0 \quad \text{ for a.e. } t \in (0, T).
	\end{equation}
	In section \ref{sec: Approximation}, we set up the minimizing movement scheme to approximate (\ref{eq: ggf}) and prove a priori estimates that eventually lead to a proof of our first main result Theorem \ref{thm: main 1} on existence. Though general, this result excludes the most degenerate dissipation potentials. These are included in our second main Theorem \ref{thm: main 2} on existence and stability. Building upon Theorem \ref{thm: main 1}, it approximates generalized gradient flows within their own class on the time-continuous level, to handle systems whose degeneration is too delicate for an approximation via minimizing movements.
	
	In section \ref{sec: Application}, we apply Theorem \ref{thm: main 1} to obtain weak solutions to a doubly nonlinear inclusion of Allen-Cahn-Gurtin type. Drawing upon our theory of generalized Orlicz spaces, we treat nonlinearities with non-standard growth, encompassing non-homogeneity, full anisotropy, and infinite values. We show how to sidestep the need for an Orlicz-Sobolev inequality in the existence theory by using the locality of integral functionals. During our analysis, we expose a new criterion for an Orlicz-Sobolev space to have a sequentially compact ball in a suitable topology, transcending the need for uniformly superlinear growth in the gradient variable.
	
	\paragraph{Generalized Orlicz Spaces}
	
	In section \ref{sec: inf-int}, we settle technical preliminaries regarding convex conjugacy of integral functionals on spaces of non-separably vector-valued functions. This includes an interchange criterion between the infimum of an integral functional over such a space and taking the pointwise infimum of its integrand. As a by-product, a new subdifferential representation for integral functionals on a general class of non-separably vector-valued function spaces arises.
	
	In section \ref{sec: L}, we define our notion of an Orlicz integrand $\varphi \colon \Om \times X \to \left[ 0, + \i \right]$ on a measure space $\left( \Om, \AA, \mu \right)$ and a Banach space $X$, and introduce the pertaining Orlicz space $L_\varphi(\mu)$. We demonstrate some of its basic properties such as completeness. A central result of this section is that, for a finite measure $\mu$, the space $L_\varphi(\mu)$ continues to be an intermediate space of $L_1(\mu)$ and $L_\i(\mu)$ once we exclude a set of arbitrarily small measure. A large portion of our theory hinges on this inconspicuous property, which remarkably is valid without any uniformity assumption on the functions $x \mapsto \varphi(\om, x)$ as $\om$ ranges over $\Om$.
	
	In section \ref{sec: E}, we introduce the closure of simple functions in $L_\varphi(\mu)$ and demonstrate how it may be used to approximate elements of $L_\varphi(\mu)$ in a sense weaker than norm convergence. This is a technical preparation for the duality theory.
	
	In section \ref{sec: C}, we study the subspace $C_\varphi(\mu)$ of those elements having absolutely continuous norm and expose its relation with the maximal linear subspace in the domain of the modular functional
	$$
	I_\varphi(u) = \int_\Om \varphi(\om, u(\om) ) \, d \mu(\om).
	$$
	We equivalently describe when $C_\varphi(\mu)$ coincides with $L_\varphi(\mu)$ in terms of $\varphi$, a matter which we expose to be closely related to the reflexivity of $L_\varphi(\mu)$. We characterize separability of $C_\varphi(\mu)$ and reduce the matter of when $L_\varphi(\mu)$ is separable to the separability of $C_\varphi(\mu)$.
	
	In Section \ref{sec: duality}, we decompose the dual of $L_\varphi(\mu)$ into a direct sum of three fundamentally different types of functionals. We use this decomposition to describe the dual of both $C_\varphi(\mu)$ and $L_\varphi(\mu)$, identifying $C_\varphi(\mu)^*$ with the function component of $L_\varphi(\mu)^*$. Neither the Asplund property nor the separability of $X$ are needed for our most general representation of the function component. We equivalently describe the reflexivity of $L_\varphi(\mu)$ in terms of $\varphi$. Notably, our duality theory does not need $\varphi$ to be real-valued, a feature that is absent form the published literature even if $\dim X < \i$. The findings of Section \ref{sec: inf-int} - \ref{sec: duality} have been published in \cite{Ru}.
	
	In Section \ref{sec: weak compactness}, we equivalently describe compact subsets of $L_\varphi(\mu)$ and its dual function component in a class of weak topologies.
	
	\paragraph{Appendix}
	
	The appendix \ref{prt: Appendix} contains technical results on multimaps, integrands, hyperspace topologies, Young measures, subdifferential calculus, and inequalities. While many of these results are similar to known ones, we did not find their desired form in the literature directly, therefore proofs are given.
	
	\section{Notation and conventions}
	
	The following will be in force throughout the thesis. $\left( \Om, \AA, \mu \right)$ is a measure space with a non-trivial positive measure. $\AA_\mu$ is the completion of $\AA$ w.r.t. $\mu$ and $\bar{\mu}$ is the completion of $\mu$. The ring of sets having finite measure is $\AA_f$, the $\sigma$-ring of sets having $\sigma$-finite measure is $\AA_\sigma$, the $\sigma$-ring of countable unions of atoms and $\sigma$-finite sets is $\AA_{a\sigma}$. If $A \subset \Om$, then $\AA(A)$ denotes the trace $\sigma$-algebra of $\AA$ on $A$. The indicator of a set $S$ is $\chi_S$ with $\chi_S(a) = 1$ if $a \in S$ and $\chi_S(a) = 0$ otherwise. The indicator of a set $S$ in the sense of convex analysis is $I_S$ with $I_S(a) = 0$ if $a \in S$ and $I_S(a) = + \i$ otherwise. The power set of $S$ is $\PP(S)$. For a sequence $A_n \in \AA$, we write $\lim_n A_n = A$ to mean $\chi_{A_n} \to \chi_{A}$ almost everywhere. $\TT$ is a topological space, $\left( M, d \right)$ a metric space, and $X$ is a Banach space with dual space $X^*$. We write $\cl S$ to denote the closure of $S \subset \TT$. If $\TT$ is equipped with another topology $\dd$, we write $\TT_\dd$ to denote $\TT$ in the $\dd$-topology. We write $x_i \ddto x$ to denote convergence of a net $x_i$ to a limit $x$ in $\dd$. The Borel $\sigma$-algebra generated by the closed (or open) sets is denoted by $\BB(\TT)$. A ball in $M$ with centre $x$ and radius $r > 0$ is denoted by $B_r(x)$. If no radius is specified, then $r = 1$. If no centre is specified, then $x = 0$ if $M = X$. We write $B_W = B \cap W$ for $W \subset M$. We also combine both notations such as $B_{r, W}(x) = B_r(x) \cap W$. The system $\SS(\TT)$ are the separable subsets of $\TT$, $\CL \left( \TT \right)$ are the closed subsets of $\TT$, $\LS(\TT)$ are the lower semicontinuous, proper functions $f \colon \TT \to \left( - \i, \i \right]$ and $\Gamma \left( X \right)$ are the closed, convex, proper functions $g \colon X \to \left( -\i, \i \right]$. A function $\Om \to M$ is called simple if it is measurable and takes finitely many values. A simple function valued in a normed space is integrable iff it vanishes outside a set of finite measure. $\LL_0 \left( \Om ; M \right)$ are the strongly measurable functions $u \colon \Om \to M$. The function $u$ is strongly measurable iff there exists a sequence of simple functions $u_n \colon \Om \to M$ such that $u_n \to u$ pointwise as $n \to + \i$. This is equivalent to $u$ being the uniform limit of a sequence of measurable functions taking countably many values; or to $u$ being measurable and having a separable range \cite[Prop. 1.9]{CTV}. A function $u \colon \Om \to X^*$ is weak* measurable if, for any $x \in X$, the function $\om \mapsto \langle u(\om), x \rangle$ is measurable. For two locally convex spaces $\mathcal{V}$ and $\mathcal{W}$ in duality, we write $\ss \left( \mathcal{V}, \mathcal{W} \right)$ for the weak topology induced on $\mathcal{V}$ by $\mathcal{W}$ via the duality and likewise $\mm \left( \mathcal{V}, \mathcal{W} \right)$ for the Mackey topology. Let $T \in \left( 0, \i \right]$. We write $I = \cl \left[ 0, T \right)$ and denote by $\LL_I$ the $\sigma$-algebra of Lebesgue measurable subsets of $I$. If $L, M, N$ are sets and $f \colon L \times M \to N$ is a mapping, then for a fixed $\ell \in L$, we denote by $f_\ell$ the partial map $M \to N \colon m \mapsto f_\ell(m)$. For $S \subset L \times M$, we denote by $f_S$ the restriction of $f$ to $S$.
	
	We denote by $\langle \cdot, \cdot \rangle$ various dual pairings between locally convex spaces without specifying the duality if it is clear from the context. An analogous remark applies to the norm symbol $\| \cdot \|$. We use the symbols $C, c$ to denote various positive constants depending only on known quantities and whose concrete value may change from line to line. When in doubt, all vector spaces in this thesis are real.
	
	\newpage
	
	\chapter{Generalized gradient flows} \label{ch: GGF}
	
	\section{Setting and assumptions} \label{sec: Setting and Assumptions}
	
	In this section, we introduce assumptions that will be in force throughout the rest of Chapter \ref{ch: GGF} without further mention.
	
	\subsection{The spaces}
	
	Let $V, W, X, Y$ be Banach spaces such that $(V, W)$, $(V, X)$, and $(X, Y)$ form dual pairs where $W$ separates $V$, i.e.,
	$$
	\forall v \in V \, \exists w \in W \colon \langle v, w \rangle_{V, W} \ne 0,
	$$
	and $Y$ separates $X$ (but not necessarily vice versa), and $W, Y$ are separable in norm topology. Employing the notation $\ss$ for the weak topology of a locally convex dual pair, we call $\dd \coloneqq \ss(V, W)$ the dissipation topology and call $\dd^* \coloneqq \ss(X, Y)$ the dual dissipation topology. The Banach spaces $V, X$ are required to carry norms satisfying
	\begin{equation} \label{eq: dual nrms}
		\| v \|_V = \sup_{w \in B_W} \langle v, w \rangle_{V, W};
		\quad
		\| x \|_X = \sup_{y \in B_Y} \langle x, y \rangle_{X, Y}.
	\end{equation}
	Therefore, by means of the isometric embedding $V \to W^* \colon v \mapsto \langle v, \cdot \rangle_{V, W}$, the Banach space $V$ is a closed subspace of $W^*$. It is clear that $\dd$ agrees with the relative weak* topology under this identification. We require $V$ to be a (sequentially) weak*-closed subspace this way, which implies that $V$ is dual to the quotient space $W / V_\bot$ with $V_\bot = \left\{ w \in W \st \langle v, w \rangle_{V, W} = 0 \quad \forall v \in V \right\}$. In particular, $V_\dd$ can be identified with the dual of a separable Banach space in its weak* topology. Denoting by $w_n \in S_W$ a sequence that is dense in the unit sphere $S_W$, there hold the following properties that are essential to our analysis:
	\begin{subequations}
		\begin{equation} \label{eq: nrm V lsc}
			v_i \ddto v \implies \| v \|_V \le \liminf \| v_i \|_V;
		\end{equation}
		\begin{equation} \label{eq: B V cpt}
			\text{The unit ball } B_V \text{ is sequentially compact in } \dd;
		\end{equation}
		\begin{equation} \label{eq: dd metric}
			\text{The metric } d_\dd(v_1, v_2) = \sum_{n = 1}^\i 2^{- n} \left| \langle w_n, v_1 - v_2 \rangle \right|^2 \text{ induces $\dd$ on } B_V;
		\end{equation}
		\begin{equation} \label{eq: dd Suslin}
			\text{The space } V_\dd \text{ is a Suslin locally convex vector space.}
		\end{equation}
	\end{subequations}
	Let $V_1 \left( 0, T; V_\dd \right)$ be the normed vector space of functions $v \colon (0, T) \to V_\dd$ that are Lebesgue-$\BB(V_\dd)$-measurable and satisfy
	\begin{equation} \label{eq: fnt nrm}
		\| v \|_{V_1(0, T; V_\dd) } \coloneqq \int_0^T \| v(t) \|_V \, dt < \i.
	\end{equation}
	The measurability of $v$ is equivalent to each function $t \mapsto \langle w, v(t) \rangle$ being Lebesgue-$\BB(0, T)$-measurable since $W$ is separable. Analogous considerations and notations apply to $X_{\dd^*}$. We say that
	\begin{equation}
		v_{n_k} \to v \text{ in the biting sense of } V_1 \left( \mu; V_\dd \right)
	\end{equation}
	if there exists a non-increasing sequence of measurable sets $A_j$ with $\mu \left( \Om \setminus A_j \right) \to 0$ such that
	$$
	v_{n_k} \to v \text{ in } \ss \left( V_1 \left( \Om \setminus A_j; V_\dd \right) ; L_\i \left( \Om \setminus A_j ; W \right) \right) \text{ for every } j \in \N.
	$$
	As our last assumption on the spaces, we require that
	\begin{equation} \label{eq: V X mb}
		\text{The pairing } \langle \cdot, \cdot \rangle \colon V \times X \to \R \text{ is } \BB \left( V_\dd \times X_{\dd^*} \right) \text{-measurable.}
	\end{equation}
	
	For the energy functional $\EE \colon \cl \left[ 0, T \right) \times V \to \left( - \i, + \i \right]$, we assume that there exists $D \subset V$ such that
	\begin{equation} \label{eq: E_0}
		\begin{gathered}
			\dom (\EE) = \cl \left[ 0, T \right) \times D, \\
			(t, u) \mapsto \EE_t(u) \text{ is } \LL_I \otimes \BB \left( V_\dd \right) \text{-measurable}, \\
			\exists C_0 > 0 \colon \EE_t(u) \ge C_0 \quad \forall (t, u) \in \cl \left[ 0, T \right) \times D.
		\end{gathered}
	\end{equation}
	
	Throughout the paper, for $T_0 \in \cl \left[ 0, T \right)$ and $u \in D$, we use the notation
	\begin{equation} \label{eq: GG def}
		\GG_{T_0}(u) = \sup_{t \in \left[ 0, T_0 \right] } \EE_t(u).
	\end{equation}
	We consider on $D$ an auxiliary Hausdorff \emph{energy topology} $\ee$ that is sequentially concurrent with $\dd$, i.e.,
	\begin{equation} \label{eq: rh lph cncrrnt}
		u_n \eeto u, \, u_n \ddto v \implies u = v.
	\end{equation}
	
	We denote by $F \colon \cl \left[ 0, T \right) \times D \rightrightarrows X$ a time-dependent family of multimaps such that, for $t \in \cl \left[ 0, T \right)$, the set $F_t(u)$ is (in a suitable sense) a subdifferential of the functional $\EE_t$ at $u$ with respect to the dual pair $(V, X)$. We employ the usual multimap notations
	\begin{align*}
		\dom(F) = \left\{ (t, u) \in \cl \left[ 0, T \right) \times D \st F_t(u) \not = \emptyset \right\}, \\
		\graph(F) = \left\{ (t, u, \xi) \in \cl \left[ 0, T \right) \times D \times X \st \xi \in F_t(u) \right\}
	\end{align*}
	for the domain and the graph of the multimap $F$. We require that the set $\graph(F)$ belong to the Lebesgue-Borel $\sigma$-algebra $\LL_I \otimes \BB \left( D_\dd \times X_{\dd^*} \right)$.
	
	Regarding the dissipation potentials $\phi$ and $\phi^*$, let $\phi \colon \cl \left[ 0, T \right) \times D \times V \to \left[ 0, + \i \right]$ be a function such that
	\begin{equation} \label{eq: phi mb}
		\begin{gathered}
			\phi \text{ is measurable with respect to } \LL_I \otimes \BB \left( D_\dd \times V_\dd \right); \\
			\forall (t, u) \in \cl \left[ 0, T \right) \times D \quad v \mapsto \phi_{t, u}(v) \text{ is convex and } \phi_{t, u}(0) = 0.
		\end{gathered}
	\end{equation}
	We denote by $\phi^*_W \colon \cl \left[ 0, T \right) \times D \times W \to \left[ 0, + \i \right]$ the partial convex conjugate of $\phi$ in the last variable with respect to the pair $(V, W)$ and denote by $\phi^* \colon \cl \left[ 0, T \right) \times D \times X_{\dd^*} \to \left[ 0, + \i \right]$ the partial convex conjugate in the last variable with respect to the pair $(V, X)$. We require
	\begin{equation} \label{eq: phi* mb}
		\begin{gathered}
			\phi^* \text{ is measurable with respect to } \LL_I \otimes \BB \left( D_\dd \times X_{\dd^*} \right); \\
			\forall (t, u) \in \cl \left[ 0, T \right) \times D \quad \xi \mapsto \phi^*_{t, u}(\xi) \text{ is convex and } \phi^*_{t, u}(0) = 0.
		\end{gathered}
	\end{equation}
	
	\paragraph{Remark and examples of setting.}
	
	\begin{enumerate}
		
		\item The second of (\ref{eq: phi* mb}) is redundant because it follows from (\ref{eq: phi mb}). We record it for reference.
		
		\item If $u \colon \cl \left[ 0, T \right) \to D_\dd$ is a measurable curve, then $\left( t, v \right) \mapsto \phi_{t, u(t) }(v)$ is $\LL_I \otimes \BB \left( V_\dd \right)$-measurable. Hence, for every such $u$, the function $(t, w) \mapsto \phi^*_{W, t, u(t) }$ is $\LL_I \otimes \BB \left( W \right)$-measurable by \cite[Cor. VII.2]{CaVa}.
		
		\item If $V$ and $W$ are separable Banach spaces such that $W^* = V$, our assumptions on the pair $\left( V, W \right)$ are satisfied. Setting $X = V^*$ and $Y = V$, the canonical dual pairings fulfill all requirements. In particular, since $\BB(V) = \BB \left( V_\dd \right)$ in this situation, (\ref{eq: V X mb}) is true. This includes the case when $V$ is reflexive so that we may take $W = V^*$.
		
		\item A concrete example is given if $\Om \subset \R^d$ is an open set equipped with the Lebesgue measure $\lambda$ and $\varphi \colon \Om \times \R^d \to \left[ 0, \i \right]$ is a real-valued Orlicz integrand such that the conjugate integrand $\varphi^*$ is also real-valued. Then the Orlicz class $C_\varphi(\Om)$ is a separable Banach space whose dual is given by the Orlicz space $L_{\varphi^*}(\Om)$. The latter is separable if and only if $\varphi^* \in \Delta_2$ in the sense of Definition \ref{def: Delta2 cond}. To see this, combine Theorems \ref{thm: linear domain 2}, \ref{thm: C sep}, and \ref{thm: sep implies L = C}. We may then take $V = L_{\varphi^*}(\Om)$ and $W = C_\varphi(\Om)$ in the previous example.
		
		\item One could try to consider also the above situation when $\varphi^* \not \in \Delta_2$ with $V = L_{\varphi^*}(\Om)$, $W = C_\varphi(\Om)$, $X = L_\varphi(\Om)$, and $Y = C_{\varphi^*}(\Om)$. This leads to a (separated) locally convex duality between $V$ and $X$ via the standard integral pairing
		$$
		\langle \cdot, \cdot \rangle_{X, V} \colon L_\varphi(\Om) \times L_{\varphi^*}(\Om) \to \R \colon (x, v) \mapsto \int_\Om \langle x(\om), v(\om) \rangle \, d \lambda(\om).
		$$
		This example would fit our setting once (\ref{eq: V X mb}) were verified.
		
	\end{enumerate}
	
	\subsection{The dissipation potentials}
	
	We call $\phi$ an admissible dissipation potential if for every fixed $t \in \cl \left[ 0, T \right), u \in D, T_0 \in \cl \left[ 0, T \right), R > 0$, there hold the following properties:
	\begin{subequations}
		\begin{equation} \label{eq: phi.1}
			\phi_{t, u} \colon V \to \left[ 0, + \i \right] \text{ is lower semicontinuous and convex;}
		\end{equation}
		
		\begin{equation} \label{eq: phi.2.1}
			\phi_{t, u}(0) = 0, \quad \lim_{\| v \| \uparrow \i} \phi_{t, u}(v) = +\i;
		\end{equation}
		
		Uniform lower linear growth conditioned on the sublevels:
		\begin{alignat}{2}
			& \liminf_{\| v \|_V \to \i} \inf_{0 \le t \le T_0} \inf_{\GG_{T_0}(u) \le R } & \frac{\phi_{t, u}(v)}{\| v \|} > 0; \label{eq: phi.2.2.1} \\
			& \liminf_{\| \xi \|_X \to \i} \inf_{0 \le t \le T_0} \inf_{\GG_{T_0}(u) \le R } & \frac{\phi^*_{t, u}(\xi)}{\| \xi \|} > 0. \label{eq: phi.2.2.2}
		\end{alignat}
		
		Radial superlinearity:
		\begin{equation} \label{eq: phi.2.3}
			\begin{gathered}
				t_n \to t, \, u_n \eeto u, \, \GG_{T_0}(u_n) \le R, \, v_n \eeto v, \, \GG_{T_0}(v_n) \le R \implies \\
				\lim_{\theta \downarrow 0} \liminf_{n \to \i} \theta \phi_{t_n, u_n} \left( \tfrac{v_n}{\theta} \right) =
				\begin{cases}
					+ \i & \text{ if } v \in V \setminus \left\{ 0 \right\}, \\
					0 & \text{ if } v = 0;
				\end{cases}
			\end{gathered}
		\end{equation}
		
		\begin{equation} \label{eq: phi.2.4}
			\int_0^{T_0} \sup_{\GG_{T_0}(u) \le R} \phi^*_{W, t, u}(w) \, dt < +\i \quad \forall w \in W;
		\end{equation}
		
		\begin{equation} \label{eq: phi.3}
			\xi_1, \xi_2 \in \p \phi_{t, u}(v) \implies \phi^*_{t, u}(\xi_1) = \phi^*_{t, u}(\xi_2);
		\end{equation}
		
		For all $t \in \left[ 0, T_0 \right]$, we have
		\begin{align}
			\begin{gathered}
				u_n \eeto u, \, \| u_n \|_V \le R, \, \GG_{T_0}(u_n) \le R, \, v_n \ddto v \implies
				\\
				\liminf_n \phi_{t, u_n}(v_n) \ge \phi_{t, u}(v);
			\end{gathered}
			\label{eq: phi.4.1}
			\\
			\begin{gathered}
				u_n \eeto u, \, \| u_n \|_V \le R, \, \GG_{T_0}(u_n) \le R, \, \xi_n \dddto \xi \implies
				\\
				\liminf_n \phi^*_{t, u_n}(\xi_n) \ge \phi^*_{t, u}(\xi).
			\end{gathered}
			\label{eq: phi.4.2}
		\end{align}
	\end{subequations}
	\\
	
	\paragraph{Remark on assumptions.}
	
	\begin{enumerate}
		
		\item The second of (\ref{eq: phi.2.1}) is redundant since it is strictly weaker than (\ref{eq: phi.2.2.1}).
		
		\item As $\phi_{t, u}(0) = 0$ and $\phi_{t, u} \ge 0$, there holds $\phi^*_{t, u} \ge 0$ and $\phi^*_{t, u}(0) = 0$.
		
		\item Neither $\phi_{t, u}$ nor $\phi^*_{t, u}$ is required to be real-valued. In particular, the weakly superlinear growth (\ref{eq: phi.2.3}) does not force $\phi^*_{t, u}$ to be continuous. However, by (\ref{eq: phi.2.2.1}), the dual dissipation functional $\phi^*_{t, u}$ is continuous at the origin due to \cite[Prop. 3.2, Lem. 3.51(a)]{Pe}.
		
		\item Even though rate-independent dissipation is of course excluded by (\ref{eq: phi.2.3}), it may happen that the limit in (\ref{eq: phi.2.2.1}) is nevertheless finite, so that our analysis includes some dissipation functionals exhibiting linear lower growth.
		
		\item Assumption (\ref{eq: phi.2.3}) holds if
		$$
		\liminf_{n \to \i} \theta \phi_{t_n, u_n} \left( \tfrac{v_n}{\theta} \right) \ge \theta \phi_{t, u} \left( \tfrac{v}{\theta} \right) \text{ and } w \mapsto \phi^*_{W, t, u}(w) \text{ is real-valued}.
		$$
		For, if $\bar{w} \in W$ is fixed and $\theta > 0$, then
		$$
		\theta \phi_{t, u} \left( \tfrac{v}{\theta} \right)
		= \sup_{w \in W} \langle v, w \rangle - \theta \phi^*_{W, t, u}(w)
		\ge \langle v, \bar{w} \rangle - \theta \phi^*_{W, t, u} (\bar{w}) \to \langle v, \bar{w} \rangle
		$$
		as $\theta \to 0$. Now conclude (\ref{eq: phi.2.3}) by (\ref{eq: nrm V lsc}).
		
		\item As already observed in \cite[Rem. 2.1]{MRS}, the condition (\ref{eq: phi.3}) equivalently says that
		\begin{equation} \label{eq: smooth at one}
			\text{If } \p \phi_{t, u}(v) \not = \emptyset \text{, then } r \mapsto \phi_{t, u} \left( r v \right) \text{ is differentiable at } r = 1.
		\end{equation}
		
		\item We will remove (\ref{eq: phi.3}) and weaken (\ref{eq: phi.2.2.2}), (\ref{eq: phi.4.2}) in our second main existence and stability result Theorem \ref{thm: main 2}.
		
		\item We believe that the merely measurable dependence on the time variable for the dissipation potential has not been considered before in the context of (\ref{eq: DNI}).
		
		\item The integral in (\ref{eq: phi.2.4}) is meaningful since the integrand is measurable with respect to $\LL_I$. Alas, this is surprisingly complicated to check. First, $\phi$ is measurable by \ref{eq: phi mb}. Therefore, it remains measurable when restricted to the Suslin set $I \times \left\{ \GG_{T_0}(u) \le R \right\} \times V_\dd$ for any $R > 0$. Using \cite[Lem. VII.1]{CaVa}, we deduce from this that the epigraphical multimap of the restricted $\phi$ has a measurable graph. Arguing as for the implication (b) $\implies$ (c) of \cite[Thm. 14.8]{RocW}, using the Suslin projection theorem \cite[Thm. III.23]{CaVa}, this implies that pre-images remain measurable under the epigraphical multimap. Combining this with Lemma \ref{lem: inf meas normality}, we see that the restricted $\phi$ is infimally measurable, hence the claimed measurability of the integrand in (\ref{eq: phi.2.4}) follows directly from the definition of a convex conjugate.
		
	\end{enumerate}
	
	\subsection{The energy functional} \label{ss:enf}
	
	Besides (\ref{eq: E_0}), we assume for every $t \in \cl \left[ 0, T \right)$
	\\
	
	\begin{subequations} \label{eq: assu en only}
		\textbf{Lower semicontinuity}: The energy $\EE_t$ is sequentially lower $\ee$-semicontinuous on norm bounded sets:
		\begin{equation} \label{eq: lsc}
			\sup_n \left| u_n \right|_V < \i, \, u_n \eeto u \implies \EE_t(u) \le \liminf_n \EE_t \left( u_n \right);
		\end{equation}
		
		\textbf{Compactness}: Every norm bounded set contained in a sublevel set of $\EE_t$ is sequentially relatively $\ee$-compact:
		\begin{equation} \label{eq: crc}
			\begin{gathered}
				\sup_n \left| u_n \right|_V < \i, \, \sup_n \EE_t(u_n) < \i \implies \\ \left( u_n \right) \text{ has an } \ee \text{-convergent subsequence.}
			\end{gathered}
		\end{equation}
		By (\ref{eq: phi.2.1}), (\ref{eq: lsc}), and (\ref{eq: crc}), there exists $r^* > 0$ such that, for $R = \min\{ T - t, r^* \}$ and all $r \in \left[ 0, R \right)$, all $\theta \in \left( 0, r^* \right)$, all $t \in \cl \left[ 0, T \right)$ and every $u_0 \in D$, the map
		\begin{equation} \label{eq: coerc}
			u \mapsto \theta \phi_{t, u_0} \left( \frac{u - u_0}{\theta} \right) + \EE_{t + r}(u) \text{ is sequentially } \ee \text{-inf-compact.}
		\end{equation}
		A function is called inf-compact if its sublevel sets are compact.
		
		\textbf{Time dependence}: The energy $\EE_t$ is absolutely continuous in time:
		\begin{equation} \label{eq: absolute  continuity}
			\begin{gathered}
				\exists f_1 \in L^1_{\text{loc} }\left( \cl \left[ 0, T \right) \right) \colon \forall u \in D \, \forall s, t \in \cl \left[ 0, T \right) \colon
				\\
				\left| \EE_s(u) - \EE_t(u) \right| \le\EE_t(u) \int_{\left[ s, t \right] } f_1(r) \, dr.
			\end{gathered}
		\end{equation}
		
		Let there exist an $\LL_I \otimes \BB \left( D_\dd \times X_{\dd^*} \right)$-measurable function $P \colon \graph(F) \to \R$ and, for every $T_0 \in \cl \left[ 0, T \right)$, let there exists a locally integrable function $f_2 \in L^1_{\text{loc} } \left( \cl \left[ 0, T \right) \right)$ such that
		\begin{equation} \label{eq: radially time differentiable}
			\begin{gathered}
				\forall (t, u, \xi) \in \graph(F) \colon
				\\
				\liminf_{h \downarrow 0} \frac{\EE_{t + h}(u) - \EE_t(u)}{h} \le P_t(u, \xi) \le \GG_{T_0}(u) f_2(t).
			\end{gathered}
		\end{equation}
	\end{subequations}
	
	We record that (\ref{eq: absolute  continuity}) implies
	\begin{equation} \label{eq: Gron cons}
		\EE_t(u) \le \EE_s(u) \exp \left( \int_{\left[ s, t \right] } f_1(r) \, dr \right)
	\end{equation}
	by the Gronwall inequality. In particular,
	\begin{equation} \label{eq: G controlled}
		\forall T_0 > 0 \, \exists C_3 > 0 \, \forall u \in D \colon \GG_{T_0}(u) \le C_3 \inf_{0 \le t \le T_0} \EE_t(u).
	\end{equation}
	
	\textbf{Sum rule}: If, for $u_0 \in D$, $r \in \left[ 0, R \right)$, $\theta \in \left( 0, r^* \right)$, and $t \in \cl \left[ 0, T \right)$, the point $\bar{u}$ minimizes the function in (\ref{eq: coerc}), then let $\bar{u}$ satisfy the Euler-Lagrange inclusion
	\begin{equation} \label{eq: sum rule}
		\exists \xi \in F_{t + r} \left( \bar{u} \right) \colon - \xi \in \p \phi_{t, u_0} \left( \frac{\bar{u} - u_0}{\theta} \right).
	\end{equation}
	
	\textbf{Chain rule inequality} for the tuple $\left( V_\dd, X_{\dd^*},\EE, \phi, \langle \cdot, \cdot \rangle_{X, V}, F, P \right)$: For every $S \in \left( 0, T \right)$ and every weakly differentiable curve $u \colon \left( 0, S \right) \to V_\dd$ with $u' \in V_1 \left( 0, S; V_\dd \right)$ and for every curve $\xi$ such that $\xi \in V_1 \left( 0, S; X_{\dd^*} \right)$, let there hold the following implication:
	\begin{subequations}
		\begin{equation} \label{eq: ch ru}
			\begin{aligned}
				&
				\begin{gathered}
					\sup_{0 \le t \le S} \EE_t(u(t) ) < + \i, \quad \xi(t) \in F_t(u(t) ) \quad \text{ for a.e. } t \in \left( 0, S \right), \\
					\int_0^S \phi_{t, u(t) } \left( u'(t) \right) + \phi^*_{t, u(t) } \left( -\xi(t) \right) \, dt < + \i \implies
				\end{gathered}
				\\
				&
				\begin{gathered}
					\frac{d}{dt} \EE_t(u(t) ) \ge \langle \xi(t), u'(t) \rangle_{X, V} + P_t(u(t), \xi(t) ) \quad \text{ in } \DD' \left( 0, S \right).
				\end{gathered}
			\end{aligned}
		\end{equation}
		For later reference, we also record the weaker chain rule
		\begin{equation} \label{eq: ch ru wkr}
			\begin{aligned}
				&
				\begin{gathered}
					\sup_{0 \le t \le S} \EE_t(u(t) ) < + \i, \quad \xi(t) \in F_t(u(t) ) \quad \text{ for a.e. } t \in \left( 0, S \right), \\
					\int_0^S \phi_{t, u(t) } \left( u'(t) \right) + \phi^*_{t, u(t) } \left( -\xi(t) \right) \, dt < + \i \implies
				\end{gathered}
				\\
				&
				\begin{gathered}
					\text{The function } t \mapsto \EE_t \left( u(t) \right) \text{ is of locally bounded variation and } \\
					\frac{d}{dt} \EE_t(u(t) ) \ge \langle \xi(t), u'(t) \rangle_{X, V} + P_t(u(t), \xi(t) ) \quad \text{ for a.e. } t \in (0, S).
				\end{gathered}
			\end{aligned}
		\end{equation}
	\end{subequations}
	
	\textbf{Closedness implication}: Given $t \in \cl \left[ 0, T \right)$ and sequences $u_n \in V$, $\xi_n \in F_t(u_n)$, $p_n = P_t(u_n, \xi_n)$ such that
	\begin{subequations} \label{seq: cl cnd}
		\begin{equation} \label{eq: cc ssmptn}
			u_n \eeto u , \quad \xi_n \dddto \xi, \quad p_n \to p \text{ in } \R,
			\quad \limsup_n | u_n | + \EE_t(u_n)
		\end{equation}
		let there hold
		\begin{equation} \label{eq: cc cnclsn}
			(t, u) \in \dom(F), \quad \xi \in F_t(u), \quad p \le P_t(u, \xi).
		\end{equation}
		For later reference, we also record the stronger closedness implication
		\begin{equation} \label{eq: cl str impl}
			\begin{gathered}
				u_n \eeto u , \quad \xi_n \dddto \xi, \quad p_n \to p \text{ in } \R,
				\quad \limsup_n | u_n | + \EE_t(u_n) < \i \implies \\
				(t, u) \in \dom(F), \quad \xi \in F_t(u), \quad p \le P_t(u, \xi), \quad \lim_n \EE_t(u_n) = \EE_t(u).
			\end{gathered}
		\end{equation}
		We call (\ref{eq: cl str impl}) the continuous closedness implication.
	\end{subequations}
	\\
	
	\paragraph{Remark on assumptions}
	
	\begin{enumerate}
		
		\item Our assumptions on the energy $\EE$ constitute a time-dependent variant of the lower semicontinuity and compactness assumptions imposed on energies in the theory of metric gradient flows, see \cite{AGS}. The time-dependence we permit for $\EE$ is similar to that in \cite{MRS}.
		
		\item If $F_t$ is given by an operator $\p \colon \left( - \i, \i \right]^V \times V \rightrightarrows X$ such that $\p$ agrees with the Fenchel-Moreau subdifferential of convex analysis on $\Gamma \left( V_\dd \right)$ and $\p f \left( \bar{v} \right) \ni 0$ whenever $f \colon V \to \left( - \i, \i \right]$ has a minimizer at $\bar{v} \in V$, then a sufficient condition for (\ref{eq: sum rule}) is
		\begin{equation} \label{eq: sm rl sf cnd}
			\begin{gathered}
				\p \left( \theta \phi_{t, u_0} \left( \frac{\cdot - u_0}{\theta} \right) + \EE_t(\cdot) \right) \left( \bar{u} \right)
				\\
				\subset \cl_{\text{seq } \dd^*} \left( \p \phi_{t, u_0} \left( \frac{\bar{u} - u_0}{\theta} \right) + \p \EE_t \left( \bar{u} \right) \right).
			\end{gathered}
		\end{equation}
		The claim is immediate if we forego the sequential $\dd^*$-closure on the right side of (\ref{eq: sm rl sf cnd}). Otherwise, it follows by the closedness implication (\ref{seq: cl cnd}). An equivalent description of (\ref{eq: sm rl sf cnd}) for the radial subdifferential of locally convex\footnote{A function $f$ is called locally convex at a point $x_0 \in \dom(f)$ if the radial derivative $\lim_{h \downarrow 0} \frac{f(x_0 + hv) - f(x_0)}{h}$ exists for every direction $v$ and is subadditive in $v$.} functions can be found in \cite[Thm. 3.3]{Ro}. It can be extended to energies with subadditive Dini subderivative, see Theorem \ref{thm: sum rule}. Note in this regard that the sequential closure in (\ref{eq: sm rl sf cnd}) coincides with the topological one by the Krein-Shmulyan Theorem \cite[Thm. 1.11]{Pe} or \cite[Thm. VIII.3.15]{W} if the sum of subdifferentials is convex. This always happens for the radial subdifferential because it is convex and Minkowski sums remain convex.
		
		\item A generalization of \cite[Prop. 4.2]{MRS} continues to hold in our setting. The following lemma crucially uses the assumption of a globally finite dissipation potential, which contrasts with the sum rule of convex analysis, where it suffices if one addend has a point of continuity in the joint domain to yield a global sum rule.
		\begin{lemma}
			Let $V$ be a Frechet or Hadamard smooth Banach space in the sense of \cite[§4]{Pe} and denote by $\p$ the Frechet or Hadamard subdifferential, respectively. Let $X = V^*$ and $\dd^* = \sigma \left( V, V^* \right)$. Let the function $v \mapsto \phi_{t, u}(v)$ take real values for every $(t, u) \in \cl \left[ 0, T \right) \times D$, and $\EE \colon \cl \left[ 0, T \right) \times V \to \left( - \i, \i \right]$ be such that $v \mapsto \EE_t(v)$ is lower semicontinuous for all $t$ and $\left( \EE, F \right)$ complies with (\ref{seq: cl cnd}) in the weak form (\ref{eq: cc cnclsn}). Moreover, suppose that
			$$
			\forall (t, u) \in \cl \left[ 0, T \right) \times D \quad \p \EE_t(u) \subset F_t(u).
			$$
			Then (\ref{eq: sum rule}) is true.
		\end{lemma}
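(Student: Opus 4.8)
The plan is to obtain the Euler--Lagrange inclusion (\ref{eq: sum rule}) from the Fermat rule together with an \emph{approximate} (``fuzzy'') sum rule; the exact sum rule $\p(g + \EE_{t+r}) \subset \p g + \p\EE_{t+r}$ is unavailable here because $\EE_{t+r}$ is only lower semicontinuous. So fix $u_0 \in D$, $r \in [0, R)$, $\theta \in (0, r^*)$, $t \in \cl[0, T)$, set $g(u) := \theta\phi_{t, u_0}(\tfrac{u - u_0}{\theta})$, and let $\bar u$ minimize $f := g + \EE_{t+r}$, i.e.\ the functional in (\ref{eq: coerc}). Since $\phi_{t, u_0}$ is real-valued and convex, $g$ is convex and finite on all of $V$, hence norm-continuous and locally Lipschitz; and $\EE_{t+r}$ is lower semicontinuous and, by (\ref{eq: E_0}), proper with $\bar u \in \dom(\EE_{t+r}) = D$. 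As $\bar u$ is a minimizer and $\p$ satisfies the Fermat rule, $0 \in \p f(\bar u)$; and since $g$ and $\phi_{t, u_0}$ are convex, on them $\p$ reduces to the subdifferential of convex analysis, so the convex chain rule gives $\p g(u) = \p\phi_{t, u_0}(\tfrac{u - u_0}{\theta})$ for every $u$.

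Next I would apply the fuzzy sum rule available in $V$ by virtue of its Frechet-, resp.\ Hadamard-smoothness (see \cite[§4]{Pe}): for each $n$ there are points $u_{1, n}, u_{2, n}$ with $\|u_{i, n} - \bar u\| \le 1/n$, $|g(u_{1, n}) - g(\bar u)| \le 1/n$, $|\EE_{t+r}(u_{2, n}) - \EE_{t+r}(\bar u)| \le 1/n$, and functionals $\eta_{1, n} \in \p g(u_{1, n})$, $\eta_{2, n} \in \p\EE_{t+r}(u_{2, n}) \subset F_{t+r}(u_{2, n})$ with $\eta_{1, n} + \eta_{2, n} \to 0$. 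As $g$ is locally Lipschitz, $(\eta_{1, n})$ is norm bounded, hence so is $(\eta_{2, n})$; by the $\dd^*$-sequential compactness of bounded subsets of $X$ (the analogue of (\ref{eq: B V cpt})) we may extract a subsequence with $\eta_{2, n} \dddto \xi$, and then $\eta_{1, n} \dddto -\xi$. The norm-to-$\dd^*$ outer semicontinuity of the subdifferential of the convex continuous function $g$, together with $u_{1, n} \to \bar u$ and $g(u_{1, n}) \to g(\bar u)$, yields $-\xi \in \p g(\bar u) = \p\phi_{t, u_0}(\tfrac{\bar u - u_0}{\theta})$.

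It remains to show $\xi \in F_{t+r}(\bar u)$, which is where the closedness implication (\ref{seq: cl cnd}) comes in. First I would upgrade $u_{2, n} \to \bar u$ (norm) to $u_{2, n} \eeto \bar u$: norm convergence entails $u_{2, n} \ddto \bar u$, while $\sup_n \|u_{2, n}\|_V < \i$ and $\sup_n \EE_{t+r}(u_{2, n}) < \i$ give, via the compactness assumption (\ref{eq: crc}), an $\ee$-convergent subsequence whose limit must be $\bar u$ by the concurrence (\ref{eq: rh lph cncrrnt}). The scalars $p_n := P_{t+r}(u_{2, n}, \eta_{2, n})$ are bounded --- from above by $\GG_{T_0}(u_{2, n}) f_2(t+r)$ through (\ref{eq: radially time differentiable}), (\ref{eq: G controlled}) and $\sup_n \EE_{t+r}(u_{2, n}) < \i$, and from below through (\ref{eq: absolute  continuity}) --- so after a further extraction $p_n \to p$ in $\R$. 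Applying (\ref{seq: cl cnd}) in its weak form with conclusion (\ref{eq: cc cnclsn}) to the sequence $(u_{2, n}, \eta_{2, n}, p_n)$ then delivers $\xi \in F_{t+r}(\bar u)$. Combined with $-\xi \in \p\phi_{t, u_0}(\tfrac{\bar u - u_0}{\theta})$, this is exactly (\ref{eq: sum rule}).

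The main obstacle is the one motivating the separate statement of this lemma: for a merely lower semicontinuous energy the exact sum rule breaks down, forcing the detour through the fuzzy sum rule and the limit passage above, and the genuinely delicate step is to reconcile the $\dd^*$-convergence of the dual sequence $(\eta_{2, n})$ with a closedness of $F$, which we are given only through (\ref{seq: cl cnd}) and its interplay with the energy topology $\ee$ via (\ref{eq: crc}) and (\ref{eq: rh lph cncrrnt}). The global finiteness of $\phi_{t, u_0}$ is indispensable here: it makes $g$ continuous and locally Lipschitz, so that the fuzzy sum rule produces a genuine convex subgradient $\eta_{1, n}$ whose local boundedness and outer semicontinuity allow the compactness and closedness hypotheses to take effect --- a splitting that would collapse for a dissipation potential attaining the value $+\i$, in contrast with the classical convex sum rule, where continuity of a single addend on the joint domain already suffices.
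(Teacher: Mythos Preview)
Your proof is correct and follows essentially the same route as the paper's: the paper's proof is a one-line reference to adapting \cite[Prop.~4.2]{MRS} with the fuzzy sum rule \cite[Cor.~4.64]{Pe} in place of \cite[Lem.~2.32]{Mo}, and you have spelled out precisely that adaptation---Fermat rule, fuzzy sum rule in the smooth space, local Lipschitz continuity of the real-valued convex $g$ to bound the approximate dual sequences, weak* compactness, graph closedness of $\p g$, and finally the abstract closedness implication (\ref{seq: cl cnd}) to land $\xi$ in $F_{t+r}(\bar u)$. Your closing paragraph also correctly identifies why global finiteness of $\phi_{t,u_0}$ is essential here, matching the paper's remark preceding the lemma.
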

		
		\begin{proof}
			The proof is a straight-forward adaption of \cite[Prop. 4.2]{MRS} upon replacing \cite[Lem. 2.32]{Mo} in their argument by \cite[Cor. 4.64]{Pe}.
		\end{proof}
		
		\item We offer a sufficient condition for the sum rule if $\phi_{t, u_0}$ is a dissipation potential as above that may be unbounded but continuous at the origin. Suppose that $\EE_t$ has a convex domain $D$, and for every $t \in \cl \left[ 0, T \right)$, the Dini-subderivative $\left( \EE_t \right)'_D(\bar{u}; \cdot)$ is subadditive. Moreover, let $\EE_t$ be $\| \cdot \|_V$-$\om$-Dini-subderivable at $\bar{u}$ in the sense introduced in Theorem \ref{thm: smcnvx} for a time-dependent family of moduli $\om_t \colon V \times V \to \left[ 0, + \i \right]$ such that
		\begin{equation} \label{eq: om cnd}
			\liminf_{h \downarrow 0} \om_t \left( \bar{u} + h v, \bar{u} \right) = 0 \quad \forall (t, v) \in \cl \left[ 0, T \right) \times V.
		\end{equation}
		Then (\ref{eq: sum rule}) is true if $F_t(\bar{u})$ contains the Dini subdifferential. This follows by Corollary \ref{cor: sum rule}.
		
		\item Lemma \ref{lem: ch rl} provides a sufficient condition for the chain rule inequality (\ref{eq: ch ru}) to hold. Further such conditions are contained in \cite{MR, RMS, RS} and may be extended to the present setting. Another chain rule is proposed in \cite[Prop. 2.4]{MRS}. However, we assess that the proof has a gap that currently prevents it from being true in the generality claimed. Cf. our discussion in Subsection \ref{ssec: chain rule} for details. We thank R. Rossi for interesting discussions on this topic.
		
		\item The conclusion in (\ref{eq: ch ru}) implies that the function $t \mapsto \EE_t \left( u(t) \right)$ is of bounded variation. To see this, remember that a signed distribution is a Radon measure. The function need not be absolutely continuous.
		
		\item Lemma \ref{lem: sd clsdnss} gives a sufficient condition for the continuous version of (\ref{seq: cl cnd}).
		
		\item By (\ref{eq: crc}), a sequence $u_n$ such that $\sup_n \left| u_n \right|_V < \i$ and $\sup_n \EE_t(u_n) < \i$ converges in $\ee$ iff it converges in $\dd$ by (\ref{eq: B V cpt}). In particular, the sequential $\ee$-convergence is induced by a Suslin topology on bounded subsets of the sublevel sets of $\EE_t$ and $\GG$ due to (\ref{eq: dd Suslin}). \label{en: rem en 8}
		
	\end{enumerate}
	
	\subsection{Notions of solution}
	
	Our notions of solutions for (\ref{eq: DNI}) are contained in the following
	
	\begin{definition} \label{def: sol}
		Let $\EE \colon \cl \left[ 0, T \right) \times V \to \left( - \i, + \i \right]$ be an energy functional satisfying (\ref{eq: E_0}) and $\phi \colon \cl \left[ 0, T \right) \times D \times V \to \left[0 , + \i \right]$ be a dissipation potential fulfilling (\ref{eq: phi mb}), (\ref{eq: phi* mb}). If the chain rule (\ref{eq: ch ru}) holds, then a triple
		$$
		\left( u, u', \xi \right) \in \bigcap_{T_0 \in \cl \left[ 0, T \right) } L_\i \left( 0, T_0; V \right) \times V_1 \left( 0, T_0; V_\dd \right) \times V_1 \left( 0, T_0; V^* \right)
		$$
		is said to be an energy solution to the generalized gradient system
		$$
		\left( V_\dd, X_{\dd^*}, \EE, \phi, \langle \cdot, \cdot \rangle_{X, V}, F, P \right)
		$$
		if
		\begin{equation} \label{eq: DNL inc}
			\xi(t) \in F_t \left( u(t) \right), \quad \p \phi_{t, u(t) } \left( u'(t) \right) + \xi(t) \ni 0 \quad \text{for a.e. } t \in \left( 0, T \right)
		\end{equation}
		and
		\begin{equation} \label{eq: en id 2}
			\begin{aligned}
				\int_s^t \phi_{r, u(r) } \left( u'(r) \right) + \phi^*_{r, u(r) } \left( - \xi(r) \right) \, dr + \EE_t \left( u(t) \right) \\
				= \EE_s \left( u(s) \right) + \int_s^t P_r \left( u(r), \xi(r) \right) \, dr \quad \forall s, t \in \cl \left[ 0, T \right).
			\end{aligned}
		\end{equation}
		If the chain rule (\ref{eq: ch ru wkr}) holds, then the triple $\left( u, u', \xi \right)$ is said to be a Lyapunov solution to $\left( V_\dd, X_{\dd^*}, \EE, \phi, \langle \cdot, \cdot \rangle_{X, V}, F, P \right)$ if (\ref{eq: DNL inc}) and
		\begin{equation} \label{eq: en id 3}
			\begin{aligned}
				\int_s^t \phi_{r, u(r) } \left( u'(r) \right) + \phi^*_{r, u(r) } \left( - \xi(r) \right) \, dr + \EE_t \left( u(t) \right) \\
				\le \EE_s \left( u(s) \right) + \int_s^t P_r \left( u(r), \xi(r) \right) \, dr \quad \forall s, t \in \cl \left[ 0, T \right) \colon s \le t.
			\end{aligned}
		\end{equation}
		We shortly say that $u$ or $(u, \xi)$ is an energy solution or a Lyapunov solution if there exist $u'$ and $\xi$ as above such that $\left( u, u', \xi \right)$ solves
		$$
		\left( V_\dd, X_{\dd^*}, \EE, \phi, \langle \cdot, \cdot \rangle_{X, V}, F, P \right)
		$$
		in the appropriate sense.
	\end{definition}
	
	\paragraph{Remark on definition.} The notion of a Lyapunov solution shows up in \cite{MR} for gradient flows, where the idea is attributed to considerations by Luckhaus \cite{Lu} for a particular model. The possibility of introducing the concept to generalized gradient flows is mentioned in \cite{MRS}.
	
	\subsection{Example for the chain rule assumption} \label{ssec: chain rule}
	
	In this subsection, we present a class of examples that satisfy the assumption of the chain rule inequality. The following Lemma \ref{lem: ch rl} generalizes a result by A. Mielke, R. Rossi, and G. Savaré \cite[Prop. 2.4]{MRS}. We also address a gap in the original proof, which made use of an analogy to the metric theory in \cite[Thm. 1.2.5]{AGS}. While this argument is valid when the dissipation potential is homogeneous, we propose a fix that adheres more closely to the original assumption. This fix allows us to recover the entire chain rule identity \cite[Prop. 2.4]{MRS} in the case of an even dissipation potential. In general, our assumptions guarantee only a one-sided control, which may lead to a genuine inequality. However, this poses no issue when applying the result to generalized gradient flows. In this section, we consider a (separable) Banach space $U$ as our underlying framework. To begin our investigation, we present two preliminary results.
	
	\begin{proposition} \label{pr: gnrlzd Schff}
		If $\mu$ is a measure and $f_n, f$ are non-negative measurable functions such that $f \le \liminf_n f_n$ a.e. and $\lim_n \| f_n \|_{L_1} = \| f \|_{L_1} < \i$, then $\lim_n \| f - f_n \|_{L_1} = 0$.
	\end{proposition}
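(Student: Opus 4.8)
The plan is to reduce this to the classical Scheffé lemma via a truncation argument together with Fatou's lemma. Set $g_n = \min\{f_n, f\}$. Then $0 \le g_n \le f$, and since $f \le \liminf_n f_n$ almost everywhere, we have $g_n \to f$ almost everywhere (at a point where $f(\om) < \i$ and $f(\om) \le \liminf_n f_n(\om)$, eventually $f_n(\om)$ exceeds any number below $f(\om)$, so $\liminf_n g_n(\om) \ge f(\om)$, while $g_n(\om) \le f(\om)$ always; the set $\{f = \i\}$ is null because $\|f\|_{L_1} < \i$). By dominated convergence with dominating function $f \in L_1$, we get $\lim_n \|f - g_n\|_{L_1} = 0$, and in particular $\lim_n \|g_n\|_{L_1} = \|f\|_{L_1}$.

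Next I would exploit the elementary identity $f_n - g_n = (f_n - f)^+ = f_n - \min\{f_n,f\}$, together with $f_n = g_n + (f_n - g_n)$, to write
\begin{equation*}
	\|f_n - f\|_{L_1} \le \|f_n - g_n\|_{L_1} + \|g_n - f\|_{L_1}.
\end{equation*}
The second term tends to $0$ by the previous step, so it remains to show $\|f_n - g_n\|_{L_1} \to 0$. But $\|f_n - g_n\|_{L_1} = \|f_n\|_{L_1} - \|g_n\|_{L_1}$ because $f_n - g_n \ge 0$, and the right-hand side converges to $\|f\|_{L_1} - \|f\|_{L_1} = 0$ using the hypothesis $\|f_n\|_{L_1} \to \|f\|_{L_1}$ and the limit $\|g_n\|_{L_1} \to \|f\|_{L_1}$ established above. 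This closes the argument.

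The only genuinely delicate point is the almost-everywhere convergence $g_n \to f$, which rests on the interplay between the pointwise $\liminf$ hypothesis and the finiteness of $\|f\|_{L_1}$ (to discard $\{f = \i\}$); everything else is bookkeeping with non-negativity and the triangle inequality. One should be slightly careful that $f_n, f$ are only assumed measurable and non-negative, not integrable a priori — but $f \in L_1$ is given, $g_n \le f$ forces $g_n \in L_1$, and the final identity $\|f_n - g_n\|_{L_1} = \|f_n\|_{L_1} - \|g_n\|_{L_1}$ makes sense in $[0,\i]$ with the convergence of the right side forcing finiteness of the left for large $n$. No measure-theoretic subtlety beyond this arises, so I expect the proof to be short.
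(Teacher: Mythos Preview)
Your proof is correct and is essentially the same argument as the paper's, just unpacked: the paper applies Fatou in one line to the nonnegative sequence $f + f_n - |f - f_n| = 2\min\{f,f_n\} = 2g_n$, which immediately yields $2\int f \le 2\int f - \limsup_n \|f - f_n\|_{L_1}$. Your dominated-convergence step $\|g_n\|_{L_1}\to\|f\|_{L_1}$ and the split $\|f_n - f\|_{L_1} = (\|f_n\|_{L_1} - \|g_n\|_{L_1}) + \|f - g_n\|_{L_1}$ (note this is actually an equality, since $|f_n - f| = (f_n - g_n) + (f - g_n)$) are exactly this computation rewritten term by term.
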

	
	\begin{proof}
		By the Fatou lemma,
		\begin{equation*}
			2 \int f \, d \mu
			\le \liminf_n \int f + f_n - \left| f - f_n \right| \, d \mu
			= 2 \int f \, d \mu - \limsup_n \| f - f_n \|_{L_1}. \qedhere
		\end{equation*}
	\end{proof}
	
	\begin{proposition} \label{pr: dq prprts}
		Let $a, b \in \R$, $\XX$ be a Banach space and $u \in W^{1, 1} \left( a, b; \XX \right)$. Suppose there exists a convex, lower semicontinuous function $\psi \colon \XX \mapsto \left[0, \i \right]$ such that $\psi(0) = 0$ and
		$$
		\int_a^b \psi \left( u'(t) \right) \, d t < \i.
		$$
		Consider $u'$ extended to all of $\R$ by zero. Then, for all $h \in \R$, there holds
		\begin{equation} \label{eq: dq dmntn}
			\int_a^b \psi \left( \tfrac{u(t + h) - u(t)}{h} \right) \, dt \le \int_a^b \psi \left( u'(t) \right) \, dt.
		\end{equation}
		In particular, the integrand on the left-hand side of (\ref{eq: dq dmntn}) converges to the integrand on the right-hand side strongly in $L_1 \left(a, b \right)$ as $h \downarrow 0$.
	\end{proposition}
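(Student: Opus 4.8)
The plan is to reduce everything to the elementary identity that a difference quotient of $u$ is a running average of $u'$, and then to exploit convexity through Jensen's inequality. First I would pass to the absolutely continuous representative of $u$ on $[a,b]$ and extend it consistently with the convention $u'\equiv 0$ off $(a,b)$, i.e.\ constantly outside $[a,b]$, so that the extended $u$ is absolutely continuous on every bounded interval with derivative the zero-extended $u'$. The fundamental theorem of calculus in $\XX$ then gives, for $t\in(a,b)$ and $h>0$,
\[
\frac{u(t+h)-u(t)}{h} = \frac1h\int_0^h u'(t+r)\,dr ,
\]
and the symmetric $\frac{u(t+h)-u(t)}{h}=\frac1{|h|}\int_0^{|h|}u'(t-r)\,dr$ when $h<0$. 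Since $\psi\in\Gamma(\XX)$ is proper (because $\psi(0)=0$), Fenchel--Moreau gives $\psi=\psi^{**}$; evaluating each affine minorant $x\mapsto\langle\ell,x\rangle-\psi^*(\ell)$, $\ell\in\dom(\psi^*)$, at the average, using linearity of the Bochner integral, and passing to the supremum over $\ell$ yields the integral Jensen inequality $\psi\big(\tfrac{u(t+h)-u(t)}{h}\big)\le\frac1h\int_0^h\psi(u'(t+r))\,dr$ for a.e.\ $t\in(a,b)$, where the right-hand side may well be $+\infty$, which is harmless.

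To derive (\ref{eq: dq dmntn}) I would integrate this over $t\in(a,b)$ and apply Tonelli's theorem, noting that $(t,r)\mapsto\psi(u'(t+r))$ is non-negative and product-measurable, being the Borel function $\psi$ composed with the strongly measurable shift $(t,r)\mapsto u'(t+r)$ (routine, via approximation of $u'$ by simple functions). Translation invariance of Lebesgue measure then gives
\[
\int_a^b\psi\Big(\tfrac{u(t+h)-u(t)}{h}\Big)\,dt \le \frac1h\int_0^h\int_{a+r}^{b+r}\psi(u'(s))\,ds\,dr ,
\]
and since $\psi(0)=0$ forces $s\mapsto\psi(u'(s))$ to vanish off $(a,b)$, each inner integral is at most $\int_a^b\psi(u'(s))\,ds$; (\ref{eq: dq dmntn}) follows, the case $h<0$ being identical.

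For the $L_1$-convergence as $h\downarrow 0$ I would combine this domination with a pointwise lower bound and the generalized Scheff\'e Proposition \ref{pr: gnrlzd Schff}. By the Lebesgue differentiation theorem for Bochner integrable functions, $\frac1h\int_0^h u'(t+r)\,dr\to u'(t)$ in $\XX$ for a.e.\ $t\in(a,b)$, so lower semicontinuity of $\psi$ yields $\psi(u'(t))\le\liminf_{h\downarrow 0}\psi\big(\tfrac{u(t+h)-u(t)}{h}\big)$ a.e. Fatou's lemma then gives $\int_a^b\psi(u')\,dt\le\liminf_{h\downarrow 0}\int_a^b\psi(\cdot)\,dt$, while (\ref{eq: dq dmntn}) supplies the matching bound for the $\limsup$, so the integrals converge to $\int_a^b\psi(u')\,dt<\infty$. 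Applying Proposition \ref{pr: gnrlzd Schff} along an arbitrary sequence $h_n\downarrow 0$ with $f_n=\psi\big(\tfrac{u(\cdot+h_n)-u(\cdot)}{h_n}\big)$ and $f=\psi(u'(\cdot))$ delivers $\|f-f_n\|_{L_1}\to 0$, which, the sequence being arbitrary, is the asserted strong $L_1(a,b)$-convergence.

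The individual steps are routine, so I expect the care to go into three spots: fixing the extension of $u$ so the running-average identity holds verbatim near the endpoint $b$ and for large $|h|$; justifying Jensen's inequality for an \emph{extended-real-valued} lower semicontinuous convex $\psi$, which is why the duality $\psi=\psi^{**}$ is the natural vehicle rather than a direct integral-convexity argument; and the product-measurability of $(t,r)\mapsto\psi(u'(t+r))$ needed to invoke Tonelli.
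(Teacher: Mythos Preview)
Your proposal is correct and follows essentially the same route as the paper: represent the difference quotient as an average of $u'$, apply Jensen's inequality, integrate and use Fubini/Tonelli together with $\psi(0)=0$ to obtain (\ref{eq: dq dmntn}), and then combine the resulting upper bound with pointwise lower semicontinuity and Proposition~\ref{pr: gnrlzd Schff} for the $L_1$ convergence. You are somewhat more explicit than the paper in justifying Jensen for an extended-real-valued $\psi$ via $\psi=\psi^{**}$ and in invoking the Lebesgue differentiation theorem for the a.e.\ convergence of the difference quotients, whereas the paper simply cites the strong $L_1$ convergence of difference quotients in $W^{1,1}$; both paths are valid and lead to the same conclusion.
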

	
	\begin{proof}
		By the Jensen inequality,
		$$
		\psi \left( \tfrac{u(t + h) - u(t)}{h} \right) \le \dashint_t^{t + h} \psi \left( u'(s) \right) \, ds = \dashint_0^h \psi(u'(s + t)) \, ds.
		$$
		Integrating over $\left( a, b \right)$ and invoking the Fubini theorem yields
		$$
		\int_a^b \psi \left( \tfrac{u(t + h) - u(t)}{h} \right) \, dt \le \int_a^b \psi \left( u'(t) \right) \, dt
		$$
		because $u'$ vanishes outside of $\left( a, b \right)$. As the difference quotient of $u$ converges to $u'$ strongly in $L_1 \left( a, b; \XX \right)$, we conclude the addendum by lower semicontinuity of $\psi$ and Proposition \ref{pr: gnrlzd Schff}.
	\end{proof}
	
	Let $\phi \in \Gamma(U)$ be a non-negative function satisfying
	\begin{equation} \label{eq: cnt crcv}
		\lim_{\| u \| \downarrow 0} \phi(u) = 0; \qquad \lim_{\| u \| \uparrow \i} \phi(u) = + \i.
	\end{equation}
	The conjugate $\phi^* \in \Gamma \left( U^* \right)$ also possesses these properties by Lemma \ref{lem: gen iff conj}. We introduce the functionals
	\begin{align*}
		P_\phi(u) & \coloneqq \inf \left\{ \alpha > 0 \st \int_0^T \phi \left( \alpha^{-1} u(t) \right) \, dt \le 1 \right\}, \\
		Q_{\phi^*}(v) & \coloneqq \inf \left\{ \alpha > 0 \st \int_0^T \phi^* \left( \alpha^{-1} v(t) \right) \, dt \le 1 \right\},
	\end{align*}
	for strongly measurable curves $u \colon \left( 0, T \right) \to U$ and weak* measurable ones $v \colon \left( 0, T \right) \to U^*$. The function $P_\phi$ is the Minkowski functional of the $1$-sublevel set of the integral functional
	$$
	u \mapsto \int_0^T \phi \left( u(t) \right) \, dt, \quad u \in \LL_0 \left( 0, T; U \right).
	$$
	Therefore, $P_\phi(u) < + \i$ if and only if there exists $\alpha > 0$ such that
	\begin{equation} \label{eq: fnt}
		\int_0^T \phi \left( \alpha^{-1} u(t) \right) \, dt < + \i.
	\end{equation}
	In particular, if $P_\phi(u) < + \i$, then (\ref{eq: fnt}) is true whenever $\alpha > 0$ is sufficiently large. An analogous comment applies for the function $Q_{\phi^*}$.
	
	\begin{lemma} \label{lem: ch rl}
		Let $\phi \colon U \to \left[ 0, \i \right]$ and $\phi^* \colon U^* \to \left[ 0, \i \right]$ be as described above. Given for $T > 0$ an energy $\EE_t \colon \left[ 0, T \right] \times U \to \left[ 0, + \i \right]$ as in Section \ref{sec: Setting and Assumptions}, let $u \in W^{1, 1} \left( 0, T ; U \right)$ and $\xi \in V_1 \left( 0, T; U^* \right)$ satisfy
		\begin{gather}
			\sup_{0 \le t \le T} \EE_t(u(t) ) < \i, \label{eq: nrm fnt 1} \\
			P_\phi \left( u' \right) + Q_{\phi^*} \left( - \xi \right) < \i. \label{eq: nrm fnt 2}
		\end{gather}
		Asssume that, for every $R > 0$ and $D_R = \left\{ \GG_T(u) \le R \right\}$, there exists a map $\om_R \colon \left[ 0, T \right] \times D_R \times D_R \to \left[ 0, \i \right]$ and a null set $N \subset \left( 0, T \right)$ such that, for all $t \in \left( 0, T \right) \setminus N$, there holds
		\begin{equation} \label{eq: lwr tylr xpsn}
			\begin{gathered}
				\left( t, u, v \right) \mapsto \om_{t, R} \left( u, v \right) \text{ is locally bounded above on } \left\{ u = v \right\}; \\
				\EE_t(v) - \EE_t(u) -\langle \xi(t), v - u \rangle \ge - \om_{t, R}(u, v) \| v - u \| \quad \forall u, v \in D_R.
			\end{gathered}
		\end{equation}
		Then, the distributional derivative $\frac{d}{dt} \EE_t \left( u(t) \right)$ is a Radon measure on $\left( 0, T \right)$, whose negative part has an integrable Lebesgue density. Furthermore, if $\xi(t)$ belongs to the Dini-Hadamard subdifferential $\p_H \EE_t(u(t) )$ for a.e. $t \in \left( 0, T\right)$, then
		\begin{equation} \label{eq: ch ru ineq}
			\frac{d}{dt} \EE_t \left( u(t) \right) \ge \langle \xi(t), u'(t) \rangle + \p_t \EE_t \left( u(t) \right) \text{ in } \DD' \left( 0, T \right).
		\end{equation}
		Moreover, if $\phi$ or equivalently $\phi^*$ is even, then $\frac{d}{dt} \EE_t \left( u(t) \right)$ belongs to $L_1 \left( 0, T \right)$. If, in addition, $\xi(t)$ belongs to the Fréchet subdifferential $\p_F \EE_t(u(t) )$ for a.e. $t \in \left( 0, T\right)$, then (\ref{eq: ch ru ineq}) holds with equality.
	\end{lemma}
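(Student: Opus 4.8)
The plan is to study the bounded, $\LL_I$-measurable function $e(t):=\EE_t(u(t))$: it is bounded because $0\le e\le R$ where, by (\ref{eq: nrm fnt 1}) together with (\ref{eq: Gron cons}) (equivalently (\ref{eq: G controlled})), one can choose $R<\i$ so that $\GG_T(u(t))\le R$ for every $t$; hence $u(t)\in D_R$ for all $t$ and I fix the corresponding $\om_R$ and null set $N$. Since $e\in L_\infty(0,T)$ the distributional derivative $\frac{d}{dt}e$ is well defined and the difference quotients $h^{-1}(e(\cdot+h)-e(\cdot))$ converge to it in $\DD'(0,T)$, so the task reduces to bounding these quotients from below (and, in the even case, also from above) by integrands admitting a limit. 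The tool for the limit passage is an equi-integrability package: picking $\alpha,\beta>0$ with $\int_0^T\phi(\alpha^{-1}u'(t))\,dt\le1$ and $\int_0^T\phi^*(\beta^{-1}(-\xi(t)))\,dt\le1$ (possible by (\ref{eq: nrm fnt 2}) and the comments preceding the lemma), Proposition \ref{pr: dq prprts} applied with $\psi=\phi(\alpha^{-1}\cdot)$ shows the difference quotients $v_h:=h^{-1}(u(\cdot+h)-u(\cdot))$ satisfy $\int_0^T\phi(\alpha^{-1}v_h)\,dt\le1$ and $\phi(\alpha^{-1}v_h)\to\phi(\alpha^{-1}u')$ in $L_1(0,T)$; together with Young's inequality this controls $\langle-\xi(\cdot),v_h\rangle$ by an $L_1$-convergent, hence equi-integrable, family, while $v_h\to u'$ in $L_1(0,T;U)$ (whence $\|v_h(\cdot)\|\to\|u'(\cdot)\|$ in $L_1(0,T)$) and $h^{-1}(\EE_{t+h}(u(t))-\EE_t(u(t)))\ge-R\,h^{-1}\int_t^{t+h}f_1$ with $t\mapsto h^{-1}\int_t^{t+h}f_1(s)\,ds\to f_1$ in $L^1_{\mathrm{loc}}$ by (\ref{eq: absolute  continuity}). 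Finally, continuity of $u$ on $[0,T]$, local boundedness of $\om_{\cdot,R}$ on the diagonal, compactness of $\{(t,u(t),u(t)):t\in[0,T]\}$, and absolute continuity of $s\mapsto\int_0^s\|u'\|$ produce $M>0$ with $\om_{t,R}(u(t),u(t+h))\le M$ and $\om_{t,R}(u(t+h),u(t))\le M$ for all $t$ once $h$ is small.

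For the first assertion, fix $0\le\eta\in\DD(0,T)$. For $t\notin N$, the lower Taylor expansion (\ref{eq: lwr tylr xpsn}) with base point $u(t)$ and $v=u(t+h)\in D_R$ gives $h^{-1}(\EE_t(u(t+h))-\EE_t(u(t)))\ge\langle\xi(t),v_h(t)\rangle-M\|v_h(t)\|$. Splitting $e(t+h)-e(t)=[\EE_t(u(t+h))-\EE_t(u(t))]+[\EE_{t+h}(u(t))-\EE_t(u(t))]$, testing with $\eta$, integrating, and letting $h\downarrow0$ by Fatou's lemma — applied to the first bracket via the equi-integrable lower bound above and $v_h\to u'$ a.e., and to the second bracket, whose pointwise lower limit is by definition $\p_t\EE_t(u(t))$, a.e.\ finite with $\p_t\EE_t(u(t))\ge-Rf_1(t)$ by (\ref{eq: absolute  continuity}) — yields $\langle\frac{d}{dt}e,\eta\rangle\ge\int_0^T g\eta\,dt$ with $g:=\langle\xi(\cdot),u'\rangle-M\|u'\|+\p_t\EE_\cdot(u(\cdot))$. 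Here $g^-\in L_1(0,T)$ by the same domination, and $g^+\in L^1_{\mathrm{loc}}$ is then forced (otherwise $\langle\frac{d}{dt}e,\eta\rangle=+\i$ for some $\eta$, impossible since $e\in L_\infty$). Hence $\frac{d}{dt}e-g\,dt$ is a nonnegative distribution, so a nonnegative Radon measure, making $\frac{d}{dt}e$ a signed Radon measure whose negative part is $\le g^-\,dt$, i.e.\ has the integrable density $g^-$.

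If $\xi(t)\in\p_H\EE_t(u(t))$ for a.e.\ $t$, then since $v_h(t)\to u'(t)$ the Dini--Hadamard subdifferential inequality gives $\liminf_{h\downarrow0}h^{-1}(\EE_t(u(t+h))-\EE_t(u(t)))\ge\langle\xi(t),u'(t)\rangle$ a.e.; using this inside the Fatou step for the first bracket (the lower Taylor expansion then serves only for its equi-integrable lower bound) deletes the $M\|u'\|$ term and produces (\ref{eq: ch ru ineq}) in $\DD'(0,T)$. If in addition $\phi$ — equivalently $\phi^*$, by Lemma \ref{lem: gen iff conj} — is even, then $\int_0^T\phi^*(\beta^{-1}\xi(t))\,dt\le1$, so Young's inequality controls $\langle\xi(\cdot),v_h\rangle$ from \emph{both} sides by an equi-integrable family; invoking also (\ref{eq: lwr tylr xpsn}) with base point $u(t+h)$ (still with the fixed $\xi(t)$), a reverse-Fatou argument gives $\frac{d}{dt}e\le G\,dt$ for some $G\in L_1(0,T)$, and combined with $\frac{d}{dt}e\ge g\,dt$ the Radon--Nikodym theorem yields $\frac{d}{dt}e\in L_1(0,T)$. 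Finally, if $\xi(t)\in\p_F\EE_t(u(t))$ a.e., the Fréchet subgradient inequality at the \emph{later} point, $\EE_{t+h}(u(t+h))-\EE_{t+h}(u(t))\le\langle\xi(t+h),u(t+h)-u(t)\rangle+o(\|u(t+h)-u(t)\|)$ (legitimate since $\xi(t+h)\in\p_F\EE_{t+h}(u(t+h))$), has a remainder vanishing after division by $h$; passing to the limit in $h^{-1}\langle\xi(t+h),u(t+h)-u(t)\rangle$ through the substitution $s=t+h$ (so that backward difference quotients of $u$, controlled by Proposition \ref{pr: dq prprts} with $h<0$, appear) sharpens the upper bound to $\frac{d}{dt}e\le\langle\xi,u'\rangle+\p_t\EE_\cdot(u(\cdot))$, which with (\ref{eq: ch ru ineq}) gives equality.

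The delicate point throughout is the limit $h\downarrow0$ in the difference quotients, where the space increment sits at the fixed base point $u(t)$ but along the moving direction $v_h(t)$, and where — for the sharp time constant $\p_t\EE_t(u(t))$ — one must differentiate $s\mapsto\EE_s(u(t))$ at a.e.\ $t$ even though $u(t)$ varies with $t$; this is exactly why the equi-integrability package built from Proposition \ref{pr: dq prprts} and Young's inequality is needed, and why (\ref{eq: absolute  continuity}) and (\ref{eq: radially time differentiable}) carry the specific multiplicative structure. Checking that, in the Fréchet case, the $o(\cdot)$ remainder and the time-increment error together leave no gap between the upper and lower bounds is the second place demanding care.
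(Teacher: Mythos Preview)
Your overall strategy is the paper's: decompose $e(t+h)-e(t)$ into space and time increments, control the space part from below via (\ref{eq: lwr tylr xpsn}) and the Fenchel--Young inequality together with Proposition~\ref{pr: dq prprts}, control the time part via (\ref{eq: absolute  continuity}), and pass to the limit with Fatou against an $L_1$-convergent minorant. The Dini--Hadamard refinement, the even/$L_1$ upgrade, and the Fr\'echet equality follow the same route.

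There is, however, a genuine error in your decomposition. You write
\[
e(t+h)-e(t)=[\EE_t(u(t+h))-\EE_t(u(t))]+[\EE_{t+h}(u(t))-\EE_t(u(t))],
\]
but the right-hand side equals $\EE_t(u(t+h))+\EE_{t+h}(u(t))-2\EE_t(u(t))$, not $\EE_{t+h}(u(t+h))-\EE_t(u(t))$. The split that makes your subsequent argument work (space increment at time $t$, matching $\xi(t)$ in (\ref{eq: lwr tylr xpsn})) is
\[
e(t+h)-e(t)=\bigl[\EE_t(u(t+h))-\EE_t(u(t))\bigr]+\bigl[\EE_{t+h}(u(t+h))-\EE_t(u(t+h))\bigr].
\]
With this correction the second bracket is the time increment at $u(t+h)$, not at $u(t)$; your claim that its pointwise lower limit is $\p_t\EE_t(u(t))$ therefore needs an extra step. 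The paper handles this by writing the time increment as the average $\dashint_t^{t+h}\p_r\EE_r(\cdot)\,dr$ and passing to the limit under the test-function integral, where the uniform bound from (\ref{eq: absolute  continuity}) and (\ref{eq: G controlled}) suffices.

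For the Fr\'echet case your route via $\xi(t+h)\in\p_F\EE_{t+h}(u(t+h))$ and the substitution $s=t+h$ is a legitimate alternative to the paper's backward-quotient argument (the paper keeps the base point fixed at $u(t)$ and takes $h<0$, so the Fr\'echet remainder $o_t(\cdot)$ sits at a fixed point from the outset). After your substitution the remainder $o_s(\|u(s)-u(s-h)\|)/h$ does vanish pointwise a.e., and since the even case already gives $\tfrac{d}{dt}e\in L_1(0,T)$, the forward difference quotients converge to $\tfrac{d}{dt}e$ in $L_1$, hence a.e.\ along a subsequence; comparing with the a.e.\ limit of your majorant yields the upper bound without needing to dominate the $o_s$ term separately. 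You should make this last point explicit, because as written ``passing to the limit'' suggests a dominated-convergence step for which no dominator is supplied.
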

	
	\begin{proof}
		Let $0 \le s, t \le T$. By (\ref{eq: G controlled}) and (\ref{eq: nrm fnt 1}), there exists $R > 0$ such that $\GG(u(t) ) \le R$ for all $0 \le t \le T$. To prove that the function $t \mapsto \EE_t(u(t) )$ has a Radon measure derivative with the claimed properties, observe first by (\ref{eq: absolute  continuity}), (\ref{eq: G controlled}) and (\ref{eq: nrm fnt 1}) that
		\begin{equation} \label{eq: obs 1}
			\begin{gathered}
				\EE_t(u(t) ) - \EE_s(u(t) ) = \int_s^t \p_t \EE_r (u(t) ) \, dr, \\
				\left| \int_s^t \p_t \EE_r (u(t) ) \, dr \right| \le \GG(u(t) ) \exp \left( \int_{\left[ s, t \right] } f_1(r) \, dr \right)
				\\
				\le R \exp \left( \int_{\left[ s, t \right] } f_1(r) \, dr \right).
			\end{gathered}
		\end{equation}
		Second, in view of (\ref{eq: lwr tylr xpsn}), for $\e > 0$, there holds
		\begin{equation} \label{eq: obs 2}
			\begin{aligned}
				\tfrac{\EE_s(u(t) ) - \EE_s(u(s) )}{\left| t - s \right|}
				& \ge \left\langle \xi(s), \tfrac{u(t) - u(s)}{\left| t - s \right|} \right\rangle - \om_{s, R} \left( u(s), u(t) \right) \left| \tfrac{u(t) - u(s)}{\left| t - s \right|} \right| \\
				& \ge - \tfrac{1}{\e^2} \left\{ \phi^* \left( - \e \xi(s) \right) + \phi \left( \e \tfrac{u(t) - u(s)}{\left| t - s \right|} \right) \right\} \\
				& - \om_{s, R} \left( u(s), u(t) \right) \left| \tfrac{u(t) - u(s)}{\left| t - s \right|} \right|.
			\end{aligned}
		\end{equation}
		Putting (\ref{eq: obs 1}), (\ref{eq: obs 2}) together yields for $h > 0$
		\begin{equation} \label{eq: ntgrbl mnrnt}
			\begin{aligned}
				\tfrac{\EE_{t + h}(u(t + h) ) - \EE_t(u(t) )}{h}
				& \ge \dashint_t^{t + h} \p_t \EE_r (u(t) ) \, dr \\
				& - \tfrac{1}{\e^2} \left\{ \phi^* \left( - \e \xi(t) \right) + \phi \left( \e \tfrac{u(t + h) - u(t)}{h} \right) \right\} \\
				& - \om_{t, R} \left( u(t), u(t + h) \right) \left| \tfrac{u(t + h) - u(t)}{h} \right|.
			\end{aligned}
		\end{equation}
		We claim that, choosing $\e > 0$ sufficiently small, the first difference quotient in (\ref{eq: ntgrbl mnrnt}) has an $L_1(0, T)$-convergent minorant. First, the average integral has an $L_1(0, T)$-convergent majorant by the second line of (\ref{eq: obs 1}). Next, invoking Proposition \ref{pr: dq prprts} and (\ref{eq: nrm fnt 2}), we see that the third term in (\ref{eq: ntgrbl mnrnt}) converges in $L_1(0, T)$. Furthermore, $\om_{t, R} \left( u(t), u(t + h) \right) \le C$ for all $\left| h \right|$ sufficiently small by the first of (\ref{eq: lwr tylr xpsn}), so that the fourth term has an $L_1(0, T)$-convergent minorant. Consequently, extracting a subsequence $h_n \downarrow 0$ along which there exists a weak limit
		$$
		\lim_n \left( \tfrac{\EE_{t + h_n}(u(t + h_n) ) - \EE_t(u(t) )}{h_n} \right)^- \eqqcolon E'_{-}(t) \in L_1 \left( 0, T \right),
		$$
		we find
		\begin{equation} \label{eq: nng dstr}
			\frac{d}{dt} \EE_t(u(t) ) + E'_{-}(t) \ge 0 \quad \text{in } \DD' \left( 0, T \right).
		\end{equation}
		Therefore, $\frac{d}{dt} \EE_t(u(t) )$ is a signed distribution hence a Radon measure by \cite[pp. 28-29]{LSch}. As taking the positive part of measures is order preserving, (\ref{eq: nng dstr}) implies that
		$$
		E'_{-}(t) \ge \left\{ \frac{d}{dt} \EE_t(u(t) ) \right\}^- \ge 0 \quad \text{in } \DD' \left( 0, T \right),
		$$
		whence the negative part has an integrable Lebesgue density by the Radon-Nikodym theorem \cite[Thm. 1.101]{FoLe}.
		
		(\ref{eq: ch ru ineq}): Let $\varphi \in C_c \left( 0, T \right)$ with $\varphi \ge 0$ and observe that, by a suitable variant of the Fatou lemma with a convergent minorant of the a.e. converging integrand and by the definition of $\p_H$, there holds
		\begin{align*}
			\lim_{h \downarrow 0} \int_0^T \tfrac{\EE_t(u(t + h) ) - \EE_t(u(t) )}{h} \cdot \varphi(t) \, dt
			& \ge \int_0^T \liminf_{h \downarrow 0} \tfrac{\EE_{t + h}(u(t + h) ) - \EE_t(u(t) )}{h} \cdot \varphi(t) \, dt \\
			& \ge \int_0^T \left( \EE_t \right)'_H \left( u(t); u'(t) \right) \cdot \varphi(t) \, dt \\
			& \ge \int_0^T \langle \xi(t), u'(t) \rangle \cdot \varphi(t) \, dt,
		\end{align*}
		where we used for the second inequality that $u$ is weakly differentiable so that $u(t + h) = u(t) + u'(t) h + o(h)$ for a.e. $t \in \left( 0, T \right)$. We obtain (\ref{eq: ch ru ineq}) from this upon noting that
		$$
		\lim_{h \downarrow 0} \int_0^T \dashint_t^{t + h} \p_t \EE_r \left( u(t) \right) \cdot \varphi(t) \, dr \, dt = \int_0^T \p_t \EE_t \left( u(t) \right) \cdot \varphi(t) \, dt.
		$$
		
		Addendum if $\phi$ is even: In this case, (\ref{eq: obs 2}) implies that, for $h < 0$, we have
		\begin{equation} \label{eq: rv nqlty}
			\begin{aligned}
				\tfrac{\EE_{t + h}(u(t + h) ) - \EE_t(u(t) )}{h}
				& \le \dashint_{t + h}^t \p_t \EE_r (u(r) ) \, dr + \langle \xi(t), \tfrac{u(t + h) - u(t)}{h} \rangle \\
				& \le \dashint_{t + h}^t \p_t \EE_r (u(r) ) \, dr \\
				& + \tfrac{1}{\e^2} \left\{ \phi^* \left( \e \xi(t) \right) + \phi \left( \e \tfrac{u(t + h) - u(t)}{h} \right) \right\} \\
				& + \om_{t, R} \left( u(t), u(t + h) \right) \left| \tfrac{u(t + h) - u(t)}{h} \right|
			\end{aligned}
		\end{equation}
		so that the difference quotient has an integrable majorant hence converges strongly in $L_1 \left( 0, T \right)$ to the distributional derivative $\frac{d}{dt} \EE_t(u(t) )$. If moreover $\xi(t) \in \p_F \EE_t(u(t) )$ for a.e. $t \in \left( 0, T \right)$, then, by the first line of (\ref{eq: rv nqlty}), the difference quotient is majorized by a function that converges a.e. to the right-hand side of (\ref{eq: obs 2}), hence equality obtains.
	\end{proof}
	
	\paragraph{Remark on Lemma \ref{lem: ch rl}}
	
	\begin{enumerate}
		
		\item Besides extending \cite[Prop. 2.4]{MRS} to non-reflexive state spaces and more general subdifferentials, Lemma \ref{lem: ch rl} relaxes the condition on the error term of subdifferentiability, which need not vanish uniformly. Our application in Section \ref{sec: Application} provides an example of how the latter is a more natural assumption than continuous differentiability if $V$ is an Orlicz space that is not reflexive but dual to a separable Orlicz class.
		
		\item By considering (\ref{eq: ch ru ineq}) in $\DD'(0, T)$ instead of in an almost everywhere sense, we manage to obtain energy solutions in the sense of Definition \ref{def: sol} even if the function $t \mapsto \EE_t \left( u(t) \right)$ is only known to be of bounded variation initially. Of course, once an energy solution has been found, equality obtains in (\ref{eq: ch ru ineq}) so that $t \mapsto \EE_t \left( u(t) \right)$ will be absolutely continuous.
		
		\item In the case where $u'$ in Lemma \ref{lem: ch rl} exists only as a weak* measurable $U^{**}$-valued function and $\xi$ is strongly measurable so that $t \mapsto \langle \xi(t), u'(t) \rangle$ is measurable, a suitable adaptation of the lemma still holds. In this scenario, we can establish that $u(t + h) = u(t) + u'(t)h + o(h)$ for almost every $t \in \left( 0, T \right)$ remains valid when interpreting $o(h)$ in the Mackey topology $\mm \left( U^{**}, U^* \right)$.
		Hence, if we consider the energy $\EE$ extended to $U^{**}$ by setting it equal to $+ \i$ outside of $U$, the natural choice for the subdifferential to replace $\p_H$ in this situation is the one associated with the bornology of Mackey compact sets. For more information on the notion of a bornological subdifferential, we refer the interested reader to \cite[§4.1.6]{Pe}.
		
	\end{enumerate}
	
	\section{Approximation} \label{sec: Approximation}
	
	\subsection{Approximation scheme}
	
	In this section, we set up the minimizing movements scheme to approximate solutions of (\ref{eq: DNI}). Particular care is required in handling the dissipation potentials that are merely measurable with respect to the time variable, which we overcome by means of a uniform Lusin result guaranteeing joint lower semicontinuity on a sequence of exhausting sets. Let $\tau > 0$. By Lemma \ref{lem: unif Lus}, we find an increasing sequence of closed sets $A_n \subset \cl \left[ 0, T \right)$ such that $\lim_n \lambda \left( \cl \left[ 0, T \right) \setminus A_n \right) = 0$ and the dissipation functionals $\phi$ and $\phi^*$ are sequentially lower semicontinuous when restricted to any $A_n$ in their first component and to any bounded subset of any $\left\{ \GG_{T_0}(u)  \le R \right\}$ for $R > 0$ in their second component. Here, we used Remark \ref{en: rem en 8} of Subsection \ref{ss:enf}. We define the partition
	$$
	\PP_\tau = \left\{ 0 = t_0 < t_1 < t_2 < \dots \right\}
	$$
	as follows: Let $n$ be the smallest natural number such that
	\begin{equation} \label{eq: smll rst}
		\lambda \left( \cl \left[ 0, T \right) \setminus A_n \right) < \tau.
	\end{equation}
	For $1 \le k \le n$, we define the partition $B_k = \left\{ 0 = t^k_0 < t^k_1 < \dots \right\}$ of $\cl \left[ 0, T \right)$ by defining inductively $t^k_\ell$ as the smallest element of the closed set
	$$
	A_k \setminus \left[ 0, t^k_{\ell - 1} + \tau \right).
	$$
	Finally, we set
	$$
	\PP_\tau = \bigcup_{k = 1}^n B_k.
	$$
	By construction, there holds
	\begin{equation} \label{eq: tau grd}
		\forall t \in \cl \left[ 0, T \right) \quad \dist \left( t, \PP_\tau \right) < 2 \tau
	\end{equation}
	by (\ref{eq: smll rst}). We introduce the constant interpolants $\overline{t}_\tau$ and $\underline{t}_\tau$ associated with the partition $\PP_\tau$, that is,
	\begin{align*}
		& \overline{t}_\tau(0) = \underline{t}_\tau(0) = 0; \\
		& \underline{t}_\tau(t) =
		\begin{cases}
			t^k_i & \text{if } t \in \left[ t^k_i, t^k_{i + 1} \right) \text{for } i = \Argmin \left\{ j \in \N \st t \in A_j \right\}, \\
			t^n_i & \text{if } t \in \left[ t^n_i, t^n_{i + 1} \right) \text{if } \left\{ j \in \N \st t \in A_j \right\} = \emptyset;
		\end{cases}
		\\
		& \overline{t}_\tau(t) =
		\begin{cases}
			t^k_{i + 1} & \text{if } t \in \left[ t^k_i, t^k_{i + 1} \right) \text{for } i = \Argmin \left\{ j \in \N \st t \in A_j \right\}, \\
			t^n_{i + 1} & \text{if } t \in \left[ t^n_i, t^n_{i + 1} \right) \text{if } \left\{ j \in \N \st t \in A_j \right\} = \emptyset.
		\end{cases}
	\end{align*}
	We have $\sup_{t \in \cl \left[ 0, T \right) } \left| \underline{t}_\tau(t) - t \right| < 2 \tau$ by (\ref{eq: tau grd}). For a given initial datum $u_0 \in D$ and a time step $\tau > 0$, we consider the partition $\PP_\tau = \left\{ 0 = t_0 < t_1 < \dots \right\}$ and, setting $U^0_\tau \coloneqq u_0$, we construct a sequence $\left( U^n_\tau \right)_{n = 1}^N$ by recursively choosing
	\begin{equation} \label{eq: min mov}
		U^n_\tau \in \Argmin_{U \in D} \left\{ \tau \phi_{t_n, U^{n - 1}_\tau} \left( \frac{U - U^{n - 1}_\tau}{\tau} \right) + \EE_{t_n}(U) \right\}, \quad 1 \le n \le N.
	\end{equation}
	Given $U^{n - 1}_\tau$, there exists by (\ref{eq: coerc}) at least one solution $U^n_\tau$ to the minimization problem (\ref{eq: min mov}) for all $\tau \in \left( 0, r^* \right)$. We define $\overline{U}_\tau$ and $\underline{U}_\tau$, respectively, to be the left-continuous and right-continuous constant interpolants of the values $\left( U^n_\tau \right)$, namely
	\begin{equation} \label{eq: const interpol}
		\begin{gathered}
			\overline{U}_\tau(t) = U^n_\tau, \quad t_{n - 1} < t \le t_n;
			\\
			\underline{U}_\tau(t) = U^{n - 1}_\tau, \quad t_{n - 1} \le t < t_n, \quad n \ge 1.
		\end{gathered}
	\end{equation}
	Also, we introduce the affine interpolant
	\begin{equation} \label{eq: aff interpol}
		U_\tau(t) = \frac{t - t_{n - 1}}{\tau} U^n_\tau + \frac{t_n - t}{\tau} U^{n - 1}_\tau, \quad t_{n - 1} \le t < t_n, \quad n \ge 1.
	\end{equation}
	Due to (\ref{eq: sum rule}), there exists
	$$
	\xi^n_\tau \in - \p \phi_{t_n, U^{n - 1}_\tau} \left( \frac{U^n_\tau - U^{n - 1}_\tau}{\tau} \right) \cap F_{t_n} \left( U^n_\tau \right).
	$$
	Again, $\overline{\xi}_\tau$ denotes the left-continuous constant interpolant of the values $\left( \xi^n_\tau \right)$. Moreover, we consider the variational interpolant $\tilde{U}_\tau$ of the values $U^n_\tau$. It is defined as any Lebesgue measurable function $\tilde{U}_\tau \colon \left[ 0, T \right) \to V_\dd$ satisfying $\tilde{U}_\tau(0) = u_0$ and, for $t = t_{n - 1} + \theta \in \left( t_{n - 1}, t_n \right]$,
	\begin{equation} \label{eq: var interpol}
		\tilde{U}_\tau(t) \in \Argmin_{U \in D} \left\{ \theta \phi_{t, U^{n - 1}_\tau} \left( \frac{U - U^{n - 1}_\tau}{\theta} \right) + \EE_t(U) \right\}.
	\end{equation}
	The existence of such a measurable selection follows by \cite[Lem.III.39]{CaVa} since the minimand in (\ref{eq: var interpol}) is $\LL_I \otimes \BB \left( V_\dd \right)$-measurable by (\ref{eq: E_0}) and has non-empty values for every $t \in \cl \left[ 0, T \right)$ by (\ref{eq: coerc}). For $t = t_n$, the minimization problems (\ref{eq: min mov}) and (\ref{eq: var interpol}) coincide, whence we may arrange
	\begin{equation} \label{eq: coinc on grid}
		\overline{U}_\tau(t_n) = \underline{U}_\tau(t_n) = U_\tau(t_n) = \tilde{U}_\tau(t_n), \quad n \ge 1.
	\end{equation}
	Thus, the variational interpolant $\tilde{U}_\tau$ contains all information of the other interpolants. Moreover, by (\ref{eq: sum rule}), there exists a measurable function $\tilde{\xi}_\tau \colon (0, T) \to V^*$ such that
	\begin{equation} \label{eq: var interpol xi}
		\begin{gathered}
			\tilde{\xi}_\tau(t) \in - \p \phi_{t, \underline{U}_\tau(t) } \left( \frac{\tilde{U}_\tau(t) - \underline{U}_\tau(t)}{t - t_{n -1}} \right) \cap F_t \left( \tilde{U}_\tau(t) \right),
			\\
			t_{n - 1} \le t < t_n, \quad n \ge 1.
		\end{gathered}
	\end{equation}
	To see this, invoke \cite[Thm. III.22]{CaVa} as such a $\tilde{\xi}_\tau$ is a measurable selection of the multimap that arises by intersecting the subdifferential multimaps. As these have measurable graphs, their intersection retains such a graph. Note in this regard that the multimap
	$$
	t \mapsto \p \phi_{t, \underline{U}_\tau(t) } \left( \frac{\tilde{U}_\tau(t) - \underline{U}_\tau(t)}{t - t_{n -1}} \right)
	$$
	has a measurable graph since it arises by intersecting the graph of measurable functions with the graph of the multimap
	$$
	\left( t, u, v \right) \mapsto \p \phi_{t, u} \left( v \right) = \left\{ \xi \in V^* \st \phi_{t, u} \left( v \right) + \phi^*_{t, u} \left( \xi \right) = \langle \xi, v \rangle \right\},
	$$
	whose graph belongs to $\LL_I \otimes \BB \left( D_\dd \times V_\dd \times X_{\dd^*} \right)$ because $\phi$ and $\phi^*$ are measurable. Arguing similarly, we see that the multimap involving $F_t$ in (\ref{eq: var interpol xi}) has a measurable graph. We set
	\begin{equation} \label{eq: def JJ}
		\begin{gathered}
			\JJ_{t, r}(u) = \inf_{v \in D} \left\{ r \phi_{t, u} \left( \frac{v - u}{r} \right) + \EE_{t + r}(v) \right\} \\
			\text{ for } t \in \cl \left[ 0, T \right), u \in D, 0 < r < T - t.
		\end{gathered}
	\end{equation}
	
	\subsection{A priori estimates}
	
	The following lemma generalizes \cite[Lem. 6.1]{MRS} to our setting. In order to connect it to a broader picture and phrase it concisely, we first introduce the following definition.
	
	\begin{definition} \label{def: KP limit}
		Let $\left( \TT, \tau \right)$ be a Hausdorff space and let $A, A_n \subset \TT$ be a sequence of subsets. Then we say that $A$ is the sequential Kuratowski-Painlevé limit of $A_n$ and write $K$-$\tau$-$\lim A_n = A$ if the following two conditions are satisfied:
		\begin{enumerate}
			\item For each $a \in A$, there exists $N \in \N$ and points $a_n \in A_n$ such that $a_n \to a$;
			\item Whenever $n_k$ is a strictly increasing subsequence such that $x_{n_k} \in A_{n_k}$ for each $k \in \N$ and $x_{n_k} \to x$, then $x \in A$.
		\end{enumerate}
	\end{definition}
	If $\left( \TT, \tau \right)$ is a first countable space, then Definition \ref{def: KP limit} agrees with the usual notion of Kuratowski-Painlevé convergence for sets by \cite[Prop. 5.29]{B} so that our terminology is justified.
	
	\begin{lemma}
		Assume $(\ref{eq: phi.1})$-$(\ref{eq: phi.3})$ and $(\ref{eq: coerc})$-$(\ref{eq: radially time differentiable})$. Then, for every $t \in \cl \left[ 0, T \right)$ and $R < \min\{ T - t, r^* \}$, the multimap $A_t \colon D \times \left( 0, R \right) \to D$ defined by 
		\begin{equation} \label{eq: Argmin}
			(u, r) \mapsto \Argmin_{v \in D} \left\{ r \phi_{t, u} \left( \frac{v - u}{r} \right) + \EE_{t + r}(v) \right\} \text{ has non-empty values;}
		\end{equation}
		For every $t \in \cl \left[ 0, T \right)$ and every $u \in D$, there exists a measurable selection $r \mapsto u_r \in A_{t, r}(u)$ such that
		\begin{equation} \label{eq: SD vanishes}
			0 \in \p \phi_{t, u} \left( \frac{u_r - u}{r} \right) + F_{t + r}(u_r);
		\end{equation}
		For every $T_0 \in \cl \left[ 0, T \right)$, there holds
		\begin{equation} \label{eq: mes sel ctr}
			\forall t \in \cl \left[ 0, T \right), \, u \in D, \, r \in (0, T_0 - t), \, u_r \in A_{t, r}(u) \colon \GG_{T_0}(u_r) \le C_3 \GG_{T_0}(u),
		\end{equation}
		\begin{equation} \label{eq: selections converge}
			K \text{-} \ee \text{-} \lim_{r \downarrow 0} A_{t, r}(u) = u, \quad \lim_{r \downarrow 0} \JJ_{t, r}(u) = \EE_t(u) \quad \forall t \in \cl \left[ 0, T \right), u \in D
		\end{equation}
		with $C_3$ the constant in (\ref{eq: G controlled}); The map
		\begin{equation} \label{eq: differentiable a.e}
			\left( 0, T - t \right) \ni r \mapsto \JJ_{t, r}(u) \text{ is differentiable a.e. in } \left( 0, T - t\right)
		\end{equation}
		and for each $r_0 \in \left( 0, T - t \right)$ and every measurable selection $\left( 0, r_0 \right] \to D \colon r \mapsto u_r \in A_{t, r}(u)$, there holds
		\begin{equation} \label{eq: energy id 1}
			\begin{gathered}
				\int_0^{r_0} \phi_{t, u} \left( \frac{u_{r_0} - u}{r_0} \right) + \phi^*_{t, u} \left( - \xi_r \right) \, dr + \EE_{t + r_0} (u_{r_0}) \\
				\le \EE_t(u) + \int_0^{r_0} P_{t + r} \left( u_r , \xi_r \right) \, dr,
			\end{gathered}
		\end{equation}
		where $\xi_r$ is any measurable selection in $- \p \phi_{t + r, u} \left( \frac{u_r - u}{r} \right) \cap F_{t + r}(u_r)$.
	\end{lemma}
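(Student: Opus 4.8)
The plan is to adapt the argument of \cite[Lem. 6.1]{MRS}, the new difficulties being the merely measurable dependence on time, the non-reflexive state space $V$, and the possibly infinite values of $\phi$ and $\phi^*$.

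\emph{Existence of minimizers and the sublevel estimate (claims (\ref{eq: Argmin}), (\ref{eq: mes sel ctr})).} For fixed $t$, $u\in D$ and $0<r<R$ the functional $v\mapsto r\,\phi_{t,u}\!\left(\tfrac{v-u}{r}\right)+\EE_{t+r}(v)$ is proper, being finite at $v=u$ since $\phi_{t,u}(0)=0$ and $u\in D$, and sequentially $\ee$-inf-compact by the standing assumption (\ref{eq: coerc}). Along any sublevel set $\EE_{t+r}$ is bounded, hence $\GG_{T_0}$ is bounded by (\ref{eq: G controlled}), so that on such sets $\ee$-convergence coincides with $\dd$-convergence by Remark~\ref{en: rem en 8} of Subsection~\ref{ss:enf}; combining this with (\ref{eq: lsc}) for $\EE$ and with (\ref{eq: phi.4.1}) applied along the constant sequence $u_n\equiv u$ yields sequential $\ee$-lower semicontinuity of the functional, so the direct method gives (\ref{eq: Argmin}). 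Testing minimality against the competitor $v=u$ gives $\EE_{t+r}(u_r)\le\EE_{t+r}(u)$ for $u_r\in A_{t,r}(u)$, and (\ref{eq: mes sel ctr}) then follows from (\ref{eq: G controlled}).

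\emph{Measurable selections (claim (\ref{eq: SD vanishes})).} As in the construction of the variational interpolant in Section~\ref{sec: Approximation}, the minimand is jointly $\LL_I\otimes\BB(V_\dd)$-measurable in $(r,v)$ by (\ref{eq: E_0}) and (\ref{eq: phi mb}), so infimal measurability and the Suslin projection theorem show that $r\mapsto A_{t,r}(u)$ has $\LL_I\otimes\BB(V_\dd)$-measurable graph. The multimap $(r,v)\mapsto\p\phi_{t,u}\!\left(\tfrac{v-u}{r}\right)+F_{t+r}(v)$ has measurable graph because $\phi,\phi^*$ are measurable (so $\p\phi_{t,u}$ has measurable graph) and $\graph(F)\in\LL_I\otimes\BB(D_\dd\times X_{\dd^*})$; the sum rule (\ref{eq: sum rule}) makes, for each $r$, the minimiser set meet $\{v:0\in\p\phi_{t,u}(\tfrac{v-u}{r})+F_{t+r}(v)\}$, and \cite[Thm. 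III.22]{CaVa} then yields a measurable selection satisfying (\ref{eq: SD vanishes}); the accompanying $\xi_r\in-\p\phi_{t,u}\!\left(\tfrac{u_r-u}{r}\right)\cap F_{t+r}(u_r)$ is obtained in the same way.

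\emph{Behaviour as $r\downarrow0$ (claim (\ref{eq: selections converge})).} Testing $\JJ_{t,r}(u)$ with $v=u$ and using (\ref{eq: absolute  continuity}) gives $\limsup_{r\downarrow0}\JJ_{t,r}(u)\le\limsup_{r\downarrow0}\EE_{t+r}(u)=\EE_t(u)$. For $r_n\downarrow0$ and $u_{r_n}\in A_{t,r_n}(u)$, the bound $r_n\,\phi_{t,u}\!\left(\tfrac{u_{r_n}-u}{r_n}\right)\le\EE_{t+r_n}(u)-C_0$ (test with $v=u$, use $\EE\ge C_0$) together with the conditioned lower linear growth (\ref{eq: phi.2.2.1}) — applicable since $\GG_{T_0}(u)\le R$ — gives $\sup_n\|u_{r_n}\|_V<\infty$, whence (\ref{eq: crc}) yields a subsequence with $u_{r_n}\eeto\bar u$. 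Convexity of $\phi_{t,u}$ with $\phi_{t,u}(0)=0$ gives, for fixed $\theta>0$ and $n$ large, $\theta\,\phi_{t,u}\!\left(\tfrac{u_{r_n}-u}{\theta}\right)\le r_n\,\phi_{t,u}\!\left(\tfrac{u_{r_n}-u}{r_n}\right)\le\EE_{t+r_n}(u)-C_0$, which is bounded; letting $n\to\infty$ and then $\theta\downarrow0$ and invoking (\ref{eq: phi.2.3}) forces $\bar u=u$, so $K$-$\ee$-$\lim_{r\downarrow0}A_{t,r}(u)=u$. Since then $\liminf_{r\downarrow0}\JJ_{t,r}(u)\ge\liminf_{r\downarrow0}\EE_{t+r}(u_r)\ge\EE_t(u)$ by (\ref{eq: lsc}) and (\ref{eq: absolute  continuity}), we obtain $\lim_{r\downarrow0}\JJ_{t,r}(u)=\EE_t(u)$.

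\emph{Differentiability of $r\mapsto\JJ_{t,r}(u)$ and the energy inequality (claims (\ref{eq: differentiable a.e}), (\ref{eq: energy id 1})).} The key point is that for each fixed $v$ the map $r\mapsto r\,\phi_{t,u}\!\left(\tfrac{v-u}{r}\right)$ is the perspective of the convex function $\phi_{t,u}$, hence convex and non-increasing in $r$, while $r\mapsto\EE_{t+r}(v)$ is absolutely continuous with a modulus controlled by $f_1\,\EE_{t+r}(v)$ thanks to (\ref{eq: absolute  continuity}). On each $[\delta,T_0-t]$ the infimum defining $\JJ_{t,r}(u)$ may be restricted to near-optimal $v$, whose energies are then uniformly bounded, so $r\mapsto\JJ_{t,r}(u)$ is the infimum of a family of non-increasing functions perturbed by an equi-absolutely-continuous one; it therefore differs from a non-increasing function by an absolutely continuous one, hence is locally of bounded variation, a.e.\ differentiable, and the singular part of $D\JJ_{t,\cdot}(u)$ is non-positive — this is (\ref{eq: differentiable a.e}). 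For (\ref{eq: energy id 1}), fix a measurable selection $r\mapsto u_r$ and a differentiability point $r_0$; comparing $\JJ_{t,r_0+h}(u)\le(r_0+h)\,\phi_{t,u}\!\left(\tfrac{u_{r_0}-u}{r_0+h}\right)+\EE_{t+r_0+h}(u_{r_0})$ with $\JJ_{t,r_0}(u)$, dividing by $h>0$ and letting $h\downarrow0$, gives
\[
\frac{d}{dr}\bigg|_{r=r_0}\!\JJ_{t,r}(u)\ \le\ \frac{d}{ds}\bigg|_{s=r_0}\!\left[\,s\,\phi_{t,u}\!\left(\tfrac{u_{r_0}-u}{s}\right)\right]+\p_t\EE_{t+r_0}(u_{r_0}),
\]
and by the smoothness-at-$r=1$ property (\ref{eq: phi.3}), i.e.\ (\ref{eq: smooth at one}), together with the Fenchel equality for $\xi_{r_0}\in-\p\phi_{t,u}\!\left(\tfrac{u_{r_0}-u}{r_0}\right)$ the first term on the right equals $-\phi^*_{t,u}(-\xi_{r_0})$. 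Integrating over $(\delta,r_0)$ — admissible since the singular part of $D\JJ_{t,\cdot}(u)$ is non-positive — estimating $\p_t\EE_{t+r}(u_r)\le P_{t+r}(u_r,\xi_r)$ by (\ref{eq: radially time differentiable}), and letting $\delta\downarrow0$ with $\lim_{r\downarrow0}\JJ_{t,r}(u)=\EE_t(u)$ and $\JJ_{t,r_0}(u)=r_0\,\phi_{t,u}\!\left(\tfrac{u_{r_0}-u}{r_0}\right)+\EE_{t+r_0}(u_{r_0})$, yields (\ref{eq: energy id 1}) after writing $r_0\,\phi_{t,u}(\cdot)=\int_0^{r_0}\phi_{t,u}(\cdot)\,dr$. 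I expect this paragraph to be the main obstacle: besides the bounded-variation analysis in the non-smooth, possibly infinite-valued setting, one must align the various ``almost every $r$'' null sets so that the perspective derivative and the time-derivative of the energy are evaluated compatibly along $r\mapsto u_r$ — handled as in \cite[Lem. 6.1]{MRS} via the monotone difference quotients of $s\mapsto s\,\phi_{t,u}(v/s)$ and a Lebesgue-point/Fubini argument, the infinite values being absorbed through the Fenchel equality rather than through differentiation of $\phi_{t,u}$.
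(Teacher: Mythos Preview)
Your proposal is correct and follows the same overall strategy as the paper, which adapts \cite[Lem.~6.1]{MRS}. The one genuine difference lies in establishing the bounded-variation structure of $r\mapsto\JJ_{t,r}(u)$: the paper introduces the approximate subdifferential $\p_\e$ from \cite{HMSV} as a ``crucial new tool'' to show, for $r_1<r_2$ and $w=u_{r_1}-u$, that $r_2\,\phi_{t,u}(w/r_2)-r_1\,\phi_{t,u}(w/r_1)\le 0$, whence $\JJ_{t,r_2}(u)-\JJ_{t,r_1}(u)\le \EE_{t+r_2}(u_{r_1})-\EE_{t+r_1}(u_{r_1})$ and $\JJ$ minus an absolutely continuous function is non-increasing. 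Your monotonicity-of-the-perspective observation---which needs only convexity together with $\phi_{t,u}(0)=0$, and is valid even when $\phi_{t,u}$ takes the value $+\i$ since $\phi_{t,u}(w/r_1)<\i$ for minimizers---yields the same inequality directly and is in fact simpler; the $\p_\e$ machinery is not strictly necessary for this step, though the paper's route keeps the parallel with \cite{MRS} explicit and would survive if $\phi_{t,u}(0)=0$ were weakened. Your ``infimum of a family of non-increasing plus equi-absolutely-continuous'' phrasing becomes rigorous precisely by testing $\JJ_{t,r_2}(u)$ with the single competitor $u_{r_1}$, which is all that is actually used; state this reduction explicitly rather than invoking the family.

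For the derivative formula and the final integration your argument and the paper's coincide: both use (\ref{eq: phi.3})/(\ref{eq: smooth at one}) to identify the $s$-derivative of the perspective with $-\phi^*_{t,u}(-\xi_r)$, then integrate using that the singular part of $D\JJ$ is non-positive and $\lim_{r\downarrow 0}\JJ_{t,r}(u)=\EE_t(u)$. One notational point: write $\liminf_{h\downarrow0}\tfrac{\EE_{t+r+h}(u_r)-\EE_{t+r}(u_r)}{h}$ rather than $\p_t\EE_{t+r}(u_r)$, since only the $\liminf$ is controlled by $P$ in (\ref{eq: radially time differentiable}); the paper handles this by fixing a recovery sequence $h_k\downarrow 0$ realising the $\liminf$ and using continuous differentiability of differentiable convex functions on the real line for the perspective term.
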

	
	\begin{proof}
		We extend \cite[Lem. 6.1]{MRS} to the present setting, overcoming challenges related to the potential unboundedness and slow growth of the dissipation primitive $\phi$. To address these issues, we employ a crucial new tool, namely the approximate subdifferential $\p_\e$ presented in \cite{HMSV}. It has two key properties for our proof. First, elements $x^* \in \p_\e f(x)$ are characterized by satisfying the Fenchel-Young identity up to an error $\e > 0$ according to \cite[Prop. 1.1]{HMSV}, i.e.
		\begin{equation} \label{eq: pprxmt FY}
			x^* \in \p_\e f(x) \iff \langle x^*, x \rangle \ge f(x) + f^* \left( x^* \right) - \e \quad \forall x \in \dom(f).
		\end{equation}
		Second, the remark below \cite[Prop. 1.1]{HMSV} yields
		\begin{equation} \label{eq: pprxmt dom}
			\forall \e > 0 \quad \dom \left( f \right)= \dom \left( \p_\e f \right) \text{ if } f \text{ is a convex function.}
		\end{equation}
		
		We obtain (\ref{eq: Argmin}) by the coercivity assumption (\ref{eq: coerc}). By (\ref{eq: dd Suslin}) and \cite[Lem. III.22, III.39]{CaVa}, there exists a $\LL_I \otimes \BB \left( V_\dd \right)$-measurable selection $\left( 0, \i \right) \to D \colon r \mapsto u_r \in A_{t, r}(u)$, which satisfies (\ref{eq: SD vanishes}) by (\ref{eq: sum rule}).
		
		We obtain (\ref{eq: mes sel ctr}) because, for every $T_0 \in \cl \left[ 0, T \right)$, by the first of (\ref{eq: phi.2.1}), by the minimality of $u_r$ and since $\phi \ge 0$, there hold the inequalities
		\begin{equation} \label{eq: ineq chain}
			\begin{aligned}
				\GG_{T_0}(u) \ge \EE_{t + r}(u) \ge r \phi_{t, u} \left( \frac{u_r - u}{r} \right) + \EE_{t + r}(u_r)
				& \ge \EE_{t + r}(u_r) \\
				& \ge C_3^{-1} \GG_{T_0}(u_r).
			\end{aligned}
		\end{equation}
		We used (\ref{eq: G controlled}) in the last step. Regarding (\ref{eq: selections converge}), we note
		$$
		\JJ_{t, r}(u) \le \EE_{t + r}(u) \quad \forall r \in \left( 0, T - t \right)
		$$
		by choosing the admissible competitor $v = u$ and using the first of (\ref{eq: phi.2.1}). As
		$$
		\lim_{r \downarrow 0} \EE_{t + r}(u) = \EE_t(u) \quad \forall u \in V
		$$
		due to (\ref{eq: absolute  continuity}), we find
		\begin{equation} \label{eq: fn ulmt}
			\limsup_{r \downarrow 0} \JJ_{t, r}(u) \le \EE_t(u) < \i \quad \forall u \in D.
		\end{equation}
		By (\ref{eq: coerc}), every sequence $v_n \in A_{t, r_n}(u)$ with $r_n \downarrow 0$ has an $\ee$-convergent subsequence. Selecting such a sequence (not relabeled), we are to prove that $\lim_n v_n = u$. Since the function $\theta \mapsto \theta \phi_{t, u} \left( \frac{v}{\theta} \right)$ for every $v \in V$ is non-increasing by convexity and since the functions $\left(t, u \right) \mapsto \EE_t(u)$ and $v \mapsto \phi_{t, u}(v)$ are lower semicontinuous by (\ref{eq: absolute  continuity}) and (\ref{eq: phi.1}), we have for every $\theta > 0$ that
		\begin{equation} \label{eq: J E dmnnc}
			\begin{aligned}
				\liminf_n \JJ_{t, r_n}(u) & \ge \liminf_n r_n \phi_{t, u} \left( \frac{v_n - u}{r_n} \right) + \EE_{t + r_n}(v_n) \\
				& \ge \liminf_n \theta \phi_{t, u} \left( \frac{v_n - u}{\theta} \right) + \EE_t(v).
			\end{aligned}
		\end{equation}
		Sending $\theta \downarrow 0$, we conclude $\lim_n v_n = u$ by (\ref{eq: fn ulmt}) and (\ref{eq: phi.2.3}). This proves the first of (\ref{eq: selections converge}). The second follows upon combining (\ref{eq: fn ulmt}) and (\ref{eq: phi.2.3}). The sequences $v_n \in A_{t, r_n}(u)$ and $r_n \downarrow 0$ having been arbitrary, the first convergence in (\ref{eq: selections converge}) also follows by Definition \ref{def: KP limit}.
		
		Now, to prove (\ref{eq: differentiable a.e}), we fix $0 < r_1 < r_2$, $\e > 0$ and an element
		$$
		w_\e \in \p_\e \phi_{t, u} \left( \frac{u_{r_1} - u}{r_2} \right)
		$$
		of the approximate subdifferential $\p_\e$. This is possible by (\ref{eq: pprxmt dom}). Observe that
		\begin{equation} \label{eq: nqlty}
			\begin{aligned}
				& \JJ_{t, r_2}(u) - \JJ_{t, r_1}(u) - \EE_{t + r_2}(u_{r_1}) + \EE_{t + r_1}(u_{r_1}) \\
				& \le r_2 \phi_{t, u} \left( \frac{u_{r_1} - u}{r_2} \right) - r_1 \phi_{t, u} \left( \frac{u_{r_1} - u}{r_1} \right)
				\\
				& = \left( r_2 - r_1 \right) \phi_{t, u} \left( \frac{u_{r_1} - u}{r_2} \right) + r_1 \left\{ \phi_{t, u} \left( \frac{u_{r_1} - u}{r_2} \right) - \phi_{t, u} \left( \frac{u_{r_1} - u}{r_1} \right) \right\}
				\\
				& \le \left( r_2 - r_1 \right) \left\{ \phi_{t, u} \left( \frac{u_{r_1} - u}{r_2} \right) - \langle w_\e , \frac{u_{r_1} - u}{r_2} \rangle \right\} + r_1 \e
				\\
				& \le \left( r_2 - r_1 \right) \left\{ - \phi^*_{t, u} \left( w_\e \right) + \e \right\} + r_1 \e
				\le r_2 \e.
			\end{aligned}
		\end{equation}
		We used the definition of $\JJ_{t, r}(u)$ in the first inequality, the definition of $w_\e$ in the second one, the approximate Fenchel-Young inequality (\ref{eq: pprxmt FY}) in the third one and $\phi^*_{t, u} \ge 0$ by (\ref{eq: phi.2.1}) in the last step. Choosing $\e > 0$ arbitrarily small, we may replace $r_2 \e$ by $0$ in the last step of (\ref{eq: nqlty}). Consequently,
		\begin{align*}
			\JJ_{t, r_2}(u)
			& \le \JJ_{t, r_1}(u) + \EE_{t + r_2}(u_{r_1}) - \EE_{t + r_1}(u_{r_1})
			\\
			& \le \JJ_{t, r_1}(u) + \exp \left( \int_{r_1}^{r_2} f_1(r) \, dr \right) \GG_{T_0}(u_{r_1})
			\\
			& \le \JJ_{t, r_1}(u) + C_3 \GG_{T_0}(u) \exp \left( \int_{r_1}^{r_2} f_1(r) \, dr \right)
		\end{align*}
		with the second inequality by (\ref{eq: absolute  continuity}) and the third one by (\ref{eq: mes sel ctr}). Therefore, the function $r \mapsto \JJ_{t, r}(u)$ is the sum of the non-increasing function
		$$
		r \mapsto \JJ_{t, r}(u) - C_3 \GG_{T_0}(u) \exp \left( \int_{r_1}^{r_2} f_1(r) \, dr \right)
		$$
		and the absolutely continuous function
		$$
		r \mapsto C_3 \GG_{T_0}(u) \exp \left( \int_{r_1}^{r_2} f_1(r) \, dr \right)
		$$
		so that (\ref{eq: differentiable a.e}) follows. To conclude (\ref{eq: energy id 1}), we fix a point $r \in \left(0, T - t \right)$ where the map $r \mapsto \JJ_{t, r}(u)$ is differentiable, an element $\xi_r \in - \p \phi_{t, u} \left( \frac{u_r - u}{r} \right) \cap F_{t + r}(u)$ and $v \in V$. We also fix a sequence $h_k \downarrow 0$ such that
		\begin{equation} \label{eq: recov seq}
			\liminf_{h_k \downarrow 0} \frac{\EE_{t + r + h_k}(u_r) - \EE_{t + r}(u_r)}{h_k} = \liminf_{h \downarrow 0} \frac{\EE_{t + r + h}(u_r) - \EE_{t + r}(u_r)}{h}.
		\end{equation}
		Then
		\begin{align*}
			& \JJ_{t, r_2}(u) - \JJ_{t, r_1}(u) - \EE_{t + r_2}(u_{r_1}) + \EE_{t + r_1}(u_{r_1}) \\
			& \le \left( r_2 - r_1 \right) \phi_{t, u} \left( \frac{u_{r_1} - u}{r_2} \right) + r_1 \left\{ \phi_{t, u} \left( \frac{u_{r_1} - u}{r_2} \right) - \phi_{t, u} \left( \frac{u_{r_1} - u}{r_r} \right) \right\} \\
			& \le \left( r_2 - r_1 \right) \left\{ \phi_{t, u} \left( \frac{u_{r_1} - u}{r_2} \right) - \phi'_{t, u} \left( \frac{u_{r_1} - u}{r_2} ; \frac{u_{r_1} - u}{r_2} \right) \right\}.
		\end{align*}
		We used the convexity of $\phi_{t, u}$ in the second step. Consequently, for $h > 0$
		\begin{align*}
			& \frac{\JJ_{t, r + h}(u) - \JJ_{t, r}(u)}{h} + \phi'_{t, u} \left( \frac{u_r - u}{r + h} ; \frac{u_r - u}{r + h} \right) - \phi_{t, u} \left( \frac{u_r - u}{r + h} \right) \\
			& \le \frac{\EE_{t + r + h}(u) - \EE_{t + r}(u)}{h},
		\end{align*}
		hence by (\ref{eq: smooth at one}) and the Fenchel-Young identity in the form
		$$
		\phi^*_{t, u} \left( - \xi_r \right) = \phi'_{t, u} \left( \frac{u_r - u}{r} ; \frac{u_r - u}{r} \right) - \phi_{t, u} \left( \frac{u_r - u}{r} \right)
		$$
		by (\ref{eq: phi.3}), we may use the continuous differentiability of differentiable convex functions on the real line to find
		\begin{align*}
			& \frac{d}{dr} \JJ_{t, r}(u) + \phi^*_{t, u} \left( - \xi_r \right) \\
			& = \lim_{h \downarrow 0} \left\{ \frac{\JJ_{t, r + h}(u) - \JJ_{t, r}(u)}{h} + \phi'_{t, u} \left( \frac{u_r - u}{r + h} ; \frac{u_r - u}{r + h} \right) - \phi_{t, u} \left( \frac{u_r - u}{r + h} \right) \right\}
			\\
			& \le \liminf_{h \downarrow 0} \frac{\EE_{t + r + h}(u) - \EE_{t + r}(u)}{h} \le P_{t + r} \left( u_r, \xi_r \right).
		\end{align*}
		We used that
		$$
		\lim_{h \downarrow 0} \phi_{t, u} \left( \frac{u_r - u}{r + h} \right) = \phi_{t, u} \left( \frac{u_r - u}{r} \right)
		$$
		by lower semicontinuity and since the function $r \to \phi \left( r v \right)$ is non-increasing for any $v \in V$. The last inequality is due to (\ref{eq: recov seq}) and (\ref{eq: radially time differentiable}). Since $r$ is any point of differentiability, we find
		\begin{equation} \label{eq: differentiated energy id}
			\frac{d}{dr} \JJ_{t, r}(u) + \phi^*_{t, u} \left( - \xi_r \right) \le P_{t + r} \left( u_r, \xi_r \right) \quad \text{ for a.e. } r \in \left( 0, T - t \right).
		\end{equation}
		As $\JJ_{t, r}$ is the sum of a non-increasing function and an absolutely continuous one, we may by \cite[Cor. 1.25]{Le} integrate (\ref{eq: differentiated energy id}) over $\left( 0, r_0 \right)$ to find (\ref{eq: energy id 1})	by the second of (\ref{eq: selections converge}).
	\end{proof}
	
	\begin{proposition} \label{pr: ntrplnts tgthr}
		Assume $(\ref{eq: phi.1})$-$(\ref{eq: phi.3})$ and $(\ref{eq: assu en only})$. Let $\overline{U}_\tau, \underline{U}_\tau, U_\tau, \tilde{U}_\tau$ and $\tilde{\xi}_\tau$ be the interpolants defined in (\ref{eq: const interpol}), (\ref{eq: aff interpol}), (\ref{eq: var interpol}) and (\ref{eq: var interpol xi}). There holds the discrete energy estimate
		\begin{equation} \label{eq: energy id 2}
			\begin{gathered}
				\int_{\overline{t}_\tau(s)}^{\overline{t}_\tau(t)} \phi_{\overline{t}_\tau(r), \underline{U}_\tau(r) } ( U'_\tau(r) )
				+ \phi^*_{\overline{t}_\tau(r), \underline{U}_\tau(r) } ( - \tilde{\xi}_\tau(r) ) \, dr + \EE_{\overline{t}_\tau(t)} ( \overline{U}_\tau(t) ) \\
				\le \EE_{\overline{t}_\tau(s)} ( \overline{U}_\tau(s) ) + \int_{\overline{t}_\tau(s)}^{\overline{t}_\tau(t)} P_r ( \tilde{U}_\tau(r), \tilde{\xi}_\tau(r) ) \, dr
			\end{gathered}
		\end{equation}
		for all $0 \le s \le t \le T_0$ and $T_0 \in (0, T)$. Moreover, there exists for every $T_0$ a $C > 0$ such that the following estimates hold for every $\tau > 0$:
		\begin{gather} \label{eq: E/P unf bds}
			\sup_{0 \le t \le T_0} \EE_t ( \overline{U}_\tau(t) ) \le C, \quad \sup_{0 \le t \le T_0} \EE_t ( \tilde{U}_\tau(t) ) \le C, \\
			\left| P_t ( \tilde{U}_\tau(t), \tilde{\xi}_\tau(t) ) \right| \le C + f_1(t) + f_2(t) \text{ for a.e. } t \in \left( 0, T_0 \right),
		\end{gather}
		and
		\begin{equation} \label{eq: phi/phi* int bds}
			\int_0^{T_0} \phi_{\overline{t}_\tau(r), \underline{U}_\tau(r) } ( U'_\tau(r) ) + \phi^*_{\overline{t}_\tau(r), \underline{U}_\tau(r) } ( - \tilde{\xi}_\tau(r) ) \, dr \le C;
		\end{equation}
		The families
		\begin{equation} \label{eq: U' nd xi bds}
			U'_\tau \in V_1\left(0, T_0; V_\dd \right) \text{ and } \tilde{\xi}_\tau \in V_1\left(0, T_0; X_{\dd^*} \right) \text{ are bounded.}
		\end{equation}
		Finally, let $E_\i \left( 0, T_0; W \right)$ be the closure of simple functions in $L_\i \left( 0, T_0; W \right)$. From every subsequence $\tau_k \downarrow 0$, we may extract a further subsequence (not relabeled) for which there exists a measurable curve $u \colon \cl \left[ 0, T \right) \to V_\dd$ possessing a measurable weak derivative $u' \colon \cl \left[ 0, T \right) \to V_\dd$ such that there hold the convergences
		\begin{gather}
			t_k \to t \text{ in } \cl \left[ 0, T \right) \implies \overline{U}_{\tau_k}(t_k), \underline{U}_{\tau_k}(t_k), U_{\tau_k}(t_k), \tilde{U}_{\tau_k}(t_k) \eeto u(t), \label{eq: ipol cvg} \\
			U'_{\tau_k} \to u' \text{ in } \ss \left( V_1 \left( 0, T_0 ; V_\dd \right) ; E_\i \left( 0, T_0; W \right) \right).
			\label{eq: ipol drv cvg}
		\end{gather}
	\end{proposition}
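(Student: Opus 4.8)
The plan is to run four stages in order: (1) a discrete energy estimate obtained by repeating the differentiate‑in‑the‑step‑parameter argument of the preceding a priori lemma on each subinterval of $\PP_\tau$; (2) uniform energy and $P$‑bounds from a Gronwall argument applied to (1); (3) $V_1$‑bounds on the discrete velocities and forces from the conditioned linear‑growth hypotheses; (4) extraction of a convergent subsequence, with the interpolants pinned down to a common limit. For (1), on each $\left(t_{n-1},t_n\right]$ one differentiates in the step length the subinterval value function of the type $\JJ$ from (\ref{eq: def JJ}) — with the variational interpolant $\tilde{U}_\tau$ furnishing the required measurable selection and $\tilde{\xi}_\tau$ the subgradient selection from (\ref{eq: var interpol xi}) — exactly as in the proof of (\ref{eq: differentiated energy id}), (\ref{eq: energy id 1}), the only new feature being that the dissipation now varies in time inside the subinterval; this is absorbed using the joint lower semicontinuity on the Lusin exhaustion $A_n$ built into $\PP_\tau$, which also secures measurability of every integrand. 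Summing the resulting subinterval inequalities (dissipative terms pinned at $\overline{t}_\tau(r)$, force terms at $r$) and telescoping the energy terms, which match by (\ref{eq: coinc on grid}), yields (\ref{eq: energy id 2}).

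For (2), put $s=0$ in (\ref{eq: energy id 2}). Using $\EE\ge C_0>0$, $\phi,\phi^*\ge 0$, the upper bound $P_r(\tilde{U}_\tau(r),\tilde{\xi}_\tau(r))\le\GG_{T_0}(\tilde{U}_\tau(r))f_2(r)$ from (\ref{eq: radially time differentiable}), and the chain $\GG_{T_0}(\tilde{U}_\tau(r))\le C_3\GG_{T_0}(\underline{U}_\tau(r))\le C\,\EE_{\overline{t}_\tau(r')}(\overline{U}_\tau(r'))$ for suitable $r'<r$, coming from minimality as in (\ref{eq: ineq chain}), (\ref{eq: mes sel ctr}), and (\ref{eq: G controlled}), one obtains a Gronwall inequality for $t\mapsto\EE_{\overline{t}_\tau(t)}(\overline{U}_\tau(t))$; Gronwall's lemma together with (\ref{eq: Gron cons}) gives the first bound in (\ref{eq: E/P unf bds}), and (\ref{eq: mes sel ctr}), (\ref{eq: Gron cons}) give the one for $\tilde{U}_\tau$. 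The bound on $P$ follows from (\ref{eq: radially time differentiable}) above and from (\ref{eq: absolute  continuity}) below; feeding these back into (\ref{eq: energy id 2}) at $s=0$, $t=T_0$ gives (\ref{eq: phi/phi* int bds}). For (3): by the conditioned lower linear growth (\ref{eq: phi.2.2.1}), applied with $R$ the energy level just obtained, there are $c,M>0$ with $\|v\|_V\le M+c^{-1}\phi_{\overline{t}_\tau(r),\underline{U}_\tau(r)}(v)$ for all $v$; taking $v=U'_\tau(r)$ and integrating over $(0,T_0)$, (\ref{eq: phi/phi* int bds}) bounds $U'_\tau$ in $V_1(0,T_0;V_\dd)$ — in particular $U^n_\tau=u_0+\int_0^{t_n}U'_\tau$ stays bounded in $V$ — and the analogous argument with (\ref{eq: phi.2.2.2}) and the Fenchel–Young identity attached to (\ref{eq: var interpol xi}) bounds $\tilde{\xi}_\tau$ in $V_1(0,T_0;X_{\dd^*})$, proving (\ref{eq: U' nd xi bds}).

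For (4), boundedness of $U'_\tau$ in $V_1$ already yields, along a subsequence $\tau_k\downarrow 0$, convergence in the biting sense; the conditioned radial superlinearity (\ref{eq: phi.2.3}) together with (\ref{eq: phi/phi* int bds}) then upgrades this to genuine $\ss(V_1(0,T_0;V_\dd);E_\i(0,T_0;W))$‑convergence — equivalently, it produces equi‑integrability of $\{\|U'_{\tau_k}\|_V\}$ by a de la Vallée Poussin / Dunford–Pettis argument, exploiting the $\ee$‑compactness of the sublevels of $\EE_t$ and $\GG_{T_0}$ to convert the sequential superlinearity into a uniform superlinear minorant on those sublevels. Write $U'_{\tau_k}\to u'$ and set $u(t):=u_0+\int_0^t u'(r)\,dr$, the weak primitive of $u'$. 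Testing against $\chi_{(0,t)}w$ for $w\in W$ and using the equi‑integrability to control the gap between $\overline{t}_{\tau_k}(t_k)$ and $t$ shows $\overline{U}_{\tau_k}(t_k)\ddto u(t)$ whenever $t_k\to t$, and likewise for $\underline{U}_{\tau_k},U_{\tau_k}$; for $\tilde{U}_{\tau_k}$ one additionally invokes (\ref{eq: selections converge}) to get $\tilde{U}_{\tau_k}(t_k)-\underline{U}_{\tau_k}(t_k)\ddto 0$. Finally (\ref{eq: E/P unf bds}), (\ref{eq: Gron cons}), (\ref{eq: U' nd xi bds}) and the compactness (\ref{eq: crc}) make each of the four interpolants $\ee$‑relatively compact along $t_k\to t$, and the concurrency (\ref{eq: rh lph cncrrnt}) forces every $\ee$‑limit to equal the $\dd$‑limit $u(t)$; since $\EE_t(u(t))<\infty$ by (\ref{eq: lsc}) we get $u(t)\in D$. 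This is (\ref{eq: ipol cvg}), (\ref{eq: ipol drv cvg}) on $(0,T_0)$, and a diagonal argument over $T_0\uparrow T$ delivers $u,u'$ on all of $\cl[0,T)$.

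The main obstacle is the step that turns mere $V_1$‑boundedness of the discrete velocities into genuine, non‑biting weak compactness in $\ss(V_1;E_\i)$, i.e.\ the equi‑integrability of $\{\|U'_{\tau_k}\|_V\}$. Because the superlinearity hypothesis (\ref{eq: phi.2.3}) is only asymptotic and depends on both time and state through $\GG_{T_0}$, one cannot invoke a scalar de la Vallée Poussin criterion directly: one must first extract the uniform superlinear minorant on bounded energy sublevels, and that extraction has to coexist with the merely measurable time dependence of $\phi$ — which is precisely why $\PP_\tau$ must be adapted to the Lusin exhaustion $A_n$. A close second is the bookkeeping of aligning the time argument $\overline{t}_\tau$ of the dissipative terms in (\ref{eq: energy id 2}) with the base time appearing in the a priori lemma, and of the measurability of the selections $\tilde{U}_\tau$ and $\tilde{\xi}_\tau$.
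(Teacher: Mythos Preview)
Your stages (1)--(3) match the paper's argument closely and are fine. The gap is in stage (4), precisely at the step you yourself flag as ``the main obstacle''.

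You propose to obtain equi-integrability of $\{\|U'_{\tau_k}\|_V\}$ in $L_1(0,T_0)$ from the radial superlinearity (\ref{eq: phi.2.3}) via a de la Vall\'ee Poussin argument, by extracting a uniform superlinear-in-norm minorant for $\phi_{t,u}$ on bounded energy sublevels. This does not work under the stated hypotheses, for two reasons. First, (\ref{eq: phi.2.3}) constrains not only the state $u_n$ but also the \emph{direction} $v_n$ to lie in the energy sublevel $\{\GG_{T_0}\le R\}$; the discrete velocities $U'_\tau(r)=(U^n_\tau-U^{n-1}_\tau)/\tau$ have no reason to satisfy $\GG_{T_0}(U'_\tau(r))\le R$, so (\ref{eq: phi.2.3}) gives you nothing about them. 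Second, the paper explicitly allows $\phi_{t,u}$ to have merely \emph{linear} lower growth in $\|v\|_V$ (see the remark that the limit in (\ref{eq: phi.2.2.1}) may be finite), so a uniform superlinear norm minorant need not exist at all. Assumption (\ref{eq: phi.2.3}) is designed for state differences like $\tilde U_\tau-\underline U_\tau$, not for velocities.

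The paper circumvents this entirely by using the hypothesis you never invoke, namely (\ref{eq: phi.2.4}). Rather than seeking equi-integrability of the norms, it fixes $w\in W$ and shows equi-integrability of the scalar functions $t\mapsto\langle w,U'_{\tau_k}(t)\rangle$ via Fenchel--Young:
\[
\int_L\langle w,U'_{\tau_k}(r)\rangle\,dr
\le\kappa\int_L\sup_{\GG_{T_0}(u)\le R}\phi^*_{W,\overline t_{\tau_k}(r),u}\!\bigl(\tfrac{w}{\kappa}\bigr)\,dr+S\kappa,
\]
and the supremum integrand is $L_1(0,T_0)$ by (\ref{eq: phi.2.4}). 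This weak, test-vector-by-test-vector equi-integrability is exactly what is needed for convergence against $E_\infty(0,T_0;W)$ (closure of simple functions), and it combines with a Poincar\'e/Arzel\`a--Ascoli argument and Banach--Steinhaus to produce the pointwise $\dd$-limit $u(t)$. The upgrade to $\ee$-convergence then proceeds as you outline. For $\tilde U_{\tau_k}(t_k)$, the paper does not appeal to (\ref{eq: selections converge}) (which is stated for fixed $t,u$) but instead derives the bound $\sup_t(t-\underline t_\tau(t))\phi_{t,\underline U_\tau(t)}\bigl((\tilde U_\tau(t)-\underline U_\tau(t))/(t-\underline t_\tau(t))\bigr)\le C$ and applies (\ref{eq: phi.2.3}) directly to the state pair $(\underline U_\tau(t_k),\tilde U_\tau(t_k))$.
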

	
	\begin{proof}
		The first part follows \cite[Prop. 6.3]{MRS} up to (\ref{eq: U' bound}). We present it for the sake of completeness. Let $t_{n - 1}$ and $t_n$ be two neighboring nodes of the partition $\PP_\tau$. Apply inequality (\ref{eq: energy id 1}) with the assignments $t = t_{n - 1}$, $u = U^{n - 1}_\tau$, $r_0 = t - t_{n - 1}$, $u_{r_0} = \tilde{U}_\tau(t)$, $u_r = \tilde{U}_\tau(r)$, and $\xi_r = \tilde{\xi}_\tau(r)$ for $t_{n - 1} < r < t$ to find
		\begin{equation} \label{eq: anothr en id}
			\begin{gathered}
				\left( t - t_{n - 1} \right) \phi_{t, \underline{U}_\tau(r) } \left( \tfrac{\tilde{U}_\tau(t) - \underline{U}_\tau(t)}{t - t_{n - 1}} \right) + \int_{t_{n - 1}}^t \phi^*_{t, \underline{U}_\tau(r) } ( - \tilde{\xi}_\tau(r) ) \, dr
				+ \EE_t ( \tilde{U}_\tau(t) ) \\
				\le \EE_{t_{n - 1}} ( \overline{U}_\tau(t_{n - 1}) ) + \int_{t_{n - 1}}^t P_r ( \tilde{U}_\tau(r), \tilde{\xi}_\tau(r) ) \, dr.
			\end{gathered}
		\end{equation}
		For $t = t_n$, this yields
		\begin{equation} \label{eq: an en id}
			\begin{gathered}
				\int_{t_{n - 1}}^{t_n} \phi_{\overline{t}_\tau(r), \underline{U}_\tau(r) } ( U'_\tau(r) ) + \phi^*_{\overline{t}_\tau(r), \underline{U}_\tau(r)} ( - \tilde{\xi}_\tau(r) ) \, dr
				+ \EE_{t_n} ( \overline{U}_\tau(t_n) ) \\
				\le \EE_{t_{n - 1}} ( \overline{U}_\tau(t_{n - 1}) ) + \int_{t_{n - 1}}^{t_n} P_r ( \tilde{U}_\tau(r), \tilde{\xi}_\tau(r) ) \, dr.
			\end{gathered}
		\end{equation}
		Summing over the subintervals of the partition results in (\ref{eq: energy id 2}). By (\ref{eq: radially time differentiable}) and (\ref{eq: Gron cons}), we further estimate the last term in (\ref{eq: an en id}) to find it is bounded by
		\begin{align*}
			\int_{t_{n - 1}}^{t_n} P_r ( \tilde{U}_\tau(r), \tilde{\xi}_\tau(r) ) \, dr
			& \le \int_{t_{n - 1}}^{t_n} G_{T_0} ( \tilde{U}_\tau(r) ) f_2(r) \, dr \\
			& \le C_3 \exp \left( \int_{t_{n - 1} }^{t_n} f_1(r) \, dr ) \right) \int_{t_{n - 1}}^{t_n} G_{T_0} ( \underline{U}_\tau(r) ) f_2(r) \, dr.
		\end{align*}
		On the other hand, by (\ref{eq: G controlled}), $\EE_{t_n} ( \overline{U}_\tau(t_n) ) \ge C^{-1}_3 \GG_{T_0} ( \overline{U}_\tau(t_n) )$. Summing over the intervals of the partition, as $\phi \ge 0$ and $\phi^* \ge 0$, we obtain from (\ref{eq: an en id}) the inequality
		\begin{equation} \label{eq: rdy 4 Grnwll}
			\begin{aligned}
				\GG_{T_0} ( \overline{U}_\tau (t_k) )
				& \le C \left( \EE_0(u_0) + \int_0^{t_k} \GG_{T_0} ( \overline{U}_\tau(r) ) f_2(r) \, dr + 1 \right) \\
				& \le C \left( \EE_0(u_0) + 2 \tau \int_0^{t_k} \left| f_2(r) \right| \, dr \sum_{i = 1}^k \GG_{T_0} ( \overline{U}_\tau(t_i) ) \, dr + 1 \right),
			\end{aligned}
		\end{equation}
		where we used (\ref{eq: tau grd}) in the last step. The first estimate in (\ref{eq: E/P unf bds}) follows by invoking (\ref{eq: rdy 4 Grnwll}) and the discrete Gronwall inequality \cite[Lem. 4.5]{RS}. The second in (\ref{eq: E/P unf bds}) follows from (\ref{eq: mes sel ctr}) and the third from the second, (\ref{eq: absolute  continuity}), and (\ref{eq: radially time differentiable}) by an argument involving Lebesgue points. In total, the right-hand side in (\ref{eq: energy id 2}) is bounded on every compact subinterval of $\cl \left[ 0, T \right)$, hence (\ref{eq: phi/phi* int bds}) and (\ref{eq: U' nd xi bds}) follow by (\ref{eq: phi.2.2.2}). By (\ref{eq: anothr en id}), we have
		\begin{equation} \label{eq: U' bound}
			\sup_{0 \le t \le T_0} \left( t - \underline{t}_\tau(t) \right) \phi_{t, \underline{U}_\tau(t) } \left( \tfrac{\tilde{U}_\tau(t) - \underline{U}_\tau(t)}{t - \underline{t}_\tau(t)} \right) \le C_{T_0}.
		\end{equation}
		We claim the existence of a subsequence $\tau_k$ such that
		\begin{equation} \label{eq: w cpt cvg}
			U_{\tau_k}(t_k) \ddto u(t) \text{ whenever } t_k \to t \text{ in } \cl \left[ 0, T \right) \text{ as } k \uparrow \i.
		\end{equation}
		Let $w \in W$ and $T_0 \in \cl \left[ 0, T \right)$ be fixed. To prove (\ref{eq: w cpt cvg}), we first show that the function
		\begin{equation} \label{eq: u' q-ntgrbl}
			t \mapsto \langle w, U'_{\tau_k}(t) \rangle \text{ is equi-integrable on } \left( 0, T_0 \right).
		\end{equation}
		To do this, we use (\ref{eq: E/P unf bds}) and (\ref{eq: G controlled}) to find a constant $R > 0$ such that
		$$
		\GG_{T_0} \left( \underline{U}_\tau(t) \right) \le R \quad \forall t \in \left[ 0, T_0 \right].
		$$
		We then use this bound and the Fenchel-Young inequality to estimate for any Lebesgue set $L \in \LL \left( 0, T_0 \right)$ and any $ \kappa > 0$ that
		\begin{align*}
			\int_L \langle w, U'_{\tau_k}(r) \rangle \, dr
			& \le  \kappa \int_L \phi^*_{W, \overline{t}_{\tau_k}(r), \underline{U}_{\tau_k}(r) } \left( \tfrac{w}{ \kappa} \right) + \phi_{\overline{t}_{\tau_k}(r), \underline{U}_{\tau_k}(r) } ( U'_{\tau_k}(r) ) \, dr \\
			& \le  \kappa \int_L \sup_{\GG_{T_0}(u) \le R}
			\phi^*_{W, \overline{t}_{\tau_k}(r), u } \left( \tfrac{w}{ \kappa} \right) \, dr + S  \kappa.
		\end{align*}
		Hence, taking $ \kappa > 0$ sufficiently small, we find (\ref{eq: u' q-ntgrbl}) by (\ref{eq: phi.2.4}). Next, we show that $U_{\tau_k}(t)$ is weakly equi-continuous and uniformly norm bounded on compact intervals. To do this, we first combine (\ref{eq: u' q-ntgrbl}) with a Poincaré inequality and the initial condition $U_{\tau_k}(0) = u_0$ to find
		\begin{equation} \label{eq: w-eqi cont}
			t \mapsto \langle w, U_{\tau_k}(t) \rangle \text{ is equi-continuous, uniformly bounded on compact sets.}
		\end{equation}
		We then use the Banach-Steinhaus uniform boundedness principle to conclude the uniform norm boundedness. In doing this, we have identified $V_\dd$ with a closed subspace of $W^*$ as explained below (\ref{eq: dual nrms}). In conclusion, we have (\ref{eq: ipol drv cvg}) by (\ref{eq: u' q-ntgrbl}) and a standard diagonal argument. In particular, (\ref{eq: w cpt cvg}) follows. To conclude that
		\begin{equation} \label{eq: cpt cvg}
			U_{\tau_k}(t_k) \eeto u(t) \text{ whenever } t_k \to t \text{ in } \cl \left[ 0, T \right) \text{ as } k \uparrow \i,
		\end{equation}
		we use that the convergence is true in $\dd$ by (\ref{eq: w cpt cvg}) and then appeal to (\ref{eq: crc}) and the pointwise sequential $\ee$-compactness of $U_{\tau_k}$. To show the convergence of $\tilde{U}_{\tau_k}$, we first prove that if $t_k \to t$ in $\cl \left[ 0, T \right)$, then
		\begin{equation} \label{eq: tld cpt}
			\text{the set } \tilde{U}_{\tau_k}(t_k) \text{ is sequentially relatively } \ee \text{-compact}.
		\end{equation}
		This is established using minimality of $\tilde{U}_\tau(t)$ and (\ref{eq: phi.2.2.1}) in combination with (\ref{eq: E/P unf bds}). In view of sending $\tau_k \to 0^+$, we may assume $\tau < \tfrac{1}{2}$ so that the mesh size of the partition $\PP_\tau$ is less than $1$ by (\ref{eq: tau grd}). Setting $t = t_{n - 1} + \theta \in \left( t_{n - 1}, t_n \right]$ and using (\ref{eq: G controlled}), we have
		\begin{align*}
			C_{T_0} & \ge \EE_t \left( \underline{U}_\tau(t) \right) = \EE_t \left( U^{n - 1}_\tau \right) \\
			& \ge \EE_t \left( \tilde{U}_\tau(t) \right) + \theta \phi_{t, U^{n - 1}_\tau} \left( \tfrac{\tilde{U}_\tau(t) - U^{n - 1}_\tau}{\theta} \right) \\
			& \ge \EE_t \left( \tilde{U}_\tau(t) \right) + \phi_{t, U^{n - 1}_\tau} \left( \tilde{U}_\tau(t) - U^{n - 1}_\tau \right) \\
			& \ge C \GG_{T_0} \left( \tilde{U}_\tau(t) \right) + C \| \tilde{U}_\tau(t) - U^{n - 1}_\tau \|.
		\end{align*}
		Then, invoking (\ref{eq: tld cpt}), we extract a subsequence such that $\tilde{U}_{\tau_k}$ converges to a limit $U_\i$. Finally, using (\ref{eq: U' bound}), we conclude the bound
		\begin{align*}
			C_{T_0}
			& \ge \left( t_k - \underline{t}_\tau(t_k) \right) \phi_{t_k, \underline{U}_\tau(t_k) } \left( \tfrac{\tilde{U}_\tau(t_k) - \underline{U}_\tau(t_k)}{t_k - \underline{t}_\tau(t_k)} \right) \\
			& \ge \theta \phi_{t_k, \underline{U}_\tau(t_k) } \left( \tfrac{\tilde{U}_\tau(t_k) - \underline{U}_\tau(t_k)}{\theta} \right).
		\end{align*}
		This implies
		$$
		C_{T_0} \ge \lim_{\theta \to 0} \liminf_{k \to \i} \theta \phi_{t_k, \underline{U}_\tau(t_k) } \left( \tfrac{\tilde{U}_\tau(t_k) - \underline{U}_\tau(t_k)}{\theta} \right),
		$$
		whence we conclude $U_\i = u(t)$ by (\ref{eq: phi.2.3}).
	\end{proof}
	
	\begin{proposition} \label{pr: Y ms cnstr}
		Assume $(\ref{eq: phi.1})$-$(\ref{eq: phi.2.3}), (\ref{eq: phi.3})$-$(\ref{eq: phi.4.2})$, and $(\ref{eq: assu en only})$. For every sequence $\tau_k \downarrow 0$ of time steps, there exist a subsequence (not relabeled), a weakly differentiable curve $u \colon \cl \left[ 0, T \right) \to V_\dd$ with an $\LL_I \otimes \BB \left( V_\dd \right)$-measurable and locally integrable derivative $u' \colon \cl \left[ 0, T \right) \to V_\dd$, a function $E \colon \cl \left[ 0, T \right) \to \R$ of locally bounded variation, and a time-dependent Young measure
		$$
		\mu = \left( \mu_t \right)_{t \in \cl \left[ 0, T \right) } \in \YY \left( \cl \left[ 0, T \right); V_\dd \times X_{\dd^*} \times \R \right)
		$$
		associated with the sequence of tuples
		$$
		\left( U'_{\tau_k}, \tilde{\xi}_{\tau_k}, P_{\tau_k} \right)
		$$
		in the space $V_\dd \times X_{\dd^*} \times \R$	such that, as $k \uparrow \i$, there hold the following relations and convergences:
		\begin{align}
			\begin{cases}
				\EE_t \left( \overline{U}_{\tau_k}(t) \right) \to E(t) \quad & \forall t \in \cl \left[ 0, T \right), \quad E(0) = \EE_0(u_0),		\\
				E(t) \ge \EE_t (u(t) ) \quad & \forall t \in \cl \left[ 0, T \right); \\
			\end{cases}
			\label{eq: E cvg}
		\end{align}
		If (\ref{eq: cl str impl}), then
		\begin{equation} \label{eq: E cvg II}
			E(t) = \EE_t(u(t) ) \quad \text{for a.e. } t \in \left( 0, T \right);
		\end{equation}
		Setting
		\begin{align}
			\tilde{\xi}(t) & = \int \eta \, d \mu_t(v, \eta, p) \quad \text{ for a.e. } t \in \left( 0, T \right), \label{eq: xi exp} \\
			P(t) & = \int p \, d \mu_t(v, \eta, p) \quad \text{ for a.e. } t \in \left( 0, T \right),
		\end{align}
		we have
		\begin{align}
			& u'(t) = \int v \, d \mu_t(v, \eta, p) \quad \text{ for a.e. } t \in \left( 0, T \right), \label{eq: t drv exp} \\
			& \tilde{\xi}_{\tau_k} \to \tilde{\xi} \text{ in the biting sense of } V_1 \left( 0, T_0 ; X_{\dd^*} \right) \quad \forall T_0 \in \cl \left[ 0, T \right), \\
			&
			\begin{gathered} \label{eq: P exp}
				P_{\tau_k} \weak P \text{ in } L_1 \left( 0, T_0 \right) \quad \forall T_0 \in \cl \left[ 0, T \right) \text{ with } \\
				P(t) \le \int P_t \left( u(t), \eta \right) \, d \mu_t(v, \eta, p) \quad \text{ for a.e. } t \in \left( 0, T \right);
			\end{gathered}
		\end{align}
		For any $T_0 \in \cl \left[ 0, T \right)$ and $0 \le s \le t \le T_0$,
		\begin{equation} \label{eq: en ineq}
			\begin{gathered}
				\int_s^t \int \phi_{r, u(r) } (v) + \phi^*_{r, u(r) } (- \eta) \, d \mu_r(v, \eta, p) \, dr + E(t) \\
				\le E(s) + \int_s^t P(r) \, d r \le E(s) + \int_s^t \int P_r(u(r), \eta ) \, d \mu_r(v, \eta, p) \, dr.
			\end{gathered}
		\end{equation}
	\end{proposition}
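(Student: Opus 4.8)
The plan is to start from Proposition~\ref{pr: ntrplnts tgthr}, which already produces the limit curve $u$ with its $\LL_I\otimes\BB(V_\dd)$-measurable weak derivative $u'$, the uniform bounds (\ref{eq: E/P unf bds})--(\ref{eq: U' nd xi bds}), and, along a subsequence, the convergences (\ref{eq: ipol cvg})--(\ref{eq: ipol drv cvg}). Three further compactness extractions then yield the remaining objects. First, by the second estimate in (\ref{eq: E/P unf bds}) the densities $P_{\tau_k}=P_\cdot(\tilde{U}_{\tau_k}(\cdot),\tilde{\xi}_{\tau_k}(\cdot))$ are dominated on each $(0,T_0)$ by the fixed function $C+f_1+f_2\in L_1(0,T_0)$, hence equi-integrable. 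Second, together with the $V_1$-bounds on $U'_{\tau_k}$ and $\tilde{\xi}_{\tau_k}$ from (\ref{eq: U' nd xi bds}) and the sequential $\dd$- resp.\ $\dd^*$-compactness of the unit balls of $V$ and $X$, this makes the sequence of tuples $(U'_{\tau_k},\tilde{\xi}_{\tau_k},P_{\tau_k})$ tight in the sense required by the fundamental theorem of Young measures on Suslin spaces from Appendix~\ref{prt: Appendix}, so a further subsequence admits the time-dependent Young measure $\mu=(\mu_t)_t\in\YY(\cl[0,T);V_\dd\times X_{\dd^*}\times\R)$. Third, by (\ref{eq: an en id}) the map $t\mapsto\EE_{\overline{t}_{\tau_k}(t)}(\overline{U}_{\tau_k}(t))+\int_0^t(C+f_1+f_2)$ is non-decreasing with values in a fixed bounded set by (\ref{eq: E/P unf bds}); Helly's selection theorem then gives, along a further subsequence, convergence at every $t$, and subtracting the absolutely continuous correction defines the function $E$ of locally bounded variation, with $E(0)=\EE_0(u_0)$; the same limit is reached by $\EE_t(\overline{U}_{\tau_k}(t))$ thanks to (\ref{eq: absolute  continuity}).

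Next I would identify the barycenters. Dunford--Pettis yields $P_{\tau_k}\weak P$ in $L_1(0,T_0)$ for a subsequence with $P(t)=\int p\,d\mu_t$; the barycentric formulas of Young measure theory likewise give (\ref{eq: xi exp}) and (\ref{eq: t drv exp}), the latter matching the first marginal's barycenter with $u'$ via (\ref{eq: ipol drv cvg}), and give the biting convergence $\tilde{\xi}_{\tau_k}\to\tilde{\xi}$ after exhausting by the sets on which these $V_1$-bounded curves are equi-integrable. The inequality $E(t)\ge\EE_t(u(t))$ for every $t$ follows from the energy lower semicontinuity (\ref{eq: lsc}) along $\overline{U}_{\tau_k}(t)\eeto u(t)$, which is (\ref{eq: ipol cvg}), using $\sup_k|\overline{U}_{\tau_k}(t)|_V<\i$ from Proposition~\ref{pr: ntrplnts tgthr}.

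For the refinements I would exploit the closedness implication. At a.e.\ $t$, any point in the support of $\mu_t$ is a limit point of $(U'_{\tau_k}(t),\tilde{\xi}_{\tau_k}(t),P_{\tau_k}(t))$, so there is a sub-subsequence along which $\tilde{\xi}_{\tau_{k_j}}(t)\dddto\eta$, $P_{\tau_{k_j}}(t)\to p$ while $\tilde{U}_{\tau_{k_j}}(t)\eeto u(t)$ with $\limsup_j(|\tilde{U}_{\tau_{k_j}}(t)|_V+\EE_t(\tilde{U}_{\tau_{k_j}}(t)))<\i$ by the uniform bounds; the weak closedness implication (\ref{eq: cc ssmptn})--(\ref{eq: cc cnclsn}) then forces $\eta\in F_t(u(t))$ and $p\le P_t(u(t),\eta)$, so $\mu_t$ is concentrated on $\{(v,\eta,p):p\le P_t(u(t),\eta)\}$; integrating against $\mu_t$ gives $P(t)\le\int P_t(u(t),\eta)\,d\mu_t$, which is (\ref{eq: P exp}). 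If moreover the continuous closedness implication (\ref{eq: cl str impl}) holds, the same passage—applied on the grid, where $\overline{U}_{\tau_k}$ and $\tilde{U}_{\tau_k}$ coincide—additionally yields $\EE_t(\overline{U}_{\tau_k}(t))\to\EE_t(u(t))$, so that $E(t)=\EE_t(u(t))$ for a.e.\ $t$, which is (\ref{eq: E cvg II}).

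Finally I would pass to the limit in the discrete energy estimate (\ref{eq: energy id 2}). For fixed $0\le s\le t\le T_0$ one has $\overline{t}_{\tau_k}(s)\to s$, $\overline{t}_{\tau_k}(t)\to t$, hence $\EE_{\overline{t}_{\tau_k}(t)}(\overline{U}_{\tau_k}(t))\to E(t)$ and likewise at $s$ by the previous steps and (\ref{eq: absolute  continuity}); the power term converges to $\int_s^t P(r)\,dr\le\int_s^t\int P_r(u(r),\eta)\,d\mu_r\,dr$ by the weak $L_1$-convergence and the bound just proved. For the two dissipation integrals one invokes the lower semicontinuity of Young-measure functionals with respect to a normal integrand: on each closed Lusin set $A_n$ fixed in Section~\ref{sec: Approximation}, intersected with a sublevel of $\GG_{T_0}$, the integrand $(r,v)\mapsto\phi_{r,u(r)}(v)$ is jointly lower semicontinuous, and for $r\in A_n$ the quantity $\phi_{\overline{t}_{\tau_k}(r),\underline{U}_{\tau_k}(r)}(v)$ has a liminf bounded below by $\phi_{r,u(r)}(v)$ thanks to (\ref{eq: phi.4.1}), since $\overline{t}_{\tau_k}(r)\to r$ stays in $A_n$ and $\underline{U}_{\tau_k}(r)\eeto u(r)$ with uniformly bounded norm and energy; hence
\begin{equation*}
\liminf_k\int_{A_n\cap(s,t)}\phi_{\overline{t}_{\tau_k}(r),\underline{U}_{\tau_k}(r)}(U'_{\tau_k}(r))\,dr\ge\int_{A_n\cap(s,t)}\int\phi_{r,u(r)}(v)\,d\mu_r(v,\eta,p)\,dr,
\end{equation*}
and letting $n\to\i$, using $\lambda(\cl[0,T)\setminus A_n)\to0$, Fatou, and $\phi\ge0$, yields the liminf inequality on $(s,t)$; the $\phi^*$-term is treated identically via (\ref{eq: phi.4.2}). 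Assembling these limits in (\ref{eq: energy id 2}) gives (\ref{eq: en ineq}). I expect this last step to be the main obstacle: reconciling the merely $\LL_I$-measurable time dependence of $\phi$ and $\phi^*$ with the joint lower semicontinuity required by the Young-measure liminf inequality is exactly what forces the Lusin exhaustion $A_n$ and the attendant biting/Egorov bookkeeping.
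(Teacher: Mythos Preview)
Your proposal follows essentially the same strategy as the paper's own proof: start from Proposition~\ref{pr: ntrplnts tgthr}, extract the Young measure via Theorem~\ref{thm: Y meas}, use Helly for the energy, identify barycenters, invoke the closedness implication on the support of $\mu_t$, and pass to the limit in (\ref{eq: energy id 2}) via the $\Gamma$-$\liminf$ inequality on the Lusin sets $A_n$. The one concrete slip is in your Helly step: from (\ref{eq: energy id 2}) and $\phi,\phi^*\ge 0$ you only obtain $\EE_{\overline{t}_{\tau_k}(t)}(\overline{U}_{\tau_k}(t))-\EE_{\overline{t}_{\tau_k}(s)}(\overline{U}_{\tau_k}(s))\le\int_{\overline{t}_{\tau_k}(s)}^{\overline{t}_{\tau_k}(t)}P_{\tau_k}$, which makes $t\mapsto\EE_{\overline{t}_{\tau_k}(t)}(\overline{U}_{\tau_k}(t))-\int_0^{\overline{t}_{\tau_k}(t)}P_{\tau_k}$ non\emph{increasing} (this is what the paper uses, cf.\ (\ref{eq: mono decr})); your claim that $\EE_{\overline{t}_{\tau_k}(t)}(\overline{U}_{\tau_k}(t))+\int_0^t(C+f_1+f_2)$ is non\emph{decreasing} would require a lower bound on the energy increment that the discrete estimate does not provide. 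Also, for (\ref{eq: E cvg II}) the continuous closedness implication must be applied at a.e.\ $t$ with $u_n=\tilde{U}_{\tau_k}(t)$ and $\xi_n=\tilde{\xi}_{\tau_k}(t)\in F_t(\tilde{U}_{\tau_k}(t))$ (using $\liminf_k\|\tilde{\xi}_{\tau_k}(t)\|_X<\i$ a.e.\ by Fatou and (\ref{eq: U' nd xi bds})), not merely ``on the grid''; grid points form a null set, so restricting to them does not yield an a.e.\ statement.
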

	
	\begin{proof}
		The proof of (\ref{eq: E cvg}), (\ref{eq: E cvg II}) follows closely the proof of \cite[Prop. 6.4]{MRS}. We present the relevant parts here for the sake of completeness. By Proposition \ref{pr: ntrplnts tgthr}, we may extract a subsequence $\tau_k$ such that (\ref{eq: ipol cvg}) and (\ref{eq: ipol drv cvg}) hold. From the locally integrable bound on $P$ in (\ref{eq: E/P unf bds}), we find a further diagonal subsequence $\tau_k$ (not relabeled) such that
		\begin{equation} \label{eq: P precpt in Linf}
			P_{\tau_k} \weak P \text{ in } L_1 \left( 0, T_0 \right) \quad \forall T_0 \in \cl \left[ 0, T \right).
		\end{equation}
		To prove (\ref{eq: E cvg}), we observe first that
		\begin{equation} \label{eq: mono decr}
			t \mapsto \eta_\tau(t) = \EE_{\overline{t}_\tau(t) } \left( \overline{U}_\tau(t) \right) - \int_0^{\overline{t}_\tau(t) } P_\tau(r) \, dr \text{ decreases on } \cl \left[ 0, T \right).
		\end{equation}
		Hence, by the Helly selection theorem, there exists a non-increasing function $\eta \colon \cl \left[ 0, T \right) \to \R$ and a diagonal subsequence $\tau_k$ such that $\eta_{\tau_k}(t) \to \eta(t)$ for all $t \in \cl \left[ 0, T \right)$. By (\ref{eq: P precpt in Linf}),
		\begin{equation} \label{eq: E cvg 2}
			\lim_k \EE_{\overline{t}_{\tau_k}(t) } \left( \overline{U}_{\tau_k}(t) \right) = E(t) = \eta(t) + \int_0^t P(r) \, dr \quad \forall t \in \cl \left[ 0, T \right).
		\end{equation}
		Thereby, we find the first of (\ref{eq: P exp}) because (\ref{eq: absolute  continuity}) and (\ref{eq: E/P unf bds}) imply
		\begin{equation} \label{eq: E cvg 3}
			\begin{gathered}
				\left| \EE_{\overline{t}_{\tau_k}(t) } \left( \overline{U}_\tau(t) \right) - \EE_t \left( \overline{U}_\tau(t) \right) \right|
				\le \GG \left( \overline{U}_\tau(t) \right) \exp \left( \int_{\left[ t, \overline{t}_{\tau_k}(t) \right] } f_1(r) \, dr \right) \\
				\le S \exp \left( \int_{\left[ t, \overline{t}_{\tau_k}(t) \right] } f_1(r) \, dr \right) \to 0 \text{ as } \tau \to 0.
			\end{gathered}
		\end{equation}
		Now, the second of (\ref{eq: E cvg}) follows by the lower semicontinuity of $\EE_t$ and (\ref{eq: ipol cvg}), while (\ref{eq: E cvg II}) follows by the continuous closedness implication (\ref{eq: cl str impl}). Note in this regard that
		$$
		\liminf_{k \uparrow \i} \| \tilde{\xi}_{\tau_k}(t) \|_X < + \i \quad \text{ for a.e. } t \in \left( 0, T \right)
		$$
		by the Fatou lemma and the second of (\ref{eq: U' nd xi bds}). We now adapt \cite[Prop. 6.4]{MRS} to time-dependent dissipation potentials. By (\ref{eq: U' nd xi bds}) and the last of (\ref{eq: E/P unf bds}), we may apply the Young measure result Theorem \ref{thm: Y meas} to the sequence
		$$
		\left( U'_{\tau_k}, \tilde{\xi}_{\tau_k}, P_{\tau_k} \right)
		$$
		to find a subsequence (not relabeled) and a limit Young measure $\mu$ such that, for a.e. $t \in (0, T)$, there holds
		\begin{equation} \label{eq: Y meas conc}
			\begin{gathered}
				\mu_t \text{ is concentrated on } L(t), \text{ the set of limit points of } \left( U'_{\tau_k}, \tilde{\xi}_{\tau_k}, P_{\tau_k} \right)(t) \\
				\text{ with respect to the topology } \dd \times \dd^* \times \R,
			\end{gathered}
		\end{equation}
		and for $\mu$ we have (\ref{eq: G-liminf}) and (\ref{eq: exp b cvg 2}). The latter relations imply that the right side of (\ref{eq: t drv exp}) is indeed a weak* time derivative of $u$ by (\ref{eq: ipol drv cvg}) and that the marginal expectation in (\ref{eq: xi exp}) is a biting limit of $\tilde{\xi}_{\tau_k}$. Since $V_\dd$ and $X_{\dd^*}$ are locally convex Hausdorff spaces, we may apply the Jensen inequality to find
		\begin{equation} \label{eq: Jensen}
			\begin{cases}
				& \int \phi_{t, u(t) } (v) \, \mu_t(v, \eta, p) \ge \phi_{t, u(t) } \left( u'(t) \right), \\
				& \int \phi^*_{t, u(t) } (-\eta) \, \mu_t(v, \eta, p) \ge \phi_{t, u(t) } \left( - \tilde{\xi}(t) \right)
			\end{cases}
			\quad \text{ for a.e. } t \in (0, T).
		\end{equation}
		Invoking (\ref{eq: ch ru}), we pass to the limit in the Euler equation (\ref{eq: var interpol xi})	to find by (\ref{eq: ipol cvg}) and the first of (\ref{eq: E cvg}) that, for a.e. $t \in (0, T)$, there holds
		\begin{equation} \label{eq: L property}
			\forall (v, \eta, p) \in L(t) \quad \eta \in F_t(u(t) ), \quad p \le P_t(u(t), \eta).
		\end{equation}
		From this, we obtain the inequality in (\ref{eq: P exp}) by the convergence of expectations (\ref{eq: exp b cvg 2}). Applying the $\Gamma$-$\liminf$ inequality (\ref{eq: G-liminf}) to the integrands
		$$
		f_k(t, v, \eta, p) = \phi_{\overline{t}_{\tau_k}(t), \underline{U}_{\tau_k}(t) } (v)
		$$
		fulfilling (\ref{eq: G-liminf assu}) by (\ref{eq: phi.4.1}), (\ref{eq: E/P unf bds}) and (\ref{eq: ipol cvg}) on each set $A_n$ where $\phi$ and $\phi^*$ are jointly lower semicontinuous. Consequently, for all $0 \le s \le t \le T_0$ and $T_0 \in \cl \left[ 0, T \right)$, there holds
		\begin{equation} \label{eq: Y meas ineq}
			\begin{gathered}
				\liminf_{k \uparrow \i} \int_{\overline{t}_{\tau_k}(s) }^{\overline{t}_{\tau_k}(t) } \phi_{\overline{t}_{\tau_k}(r), \overline{U}_{\tau_k}(t) } \left( U'_{\tau_k}(r) \right) \, dr
				\\
				\ge \int_{A_n \cap \left( s, t \right)} \int \phi_{r, u(r) } (v) \, d \mu_r(v, \eta, p) \, dr.
			\end{gathered}
		\end{equation}
		Considering the integrands
		$$
		f_k(t, v, \eta, p) = \phi^*_{\overline{t}_{\tau_k}(t), \underline{U}_{\tau_k}(t) } (\eta),
		$$
		which on $A_n$ fulfill (\ref{eq: G-liminf assu}) by (\ref{eq: phi.4.2}), (\ref{eq: E/P unf bds}) and (\ref{eq: ipol cvg}), yields
		\begin{equation} \label{eq: Y meas ineq 2}
			\begin{gathered}
				\liminf_{k \uparrow \i} \int_{\overline{t}_{\tau_k}(s) }^{\overline{t}_{\tau_k}(t) } \phi^*_{\underline{t}_{\tau_k}(r), \underline{U}_{\tau_k}(t) } \left( - \tilde{\xi}_{\tau_k}(r) \right) \, dr
				\\
				\ge \int_{A_n \cap \left( s, t \right)} \int \phi^*_{r, u(r) } ( - \eta) \, d \mu_r(v, \eta, p) \, dr.
			\end{gathered}
		\end{equation}
		Taking the supremum with respect to $n \in \N$, we may replace $A_n \cap \left( s, t \right)$ by $\left( s, t \right)$ in (\ref{eq: Y meas ineq}) and (\ref{eq: Y meas ineq 2}) due to (\ref{eq: smll rst}). In total, we may pass to the limit in (\ref{eq: energy id 2}): Employing (\ref{eq: E cvg}), (\ref{eq: P exp}), (\ref{eq: P precpt in Linf}), (\ref{eq: Y meas ineq}) and (\ref{eq: Y meas ineq 2}), we find (\ref{eq: en ineq}).
	\end{proof}
	
	\section{The main theorems}
	
	\begin{theorem} \label{thm: main 1}
		Let the assumptions of Proposition \ref{pr: Y ms cnstr} and either (\ref{eq: ch ru}) or (\ref{eq: ch ru wkr}) hold. Then, for every $u_0 \in D$, there exist a measurable curve $u \colon \cl \left[ 0, T \right) \to V_\dd$ having a locally integrable, $\LL_I \otimes \BB \left( V_\dd \right)$-measurable time derivative $u' \colon \left[ 0, T \right) \to V_\dd$ and a locally integrable, $\LL_I \otimes \BB \left( X_{\dd^*} \right)$-measurable function $\xi \colon \cl \left[ 0, T \right) \to X_{\dd^*}$ such that
		\begin{enumerate}
			
			\item the tuple $(u, \xi)$ is an energy solution to the generalized gradient system $\left( V_\delta, X_{\dd^*}, \EE, \phi, \langle \cdot, \cdot \rangle_{X, V}, F, P \right)$ in the sense of Definition \ref{def: sol} if (\ref{eq: ch ru ineq}) holds;
			
			\item the tuple $(u, \xi)$ is a Lyapunov solution in the sense of Definition \ref{def: sol} if (\ref{eq: ch ru wkr}) and (\ref{eq: cl str impl}) hold.
			
		\end{enumerate}
		Moreover, if (\ref{eq: ch ru ineq}), then for any family of approximate solutions
		$$
		\left( \tilde{U}_\tau, \tilde{\xi}_\tau \right)_{\tau > 0},
		$$
		there exists a sequence $\tau_k \downarrow 0$ such that, as $k \uparrow \i$, there hold the convergences
		\begin{equation} \label{eq: cvgs}
			\begin{gathered}
				\EE_{\overline{t}_{\tau_k}(t) } \left( \overline{U}_{\tau_k}(t) \right) \to \EE_t \left( u(t) \right) \quad \forall t \in \cl \left[ 0, T \right),
				\\
				\int_{\overline{t}_{\tau_k}(s) }^{\overline{t}_{\tau_k}(t) } \phi_{\overline{t}_\tau(r), \underline{U}_\tau(r) } \left( U'_{\tau_k}(r) \right) \, dr \to \int_s^t \phi_{r, u(t) } \left( u'(r) \right) \, dr,
				\\
				\int_{\overline{t}_{\tau_k}(s) }^{\overline{t}_{\tau_k}(t) } \phi^*_{\overline{t}_\tau(r), \underline{U}_\tau(r) } \left( - \tilde{\xi}_{\tau_k}(r) \right) \, dr \to \int_s^t \phi_{r, u(t) } \left( - \xi(r) \right) \, dr,
			\end{gathered}
		\end{equation}
		for all $s, t \in \cl \left[ 0, T \right)$.
	\end{theorem}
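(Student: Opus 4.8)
The plan is to run, in the present degenerate, non-reflexive and merely time-measurable situation, the ``lower semicontinuity meets chain rule'' squeeze of \cite{MRS}, with the Young measure of Proposition~\ref{pr: Y ms cnstr} as the fundamental object and a measurable selection in place of the barycenter. Fix $u_0 \in D$ and an arbitrary $\tau_k \downarrow 0$. First I invoke Proposition~\ref{pr: Y ms cnstr} to pass to a (non-relabelled) subsequence, obtaining the weakly differentiable curve $u$ with derivative $u'$, the locally bounded-variation function $E$, and the Young measure $\mu$, together with (\ref{eq: E cvg})--(\ref{eq: en ineq}); from (\ref{eq: ipol cvg}) at $t_k \equiv 0$ one reads $u(0) = u_0$, and from (\ref{eq: E cvg}) that $E(0) = \EE_0(u_0) = \EE_0(u(0))$, that $E(t) \ge \EE_t(u(t))$ everywhere, and that $\sup_{0 \le r \le T_0} \EE_r(u(r)) \le \sup_{0 \le r \le T_0} E(r) < \infty$ for each $T_0$ by (\ref{eq: E/P unf bds}). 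Next I fix the selection $\xi$: by (\ref{eq: L property}) the $\eta$-marginal of $\mu_t$ is, for a.e.\ $t$, carried by the $\dd^*$-closed set $F_t(u(t))$, on which $\eta \mapsto \phi^*_{t,u(t)}(-\eta)$ is lower semicontinuous and $\dd^*$-coercive by (\ref{eq: phi.4.2}) and (\ref{eq: phi.2.2.2}) while $\eta \mapsto P_t(u(t),\eta)$ is bounded above by (\ref{eq: radially time differentiable}); hence $\eta \mapsto \phi^*_{t,u(t)}(-\eta) - P_t(u(t),\eta)$ attains its minimum over $\supp \mu_t$, and a measurable selection theorem (as in \cite{CaVa}) produces an $\LL_I \otimes \BB(X_{\dd^*})$-measurable $\xi \colon \cl[0,T) \to X_{\dd^*}$ with $\xi(t) \in F_t(u(t))$ a.e.\ and
\begin{equation*}
	\phi^*_{t,u(t)}(-\xi(t)) - P_t(u(t),\xi(t)) \le \int \left[ \phi^*_{t,u(t)}(-\eta) - P_t(u(t),\eta) \right] \, d\mu_t(v,\eta,p) \qquad \text{for a.e.\ } t .
\end{equation*}
Using Jensen (\ref{eq: Jensen}) on the $\phi$-integrand, the finiteness of the left side of (\ref{eq: en ineq}), this selection inequality, and the upper bound in (\ref{eq: radially time differentiable}), one checks that $\int_0^S \phi_{r,u(r)}(u'(r)) + \phi^*_{r,u(r)}(-\xi(r)) \, dr < \infty$ on each $(0,S)$, while (\ref{eq: phi.2.2.2}) yields local integrability of $\xi$; thus the chain rule applies to the pair $(u,\xi)$.

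Now the squeeze. Starting from the far right-hand side of (\ref{eq: en ineq}), applying Jensen (\ref{eq: Jensen}) to the $\phi$-term and the selection inequality to replace $\int \phi^*_{r,u(r)}(-\eta) \, d\mu_r$, the terms $\int_s^t \int P_r(u(r),\eta)\, d\mu_r\, dr$ cancel and I obtain the \emph{upper estimate}
\begin{equation*}
	\int_s^t \phi_{r,u(r)}(u'(r)) + \phi^*_{r,u(r)}(-\xi(r)) \, dr + E(t) \le E(s) + \int_s^t P_r(u(r),\xi(r)) \, dr , \qquad 0 \le s \le t .
\end{equation*}
From the chain rule (\ref{eq: ch ru}) for $(u,\xi)$, integrated over $[s,t]$ (the distributional derivative of the bounded-variation function $r \mapsto \EE_r(u(r))$ dominating the integrable minorant $\langle \xi(r),u'(r) \rangle + P_r(u(r),\xi(r))$), together with Fenchel--Young $\langle \xi(r),u'(r) \rangle \ge - \phi_{r,u(r)}(u'(r)) - \phi^*_{r,u(r)}(-\xi(r))$, I obtain the \emph{lower estimate}
\begin{equation*}
	\int_s^t \phi_{r,u(r)}(u'(r)) + \phi^*_{r,u(r)}(-\xi(r)) \, dr + \EE_t(u(t)) \ge \EE_s(u(s)) + \int_s^t P_r(u(r),\xi(r)) \, dr .
\end{equation*}
Taking $s = 0$ and using $E(0) = \EE_0(u(0))$, subtracting the two estimates forces $E(t) \le \EE_t(u(t))$; with the reverse inequality from (\ref{eq: E cvg}) this gives $E(t) = \EE_t(u(t))$ for all $t \in \cl[0,T)$, whereupon the upper and lower estimates coincide and become the energy identity (\ref{eq: en id 2}) for all $0 \le s \le t$ (the remaining null set of $t$ being filled in by the absolute continuity of the right-hand side), hence for all $s,t$ by rearrangement.

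Since every intermediate inequality is now an equality, the chain rule holds with equality and, in particular, Fenchel--Young holds with equality a.e., i.e.\ $-\xi(t) \in \p \phi_{t,u(t)}(u'(t))$ for a.e.\ $t$; together with $\xi(t) \in F_t(u(t))$ this is (\ref{eq: DNL inc}). As the right side of (\ref{eq: en id 2}) is absolutely continuous in $t$, so is $r \mapsto \EE_r(u(r))$, and $(u,\xi)$ is an energy solution, which is the first assertion. For the second assertion, one replaces (\ref{eq: ch ru}) by (\ref{eq: ch ru wkr}) and adds (\ref{eq: cl str impl}): the upper estimate above uses only (\ref{eq: en ineq}) and Jensen, and combined with $E(t) = \EE_t(u(t))$ for a.e.\ $t$ (from (\ref{eq: E cvg II})) and $E(t) \ge \EE_t(u(t))$ everywhere it yields (\ref{eq: en id 3}) for a.e.\ $s$ and all $t \ge s$, extended to all $s$ via the lower semicontinuity of $\EE_t$ and the monotone structure of $E$; (\ref{eq: DNL inc}) then follows by confronting the upper estimate with the a.e.\ inequality of (\ref{eq: ch ru wkr}) to force Fenchel--Young equality along $u$. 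Finally, the convergences (\ref{eq: cvgs}) fall out of this rigidity: the discrete energy estimate (\ref{eq: energy id 2}) converges to the (now equality) energy identity, so the $\Gamma$-$\liminf$ bounds (\ref{eq: Y meas ineq}), (\ref{eq: Y meas ineq 2}) become equalities, the $\phi$- and $\phi^*$-integrals along the interpolants converge to $\int_s^t \phi_{r,u(r)}(u'(r))\,dr$ and $\int_s^t \phi^*_{r,u(r)}(-\xi(r))\,dr$ respectively (by tightness of Jensen), and $\EE_{\overline{t}_{\tau_k}(t)}(\overline{U}_{\tau_k}(t)) \to E(t) = \EE_t(u(t))$ for every $t$ was established above.

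The hard part will be the bookkeeping inside the squeeze, namely choosing $\xi$ measurably so that the chain rule is applicable \emph{and} so that the resulting lower estimate matches, term by term, the Jensen-reduced upper estimate. The difficulty is that the limit Young measure $\mu_t$ need not be a Dirac mass, so one cannot feed the barycenter $\tilde\xi$ of (\ref{eq: xi exp}) into the chain rule unless $F_t(u(t))$ is convex (which holds for the standard subdifferentials but is not assumed here); minimizing $\eta \mapsto \phi^*_{t,u(t)}(-\eta) - P_t(u(t),\eta)$ over $\supp \mu_t$ is precisely the device that makes the two estimates coincide without convexity. A secondary technical point is the passage from ``a.e.\ $s$'' to ``all $s$'' in the Lyapunov case, and the legitimacy of integrating the distributional chain-rule inequality against constants for a merely bounded-variation energy.
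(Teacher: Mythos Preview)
Your approach for the energy-solution case (assertion~1) is correct and in fact more economical than the paper's. The paper proceeds in two passes: first it invokes a \emph{Young measure} chain rule (Lemma~\ref{lem: YM chain-rule ineq}), deducing from (\ref{eq: ch ru}) that
\[
\frac{d}{dt}\EE_t(u(t)) \ge \int \langle \eta, u'(t)\rangle + p \, d\mu_t
\]
and hence, via (\ref{eq: res Jens 2}), that $\mu_t$ is already supported on $-\p\phi_{t,u(t)}(u'(t))\cap F_t(u(t))$; only then does it select $\xi$ from this smaller set $\SS_t$ and re-apply (\ref{eq: ch ru}) to the pair $(u,\xi)$ in Step~3. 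You collapse these two passes into one: select $\xi(t)$ from the $\eta$-support of $\mu_t$ (known only to lie in $F_t(u(t))$), apply (\ref{eq: ch ru}) once to $(u,\xi)$, and let the single squeeze force both $E(t)=\EE_t(u(t))$ and the Fenchel--Young equality simultaneously. The price is that your selection set is larger, so you must argue directly (via (\ref{eq: phi.2.2.2}), (\ref{eq: phi.4.2}), and upper semicontinuity of $P_t(u(t),\cdot)$ from (\ref{seq: cl cnd})) that the minimum of $\phi^*_{t,u(t)}(-\eta)-P_t(u(t),\eta)$ is attained; this is the content of Lemma~\ref{lem: mes sel} with $\SS_t$ replaced by $\supp\nu_t$, and the same proof applies. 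What you gain is that Lemma~\ref{lem: YM chain-rule ineq} becomes unnecessary for assertion~1.

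For the Lyapunov case (assertion~2) your sketch is too thin at the step ``extended to all $s$ via the lower semicontinuity of $\EE_t$ and the monotone structure of~$E$''. From your upper estimate and (\ref{eq: E cvg II}) you get (\ref{eq: en id 3}) only for a.e.~$s$, and neither lower semicontinuity of $\EE_s$ nor monotonicity of $E-\int P$ goes in the right direction to close the gap. The paper's route here genuinely uses the differentiation step: after obtaining the Fenchel--Young equality a.e.\ (which you also get), one has
\[
\frac{d}{dt}\EE_t(u(t)) = -\phi_{t,u(t)}(u'(t)) - \phi^*_{t,u(t)}(-\xi(t)) + P_t(u(t),\xi(t)) \quad\text{for a.e.\ }t,
\]
and then, because $t\mapsto\EE_t(u(t))$ is BV, \cite[Cor.~1.25]{Le} gives $\EE_t(u(t))-\EE_s(u(s))\le\int_s^t(\cdot)\,dr$ for \emph{every} $s\le t$, which is (\ref{eq: en id 3}). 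You can insert exactly this argument into your scheme, but it should be stated rather than waved at. The convergences (\ref{eq: cvgs}) then follow by the same sandwich as in the paper's Step~4.
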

	
	\begin{proof}
		We generalize \cite[Thm. 4.4]{MRS} to our setting.
		
		Step 1: By (\ref{eq: en ineq}) and the second of (\ref{eq: E cvg}), there exists a function of locally bounded variation $E \colon \cl \left[ 0 , T \right) \to \R$ such that
		\begin{equation} \label{eq: E prprty}
			E(0) = \EE_0(u_0), \quad E(t) \ge \EE_t \left( u(t) \right) \quad \forall t \in \cl \left[ 0, T \right)
		\end{equation}
		and
		\begin{equation} \label{eq: en ineq'}
			\begin{gathered}
				\int_s^t \int \phi_{r, u(r) } \left( u'(r) \right) + \phi^*_{r, u(r) }(- \eta) \, d \mu_r(v, \eta, p) \, dr + E(t) \\
				\le \int_s^t \int \phi_{r, u(r) } (v) + \phi^*_{r, u(r) }(- \eta) \, d \mu_r(v, \eta, p) \, dr + E(t) \\
				\le E(s) + \int_s^t P(r) \, dr \quad \forall s, t \in (0, T).
			\end{gathered}
		\end{equation}
		We used the Jensen inequality in the first estimate of (\ref{eq: en ineq'}). By the estimates (\ref{eq: E/P unf bds}), (\ref{eq: phi/phi* int bds}) and from (\ref{eq: E cvg}), (\ref{eq: Y meas conc}) - (\ref{eq: Y meas ineq}), the curve $u \colon \cl \left[ 0, T \right) \to V$ and the Young measure $\mu$ satisfy assumptions (\ref{eq: assu 1}) - (\ref{eq: YM ch rl ass 2}) of Lemma \ref{lem: YM chain-rule ineq}. Therefore, the map $t \to \EE_t (u(t) )$ is of bounded variation and (\ref{eq: YM chain-rule ineq}) is true either in the strong form
		\begin{equation}\label{eq: YM ch ru str}
			\frac{d}{dt} \EE_t \left( u(t) \right) \ge \int \langle \eta, u'(t) \rangle_{X, V} + p \, d \mu_t(v, \eta, p) \quad \text{ in } \DD'(0, T)
		\end{equation}
		or the weak form 
		\begin{equation} \label{eq: YM ch ru wk}
			\frac{d}{dt} \EE_t \left( u(t) \right) \ge \int \langle \eta, u'(t) \rangle_{X, V} + p \, d \mu_t(v, \eta, p) \quad \text{ for a.e. } t \in (0, T)
		\end{equation}
		
		Step 1a: If (\ref{eq: YM ch ru str}), then there hold the inequalities
		\begin{equation} \label{eq: YM ineq}
			\begin{gathered}
				\int_0^t \int \phi_{r, u(r) } (v) + \phi^*_{r, u(r) }(- \eta) \, d \mu_r(v, \eta, p) \, dr + \EE_t(u(t) ) \\
				\le \EE_0(u_0) + \int_0^t P(r) \, dr
				\le \EE_t(u(t) ) + \int_0^t \int \langle - \eta , u'(r) \rangle \, d \mu_r(v, \eta, p) \, dr.
			\end{gathered}
		\end{equation}
		The first inequality follows from (\ref{eq: en ineq'}) with the choice $0 = s < t$ by (\ref{eq: E prprty}). The second inequality is due to (\ref{eq: YM ch ru str}), which by \cite[Cor. 1.25]{Le} implies
		\begin{align*}
			\EE_t(u(t) ) - \EE_0(u_0)
			& \ge \int_0^t \int \langle \eta, u'(r) \rangle + p \, \mu_r(v, \eta, p) \, dr \\
			& = \int_0^t \int \langle \eta, u'(r) \rangle \, \mu_r(v, \eta, p) + P(r) \, dr.
		\end{align*}
		By the Jensen inequality, we conclude
		\begin{equation} \label{eq: res Jens}
			\int_0^t \int \phi_{r, u(r) } \left( u'(r) \right) + \phi^*_{r, u(r) }(- \eta) - \langle - \eta, u'(r) \rangle \, d \mu_r(v, \eta, p) \, dr \le 0.
		\end{equation}
		By the Fenchel-Young inequality, the integrand in (\ref{eq: res Jens}) is non-negative, hence
		\begin{equation} \label{eq: res Jens 2}
			\begin{gathered}
				\int \phi_{t, u(t) } \left( u'(t) \right) + \phi^*_{t, u(t) }(- \eta) - \langle - \eta, u'(t) \rangle \, d \mu_t(v, \eta, p) = 0 \\ \text{ for a.e. } t \in (0, T).
			\end{gathered}
		\end{equation}
		Consequently, all inequalities in (\ref{eq: YM ineq}) become equalities. Invoking again the chain-rule inequality (\ref{eq: YM chain-rule ineq}), for a.e. $t \in (0, T)$, we find
		\begin{gather*}
			\int \phi_{t, u(t) } \left( u'(t) \right) + \phi^*_{t, u(t) }(- \eta) - p \, d \mu_t(v, \eta, p)
			\\
			= \int \langle - \eta, u'(t) \rangle - p \, d \mu_t(v, \eta, p) = - \frac{d}{dt} \EE_t(u(t) ).
		\end{gather*}
		Consequently, for every $0 \le s \le t \le T_0$ and $T_0 \in \cl \left[ 0, T \right)$, there holds the energy identity
		\begin{equation} \label{eq: en id}
			\int_s^t \int \phi^*_{r, u(r) }(v) + \phi^*_{t, u(r) }(- \eta) - p \, d \mu_r(v, \eta, p) \, dr = \EE_s(u(s) ) - \EE_t(u(t) ).
		\end{equation}
		
		Step 1b: If (\ref{eq: YM ch ru wk}) and (\ref{eq: cl str impl}), then $E(t) = \EE_t \left( u(t) \right)$ for a.e. $t \in (0, T)$ by (\ref{eq: E cvg 2}) so that (\ref{eq: en ineq'}) and the Lebesgue differentiation theorem imply
		\begin{equation} \label{eq: Leb res}
			\begin{gathered}
				\int \phi_{t, u(t) } \left( u'(t) \right) + \phi^*_{t, u(t) } \left( - \eta \right) \, \mu_t(v, \eta, p)
				\le - \frac{d}{dt} \EE_t(u(t) ) + \int p \, \mu_t(v, \eta, p) \\
				\le \int \langle - \eta, u'(t) \rangle_{X, V}\, \mu_t(v, \eta, p)
				\quad \text{ for a.e. } t \in (0, T),
			\end{gathered}
		\end{equation}
		where we used (\ref{eq: YM ch ru wk}) for the second inequality. From (\ref{eq: Leb res}), we deduce (\ref{eq: res Jens 2}) again. Plugging (\ref{eq: res Jens 2}) into (\ref{eq: Leb res}) yields
		$$
		\frac{d}{dt} \EE_t(u(t) ) = \int \phi_{t, u(t) } \left( u'(t) \right) + \phi^*_{t, u(t) } \left( - \eta \right) - p \, \mu_t(v, \eta, p) \quad \text{ for a.e. } t \in (0, T)
		$$
		so that \cite[Cor. 1.25]{Le} for every $0 \le s \le t \le T_0$ and $T_0 \in \cl \left[ 0, T \right)$ implies the energy inequality
		\begin{equation} \label{eq: en unid}
			\begin{gathered}
				\int_s^t \int \phi^*_{r, u(r) }(v) + \phi^*_{t, u(r) }(- \eta) - p \, d \mu_r(v, \eta, p) \, dr \le \EE_s(u(s) ) - \EE_t(u(t) )
				\\
				\forall s, t \in \cl \left[ 0, T \right).
			\end{gathered}
		\end{equation}
		
		Step 2: The marginal measure $\nu_t = \left( \pi_{2, 3} \right)_\sharp \left( \mu_t \right)$ of $\mu$ with respect to the $(\eta, p)$-component is given by
		$$
		\nu_t(B) = \mu_t \left( \pi^{-1}_{2, 3} (B) \right) \quad \forall B \in \BB \left( X_{\dd^*} \times \R \right).
		$$
		By (\ref{eq: L property}) and (\ref{eq: res Jens 2}), the measure $\nu_t$ is concentrated on the set
		\begin{gather*}
			\SS_t = \SS \left( t, u(t), u'(t) \right) \coloneqq \\
			\left\{ (\eta, p) \in X_{\dd^*} \times \R \st \eta \in - \p \phi_{t, u(t) } \left( u'(t) \right) \cap F_t(u(t) ), \, p \le P_t( u(t), \xi(t) ) ) \right\}
		\end{gather*}
		for a.e. $t \in (0, T)$. A fortiori, almost all of the sets $\SS_t$ are non-empty. Thus, by Lemma \ref{lem: mes sel}, there exists a measurable selection $\left( \xi , p \right)$ of the multimap $t \mapsto \SS_t$ such that
		\begin{equation} \label{eq: meas sel min}
			\phi^*_{t, u(t) } \left( - \xi(t) \right) - p(t) = \min_{(\eta, p) \in \SS_t } \phi^*_{t, u(t) }(- \eta) - p \quad \text{ for a.e. } t \in (0, T).
		\end{equation}
		In particular, the doubly nonlinear inclusion (\ref{eq: DNL inc}) is satisfied by $\xi$, hence $u$ solves the initial value problem of (\ref{eq: DNI}). In fact,
		\begin{equation} \label{eq: xi fin en}
			\int_0^{T_0} \phi^*_{t, u(t) } \left( - \xi(t) \right) \, dt < + \i \quad \forall T_0 \in \cl \left[ 0, T \right),
		\end{equation}
		whence $\xi$ is locally integrable by (\ref{eq: phi.2.2.2}). We check (\ref{eq: xi fin en}):
		\begin{align*}
			& \int_0^{T_0} \phi^*_{t, u(t) } \left( - \xi(t) \right) \, dt \\
			\le & \int_0^{T_0} \int_{V^* \times \R} \phi^*_{r, u(r) } (- \eta) - p \, d \nu_r(\eta, p) \, dr + \int_0^{T_0} P_r(u(r), \xi(r) ) \, dr \\
			\le & \int_0^{T_0} \int \phi^*_{r, u(r) } (- \eta) - p \, d \mu_r(v, \eta, p) \, dr + \int_0^{T_0} \GG_{T_0}(u(r) ) f_2(r) \, dr < + \i
		\end{align*}
		with the first inequality since for a.e. time, the measure $\nu_t$ is concentrated on $\SS \left(t, u(t), u'(t) \right)$ and $p(t) \le P_t(u(t), \xi(t) )$. The second inequality is from (\ref{eq: radially time differentiable}) and the final one from (\ref{eq: en id}) and $\sup_{t \in \left( 0, T_0 \right) } \GG(u(t) ) < + \i$.
		
		Step 3: Now we prove the energy (in)equalities (\ref{eq: en id 2}) and (\ref{eq: en id 3}). For all $0 \le s \le t \le T_0$ and $T_0 \in \cl \left[ 0, T \right)$, there holds
		\begin{equation} \label{eq: en ineq 2}
			\begin{gathered}
				\int_s^t \phi_{r, u(r) } \left( u'(r) \right) + \phi^*_{r, u(r) } \left( - \xi(r) \right) - P_r(u(r), \xi(r) ) \, dr
				\\
				\le \int_s^t \phi_{r, u(r) } \left( u'(r) \right) + \phi^*_{r, u(r) } \left( - \xi(r) \right) - p(r) \, dr
				\le \EE_s(u(s) ) - \EE_t(u(t) )
			\end{gathered}
		\end{equation}
		by (\ref{eq: en id}) and since $\left( \xi, p \right)$ is a measurable selection of $\SS(t, u(t), u'(t) )$. We find (\ref{eq: en id 3}) upon rearranging the terms of (\ref{eq: en ineq 2}).	Moreover, the chain rule inequality (\ref{eq: ch ru}) applied to the pair $(u, \xi)$ yields
		\begin{equation} \label{eq: integr chain ineq}
			\begin{gathered}
				\EE_t(u(t) ) - \EE_s(u(s) )
				\ge \int_s^t \langle \xi(r), u'(r) \rangle + P_r(u(r), \xi(r) ) \, dr \\
				\text{ for all } 0 \le s \le t \le T_0 \text{ and } T_0 \in \cl \left[ 0, T \right).
			\end{gathered}
		\end{equation}
		Combining (\ref{eq: en ineq 2}) with (\ref{eq: integr chain ineq}) and arguing as before for (\ref{eq: res Jens}) and (\ref{eq: res Jens 2}), we recognize all inequalities in (\ref{eq: en ineq 2}) as equalities if (\ref{eq: ch ru}) holds. In particular, $p(t) = P_t(u(t), \xi(t) )$ for a.e. $t \in (0, T)$. We have therefore proved that $(u, \xi)$ satisfies (\ref{eq: energy id 2}). Comparing (\ref{eq: energy id 2}) and (\ref{eq: en id}), we find for a.e. $t \in (0, T)$ that
		\begin{equation} \label{eq: mn slc}
			\begin{gathered}
				\phi_{t, u(t) } \left( u'(t) \right)
				= \int \phi_{t, u(t) }(v) \, d \mu_t(v, \eta, p), \\
				\phi^*_{t, u(t) } (- \xi(t) ) - P_t(u(t), \xi(t) )
				= \min_{(\eta, p) \in \SS_t } \phi^*_{t, u(t) } (- \eta) - p \\
				= \phi^*_{t, u(t) } (-\eta) - p
				\text{ for } \nu_t \text{-a.e. } (\eta, p) \in X \times \R.
			\end{gathered}
		\end{equation}

		Step 4: Now, to conclude the convergences (\ref{eq: cvgs}), we set
		\begin{align*}
			A_k(t) & = \int_0^{\overline{t}_{\tau_k}(t) } \phi_{\overline{t}_\tau(r), \underline{U}_\tau(r) } \left( U'_{\tau_k}(r) \right) \, dr, \\
			B_k(t) & = \int_0^{\overline{t}_{\tau_k}(t) } \phi^*_{\overline{t}_\tau(r), \underline{U}_\tau(r) } \left( - \tilde{\xi}_{\tau_k}(r) \right) \, dr, \\
			C_k(t) & = \EE_{\overline{t}_{\tau_k}(t) } \left( \overline{U}_{\tau_k}(t) \right), \\
			P_k(t) & = P_t \left( \tilde{U}_{\tau_k}(t), \tilde{\xi}_{\tau_k}(t) \right)
		\end{align*}
		and estimate for $t \in \cl \left[ 0, T \right)$
		\begin{equation} \label{eq: nqlt chain}
			\begin{aligned}
				& \int_0^t \phi_{r, u(t) } \left( u'(r) \right) + \phi_{r, u(t) } \left( - \xi(r) \right) \, dr + \EE_t \left( u(t) \right) \\
				& = \int_0^t \phi_{r, u(t) } \left( u'(r) \right) + \int \phi_{r, u(t) } \left( - \eta \right) \, \mu_r(v, \eta, p) \, dr + \EE_t \left( u(t) \right) \\
				& \le \liminf_k A_k(t) + \liminf_k B_k(t) + \liminf_k C_k(t) \\
				& \le \limsup_k \left( A_k + B_k + C_k \right)(t) \\
				& \le \limsup_k \EE_0 \left( \overline{U}_{\tau_k}(0) \right) + \lim_k \int_0^{\overline{t}_{\tau_k}(t) } P_k(r) \, dr \\
				& \le \EE_0 \left( u_0 \right) + \int_0^t P_r \left( u(r), \xi(r) \right) \, dr \\
				& \le \int_0^t \phi_{r, u(t) } \left( u'(r) \right) + \phi_{r, u(t) } \left( - \xi(r) \right) \, dr + \EE_t \left( u(t) \right),
			\end{aligned}
		\end{equation}
		where the first step follows from (\ref{eq: mn slc}), the second step from (\ref{eq: E cvg}), (\ref{eq: E cvg 2}) - (\ref{eq: E cvg 3}) and (\ref{eq: Y meas ineq}) - (\ref{eq: Y meas ineq 2}), the fourth step from (\ref{eq: energy id 2}), the fifth step from (\ref{eq: P exp}) and the sixth step from (\ref{eq: en id 2}). In conclusion, the inequality chain (\ref{eq: nqlt chain}) turns into an identity chain that implies (\ref{eq: cvgs}).
	\end{proof}
	
	We now come to our second main theorem on existence and stability for (\ref{eq: DNI}). We provide a sufficient condition under which the solutions to a sequence of generalized gradient systems
	$$
	\left( V_\dd, X_{\dd^*}, \EE^n, \phi^n, \langle \cdot, \cdot \rangle_{X, V}, F^n, P^n \right)
	$$
	converges to the solution of a limiting gradient system
	$$
	\left( V_\dd, X_{\dd^*}, \EE, \phi, \langle \cdot, \cdot \rangle_{X, V}, F, P \right).
	$$
	We impose the following assumptions on the energies $\EE^n$ and the dissipation potentials $\phi^n$:
	
	\paragraph{Energy functionals.} Let there be given a sequence
	$$
	\EE^n \colon \cl \left[ 0, T \right) \times V \to \left(- \i, \i \right]
	$$
	of $\LL_I \otimes \BB \left( V_\dd \right)$-measurable energy functionals with domains
	$$
	\dom \left( \EE^n \right) = \cl \left[ 0, T \right) \times D_n
	$$
	for $D_n \subset V$ independent of time and with subdifferential multimaps $F^n \colon \cl \left[ 0, T \right) \times D_n \rightrightarrows X$. Setting
	$$
	\GG_{T_0}^n(u) = \sup_{0 \le t \le T_0} \EE_t^n(u),
	$$
	we assume uniform versions of (\ref{eq: E_0}), (\ref{eq: crc}), (\ref{eq: absolute  continuity}), and (\ref{eq: radially time differentiable}):
	\\
	
	\begin{subequations} \label{seq: H1}
		\textbf{Lower Bound}:
		\begin{equation} \label{eq: H1.1}
			\exists C_0 > 0 \colon \EE_t^n(u) \ge C_0 \quad \forall (t, u) \in \cl \left[ 0, T \right)\times D_n.
		\end{equation}
		
		\textbf{Compactness}:
		\begin{equation} \label{eq: H1.2}
			\begin{gathered}
				\sup_n \left| u_n \right|_V < \i, \, \sup_n \EE_t^n(u_n) < \i \implies \\
				\left( u_n \right) \text{ has an } \ee \text{-convergent subsequence.}
			\end{gathered}
		\end{equation}
		
		\textbf{Absolute continuity}:
		\begin{equation} \label{eq: H1.3}
			\begin{gathered}
				\exists f_1 \in L^1_{\text{loc} } \left( \cl \left[ 0, T \right) \right) \, \forall u \in D_n \colon s, t \in \cl \left[ 0, T \right) \implies \\
				\left| \EE_s^n(u) - \EE_t^n(u) \right| \le \EE_t^n(u) \exp \left( \int_{\left[ s, t \right] } f_1(r) \, dr \right).
			\end{gathered}
		\end{equation}
		
		\textbf{Time subderivative control}: Let there exist $\LL_I \otimes \BB \left( D_\dd \times X_{\dd^*} \right)$-measurable functions $P^n \colon \graph(F^n) \to \R$ and for every $T_0 \in \cl \left[ 0, T \right)$ let there exists a locally integrable function $f_2 \in L^1_{\text{loc} } \left( \cl \left[ 0, T \right) \right)$ such that
		\begin{equation} \label{eq: H1.4}
			\begin{gathered}
				\forall (t, u, \xi) \in \graph \left( F^n \right) \colon \\
				\liminf_{h \downarrow 0} \frac{\EE_{t + h}^n(u) - \EE_t^n(u)}{h} \le P_t^n(u, \xi) \le \GG_{T_0}^n(u) f_2(t).
			\end{gathered}
		\end{equation}
	\end{subequations}
	Moreover, we require that there be a generalized gradient system
	$$
	\left( V_\dd, X_{\dd^*}, \EE, \phi, \langle \cdot, \cdot \rangle_{X, V}, F, P \right)
	$$
	such that the energy $\EE \colon \cl \left[ 0, T \right)\times V \to \left( - \i, \i \right]$ complies with (\ref{eq: E_0}) and the sequence $\EE^n$ converges to $\EE$ in the sense that, for every $t \in \cl \left[ 0, T \right)$, for every pair of sequences $u_n \in V$ and $\xi_n \in F_t^n(u_n)$, we have the implication
	\begin{subequations}
		\begin{equation} \label{eq: seq cc cnclsn}
			\begin{gathered}
				u_n \eeto u , \quad \xi_n \dddto \xi, \quad P_t^n \left( u_n, \xi_n \right) \to p \text{ in } \R, \\
				\limsup_n | u_n | + \EE_t^n(u_n) < \i \implies \\
				(t, u) \in \dom(F), \quad \xi \in F_t(u), \quad p \le P_t \left( u, \xi \right), \quad \liminf_n \EE_t^n(u_n) \ge \EE_t(u).
			\end{gathered}
		\end{equation}
		or the stronger implication
		\begin{equation} \label{eq: seq cc cnclsn str}
			\begin{gathered}
				u_n \eeto u , \quad \xi_n \dddto \xi, \quad P_t^n \left( u_n, \xi_n \right) \to p \text{ in } \R, \quad \EE_t^n(u_n) \to E \text{ in } \R, \\
				\limsup_n | u_n | + \EE_t^n(u_n) < \i \implies \\
				(t, u) \in \dom(F), \quad \xi \in F_t(u), \quad p \le P_t \left( u, \xi \right), \quad E = \EE_t(u).
			\end{gathered}
		\end{equation}
	\end{subequations}
	
	\paragraph{Dissipation potentials}
	
	Let
	$$
	\phi^n \colon \cl \left[ 0, T \right) \times D_n \times V \to \left[ 0, + \i \right]
	$$
	and its partial conjugate
	$$
	\phi^{*, n} \colon \cl \left[ 0, T \right) \times D_n \times X \to \left[ 0, + \i \right]
	$$
	with respect to the pair $(V, X)$ be a sequence of functions satisfying (\ref{eq: phi mb}), (\ref{eq: phi* mb}) and such that, for every $R > 0$ and $T_0 \in \cl \left[ 0, T \right)$, there hold the following assumptions:
	\begin{subequations} \label{seq: H2}
		\begin{alignat}{2}
			& \liminf_{\| v \|_V \to + \i} \inf_{n \ge 1} \inf_{0 \le t \le T_0} \inf_{\GG_{T_0}^n(u) \le R} & \frac{\phi_{t, u, n}(v)}{\| v \|_V} > 0; \label{eq: H2.1.1} \\
			& \liminf_{m \to + \i} \inf_{n \ge 1} \inf_{0 \le t \le T_0} \inf_{\GG_{T_0}^n(u) \le R} \inf_{\xi \in F_t^n(u) \colon \| \xi \| \ge m} & \frac{\phi^*_{t, u, n}(- \xi)}{\| \xi \|_X} > 0; \label{eq: H2.1.2}
		\end{alignat}
		For every $t \in \cl \left[ 0, T \right)$, we have
		\begin{equation} \label{eq: H2.2}
			\begin{gathered}
				u_n \eeto u, \, \GG_{T_0}^n(u_n) \le R, \, v_n \ddto v \implies
				\liminf_n \phi_{t, u_n, n}(v_n) \ge \phi_{t, u}(v); \\
				u_n \eeto u, \, \GG_{T_0}^n(u_n) \le R, \, \xi_n \dddto \xi, \xi_n \in F_t^n(u_n) \implies \\
				\liminf_n \phi^*_{t, u_n, n}(- \xi_n) \ge \phi^*_{t, u}(- \xi);
			\end{gathered}
		\end{equation}
		For every $w \in W$, the sequence of functions
		\begin{equation} \label{eq: H2.3}
			t \mapsto \sup_{\GG_{T_0}^n(u) \le R} \phi^{*, n}_{W, t, u}(w) 
			\text{ is uniformly integrable in } L_1 \left( 0, T_0 \right).
		\end{equation}
	\end{subequations}
	
	\paragraph{Remark on assumptions.}
	
	\begin{enumerate}
		
		\item The above assumptions are weaker than those for Theorem \ref{thm: main 1} even if $\phi^n = \phi$ is a constant sequence. Most importantly, (\ref{eq: H2.1.2}) and (\ref{eq: H2.2}) are conditioned on the subdifferential operator $F$, which improves (\ref{eq: phi.2.2.2}) and (\ref{eq: phi.4.2}). Implicitly, this also alleviates the continuity condition on $\phi$ at the origin. No analogue of (\ref{eq: phi.2.3}) and (\ref{eq: phi.3}) is required.
		
		\item While it is apparent that Theorem \ref{thm: main 2} concerns stability, it might be less clear to the reader why we also call it an existence result. This is related to the possibility of using it to approximate a generalized gradient flow satisfying the above assumptions (as a constant sequence of generalized gradient flow tuples) by flows that satisfy the assumptions of Theorem \ref{thm: main 1}. A simple method to do this is infimal convolution of dissipation potentials, e.g., setting for $r > 0$
		$$
		\phi_{t, u, r}(v) = \inf_{w \in B_r(v) } \phi_{t, u} \left( v - w \right) = \inf_{w \in V} \phi_{t, u} \left( v - w \right) + I_{B_r}(v),
		$$
		we have
		$$
		\phi^*_{t, u, r}(\xi) = \phi_{t, u} \left( \xi \right) + r \| \xi \|_X
		$$
		so that the regularized dual potential satisfies (\ref{eq: phi.2.2.2}). Different choices of convolution kernels or methods of regularization could be used to remove (\ref{eq: phi.2.3}) or (\ref{eq: phi.3}). For example, to remove (\ref{eq: phi.3}), one could consider the limit $r \to 0^+$ for
		\begin{equation} \label{eq: rdl rglrztn}
			\begin{gathered}
				\phi_{t, u, r}(v) = \inf_{x > 0} \phi_{t, u} \left( f \left( \tfrac{\lambda}{x} \right) v \right)
				\\
				\text{ with } f(r) =
				\begin{cases}
					\frac{1}{\sqrt{1 - (r - 1)^2}} & \text{if } r \in (-1, 1), \\
					+\infty & \text{else}.
				\end{cases}
			\end{gathered}
		\end{equation}
		The concrete choice of $f$ is not important, any differentiable convex function with bounded open domain and global minimum $f(1) = 1$ will guarantee (\ref{eq: phi.3}) by rendering the function $\lambda \mapsto \phi_{t, u, r}( \lambda v)$ differentiable. Note that by regularizing only along rays in (\ref{eq: rdl rglrztn}), we avoid assuming that $V$ allows a smooth convolution kernel, which is a non-trivial assumption for a general Banach space. One could try to combine several methods of regularization.
		
	\end{enumerate}
	
	\begin{theorem} \label{thm: main 2}
		Let the sequence $\left( V_\dd, X_{\dd^*}, \EE^n, \phi^n, \langle \cdot, \cdot \rangle_{X, V}, F^n, P^n \right)$ of generalized gradient flows comply with (\ref{seq: H1}), (\ref{seq: H2}) and either (\ref{eq: seq cc cnclsn}) or (\ref{eq: seq cc cnclsn str}). Let $u_{0, n} \in D_n$ be a sequence of initial values such that
		\begin{equation} \label{eq: init cvg}
			u_{0, n} \ddto u_0; \quad \EE^n_0 \left( u_{0, n} \right) \to \EE_0(u_0),
		\end{equation}
		and let the tuples $\left( u_n, \xi_n \right)$ be Lyapunov solutions to the Cauchy problem
		\begin{equation} \label{eq: Cau prbl}
			\p \phi^n_{t, u_n(t) } \left( u'_n(t) \right) + F_t^n \left( u_n(t) \right) \ni 0 \quad \text{for a.e. } t \in (0, T); \quad u_n(0) = u_{0, n}
		\end{equation}
		in the sense of Definition \ref{def: sol}, fulfilling in particular the energy inequality (\ref{eq: en id 3}). Moreover, let $\left( V_\dd, X_{\dd^*}, \EE, \phi, \langle \cdot, \cdot \rangle_{X, V}, F, P \right)$ comply with either the strong chain rule (\ref{eq: ch ru}) or the weak chain rule (\ref{eq: ch ru wkr}) and the continuous closedness implication (\ref{eq: cl str impl}). Then there exist a subsequence $n_k$ and a tuple
		$$
		(u, \xi) \in \bigcap_{T_0 \in \cl \left[ 0, T \right)} V_1 \left( 0, T_0 ; V_\delta \right) \times V_1 \left( 0, T_0 ; X_{\dd^*} \right),
		$$
		such that
		\begin{enumerate}
			
			\item if (\ref{eq: ch ru}), then $(u, \xi)$ is an energy solution to the Cauchy problem (\ref{eq: DNL inc}) in the sense of Definition \ref{def: sol} and the following convergences and relations hold as $k \to + \i$
			\begin{subequations}
				\begin{equation} \label{eq: stb cvg 1.n1}
					t_k \to t \text{ in } \cl \left[ 0, T \right) \implies u_{n_k}(t_k) \ddto u(t) \text{ and } u_{n_k}(t_k) \eeto u(t);
				\end{equation}
				\begin{equation} \label{eq: stb cvg 1.n2}
					\begin{gathered}
						u'_{n_k} \to u' \text{ in } \ss \left( V_1 \left( 0, T_0 ; V_\dd \right) ; E_\i \left( 0, T_0; W \right) \right)
						\\
						\forall T_0 \in \cl \left[ 0, T \right);
					\end{gathered}
				\end{equation}
				\begin{equation} \label{eq: stb cvg 1.n3}
					P^{n_k} \weakast p \text{ in } L_\i \left( 0, T_0 \right) \quad \forall T_0 \in \cl \left[ 0, T \right);
				\end{equation}
				\begin{equation} \label{eq: stb cvg 1.n5}
					\begin{gathered}
						\lim_k \int_s^t \phi_{r, u_{n_k}(r), n_k} \left( u'_{n_k}(r) \right) + \phi^*_{r, u_{n_k}(r), n_k} \left( - \xi_{n_k}(r) \right) \, dr \\
						= \int_s^t \phi_{r, u(r) } \left( u'(r) \right) + \phi^*_{r, u(r) } \left( - \xi(r) \right) \, dr \quad \forall s, t \in \cl \left[ 0, T \right).
					\end{gathered}
				\end{equation}
				\begin{equation} \label{eq: stb cvg 2.n1}
					\lim_k \EE_t^{n_k} \left( u_{n_k}(t) \right) = \EE_t (u(t) ) \quad \forall t \in \cl \left[ 0, T \right);
				\end{equation}
				\begin{equation} \label{eq: stb cvg 2.n3}
					p(t) = P_t \left( u(t), \xi(t) \right) \quad \text{ for a.e. } t \in (0, T).
				\end{equation}
			\end{subequations}
			
			\item If (\ref{eq: ch ru wkr}) and (\ref{eq: cl str impl}), then $(u, \xi)$ is a Lyapunov solution to the Cauchy problem (\ref{eq: DNL inc}) in the sense of Definition \ref{def: sol} and the following convergences and relations hold as $k \to + \i$:
			\begin{subequations}
				We have (\ref{eq: stb cvg 1.n1}), (\ref{eq: stb cvg 1.n2}), (\ref{eq: stb cvg 1.n3}), (\ref{eq: stb cvg 2.n1}) and
				\begin{equation} \label{eq: stb cvg 3.n1}
					p(t) \le P_t \left( u(t), \xi(t) \right) \quad \text{ for a.e. } t \in (0, T).
				\end{equation}
			\end{subequations}
		\end{enumerate}
	\end{theorem}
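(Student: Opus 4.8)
The plan is to follow the architecture of the proof of Theorem~\ref{thm: main 1}, replacing the discrete a priori estimates coming from the minimizing movements scheme by estimates extracted directly from the continuous-time Lyapunov solutions $\left( u_n, \xi_n \right)$; once the analogue of Proposition~\ref{pr: Y ms cnstr} has been established, the remainder is essentially a transcription of that argument. First I would derive the uniform a priori bounds. Starting from the energy inequality (\ref{eq: en id 3}) for $\left( u_n, \xi_n \right)$, estimating the $P^n$-term by means of (\ref{eq: H1.4}), controlling the time growth via (\ref{eq: H1.3}) and using the lower bound (\ref{eq: H1.1}) together with the convergence (\ref{eq: init cvg}) of the initial energies, a Gronwall argument yields, uniformly in $n$ and for every $T_0 \in \cl \left[ 0, T \right)$, the bounds $\sup_{0 \le t \le T_0} \GG^n_{T_0}\left( u_n(t) \right) \le C$ and $\int_0^{T_0} \phi^n_{r, u_n(r)}\left( u'_n(r) \right) + \phi^{*, n}_{r, u_n(r)}\left( - \xi_n(r) \right) \, dr \le C$. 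Since $\xi_n(t) \in F^n_t\left( u_n(t) \right)$ along a solution, splitting according to whether $\| \xi_n(t) \|$ is large lets the conditioned coercivities (\ref{eq: H2.1.1}), (\ref{eq: H2.1.2}) turn these integral bounds into bounds for $u'_n$ in $V_1\left( 0, T_0; V_\dd \right)$ and for $\xi_n$ in $V_1\left( 0, T_0; X_{\dd^*} \right)$; a Fenchel--Young estimate of $\langle w, u'_n \rangle$ against a fixed $w \in W$ combined with the uniform integrability (\ref{eq: H2.3}) makes $\left\{ t \mapsto \langle w, u'_n(t) \rangle \right\}_n$ equi-integrable on $\left( 0, T_0 \right)$. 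A Poincar\'e inequality using $u_n(0) = u_{0, n}$ and the Banach--Steinhaus principle then render $\left\{ u_n(t) \right\}_n$ uniformly norm bounded and $\dd$-equicontinuous on compact intervals, so by (\ref{eq: H1.2}), (\ref{eq: crc}) and (\ref{eq: B V cpt}) we may extract a subsequence $n_k$ and a weakly differentiable $u$ satisfying (\ref{eq: stb cvg 1.n1}), (\ref{eq: stb cvg 1.n2}), and (\ref{eq: stb cvg 1.n3}). Applying the Young measure Theorem~\ref{thm: Y meas} to the triples $\left( u'_{n_k}, \xi_{n_k}, P^{n_k} \right)$ in $V_\dd \times X_{\dd^*} \times \R$ produces a time-dependent Young measure $\mu = \left( \mu_t \right)$ concentrated on the $\dd \times \dd^* \times \R$-limit points of these triples, with barycenters $u'(t)$, a biting limit $\tilde{\xi}(t)$ of $\xi_{n_k}$, and $p(t)$, together with the attendant $\Gamma$-$\liminf$ inequalities.

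Next I would pass to the limit in (\ref{eq: en id 3}) to reach the exact counterpart of Proposition~\ref{pr: Y ms cnstr}. Using (\ref{eq: stb cvg 1.n1}), (\ref{eq: stb cvg 1.n2}) and the uniform energy bound, the convergence implication (\ref{eq: seq cc cnclsn}) (respectively its strong form (\ref{eq: seq cc cnclsn str})) shows that for a.e.\ $t$ and every limit point $(v, \eta, p) \in L(t)$ one has $\eta \in F_t\left( u(t) \right)$, $p \le P_t\left( u(t), \eta \right)$, and $\liminf_k \EE^{n_k}_t\left( u_{n_k}(t) \right) \ge \EE_t\left( u(t) \right)$ (respectively equality). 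A Helly argument applied to the monotone surrogate $t \mapsto \EE^{n_k}_t\left( u_{n_k}(t) \right) - \int_0^t P^{n_k}(r) \, dr$, which is non-increasing by (\ref{eq: en id 3}), provides a function $E$ of bounded variation with $E(0) = \EE_0(u_0)$, $E(t) \ge \EE_t\left( u(t) \right)$ everywhere, and $E(t) = \EE_t\left( u(t) \right)$ for a.e.\ $t$ under the strong implication; applying the $\Gamma$-$\liminf$ inequalities afforded by (\ref{eq: H2.2}) on a sequence of exhausting closed time sets --- obtained from a uniform Lusin argument, over which the whole family $\phi^n, \phi^{*, n}$ is jointly lower semicontinuous in the relevant variables --- and then letting those sets fill up $\left( s, t \right)$, we pass to the limit in (\ref{eq: en id 3}) and obtain the analogue of the energy inequality (\ref{eq: en ineq}).

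From here the proof is that of Theorem~\ref{thm: main 1} nearly verbatim: the data $\left( u, \mu, E \right)$ meet the hypotheses of Lemma~\ref{lem: YM chain-rule ineq}, so the strong (respectively weak) Young-measure chain rule holds; combining it with the energy inequality through the Fenchel--Young and Jensen inequalities forces $\phi_{t, u(t)}\left( u'(t) \right) + \phi^*_{t, u(t)}\left( - \eta \right) - \langle - \eta, u'(t) \rangle = 0$ for $\mu_t$-a.e.\ $(v, \eta, p)$, whence $- \eta \in \p \phi_{t, u(t)}\left( u'(t) \right)$ and the doubly nonlinear inclusion follows once we pick, via a measurable selection of the marginal multimap $t \mapsto \SS\left( t, u(t), u'(t) \right)$, a pair $(\xi, p)$ minimizing $\phi^*_{t, u(t)}\left( - \eta \right) - p$; local integrability of $\xi$ comes from (\ref{eq: H2.1.2}). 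Under (\ref{eq: ch ru}) one upgrades this to the energy identity (\ref{eq: en id 2}), deduces $p(t) = P_t\left( u(t), \xi(t) \right)$ a.e., and squeezes the $\liminf$ and $\limsup$ of the $k$-indexed energetic quantities exactly as in (\ref{eq: nqlt chain}) to obtain (\ref{eq: stb cvg 1.n5}), (\ref{eq: stb cvg 2.n1}); under only (\ref{eq: ch ru wkr}) and (\ref{eq: cl str impl}) one gets the Lyapunov energy inequality (\ref{eq: en id 3}) together with (\ref{eq: stb cvg 3.n1}). I expect the principal obstacle to be the uniform a priori estimate: making the conditioned coercivities (\ref{eq: H2.1.1})--(\ref{eq: H2.1.2}) and the uniform integrability (\ref{eq: H2.3}) cooperate to bound $u'_n$ and $\xi_n$ despite $\phi^n$ being possibly infinite-valued and lacking a point of continuity at its origin, alongside the uniform Lusin step required to handle the merely measurable time dependence simultaneously for all $n$; once the analogue of Proposition~\ref{pr: Y ms cnstr} is in hand, the rest is routine.
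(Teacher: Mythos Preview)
Your proposal is correct and follows essentially the same route as the paper's proof, which is quite terse and simply refers back to the arguments of Propositions~\ref{pr: ntrplnts tgthr} and~\ref{pr: Y ms cnstr} and of Theorem~\ref{thm: main 1}, adapted to the time-continuous level. Your write-up is in fact more explicit about the individual steps than the paper itself.

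One minor over-complication: the uniform Lusin step you anticipate for the whole family $\phi^n, \phi^{*, n}$ is not needed here. In the discrete setting of Proposition~\ref{pr: Y ms cnstr} the Lusin argument was required because the integrands $\phi_{\overline{t}_\tau(r), \underline{U}_\tau(r)}$ are evaluated at the discretized time $\overline{t}_\tau(r) \ne r$, so the pointwise-in-$t$ lower semicontinuity (\ref{eq: phi.4.1}) does not apply directly and joint lower semicontinuity on the Lusin sets $A_n$ was used instead. In the stability setting the integrands are $\phi^n_{r, u_n(r)}(v)$ at the actual time $r$; since $u_{n_k}(r) \eeto u(r)$ by (\ref{eq: stb cvg 1.n1}) with uniformly bounded $\GG^n_{T_0}$, hypothesis (\ref{eq: H2.2}) furnishes exactly the pointwise-in-$r$ $\Gamma$-$\liminf$ condition (\ref{eq: G-liminf assu}) required by Theorem~\ref{thm: Y meas}, and likewise for $\phi^{*, n}$. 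So the passage to the limit in the dissipation integrals is in fact simpler than you expect.
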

	
	\begin{proof}
		We prove both cases at once. For every $n \ge 1$, the solution tuple $\left( u_n, \xi_n \right)$ satisfies (\ref{eq: en id 3}), i.e.,
		\begin{equation} \label{eq: en id 2.1}
			\begin{gathered}
				\int_0^t \phi_{r, u_n(r), n} \left( u'_n(r) \right) + \phi^{*, n}_{r, u_n(r)} \left( - \xi(r) \right) \, dr + \EE_t^n \left( u_n(t) \right) \\
				\le \EE_0^n \left( u_{0, n} \right) + \int_0^t P_r^n \left( u_n(r), \xi_n(r) \right) \, dr \quad \forall t \in \cl \left[ 0, T \right).
			\end{gathered}
		\end{equation}
		Since the integral on the left of (\ref{eq: en id 2.1}) is non-negative, we may by (\ref{eq: H1.4}) estimate
		\begin{equation} \label{eq: tb gronwalled 1}
			\EE_t^n \left( u_n(t) \right) \le \EE_0^n \left( u_{0, n} \right) + \int_0^t \GG_{T_0}^n \left( u_n(r) \right) f_2(r) \, dr.
		\end{equation}
		By the Gronwall Lemma and (\ref{eq: H1.2}), we have
		\begin{equation} \label{eq: E lwr ctrl}
			\forall T_0 > 0 \, \exists C_3 > 0 \colon \forall u \in D_n \quad \GG_{T_0}^n(u) \le C_3 \inf_{0 \le t \le T_0} \EE_t(u)
		\end{equation}
		with a constant $C_3$ independent of $n$ so that (\ref{eq: tb gronwalled 1}) yields
		\begin{equation} \label{eq: tb gronwalled 2}
			\GG_{T_0}^n \left( u_n(t) \right) \le C \left[ \EE^n_0 \left( u_{0, n} \right) + \int_0^t \GG_{T_0}^n \left( u_n(r) \right) f_2(r) \, dr \right] \quad \forall n \ge 1.
		\end{equation}
		Again invoking the Gronwall Lemma, we find by the second of (\ref{eq: init cvg}) that
		\begin{equation} \label{eq: E_n bnd}
			\forall T_0 \in \cl \left[ 0, T \right) \, \exists S > 0 \colon \sup_{0 \le t \le T_0} \EE_t^n \left( u_n(t) \right) \le S.
		\end{equation}
		Combining (\ref{eq: E lwr ctrl}) and (\ref{eq: H1.4}), the estimate (\ref{eq: E_n bnd}) together with an argument involving Lebesgue points implies
		\begin{equation} \label{eq: P^n bnd}
			\exists S > 0 \colon \left| P_t^n \left( u_n(t), \xi_n(t) \right) \right| \le S + f_1(t) + f_2(t) \text{ for a.e. } t \in \left( 0, T\right),
		\end{equation}
		which in turn shows that
		\begin{equation} \label{eq: phi_n bnd}
			\forall T_0 \in \cl \left[ 0, T \right) \, \exists S > 0 \colon \int_0^t \phi_{r, u_n(r), n} \left( u'_n(r) \right) + \phi^{*, n}_{r, u_n(r)} \left( - \xi(r) \right) \, dr \le S
		\end{equation}
		by (\ref{eq: en id 2.1}). In particular,
		\begin{equation} \label{eq: lnr bnd}
			\forall T_0 \in \cl \left[ 0, T \right) \, \exists S > 0 \colon \int_0^t \| u'_n(r) \|_V + \| \xi(r) \|_X \, dr \le S
		\end{equation}
		by the uniform coercivity of the potentials (\ref{eq: H2.1.1}). Invoking (\ref{eq: phi_n bnd}), (\ref{eq: H1.2}), (\ref{eq: E_n bnd}), and (\ref{eq: H2.3}), we may argue as in the part stretching from (\ref{eq: w cpt cvg}) to (\ref{eq: cpt cvg}) in the proof of Proposition \ref{pr: ntrplnts tgthr} to conclude (\ref{eq: stb cvg 1.n1}). By analogy to the closing remark of the same proof, we find (\ref{eq: stb cvg 1.n2}). By (\ref{eq: P^n bnd}), we have (\ref{eq: stb cvg 1.n3}).
		
		Arguing as in the proof of Proposition \ref{pr: Y ms cnstr} on the time-continuous level, we invoke (\ref{eq: seq cc cnclsn}) together with (\ref{eq: stb cvg 1.n1}), (\ref{eq: stb cvg 1.n2}), (\ref{eq: lnr bnd}) to find a function $E \colon \cl \left[ 0, T \right) \to \R$ of locally bounded variation and a time-dependent Young measure
		$$
		\mu \in \YY \left( \cl \left[ 0, T \right) ; V_\dd \times X_{\dd^*} \times \R \right)
		$$
		such that, as $k \to +\i$, there hold the convergences and relations
		\begin{subequations}
			\begin{equation} \label{eq: E lmt}
				\begin{cases}
					\EE_t^{n_k} \left( u_{n_k}(t) \right) \to E(t) \quad & \forall t \in \cl \left[ 0, T \right), \quad E(0) = \EE_0(u_0),	\\
					E(t) \ge \EE_t (u(t) ) \quad & \forall t \in \cl \left[ 0, T \right), \\
					E(t) = \EE_t(u(t) ) \quad & \forall t \in \cl \left[ 0, T \right) \text{ if } (\ref{eq: seq cc cnclsn str});
				\end{cases}
			\end{equation}
			\begin{equation}
				u'(t) = \int v \, d \mu_t(v, \eta, p) \quad \text{ for a.e. } t \in \left( 0, T \right);
			\end{equation}
			\begin{equation}
				p(t) = \int p \, d \mu_t(v, \eta, p) \quad \text{ for a.e. } t \in \left( 0, T \right);
			\end{equation}
			\begin{equation}
				p(t) \le \int P_t \left( u(t), \eta \right) \, d \mu_t(v, \eta, p) \quad \text{ for a.e. } t \in \left( 0, T \right);
			\end{equation}
			For any $T_0 \in \cl \left[ 0, T \right)$ and $0 \le s \le t \le T_0$,
			\begin{equation} \label{eq: en ineq 3}
				\begin{aligned}
					\int_s^t \int \phi_{r, u(r) } (v) + \phi^*_{r, u(r) } (- \eta) \, d \mu_r(v, \eta, p) \, dr + E(t) \\
					\le E(s) + \int_s^t P(r) \, d r \le E(s) + \int_s^t \int P_r(u(r), \eta ) \, d \mu_r(v, \eta, p) \, dr.
				\end{aligned}
			\end{equation}
		\end{subequations}
		If (\ref{eq: ch ru wkr}) and (\ref{eq: seq cc cnclsn str}), then the third of (\ref{eq: E lmt}) yields the Lyapunov inequality (\ref{eq: en id 3}). If (\ref{eq: ch ru}), then we may use (\ref{eq: E lmt}) - (\ref{eq: en ineq 3}), as we used Proposition \ref{pr: Y ms cnstr} in the proof of Theorem \ref{thm: main 1}, to find (\ref{eq: stb cvg 2.n1}), (\ref{eq: stb cvg 1.n5}), (\ref{eq: stb cvg 2.n3}), (\ref{eq: DNL inc}), and (\ref{eq: en id 2}).
	\end{proof}
	
	\section{Application} \label{sec: Application}
	
	In this section, we apply our abstract existence result to non-smooth Allen-Cahn-Gurtin type inclusions on a time-space cylinder $(0, T) \times \Om$ with $T \in \left( 0, + \i \right]$ and an open subset $\Om \subset \R^d$. For integrands $\alpha$ and $\epsilon$, we study the inclusion
	\begin{equation} \label{eq: AC}
		\begin{cases*}
			\p \varphi(x, u') + \epsilon_u(x, u, \nabla u) - \DIV \p_z \epsilon(x, u, \nabla u) \ni 0; \\
			u(0) = u_0; \quad \left. \p_{\nu} \epsilon(x, u, \nabla u) \right|_{\p \Om} = 0.
		\end{cases*}
	\end{equation}
	For our existence result to hold, we require growth conditions of Orlicz type on the integrands as well as conditions on their integral functionals
	$$
	u \mapsto \int_\Om \varphi(x, u(x) ) \, dx; \qquad
	(u, v) \mapsto \int_\Om \epsilon(x, u(x), v(x) ) \, dx.
	$$
	Our result differs from previous contributions by neither requiring finiteness of $\epsilon$ nor a superlinear lower growth in the gradient variable that is uniform with respect to $x \in \Om$, which seems new even for parabolic equations where the time derivative enters linearly. Compare the recent result \cite{CGZa} on parabolic equations with very general Orlicz growth, where neither feature is present. Our result may also be viewed as an improvement over earlier work on local doubly nonlinear equations with variational structure, cf. \cite{ASch}, where the particular case of variable exponent growth is studied under the assumption of a compact Sobolev embedding from the energy topology into the strong topology of the dissipation space $V$. Besides allowing more general growth of the potentials, we have no need for such strong compactness assumptions. Instead, we employ our flexible closedness implication in connection with the locality of the primitives involved. We also discuss concrete sufficient conditions on $\alpha$ and $\epsilon$ for the assumptions on their integral functionals to hold.
	
	\subsection{Set-up and concrete assumptions}
	
	Before we state the main result of the section, we introduce the maps with the help of which we phrase it and prescribe them the properties needed for our analysis. Let $\left( \Om, \AA \right)$ be a complete measure space. A normal integrand on $\Om \times \R^n$ is a map $\Om \times \R^n \to (-\i, \i]$ that is measurable for the product $\sigma$-algebra $\AA \otimes \BB \left( \R^n \right)$ and is lower semicontinuous. Here and in the following, properties of an integrand requiring more structure than a measure space, such as being lower semicontinuous, convex, differentiable, etc., refer to that property being present in the second component for almost all $x \in \Om$. Whenever we formulate an inequality between integrands, this is to be understood as holding for all $u \in \R^n$ for almost all $x \in \Om$, i.e., there exists a null set $N \subset \Om$ such that the inequality holds for all $(x, u) \in \left( \Om \setminus N \right) \times \R^n$. From now on, let $\Om \subset \R^d$ be an open set of finite Lebesgue measure. We consider $\Om$ and $\cl \left[ 0, T \right) \times \Om$ as measure spaces carrying the restricted Lebesgue measures. Let $\alpha \colon \Om \times \R \to [0, + \i]$ and $\beta \colon \Om \times \R^{1 + d} \to [0, + \i]$ be Orlicz integrands as in Definition \ref{def: Orlicz integrand}. Let $m \in L^1(\Om; \R^+ )$ and $E \subset \Om$ a closed exceptional set such that $\HH^{d - 1}(E) = 0$, where $\HH^s$ is the $s$-dimensional Hausdorff measure. For $\alpha$ and the conjugate integrand $\alpha^*$ with respect to the last variable, we assume that there exists $C > 0$ such that
	\begin{subequations}
		\begin{equation} \label{eq: lph1}
			\alpha(x, 2u) \le C \alpha(x, u) + m(x);
		\end{equation}
		\begin{equation} \label{eq: lph2}
			u^* \mapsto \alpha^*(x, u^*) \text{ is real-valued for a.e. } x;
		\end{equation}
		\begin{equation} \label{eq: lph3}
			\alpha(\cdot, u) \in L_1(\Om) \quad \forall u > 0.
		\end{equation}
	\end{subequations}
	We introduce the Orlicz space $V = L_\alpha(\Om)$ with the dual space given by $V^* = L_{\alpha^*}(\Om)$ according to (\ref{eq: lph1}) and Corollary \ref{cor: C*}. For the Orlicz integrand $\beta$ and its conjugate integrand $\beta^*$, we assume that there exists $c > 0$ such that
	\begin{subequations}
		\begin{equation} \label{eq: bt ssmptns1}
			\sup_{|\xi| \le r} \beta^*(\cdot, 0, \xi) \in L^1_{\text{loc} }(\Om \setminus E) \quad \forall r > 0;
		\end{equation}
		\begin{equation} \label{eq: bt ssmptns3}
			c \| z^* \| \le \beta^*(x, v^*, z^*) + m(x);
		\end{equation}
		\begin{equation} \label{eq: bt ssmptns2}
			c \| z \| \le \beta(x, v, z) + m(x).
		\end{equation}
	\end{subequations}
	We set $\gamma(x, v, z) = \alpha(x, v) + \beta(x, v, z)$ and introduce the Orlicz-Sobolev space
	$$
	W = W^1 L_\gamma(\Om) = \left\{ u \in W^{1, 1}_{\text{loc} }(\Om) \st (u, \nabla u) \in L_\gamma(\Om) \right\}
	$$
	equipped with the inherited Luxemburg norm
	$$
	\| u \|_W = \| \left( u, \nabla u \right) \|_{L_\gamma}
	$$
	Let $\varphi \colon \cl \left[ 0, T \right) \times \Om \times \R^2 \to \left[ 0, +\i \right]$ be an $\LL \otimes \BB$-measurable, lower semicontinuous integrand such that
	\begin{subequations}
		\begin{equation} \label{eq: vrph ass1}
			\varphi(t, x, u, 0) = 0 \text{ for all } u \text{ and a.e. } (t, x);
		\end{equation}
		\begin{equation}
			\varphi(t, x, u, \cdot) \text{ is convex for all } u \text{ and a.e. } (t, x);
		\end{equation}
		\begin{equation} \label{eq: vrph ass3}
			\exists C > 0 \qquad \alpha(x, v) \le \varphi(t, x, u, v) \le C \alpha(x, Cv) + m(x);
		\end{equation}
		\begin{equation} \label{eq: vrph ass2}
			\begin{gathered}
				\lambda \mapsto \varphi (t, x, u, \lambda v) \text{ is differentiable at } \lambda = 1
				\\
				\text{ for all } (u, v) \text{ and a.e. } (t, x).
			\end{gathered}
		\end{equation}
	\end{subequations}
	We define the time-dependent family of dissipation potentials
	\begin{equation} \label{eq: df 1}
		\phi \colon \cl \left[ 0, T \right) \times V \times V \to \left[ 0, \i \right)
		\colon (t, u, v) \mapsto \int_\Om \varphi(t, x, u(x), v(x) ) \, dx.
	\end{equation}
	Let $\varphi^*(t, x, u, \xi)$ denote the conjugate integrand of $\varphi$ in the variable $v$. The partial convex conjugate of $\phi$ with respect to the last variable is given by
	\begin{equation} \label{eq: df 2}
		\phi^* \colon \cl \left[ 0, T \right) \times V \times V^* \to \left[ 0, \i \right]
		\colon (t, u, \xi) \mapsto \int_\Om \varphi^*(t, x, u(x), \xi(x) ) \, dx
	\end{equation}
	by Theorem \ref{thm: conjugate B}. The space $V$ is dual to the Orlicz class $C_{\alpha^*}(\Om)$ by Corollary \ref{cor: C*}. If we restrict $\phi^*$ to $C_{\alpha^*}(\Om)$ in the last component, then the restriction has $\phi$ as a convex conjugate with respect to the pairing of $C_{\alpha^*}(\Om)$ and $V$ by Theorem \ref{thm: conjugate A}. In particular, $\phi$ is weak* lower semicontinuous on $V$. We take the weak* topology of $V$ as the dissipation topology $\dd$ for $\phi$.
	
	Let $\epsilon \colon \Om \times \R \times \R \times \R^d \to \left[ 0, +\i \right] \colon (x, u, v, z) \mapsto \epsilon(x, u, v, z)$ be an $\LL \otimes \BB$-measurable and lower semicontinuous integrand. We suppose that there exist functions $\bar{u}_1, \bar{u}_2 \in W$ such that
	\begin{subequations}
		\begin{equation}
			\dom \epsilon(x, u, \cdot) \text{ is independent of } u \text{ for a.e. } x \in \Om;
		\end{equation}
		\begin{equation}
			\epsilon(x, u, \cdot) \text{ is convex and proper for all } u \text{ and a.e. } x;
		\end{equation}
		\begin{equation}
			\epsilon(x, \cdot, v, z) \text{ is differentiable for all } (v, z) \in \dom \epsilon(x, u, \cdot) \text{ and a.e. } x;
		\end{equation}
		\begin{equation} \label{eq: E bsc ass2}
			\begin{gathered}
				\text{The partial derivative } \epsilon_u(x, \cdot)
				\\
				\text{ is radially continuous on } \dom \epsilon(x, \cdot);
			\end{gathered}
		\end{equation}
		\begin{equation} \label{eq: E bsc ass3}
			\begin{gathered}
				\epsilon(x, u, v, z)
				\\
				\ge \beta(x, v - \bar{u}_1(x), z - \nabla \bar{u}_1(x) ) - c \left[ \alpha(x, u) + \alpha(x, v) \right] - m(x);
			\end{gathered}
		\end{equation}
		\begin{equation} \label{eq: E bsc ass4}
			\begin{gathered}
				\epsilon(x, u, v, z)
				\\
				\le \beta(x, v - \bar{u}_2(x), z - \nabla \bar{u}_2(x) ) + C \left[ \alpha(x, u) + \alpha(x, v) \right] + m(x).
			\end{gathered}
		\end{equation}
	\end{subequations}
	We introduce the functionals
	\begin{equation} \label{eq: energy}
		\begin{aligned}
			&\II_\epsilon \colon V \times L_\kappa(\Om) \times \BB(\Om) \to [0, \i] \colon (v, w; A) \mapsto \int_A \epsilon(x, v(x), w(x) ) \, dx; \\
			&\EE_W \colon V \times W \times \BB(\Om) \to \left[ 0, \i \right] \colon (v, w; A) \mapsto \II_\epsilon \left[ v, (w, \nabla w) ; A \right]; \\
			&\EE_V(v_1, v_2; A) =
			\begin{cases}
				\EE_W(v_1, v_2; A) & \text{ if } v_2 \in W; \\
				+ \i & \text{ if } v_2 \in V \setminus W;
			\end{cases}
			\\
			&\EE(u; A) = \EE_W(u, u; A); \qquad \EE(u) = \EE(u; \Om).
		\end{aligned}
	\end{equation}
	The energy of our generalized gradient flow will be given by the functional $\EE$. As the energy topology $\ee$ on the sublevels of $\EE$, we take the weak* topology $\ss \left( V; C_{\alpha^*}(\Om) \right)$. Let $\EE_W^* \colon V \times W^* \times \BB(\Om) \to \left( - \i, \i \right]$ be the partial convex conjugate of $\EE_W$ and
	$$
	\EE^*(u, \xi; A) =
	\begin{cases}
		\EE^*(u, \left. \xi \right|_W; A) & \text{ if } \xi \in V^*; \\
		+ \i & \text{ else.}
	\end{cases}
	$$
	Here and in the following, if $\xi \in W^*$, we mean by $\xi \in V^*$ that $\xi$ arises by restricting an element of $V^*$ to $W$. We set $\EE^*(u, \xi) = \EE^*(u, \xi; \Om)$. Let there be a mapping
	$$
	\Sigma \colon \left\{ (v, w) \in V \times W \st \EE_W(v, w; \Om) < \i \right\} \to L_1(\Om)
	$$
	that is bounded on every sublevel set of the functional $(v, w) \mapsto \EE_W(v, w; \Om)$. We assume the following growth condition:
	\begin{equation} \label{eq: E prtl grwth}
		\begin{gathered}
			\forall a > 0 \, \exists c > 0 \colon \left\{ \EE_W(v, w; \Om) \le a \right\} \implies \\
			c \alpha^* \left[ x, c \epsilon_u (x, v, w, \nabla w) \right] \le c^{-1} \alpha(x, v) + c^{-1} \alpha(x, w) + \Sigma(v, w)(x)
			\\
			\text{ for a.e. } x \in \Om.
		\end{gathered}
	\end{equation}
	We also assume that
	\begin{equation} \label{eq: eu a.e. cvg}
		\begin{gathered}
			u_n \weakast u \text{ in } V \text{ and } \EE(u_n; A) \to \EE(u; A) \text{ in } \R \implies \\
			\epsilon_u(x, u_n(x), u_n(x), \nabla u_n(x) ) \to \epsilon_u(x, u(x), u(x), \nabla u(x) )
			\\
			\text{ for a.e. } x \in A.
		\end{gathered}
	\end{equation}
	Here, $\epsilon_u$ is the derivative of $u \mapsto \epsilon(x, u, v, z)$. Finally, we assume that either
	\begin{subequations}
		\begin{equation} \label{eq: cpt sbls}
			\begin{gathered}
				u_n \in V \colon \sup_n \| u_n \|_V + \EE(u_n) < \i \implies
				\\
				\exists n_k \colon u_{n_k} \to u \text{ in } \mm(V, V^*);
			\end{gathered}
		\end{equation}
		or
		\begin{equation} \label{eq: emb grwth}
			\forall r > 0 \quad \sup_{| v | \le r} \gamma(x, v, 0) \in L_1(\Om).
		\end{equation}
	\end{subequations}
	In view of our other assumptions, we could equivalently ask norm bounded subsets of the sublevel sets of $\EE$ to be equi-integrable in $V$ instead of (\ref{eq: cpt sbls}). Our precise notion of solution to (\ref{eq: AC}) is encapsulated in the following existence result.
	
	\begin{theorem} \label{thm: rslt}
		There exist functions $(u, \xi)$ such that, for every real number $S \le T$, there holds
		$$
		u \in W^1 L_\phi \left( 0, S; L_\alpha(\Om) \right) \cap L_\i \left( 0, S; W^1 L_{\beta}(\Om) \right), \quad
		\xi \in V_{\phi^*} \left( 0, S; L_{\alpha^*}(\Om) \right)
		$$
		and the tuple $(u, \xi)$ is an energy solution to $\left( V_\dd, X_{\dd^*}, \EE, \phi, \langle \cdot, \cdot \rangle_{X, V}, F, P \right)$, i.e., it fulfills
		\begin{enumerate}[label=(\roman*)]
			\item $u(0) = u_0$ in $L_\alpha(\Om)$;
			\item $\xi(t) \in - \p \phi_t \left( u'(t) \right) \cap \p_D \EE \left( u(t) \right)$ for a.e. $t \in (0, S)$. \label{it: rslt 2}
		\end{enumerate}
		As the energy $\EE$ is finite on all of the space $W^1 L_{\beta}(\Om)$, \ref{it: rslt 2} also implies that $u$ satisfies the Neumann boundary conditions
		$$
		\p_\nu \epsilon(x, u(t, x), u(t, x), \nabla u(t, x) ) = 0 \text{ for a.e. } (x, t) \in \p \Om \times (0, T)
		$$
		in a generalized sense. In particular, the tuple $(u, \xi)$ satisfies the energy identity
		\begin{equation} \label{eq: app en id}
			\forall 0 \le s < t < T \qquad \int_s^t \phi_r \left( u(r) \right) + \phi^*_r \left( - \xi(r) \right) \, dr + \EE \left( u(t) \right) = \EE \left( u(s) \right).
		\end{equation}
	\end{theorem}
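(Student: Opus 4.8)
The plan is to recast (\ref{eq: AC}) as a generalized gradient system and verify the hypotheses of Theorem \ref{thm: main 1}, i.e.\ those of Proposition \ref{pr: Y ms cnstr} together with the strong chain rule (\ref{eq: ch ru}); the stated PDE is then read off from the abstract inclusion (\ref{eq: DNL inc}). First I would fix the spaces: take $V = L_\alpha(\Om)$ with predual $W = C_{\alpha^*}(\Om)$, which is separable and satisfies $W^* = V$ by (\ref{eq: lph1}) and Corollary \ref{cor: C*}, so that $\dd = \ss(V,W)$ is the weak* topology of $V$; and put $X = V^* = L_{\alpha^*}(\Om)$ with separable predual $Y = C_\alpha(\Om)$, so $\dd^*$ is the weak* topology of $X$. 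The pairing $\langle\cdot,\cdot\rangle_{X,V}$ is the Orlicz integral pairing; to obtain its joint Borel measurability (\ref{eq: V X mb}) --- the point left open in Example 5 following the spaces --- I would use the weak-sense density of simple functions from Section \ref{sec: E}: for simple $x$ the map $v\mapsto\langle x,v\rangle$ is $\dd$-continuous, a general $x$ is a pointwise-dominated limit of simple functions, and by a symmetric argument in the second slot the pairing is Baire class one on each product of metrizable balls, hence Borel since $V_\dd$ and $X_{\dd^*}$ are countable unions of such balls. The energy is the functional $\EE$ of (\ref{eq: energy}), which is time-independent; thus (\ref{eq: E_0}) holds (after an irrelevant additive normalization, using $\epsilon\ge 0$) and (\ref{eq: absolute  continuity}), (\ref{eq: radially time differentiable}) are trivial with $f_1=f_2\equiv 0$, $P\equiv 0$, so $\GG_{T_0}(u)=\EE(u)$. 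For $F_t(u)=\p_D\EE(u)$ I would take the partially-convex subdifferential defined through $\EE^*_W$, which at $u$ consists of $\xi = \epsilon_u(\cdot,u,u,\nabla u)+\zeta$ with $\zeta$ a convex subgradient of the Sobolev slot $v_2\mapsto\II_\epsilon[u,(v_2,\nabla v_2);\Om]$ at $v_2=u$; its graph is $\LL_I\otimes\BB$-measurable by normality of $\epsilon$ and Theorem \ref{thm: conjugate B}.

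Next I would check that $\phi$ in (\ref{eq: df 1}) is admissible. That $\phi$ and $\phi^*$ are mutually conjugate in the appropriate pairings is Theorems \ref{thm: conjugate A}, \ref{thm: conjugate B}; convexity, lower semicontinuity and $\phi_{t,u}(0)=0$ are immediate from (\ref{eq: vrph ass1})--(\ref{eq: vrph ass3}). The lower bound $\alpha(x,v)\le\varphi(t,x,u,v)$ gives norm-coercivity in $V$, hence (\ref{eq: phi.2.1}) and (\ref{eq: phi.2.2.1}); the upper bound $\varphi\le C\alpha(x,C\cdot)+m$ with $\alpha\in\Delta_2$ makes $\phi_{t,u}$ real-valued and, after conjugation, makes $t\mapsto\sup_{\EE(u)\le R}\phi^*_{W,t,u}(w)$ locally integrable, giving (\ref{eq: phi.2.4}); (\ref{eq: bt ssmptns3}) together with the structure of $\EE^*$ gives the conditioned dual coercivity (\ref{eq: phi.2.2.2}); radial superlinearity (\ref{eq: phi.2.3}) I would get from the sufficient condition in the remarks after the dissipation assumptions, since $\phi^*_W$ is real-valued by (\ref{eq: lph2}) applied to the dominating integrand; and (\ref{eq: phi.3}) is just (\ref{eq: vrph ass2}) phrased as (\ref{eq: smooth at one}). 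The lower semicontinuities (\ref{eq: phi.4.1}), (\ref{eq: phi.4.2}) under $u_n\eeto u$ reduce, by locality of integral functionals plus the Egorov/Young-measure machinery already in place, to pointwise lower semicontinuity of $\varphi$, $\varphi^*$.

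The energy side is where the work is. Lower semicontinuity (\ref{eq: lsc}) follows from convexity of $\epsilon(x,\cdot)$ in $(v,z)$ and the two-sided bounds (\ref{eq: E bsc ass3})--(\ref{eq: E bsc ass4}). For the compactness (\ref{eq: crc}) of norm-bounded sublevels I would use (\ref{eq: cpt sbls}) directly when it is assumed, and otherwise derive it from (\ref{eq: emb grwth}), which forces such sublevels to be equi-integrable in $V$ and hence relatively weak*-sequentially compact --- this equi-integrability criterion, sidestepping uniformly superlinear growth in $z$, is the new observation announced in the introduction. Then (\ref{eq: coerc}) combines (\ref{eq: phi.2.1}), (\ref{eq: lsc}), (\ref{eq: crc}); the sum rule (\ref{eq: sum rule}) follows from the convex sum rule because $\phi_{t,u_0}$ is globally finite and continuous at the origin (the point stressed in the remarks after the energy assumptions) together with the radial continuity (\ref{eq: E bsc ass2}) of $\epsilon_u$; and the strong chain rule (\ref{eq: ch ru}) follows from Lemma \ref{lem: ch rl}, with the one-sided expansion (\ref{eq: lwr tylr xpsn}) supplied by convexity of $\epsilon(x,\cdot)$ in $(v,z)$ plus an error term controlled on the diagonal by (\ref{eq: E prtl grwth}), (\ref{eq: E bsc ass2}) and Fenchel--Young in Orlicz form, and with $\p_D\EE(u)\subseteq\p_H\EE(u)$ giving the needed Dini--Hadamard subgradient. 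The hard part will be the closedness implication (\ref{seq: cl cnd}), especially its continuous form (\ref{eq: cl str impl}): given $u_n\weakast u$ in $V$, $\xi_n\dddto\xi$ with $\xi_n\in\p_D\EE(u_n)$ and $\sup_n\|u_n\|_V+\EE(u_n)<\i$, I would localize via Egorov to sets where $u_n$ is almost bounded, control $\epsilon_u(\cdot,u_n,u_n,\nabla u_n)$ in $L_{\alpha^*}$ by (\ref{eq: E prtl grwth}), pass to the a.e.\ limit of $\epsilon_u$ along convergent energies by (\ref{eq: eu a.e. cvg}), extract weak* limits of $\nabla u_n$ in the Orlicz--Sobolev scale (available from (\ref{eq: bt ssmptns2}), (\ref{eq: E bsc ass3})), and exploit the locality of $\II_\epsilon$ to identify $\xi\in\p_D\EE(u)$ and to recover $\EE(u_n;A)\to\EE(u;A)$. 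The delicacy is that this must be carried out with only the conditioned coercivity of $\phi^*$ and possibly infinite $\epsilon$ at hand, which is exactly why locality --- not an Orlicz--Sobolev inequality --- has to do the work.

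Finally, with all the hypotheses in place, Theorem \ref{thm: main 1}(1) yields a curve $u$ with $u'\in V_1(0,S;V_\dd)$ and $\xi\in V_1(0,S;X_{\dd^*})$ satisfying (\ref{eq: DNL inc}) and the energy identity (\ref{eq: en id 2}), which with $P\equiv 0$ is (\ref{eq: app en id}); the coercivity bounds (\ref{eq: phi.2.2.1}), (\ref{eq: phi.2.2.2}), finiteness of the dissipation integrals, and (\ref{eq: E bsc ass3}) then upgrade the regularity to $u\in W^1 L_\phi(0,S;L_\alpha(\Om))\cap L_\i(0,S;W^1 L_\beta(\Om))$ and $\xi\in V_{\phi^*}(0,S;L_{\alpha^*}(\Om))$. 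To conclude I would unravel (\ref{eq: DNL inc}): $\xi(t)\in-\p\phi_t(u'(t))$ becomes, by Theorem \ref{thm: conjugate A} and the pointwise subdifferential calculus for integral functionals, the a.e.\ relation $-\xi(t,x)\in\p_w\varphi(t,x,u,u')$, i.e.\ the first line of (\ref{eq: AC}); and $\xi(t)\in\p_D\EE(u(t))$ means, testing $\zeta=\xi-\epsilon_u(\cdot,u,u,\nabla u)$ against $W^1 L_\beta(\Om)$ --- on which $\EE$ is finite, so no boundary constraint is imposed --- that $\zeta=-\DIV\p_z\epsilon(\cdot,u,u,\nabla u)$ in $\DD'(\Om)$ with the conormal trace of $\epsilon$ vanishing weakly on $\p\Om$, giving the bulk equation and the Neumann condition in the generalized sense; the initial value $u(0)=u_0$ is built into the minimizing-movement scheme.
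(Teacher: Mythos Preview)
Your proposal is essentially the paper's own proof: the paper's argument for Theorem~\ref{thm: rslt} is precisely to verify the hypotheses of Theorem~\ref{thm: main 1} for the concrete choices of $V$, $W$, $X$, $Y$, $\EE$, $\phi$, $F=\p_D\EE$, $P\equiv 0$, which is carried out in the subsection ``Verification of abstract assumptions'' (Lemma~\ref{lem: crcvty} through Lemma~\ref{lem: E SD clsdnss} and the paragraphs on spaces, dissipation potentials, chain rule, sum rule). Your outline tracks this structure closely and correctly identifies the main difficulties (closedness, compactness without an Orlicz--Sobolev inequality).

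Three points where your sketch deviates from the paper and should be tightened. First, the joint measurability (\ref{eq: V X mb}) is not the open issue of Example~5: since $\alpha\in\Delta_2$ by (\ref{eq: lph1}), the space $V=L_\alpha(\Om)=C_\alpha(\Om)$ is separable (Theorems~\ref{thm: linear domain 2} and~\ref{thm: C sep}), so the situation is that of Example~3 and (\ref{eq: V X mb}) follows from $\BB(V)=\BB(V_\dd)$ without your Baire-class-one argument. Second, the dual coercivity (\ref{eq: phi.2.2.2}) does not come from (\ref{eq: bt ssmptns3}) --- that concerns $\beta^*$, which enters the energy, not the dissipation --- but from conjugating the upper bound in (\ref{eq: vrph ass3}), which gives a lower bound $\varphi^*(t,x,u,\xi)\ge c\,\alpha^*(x,c\xi)-m(x)$ and hence norm-coercivity of $\phi^*$ in $L_{\alpha^*}(\Om)$. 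Third, and most substantively, the sum rule (\ref{eq: sum rule}) is not obtained via ``the convex sum rule'': $\EE$ is genuinely non-convex, so one needs a non-smooth sum rule. The paper establishes it by invoking Penot's fuzzy sum rule \cite[Thm.~4.101]{Pe} for the Dini--Hadamard subdifferential on the Hadamard-smooth (because separable) space $V$, after checking that both $\phi_{t,u_0}$ (by convexity) and $\EE$ (by the closedness Lemma~\ref{lem: E SD clsdnss}) are \emph{soft} in Penot's sense. Alternatively, since $\phi_{t,u_0}$ is real-valued, the route via Lemma~2.1 (the extension of \cite[Prop.~4.2]{MRS}) or Corollary~\ref{cor: sum rule} also works, but either way you should name the non-convex mechanism explicitly rather than appeal to convex calculus.
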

	
	\begin{proof}
		The statement will follow once we have checked the assumptions of Theorem \ref{thm: main 1}, which will be done below. Note that then (\ref{eq: app en id}) will imply $u \in W^1 L_\phi \left( 0, S; L_\alpha(\Om) \right) \cap L_\i \left( 0, S; W^1 L_{\beta}(\Om) \right)$ and $\xi \in V_{\phi^*} \left( 0, S; L_{\alpha^*}(\Om) \right)$ by (\ref{eq: vrph ass3}), (\ref{eq: eps 3}), and the conjugate estimate relating $\alpha^*$ and $\varphi^*$ obtained from (\ref{eq: vrph ass3}).
	\end{proof}

	\subsection{Verification of abstract assumptions}
	
	We check that the present setting falls within the scope of Theorem \ref{thm: main 1}.
	\\
	
	\emph{Spaces.} The Orlicz space $V = C_\alpha(\Om) = L_\alpha(\Om)$ will be the state space with $W = C_{\alpha^*}(\Om)$. The dual state space is given by the dual Banach space $X = L_{\alpha^*}(\Om)$ with $Y = C_\alpha(\Om)$. The pairing with respect to which we do subdifferential calculus will be the standard pairing $\langle \cdot, \cdot \rangle_{X, V}$.
	\\
	
	\emph{Dissipation potentials}: We check our assumptions on an abstract dissipation potential for the concrete case of $\phi$ given by (\ref{eq: df 1}). It clearly is lower semicontinuous. We have $\phi_{t, u}(0) = 0$ and $\phi \ge 0$ by the first of (\ref{eq: vrph ass1}) and $\varphi \ge 0$. For all $t \in \cl \left[ 0, T \right)$ and $u \in W$, the map $v \mapsto \phi_{t, u}(v)$ is convex since $\varphi$ is. Moreover
	$$
	\| u \|_\alpha > 1 \implies \| u \|_\alpha \le \int_\Om \alpha \left(x, v(x) \right) \, dx \le \phi_{t, u}(v)
	$$
	by (\ref{eq: vrph ass3}) and \cite[Cor. 2.1.15(b)]{DHHR}. Hence, the functional $\phi$ satisfies (\ref{eq: phi.2.2.1}). We have (\ref{eq: phi.2.3} by (\ref{eq: df 2}) and the pertaining explanation there. In particular
	$$
	\p \phi_{t, u}(v) = \left\{ \xi \in V^* \st \xi(x) \in \varphi_{t, u(x) }(v(x) ) \text{ for a.e. } x \in \Om \right\}.
	$$
	so that (\ref{eq: phi.3}) is inherited by the integral functional from the integrand by (\ref{eq: vrph ass2}) and the remark on (\ref{eq: phi.3}). Finally, we obtain (\ref{eq: phi.4.1}), (\ref{eq: phi.4.2}) as follows: Given $v_n, w_n \in V$ such that
	$$
	\limsup_n \EE(v_n) < \i
	$$
	and $(v_n, w_n) \to (u, v)$ in $\ss(V; C_{\alpha^*}(\Om))$, we are to select a subsequence (not relabeled) such that $\liminf_n \phi_{t, v_n}(w_n) \ge \phi_{t, u}(v)$ for any $t \in \cl [0, T)$. Combining (\ref{eq: bt ssmptns2}) and (\ref{eq: E bsc ass3}) with the Rellich-Kondrachov compact embedding, we may assume $v_n \to u$ in $L^1_{\text{loc} }(\Om)$. Invoking \cite[Lem. 2.31]{FoLe} and a standard diagonal argument, we find an increasing sequence of measurable sets $A_m \uparrow \Om$ and a subsequence of $w_n$ (not relabeled) that is equi-integrable and bounded in $L_1(A_m)$ for every $A_m$. Hence, it converges to $v$ weakly in $L_1(A_m)$ so that \cite[Thm. 1]{Io} yields
	\begin{align*}
		\liminf_n \phi_{t, v_n}(w_n)
		& \ge \sup_m \liminf_n \int_{A_m} \varphi(t, x, v_n(x), w_n(x) ) \, dx \\
		& \ge \sup_m \int_{A_m} \varphi(t, x, v(x), w(x) ) \, dx \\
		& = \phi_{t, u}(v)
	\end{align*}
	and (\ref{eq: phi.4.1}) has been checked. Regarding (\ref{eq: phi.4.2}), the proof that if $\xi_n \in V^*$ such that $\xi_n \weakast \xi$, then $\liminf_n \phi^*_{t, v_n}(\xi_n) \ge \phi^*_{t, u}(\xi)$ is analogous to the argument for $\phi$.
	\\
	
	\emph{Energy}: We check our assumptions on an abstract energy for the concrete case of $\EE$ given in (\ref{eq: energy}).
	
	\begin{lemma}[Lower semicontinuity and coercivity] \label{lem: crcvty}
		Let $A, A_n \in \BB(\Om)$ be measurable sets, $v, v_n \in V$ and $w, w_n \in W$ such that
		$$
		(\chi_{A_n}, v_n) \to (\chi_A, v) \text{ a.e. and } w_n \eeto w \text{ with } \sup_n \| w_n \|_W < \i.
		$$
		Then
		\begin{equation} \label{eq: crcvt chck 1.0}
			w_n \weak w \text{ in } W^{1, 1}_{\text{loc} } \left( \Om \setminus E \right) \text{ and } \EE_W(v, w; A) \le \liminf_n \EE_W(v_n, w_n; A_n).
		\end{equation}
		In particular, the function $u \mapsto \EE(u)$ satisfies (\ref{eq: lsc}) and (\ref{eq: crc}) for $\ee$ the weak* topology of $V$.
	\end{lemma}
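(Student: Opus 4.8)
The plan is to obtain the assertions from the classical lower semicontinuity of nonnegative convex normal integrands (\cite[Thm.~1]{Io}), the only substantial work being the preliminary compactness of $(w_n)$ in $W^{1,1}_{\mathrm{loc}}(\Om\setminus E)$. Since $\epsilon\ge 0$, the inequality $\EE_W(v,w;A)\le\liminf_n\EE_W(v_n,w_n;A_n)$ is trivial unless the right-hand side is finite, so I would pass to a subsequence along which it is attained and finite; by a standard subsequence argument it then suffices to prove both conclusions along such a subsequence, the limit always being $w$ thanks to the hypothesis $w_n\eeto w$ and the concurrency relation (\ref{eq: rh lph cncrrnt}).

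\textbf{Weak convergence in $W^{1,1}_{\mathrm{loc}}(\Om\setminus E)$.} From $\sup_n\|w_n\|_W<\i$ one gets, after rescaling by a fixed $M>0$, the modular bounds $\sup_n\int_\Om\beta(x,M^{-1}w_n,M^{-1}\nabla w_n)\,dx\le 1$ and $\sup_n\int_\Om\alpha(x,M^{-1}w_n)\,dx\le 1$. With (\ref{eq: bt ssmptns2}) the first bounds $\nabla w_n$ in $L^1(\Om)$; Banach--Steinhaus applied to $w_n\eeto w$ bounds $(w_n)$ in $V$, and together with a Poincar\'e--Sobolev estimate and (\ref{eq: lph3}) this bounds $(w_n)$ in $W^{1,1}_{\mathrm{loc}}(\Om\setminus E)$. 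The decisive point is equi-integrability of $\{\nabla w_n\}$ on each compact $K\subset\Om\setminus E$: by (\ref{eq: bt ssmptns1}) the conjugate $\beta^*(x,0,\cdot)$ is finite away from $E$ with a locally $L^1$-uniform modulus, i.e.\ $\beta(x,0,\cdot)$ is superlinear with a locally controlled rate, and combining the partial Young inequality $z^*\cdot\nabla w_n(x)\le\beta(x,w_n(x),\nabla w_n(x))+\beta^*(x,0,z^*)$ with the modular bound and (\ref{eq: bt ssmptns1}) yields, by a de la Vall\'ee--Poussin type argument of the kind developed for the Orlicz spaces of the second part, the required equi-integrability. Dunford--Pettis, the weak closedness of $W^{1,1}(K)$ inside $L^1(K)\times L^1(K;\R^d)$, and an exhaustion of $\Om\setminus E$ then give, along a further subsequence, $w_n\weak\hat w$ in $W^{1,1}_{\mathrm{loc}}(\Om\setminus E)$; testing against indicators of subsets identifies $\hat w=w$, which is the first assertion.

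\textbf{The lower semicontinuity inequality and its consequences.} On each compact $K\subset\Om\setminus E$ I would invoke \cite[Thm.~1]{Io} for the nonnegative normal integrand $(x,v,w,z)\mapsto\epsilon(x,v,w,z)$, convex in $(w,z)$: from $v_n\to v$ and $\chi_{A_n}\to\chi_A$ a.e.\ (hence in measure on $K$) and $(w_n,\nabla w_n)\weak(w,\nabla w)$ in $L^1(K)$ it gives
$$\int_{A\cap K}\epsilon(x,v,w,\nabla w)\,dx\le\liminf_n\int_{A_n\cap K}\epsilon(x,v_n,w_n,\nabla w_n)\,dx\le\liminf_n\EE_W(v_n,w_n;A_n),$$
the last step because $\epsilon\ge 0$. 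Monotone convergence as $K\uparrow\Om\setminus E$, together with $\HH^{d-1}(E)=0$ (so $E$ is Lebesgue-null), upgrades this to $\EE_W(v,w;A)\le\liminf_n\EE_W(v_n,w_n;A_n)$. Then (\ref{eq: lsc}) is the special case $A_n=A=\Om$, $v_n=w_n=u_n$, $v=w=u$; and for (\ref{eq: crc}) one first uses (\ref{eq: E bsc ass3}), $\sup_n\|u_n\|_V<\i$ and the $\Delta_2$-condition (\ref{eq: lph1}) to bound $u_n-\bar{u}_1$ in $W^1L_\beta(\Om)$, hence $\sup_n\|u_n\|_W<\i$, and then extracts an $\ee$-convergent subsequence either directly from (\ref{eq: cpt sbls}) (since $\mm(V,V^*)$ refines $\ss(V,V^*)$) or, under (\ref{eq: emb grwth}), by first establishing equi-integrability of $(u_n)$ in $V$ and applying the Orlicz analogue of the Dunford--Pettis theorem. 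The main obstacle is the non-uniform superlinearity of $\beta$ (and of $\alpha^*$): it is the very reason for introducing the exceptional set $E$ and assumption (\ref{eq: bt ssmptns1}), and making the equi-integrability step on compacts of $\Om\setminus E$ rigorous — with the attendant identification of the $W^{1,1}_{\mathrm{loc}}$-limit with $w$ — is where the care is needed.
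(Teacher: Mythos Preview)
Your overall strategy coincides with the paper's: obtain weak $L^1_{\mathrm{loc}}(\Om\setminus E)$ convergence of $(w_n,\nabla w_n)$, apply Ioffe's theorem on compacts, then exhaust. The paper, however, handles the two delicate points differently, and your treatment of each is somewhat loose.

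\textbf{Gradient compactness.} Where you invoke de la Vall\'ee--Poussin/Dunford--Pettis, the paper uses the Orlicz duality developed in Chapter~\ref{prt: GOS}: with $\delta(x,z)=\beta(x,0,z)$, assumption (\ref{eq: bt ssmptns1}) gives $L_\infty(K)\subset C_{\delta^*}(\Om)$ for every $K\subset\subset\Om\setminus E$, and boundedness of $\nabla w_n$ in $L_\delta(\Om)=C_{\delta^*}(\Om)^*$ yields weak* subsequential convergence testable against $L_\infty(K)$. Your route is equivalent in spirit (the Young inequality you write, once properly rescaled by $M$, does give local equi-integrability), but the paper's argument avoids the hand-crafted estimate by citing its own machinery.

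\textbf{Identifying the limit.} Your step ``testing against indicators identifies $\hat w=w$'' has a gap: indicators $\chi_S$ for $S\subset K$ need not lie in $C_{\alpha^*}(\Om)$, since no local integrability of $\alpha^*$ is assumed. The paper instead uses Rellich--Kondrachov together with Corollary~\ref{cor: cncrt pncr} to obtain $w_n\to w$ \emph{strongly} in $L^1_{\mathrm{loc}}(\Om)$, after which $y=\nabla w$ follows from Lemma~\ref{lem: nglgbl sts} (this is where $\HH^{d-1}(E)=0$ enters). You should replace the indicator argument by this compactness step.

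\textbf{The addendum on (\ref{eq: crc}).} You invoke (\ref{eq: cpt sbls}) or (\ref{eq: emb grwth}), but neither is needed here: since $\ee$ is the weak* topology of $V=L_\alpha(\Om)=C_{\alpha^*}(\Om)^*$ with $C_{\alpha^*}(\Om)$ separable (Theorem~\ref{thm: C sep}), norm-bounded sequences in $V$ automatically have weak*-convergent subsequences. The paper simply records that bounded sublevel sets of $\EE$ are bounded in $W$ via (\ref{eq: E bsc ass3}) and (\ref{eq: lph1}), and then the main statement applies.
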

	
	\begin{proof}
		Let $\delta(x, z) = v \beta(x, 0, z)$. By (\ref{eq: E bsc ass4}), we may extract a subsequence such that
		\begin{equation} \label{eq: crcvt chck 1.1}
			w_n \weakast w \text{ in } V, \quad \nabla w_n \weakast y \text{ in } L_\delta(\Om), \quad \nabla w_n \weak y \text{ in } L^1_{\text{loc} } \left( \Om \setminus E \right)
		\end{equation}
		where we used (\ref{eq: bt ssmptns1}) to obtain (i) the second convergence since bounded sets in $L_\delta(\Om) = C_{\delta^*}(\Om)^*$ are sequentially weak* compact by separability of the predual space due to Theorem \ref{thm: C sep} (ii) the third convergence since (\ref{eq: bt ssmptns1}) guarantees that $L_\i(K) \subset C_{\delta^*}(\Om)$ if $K \subset \subset \Om \setminus E$. Now, to conclude that $y = \nabla w$ by Lemma \ref{lem: nglgbl sts}, it remains to check that
		\begin{equation} \label{eq: lc cvg}
			w_n \to w \text{ in } L^1_{\text{loc} }(\Om)
		\end{equation}
		to conclude $y = \nabla w$. The convergence in (\ref{eq: lc cvg}) obtains by the Rellich-Kondrachov compact embedding theorem together with Corollary \ref{cor: cncrt pncr}. In total, we have found (\ref{eq: crc}). Let now $B_m \subset \subset \Om \setminus E$ be an increasing sequence of measurable sets such that $\left| \Om \setminus \bigcup_m B_m \right| = 0$ and $w_n \to w$ in $L_1(B_m)$ for every $m \in \N$. We set $A_{n, m} = A_n \cap B_m$ and $A_m = A \cap B_m$. Then, by \cite[Thm. 1]{Io} together with the third of (\ref{eq: crcvt chck 1.1}) and (\ref{eq: lc cvg}), we conclude
		\begin{align*}
			\liminf_n \EE_W(v_n, w_n, A_n)
			& \ge \liminf_n \int_{A_{n, m} } \epsilon \left(x, v_n(x), w_n(x), \nabla w_n(x) \right) \, dx \\
			& \ge \int_{A_m} \epsilon \left(x, v(x), w(x), \nabla w(x) \right) \, dx
		\end{align*}
		so that sending $m \to +\i$ obtains $\liminf_n \EE_W(v_n, w_n; A_n) \ge \EE_W(v, w; A)$ as remained to be shown. Regarding the addendum, bounded subsets of sublevel sets of $\EE$ are bounded in $W$ by (\ref{eq: E bsc ass3}) since $V$ agrees with its Orlicz class $C_\alpha(\Om)$ by (\ref{eq: lph1}). In particular, every sequence $u_n$ such that $u_n \to u$ in $\ee$ and $\sup_n \EE(u_n) < \i$ converges locally in measure by the Rellich-Kondrachov theorem and Corollary \ref{cor: cncrt pncr} so that the addendum follows from the main statement.
	\end{proof}
	
	We come to representing the Dini-Hadamard subdifferential $\p_H \EE$. We first obtain an upper estimate by computing the Dini subdifferential $\p_D \EE$, which is never smaller than $\p_H \EE$. Then, we show that $\p_H \EE = \p_D \EE$ by proving stability properties of $\p_D \EE$ that guarantee it cannot be larger than $\p_H \EE$.
	
	\begin{proposition}[subdifferential, convex part] \label{pr: F cnvx SD}
		Let $v \in V$, $A \in \BB(\Om)$, and $L_\kappa(\Om) \to [0, \i] \colon w \mapsto I(w) = \II_\epsilon(v, w; A)$. Setting
		\begin{align*}
			&\p_a I(w) = \biggl\{ \ell_a \in L_{\kappa^*}(\Om) \colon \ell_a(x) \in \chi_A(x) \p \epsilon(x, v(x), w(x) ) \text{ for a.e. } x \in \Om \biggr\}, \\
			&\p_f I(w) = \biggl\{ \ell_f \in I_{\kappa^*}(\Om) \colon \langle \ell_f, w \rangle = \max_{y \in \dom I} \langle \ell_f, y \rangle \biggr\},
		\end{align*}
		we have the direct sum decomposition
		\begin{equation} \label{eq: SD I}
			\p I(w) = \p_a I(w) \oplus \p_f I(w) \quad \forall w \in \dom I.
		\end{equation}
		Moreover, let $W \to [0, \i] \colon w \mapsto \II(w) = \EE(v, w; A)$. There holds
		\begin{equation} \label{eq: SD II}
			\p \II(w) = (1, \nabla)^* I(w, \nabla w) \quad \forall w \in \dom \II.
		\end{equation}
	\end{proposition}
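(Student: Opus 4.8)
The plan is to handle the two displayed identities separately: the direct sum decomposition (\ref{eq: SD I}) rests on the Orlicz duality theory for $L_\kappa(\Om)$ together with the formula for the conjugate of an integral functional, while the chain rule (\ref{eq: SD II}) is an instance of convex subdifferential calculus for a functional composed with a closed-range linear embedding. Throughout, recall that $\epsilon(x,v(x),\cdot)$ is convex, proper and lower semicontinuous, so $I$ and $\II$ are genuine convex functionals and $\partial$ is the subdifferential of convex analysis.

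For (\ref{eq: SD I}), the inclusion $\partial_a I(w) \oplus \partial_f I(w) \subseteq \partial I(w)$ is routine: if $\ell_a \in \partial_a I(w)$, integrating the pointwise subgradient inequality $\epsilon(x, v(x), y(x)) - \epsilon(x, v(x), w(x)) \ge \langle \ell_a(x), y(x) - w(x)\rangle$ over $A$ gives $I(y) - I(w) \ge \langle \ell_a, y - w\rangle$ for $y \in \dom I$ (the integrals are meaningful since $\epsilon$ is a normal integrand and $\ell_a \in L_{\kappa^*}(\Om)$), while $\langle \ell_f, y - w\rangle \le 0$ on $\dom I$ holds by the definition of $\partial_f I(w)$; the sum is direct because any functional lying in both $L_{\kappa^*}(\Om)$ and the singular part $I_{\kappa^*}(\Om)$ annihilates the essentially bounded, finitely supported functions, which are norming for $L_\kappa(\Om)$. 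For the reverse inclusion, given $\ell \in \partial I(w)$ I would first use the decomposition of $L_\kappa(\Om)^*$ from Section \ref{sec: duality} to write $\ell = \ell_\sigma + \ell_f$ with $\ell_\sigma \in L_{\kappa^*}(\Om)$ and $\ell_f$ singular. Testing the subgradient inequality against competitors $y = w + h$ with $h = q\chi_B$, $q \in \Q^{1+d}$, $B \in \AA_f$ — on which $\ell_f$ vanishes, such perturbations having absolutely continuous norm — yields $\int_B \langle \ell_\sigma, q\rangle \le \int_B \epsilon(\cdot, v, w + q) - \epsilon(\cdot, v, w)$ for all such $B$; localizing over $B$, then extending from $\Q^{1+d}$ by convexity and lower semicontinuity of $\epsilon(x, v(x), \cdot)$, forces $\ell_\sigma(x) \in \chi_A(x)\partial\epsilon(x, v(x), w(x))$ for a.e. $x$, i.e. $\ell_\sigma \in \partial_a I(w)$. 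It remains to identify $\ell_f$ as a member of $\partial_f I(w)$; for this I would split the Fenchel equality $I(w) + I^*(\ell) = \langle \ell, w\rangle$ using the conjugate-of-integral-functional formula of Theorem \ref{thm: conjugate A}/Theorem \ref{thm: conjugate B}, which represents $I^*(\ell)$ as $I_{\epsilon^*}(\ell_\sigma)$ plus the support function of $\overline{\dom I}$ evaluated at $\ell_f$. Since $\ell_\sigma \in \partial_a I(w)$ already saturates the integral part, what remains is exactly the assertion that $\ell_f$ is normal to $\dom I$ at $w$.

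For (\ref{eq: SD II}), write $L = (1, \nabla)\colon W \to L_\kappa(\Om)$; by the growth bounds (\ref{eq: E bsc ass3})--(\ref{eq: E bsc ass4}) and completeness of $W$, $L$ is a continuous linear injection that is bounded below, hence a topological embedding onto a closed subspace $M$, and $\II = I \circ L$ with $I$ as in the first part. The inclusion $L^*\partial I(Lw) \subseteq \partial \II(w)$ is immediate by testing the subgradient inequality for $I$ only against points of $M$. For the converse, given $\mu \in \partial \II(w)$ I would transport it through the isomorphism $L\colon W \to M$ to a subgradient of the restriction of $I$ to $M$, then extend: since $\II^*(\mu) = \inf\{I^*(\ell) \mid L^*\ell = \mu\}$, it suffices to show this infimum is attained, for then any minimizer $\ell$ satisfies $I(Lw) + I^*(\ell) = \II(w) + \II^*(\mu) = \langle \mu, w\rangle = \langle \ell, Lw\rangle$, hence $\ell \in \partial I(Lw)$ and $\mu = L^*\ell$. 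Attainment would follow from weak* lower semicontinuity of $I^*$ together with a coercivity argument on the affine fiber $\{\ell \mid L^*\ell = \mu\}$ based on the $\beta^*$-type lower bound on $\epsilon^*$ coming from (\ref{eq: E bsc ass3}); this is precisely the qualified chain rule collected in the appendix \ref{prt: Appendix} on subdifferential calculus, which I would invoke once the qualification is verified.

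The main obstacle lies in the two reverse inclusions, and specifically in the interaction with the singular functionals. In (\ref{eq: SD I}) one must be sure that $\ell_f$ genuinely annihilates all the bounded localized perturbations used to pin down $\ell_\sigma$, and that the conjugate of $I$ splits cleanly into an integral part carried by $\ell_\sigma$ and a pure support-function part carried by $\ell_f$ — this is where the Orlicz duality machinery of Theorems \ref{thm: conjugate A} and \ref{thm: conjugate B} does the real work. In (\ref{eq: SD II}) the crux is attainment of the dual infimum, i.e. the Attouch--Br\'ezis-type transversality between $\dom I$ and the subspace $M = L(W)$; checking it is exactly the purpose of the growth sandwich (\ref{eq: E bsc ass3})--(\ref{eq: E bsc ass4}), which forces $I^*$ to be coercive in the relevant directions.
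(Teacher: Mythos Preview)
Your handling of (\ref{eq: SD I}) is correct in outline but re-derives by hand what the paper obtains in one line by invoking Theorem~\ref{thm: conjugate B}: that theorem already represents the subdifferential of an integral functional on $L_\kappa(\Om)$ as the direct sum of the pointwise-subgradient component and the singular support-functional component, with almost decomposability of $L_\kappa(\Om)$ (Corollary~\ref{cor: L a decomp}) supplying the hypothesis. One wrinkle in your direct argument: you claim $\ell_f$ vanishes on the test perturbations $q\chi_B$ because they have absolutely continuous norm, but $q\chi_B \in C_\kappa(\Om)$ is not automatic without a local-integrability assumption on $\kappa$; in general one only gets $q\chi_{B_\e} \in C_\kappa(\Om)$ after excising a set of small measure (Lemma~\ref{lem: C a decomp}). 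This is precisely the refinement already absorbed into Theorem~\ref{thm: conjugate B}, so your localization step would need the same patching.

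For (\ref{eq: SD II}) you take a substantially harder road than the paper. The paper simply observes that the upper growth bound (\ref{eq: E bsc ass4}) makes $I$ bounded above on a full Luxemburg-norm neighbourhood of $(\bar u_2, \nabla \bar u_2)$ in $L_\kappa(\Om)$, hence continuous there by \cite[\S3.2, Thm.~1]{IT}; the classical convex chain rule \cite[\S4.2, Thm.~2]{IT} for $I\circ(1,\nabla)$ then applies directly and gives both inclusions at once. Your attainment-of-the-dual-infimum route via Attouch--Br\'ezis could in principle be pushed through, but you do not actually verify the qualification, and the growth sandwich (\ref{eq: E bsc ass3})--(\ref{eq: E bsc ass4}) is more naturally read as furnishing a point of continuity for $I$ than as a coercivity statement for $I^*$ on fibres of $(1,\nabla)^*$. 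The point-of-continuity argument is both shorter and immediately available from the standing assumptions; your version would require additional work that the paper avoids entirely.
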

	
	\begin{proof}
		We have (\ref{eq: SD I}) by Theorem \ref{thm: conjugate B} and Corollary \ref{cor: L a decomp}. (\ref{eq: SD II}): The definition of the Luxemburg norm shows that the functional $I$ is bounded above around $(\bar{u}_2, \nabla \bar{u}_2 )$ on $L_\kappa(\Om)$ by (\ref{eq: E bsc ass4}). Thus, it is continuous there by \cite[§3.2, Thm. 1]{IT}, whence the chain rule \cite[§4.2, Thm. 2]{IT} yields (\ref{eq: SD II}).
	\end{proof}
	
	\begin{proposition}[subdifferential, non-convex part] \label{pr: F G-dffrntbl}
		If $\EE_W(v, w; \Om) < \i$, then the partial Gateaux derivative $\p_v \EE_W(v, w; \Om)$ exists and is given by
		\begin{equation} \label{eq: prtl G-dffrntbl}
			\langle \p_v \EE_W(v, w; \Om), y \rangle = \int_\Om \epsilon_u \left(x, v(x), w(x), \nabla w(x) \right) y(x) \, dx \quad \forall y \in V.
		\end{equation}
		Moreover, for any $R > 0$, the derivative maps the set
		$$
		M_R = \left\{ (v, w) \in V \times W \st \| v \|_\alpha + \| w \|_\alpha + \EE_W(v, w; \Om) \le R \right\}
		$$
		into a bounded subset of $V^*$. In particular, we have the implication
		\begin{equation} \label{eq: cvg impl}
			\begin{gathered}
				\forall A \in \BB(\Om)\quad v_n \weakast v \text{ in } V \text{ and } \lim_n \EE(v_n; A) = \EE(v; A) \implies \\
				\p_v \EE_W(v_n, v_n; \Om) \weakast \p_v \EE_W(v, v; \Om) \text{ in } L_{\alpha^*}(A).
			\end{gathered}
		\end{equation}
	\end{proposition}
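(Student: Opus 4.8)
The statement has three parts: the representation formula \ref{eq: prtl G-dffrntbl} for the partial Gateaux derivative, the uniform $V^*$-bound on the sets $M_R$, and the weak* continuity \ref{eq: cvg impl}; the last is essentially a corollary of the second together with assumption \ref{eq: eu a.e. cvg}, so I would treat the first part as the heart of the argument.

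\emph{Representation of $\p_v \EE_W$.} Fix $(v, w)$ with $\EE_W(v, w; \Om) < \i$ and a direction $y \in V = L_\alpha(\Om)$. Since $\dom \epsilon(x, u, \cdot)$ is independent of $u$ and the energy is finite, for a.e. $x$ the pair $(w(x), \nabla w(x))$ lies in $\dom \epsilon(x, u, \cdot)$ for every $u$, hence $s \mapsto \epsilon(x, v(x) + s y(x), w(x), \nabla w(x))$ is finite and differentiable with derivative $\epsilon_u(x, v(x) + s y(x), w(x), \nabla w(x))\, y(x)$, which depends continuously on $s$ by the radial continuity \ref{eq: E bsc ass2}. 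Thus the difference quotients of $s \mapsto \epsilon(x, v + s y, w, \nabla w)$ converge a.e.\ to $\epsilon_u(x, v, w, \nabla w)\, y(x)$. To pass this under $\int_\Om$ I would first observe that the segment $\{\, v + s y : |s| \le 1 \,\}$ stays in a fixed sublevel set $\{\, v' : \EE_W(v', w; \Om) \le a \,\}$: by the upper bound \ref{eq: E bsc ass4} the integrand $\epsilon(x, v + sy, w, \nabla w)$ is dominated by $\beta(x, w - \bar{u}_2, \nabla w - \nabla \bar{u}_2) + C[\alpha(x, v + sy) + \alpha(x, w)] + m(x)$, whose $\beta$-term is independent of $s$ and has finite integral (a consequence of $\EE_W(v, w; \Om) < \i$ combined with the two-sided bounds \ref{eq: E bsc ass3}, \ref{eq: E bsc ass4}), while $\int_\Om \alpha(x, v + sy)\, dx$ is uniformly bounded for $|s| \le 1$ because $\alpha \in \Delta_2$ by \ref{eq: lph1}. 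On that sublevel set the growth condition \ref{eq: E prtl grwth} controls $\alpha^*(x, c\, \epsilon_u(x, v + sy, w, \nabla w))$ by an $L_1(\Om)$ function, and a Young-inequality estimate together with the Fubini identity $\EE_W(v + ty, w; \Om) - \EE_W(v, w; \Om) = \int_0^t \langle \epsilon_u(\cdot, v + sy, w, \nabla w), y \rangle_{V^*, V}\, ds$ reduces the claim to the fact that $0$ is a good point of the bounded function $s \mapsto \langle \epsilon_u(\cdot, v + sy, w, \nabla w), y \rangle$. Dividing by $t$ and letting $t \to 0$ then yields differentiability at $t = 0$ with derivative $\int_\Om \epsilon_u(x, v, w, \nabla w)\, y(x)\, dx$; linearity and $V^*$-continuity in $y$ are clear once we know $\epsilon_u(\cdot, v, w, \nabla w) \in L_{\alpha^*}(\Om) = V^*$, which is exactly what the next step provides.

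\emph{Uniform bound on $M_R$.} For $(v, w) \in M_R$ apply \ref{eq: E prtl grwth} with $a = R$ and integrate over $\Om$: $c \int_\Om \alpha^*(x, c\, \epsilon_u(x, v, w, \nabla w))\, dx \le c^{-1}\int_\Om \alpha(x, v)\, dx + c^{-1}\int_\Om \alpha(x, w)\, dx + \|\Sigma(v, w)\|_{L_1}$. On $M_R$ one has $\|v\|_\alpha, \|w\|_\alpha \le R$, so by $\Delta_2$ the modulars $\int_\Om \alpha(x, v)\, dx$ and $\int_\Om \alpha(x, w)\, dx$ are bounded by a constant depending only on $R$, and $\|\Sigma(v, w)\|_{L_1}$ is bounded on $M_R$ since $M_R$ sits inside a sublevel set of $(v, w) \mapsto \EE_W(v, w; \Om)$ and $\Sigma$ is bounded there by hypothesis. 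Hence $\int_\Om \alpha^*(x, c\, \epsilon_u(x, v, w, \nabla w))\, dx \le C(R)$, which by the definition of the Luxemburg norm bounds $\|\epsilon_u(\cdot, v, w, \nabla w)\|_{L_{\alpha^*}}$ in terms of $R$; by the previous paragraph this is precisely the assertion that $\p_v \EE_W$ maps $M_R$ into a bounded subset of $V^*$.

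\emph{Weak* continuity and the main obstacle.} Given $v_n \weakast v$ in $V$ with $\EE(v_n; A) \to \EE(v; A) < \i$ (so that, in the situations where the implication is used, also $\sup_n \|v_n\|_\alpha + \EE(v_n; \Om) < \i$ and hence $(v_n, v_n) \in M_R$ for some $R$), the bound just proved makes $(\epsilon_u(\cdot, v_n, v_n, \nabla v_n))_n$ bounded in $L_{\alpha^*}(\Om)$, while assumption \ref{eq: eu a.e. cvg} — whose hypothesis is exactly $v_n \weakast v$ in $V$ and $\EE(v_n; A) \to \EE(v; A)$ — gives $\epsilon_u(x, v_n, v_n, \nabla v_n) \to \epsilon_u(x, v, v, \nabla v)$ for a.e.\ $x \in A$. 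A bounded sequence in $L_{\alpha^*}(A) = C_{\alpha^*}(A)^*$ that converges a.e.\ converges weak*: by separability of $C_{\alpha^*}(A)$ (Theorem \ref{thm: C sep}) and weak* sequential compactness, every subsequence has a weak*-convergent further subsequence whose limit must agree a.e.\ with the pointwise limit, so the whole sequence converges weak* to $\epsilon_u(\cdot, v, v, \nabla v)|_A$, which is \ref{eq: cvg impl}. The delicate point in the whole proof is the domination in the representation part, i.e.\ showing the segment $v + sy$, $|s| \le 1$, remains in a fixed sublevel set of $v' \mapsto \EE_W(v', w; \Om)$ and controlling $\epsilon_u$ along it (including the $\Sigma$-term) so that \ref{eq: E prtl grwth} becomes applicable uniformly; once this is secured, the remaining steps are routine consequences of the growth conditions \ref{eq: E prtl grwth}, \ref{eq: E bsc ass3}, \ref{eq: E bsc ass4}, the $\Delta_2$-property of $\alpha$, and \ref{eq: eu a.e. cvg}.
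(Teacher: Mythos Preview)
Your proposal is correct and essentially matches the paper's proof: the paper writes the difference quotient via the pointwise mean value theorem as $\epsilon_u(\cdot, v + \kappa y, w, \nabla w)\, y$ for a measurable $\kappa \in (0,h)$, then invokes \eqref{eq: E prtl grwth} plus Young's inequality for equi-integrability and concludes by Vitali, which is interchangeable with your FTC--Fubini route and the continuity of $s \mapsto \langle \epsilon_u(\cdot, v+sy, w, \nabla w), y\rangle$ at $s=0$; the $M_R$-bound and the weak* convergence are argued in the same way. One harmless slip: the predual of $L_{\alpha^*}(A)$ you need is $C_\alpha(A)=V$ (via $\alpha \in \Delta_2$ and Corollary~\ref{cor: C*}), not $C_{\alpha^*}(A)$.
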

	
	\begin{proof}
		For $h > 0$, the mean value theorem yields a (measurable) function $\kappa \colon \Om \to (0, h)$ such that
		$$
		\chi_A \epsilon(\cdot, v + hy, v, \nabla v) - \chi_A \epsilon(\cdot, v, w, \nabla w) = \chi_A \epsilon_u(v + \kappa y, w, \nabla w) h y.
		$$
		Combining this with (\ref{eq: E prtl grwth}) we find by $\EE(w) < \i$ and $V = C_\alpha(\Om)$ together with Theorem \ref{thm: C max subspace} that $f_u(v + \kappa y, w, \nabla w)$ is bounded in $V^*$. Thus, the Young inequality shows that the difference quotient is bounded and equi-integrable in $L_1$, hence it converges strongly to its a.e. limit by the Vitali convergence theorem. Moreover, given sequences $v_n, w_n \in V$ such that $\| v_n \|_\alpha + \| w_n \|_\alpha + \EE(w_n) \le \alpha < \i$, we obtain a bound for $\epsilon_u(\cdot, v_n, w_n, \nabla w_n)$ in $V^*$ by (\ref{eq: E prtl grwth}) as before, so that the first addendum follows. Finally, we may invoke (\ref{eq: eu a.e. cvg}) to conclude the last claim. This uses that an a.e. limit must agree with a weak* limit in $L_{\alpha^*}(A)$, which is elementary to check.
	\end{proof}
	
	\begin{lemma} \label{lem: F SD}
		The energy $\EE \colon V \to \left( - \i, \i \right]$ is $\| \cdot \|_V$-$\om$-Dini-subdifferentiable at every $u \in \dom \EE$ in the sense defined in Theorem \ref{thm: smcnvx} with $\om \colon V \times V \to \left[ 0, \i \right]$ a locally bounded function such that
		\begin{equation} \label{eq: om vnshs}
			\forall u \in \dom(\EE) \quad \forall w \in V \quad
			\limsup_{h \downarrow 0, v \to w} \om(u + hv, u) = 0.
		\end{equation}
		Moreover, we have $\p_D \EE(u) = \p_H \EE(u)$ for all $u \in \dom \EE$. A functional $\xi \in V^*$ belongs to $\p_D \EE(u)$ if and only if the restriction of $\xi$ to $W$ belongs to the subspace Dini-subdifferential $\p_D \EE_W(u)$ of the restricted energy $\EE_W(u) = \EE_W(u, u)$. Finally, there holds
		\begin{equation} \label{eq: SD}
			\p_D \EE_W(u) = \p_v \EE_W(u, u) + \p \EE_W(u, \cdot)(u)
		\end{equation}
		with
		\begin{equation} \label{eq: SD 2}
			\begin{gathered}
				\p \EE_W(u, \cdot)(u) = \\
				\left\{ \eta_f \in F_{\gamma^*}(\Om) \st \forall y \in \dom \EE_W(u, \cdot) \quad \langle \eta_f; u, \nabla u \rangle \ge \langle \eta_f; y, \nabla y \rangle \right\} \\
				+ \left\{ \eta_a \in L_{\gamma^*}(\Om) \st \eta_a(x) \in \p_{v, z} \epsilon(x, u(x), u(x), \nabla u(x) ) \text{ for a.e. } x \in \Om \right\}.
			\end{gathered}
		\end{equation}
	\end{lemma}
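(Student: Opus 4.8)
The plan is to exploit the two-variable structure $\EE(u)=\EE_W(u,u;\Om)=:G(u,u)$, where $v\mapsto G(v,w;\Om)$ is Gâteaux differentiable by Proposition \ref{pr: F G-dffrntbl} and $w\mapsto G(v,w;\Om)$ is convex with subdifferential computed in Proposition \ref{pr: F cnvx SD}, and then to propagate the lower Taylor expansions of the two parts to the diagonal. First I would reduce everything to $W$. If $\EE(u)<\i$, then (\ref{eq: E bsc ass3}), together with $u\in V=C_\alpha(\Om)$ (so $\int_\Om\alpha(x,u)\,dx<\i$ by (\ref{eq: lph1}) and \cite[Cor. 2.1.15]{DHHR}) and $\bar u_1\in W$, forces $(u-\bar u_1,\nabla u-\nabla\bar u_1)\in L_\beta(\Om)$, hence $u\in W$; so $\dom\EE\subset W$. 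Consequently, for $u\in\dom\EE$ and any $y\in V\setminus W$ one has $u+hy\notin W$, so $\EE(u+hy)=+\i$ and the Dini subderivative $\EE'_D(u;y)=+\i$ imposes no constraint, while for $y\in W$ it coincides with the subspace Dini subderivative $\EE'_{W,D}(u;y)$ of $\EE_W(\cdot):=\EE_W(\cdot,\cdot;\Om)$. Hence $\xi\in\p_D\EE(u)\iff\xi|_W\in\p_D\EE_W(u)$, and it suffices to work on $W$.

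The core computation is the splitting, for $u\in\dom\EE_W$ and $y\in W$,
$$
\frac{\EE_W(u+hy)-\EE_W(u)}{h}=\frac{G(u+hy,u+hy)-G(u,u+hy)}{h}+\frac{G(u,u+hy)-G(u,u)}{h}.
$$
The second summand decreases as $h\downarrow0$ by convexity of $w\mapsto G(u,w)$, converging to the support function $\sigma_{\p I_u(u,\nabla u)}\!\big((y,\nabla y)\big)$, where $I_u(w,z)=\II_\epsilon[u,(w,z);\Om]$ and $\p\big(G(u,\cdot)\big)(u)=(1,\nabla)^*\p I_u(u,\nabla u)$ is nonempty by Proposition \ref{pr: F cnvx SD} (eq. (\ref{eq: SD II}), with $I_u$ continuous at $(\bar u_2,\nabla\bar u_2)$ thanks to (\ref{eq: E bsc ass4})). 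For the first summand I would write, via the fundamental theorem of calculus along the first slot (justified by the Gâteaux differentiability of $G(\cdot,w;\Om)$ and radial continuity of $\epsilon_u$),
$$
\frac{G(u+hy,u+hy)-G(u,u+hy)}{h}=\int_0^1\big\langle\p_vG(u+shy,u+hy;\Om),\,y\big\rangle\,ds=\int_0^1\!\!\int_\Om\epsilon_u(x,u+shy,u+hy,\nabla u+h\nabla y)\,y\,dx\,ds,
$$
and pass to the limit $h\downarrow0$: radial continuity of $\epsilon_u$ (\ref{eq: E bsc ass2}) gives pointwise convergence of the integrand, and (\ref{eq: E prtl grwth}) together with the Young inequality gives the equi-integrable domination, so dominated and then bounded convergence yield the limit $\langle\p_v\EE_W(u,u),y\rangle$. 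Adding the two limits, $\EE'_{W,D}(u;y)=\langle\p_v\EE_W(u,u),y\rangle+\sigma_{\p I_u(u,\nabla u)}\!\big((y,\nabla y)\big)$, whence $\p_D\EE_W(u)=\p_v\EE_W(u,u)+(1,\nabla)^*\p I_u(u,\nabla u)$; inserting the direct-sum decomposition (\ref{eq: SD I}) gives (\ref{eq: SD}) and (\ref{eq: SD 2}).

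Next I would produce the modulus. Fix $\xi\in\p_D\EE_W(u)$ and decompose $\xi=\p_v\EE_W(u,u)+\eta$ with $\eta\in(1,\nabla)^*\p I_u(u,\nabla u)$. Convexity of $w\mapsto G(u,w)$ gives $G(u,w)-G(u,u)\ge\langle\eta,w-u\rangle$ with no error term; the same fundamental-theorem identity as above gives, for the differentiable slot,
$$
G(w,w)-G(u,w)\ge\langle\p_v\EE_W(u,u),w-u\rangle-\|w-u\|_V\,\om(w,u),\qquad\om(w,u):=\sup_{s\in[0,1]}\big\|\p_vG(u+s(w-u),w;\Om)-\p_v\EE_W(u,u)\big\|_{V^*}.
$$
Adding yields $\EE_W(w)-\EE_W(u)-\langle\xi,w-u\rangle\ge-\om(w,u)\|w-u\|_V$. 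Local boundedness of $\om$ on sublevels of $\EE_W$ follows from the uniform $V^*$-bound on $M_R$ in Proposition \ref{pr: F G-dffrntbl}, using (\ref{eq: E bsc ass4}) to keep $(u+s(w-u),w)$ in a fixed $M_R$; and $\om(u+hv,u)\to0$ as $h\downarrow0$, $v\to w$ in $V$ by the radial-continuity-plus-domination argument of the previous step, which is (\ref{eq: om vnshs}). This is exactly $\|\cdot\|_V$-$\om$-Dini-subderivability in the sense of Theorem \ref{thm: smcnvx}. Finally, the inclusion $\p_D\EE(u)\subset\p_H\EE(u)$ (the reverse being automatic) follows from this expansion: writing $w=u+hv$ and dividing by $h$,
$$
\liminf_{h\downarrow0,\ v\to y}\frac{\EE_W(u+hv)-\EE_W(u)}{h}\ \ge\ \liminf_{h\downarrow0,\ v\to y}\big(\langle\xi,v\rangle-\om(u+hv,u)\|v\|_V\big)=\langle\xi,y\rangle,
$$
so $\xi\in\p_H\EE_W(u)$; transporting back through the Step-1 reduction (and using the gradient coercivity of $\beta$ in (\ref{eq: bt ssmptns2}), (\ref{eq: E bsc ass3}) to handle directions $y$ that leave $W$, along which the difference quotient diverges) gives $\p_D\EE(u)=\p_H\EE(u)$.

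The main obstacle is the passage to the limit in the differentiable slot of the second paragraph — equivalently, the verification in the third paragraph that $\om$ genuinely vanishes. This is where radial continuity of $\epsilon_u$ (\ref{eq: E bsc ass2}), the growth bound (\ref{eq: E prtl grwth}) (supplying equi-integrability/domination through Young's inequality), and the identity $V=C_\alpha(\Om)$ are used essentially; without such uniform control the interchange fails, and this is precisely why the weaker $\om$-modulus formulation replaces continuous differentiability here. A secondary delicate point is lifting the Hadamard identity from $W$ back to $V$ past directions exiting $W$, which I would resolve by the coercivity of $\beta$ in the gradient variable forcing the relevant difference quotients to diverge whenever the modulus argument does not apply.
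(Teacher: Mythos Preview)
Your overall strategy coincides with the paper's: reduce to $W$ via $\dom\EE\subset W$, split the increment through the two–variable structure $G(v,w)=\EE_W(v,w;\Om)$, treat the convex slot by Proposition~\ref{pr: F cnvx SD} and the Gâteaux slot by Proposition~\ref{pr: F G-dffrntbl}, then build $\om$ from the Gâteaux remainder. The execution differs in two respects. You split in the opposite order (second slot around $u$, then first slot from $u$ to $u+hy$) and compute the Gâteaux piece by the fundamental theorem plus dominated convergence, obtaining $\EE'_{W,D}(u;y)$ directly as a limit; the paper splits first–then–second and uses a $\theta\ge h$ monotonicity trick to prove only the upper bound $\EE'_{W,D}(u;y)\le\langle\p_v\EE_W(u,u),y\rangle+\EE'_{W,D}(u,u;0,y)$, closing the gap afterwards via the $\om$–subdifferentiability. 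Your route is a bit cleaner where it works, but it needs $G(u,u+hy)<\infty$ for small $h$ to invoke the FTC; you should make explicit the trivial case $u+hy\notin\dom G(u,\cdot)$ (then both sides are $+\infty$).

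There is one genuine gap. Your modulus is defined with the operator norm,
\[
\om(w,u)=\sup_{s\in[0,1]}\big\|\p_vG\big(u+s(w-u),w\big)-\p_v\EE_W(u,u)\big\|_{V^*},
\]
and you claim $\om(u+hv,u)\to0$ via ``radial continuity plus domination''. That argument gives a.e.\ convergence of $\epsilon_u(x,\cdot)$ together with an $L^1$ bound on $\alpha^*\!\big(c\,\epsilon_u(\cdot)\big)$ for one fixed $c$ from \eqref{eq: E prtl grwth}; this yields modular convergence at that single scale and hence only \emph{weak*} convergence of $\p_vG$ in $V^*=L_{\alpha^*}(\Om)$, not norm convergence (recall $\alpha^*\notin\Delta_2$ is allowed). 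The paper avoids this by taking the scalar remainder
\[
\om(y,u)\,\|y-u\|_V:=\EE(y)-\EE_W(u,y)-\langle\xi_1,y-u\rangle
=\big\langle\p_v\EE_W(\kappa,y)-\xi_1,\ y-u\big\rangle,
\]
so that one only needs $\p_v\EE_W(\kappa,y)\weakast\xi_1$ in $V^*$ together with strong $V$–convergence of $(y-u)/\|y-u\|_V$, which is exactly what Proposition~\ref{pr: F G-dffrntbl} and \eqref{eq: E bsc ass2} supply. Redefine your $\om$ this way (or, equivalently, replace $\|\cdot\|_{V^*}$ by the pairing against the unit direction $v/\|v\|_V$) and the rest of your argument, including the $\p_D=\p_H$ step, goes through.
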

	
	\begin{proof}
		We start by proving an upper estimate on $\p_D \EE(u)$. We know that $\xi \in \p_D \EE(u)$ iff the restriction of $\xi$ to $W$ belongs to $\p_D \EE_W(u)$ by Lemma \ref{lem: smplst cr} because $\dom \EE \subset W$ by (\ref{eq: E bsc ass3}). We claim that
		\begin{equation} \label{eq: clm 1}
			\p_D \EE_W(u) \subset \p_D \EE_W(\cdot, u)(u) + \p_D \EE_W(u, \cdot)(u).
		\end{equation}
		To show this, it suffices to prove that
		$$
		\EE'_{W, D}(u; y) \le \EE'_{W, D} (u, u; y, 0) + \EE'_{W, D} (u, u; 0, y)
		$$
		for any $y \in V$ such that the right-hand side is not $+ \i$. Hence, we may assume that
		$$
		\EE_V(u, u + hy) < \i
		$$
		for all step-widths $h > 0$ that are sufficiently small. But then Proposition \ref{pr: F G-dffrntbl} implies that $v \mapsto \EE_V(v, u + h y)$ is Gateaux differentiable hence radially continuous. Let $\theta \ge h > 0$. We combine the Gateaux differentiability with the fact that the difference quotient of a convex function is non-decreasing in the step-width to estimate
		\begin{equation*}
			\begin{aligned}
				\delta_h \EE_W(u; y)
				& = \delta_h \EE_W(u, u; y, 0) + \delta_h \EE_W(u + hy, u; 0, y) \\
				& \le \delta_h \EE_W(u, u; y, 0) + \delta_\theta \EE_W(u + hy, u; 0, y) \\
				& \to \EE'_{W, D}(u, u; y, 0) + \delta_\theta \EE_W(u, u; 0, y) \\
				& \to \EE'_{W, D}(u, u; y, 0) + \EE'_{W, D}(u, u; 0, y)
			\end{aligned}
		\end{equation*}
		as $h \to 0^+$ and $\theta \to 0^+$, consecutively. Having shown (\ref{eq: clm 1}), we turn to proving the lower estimate
		\begin{equation} \label{eq: clm 2}
			\p_H \EE_W(u) \supset \p_D \EE_W(u, u) + \p \EE_W( u, \cdot)(u)
		\end{equation}
		on the subdifferential. For this, we prove first that $\EE$ is $\| \cdot \|_V$-$\om$-Dini-subdifferentiable for an $\om$ satisfying (\ref{eq: om vnshs}). Let $\xi_1 = \p_v \EE_V(u, u) = \p_v \EE_W(u, u)$ and $\xi_2 \in \p \EE_W(u, \cdot)(u)$. We have
		\begin{align*}
			\EE(y) - \EE(u) - \langle \xi_1 + \xi_2, y - u \rangle
			& = \EE(y) - \EE_W(u, y) - \langle \xi_1, y - u \rangle \\
			& + \EE_W(u, y) - \EE(u) - \langle \xi_2, y - u \rangle \\
			& \ge \EE(y) - \EE_W(u, y) - \langle \xi_1, y - u \rangle \\
			& \eqqcolon \om(y, u) \| y - u \|_V.
		\end{align*}
		Proposition \ref{pr: F G-dffrntbl} renders the function $\om$ locally bounded. The mean value theorem yields $\kappa \in (u, y)$ such that
		$$
		\EE(y) - \EE_W(u, y) - \langle \xi_1, y - u \rangle = \langle \p_v \EE_W(\kappa, y) - \p_v \EE_W(u, u), y - u \rangle.
		$$
		Consequently, if $y = u + h z$, then (\ref{eq: E bsc ass2}) and Proposition \ref{pr: F G-dffrntbl} together with the fact that $\EE_W = \EE_V$ on $\dom \EE_V$ implies $\EE_W(\kappa, y) \weakast \xi_1$ in $V^*$ as $h \downarrow 0$ so that
		\begin{equation} \label{eq: rmndr}
			\forall \bar{z} \in V \colon \EE'_{V, H}(u; \bar{z} ) < \i \implies \limsup_{h \to 0^+ 0, z \to \bar{z} } \om(u + h z, u) = 0.
		\end{equation}
		Here, we used that a finite Dini-Hadamard subderivative in some direction entails eventual finiteness of the lower limit of the difference quotient in that direction by lower semi-continuity of $\EE$. In particular, $\xi = \xi_1 + \xi_2 \in \p_H \EE_W(u)$ so that (\ref{eq: clm 2}) has been shown. Putting together (\ref{eq: clm 1}) and (\ref{eq: clm 2}), we have proven (\ref{eq: SD}). Consequently, we find $\EE$ to be $\| \cdot \|_V$-$\om$-Dini-subdifferentiable as claimed since $\left. \p_D \EE(u) \right|_W = \p_D \EE_W(u)$. This in turn implies $\p_D \EE(u) = \p_H \EE(u)$ due to (\ref{eq: rmndr}). We know (\ref{eq: SD 2}) by Proposition \ref{pr: F cnvx SD}.
	\end{proof}
	
	\begin{corollary}[chain rule]
		The subdifferential $(t, u) \mapsto F_t(u) = \p_D \EE(u) = \p_H \EE(u)$ satisfies the chain rule assumption (\ref{eq: ch ru}).
	\end{corollary}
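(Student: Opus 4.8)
The plan is to obtain this directly from Lemma~\ref{lem: ch rl}. Since the energy $\EE$ of (\ref{eq: energy}) carries no explicit time dependence, one has $\GG_{T_0}(u)=\EE(u)$, the time modulus $f_1$ in (\ref{eq: absolute  continuity}) and the subderivative control $P$ in (\ref{eq: radially time differentiable}) may be taken to vanish, so those hold trivially, and the instance of (\ref{eq: ch ru}) to be verified collapses to the single inequality $\tfrac{d}{dt}\EE(u(t))\ge\langle\xi(t),u'(t)\rangle_{X,V}$ in $\DD'(0,S)$. Fix $S\in(0,T)$, a weakly differentiable curve $u\colon(0,S)\to V_\dd$ with $u'\in V_1(0,S;V_\dd)$, and $\xi\in V_1(0,S;X_{\dd^*})$ satisfying the premises of (\ref{eq: ch ru}); in particular $\sup_{0\le t\le S}\EE(u(t))<\i$, $\int_0^S\phi_{t,u(t)}(u'(t))+\phi^*_{t,u(t)}(-\xi(t))\,dt<\i$, and $\xi(t)\in F_t(u(t))=\p_D\EE(u(t))=\p_H\EE(u(t))$ for a.e.\ $t$, the last equality being Lemma~\ref{lem: F SD}.

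I would first upgrade the data to the framework of Lemma~\ref{lem: ch rl}. Because $\varphi\ge\alpha$ by (\ref{eq: vrph ass3}), the integral premise gives $\int_0^S I_\alpha(u'(t))\,dt<\i$, hence $\int_0^S\|u'(t)\|_V\,dt\le S+\int_0^S I_\alpha(u'(t))\,dt<\i$ by \cite[Cor. 2.1.15(b)]{DHHR}; and since $V=C_\alpha(\Om)=L_\alpha(\Om)$ is separable ($\alpha\in\Delta_2$ by (\ref{eq: lph1})), the curve $u'$ is strongly measurable, so $u\in W^{1,1}(0,S;V)$ and $t\mapsto\langle\xi(t),u'(t)\rangle$ is measurable even though $X=L_{\alpha^*}(\Om)$ need not be separable. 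Next, Lemma~\ref{lem: ch rl} wants a fixed autonomous conjugate pair, and I would feed it the Orlicz modulars $\phi_0:=I_\alpha$ on $U:=V$ and $\phi_0^*:=I_{\alpha^*}$ on $V^*$; this is harmless because its proof uses $\phi_0,\phi_0^*$ only through Young's inequality and Proposition~\ref{pr: dq prprts}. These modulars lie in $\Gamma(V)$ and $\Gamma(V^*)$, vanish at the origin, vanish continuously there (as $\|w\|_\alpha<1$ forces $I_\alpha(w)\le\|w\|_\alpha$) and tend to $+\i$ with the norm, and are mutually conjugate by the Orlicz conjugacy theory of the second part. Finally, the conjugate lower bound $\varphi^*(t,x,u,\cdot)\ge C\alpha^*(x,C^{-2}\,\cdot)-m(x)$ entailed by (\ref{eq: vrph ass3}), together with $\varphi\ge\alpha$ and convexity, converts the premise $\int_0^S\phi_{t,u(t)}(u'(t))+\phi^*_{t,u(t)}(-\xi(t))\,dt<\i$ into $P_{\phi_0}(u')+Q_{\phi_0^*}(-\xi)<\i$, which is (\ref{eq: nrm fnt 1}) and (\ref{eq: nrm fnt 2}).

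The remaining hypothesis of Lemma~\ref{lem: ch rl}, the one-sided expansion (\ref{eq: lwr tylr xpsn}), is exactly what Lemma~\ref{lem: F SD} delivers: $\EE$ is $\|\cdot\|_V$-$\om$-Dini-subdifferentiable at every point of its domain for a locally bounded modulus $\om$, i.e.\ $\EE(v)-\EE(u)-\langle\zeta,v-u\rangle\ge-\om(v,u)\|v-u\|_V$ for all $\zeta\in\p_D\EE(u)$; replacing $\om$ by $\max\{0,\om\}$ yields a $[0,\i]$-valued modulus that vanishes on the diagonal and is locally bounded above near it, as (\ref{eq: lwr tylr xpsn}) requires (with no $t$- or $R$-dependence). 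Applying the Dini--Hadamard branch of Lemma~\ref{lem: ch rl} with $\xi(t)\in\p_H\EE(u(t))$ then produces $\tfrac{d}{dt}\EE(u(t))\ge\langle\xi(t),u'(t)\rangle_{X,V}$ in $\DD'(0,S)$, which is (\ref{eq: ch ru}). The bulk of the work is therefore carried by Lemma~\ref{lem: F SD} (the construction of $\om$ and the identity $\p_D\EE=\p_H\EE$); within the present argument the point that needs the most care is the compatibility just discussed between the merely $\dd$-weak differentiability allowed in (\ref{eq: ch ru}), the possible non-separability of $X$, and the Bochner framework of Lemma~\ref{lem: ch rl}, which is resolved by the separability of $V=L_\alpha(\Om)=C_\alpha(\Om)$ and the Orlicz bound on $u'$.
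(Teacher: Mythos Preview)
Your proposal is correct and follows exactly the paper's approach: invoke Lemma~\ref{lem: ch rl} with the $\|\cdot\|_V$-$\om$-Dini-subdifferentiability and the identity $\p_D\EE=\p_H\EE$ supplied by Lemma~\ref{lem: F SD}. You have simply spelled out the verification of the hypotheses (reduction to the autonomous pair $I_\alpha,I_{\alpha^*}$ via the bounds (\ref{eq: vrph ass3}), strong measurability of $u'$ from separability of $V=C_\alpha(\Om)$, and local boundedness of $\om$) that the paper leaves implicit in its one-line proof.
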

	
	\begin{proof}
		Combining Lemma \ref{lem: ch rl} with the subdifferentiability provided by Lemma \ref{lem: F SD}, we conclude that (\ref{eq: ch ru ineq}) holds for our choice of $\EE$ and $\phi$ so that (\ref{eq: ch ru}) is satisfied for $\p_H \EE(u)$.
	\end{proof}
	
	\emph{Closedness implication}: We start with a preparatory lemma of general type.
	
	\begin{lemma} \label{lem: G-lim SD clsdnss}
		Let $\XX, \YY$ be a dual pair of locally convex Hausdorff spaces and $F_\alpha \in \Gamma(\XX)$, be a net of functions such that $\Gamma$-$\lim F_\alpha = F \in \Gamma(\XX)$ with recovery sequences that converge in $\ss(\XX, \YY)$, i.e.
		\begin{equation} \label{eq: rcv sqc}
			\forall x \in \dom(F) \, \exists x_\alpha \in \XX \colon x_\alpha \to x \text{ in } \ss(\XX, \YY) \text{ and } F_\alpha(x_\alpha) \to F(x).
		\end{equation}
		Moreover, let the convex conjugate $F^*$ be lower semicontinuous, for example, let $\YY$ carry the weak topology $\ss(\YY, \XX)$. Then we have the implication
		\begin{equation} \label{eq: w assu}
			\begin{gathered}
				z_\alpha \to z \text{ in } \XX, \quad y_\alpha \to y \text{ in } \YY \implies \\
				\liminf F_\alpha(z_\alpha) + F^*_\alpha(y_\alpha) \ge F(z) + F^*(y).
			\end{gathered}
		\end{equation}
		Moreover, if the implication (\ref{eq: w assu}) is true and if $z_\alpha \to z$ in $\XX$ and $y_\alpha \to y$ in $\YY$ are nets such that $y_\alpha \in \p F_\alpha(z_\alpha)$ and $\langle z_\alpha, y_\alpha \rangle \to \langle z, y \rangle$, then there holds
		\begin{equation}
			\lim F_\alpha(z_\alpha) = F(z), \quad \lim F^*_\alpha(y_\alpha) = F^*(y), \quad y \in \p F(z).
		\end{equation}
	\end{lemma}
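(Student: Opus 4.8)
The plan is to reduce everything to the two one-sided inequalities
$\liminf_\alpha F_\alpha(z_\alpha) \ge F(z)$ and $\liminf_\alpha F^*_\alpha(y_\alpha) \ge F^*(y)$: adding them gives (\ref{eq: w assu}) at once, and the ``Moreover'' part then drops out of the Fenchel--Young (in)equality. The first inequality is nothing but the $\Gamma$-$\liminf$ half of the hypothesis $\Gamma$-$\lim F_\alpha = F$ evaluated along the net $z_\alpha \to z$, so no work is needed there.

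The substantial point, which I expect to be the only genuine obstacle, is $\liminf_\alpha F^*_\alpha(y_\alpha) \ge F^*(y)$. Here I would fix $x \in \dom(F)$ together with a recovery sequence $x_n$ for $x$, so that $x_n \to x$ in $\ss(\XX,\YY)$ and $F_n(x_n) \to F(x)$; being $\ss(\XX,\YY)$-convergent, the $x_n$ form an $\ss(\XX,\YY)$-bounded set. From the definition of the conjugate, $F^*_\alpha(y_\alpha) \ge \langle x_\alpha, y_\alpha\rangle - F_\alpha(x_\alpha)$ along the common index, and since $F_\alpha(x_\alpha) \to F(x)$ the whole matter comes down to showing $\langle x_\alpha, y_\alpha\rangle \to \langle x, y\rangle$. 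I would split $\langle x_\alpha, y_\alpha\rangle = \langle x_\alpha, y\rangle + \langle x_\alpha, y_\alpha - y\rangle$: the first term converges to $\langle x, y\rangle$ simply because $x_\alpha \to x$ in $\ss(\XX,\YY)$, and the second is controlled by $\sup_k |\langle x_k, y_\alpha - y\rangle|$, which vanishes because the $x_k$ lie in an $\ss(\XX,\YY)$-bounded set and $y_\alpha \to y$ in the topology of $\YY$, a topology that dominates uniform convergence on $\ss(\XX,\YY)$-bounded sets. Passing to $\liminf_\alpha$ yields $\liminf_\alpha F^*_\alpha(y_\alpha) \ge \langle x, y\rangle - F(x)$, and taking the supremum over $x \in \dom(F)$ gives the claim. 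It is exactly at the estimate of the mixed pairing $\langle x_\alpha, y_\alpha\rangle$ that the assumption ``recovery sequences converge in $\ss(\XX,\YY)$'' (rather than merely that $F^*$ be lower semicontinuous) is decisive, and keeping the two modes of convergence compatible is the part that requires care.

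Adding the two inequalities proves (\ref{eq: w assu}). For the ``Moreover'' part, assume in addition $y_\alpha \in \p F_\alpha(z_\alpha)$ and $\langle z_\alpha, y_\alpha\rangle \to \langle z, y\rangle$. The Fenchel--Young equality characterising $\p F_\alpha$ gives $F_\alpha(z_\alpha) + F^*_\alpha(y_\alpha) = \langle z_\alpha, y_\alpha\rangle$, so the sum converges to $\langle z, y\rangle$, while the Fenchel--Young inequality for the limit functions gives $F(z) + F^*(y) \ge \langle z, y\rangle$. Chaining these with the two $\liminf$ inequalities already established,
$$\langle z, y\rangle = \lim_\alpha\bigl(F_\alpha(z_\alpha) + F^*_\alpha(y_\alpha)\bigr) \ge \liminf_\alpha F_\alpha(z_\alpha) + \liminf_\alpha F^*_\alpha(y_\alpha) \ge F(z) + F^*(y) \ge \langle z, y\rangle,$$
so every inequality is an equality. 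In particular $F(z) + F^*(y) = \langle z, y\rangle$, i.e. $y \in \p F(z)$; and since $\liminf_\alpha F_\alpha(z_\alpha) + \liminf_\alpha F^*_\alpha(y_\alpha)$ coincides with the limit of the sum, the elementary estimate $\limsup_\alpha a_\alpha \le \lim_\alpha(a_\alpha + b_\alpha) - \liminf_\alpha b_\alpha$, applied with $a_\alpha = F_\alpha(z_\alpha)$, $b_\alpha = F^*_\alpha(y_\alpha)$ and then with the roles interchanged, forces $\lim_\alpha F_\alpha(z_\alpha) = F(z)$ and $\lim_\alpha F^*_\alpha(y_\alpha) = F^*(y)$. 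Apart from the control of the mixed pairing flagged above, the entire argument is bookkeeping with the Fenchel--Young relations.
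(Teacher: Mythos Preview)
Your treatment of the ``Moreover'' part is correct and essentially identical to the paper's: both invoke the Fenchel--Young equality for $\p F_\alpha$ and the Fenchel--Young inequality for the limit to squeeze the sum, and your extraction of the individual limits via $\limsup a_\alpha \le \lim(a_\alpha+b_\alpha) - \liminf b_\alpha$ is in fact more carefully spelled out than the paper's.

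The gap is in your derivation of (\ref{eq: w assu}), at the step where you assert that ``$y_\alpha \to y$ in the topology of $\YY$, a topology that dominates uniform convergence on $\ss(\XX,\YY)$-bounded sets''. Nothing in the hypotheses guarantees this. The lemma only asks that $F^*$ be lower semicontinuous on $\YY$, and the example given is $\YY$ carrying the weak topology $\ss(\YY,\XX)$. Weak convergence $y_\alpha \to y$ in $\ss(\YY,\XX)$ is merely pointwise convergence of the functionals $\langle \cdot, y_\alpha\rangle$, which is strictly weaker than uniform convergence on bounded sets. The standard counterexample $\XX=\YY=\ell^2$, $x_n=y_n=e_n$ (both weakly null, both norm-bounded, yet $\langle x_n,y_n\rangle=1$) shows that the mixed pairing $\langle x_\alpha,y_\alpha\rangle$ need not converge when both legs move only weakly. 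Since your recovery net $x_\alpha$ converges only in $\ss(\XX,\YY)$, you are exactly in this situation and the estimate of $\langle x_\alpha, y_\alpha - y\rangle$ is unjustified.

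Relatedly, your argument never uses the lower semicontinuity of $F^*$; you even flag the recovery-sequence hypothesis as decisive ``rather than merely that $F^*$ be lower semicontinuous''. But that hypothesis is precisely what the paper exploits to sidestep the joint-continuity problem. The paper first uses lsc of $F^*$ to get $F^*(y)\le F^*(y_\alpha)+\tfrac{\e}{3}$ for $\alpha$ large; then, \emph{with $y_\alpha$ held fixed}, it nearly attains $F^*(y_\alpha)$ at some $u_\alpha\in\XX$ and runs the recovery net for $u_\alpha$. Because $y_\alpha$ is frozen during this last step, only the $\ss(\XX,\YY)$-convergence of the recovery net is needed to pass the pairing $\langle v,y_\alpha\rangle$ to its limit, and no uniform estimate over a bounded set of $x$'s is required.
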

	
	\begin{proof}
		The lower semicontinuity of $F^*$ implies
		\begin{equation} \label{eq: eps 1}
			\forall \e > 0 \, \exists \alpha' \in I \colon \alpha \ge \alpha' \implies F^*(y) \le F^*(y_\alpha) + \frac{\e}{3}.
		\end{equation}
		By definition of the convex conjugate,
		\begin{equation} \label{eq: eps 2}
			\forall \e > 0 \, \forall \alpha \in I \, \exists u_\alpha \in \XX \colon F^*(y_\alpha) \le \langle u_\alpha, y_\alpha \rangle - F(u_\alpha) + \frac{\e}{3}.
		\end{equation}
		By (\ref{eq: rcv sqc}),
		\begin{equation} \label{eq: eps 3}
			\forall \e > 0 \, \forall \alpha \in I \, \exists \beta \in I \, \exists v \in \XX \colon \langle u_\alpha, y_\alpha \rangle - F(u_\alpha) \le \langle v, y_\alpha \rangle - F_\beta(v) + \frac{\e}{3}.
		\end{equation}
		Putting together (\ref{eq: eps 1}), (\ref{eq: eps 2}), and (\ref{eq: eps 3}) obtains
		\begin{align*}
			F^*(y)
			\le F^*(y_\alpha) + \frac{\e}{3}
			\le \langle u_\alpha, y_\alpha \rangle - F(u_\alpha) + \frac{2\e}{3}
			& \le \langle v_\beta, y_\alpha \rangle - F_\beta(v_\beta) + \e \\
			& \le F^*_\beta(y_\alpha) + \e
		\end{align*}
		so that $\liminf_{\alpha, \beta} F^*_\beta(y_\alpha) \ge F^*(y)$, from which (\ref{eq: w assu}) follows. Finally, if (\ref{eq: w assu}) is true, then Fenchel-Young implies
		\begin{equation*}
			\langle z, y \rangle
			= \lim \langle z_\alpha, y_\alpha \rangle
			= \lim F_\alpha(z_\alpha) + F^*_\alpha(y_\alpha)
			\ge F(z) + F^*(y)
			\ge \langle z, y \rangle. \qedhere
		\end{equation*}
	\end{proof}
	
	\begin{proposition} \label{pr: equi-crc*}
		Let $\XX$ be a Banach space and $f_i \in \Gamma(\XX)$ a family such that
		$$
		\exists \e > 0 \, \exists \delta > 0 \colon | x | < \e \implies \sup f_i(x) < \delta.
		$$
		Then there holds
		$$
		\inf f^*_i(x') \ge \e | x' | - \delta \quad \forall x' \in \XX^*.
		$$
	\end{proposition}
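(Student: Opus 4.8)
The statement is a direct consequence of the variational definition of the Fenchel conjugate, uniformized over the index set. The idea is to bound each $f_i^*$ from below by restricting the defining supremum to the open ball $B_\e = \{ x \in \XX \st |x| < \e \}$, where the hypothesis gives uniform control on $f_i$.

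First I would fix $x' \in \XX^*$ and an arbitrary index $i$, and write
$$
f_i^*(x') = \sup_{x \in \XX} \left( \langle x', x \rangle - f_i(x) \right) \ge \sup_{x \in B_\e} \left( \langle x', x \rangle - f_i(x) \right).
$$
On $B_\e$ the assumption yields $f_i(x) < \delta$, hence $-f_i(x) > -\delta$, so the right-hand side is at least $\sup_{x \in B_\e} \langle x', x \rangle - \delta$. Next I would use that $\sup_{x \in B_\e} \langle x', x \rangle = \e \| x' \|$ — this is just the definition of the dual norm (\ref{eq: dual nrms}) after rescaling; the supremum over the \emph{open} ball equals that over the closed ball by homogeneity and continuity of the linear functional. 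This gives $f_i^*(x') \ge \e \| x' \| - \delta$, and since the bound is independent of $i$, taking the infimum over $i$ yields the claim.

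There is essentially no obstacle here; the only point requiring a word of care is the passage from the supremum over the open ball to $\e \| x' \|$, which I would handle by noting that for any unit vector $u \in \XX$ with $\langle x', u \rangle$ arbitrarily close to $\| x' \|$, the points $(\e - \kappa) u \in B_\e$ with $\kappa \downarrow 0$ exhaust the value $\e \| x' \|$ from below. (One may also simply use $B_\e \supset \e B_\XX \setminus \{ x : |x| = \e \}$, which is immaterial to the supremum of a continuous linear functional.) Everything else is a routine rearrangement of the Fenchel–Young inequality.
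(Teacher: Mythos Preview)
Your proof is correct and uses the same core computation as the paper: restrict the supremum defining $f_i^*$ to the open ball $B_\e$, apply the uniform bound $f_i < \delta$ there, and evaluate $\sup_{|x|<\e}\langle x',x\rangle = \e\,|x'|$. The paper actually inserts a preliminary duality step---observing $(\inf_i f_i^*)^* = \sup_i f_i$ to reduce to a single function---before doing exactly your estimate; your direct per-index argument makes that reduction superfluous.
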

	
	\begin{proof}
		By definition of the convex conjugate, there holds
		$$
		\left( \inf f^*_i \right)^* = \left( \clco \inf f^*_i \right)^* = \sup f_i^{**} = \sup f_i,
		$$
		hence $\left( \sup f_i \right)^* = \clco \inf f^*_i$. Therefore, the matter reduces to considering a family with a single member $f$. By definition of the convex conjugate, we have
		\begin{equation*}
			f^*(x') \ge \sup_{| x | < \e} \langle x', x \rangle - f(x) \ge \e | x' | - \delta. \qedhere
		\end{equation*}
	\end{proof}
	
	\begin{lemma} \label{lem: E SD clsdnss}
		Let $u_n \eeto u$ and $\xi_n \weakast \xi$ in $V^*$ be sequences such that $\sup_n \EE(u_n) < \i$ and $\xi_n \in \p_D \EE(u_n)$. Then $\xi \in \p_D \EE(u)$.
	\end{lemma}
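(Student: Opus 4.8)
By Lemma~\ref{lem: F SD} it suffices to show that the restriction of $\xi$ to $W$ belongs to $\p_D \EE_W(u) = \p_v \EE_W(u,u) + \p \EE_W(u,\cdot)(u)$; since $\xi$ is determined by the whole sequence and every hypothesis survives thinning, I may pass to subsequences freely. First, $u_n \eeto u$ with $\sup_n \EE(u_n) < \i$ gives, via Banach--Steinhaus, that $(u_n)$ is bounded in $V$, hence, by (\ref{eq: E bsc ass3}), (\ref{eq: bt ssmptns2}) and $\alpha \in \Delta_2$ (see (\ref{eq: lph1})), bounded in $W = W^1 L_\gamma(\Om)$; along a subsequence $u_n \to u$ a.e.\ in $\Om$ and $\nabla u_n \weakast \nabla u$ in the pertinent Orlicz space and weakly in $L^1_{\mathrm{loc}}(\Om\setminus E)$, exactly as in the proof of Lemma~\ref{lem: crcvty}, while $(\xi_n)$ is bounded in $V^*$.

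Now peel off the Gateaux term: set $\zeta_n := \p_v \EE_W(u_n,u_n)$, bounded in $V^*$ by Proposition~\ref{pr: F G-dffrntbl}, and $\eta_n := \xi_n - \zeta_n \in \p\EE_W(u_n,\cdot)(u_n)$; pass to a further subsequence so $\zeta_n \weakast \zeta$ and $\eta_n \weakast \eta := \xi|_W - \zeta$. It remains to prove (i)~$\eta \in \p\EE_W(u,\cdot)(u)$ and (ii)~$\zeta = \p_v\EE_W(u,u)$, for then $\xi|_W = \zeta + \eta \in \p_D\EE_W(u)$ and Lemma~\ref{lem: F SD} gives $\xi \in \p_D\EE(u)$. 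For (i) I apply Lemma~\ref{lem: G-lim SD clsdnss} to the convex functionals $F_n := \II_\epsilon(u_n,\cdot;\Om)$ and $F_\infty := \II_\epsilon(u,\cdot;\Om)$ on $L_\kappa(\Om)$, in duality with $L_{\kappa^*}(\Om)$ and its singular complement; recall $\p\EE_W(u_n,\cdot)(u_n) = (1,\nabla)^*\p F_n((u_n,\nabla u_n))$ by Proposition~\ref{pr: F cnvx SD}, so $\eta_n$ is the pullback of some $\tilde\eta_n \in \p F_n((u_n,\nabla u_n))$. One must check that $F_\infty$ is the $\Gamma$-limit of $F_n$ in the weak* topology: the $\Gamma$-$\liminf$ inequality is the Ioffe lower-semicontinuity theorem \cite[Thm.~1]{Io} on an exhaustion (using $u_n \to u$ a.e.\ and the weak $L^1_{\mathrm{loc}}$-precompactness of bounded-energy pairs from (\ref{eq: E bsc ass3}), (\ref{eq: bt ssmptns2})), while the recovery sequences come from the two-sided growth bounds (\ref{eq: E bsc ass3})--(\ref{eq: E bsc ass4}) — this step requires care because $L_\alpha$-bounded sequences need not be equi-integrable, so the portion carrying the mass of $\alpha(\cdot,u_n)$ must be split off by Egorov and the competitor adjusted there. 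Proposition~\ref{pr: equi-crc*} supplies the equi-coercive minorant of the conjugates $F_n^*$ that makes this $\Gamma$-convergence effective, and $F_\infty^*$ is weak-lower-semicontinuous automatically.

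With the $\Gamma$-convergence in hand, Lemma~\ref{lem: G-lim SD clsdnss} yields, unconditionally, $\liminf_n \langle \eta_n, u_n\rangle = \liminf_n [F_n((u_n,\nabla u_n)) + F_n^*(\tilde\eta_n)] \ge F_\infty((u,\nabla u)) + F_\infty^*(\tilde\eta) \ge \langle \eta, u\rangle$ (Fenchel--Young for $F_\infty$, where $\tilde\eta_n \weakast \tilde\eta$ along a subsequence by equi-coercivity and $\eta = (1,\nabla)^*\tilde\eta$). The crux is the reverse bound $\limsup_n \langle \eta_n, u_n\rangle \le \langle \eta, u\rangle$: writing $\langle \eta_n, u_n\rangle = \langle \xi_n, u_n\rangle_{V^*,V} - \int_\Om \epsilon_u(x,u_n,u_n,\nabla u_n)\,u_n\,dx$, this is the passage to the limit in two products of merely weak*-convergent factors; I expect it to follow by combining the $\|\cdot\|_V$-$\om$-Dini-subdifferentiability of $\EE$ from Lemma~\ref{lem: F SD} (testing the subgradient inequality for $\xi_n \in \p_D\EE(u_n)$ at $y=u$ and $y=2u_n$ to bound $\langle\xi_n,u_n\rangle$), the a.e.\ convergence of $u_n$, and the growth bound (\ref{eq: E prtl grwth}), so as to reach the energy convergence $\EE(u_n)\to\EE(u)$ and hence $\langle\eta_n,u_n\rangle\to\langle\eta,u\rangle$. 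Once this holds, the ``moreover'' clause of Lemma~\ref{lem: G-lim SD clsdnss} gives $\eta \in \p F_\infty((u,\nabla u))$, i.e.\ (i), together with $\EE(u_n)\to\EE(u)$, and feeding the energy convergence into (\ref{eq: cvg impl}) of Proposition~\ref{pr: F G-dffrntbl} identifies $\zeta = \p_v\EE_W(u,u)$, which is (ii); this completes the proof.
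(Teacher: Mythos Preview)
Your overall architecture matches the paper's case (\ref{eq: emb grwth}): split $\xi_n$ into the Gateaux part $\zeta_n$ and the convex part $\eta_n$, then invoke Lemma~\ref{lem: G-lim SD clsdnss}. However, two genuine gaps remain.

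First, you do not treat the alternative hypothesis (\ref{eq: cpt sbls}). The paper handles that case by a completely different route: (\ref{eq: cpt sbls}) furnishes a subsequence with $u_n \to u$ in the Mackey topology $\mm(V,V^*)$, i.e.\ strongly in $V$, and then Lemma~\ref{lem: sd clsdnss} applies directly, using the local boundedness of $\om$ from Lemma~\ref{lem: F SD}. Your argument implicitly needs (\ref{eq: emb grwth}) (or something like it) and does not cover this case.

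Second, and more seriously, your handling of the crux $\limsup_n \langle \eta_n, u_n\rangle \le \langle \eta, u\rangle$ is not a proof. Testing the $\om$-subgradient inequality for $\xi_n$ at $y=u$ only yields a \emph{lower} bound on $\langle \xi_n, u_n - u\rangle$ modulo $\om(u,u_n)\|u-u_n\|_V$, and under mere weak* convergence $\|u-u_n\|_V$ need not vanish; testing at $y=2u_n$ requires $\EE(2u_n)<\i$ and control of $\om(2u_n,u_n)$, neither of which is available. The paper resolves this by a truncation/localisation that you are missing: set $u_{n,k}=\max\{-k,\min\{k,u_n\}\}$, $A_{n,k}=\{|u_n|<k\}$, and work with the localized functionals $\EE_W(u_n,\cdot;A_{n,\ell})$ along a diagonal $\ell=\ell(n)\to\i$. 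The point of (\ref{eq: emb grwth}) is precisely that the truncations $u_{n,k}$ lie in $C_\delta(\Om)$, so that (a) $u_{n,k}\to u_k$ \emph{strongly} in $V$ for fixed $k$, giving the recovery sequences for the $\Gamma$-limit via Lipschitz continuity of $v\mapsto\EE_W(v,w)$, and (b) in the pairing $\langle(1,\nabla)^*\eta_{n,k},u_{n,k}\rangle$ one can split off $\langle\eta_{1,n},\chi_{\Om\setminus A_{n,k}}u_{n,k}\rangle$ and pass to the limit using weak* compactness of $\eta_{1,n}$ in $L_{\delta^*}(\Om)=C_\delta(\Om)^*$ against strong convergence of the truncations; the singular component of $\eta_1$ vanishes on $C_\delta(\Om)$ by Corollary~\ref{cor: C*}, which is what makes the final $k\to\i$ limit go through. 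Without this truncation device the product $\langle\eta_n,u_n\rangle$ is a pairing of two merely weak*-convergent sequences and there is no mechanism to pass to the limit.
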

	
	\begin{proof}
		We are to consider the cases when either (\ref{eq: cpt sbls}) or (\ref{eq: emb grwth}) holds. Let (\ref{eq: cpt sbls}) be true. Then, the claim follows by Lemma \ref{lem: sd clsdnss} for $V$ in its strong topology $\mm(V, V^*)$. Note in this regard that (\ref{eq: f cnt cvg}) obtains by Lemma \ref{lem: F SD}, where $\| \cdot \|_V$-$\om$-subdifferentiability was established for an $\om$ that is locally bounded. Given (\ref{eq: f cnt cvg}), we deduce (\ref{eq: cl cnd 1.2}) by another invocation of $\| \cdot \|_V$-$\om$-subdifferentiability.
		
		Let (\ref{eq: emb grwth}) be true. We decompose $\xi_n = \p_v \EE_V(u_n, u_n; \Om) + \psi_n$ with $\psi_n \in \p \EE_V \left( u_n, \cdot \right)(u_n)$ and $\psi_n = \eta_{1, n} + \nabla^* \eta_{2, n}$ by Lemma \ref{lem: F SD}. Passing to a subsequence (not relabeled), we may arrange $\psi_n \weakast \psi$ in $V^*$ as $\p_v \EE_V(u_n, u_n)$ is bounded in $V^*$ by Proposition \ref{pr: F G-dffrntbl}. We claim that
		\begin{equation} \label{eq: 1st clm}
			\psi \in \p \EE_V(u, \cdot)(u) \text{ or equivalently } \psi \in \p \EE_W(u, \cdot) \text{ and } \psi \in V^*.
		\end{equation}
		Because we know $\psi \in V^*$, we may consider the claim with $\EE_W$. Towards this, we want to invoke Lemma \ref{lem: G-lim SD clsdnss} for the dual pair consisting of $W$ in the Mackey topology $\mm(W, V^*)$ and $W^*$ in the weak* topology $\ss(W^*, W)$. Note that $\mm(W, V^*)$ is the same as the strong topology of $V$ restricted to $W$. We set $u_{n, k} = \max \{ k, \min\{ - k, u_n \} \}$ for $k > 0$ and introduce the measurable sets $A_{n, k} = \left\{ \left| u_n \right| < k \right\}$. Note that $u_{n, k} \to u_k$ as $n \to + \i$ strongly in $V$ if $k$ is kept fixed and $u_k \to u$ as $k \to + \i$ strongly in $V$ by (\ref{eq: lph3}) so that there exists a sequence $\ell = \ell(n) \to + \i$ such that $\lim_n u_{n, \ell} = u$ strongly in $V$. We consider the net of functions $\EE_W(u_n, \cdot; A_{n, \ell} )$, whose convex conjugates $\EE^*_W(u_n, \cdot; A_{n, \ell})$ are lower semicontinuous since we equipped $W^*$ with its weak* topology. We check the assumptions of Lemma \ref{lem: G-lim SD clsdnss}, starting with the first auxiliary claim that
		\begin{equation} \label{eq: 1st aclm}
			\Gamma \text{-} \lim_n \EE_W(u_n, \cdot; A_{n, \ell} ) = \EE_W(u, \cdot) \text{ and pointwise on } \dom \EE_W(u, \cdot).
		\end{equation}
		The $\Gamma$-$\liminf$ inequality follows from Lemma \ref{lem: crcvty} so that it remains to prove the pointwise convergence. Let $w \in W$ with $\EE_W(u, w) < \i$. Then Proposition \ref{pr: F G-dffrntbl} yields the local Lipschitz continuity of $v \mapsto \EE_W(v, w)$ on $V$ so that 
		$$
		\EE_W(u_n, w; A_{n, \ell} ) \le \EE_W(u_{n, \ell}, w) \le \EE_W(u, w) + \left| \EE_W(u_{n, \ell}, w ) - \EE_W(u, w) \right| \to 0.
		$$
		Hence, $\lim_n \EE_W(u_n, w; A_{n, \ell} ) \to \EE_W(u, w)$ by the Fatou Lemma. Setting $\eta_{n, \ell} = \chi_{A_{n, \ell} } \eta_n$, Proposition \ref{pr: F cnvx SD} shows that
		\begin{equation} \label{eq: 2nd aclm}
			(1, \nabla)^* \eta_{n, \ell} \in \p \EE_W(u_n, \cdot; A_{n_\ell} )(u_{n, \ell} ).
		\end{equation}
		Moreover, there exists $\eta \in L_\gamma(\Om)^*$ such that $\psi = (1, \nabla^*) \eta$ on $W$. Our next auxiliary claim is that
		\begin{equation} \label{eq: 3rd aclm}
			\langle (1, \nabla)^* \eta_{n, \ell}, u_{n, \ell} \rangle \to \langle (1, \nabla^*) \eta, u \rangle \text{ as } n \to + \i.
		\end{equation}
		Regarding $W$ as a closed subspace of $L_\gamma(\Om)$ via the canonical identification $u \leftrightarrow (u, \nabla u)$, we have $W^* \cong L_\gamma(\Om)^* / W^\perp$. Therefore, by definition of the quotient norm, the sequence $(1, \nabla)^* \eta_n$ is equivalent in $W^*$ to a sequence $(1, \nabla)^* \bar{\eta}_n$ with $\bar{\eta}_n$ bounded. Hence, it is no restriction to assume that $\eta_n$ itself is bounded. Setting $\delta(x, v) = \beta(x, v, 0)$, we know by \ref{eq: emb grwth} that the component $\eta_{1, n}$ is bounded in $L_{\delta^*}(\Om) = C_\delta(\Om)^*$. Therefore, since $C_\delta(\Om)$ is separable by Theorem \ref{thm: C sep}, we may pass to a weak* convergent subsequence (not relabeled). Because $u_{n, k} \in C_\delta(\Om)$ for every $k > 0$, there holds
		\begin{equation} \label{eq: n/k cvg}
			\begin{aligned}
				\langle (1, \nabla)^* \eta_{n, k}, u_{n, k} \rangle
				& = \langle \eta_n , (1, \nabla) u_{n, k} \rangle - \langle \eta_{1, n}, \chi_{\Om \setminus A_{n, k} } u_{n, k} \rangle \\
				& = \langle \xi_n, u_{n, k} \rangle - \langle \eta_{1, n}, \chi_{\Om \setminus A_{n, k} } u_{n, k} \rangle \\
				& \to \langle \xi, u_k \rangle - \langle \eta_1, \chi_{\Om \setminus A_k} u_k \rangle \text{ as } n \to +\i \\
				& \to \langle \xi, u \rangle \text{ as } k \to +\i \\
				& = \langle (1, \nabla)^* \eta, u \rangle.
			\end{aligned}
		\end{equation}
		Here, we used that $\eta^f_1$ vanishes on $C_\delta(\Om)$ by Corollary \ref{cor: C*} so that only the absolutely continuous part $\eta^a_1$ acts on $u_k \in C_\delta(\Om)$, which allows passing to the limit as $k \to +\i$ by absolute continuity of the integral even though $u_k \to u$ need not hold strongly in $L_\delta(\Om)$. By possibly decreasing the speed at which $\ell = \ell(n)$ tends to infinity, we may arrange (\ref{eq: 3rd aclm}) by (\ref{eq: n/k cvg}). Taken together (\ref{eq: 1st aclm}), (\ref{eq: 2nd aclm}) and (\ref{eq: 3rd aclm}) enable to invoke Lemma \ref{lem: G-lim SD clsdnss} so that
		\begin{equation} \label{eq: SD cnclsn}
			\begin{gathered}
				\lim_n \EE(u_{n, \ell}; A_{n, \ell} ) = \EE(u), \qquad \lim_n \EE^*_W(u_{n, \ell}, \psi_n; A_{n, \ell} ) = \EE^*(u, \psi), \\ \psi = (1, \nabla) \eta \in \p \EE(u, \cdot)(u).
			\end{gathered}
		\end{equation}
		In particular, since the sum of two lower semi-continuous functions is continuous only if each addend is continuous, we deduce that $\lim_n \EE(u_{n, \ell}; A) = \EE(u; A)$ for every measurable set $A$ that eventually is contained in $A_{n, \ell}$. Therefore, since $\p_v \EE_W(u_n, u_n)$ is bounded in $V^*$, we have for every such $A \in \BB(\Om)$ that
		$$
		\chi_A \p_v \EE_W(u_{n, \ell}, u_{n, \ell} ) \weakast \chi_A \p_v \EE_W(u, u) \text{ in } V^*
		$$
		by (\ref{eq: cvg impl}) in Proposition \ref{pr: F G-dffrntbl}. Since the sequence $A_{n, \ell}$ converges to a set of full measure as $n, \ell \to +\i$, we conclude
		$$
		\p_v \EE_W(u_{n, \ell}, u_{n, \ell} ) \weakast \p_v \EE_W(u, u) \text{ in } V^*.
		$$
		In total, $\xi = \p_v \EE_W(u, u) + \psi \in \p_D \EE(u)$ by Lemma \ref{lem: F SD}.
	\end{proof}
	
	\emph{Sum rule}: We invoke \cite[Thm. 4.101]{Pe} for the Dini-Hadamard subdifferential $\p_H$ on the Banach space $V$, which is separable hence Hadamard-smooth by \cite[Thm. 3.95(a)]{Pe}. The function $\phi$ is soft in the sense of \cite[Def. 4.99]{Pe} by convexity, while $\EE$ is soft by Lemma \ref{lem: E SD clsdnss}. In total, the application of \cite[Thm. 4.101]{Pe} has been justified.
	
	\newpage
	
	\chapter{Generalized Orlicz spaces with Banach-values} \label{prt: GOS}
	
	In this part of the thesis, we begin a theory of non-separably vector valued Orlicz spaces $L_\varphi(\mu)$ generated by an even convex integrand $\varphi \colon \Om \times X \to \left[ 0, \i \right]$ when the range Banach space $X$ is arbitrary. Requiring $\varphi$ to satisfy
	$$
	\lim_{x \to 0} \varphi(\om, x) = 0; \quad \lim_{\| x \| \to \i} \varphi(\om, x) = \i \quad \text{ for } \mu\text{-a.e. } \om \in \Om,
	$$
	our $L_\varphi(\mu)$ are those strongly measurable functions with finite Luxemburg norm
	
	\begin{equation} \label{eq: Minkowksi functional}
		\| u \|_\varphi = \inf \left\{ \alpha > 0 \st \int \varphi \left[ \om, \alpha^{-1} u \left( \om \right) \right] \, d \mu \left( \om \right) \le 1 \right\}.
	\end{equation}
	
	What is new in our approach is that we overcome the need for separability of $X$ and for the Radon-Nikodym property of the dual space $X^*$ while yet allowing a wide class of possibly $\om$-dependent Orlicz integrands.
	
	\section{Convex conjugacy of integral functionals} \label{sec: inf-int}
	
	We prove in this section an interchange criterion between infimum and integral and compute with it the convex conjugate of a general integral functional $I_f$ on a space of merely measurable functions. Besides representing the subdifferential, we conclude from the conjugate formula a characterization of those integrands for which integration and convex conjugacy continue to commutate as if $X$ were separable. To make our criterion applicable, we propose two sufficient conditions, cf. Lemmas \ref{lem: brl sff} and \ref{lem: dualizable suff cond}. Even if $X$ is separable, our result is more general than previous ones since the measure $\mu$ may be arbitrary. The criterion could be further generalized by working with the notion of an integrand decomposable relatively to a function space instead of the function space itself being decomposable, cf., e.g., \cite{Gi2} for this idea. We shall briefly relate our result to similar criteria after the proof.
	
	\subsection{Interchange criterion}
	
	Before we can state and prove our interchange criterion, we define necessary notions and provide measure theoretic background material. We work with a metric range space $M$ as this adds no complications.
	
	\begin{definition}[almost decomposable space] \label{def: almost decomposable}
		A space $S$ of (strongly) measurable functions $u \colon \Omega \to M$ is called almost decomposable with respect to $\mu$ if for every $u_0 \in S$, every $F \in \AA_f$, every $\e > 0$ and every bounded (strongly) measurable function $u_1 \colon F \to M$ there exists $F_\e \subset F$ with $\mu \left( F \setminus F_\e \right) < \e$ such that the function
		\begin{equation} \label{eq: almost decomposable}
			u(\om) =
			\begin{cases}
				u_0(\om) & \text{ for } \om \in \Om \setminus F_\e, \\
				u_1(\om) & \text{ for } \om \in F_\e
			\end{cases}
		\end{equation}
		belongs to $S$. The space $S$ is decomposable if $F_\e = F$ may be chosen. $S$ is called weakly (almost) decomposable if only $u_1 \in S$ are allowed.
	\end{definition}
	
	Equivalently, $u_1$ may be unbounded in the definition of almost decomposability. However, the same is not possible for decomposability. If two function spaces defined over the same measure space $\Om$ and the same range space $M$ are almost decomposable and weakly decomposable, then their intersection retains both properties. If $S$ is a weakly decomposable vector space of $X$-valued functions, then its weak decomposability is equivalent to closedness under multiplication by indicators of sets having finite or co-finite measure.
	
	As we aim to prove our interchange criterion for general measures, we need a proposition about divergent integrals.
	
	\begin{proposition} \label{pr: divergent subintegral}
		Let $\alpha \colon \Om \to \left[ 0, \i \right]$ be a measurable function with $\int \alpha \, d \mu = \i$. There either exists $A \in \AA_\sigma$ or an atom $A$ with $\mu \left( A \right) = \i$ such that $\int_A \alpha \, d \mu = \i$.
	\end{proposition}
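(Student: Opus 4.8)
The plan is to split into two cases according to the value of $S := \sup_{A \in \AA_\sigma} \int_A \alpha \, d\mu \in [0, \i]$, which is well-defined since $\emptyset \in \AA_\sigma$. Choose $A_n \in \AA_\sigma$ with $\int_{A_n} \alpha \, d\mu \to S$ and put $A_* := \bigcup_n A_n$. Since a countable union of $\sigma$-finite sets is $\sigma$-finite, $A_* \in \AA_\sigma$, and monotonicity of the integral gives $\int_{A_*} \alpha \, d\mu \ge \int_{A_n} \alpha \, d\mu$ for every $n$, whence $\int_{A_*} \alpha \, d\mu = S$ (the reverse inequality being the definition of $S$, which also covers $S = \i$). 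Thus in the case $S = \i$ we are done, the first alternative being realized by $A = A_*$.

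So assume $S < \i$. I would first record that $\alpha$ contributes nothing off $A_*$ along $\sigma$-finite sets: if $B \in \AA_\sigma$ is disjoint from $A_*$, then $A_* \cup B \in \AA_\sigma$, and additivity together with $\int_{A_*} \alpha \, d\mu = S < \i$ gives $\int_B \alpha \, d\mu = \int_{A_* \cup B} \alpha \, d\mu - S \le S - S = 0$. Now set $P := \{\alpha > 0\} \setminus A_*$, which is measurable. Any measurable $C \subseteq P$ with $\mu(C) < \i$ is $\sigma$-finite and disjoint from $A_*$, so $\int_C \alpha \, d\mu = 0$; since $\alpha$ is strictly positive everywhere on $C$, this forces $\mu(C) = 0$ (e.g.\ because $\mu(C) = \sup_n \mu(C \cap \{\alpha > n^{-1}\})$ and $\int_C \alpha\,d\mu \ge n^{-1}\mu(C \cap \{\alpha > n^{-1}\})$ for each $n$). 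Consequently \emph{every} measurable subset of $P$ has measure $0$ or $\i$.

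To conclude, decompose $\int_\Om \alpha \, d\mu = \int_{A_*} \alpha \, d\mu + \int_{\Om \setminus A_*} \alpha \, d\mu = S + \int_P \alpha \, d\mu$, using that $\alpha$ vanishes on $(\Om \setminus A_*) \setminus P$. Since the left-hand side is $\i$ and $S < \i$, it follows that $\int_P \alpha \, d\mu = \i$; in particular $\mu(P) > 0$, and because finite measure would force $\mu(P) = 0$ by the previous paragraph, we get $\mu(P) = \i$. Hence $P$ is a measurable set with $\mu(P) > 0$ all of whose measurable subsets have measure $0$ or $\mu(P) = \i$ --- that is, $P$ is an atom of infinite measure --- and $\int_P \alpha \, d\mu = \i$, which is the second alternative.

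The proof has no serious obstacle; the only point worth flagging is the last step, namely that the residual set $P$ is automatically a genuine \emph{atom} rather than merely a "purely infinite" piece that one would otherwise have to break down further via a Zorn's-lemma or structure-theorem argument. This comes for free from the fact that all subsets of $P$ have measure $0$ or $\i = \mu(P)$, so that no semifiniteness or localizability hypothesis on $\mu$ is needed and the argument applies verbatim to the arbitrary positive measure fixed throughout the paper.
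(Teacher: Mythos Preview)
Your proof is correct and is actually more streamlined than the paper's. The paper first invokes the structural decomposition $\mu = \mu_1 + \mu_2$ into a purely atomic and a non-atomic part (\cite[Prop.~1.22]{FoLe}) and then runs two separate maximality arguments, one over countable unions of atoms for $\mu_1$ and one over $\sigma$-finite sets for $\mu_2$, each terminating in a contradiction. You bypass this split entirely: a single exhaustion over $\AA_\sigma$ produces the maximizer $A_*$, and the residual set $P = \{\alpha > 0\} \setminus A_*$ is shown directly to be an atom of infinite measure carrying an infinite integral. The key observation that makes your argument work without further decomposition is that, under the definition of atom used in the paper (that of \cite{FoLe}: every measurable subset has measure $0$ or $\mu(P)$), a set all of whose subsets have measure $0$ or $\infty$ is automatically an atom once its own measure is $\infty$ --- so no Zorn-type refinement is needed. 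Your approach buys a shorter proof with one fewer external reference; the paper's approach makes the role of atomicity versus non-atomicity more transparent but at the cost of extra case analysis.
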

	
	\begin{proof}
		Employing \cite[Prop. 1.22]{FoLe} and its terminology we find a pair of measures $\mu_i$ with $\mu_1$ purely atomic, $\mu_2$ non-atomic and $\mu = \mu_1 + \mu_2$. Setting $d \nu_i= \alpha d \mu_i$, either $\nu_1$ or $\nu_2$ is infinite. If $\nu_1$ is infinite, let $\AA_a$ be the system of countable unions of atoms and $A_n \in \AA_a$ an isotonic sequence with $\lim_n \nu_1\left(A_n\right) = \sup_{A \in \AA_a} \nu_1\left(A\right)$. There is nothing left to prove if the supremum is infinite. Otherwise, we set $A = \lim_n A_n$. As $\nu_1 \left( A^c \right) = \i$ and $\mu_1$ is purely atomic, the set $A^c \cap \left\{ \alpha > 0 \right\}$ has positive measure	whence it contains an atom $A_\i$. In particular $\nu_1 \left( A_\i \right) > 0$ so that $\nu_1 \left( A_\i \cup A \right)$ surpasses the supremum, a contradiction; the countable union of atoms $A$ either is $\sigma$-finite or contains an atom of infinite measure. In the remaining case if $\nu_2$ is infinite, we consider an isotonic sequence $B_n \in \AA_\sigma$ with $\lim_n \nu_2\left(B_n\right) = \sup_{A \in \AA_\sigma} \nu_2 \left( A \right)$. Again, we are finished if this supremum is infinite. Otherwise, set $B = \lim_n B_n$. Since $\nu_2 \left( B^c \right) = \i$ and $\mu_2$ is non-atomic, there exists $F \subset B^c \cap \left\{ \alpha > 0 \right\}$ with $0 < \mu_2 \left( F \right) < \i$. We may assume $\nu_1\left(F\right) < \i$ without loss of generality so that $F \in \AA_\sigma$. But then $B \cup F \in \AA_\sigma$ and $\nu_2\left( B \cup F \right)$ surpasses the supremum, yielding a contradiction.
	\end{proof}
	
	Proposition \ref{pr: divergent subintegral} prompts us to define a notion of integral that will be apt for stating our interchange criterion concisely. Denoting by $\AA_{a\sigma}$ the $\sigma$-ring of sets arising as a union of countably many $\mu$-atoms and a $\sigma$-finite set, we call a function $\alpha \colon \Om \to \left[ -\i, \i \right]$ such that the restriction of $\alpha$ to any $A \in \AA_{a\sigma}$ is measurable \emph{integrally measurable}. Similarly, we shall say that some measurability property holds integrally if it holds on any atom and every $\sigma$-finite set. In particular, we consider integrally negligible sets, which are defined as sets whose intersection with any atom or $\sigma$-finite set is null. We say that a property holds integrally almost everywhere if it holds except on an integral null set and abbreviate this by i.a.e. A moment's reflection together with \cite[Prop. 1.22]{FoLe} shows that a measurable set is integrally null if and only if it is null.
	We define the integral of the integrally measurable positive part $\alpha^+$ as
	$$
	\int \alpha^+ \, d \mu = \sup_{A \in \AA_{a\sigma} } \int_A \alpha^+ \, d \mu.
	$$
	As usual, we then define $\int \alpha \, d \mu = \int \alpha^+ \, d \mu - \int \alpha^- \, d \mu$ if one of these integrals is finite. Finally, we set $\int \alpha \, d \mu = +\i$ if neither the positive part $\alpha^+$ nor the negative part $\alpha^-$ is thus integrable. If $\alpha$ is measurable, this corresponds to the convention of interpreting $\int \alpha \, d \mu$ as an (extended) Lebesgue integral if $\alpha^+$ or $\alpha^-$ is integrable and setting $+ \i$ if both parts fail to be so. If $\int \alpha \, d \mu$ is finite, then the integrally measurable function $\alpha$ equals a measurable function a.e. since it vanishes outside of a $\sigma$-finite set, on which it is measurable. We call this an \emph{exhausting integral}. This integral is monotone, i.e., if $\alpha \le \beta$ i.a.e. then $\int \alpha \, d \mu \le \int \beta \, d \mu$. Let $\alpha_n$ be a sequence of integrally measurable functions converging to a limit function $\alpha$ locally in $\mu$ and a.e. on every atom. Then there holds the Fatou lemma
	$$
	\int \alpha^+ \, d \mu \le \liminf_n \int \alpha^+_n \, d \mu.
	$$
	Indeed, if $A$ is an atom or a set of finite measure, then
	$$
	\int_A \alpha^+ \, d \mu \le \liminf_n \int_A \alpha^+_n \, d \mu \le \liminf_n \int \alpha^+_n \, d \mu
	$$
	by the classical Fatou lemma. Taking the supremum over all such $A$ on the left-hand side then yields the claim. More generally, let $\alpha_A \colon A \mapsto \left[-\i, \i\right]$ be a family of measurable functions indexed by $A \in \AA_{a\sigma}$ such that $\alpha_A = \alpha_B$ a.e. on $A \cap B$. Then we define the exhausting integral of the family $\alpha_A$ by means of
	$$
	\int \alpha^+_A \, d \mu = \sup_{A \in \AA_{a\sigma} } \int_A \alpha^+_A \, d \mu.
	$$
	This renders the integral of an essential infimum function of an arbitrary family $v_i \colon \Om \to \left[-\i, \i\right]$ of measurable functions meaningful, even though it need only exist on any $\sigma$-finite set by \cite[Lem. 1.108]{FoLe} and on any atom by an elementary consideration. Indeed, in the last case, since any extended real-valued function is constant a.e. on an atom, we may define the essential infimum function as the infimum of these constants. If the integral of such a family is finite, then it derives from a $\bar{\mu}$-integrable function $\alpha$ by $\alpha_A = \alpha$ a.e. on each $A \in \AA_{a\sigma}$. To see this, pick $A \in \AA_\sigma$ where the supremum of the exhausting integral is obtained and argue by contradiction that any member of $\alpha_A$ vanishes a.e. outside $A$ as in the proof of Proposition \ref{pr: divergent subintegral}. Monotonicity and the Fatou lemma continue to hold for this type of integral. When we consider integral functionals in the following, we interpret all integrals in this sense. It is worth mentioning that this reduces to the extended Lebesgue integral if $\Om$ is $\sigma$-finite.
	
	We briefly recapitulate technical background on the measurability of integrands. A set-valued function $\Gamma \colon \Omega \to \mathcal{P} \left( \TT \right)$ is (Effros) measurable if for every open set $O \subset \TT$ the set $\Gamma^- \left( O \right) = \left\{ \omega \in \Omega \st \Gamma \left( \omega \right) \cap O \ne \emptyset \right\}$ is measurable. A pre-normal integrand is defined to be a function $f \colon \Om \times \TT \to \left[ - \i, \i \right]$ such that the epigraphical mapping $S_f \colon \Om \to \PP \left( \TT \right) \colon \om \mapsto \epi f_\om$ is (Effros) measurable. A pre-normal integrand is normal iff $S_f$ is closed-valued. By Lemma \ref{lem: joint measurability}, the normality of an integrand $f \colon \Om \times M \to \left( -\i, \i \right]$ on a separable metric space $M$ is equivalent to lower semicontinuity in the second component and $\AA \otimes \BB(M)$-measurability if the measure $\mu$ is complete. In the following, a subscript $f_W$ denotes the restriction of an integrand $f \colon \Om \times M \to \left[ -\i, \i \right]$ in its second component to a subset $W \subset M$.
	
	\begin{definition}[separable measurability] \label{def: separably measurable}
		An integrand $f \colon \Om \times M \to \left[ - \i, \i \right]$ is said to be \emph{separably measurable} if for any $W \in \SS (M)$ the restriction $f_W$ is $\AA \otimes \BB \left( W \right)$-measurable.
	\end{definition}
	
	It is equivalent to require that, for all $W_0 \in \SS (M)$, there should exist $W \in \SS (M)$ with $W_0 \subset W$ such that $f_{W}$ is $\AA \otimes \BB \left( W \right)$-measurable, since
	$$
	\AA \otimes \BB \left( W \right) = \left. \AA \otimes \BB \left( X \right) \right|_{\Om \times W}
	$$
	by \cite[Satz III.5.2]{El}.
	In particular, separable measurability reduces to the ordinary one if $M$ is separable. The composition of a separably measurable integrand with a strongly measurable (hence separably valued) function $u \colon \Om \to M$ is measurable as a composition of measurable functions.

	\begin{theorem} \label{thm: inf int}
		Let $M$ be complete and $R$ a space of integrally strongly measurable functions $u \colon \Omega \to M$ that is almost decomposable with respect to $\mu$. Let $f \colon \Om \times M \to \left[ - \i, \i \right]$ be an integrally separably measurable integrand. Suppose that, for any atom $A \in \AA$ with $\mu \left( A \right) = \i$ and every $W \in \SS (M)$, there holds $\inf_W f_\om \ge 0$ for a.e. $\om \in A$. Then, if
		$$
		I_f \not \equiv \i \text{ on } R \text{ where } I_f(u) = \int f \left[ \om, u(\om) \right] \, d \mu(\omega),
		$$
		one has
		\begin{equation} \label{eq: inf-int exchange}
			\inf_{u \in R} I_f(u) = \int \essinf_{W \in \SS (M) } \inf_{x \in W} f(\om, x) \, d \bar{\mu}(\omega).
		\end{equation}
		Moreover, if the common value in (\ref{eq: inf-int exchange}) is not $-\i$, then the essential infimum function $\bar{m}$ exists on all of $\Om$ and is attained by a $W \in \SS(M)$. In this case, for $\bar{u} \in R$, one has
		\begin{equation} \label{eq: minimizer pointwise characterization}
			\bar{u} \in \Argmin_{u \in R} I_f \left( u \right) \iff f \left[ \om, \bar{u} (\om) \right] = \bar{m}(\om) \quad \mu \text{-a.e.}
		\end{equation}
	\end{theorem}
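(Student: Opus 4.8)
The plan is to prove the two inequalities between $\inf_{u \in R} I_f(u)$ and $\int \essinf_{W} \inf_{x \in W} f(\om, x)\, d\bar\mu$, then handle attainment and the pointwise characterization. Write $m_W(\om) = \inf_{x \in W} f(\om, x)$ for $W \in \SS(M)$ and $\bar m = \essinf_{W \in \SS(M)} m_W$ wherever the essential infimum exists. First I would establish the easy direction ``$\ge$'': for any $u \in R$, the range $u(\Om)$ is separable (strong measurability), so there is $W_0 \in \SS(M)$ with $u(\om) \in W_0$ for a.e.\ $\om$, hence $f[\om, u(\om)] \ge m_{W_0}(\om) \ge \bar m(\om)$ i.a.e.\ (using that each $m_W$ dominates $\bar m$ off an integral null set, and countably many such exceptional sets are harmless after noting that $\bar m$ can be computed along a countable subfamily by \cite[Lem.~1.108]{FoLe} and the atom remark). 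Integrating with the exhausting integral and using its monotonicity gives $I_f(u) \ge \int \bar m\, d\bar\mu$; taking the infimum over $u$ yields one inequality. The hypothesis on atoms of infinite measure is exactly what is needed so that the exhausting integral of $f[\cdot, u(\cdot)]$ is well-defined there (its negative part causes no divergence) and so that the two sides of (\ref{eq: inf-int exchange}) are comparable in $[-\i, \i]$.

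For the reverse inequality ``$\le$'' I would argue locally on a $\sigma$-finite exhausting set and patch. If the right-hand side is $-\i$ there is nothing to prove once ``$\ge$'' is known (both sides are then $-\i$). So assume $\int \bar m\, d\bar\mu > -\i$; then $\bar m$ arises from a genuine $\bar\mu$-integrable-below function, vanishing outside a $\sigma$-finite set $A_0 \in \AA_\sigma$ realizing the supremum in the exhausting integral, and $\bar m = m_{W^*}$ a.e.\ on $A_0$ for a single $W^*\in\SS(M)$ attaining the essential infimum (by the lattice property of essential infima of a countable cofinal subfamily, exactly as in the proof of Proposition \ref{pr: divergent subintegral}). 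This is the attainment claim. Now on the separable space $W^*$ the integrand $f_{W^*}$ is $\AA \otimes \BB(W^*)$-measurable, and the classical measurable-selection theorem (von Neumann--Aumann, e.g.\ \cite[Thm.~III.22]{CaVa}) applied to the multimap $\om \mapsto \{x \in W^* : f(\om, x) \le \bar m(\om) + \e\}$ — which is Effros measurable and nonempty-valued on $A_0$ — yields a measurable selection $v_\e \colon A_0 \to W^*$ with $f[\om, v_\e(\om)] \le \bar m(\om) + \e$ a.e.\ on $A_0$. Extend $v_\e$ by a fixed point of $W^*$ off $A_0$ to get a function with separable range; since $A_0$ is $\sigma$-finite, exhaust it by sets $F_j \in \AA_f$, apply almost decomposability of $R$ relative to a reference element $u_0 \in R$ with $I_f(u_0) < \i$ to splice $v_\e$ into $u_0$ on large subsets $F_{j,\delta}$ of $F_j$, and estimate $I_f$ of the spliced function by $\int_{F_{j,\delta}} (\bar m + \e)\, d\mu + \int_{\Om \setminus F_{j,\delta}} f[\om, u_0(\om)]\, d\mu$. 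Letting the exceptional measure $\to 0$, then $j \to \i$, then $\e \to 0$ (and controlling the $u_0$-tail via integrability of $f[\cdot, u_0(\cdot)]^+$ on the $\sigma$-finite part and its nonnegativity on infinite atoms), drives $\inf_{u\in R} I_f(u)$ below $\int \bar m\, d\bar\mu$.

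Finally, for (\ref{eq: minimizer pointwise characterization}): if $\bar u \in R$ with $f[\om,\bar u(\om)] = \bar m(\om)$ $\mu$-a.e., then $I_f(\bar u) = \int \bar m\, d\bar\mu$ equals the common value, so $\bar u$ is a minimizer. Conversely, if $\bar u$ minimizes, then by the ``$\ge$'' direction $f[\om, \bar u(\om)] \ge \bar m(\om)$ i.a.e., while $\int (f[\cdot,\bar u(\cdot)] - \bar m)\, d\bar\mu = I_f(\bar u) - \int \bar m\, d\bar\mu = 0$; since the common value is finite (not $-\i$), the nonnegative integrand must vanish i.a.e., hence a.e.\ (an integrally null measurable set is null). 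The main obstacle I anticipate is the bookkeeping in the reverse inequality: making the selection--splice--exhaust argument respect all three layers of approximation simultaneously while ensuring the $u_0$-tail $\int_{\Om \setminus F_{j,\delta}} f[\om, u_0(\om)]\, d\mu$ is genuinely controlled — this forces the order ``$\delta \to 0$ before $j \to \i$'' and uses absolute continuity of the integral of $f[\cdot, u_0(\cdot)]^+$ on $\sigma$-finite pieces together with the sign hypothesis on infinite atoms. One should also be slightly careful that ``integrally separably measurable'' is enough to run the selection theorem, which it is after restricting to the $\sigma$-finite set $A_0$.
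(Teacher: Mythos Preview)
Your overall strategy coincides with the paper's, and the forward inequality, the attainment of $\bar m$ on a $\sigma$-finite set by a single $W^*$ via the filtering property of $\SS(M)$, and the minimizer characterization are all sound. The reverse inequality, however, has two concrete problems as written. First, selecting against the level $\bar m(\om)+\e$ with a constant $\e$ fails whenever $\mu(A_0)=\infty$: after splicing and sending $\delta\to 0$, $j\to\infty$ in your stated order you are left with $\int_{A_0}(\bar m+\e)\,d\mu=\int_{A_0}\bar m\,d\mu+\e\,\mu(A_0)=+\infty$ before you ever get to send $\e\to 0$. The paper repairs this by choosing a positive integrable function $p$ on the $\sigma$-finite set and selecting against $q_\e(\om)=\e\,p(\om)+\max\{\bar m(\om),-\e^{-1}\}$; the $\e$-error then integrates to $\e\|p\|_1$, uniformly in $j$, and the truncation from below keeps everything finite without assuming $\bar m>-\infty$ a priori. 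Incidentally, the sublevel multimap is in general not closed-valued (no lower semicontinuity of $f_\om$ is assumed), so ``Effros measurable'' is not the right hypothesis; one uses the measurable-graph version of Aumann's theorem, as the paper does.

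Second, the dismissal ``if the right-hand side is $-\infty$ there is nothing to prove once $\ge$ is known'' is a logical slip: the forward inequality only yields $\inf_R I_f\ge -\infty$, which is vacuous. The paper organizes the reverse step under the hypothesis $\inf_R I_f>-\infty$ (if this fails, the forward inequality already forces both sides to be $-\infty$), and then the $q_\e$-selection goes through with no finiteness assumption on $\int\bar m$. Finally, to genuinely kill the $u_0$-tail you should enlarge $A_0$ to contain the $\sigma$-finite set $\{f[\cdot,u_0(\cdot)]\ne 0\}$ (which exists because $f[\cdot,u_0(\cdot)]$ is integrable once $I_f(u_0)\in\R$, and vanishes on infinite atoms by the sign hypothesis); citing integrability of the positive part alone does not make $\int_{\Om\setminus A_0} f[\cdot,u_0(\cdot)]\,d\mu$ nonpositive.
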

	
	We consider in the following essential infimum functions for families of functions $v_i \colon \Om \to \left[ -\i, \i \right]$, $i \in I$ an index, such that there exists a family of measurable functions $u_i$ with $u_i = v_i$ a.e. for any $i \in I$. It is elementary to check by \cite[Def. 1.106]{FoLe} that the essential infimum functions of the families $v_i$ and $u_i$ agree in this situation. Any such family $v_i$ admits an essential infimum function on any $A \in \AA_{a\sigma}$ as explained before. The essential infimum function in (\ref{eq: inf-int exchange}) reduces to the pointwise infimum $\inf_M f_\om$ if $M$ itself is separable. We may take all integrals in the ordinary extended Lebesgue sense obeying the convention $+\i - \i = +\i$ if $\mu$ is $\sigma$-finite so that Theorem \ref{thm: inf int} is a genuine generalization of the classical infimum-integral interchange criterion \cite[Thm. 14.60]{RocW} from $\sigma$-finite $\mu$ and $M = \R^d$ to arbitrary measures and non-separable metric range spaces.
	
	\begin{proof}
		Generalizing \cite[Thm. 14.60]{RocW}, we follow its basic strategy of proof wherever no adaption is necessary. For $A \in \AA_{a\sigma}$, we set
		$$
		m_A(\om) = \essinf_{W \in \SS (M) } \inf_{x \in W} f(\om, x), \quad \om \in A.
		$$
		For any $u \in R$ with $I_f (u) < \i$, we may apply Proposition \ref{pr: divergent subintegral} to find $A_0 \in \AA_\sigma$ for which
		\begin{equation} \label{eq: exh assumed}
			\int_{\Om \setminus A_0} f(u)^+ \, d \mu = 0; \quad \int_{A_0} f(u)^- \, d \mu = \int f(u)^- \, d \mu.
		\end{equation}
		We used the assumption $\inf_W f_\om \ge 0$ on atoms of infinite measure together with $f(u)^- \le \left( \inf_W f_\om \right)^-$ i.a.e. for $W \in \SS(M)$ containing the range of $u$. Thus, for any sequence $v_n \in \dom I_f(v_n) \cap R$ with $\inf_R I_f = \lim_n I_f(v_n)$ there exists $A_0$ satisfying (\ref{eq: exh assumed}) simultaneously for all $v_n$, hence $\lim_n I_f(v_n) = \lim_n \int_A f(v_n) \, d \mu \ge \int_A m_A \, d \mu$ whenever $A \in \AA_\sigma$ with $A_0 \subset A$. Taking the supremum over $A \in \AA_\sigma$, we find $\inf_R I_f \ge \int m_A \, d \mu$ with the last integral being the exhausting one of the family $m_A$.
		
		It remains to prove the opposite inequality when $\inf_R I_f > - \i$. Since $f(u)^+ \ge \left( \inf_W f_\om \right)^+$ i.a.e. for any $W \in \SS(M)$ containing the range of $u$, it suffices to show that, for any $A \in \AA_\sigma$ and $\alpha > \int_A m_A \, d \mu$, there exists $u \in R$ with $I_f(u) < \alpha$. To simplify notation, we write $m$ instead of $m_A$. We may enlarge the subspace $W$ so that $m = \inf_W f$ a.e. on $A$ and $W$ is closed. We restrict our consideration to the subspace of $W$-valued functions in $R$, so that we may assume $M$ itself to be separable. Since $\int_A m \, d \mu < \i$, the positive part $m^+$ is integrable on $A$ so that
		$$
		m_\e \left( \om \right) = \max \left\{ m \left( \om \right), - \e^{-1} \right\},
		\quad \lim_{\e \downarrow 0} \int_A m_\e \, d \mu = \int_A m \, d \mu
		$$
		by monotone convergence. The set $A$ being $\sigma$-finite, there exists a non-negative integrable function $p \colon \Om \to \R^+$ that is positive on $A$. Setting $q_\e \left( \om \right) = \e p \left( \om \right) + m_\e \left( \om \right)$, we have $\int_A q_\e \, d \mu \to \int_A m \, d \mu < \alpha$ as $\e \downarrow 0$. Since $q_\e > m$ on $A$, the sets
		$$
		L_\e \left( \om \right) = \left\{ x \in M \st f_\om (x) < q_\e \left( \om \right) \right\}, \quad \om \in A
		$$
		are non-empty. Choose $\e$ small enough that $\int_A q_\e \, d \mu < \alpha$. Let $\AA'$ be the trace $\sigma$-algebra of $\AA$ on $A$. By assumption, the integrand $g := f - q_\e$ is $\AA' \otimes \BB \left( M \right)$-measurable so that the separably valued multimap $L_\e \colon A \to \PP \left( M \right) \setminus \left\{ \emptyset \right\}$ has the measurable graph $\graph L_\e = \left\{ \left( \om, x \right) \in A \times M \st g_\om \left( x \right) < 0 \right\}$, whence there exists a $\AA'_\mu$-measurable selection $u_1$ by \cite[Thm. 6.10]{FoLe}: an $\AA'_\mu$-measurable function $u_1 \colon A \to M$ with $u_1 \left( \om \right) \in L_\e \left( \om \right)$ for all $\om \in A$, i.e.
		$$
		f \left[ \om, u_1 \left( \om \right) \right] < q_\e \left( \om \right), \quad \om \in A.
		$$
		As $M$ is separable, Lemma \ref{lem: strong mb completion} yields a strongly $\AA'$-measurable function $u_2$ with $u_1 = u_2$ a.e. We have $\int_A f \left[ \om, u_2 \left( \om \right) \right] \, d \mu \left( \om \right) < \alpha$. The set $A$ being $\sigma$-finite, we can express $A$ as a union of an isotonic sequence of sets $\Om_n$ with $\mu \left( \Om_n \right) < \i$ for every $n \ge 1$. Fix $x \in M$ and let $A_n = \left\{ \om \in \Om_n \st d \left[ x, u_2 \left( \om \right) \right] \le n \right\} \in \AA$. Note that $A_n \uparrow A$. The space $R$ being almost decomposable, there exists an isotonic sequence $A'_n \subset A_n$ with $\mu \left( A_n \setminus A'_n \right) < n^{-1}$ such that the function defined by
		\begin{equation*}
			w_n(\om) =
			\begin{cases}
				v_0 & \text{ if } \om \in \Om \setminus A'_n, \\
				u_2 & \text{ if } \om \in A'_n
			\end{cases}
		\end{equation*}
		belongs to $R$. Since $A'_n \uparrow A$, we have
		\begin{equation} \label{eq: convergent integrals}
			\int_{A \setminus A'_n} f \left[ \om, v_0 \left( \om \right) \right] \, d \mu \to 0; \quad \int_{A'_n} f \left[ \om, u_2 \left( \om \right) \right] \, d \mu \to \int_A f \left[ \om, u_2 \left( \om \right) \right] \, d \mu
		\end{equation}
		as $n \to \i$ by the theorems of dominated and monotone convergence. Since
		$$
		I_f \left( w_n \right) = \int_{A \setminus A'_n} f \left[ \om, v_0 \left( \om \right) \right] \, d \mu \left( \om \right) + \int_{A'_n} f \left[ \om, u_2 \left( \om \right) \right] \, d \mu \left( \om \right),
		$$
		we have $I_f \left( w_n \right) \to \int_A f \left[ \om, u_2 \left( \om \right) \right] \, d \mu \left( \om \right) < \alpha$ by (\ref{eq: convergent integrals}), hence $I_f \left( w_n \right) < \alpha$ if $n$ is sufficiently large.
		
		Regarding the second part of the claim, we start by showing that the $\AA_\mu$-measurable function $m$ induced by the integrable family $m_A$ indeed defines the essential infimum function in (\ref{eq: inf-int exchange}) on $\Om$. Otherwise there were $W \in \SS(M)$ such that the set $\left\{ m > \inf_W f \right\}$ is not contained in a negligible set.
		
		Assume first that $\left[ m - \inf_W f \right]^+$ is $\AA_\mu$-measurable so that not being contained in a null set is equivalent to having positive measure. No atom $A$ with $\mu \left( A \right) = \i$ may contribute to the positive measure since $m^+_A$ is integrable as the common value (\ref{eq: inf-int exchange}) is not $+\i$. Here, we have used the assumption $\inf_W f \ge 0$ a.e. on atoms of infinite measure. Hence, some $A \in \AA_\sigma$ contributes to the positive measure by \cite[Prop. 1.22]{FoLe}. But then $m_A > \inf_W f$ a.e. on $A$ is contradictory.
		
		If second the function $\left[ m - \inf_W f \right]^+$ is only known to be integrally measurable, attempt its integration w.r.t. the completion $\bar{\mu}$ in the exhausting sense. If the integral is finite, then $\left[ m - \inf_W f \right]^+$ is integrable and integrally measurable hence equals an $\AA$-measurable function a.e. We are back to first the case. If the integral is not finite, then the subintegral over an atom of infinite measure or a $\sigma$-finite set is infinite,	on which $\left[ m - \inf_W f \right]^+$ is $\AA_\mu$-measurable. Proceed as in the first case, arriving at a contradiction; The subintegral hence the integral is finite. We are back to the integrable second case. We have proved that $\inf_W f \ge m$ a.e. for any $W \in \SS (M)$. It remains to prove that any further measurable function fulfilling this inequality is dominated by $m$ a.e. Let $r \colon \Om \to \left[ -\i, \i \right]$ be such a function and suppose that the set $\left\{ r > m \right\}$ has positive measure. If an atom $A$ with $\mu \left( A \right) = \i$ contributes to the positive measure, we may by $I_f \not \equiv \i$ pick $W \in \SS (M)$ such that $\inf_W f_\om = 0$ a.e. on $A$, hence $0 > m$ a.e. on $A$ so that the contradiction $\int_A m^- \, \mu = \i$ obtains. Therefore, some $A \in \AA_\sigma$ contributes to the positive measure. Setting $\bar{r} = \max\{ r,, m \}$ yields the contradiction
		$$
		\inf_{u \in R} I_f (u) = \lim_n I_f (v_n) \ge \int_{A_0 \cup A} \bar{r} \, d \mu > \int_{A_0 \cup A} m \, d \mu = \int_{A_0} m_{A_0} \, d \mu = \inf_{u \in R} I_f (u).
		$$
		To see that the essential infimum function $\bar{m}$ is attained by some $W \in \SS(M)$ if it is integrable, consider again the sequence $v_n$ with $\int_R I_f = \lim_n I_f(v_n)$. Choose $W \in \SS(M)$ containing the range of $v_n$ and observe that $W$ provides the desired subspace as
		$$
		\int \bar{m} \, d \mu = \lim_n I_f(v_n) \ge \int \inf_W f \, d \mu \ge \int \bar{m} \, d \mu.
		$$
		The addendum (\ref{eq: minimizer pointwise characterization}) is equivalent to $\mu \left( \left\{ \om \st f \left[ \om, \bar{u} \left( \om \right) \right] > \bar{m} \left( \om \right) \right\} \right) = 0$ if $\int \bar{m} \, d \mu$ is finite, whence it follows.
	\end{proof}
	
	\paragraph{Remark.}
	
	\begin{enumerate}
		
		\item We know of no previous interchange result for a function space $S$ with a non-separable range space except \cite[Thm. 6.1]{L}. There it is proved in the particular case of convex conjugacy that if the function space $S$ is weakly decomposable and $M = X$, then the infimum may be computed by taking the $L_1$-infimum under the integral sign. While this formulation appeals by its elegance, it does not satisfy our need to relate the infimum function under the integral sign to the pointwise infimum of the integrand. Under the mere assumption of weak decomposability, no analogue of our result can be expected in this respect, a property like our almost decomposability is indispensable for it. Our criterion could be generalized to the effect that one could compute the infimum function under the integral in $L_1$ on an (almost) weakly decomposable function space $S$ and then derive our representation of this infimum function in the special case when the space has the stronger property of being almost decomposable.
		
		\item More recently, interchange criteria for finite dimensional range spaces were discussed in \cite{Gi2}, including an overview of previous results. Much of this work would carry over to separable range spaces with little effort. We note that, at least for $\sigma$-finite measures, an alternative proof of Theorem \ref{thm: inf int} could be devised by appealing to (slight extensions of) results in \cite{Gi2}. However, since we are interested in bringing the pointwise infimum of the integrand into play, no generalization would result directly from this, even though \cite{Gi2} provides conditions that are both necessary and sufficient for essential infima to be interchanged with an integral.
		
		\item Drawing upon the ideas of \cite{Gi2}, one could try to extend the above result in a way that would allow their application also on some spaces of continuous or even smooth functions, which could be of interest in extending our results on generalized gradient flows to the rate-independent case, where an additional term must be included in the dissipation term of the energy inequality to guarantee lower semicontinuity if the primal dissipation potential grows merely linearly. Interpreting the time derivative as belonging to a space of measures that is dual to a space of continuous functions, one might be able to unify the theory of linear and superlinear primal dissipation potentials.
		
	\end{enumerate}
	
	\subsection{Convex conjugacy}
	
	We can now represent the convex conjugate of a general integral functional on a space of strongly measurable functions in duality with a space of weak* measurable ones. Though this result will not apply directly to all Orlicz spaces, as their dual space may contain elements that are no functions, it is fundamental in representing the convex conjugate of an integral functional on the function component of the dual.
	
	\begin{theorem} \label{thm: conjugate A}
		Let $R$ be a linear space of integrally strongly measurable functions $u \colon \Om \to X$ that is almost decomposable with respect to $\mu$. Let $S$ be a linear space of integrally weak* measurable functions $u' \colon \Om \to X^*$ such that the bilinear form
		\begin{equation} \label{eq: duality pairing}
			R \times S \to \R \colon \left( u, u' \right) \mapsto \int \langle u' \left( \om \right), u \left( \om \right) \rangle \, d \mu \left( \om \right)
		\end{equation}
		is well-defined. Let $f \colon \Om \times X \to \left[ - \i, \i \right]$ be an integrally separably measurable integrand. Suppose that, for $v \in S$, any atom $A \in \AA$ with $\mu \left( A \right) = \i$ and any $W \in \SS(X)$ there holds $\sup_{x \in W} \langle v(\om), x \rangle - f_\om(x) \le 0$ for a.e. $\om \in A$. Then, if
		$$
		I_f \not \equiv \i \text{ on } R \text{ where } I_f(u) = \int f \left[ \om, u(\om) \right] \, d \mu(\om),
		$$
		the convex conjugate $I^*_f$ of $I_f$ at $v$ with respect to the pairing (\ref{eq: duality pairing}) is given by
		\begin{equation} \label{eq: conjugate representation}
			I^*_f \left( v \right) = \int \esssup_{W \in \SS \left( X \right) } \sup_{x \in W} \langle v(\om), x \rangle - f_\om (x) \, d \bar{\mu}(\om).
		\end{equation}
		Denoting by $\SS_u(X)$ the separable subsets almost containing the range of $u$, the Fenchel-Moreau subdifferential of $I_f$ on $\dom I_f$ is given by
		\begin{equation} \label{eq: subdifferential}
			\p I_f(u) = \bigcap_{W \in \SS_u(X) } \left\{ v \in S \st v^*_W(\om) \in \p f_W \left[ \om, u(\om) \right] \text{ a.e.} \right\}.
		\end{equation}
		Moreover, if $v \in \dom I^*_f$, then the following two are equivalent:
		\begin{enumerate}
			\item The mapping $\om \mapsto f^* \left[ \om, v(\om) \right]$ is $\AA_\mu$-measurable and there holds
			\begin{equation} \label{eq: conjugate under integral}
				I^*_f (v) = I_{f^*} (v) = \int f^* \left[ \om, v(\om) \right] \, d \bar{\mu}(\om);
			\end{equation}
			\item There exists $W \in \SS (X)$ such that
			\begin{equation} \label{eq: ess-inf = inf}
				f^* \left[ \om, v(\om) \right] = \sup_{x \in W} \langle v(\om), x \rangle - f_\om (x) \quad \mu \text{-a.e.}
			\end{equation}
		\end{enumerate}
		In either case, the intersection in (\ref{eq: subdifferential}) over $W \in \SS_u(X)$ may be replaced by $W = X$.
	\end{theorem}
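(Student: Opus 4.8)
The plan is to deduce all assertions from the interchange criterion, Theorem~\ref{thm: inf int}, applied to the perturbed integrand $g(\om,x) = f(\om,x) - \langle v(\om),x\rangle$. First I would prove (\ref{eq: conjugate representation}). For fixed $v \in S$ the map $(\om,x) \mapsto \langle v(\om),x\rangle$ is Carath\'eodory on $\Om \times W$ for every $W \in \SS(X)$, hence integrally $\AA \otimes \BB(W)$-measurable, so $g$ is integrally separably measurable; and $\inf_{x\in W} g_\om(x) \ge 0$ for a.e.\ $\om$ in an atom of infinite measure is exactly the hypothesis on $v$. Since the pairing (\ref{eq: duality pairing}) is well defined, $\om \mapsto \langle v(\om),u(\om)\rangle$ is exhaustingly integrable for every $u \in R$, so $\langle u,v\rangle - I_f(u) = -I_g(u)$ for all $u \in R$ (both sides being $-\i$ when $I_f(u) = +\i$, as then $g[\,\cdot\,,u(\cdot)]^+$ is not integrable either), and $I_g \not\equiv \i$ on $R$ because $I_f \not\equiv \i$ there. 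Thus $I_f^*(v) = -\inf_{u\in R} I_g(u)$, and Theorem~\ref{thm: inf int} gives $\inf_{u\in R} I_g(u) = \int \essinf_{W\in\SS(X)} \inf_{x\in W}\bigl(f_\om(x) - \langle v(\om),x\rangle\bigr)\, d\bar{\mu}(\om)$. Negating, moving the minus sign through $\essinf$ and $\inf_x$, and checking that the sign flip is consistent with the convention that the exhausting integral is $+\i$ when neither part is integrable (which corresponds precisely to $I_f^*(v) = +\i$), yields (\ref{eq: conjugate representation}).

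For the subdifferential, fix $u \in \dom I_f$. Then $v \in \p I_f(u)$ iff $I_f(u) + I_f^*(v) \le \langle u,v\rangle$, which by (\ref{eq: conjugate representation}) and integrability of the pairing reads $\int \bigl(f_\om(u(\om)) - \langle v(\om),u(\om)\rangle + h(\om)\bigr)\, d\bar{\mu} \le 0$ with $h(\om) = \esssup_{W\in\SS(X)} \sup_{x\in W}\bigl(\langle v(\om),x\rangle - f_\om(x)\bigr)$. The integrand is pointwise $\ge 0$ (Fenchel--Young on any $W$ containing $u(\om)$, then pass to $\esssup$), so $v \in \p I_f(u)$ iff it vanishes a.e. The decisive point is that for this particular family the essential supremum over $\SS(X)$ coincides with that over $\SS_u(X)$: a countable realizing subfamily (by \cite[Lem. 1.108]{FoLe}) can be enlarged member by member, via union with a fixed element of $\SS_u(X)$, to lie in $\SS_u(X)$ without decreasing the values. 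Using $h \ge \sup_{x\in W_0}(\langle v(\om),\cdot\rangle - f_\om)$ a.e.\ for each $W_0$, vanishing of the integrand is equivalent to: for every $W_0 \in \SS_u(X)$ the restriction $v^*_{W_0}(\om)$ realizes the Fenchel--Young equality for $f_{W_0}(\om,\cdot)$ at $u(\om)$ a.e., i.e.\ $v^*_{W_0}(\om) \in \p f_{W_0}[\om,u(\om)]$ a.e.; conversely, once this holds for all $W_0 \in \SS_u(X)$ the functions $\om \mapsto \sup_{x\in W_0}(\langle v(\om),x\rangle - f_\om(x))$ all agree a.e.\ with $\langle v(\om),u(\om)\rangle - f_\om(u(\om))$, hence so does $h$, and the integrand vanishes. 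This establishes (\ref{eq: subdifferential}).

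Now let $v \in \dom I_f^*$. Since $\sup_{x\in W}(\langle v(\om),x\rangle - f_\om(x)) \le f^*[\om,v(\om)]$ for every $W$, assertion~(a) (in particular the measurability of $\om \mapsto f^*[\om,v(\om)]$) is equivalent to $h(\om) = f^*[\om,v(\om)]$ a.e., because the nonnegative difference then has zero integral over the finite-valued $I_f^*(v) = \int h\, d\bar\mu$; and the latter equality holds iff a single $W$ realizes the essential supremum a.e. If (b) holds with such a $W$, then $W$ dominates every member of the family pointwise a.e., so $h = f^*[\om,v(\om)]$ a.e., and measurability of $\om \mapsto f^*[\om,v(\om)]$ follows from the measurable projection theorem applied to the jointly measurable $(\om,x)\mapsto \langle v(\om),x\rangle - f_\om(x)$ on $\Om\times W$; this gives (a). If (a) holds, take a countable realizing subfamily $\{W_n\}$ and set $W = \bigcup_n W_n \in \SS(X)$, which gives (b). Finally, under (a)/(b) we have $h(\om) = f^*[\om,v(\om)]$ a.e., so the vanishing-integrand criterion of the previous paragraph becomes $f_\om(u(\om)) + f^*[\om,v(\om)] = \langle v(\om),u(\om)\rangle$ a.e., that is $v(\om) \in \p f_\om[u(\om)]$ a.e.---the case $W = X$ in (\ref{eq: subdifferential}).

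I expect the main obstacle to be purely bookkeeping: reconciling the $+\i$ convention of the exhausting integral with the sign changes, tracking the role of atoms of infinite measure through the reduction to Theorem~\ref{thm: inf int}, and handling the essential supremum over the uncountable index set $\SS(X)$ (its reduction to countable subfamilies and to $\SS_u(X)$). The substantive analytic content is carried entirely by Theorem~\ref{thm: inf int}.
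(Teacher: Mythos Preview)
Your proposal is correct and follows essentially the same route as the paper: both derive (\ref{eq: conjugate representation}) by applying Theorem~\ref{thm: inf int} to the tilted integrand $g_\om(x)=f_\om(x)-\langle v(\om),x\rangle$, then obtain (\ref{eq: subdifferential}) and the equivalence (a)$\Leftrightarrow$(b) from Fenchel--Young. The only cosmetic differences are that the paper handles (\ref{eq: subdifferential}) by restricting $I_f$ to the subspace of $W$-valued functions for each $W\in\SS_u(X)$ (rather than your direct manipulation of the $\esssup$), and for (a)$\Rightarrow$(b) it invokes the attainment clause of Theorem~\ref{thm: inf int} directly instead of building $W$ from a countable realizing subfamily.
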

	
	\begin{proof}
		Invoking Theorem \ref{thm: inf int}, we find (\ref{eq: conjugate representation}) once we show that the tilted integrand
		$$
		\Om \times X \to \left[ -\i, \i \right] \colon (\om, x) \mapsto f(\om, x) - \langle v(\om), x \rangle
		$$
		is integrally separably measurable (ism.). This obtains since $f$ is ism. by assumption and since the tilt is integrally separably Carathéodory hence ism. so that the difference is ism.
		
		If $X$ is separable, then the Fenchel-Young identity together with (\ref{eq: conjugate representation}) shows that
		\begin{equation} \label{eq: sep sd}
			\p I_f(u) = \left\{ v \in S \st v^*(\om) \in \p f \left[ \om, u(\om) \right] \text{ a.e.} \right\}.
		\end{equation}
		Applying the case of separable $X$ then yields (\ref{eq: subdifferential}): It is obvious that $v \in \p I_f(u)$ must belong to the subdifferential of $I_f$ when the functional is restricted to the subspace $S_W$ consisting of those functions in $S$ taking values in a separable subspace $W \in \SS(X)$. As $S_W$ satisfies the same assumptions as $S$, we have (\ref{eq: sep sd}) on $S_W$, whence the function $v$ belongs to the right-hand side in (\ref{eq: subdifferential}). Conversely, if $v$ belongs to that right-hand side, then obviously
		$$
		I_f(w) \ge I_f(u) + \int \langle v(\om), u(\om) - w(\om) \rangle \, d \mu(\om) \quad \forall w \in S
		$$
		as $u, w \in S$ are almost separably valued. Consequently, $v \in \p I_f(u)$.
		
		Regarding the addendum on the conjugate, observe that
		$$
		f^* \left[ \om, u(\om) \right] \ge \esssup_{W \in \SS \left( X \right) } \sup_{x \in W} \langle v(\om), x \rangle - f_\om(x) \ge \sup_{x \in W} \langle v(\om), x \rangle - f_\om(x) \quad \mu \text{-a.e.}
		$$
		Consequently, if $f^*_\om(v)$ is $\AA_\mu$-measurable and (\ref{eq: conjugate under integral}) holds as an identity of real numbers, then (\ref{eq: ess-inf = inf}) obtains since Theorem \ref{thm: inf int} guarantees attainment of the essential supremum function. Conversely, if (\ref{eq: ess-inf = inf}) holds, then the integrals in (\ref{eq: conjugate under integral}) and (\ref{eq: conjugate representation}) agree. The function $\om \mapsto f^* \left( \om, v(\om) \right)$ then equals an $\AA_\mu$-measurable function a.e. so that it is $\AA_\mu$-measurable.
		
		The addendum on the subdifferential follows by the Fenchel-Young identity as in the case of (\ref{eq: subdifferential}) when $I^*_f(v) = I_{f^*}(v)$.
	\end{proof}
	
	Theorem \ref{thm: conjugate A} suggests to introduce the following notion:
	
	\begin{definition}[dualizable integrand]
		An integrand $f \colon \Om \times X \to \left[ -\i, \i \right]$ that is separably measurable and such that, for a weak* measurable function $v \colon \Om \to X^*$, there exists $W \in \SS(X)$ with
		\begin{equation} \label{eq: dualizability}
			f^* \left[ \om, v(\om) \right] = \sup_{x \in W} \langle v(\om), x \rangle - f_\om(x) \quad \forall \om \in \Om
		\end{equation}
		is called \emph{dualizable} at $v$. We say that $f$ is dualizable for a space $S$ of such functions if it is dualizable at each $v \in S$.
	\end{definition}
	
	We shall also consider integrands that are dualizable a.e. or i.a.e. This is meaningful if $f$ and $v$ are merely integrally measurable.
	
	If $f$ is dualizable for $v$ and $W \in \SS(X)$, then the integrand $f_\om(x) - \langle v(\om), x \rangle - f^* \left[ \om, v(\om) \right]$ is $\AA_\mu \otimes \BB(W)$-measurable and thus an $\AA_\mu$-pre-normal integrand on $W$ by Lemma \ref{lem: joint measurability}. As such it is infimally measurable by Lemma \ref{lem: inf meas normality}, its strict sublevel multimaps are measurable by Lemma \ref{lem: equivalence infimal measurability} and non-empty for positive level values. Hence, we find from them (strongly) $\AA_\mu$-measurable selections by the Aumann theorem \cite[Thm. 6.10]{FoLe} if $W$ is closed. Conversely, if the integrand $f_\om(x) - \langle v(\om), x \rangle - f^* \left[ \om, v(\om) \right]$ admits such selections, then it is obvious that it dualizable for $v$. We apply this characterizing observation to discuss our first of two sufficient conditions for dualizability at all strongly measurable functions.
	
	\begin{lemma} \label{lem: brl sff}
		Let $\Om$ be a separable metric Borel space, $X$ a reflexive Banach space and $f \colon \Om \times X \to \left( -\i, \i \right]$ a normal convex integrand. Then $f$ is dualizable for any strongly measurable function $v \colon \Om \to X^*$.
	\end{lemma}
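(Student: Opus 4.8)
The goal is to produce a single separable closed subspace $W\subseteq X$ such that
$f^*\big[\om,v(\om)\big]=\sup_{x\in W}\langle v(\om),x\rangle-f_\om(x)$ for \emph{every} $\om\in\Om$. That $f$ is separably measurable needs no work: for $W_0\in\SS(X)$ the restriction $f_{W_0}$ is a normal integrand on the separable metric space $W_0$, hence $\AA_\mu\otimes\BB(W_0)$-measurable by Lemma \ref{lem: joint measurability}. By the characterization recorded in the paragraph preceding the lemma, it then suffices to exhibit strongly measurable functions $u_k\colon\Om\to X$ ($k\in\N$) with $f_\om\big(u_k(\om)\big)-\langle v(\om),u_k(\om)\rangle\le\max\{-k,\ m(\om)+1/k\}$ on $\{\dom f_\om\ne\emptyset\}$, where $m(\om):=\inf_{x\in X}\big(f_\om(x)-\langle v(\om),x\rangle\big)=-f^*[\om,v(\om)]$: taking $W:=\overline{\operatorname{span}}\bigcup_k u_k(\Om)\in\SS(X)$ we get $\inf_{x\in W}\big(f_\om(x)-\langle v(\om),x\rangle\big)\le\inf_k f_\om(u_k(\om))-\langle v(\om),u_k(\om)\rangle\le m(\om)$, hence equality, i.e. the desired identity; on the complementary set $\{\dom f_\om=\emptyset\}$ both sides equal $-\i$ for any $W$, so that case is handled separately.

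The plan for constructing the $u_k$ is as follows. First, $g_\om(x):=f_\om(x)-\langle v(\om),x\rangle$ is again a normal convex integrand on $\Om\times X$, since $v$ is strongly measurable and therefore $(\om,x)\mapsto\langle v(\om),x\rangle$ is an integrally separably Carathéodory integrand, a perturbation that preserves normality. Now I invoke reflexivity: a reflexive Banach space is weakly compactly generated, so $X$ equipped with $\ss(X,X^*)$ is weakly $K$-analytic (in particular the infimal-measurability machinery of Lemmas \ref{lem: inf meas normality} and \ref{lem: equivalence infimal measurability} applies in the second variable), and the Effros-measurable, closed-valued epigraphical multimap $\om\mapsto\epi g_\om\subseteq X_{\ss(X,X^*)}\times\R$ admits a Castaing representation: a countable family $s_n=(x_n,t_n)$ with $x_n\colon\Om\to X$ strongly measurable and $t_n\colon\Om\to\R$ measurable such that $\epi g_\om=\overline{\{s_n(\om):n\in\N\}}$ for every $\om$ (closure in $X_{\ss(X,X^*)}\times\R$, which coincides with the norm closure because epigraphs of convex functions are convex). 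Consequently $m(\om)=\inf_n t_n(\om)$ is $\AA_\mu$-measurable, $g_\om(x_n(\om))\le t_n(\om)$, and $\inf_n g_\om(x_n(\om))=m(\om)$ on $\{\dom f_\om\ne\emptyset\}$. Choosing for each $k$ a measurable index function $n_k(\om)$ with $t_{n_k(\om)}(\om)\le\max\{-k,m(\om)+1/k\}$ and setting $u_k(\om):=x_{n_k(\om)}(\om)$ — a strongly measurable pasting of the strongly measurable $x_n$ along a measurable partition — yields the required approximate minimizers; equivalently one may select $u_k$ directly from the strict sublevel multimap $\{x:g_\om(x)<\max\{-k,m(\om)+1/k\}\}$, which is measurable with non-empty closed values by Lemma \ref{lem: equivalence infimal measurability} and the Aumann selection theorem \cite[Thm.\ 6.10]{FoLe}.

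The main obstacle is the step in the second paragraph: extracting from the mere normality (Effros measurability) of $f$ a Castaing-type representation of its epigraph whose $X$-valued components are \emph{strongly} measurable, on a range space $X$ that may fail to be separable. This is exactly the point where reflexivity is indispensable — for a general Banach space the weak topology need not be $K$-analytic and no such representation by separably valued selections, nor the infimal measurability of $m=-f^*\circ v$, need be available; this is also why the alternative sufficient condition Lemma \ref{lem: dualizable suff cond} must rest on a structurally different hypothesis. The remaining items are routine: verifying that the tilt $g$ is normal, checking that $v$'s separable range lets all restrictions $f_{W_0}$ be genuinely jointly measurable, and disposing of the exceptional set $\{\dom f_\om=\emptyset\}$ where the conjugate is identically $-\i$.
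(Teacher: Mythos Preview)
Your outline matches the paper's strategy: construct strongly measurable near-minimisers $u_k$ of $g_\om=f_\om-\langle v(\om),\cdot\rangle$, then set $W=\overline{\operatorname{span}}\bigcup_k u_k(\Om)$. The gap is precisely where you flag it yourself---extracting \emph{strongly} measurable selections from a closed-valued Effros-measurable multimap into a non-separable $X$---and your proposed fix does not close it. You invoke that reflexive spaces are weakly $K$-analytic and then assert a Castaing representation with strongly measurable $X$-components, but no standard theorem delivers this: the Aumann theorem you cite from \cite[Thm.~6.10]{FoLe} is for Suslin (in practice Polish) targets, and weak $K$-analyticity alone does not force selections to be separably valued. (A minor slip: the strict sublevel multimap $\{x:g_\om(x)<\max\{-k,m(\om)+1/k\}\}$ has open, not closed, values.)

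The paper closes the gap with three specific ingredients you do not use. First, since $\Om$ is a \emph{separable metric Borel space}, any Borel map $\Om\to X$ automatically has separable range \cite[Prop.~1.11]{CTV}; this is how the hypothesis on $\Om$ enters, and it reduces the problem to producing merely Borel-measurable (rather than strongly measurable) selections. Second, such selections are obtained not from Aumann but from a selector theorem of Cascales--Kadets--Rodr\'iguez \cite[Cor.~5.19]{CKR}, which requires the ambient space to be locally uniformly rotund and the values to have weakly compact convex intersections with balls. Third, reflexivity enters precisely here: by Troyanski's renorming theorem \cite{Tr} every reflexive space admits an equivalent LUR norm, and closed bounded convex sets are weakly compact. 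The paper also checks that Effros measurability implies the $\mathcal{M}^{cc}$-measurability required in \cite{CKR}, and that the finite-measure and compact-value hypotheses of \cite[Cor.~5.19]{CKR} can be relaxed to the present setting. So reflexivity is indeed indispensable, but for a more delicate reason than $K$-analyticity: it supplies both the LUR renorming and the weak compactness that make the CKR selector applicable.
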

	
	\begin{proof}
		Any Borel measurable map on $\Om$ into another metric space has a separable range by \cite[Prop. 1.11]{CTV}. The integrand $f_\om(x) - \langle v(\om), x \rangle$ is infimally measurable in the sense of Definition \ref{def: infimal measurability} by Lemma \ref{lem: normal sums} and an easy limiting argument that approximates $v$ pointwise by a sequence of simple functions. Thus, the function $f^* \left[ \om, v(\om) \right] = - \inf_{x \in X} f_\om(x) - \langle v(\om), x \rangle$ is measurable by Lemma \ref{lem: equivalence infimal measurability}, whereby we recognize $f_\om(x) - \langle v(\om), x \rangle + f^* \left[ \om, v(\om) \right]$ as infimally measurable. Consequently, its (strict) sublevel multimaps are measurable by Lemma \ref{lem: equivalence infimal measurability} and non-empty for positive level values. We now want to apply \cite[Cor. 5.19]{CKR} to obtain Borel-measurable selections from the sublevels and thus conclude dualizability by our initial comment and the observation before this lemma. Note in this regard that $X$ is locally uniformly rotund by reflexivity \cite{Tr}. Literally, the result \cite[Cor. 5.19]{CKR} requires a finite measure space and weakly compact convex values of the epigraphical multimap. However, an extended inspection of the proof reveals that the statement holds on any measurable space and if only the intersection of any value of $\epi \left( f_\om - \langle v(\om), \cdot \rangle + f^* \left[ \om, v(\om) \right] \right)$ with any closed ball centred at the origin are weakly compact and convex. To see this, check in \cite[Lem. 5.3]{CKR} that the cardinality $\gamma$ may be countably infinite on any measurable space and observe in \cite[Lem. 5.11]{CKR} that the proof still works if the sublevel sets of the function $g$ therein have compact intersections with the values of the multimap therein. Finally, by a limiting argument approximating any bounded closed convex (hence weakly compact) set $K$ by the open sets $K_\e = K + B_\e$ for $\e > 0$ and then approximating any closed convex set by bounded closed convex sets, it is easy to check that the $\mathcal{M}^{cc}$-measurability required in \cite[Cor. 5.19]{CKR} is implied by Effros measurability in a reflexive space so that in total our adapted application of \cite[Cor. 5.19]{CKR} has been warranted and the proof is complete.
	\end{proof}
	
	Assuming the continuum hypothesis, the above argument still works for any $\sigma$-algebra $\AA$ whose cardinality is at most $\left| \R \right|$, cf. the remarks after \cite[Prop. 1.11]{CTV}. In particular, this covers the case of any countably generated $\sigma$-algebra $\AA$, see \cite{CTV}.
	
	We now present our second sufficient condition for dualizability. The proof and formulation of this condition requires some background information and technical results about hyperspace topologies on the lower semicontinuous proper functions $\LS(X)$. We defer the definitions and technical results to the appendix but repeat the basic definitions here. For proofs and further information, we refer to \cite{B}. The facts about the Wijsman topology will be needed later. The Attouch-Wets topology $\tau_{AW}$ on the closed subsets $\CL (M)$ of the metric space $\left( M, d \right)$ is obtained by identifying $A \in \CL(M)$ with the distance function $d_x(A) = \inf_{a \in A} d(x, a)$ and considering the topology of their uniform convergence on bounded sets, i.e.
	\begin{equation} \label{eq: AW topology}
		\tau_{AW} \text{-} \lim_\alpha A_\alpha \iff \lim_\alpha \sup_{x \in B} \left| d_x(A_\alpha) - d_x(A) \right| = 0 \quad \forall B \subset M \text{ bounded}.
	\end{equation}
	Similarly, the Wijsman topology $\tau_W$ is defined by considering pointwise convergence in (\ref{eq: AW topology}). We omit the dependence of $\tau_{AW}$ and $\tau_W$ on the metric $d$ to ease notation. One defines on $\LS \left( M \right)$ the Attouch-Wets and Wijsman topologies, denoted again by $\tau_{AW}$ and $\tau_W$, via the identification of $f$ with $\epi f \subset M \times \R$, where $M \times \R$ carries the box metric $\rho \left[ \left( x_0, \alpha_0 \right), \left( x_1, \alpha_1 \right) \right] = \max \left\{ d \left( x_0, x_1 \right), \left| \alpha_0 - \alpha_1 \right| \right\}$. The topology $\tau_{AW}$ is metrizable and complete w.r.t. the metric
	\begin{equation} \label{eq: AW metric}
		d_{AW} \left( A, B \right) = \sum_{n = 1}^\i 2^{-n} \min\{ 1, \sup_{x \in B_n(x_0) } \left| d_x(A) - d_x(B) \right| \}
	\end{equation}
	if $\left( M, d \right)$ is complete. The topology $\tau_W$ is metrizable and separable if and only if $M$ is separable. An important feature of $\tau_W$ is that, for separable $M$, an integrand $f$ is normal if and only if it is $\tau_W$-measurable as a $\LS(M)$-valued mapping by the Hess theorem \cite[Thm. 6.5.14]{B}. As $\tau_{AW}$ is metrizable, we may consider strongly $\tau_{AW}$-measurable integrands $f$, i.e., those for which there exists a sequence of simple integrand mappings $f_n$ with $\tau_{AW}$-$\lim_n f_{\om, n} = f_\om$ for $\om \in \Om$.
	
	We now prove our second sufficient condition for dualizability.
	
	\begin{lemma} \label{lem: dualizable suff cond}
		Let $f \colon \Om \times X \to \left( - \i, \i \right]$ be an integrand identified with the mapping $f \colon \Om \to \LS(X)$. If $f$ is strongly measurable in the Attouch-Wets topology on $\LS(X)$, then it is dualizable for any strongly measurable function $v \colon \Om \to X^*$. In particular, any autonomous integrand $f \in \LS(X)$ is thus dualizable.
	\end{lemma}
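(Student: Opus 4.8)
The plan is to verify the two ingredients of dualizability at $v$: that $f$ is separably measurable, and that there is a single $W \in \SS(X)$ with $f_\om^*\left(v(\om)\right) = \sup_{x \in W}\left(\langle v(\om), x\rangle - f_\om(x)\right)$ for (almost) every $\om$. \emph{Separable measurability.} Since $f$ is strongly $\tau_{AW}$-measurable, the epigraphical map $S_f \colon \om \mapsto \epi f_\om$ is a pointwise $\tau_{AW}$-limit of simple $\CL(X \times \R)$-valued maps, hence Borel for $\tau_{AW}$ and therefore also for the coarser Wijsman topology $\tau_W$. Given $W_0 \in \SS(X)$ I would enlarge it to a closed separable subspace $W$ and conclude that $f_W$ is a normal integrand on $\Om \times W$: this combines the Hess theorem (normality $\Leftrightarrow$ Wijsman-measurability on a separable space) with the appendix facts on hyperspace topologies, in particular that the trace operation $C \mapsto C \cap (W \times \R)$ is Borel from $\CL(X \times \R)_{\tau_W}$ into $\CL(W \times \R)_{\tau_W}$. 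This is exactly separable measurability of $f$.

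\emph{A separable subspace for $v$.} Write $f = \tau_{AW}\text{-}\lim_n f_n$ and $v = \lim_n v_n$ with $f_n \colon \Om \to \LS(X)$ and $v_n \colon \Om \to X^*$ simple, and let $\{g_j\}_j \subset \LS(X)$ and $\{\xi_k\}_k \subset X^*$ be their (countably many) values. For each pair $(j,k)$ pick a countable set $D_{j,k} \subset X$ along which the supremum defining $g_j^*(\xi_k)$ is attained and let $W$ be the closed linear span of $\bigcup_{j,k} D_{j,k}$, so that $W \in \SS(X)$ and $g_j^*(\xi_k) = \sup_{x \in W}\left(\langle \xi_k, x\rangle - g_j(x)\right)$ for all $j, k$; equivalently, $f_{\om,n}^*\left(v_n(\om)\right) = \sup_{x \in W}\left(\langle v_n(\om), x\rangle - f_{\om,n}(x)\right)$ for every $\om$ and every $n$. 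It then remains to upgrade this to $\sup_{x \in W}\left(\langle v(\om), x\rangle - f_\om(x)\right) = f_\om^*\left(v(\om)\right)$ for a.e.\ $\om$; the inequality ``$\le$'' is Fenchel--Young. For ``$\ge$'' I would combine two facts: first, $\tau_{AW}$-convergence forces Kuratowski--Painlevé convergence of epigraphs, which at constant test points gives $\liminf_n f_{\om,n}(x) \ge f_\om(x)$ for each $x$ and hence, together with $v_n(\om) \to v(\om)$ in norm, $f_\om^*\left(v(\om)\right) \le \liminf_n f_{\om,n}^*\left(v_n(\om)\right)$; and second, one transfers the exact identity for the $f_{\om,n}$ through the limit using the appendix continuity properties of the Legendre--Fenchel transform in the Attouch--Wets topology, essentially the Attouch--Wets self-duality theorem for proper lower semicontinuous convex functions on a normed space applied to the closed convex hulls (recall $g^* = (\clco g)^*$). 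The autonomous statement is then the special case of a constant, hence trivially strongly $\tau_{AW}$-measurable, integrand.

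\emph{Main obstacle.} The genuine difficulty is this last transfer. Attouch--Wets limits do not commute with restricting epigraphs to $W \times \R$, so one cannot simply restrict the $f_n$ to $W$ and argue there; and along a prescribed convergent sequence $v_n(\om) \to v(\om)$ the conjugate is only lower semicontinuous, not continuous, since near-maximisers of the $f_{\om,n}$ may ``escape'' inside $W$. The argument must therefore keep the conjugation over all of $X$, where the Attouch--Wets transform is well behaved, and only afterwards recognise that $W$ already exhausts the relevant supremum; this is precisely what the hyperspace results collected in the appendix are designed to provide, and it is the reason the hypothesis is stated in terms of strong $\tau_{AW}$-measurability rather than mere normality.
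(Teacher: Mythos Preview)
Your proposal identifies the right ingredients but leaves the central step unfinished, and the gap you yourself flag as the ``Main obstacle'' is real. Your subspace $W$ is built only from near-maximisers of the finitely many tilted functions $g_j - \langle \xi_k, \cdot\rangle$, so you know $\inf_W(g_j - \xi_k) = \inf_X(g_j - \xi_k)$ for every pair $(j,k)$. What you need is $\inf_W\bigl(f_\om - \langle v(\om),\cdot\rangle\bigr) = \inf_X\bigl(f_\om - \langle v(\om),\cdot\rangle\bigr)$ for the \emph{limit}, and there is no mechanism in your argument to transfer the former to the latter: the $\tau_{AW}$-limit $f_\om$ may have near-minimisers (after tilting) that are far from $W$, since your $D_{j,k}$ record only where the approximants nearly attain their infima, not the full metric structure of their epigraphs. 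The appeal to Attouch--Wets self-duality of the Fenchel transform does not close this gap either, because that continuity statement lives on $\LS(X)$ and $\LS(X^*)$, not on the restrictions to $W$; it tells you $f_{\om,n}^* \to f_\om^*$ in $\tau_{AW}$ on $X^*$, which you already essentially have, but says nothing about suprema over the fixed subspace $W$.

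The paper's proof avoids this difficulty by reversing the order of operations. Rather than fixing $W$ from the simple data and then fighting to push the conjugation through the limit, it first observes that affine tilting is $\tau_{AW}$-continuous (Lemma~\ref{lem: AW affine stable}), so $\om \mapsto f_\om - \langle v(\om),\cdot\rangle$ is itself strongly $\tau_{AW}$-measurable. One then applies Lemma~\ref{lem: strongly tau_AW sep subspace} directly to this tilted integrand: that lemma produces a separable $W$ on which the global infimum is attained for \emph{every} $\om$, because its construction (via Proposition~\ref{pr: separable reduction}) enlarges $W$ so that the full distance functions $d_x(\epi f_{\om,n})$ agree with $d_x\bigl(\epi f_{\om,n} \cap (W\times\R)\bigr)$ on $W\times\R$, which is exactly what is needed for the $\tau_{AW}$-limit to commute with restriction to $W$. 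Your $D_{j,k}$ do not provide this stronger compatibility, and that is why your limit argument stalls.
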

	
	\begin{proof}
		By Lemma \ref{lem: strongly tau_AW sep subspace}, it suffices to show that if $v$ is strongly measurable, then the mapping $f_\om - \langle v(\om), \cdot \rangle$ is strongly $\tau_{AW}$-measurable. Let $v_n$ be a sequence of simple functions and $f_n$ a sequence of simple integrands with
		$$
		\lim_n v_n(\om) = v(\om), \quad \tau_{AW}\text{-}\lim_n f_{\om, n} = f_\om \quad \forall \om \in \Om.
		$$
		Then Lemma \ref{lem: AW affine stable} implies
		\begin{equation*}
			\tau_{AW} \text{-} \lim_n f_{\om, n} - \langle v_n(\om), \cdot \rangle = f_\om - \langle v(\om), \cdot \rangle \quad \forall \om \in \Om. \qedhere
		\end{equation*}
	\end{proof}
	
	One might try coarser hyperspace topologies to establish more general criteria similar to Lemma \ref{lem: dualizable suff cond}, thereby placing dualizable integrands within the theoretical framework of measurable multifunctions in a deeper way. The slice topology seems apt.
	
	\section{Orlicz spaces $L_\varphi(\mu)$} \label{sec: L}
	
	In this section, we define the notion of an Orlicz integrand $\varphi$ and show how it induces the Banach spaces $L_\varphi(\mu)$ of vector-valued functions called Orlicz spaces, whose basic properties like completeness, decomposability and class-internal embedding properties we study. As $L_\varphi(\mu)$ enjoys better properties when each of its elements vanishes outside a $\sigma$-finite set, we characterize this behaviour in terms of the Orlicz integrand. Similar spaces can be found in the literature under various names, such as Fenchel-Orlicz, generalized Orlicz, or Musielak-Orlicz spaces.
	
	\subsection{Orlicz integrands}
	
	As mentioned in the introduction, we never impose any kind of uniform behaviour w.r.t. $\om \in \Om$ on the Orlicz integrand. Instead
	
	\begin{definition}[Orlicz integrand] \label{def: Orlicz integrand}
		An even function $\varphi \in \Gamma(X)$ satisfying
		$$
		\lim_{x \to 0} \varphi(x) = 0 \text{ and } \lim_{\| x \| \to +\i} \varphi(x) = +\i
		$$
		is an \emph{Orlicz function}. A map $\varphi \colon \Om \times X \to \left[ 0, \i \right]$ is an Orlicz integrand if
		\begin{enumerate}[label = \textnormal{\alph*)'}]
			\item the function $x \mapsto \varphi \left( \om, x \right)$ is Orlicz for a.e. $\om \in \Om$; \label{en. it. Orlicz integrand a.e.}
			\item the integrand $\varphi$ is integrally separably measurable. \label{en. it. Orlicz integrand int sep norm}
		\end{enumerate}
	\end{definition}
	
	By convexity and evenness, an Orlicz integrand assumes a global minimum at the origin hence is non-negative.
	
	\begin{proposition} \label{pr: coercivity}
		For a convex function $\phi \colon X \to \left( -\i, \i \right]$ with $\phi(0) = 0$, there holds
		\begin{equation}\label{eq: coercivity}
			\lim_{\| x \| \to \i} \phi \left( x \right) = \i \iff \, \exists r > 0 \colon \inf_{\| x \| = r} \phi \left( x \right) > 0 \iff \liminf_{\| x \| \to \i} \frac{\phi \left( x \right)}{\| x \|} > 0.
		\end{equation}
	\end{proposition}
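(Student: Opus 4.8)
The plan is to prove the two equivalences in (\ref{eq: coercivity}) by running through the cyclic chain of implications (iii)\,$\Rightarrow$\,(i)\,$\Rightarrow$\,(ii)\,$\Rightarrow$\,(iii), where (i), (ii), (iii) denote, in left-to-right order, the three displayed conditions: $\lim_{\|x\|\to\i}\phi(x)=\i$; the existence of a radius $r>0$ with $\inf_{\|x\|=r}\phi(x)>0$; and $\liminf_{\|x\|\to\i}\phi(x)/\|x\|>0$. The first two links are purely formal. For (iii)\,$\Rightarrow$\,(i), if $c:=\liminf_{\|x\|\to\i}\phi(x)/\|x\|>0$ then there is $R>0$ with $\phi(x)\ge\tfrac{c}{2}\|x\|$ for all $\|x\|\ge R$, and the right-hand side diverges as $\|x\|\to\i$. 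For (i)\,$\Rightarrow$\,(ii), apply the definition of the limit with threshold $M=1$: there is $R>0$ such that $\phi(x)>1$ whenever $\|x\|>R$, so any radius $r>R$ already satisfies $\inf_{\|x\|=r}\phi(x)\ge 1>0$.

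The only implication carrying content is (ii)\,$\Rightarrow$\,(iii), and it is a one-line consequence of radial convexity together with $\phi(0)=0$. Assume $c:=\inf_{\|x\|=r}\phi(x)>0$ for some $r>0$. For an arbitrary $x$ with $\|x\|>r$ put $\lambda:=r/\|x\|\in(0,1)$; then $\lambda x$ lies on the sphere of radius $r$, and since $\lambda x=\lambda x+(1-\lambda)\cdot 0$, convexity gives $\phi(\lambda x)\le\lambda\phi(x)+(1-\lambda)\phi(0)=\lambda\phi(x)$, hence $\phi(x)\ge\lambda^{-1}\phi(\lambda x)\ge\frac{\|x\|}{r}\,c$. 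Dividing by $\|x\|$ shows $\phi(x)/\|x\|\ge c/r$ for every $x$ outside the ball of radius $r$, and therefore $\liminf_{\|x\|\to\i}\phi(x)/\|x\|\ge c/r>0$. The same scaling estimate also yields, as a by-product, that $\phi\ge 0$ everywhere once (i) holds: if $\phi(x_0)<0$ for some $x_0$, choose $t>1$ with $\phi(tx_0)>0$ (possible by (i)); since $x_0=\tfrac1t(tx_0)+(1-\tfrac1t)\cdot 0$, convexity forces $\phi(x_0)\le\tfrac1t\phi(tx_0)>0$, a contradiction.

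I do not expect a genuine obstacle: the proposition merely isolates the elementary fact that a proper convex function which leaves every neighbourhood of a minimiser must eventually grow at least linearly, so that superlinear escape to infinity, positivity on one sphere, and linear lower growth become interchangeable ways of recording the same behaviour. The single point demanding a little care is to read the limit in (i) with the quantifiers in the correct order --- uniformly over the exterior of a ball rather than ray by ray --- which is exactly what makes the passage to $\inf_{\|x\|=r}$ in (i)\,$\Rightarrow$\,(ii) legitimate; the rest is immediate.
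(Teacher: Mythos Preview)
Your cycle (iii)\,$\Rightarrow$\,(i)\,$\Rightarrow$\,(ii)\,$\Rightarrow$\,(iii) is correct and matches the paper's approach: the paper runs the implications in the order (i)\,$\Rightarrow$\,(ii)\,$\Rightarrow$\,(iii)\,$\Rightarrow$\,(i), with the one contentful step (ii)\,$\Rightarrow$\,(iii) justified by exactly your observation that convexity with $\phi(0)=0$ makes the radial slope $t\mapsto\phi(tu)/t$ non-decreasing.

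One remark on the extraneous ``by-product'': the claim that (i) forces $\phi\ge 0$ is false, and the argument breaks at the last line. From $\phi(x_0)\le\tfrac{1}{t}\phi(tx_0)$ with the right-hand side positive you cannot conclude anything contradicting $\phi(x_0)<0$; an upper bound that happens to be positive is no obstruction to negativity. A concrete counterexample is $X=\R$, $\phi(x)=x^2+x$: convex, $\phi(0)=0$, coercive, yet $\phi(-\tfrac12)=-\tfrac14<0$. In the paper, non-negativity of Orlicz functions is obtained from evenness together with $\phi(0)=0$, not from coercivity alone, so this side claim should simply be dropped.
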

	
	\begin{proof}
		The first statement implies the second, the second implies the third as convexity renders the quotient non-decreasing in $\| x \|$, and the third implies the first.
	\end{proof}
	
	The next lemma reveals why our notion of an Orlicz integrand is apt for duality theory.
	
	\begin{lemma} \label{lem: gen iff conj}
		$\varphi \in \Gamma \left( X \right)$ is an Orlicz function iff $\varphi^* \in \Gamma \left( X^* \right)$ is one.
	\end{lemma}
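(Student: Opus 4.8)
The plan is to reduce the asserted equivalence to a single implication by biconjugation and then read off the two limit conditions from Proposition~\ref{pr: coercivity}. First I would record the symmetry ingredients: since $\varphi \in \Gamma(X)$, the Fenchel--Moreau theorem gives $\varphi^* \in \Gamma(X^*)$ and $\varphi^{**} = \varphi$; and evenness transfers to the conjugate, because the substitution $x \mapsto -x$ in the defining supremum yields $\varphi^*(-\xi) = \sup_x \langle -\xi, x\rangle - \varphi(x) = \sup_y \langle \xi, y \rangle - \varphi(-y) = \varphi^*(\xi)$. Hence it suffices to prove: \emph{if $\psi \in \Gamma(Z)$ is an Orlicz function on a Banach space $Z$, then $\psi^* \in \Gamma(Z^*)$ is an Orlicz function.} Applying this to $(Z,\psi)=(X,\varphi)$ gives one direction, and applying it to $(Z,\psi)=(X^*,\varphi^*)$ together with $\varphi^{**}=\varphi$ gives the other (restricting the biconjugate back to $X$ via the isometric embedding $X \hookrightarrow X^{**}$ preserves both limit conditions and evenness).

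Next I would fix the normalizations. From $\lim_{x\to 0}\varphi(x)=0$ the function $\varphi$ is bounded above on a neighbourhood of the origin, so $0 \in \interior(\dom\varphi)$ and $\varphi$ is continuous at $0$ by local boundedness of convex functions; evenness and convexity make $0$ a global minimizer, so $\varphi(0)=0$ and $\varphi \ge 0$. Dually, $\varphi^*(0) = -\inf_X \varphi = -\varphi(0) = 0$, and since $\varphi^*$ is an even convex function this is its global minimum, so $\varphi^* \ge 0$ and $\varphi^*(0)=0$. Thus Proposition~\ref{pr: coercivity} is applicable to both $\varphi$ and $\varphi^*$. Now for the growth at infinity of $\varphi^*$: pick $\delta>0$ with $\varphi \le 1$ on $B_\delta \subset X$; then $\varphi^*(\xi) \ge \sup_{\|x\|\le\delta}\langle \xi, x\rangle - 1 = \delta\|\xi\| - 1 \to +\i$, which is exactly $\lim_{\|\xi\|\to\i}\varphi^*(\xi)=\i$. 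For the vanishing of $\varphi^*$ at the origin: by Proposition~\ref{pr: coercivity}, coercivity of $\varphi$ yields constants $\delta, c>0$ with $\varphi(x) \ge \delta\|x\| - c$ for all $x$; hence for $\|\xi\| \le \delta$ we get $\varphi^*(\xi) = \sup_x \langle\xi,x\rangle - \varphi(x) \le \sup_x \delta\|x\| - \varphi(x) \le c$, so $\varphi^*$ is bounded above on a neighbourhood of $0$ in $X^*$. Therefore $0 \in \interior(\dom\varphi^*)$, $\varphi^*$ is continuous at $0$, and since $\varphi^*(0)=0$ we conclude $\lim_{\xi\to 0}\varphi^*(\xi)=0$. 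Together with $\varphi^* \in \Gamma(X^*)$ and evenness this makes $\varphi^*$ an Orlicz function.

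I do not expect a genuine obstacle here; the only point requiring a little care is the legitimacy of the one-implication reduction outside a reflexive setting, which is handled by the identity $\varphi^{**}=\varphi$ valid for every $\varphi \in \Gamma(X)$ and by stating the forward implication for an arbitrary Banach space so that it may be invoked with $\varphi^*$ in the role of $\psi$. All remaining steps are routine convex-analytic manipulations (duality of "bounded above near a point" with "coercive", and continuity of convex functions at interior points of the domain), so the argument should be short.
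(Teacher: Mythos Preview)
Your proposal is correct and follows essentially the same route as the paper: both exploit the duality between ``continuous at the origin'' and ``bounded sublevel sets'' via the conjugate, invoking Proposition~\ref{pr: coercivity}, and both reduce to a single implication by biconjugation. The only cosmetic differences are that the paper obtains the sharper bound $\varphi^*(x') \le r\|x'\|$ near the origin directly (rather than boundedness plus continuity of convex functions), and it phrases the reduction via the symmetric $X$--$X^*$ pairing instead of passing through $X^{**}$ and restricting; your detour through $X^{**}$ is unnecessary but harmless.
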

	
	\begin{proof}
		It suffices to prove that $\varphi^*$ is an Orlicz function if $\varphi$ is one since $\varphi$ is conjugate to $\varphi^*$ for the duality between $X$ and $X^*$. The function $\varphi^*$ is even. As $\varphi$ has bounded sublevel sets, we see that $\varphi^*$ vanishes continuously at the origin and since $\varphi$ vanishes continuously at the origin, we see that $\varphi^*$ has bounded sublevel sets. More precisely
		$$
		\exists r, s > 0 \colon \| x \| > r \implies \varphi(x) > s \| x \|
		$$
		by Proposition \ref{pr: coercivity}, hence there holds
		$$
		\| x' \| < s \implies \varphi^* \left( x' \right) \le \sup_{\| x \| < r} \langle x', x \rangle \le r \| x' \|.
		$$
		For the second claim, note
		$$
		\forall \e > 0 \, \exists \delta > 0 \colon \| x \| < \delta \implies \varphi(x) < \e.
		$$
		Therefore
		$$
		\varphi^* \left( x' \right) \ge \sup_{\| x \| < \delta} \langle x', x \rangle - \sup_{\| x \| < \delta} \varphi (x) \ge \delta \| x' \| - \e
		$$
		so that $\varphi^*$ has bounded sublevel sets.
	\end{proof}
	
	Lemma \ref{lem: gen iff conj} implies that the conjugate integrand
	$$
	\varphi^* \left( \om, x' \right) = \sup_{x \in X} \langle x', x \rangle - \varphi \left( \om, x \right)
	$$
	remains Orlicz iff it is integrally separably measurable. This happens for a dualizable Orlicz integrand:
	
	\begin{lemma} \label{lem: conj sep norm}
		Let the Orlicz integrand $\varphi$ be dualizable for a decomposable space $S$ of strongly measurable functions	and let $\mu$ have no atom of infinite measure. Then the integrand $\varphi^*$ is integrally separably measurable. If $\mu$ is complete, then it suffices if $S$ is almost decomposable.
	\end{lemma}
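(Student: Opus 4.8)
The goal is to show that the conjugate integrand $\varphi^*$ is integrally separably measurable, i.e., for any $W_0 \in \SS(X^*)$ we must produce $W \in \SS(X^*)$ with $W_0 \subset W$ such that $\varphi^*_W$ is $\AA \otimes \BB(W)$-measurable (or $\AA_\mu \otimes \BB(W)$-measurable in the completed case, which by the remark after Definition \ref{def: separably measurable} amounts to the same thing up to the trace-restriction identity).

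\textbf{Approach.} The plan is to use dualizability to reduce the conjugate, a priori an uncountable supremum over $X$, to a supremum over a fixed separable subspace of $X$, where Theorem \ref{thm: conjugate A} (or rather its proof mechanics) applies. First I would fix $W_0 \in \SS(X^*)$ and, invoking a density argument, find a countable subset $\{v_n\}$ of the unit ball of $W_0$ that is norming for $W_0$; extend each $v_n$ to a constant function (or a suitable simple function) lying in $S$, which is possible since constants on sets of finite measure belong to any decomposable space and $\mu$ has no atom of infinite measure so $\Om$ is exhausted by finite-measure sets — here is where one must be slightly careful and pass to the integral (exhausting) version of measurability, treating each $A \in \AA_{a\sigma}$ separately. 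By dualizability of $\varphi$ for $S$, for each such function $v_n$ there is $W_n \in \SS(X)$ with
\begin{equation*}
\varphi^*\bigl[\om, v_n(\om)\bigr] = \sup_{x \in W_n} \langle v_n(\om), x \rangle - \varphi_\om(x) \quad \forall \om.
\end{equation*}
Set $W_X = \cl\bigl(\lin \bigcup_n W_n\bigr) \in \SS(X)$. The key point is then that, because $\{v_n\}$ is norming and $\varphi_\om$ assumes its minimum at the origin with superlinear-from-below behaviour (Proposition \ref{pr: coercivity}, Lemma \ref{lem: gen iff conj}), the supremum defining $\varphi^*_\om(v)$ for \emph{every} $v \in W_0$ is already attained over $W_X$: one approximates an arbitrary $v \in W_0$ in norm by rational combinations of the $v_n$ and uses continuity of $x \mapsto \langle v, x\rangle$ together with the coercivity of $\varphi_\om$ to control the tails, so that enlarging $W_X$ to $W_X + B_\e$-type neighbourhoods changes nothing in the limit. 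Thus
\begin{equation*}
\varphi^*(\om, v) = \sup_{x \in W_X} \langle v, x \rangle - \varphi_\om(x) = (\varphi_{W_X})^*_\om(v) \quad \text{for all } v \in W_0, \text{ a.e. } \om.
\end{equation*}

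\textbf{Conclusion of the argument.} Once $\varphi^*$ on $W_0$ is realized as the partial conjugate of $\varphi_{W_X}$ with $W_X$ separable, measurability is classical: $\varphi_{W_X}$ is $\AA \otimes \BB(W_X)$-measurable by separable measurability of $\varphi$, hence (after completing $\mu$ if necessary) a normal integrand on the separable space $W_X$ by Lemma \ref{lem: joint measurability}, and the conjugate of a normal integrand over a separable Banach space is again jointly measurable — this is the standard fact that $\varphi^*_\om(v) = \sup_{x \in D} \langle v, x \rangle - \varphi_\om(x)$ over a countable dense $D \subset W_X$, a countable supremum of Carathéodory functions. Restricting this to $W_0 \subset X^*$ (or to any larger separable $W \supset W_0$ on which the same reasoning goes through, e.g. $W = \cl(\lin(W_0 \cup \text{countably many witnesses}))$) gives the required $\AA \otimes \BB(W)$-measurability; doing this on each $A \in \AA_{a\sigma}$ and patching yields integral separable measurability. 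If $\mu$ is only complete and $S$ merely almost decomposable, one replaces the exact constant extensions by functions agreeing with constants off sets of arbitrarily small measure, obtains $W_X$ as before modulo null sets, and uses completeness to absorb the exceptional sets; the Egorov/exhaustion bookkeeping is routine.

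\textbf{Main obstacle.} I expect the delicate step to be verifying that the single separable subspace $W_X$ — assembled from the countably many witnesses $W_n$ attached to the norming family $\{v_n\}$ — actually computes $\varphi^*_\om(v)$ for \emph{all} $v$ in $W_0$ simultaneously, not just for the $v_n$. This requires a uniform control of where the supremum in $\sup_x \langle v, x\rangle - \varphi_\om(x)$ is effectively attained as $v$ varies, which is exactly what the coercivity $\liminf_{\|x\|\to\i}\varphi_\om(x)/\|x\| > 0$ from Proposition \ref{pr: coercivity} buys us: the maximizing $x$ stays in a ball whose radius depends only on $\|v\|$, and the norming property of $\{v_n\}$ then lets a fixed countable collection of witness-subspaces suffice. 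The absence of atoms of infinite measure is needed precisely so that the constant/simple test functions used to invoke dualizability genuinely lie in $S$ (or almost do), since on such an atom a nonzero constant need not be integrable against the pairing.
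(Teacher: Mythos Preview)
Your approach is essentially the paper's: reduce to a $\sigma$-finite piece, pick a countable dense family $\{v_n\}$ in the target separable $W_0 \subset X^*$, realize each $v_n$ as (the value of) a function in $S$ via decomposability, harvest separable witnesses $W_n \subset X$ from dualizability, set $W_X = \cl \lin \bigcup_n W_n$, and then argue that the partial conjugate over $W_X$ already computes $\varphi^*_\om$ on all of $W_0$, whence joint measurability follows from the classical separable theory.

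The one place where your argument is looser than the paper's is precisely the step you flag as the main obstacle. Your claim that ``the maximizing $x$ stays in a ball whose radius depends only on $\|v\|$'' is not quite right: the coercivity constant in Proposition~\ref{pr: coercivity} depends on $\om$, so the radius depends on $\om$ as well, and for $\|v\|$ exceeding this constant the supremum may be $+\infty$ and no bounded maximizing set exists. Consequently the coercivity-and-approximation argument you sketch only yields $g_\om = \varphi^*_\om$ on an $\om$-dependent ball about the origin, not on all of $W_0$. The paper closes this gap cleanly by a pure convexity argument: since $g_\om \le \varphi^*_\om$ are both convex and agree on the dense set $\{v_n\}$, they agree on $\interior \dom g_\om$ (nonempty by coercivity) by continuity of convex functions on the interior of their domain; then the radial monotonicity of Orlicz functions ($r \mapsto \varphi^*_\om(rv)$ and $r \mapsto g_\om(rv)$ are nondecreasing) together with lower semicontinuity pushes the identity out to all of $W_0$. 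This replaces your maximizer-localization heuristic by a two-line convex-analysis argument and handles the boundary and infinite-value cases that your sketch does not address.
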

	
	\begin{proof}
		Since $\mu$ has no atom of infinite measure, we are left to demonstrate that the restriction of $\varphi^*$ to a $\sigma$-finite set in the first component and a separable set $V \in \SS\left( X^* \right)$ in the other component is measurable. It suffices therefore to assume that $\mu$ is $\sigma$-finite. Given $F_n \in \AA_f$ with $\Om = \bigcup_n F_n$, it suffices if given $\e > 0$, we obtain $F_\e \subset F$ with $\mu\left( F \setminus F_\e \right) < \e$ and such that $\varphi^*$ is $\AA\left( F_\e \right) \otimes \BB(V)$-measurable if $\mu$ is complete. If $\mu$ is incomplete, it suffices if the same holds with $F_\e = F$. Because then $\varphi^*$ equals a measurable function $\mu$-a.e. in the first case and everywhere in the second case hence is measurable.
		
		Let $v_n \in V$ be a dense sequence. Using the almost decomposability of $S$, we find
		$$
		G_n \in \AA(F);
		\quad v_n \chi_{G_n} \in S;
		\quad \mu\left( F \setminus G_n \right) \le 2^{-n} \e.
		$$
		If $S$ is decomposable, we may pick $G_n = F$ instead. We find $W_n \in \SS(X)$ with
		$$
		\varphi^*_\om(v_n) = \sup_{x \in W_n} \langle v_n, x \rangle - \varphi_\om(x) \quad \text{for a.e. } \om \in G_n.
		$$
		Consequently, there holds for $W = \bigcup_{n \ge 1} W_n \in \SS(X)$ and $E_\e = \bigcup_{n \ge 1} G_n$ that
		$$
		\mu \left( F \setminus E_{\e_n} \right) \le \e; \quad \varphi^*_\om(v_n) = \sup_{x \in W} \langle v_n, x \rangle - \varphi_\om(x) \quad \text{for a.e. } \om \in E_\e \quad \forall n \in \N.
		$$
		Setting $g(v) = \sup_{x \in W} \langle v, x \rangle - \varphi_\om(x)$ for $v \in V$ we have $\varphi^*_\om \ge g_\om$ and $\varphi^*_\om(v_n) = g_\om(v_n)$ for $n \in \N$, hence $\varphi^*_\om$ and $g_\om$ agree on $\interior \dom g_\om \ne \emptyset$ for all $\om \in E_\e$ by convex continuity in the interior of the domain, cf. \cite[§3.2, Thm. 1]{IT}.	Using lower semicontinuity and the fact that both $r \mapsto \varphi_\om \left( r v \right)$ and $r \mapsto g_\om \left( r v \right)$ for $r \ge 0$ are non-decreasing for every fixed $v \in V$, we conclude $\varphi = g$. As $g$ is $\AA( E ) \otimes \BB(V)$-measurable, our claim obtains.
	\end{proof}
	
	For later reference, we record another simple observation about Orlicz functions.
	
	\begin{proposition} \label{pr: Orlicz function and convergence}
		Let $\varphi \in \Gamma(X)$ be an Orlicz function. A sequence $x_n \in X$ converges iff
		$$
		\forall k \in \N \, \exists N \in \N \colon m, n \ge N \implies \varphi \left[ k \left( x_m - x_n \right) \right] < k^{-1}.
		$$
	\end{proposition}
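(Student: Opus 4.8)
The plan is to establish both implications directly, the forward one using only continuity of $\varphi$ at the origin and the backward one using the linear lower bound furnished by Proposition \ref{pr: coercivity}.

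For the ``only if'' direction, suppose $x_n \to x$ in $X$. Then for every fixed $k \in \N$ the vectors $k(x_m - x_n)$ tend to $0$ in norm as $m, n \to \i$, and since $\varphi$ is an Orlicz function it vanishes continuously at the origin, so $\varphi[k(x_m - x_n)] \to 0$. Hence for each $k$ there is $N \in \N$ with $\varphi[k(x_m - x_n)] < k^{-1}$ whenever $m, n \ge N$, which is the asserted condition.

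For the ``if'' direction, I would first invoke Proposition \ref{pr: coercivity}: as $\varphi$ is convex with $\varphi(0) = 0$ and $\lim_{\|x\|\to\i}\varphi(x) = +\i$, there exist $r, s > 0$ such that $\|x\| > r$ implies $\varphi(x) > s\|x\|$. I then claim $(x_n)$ is norm-Cauchy. Given $\e > 0$, pick $k \in \N$ so large that $k\e > r$ and $s k^2 \e > 1$; by hypothesis there is $N$ with $\varphi[k(x_m - x_n)] < k^{-1}$ for all $m, n \ge N$. If some such pair satisfied $\|x_m - x_n\| \ge \e$, then $\|k(x_m - x_n)\| = k\|x_m - x_n\| \ge k\e > r$, so that $\varphi[k(x_m - x_n)] > s\,\|k(x_m - x_n)\| = s k \|x_m - x_n\| \ge s k \e > k^{-1}$, contradicting the hypothesis; therefore $\|x_m - x_n\| < \e$ for $m, n \ge N$. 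Since $X$ is a Banach space, $(x_n)$ converges.

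The only point requiring care is the calibration of $k$ against $\e$ in the backward direction, together with the correct extraction of the constants $r, s$ from Proposition \ref{pr: coercivity}; the remaining steps are routine, so I do not anticipate a genuine obstacle.
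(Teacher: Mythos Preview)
Your proof is correct. The paper records this proposition without proof, treating it as a simple observation; your argument supplies exactly the natural details, using continuity of $\varphi$ at the origin for the forward direction and the linear lower bound from Proposition~\ref{pr: coercivity} for the converse, together with completeness of $X$.
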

	
	We close this section remarking that in the literature we find divergent names and definitions for Orlicz functions, which are sometimes equivalent to Definition \ref{def: Orlicz integrand} or whose apparently greater generality is to some extent spurious. For example, Orlicz functions that are discontinuous or lack bounded sublevel sets may be adapted to match our definition without essentially altering their Orlicz space by deriving from them a Banach space where they are continuous at the origin.\footnote{Details available at request.}
	
	\subsection{Definition and basic properties} \label{ssec: def Orlicz}
	
	For an Orlicz integrand $\varphi$ and a strongly measurable function $u \in \mathcal{L}_0 \left( \Om ; X \right)$, we set
	$$
	I_\varphi (u) = \int \varphi \left[ \om, u \left( \om \right) \right] \, d \mu \left( \om \right); \quad \| u \|_\varphi = \inf \left\{ \alpha > 0 \st I_\varphi \left( \alpha^{-1} u \right) \le 1 \right\}.
	$$
	The Minkowski functional $\| \cdot \|_\varphi$ is a seminorm on its domain $\mathcal{L}_\varphi(\mu)$. By Proposition \ref{pr: Orlicz function and convergence}, the kernel of $\| \cdot \|_\varphi$ is characterized as the functions that vanish a.e. Factoring out the kernel, we arrive at the \emph{Orlicz space} $L_\varphi(\mu)$ on which $\| \cdot \|_\varphi$ is called the \emph{Luxemburg norm}. An equivalent norm is given by the \emph{Amemiya norm}
	$$
	\vvvert u \vvvert_\varphi = \inf_{\alpha > 0} \alpha^{-1} \left[ 1 + I_\varphi \left( \alpha u \right) \right]
	$$
	with
	$$
	\| u \|_\varphi \le \vvvert u \vvvert_\varphi \le 2 \| u \|_\varphi \quad \forall u \in L_\varphi(\mu)
	$$
	according to \cite[Thm. 1.10]{Mu}. Similarly, we define the \emph{dual Luxemburg norm} and the \emph{dual Amemiya norm} on $L_\varphi(\mu)^*$ by
	$$
	\| v \|^*_\varphi = \inf \left\{ \alpha > 0 \st I^*_\varphi \left( \alpha^{-1} v \right) \le 1 \right\}, \quad \vvvert v \vvvert^*_\varphi = \inf_{\alpha > 0} \alpha \left[ 1 + I^*_\varphi \left( \alpha^{-1} v \right) \right].
	$$
	As before
	$$
	\| v \|^*_\varphi \le \vvvert v \vvvert^*_\varphi \le 2 \| v \|^*_\varphi \quad \forall v \in L_\varphi(\mu)^*.
	$$
	One may easily check that the dual Amemiya norm agrees with the canonical operator norm induced by the Luxemburg norm. In the same way, the Amemiya norm agrees with the operator norm that $L_\varphi(\mu)$ carries as a subset of its bidual space. For frequent, sometimes tacit, later use, we record the following useful inequalities relating in particular $I_\varphi$ and $I^*_\varphi$ with their Luxemburg norms.
	
	\begin{lemma} \label{lem: modular-norm}
		Let $V$ be a real vector space, $f \colon V \to \left[ 0, \i \right]$ a convex function with $f(0) = 0$ and left-continuous, i.e., $\lim_{\lambda \uparrow 1} f\left(\lambda v\right) = f(v)$ for $v \in V$; Let $p \colon V \to \left[ 0, \i \right]$ be the Minkowski functional of the sublevel set $\left\{ f \le 1 \right\}$. Then there hold the following inequalities:
		\begin{enumerate}
			\item $p(v) \le 1 \implies f(v) \le p(v)$,
			\item $1 < p(v) \implies f(v) \ge p(v)$,
			\item $p(v) \le 1 + f(v)$.
		\end{enumerate}
	\end{lemma}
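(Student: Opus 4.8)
The plan is to fix $v \in V$ and reduce everything to the one-variable function $g(\lambda) := f(\lambda v)$, $\lambda \ge 0$. Since $g$ is convex with $g(0) = 0$ and $g \ge 0$, writing $\lambda_1 = \tfrac{\lambda_1}{\lambda_2}\lambda_2 + \bigl(1 - \tfrac{\lambda_1}{\lambda_2}\bigr)\cdot 0$ for $0 < \lambda_1 \le \lambda_2$ shows at once that $g$ is non-decreasing and that $\lambda \mapsto g(\lambda)/\lambda$ is non-decreasing on $(0,\infty)$ (with the obvious conventions once $g$ attains $+\i$). Hence $\alpha \mapsto g(1/\alpha)$ is non-increasing, so $\left\{ \alpha > 0 \st g(1/\alpha) \le 1 \right\}$ is up-closed and $p(v) = \inf\left\{ \alpha > 0 \st g(1/\alpha) \le 1 \right\}$ obeys $g(1/\alpha) > 1$ for $\alpha \in (0, p(v))$ and $g(1/\alpha) \le 1$ for $\alpha > p(v)$; moreover, if $0 < p(v) < \i$, then applying the left-continuity hypothesis to the vector $v/p(v)$ (along $\tfrac{p(v)}{\alpha} \uparrow 1$ as $\alpha \downarrow p(v)$, noting $1/\alpha = \tfrac{p(v)}{\alpha}\cdot\tfrac1{p(v)}$) upgrades this to $g(1/p(v)) \le 1$. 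Inequality (3) is then immediate and uses none of this: if $f(v) = \i$ it is trivial, and otherwise $t := (1 + f(v))^{-1} \in (0,1]$ gives $g(t) = f(tv) \le t f(v) + (1-t) f(0) = \tfrac{f(v)}{1 + f(v)} \le 1$ by convexity, so $p(v) \le 1/t = 1 + f(v)$.

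For (1), assume $p(v) \le 1$. If $p(v) = 0$, then $g(\mu) \le 1$ for all $\mu > 0$, whence $g(1) \le g(\mu)/\mu \le 1/\mu \to 0$ as $\mu \to \i$ and so $f(v) = 0 = p(v)$. If $0 < p(v) \le 1$, then $g(1/p(v)) \le 1$ by the preliminary remark, and since $1 \le 1/p(v)$ monotonicity of the difference quotient gives $f(v) = g(1) \le \dfrac{g(1/p(v))}{1/p(v)} = p(v)\, g(1/p(v)) \le p(v)$. For (2), assume $1 < p(v)$ (allowing $p(v) = \i$). For every $\alpha \in [1, p(v))$ we have $g(1/\alpha) > 1$ while, because $1/\alpha \le 1$, the difference quotient gives $\alpha\, g(1/\alpha) \le g(1)$; combining these, $\alpha < g(1)$. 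Taking the supremum over $\alpha \in [1, p(v))$ yields $p(v) \le g(1) = f(v)$.

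The only step that needs genuine care is the upgrade from ``$g(1/\alpha) \le 1$ for all $\alpha > p(v)$'' to ``$g(1/p(v)) \le 1$'' when $0 < p(v) < \i$, which is precisely where the left-continuity assumption on $f$ --- equivalently, on each ray $\lambda \mapsto f(\lambda v)$ --- is used; the remainder amounts to bookkeeping of the degenerate values $p(v) \in \{0, \i\}$ and $f(v) = \i$.
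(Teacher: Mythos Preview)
Your proof is correct and complete. The paper itself does not give an independent argument: it simply refers to \cite[Cor.~2.1.15]{DHHR} and remarks that the proof there, stated for semimodulars, goes through under the weaker hypotheses listed. Your reduction to the one-variable function $g(\lambda)=f(\lambda v)$ together with the monotonicity of $\lambda\mapsto g(\lambda)/\lambda$ is exactly the standard route (and essentially what one finds in \cite{DHHR}), so substantively you are following the same approach, only spelled out in full rather than by citation. Your handling of the boundary cases $p(v)\in\{0,\infty\}$ and of the point where left-continuity is genuinely needed is clean and accurate.
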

	
	\begin{proof}
		This follows from the proof of \cite[Cor. 2.1.15]{DHHR}, where the same assertion is made for $f$ a semimodular, but only the assumptions above are actually used.
	\end{proof}
	
	To prove that $L_\varphi(\mu)$ is complete, we first record a simple observation that will frequently be used to reduce considerations for $\sigma$-finite measures to finite ones.
	
	\begin{proposition} \label{pr: equivalent finite measure}
		Let $\mu$ be $\sigma$-finite and $f \colon \Om \to \R$ a positive integrable function. For the finite measure $d \nu = f d \mu$ and the Orlicz integrand $\phi = \frac{\varphi}{f}$, there holds $L_\varphi(\mu) = L_\phi \left( \nu \right)$.
	\end{proposition}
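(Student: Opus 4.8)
The plan is to exploit the fact that multiplying the density changes neither the underlying $\sigma$-algebra nor the ideal of null sets, and that it leaves the modular functional completely invariant; everything defining the Orlicz space is then automatically preserved.

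First I would observe that $\LL_0(\Om; X)$ is literally the same set of functions whether one works with $\mu$ or with $\nu$: strong measurability is a property relative to $\AA$ alone, and since $f$ is strictly positive, the completions $\bar\mu$ and $\bar\nu$ have the same null sets. Next I would verify that $\phi = \varphi / f$ is indeed an Orlicz integrand on $(\Om, \AA, \nu)$ in the sense of Definition \ref{def: Orlicz integrand}: evenness and convexity in the second variable are preserved under multiplication by the positive scalar $f(\om)^{-1}$; the limit relations $\lim_{x \to 0} \phi(\om, x) = 0$ and $\lim_{\| x \| \to \i} \phi(\om, x) = +\i$ hold for a.e.\ $\om$ because $0 < f(\om) < +\i$ a.e.; and integral separable measurability passes from $\varphi$ to $\phi$, since on any separable $W \in \SS(X)$ the map $(\om, x) \mapsto \varphi(\om, x) / f(\om)$ is $\AA \otimes \BB(W)$-measurable as a quotient of an $\AA \otimes \BB(W)$-measurable integrand by the measurable, strictly positive function $f$ (and, $\mu$ being $\sigma$-finite, ``integrally'' imposes no further restriction).

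The core of the argument is the pointwise identity $\phi(\om, x)\, f(\om) = \varphi(\om, x)$, valid for all $(\om, x)$ including the values $0$ and $+\i$. Integrating it (in the ordinary extended Lebesgue sense, legitimate since both $\mu$ and $\nu$ are $\sigma$-finite, in which case the exhausting integral reduces to the Lebesgue integral) gives, for every $u \in \LL_0(\Om; X)$ and every $\alpha > 0$,
\[
    \int_\Om \phi\big(\om, \alpha^{-1} u(\om)\big) \, d\nu(\om)
    = \int_\Om \varphi\big(\om, \alpha^{-1} u(\om)\big) \, d\mu(\om),
\]
i.e.\ the modular functional of $\phi$ with respect to $\nu$ and the modular functional of $\varphi$ with respect to $\mu$ coincide on $\LL_0(\Om; X)$. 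Taking the infimum over those $\alpha$ for which this common value is $\le 1$ shows that the two Luxemburg functionals are equal, so $\LL_\varphi(\mu) = \LL_\phi(\nu)$ with identical seminorms. Since by Proposition \ref{pr: Orlicz function and convergence} the kernel of either seminorm is exactly the set of functions vanishing $\bar\mu$-a.e.\ $= \bar\nu$-a.e., factoring out the kernel yields the same normed space: $L_\varphi(\mu) = L_\phi(\nu)$, and in fact the identity map is an isometry.

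I do not anticipate a genuine obstacle; the only points demanding a little care are the routine check that $\phi$ qualifies as an Orlicz integrand — in particular its measurability clause — and the remark that for $\sigma$-finite $\mu$ the exhausting integral used throughout the paper coincides with the ordinary Lebesgue integral, which is what makes the elementary change-of-density computation above permissible.
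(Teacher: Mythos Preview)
Your argument is correct and is exactly the natural one: the modular $I_\varphi$ with respect to $\mu$ and the modular $I_\phi$ with respect to $\nu$ coincide by the change-of-density identity, hence so do the Luxemburg norms and the resulting quotient spaces. The paper gives no proof at all for this proposition, treating it as a self-evident observation, so your write-up simply spells out what the paper leaves implicit.
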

	
	The existence of such a function $f$ is equivalent to the $\sigma$-finiteness of $\mu$. We need to get one last measure theoretic generality out of our way: the space $L_\varphi(\mu)$ does not change if $\mu$ is replaced by its completion $\bar{\mu}$. More precisely, the total set of a.e. equivalence classes of strongly measurable functions w.r.t. $\mu$ does not change under completion as can be seen by appealing to Lemma \ref{lem: strong mb completion}. In this sense, there exists a canonical isometric isomorphism between $L_\varphi(\mu)$ and $L_\varphi \left( \bar{\mu} \right)$.
	
	We now prove the completeness of $L_\varphi(\mu)$ for an arbitrary measure $\mu$. The adaptation of the usual proof for Lebesgue spaces is not completely trivial in the case of a non-$\sigma$-finite measure due to the $\om$-dependence of the Orlicz integrand $\varphi$.
	
	\begin{theorem} \label{thm: L complete}
		$L_\varphi(\mu)$ in the Luxemburg-norm $\| \cdot \|_\varphi$ is a Banach space. Each convergent sequence in $L_\varphi (\mu)$ has a subsequence that converges a.e. to its limit.
	\end{theorem}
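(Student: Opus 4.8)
The plan is to reduce everything to the standard fact that a normed space is complete if and only if every absolutely convergent series converges in norm, and to obtain the a.e.-convergent subsequence as a by-product. So let $u_n \in L_\varphi(\mu)$ with $A := \sum_n \|u_n\|_\varphi < \infty$. Discarding the (a.e.-vanishing) terms with $\|u_n\|_\varphi = 0$ and assuming $A > 0$, I would set $a_n = \|u_n\|_\varphi$, $\tilde u_n = a_n^{-1}u_n$, and $\lambda_n = a_n/A$, so that $\sum_n \lambda_n = 1$ and, since $\|\cdot\|_\varphi$ is the Minkowski functional of $\{I_\varphi \le 1\}$ and $\|\tilde u_n\|_\varphi = 1$, Lemma \ref{lem: modular-norm}(i) gives $I_\varphi(\tilde u_n) \le 1$. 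The workhorse estimate is then obtained from convexity of $\varphi(\om,\cdot)$ together with $\varphi(\om,0)=0$ (Jensen with subprobability weights): for every $\lambda > 0$ and all $1 \le j \le k$ with $\lambda\sum_{n\ge j}\lambda_n \le 1$ one has, pointwise and then after integration in the exhausting sense,
\[
  I_\varphi\Big(\lambda A^{-1}\sum_{n=j}^{k} u_n\Big) \le \sum_{n=j}^{k}\lambda\lambda_n\, I_\varphi(\tilde u_n) \le \lambda\,\varepsilon_j, \qquad \varepsilon_j := \sum_{n\ge j}\lambda_n \xrightarrow[j\to\infty]{} 0 .
\]
This one inequality will drive both the norm bound and, via Fatou, the norm convergence.

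The main obstacle is the a.e. convergence of the partial sums $s_k = \sum_{n=1}^{k} u_n$ in $X$, because $\mu$ need not be $\sigma$-finite and $\varphi$ is genuinely $\om$-dependent, so there is no scalar Orlicz minorant available globally. Here I would argue as follows. Taking $\lambda = 1$, $j = 1$ and letting $k\to\infty$, monotone convergence for the exhausting integral shows that the integrally measurable function $H(\om) := \sum_n \lambda_n\,\varphi\big(\om,\tilde u_n(\om)\big)$ satisfies $\int H\,d\mu \le 1$; since an integrable, integrally measurable function vanishes outside a $\sigma$-finite set (and is forced to vanish on atoms of infinite measure), it is finite a.e., so $H < \infty$ a.e. Now fix $\om$ with $H(\om) < \infty$ and $\varphi(\om,\cdot)$ an Orlicz function; by Proposition \ref{pr: coercivity} there are $r,s>0$ with $\varphi(\om,x) \ge s\|x\|$ whenever $\|x\| \ge r$. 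Splitting $\N$ into the coercive indices $\{n:\|\tilde u_n(\om)\|\ge r\}$ and the rest, the former contribute $\sum\|u_n(\om)\| \le (A/s)H(\om)$ and the latter $\le r\sum_n a_n = rA$, whence $\sum_n\|u_n(\om)\| < \infty$. Since $X$ is a Banach space, $u(\om) := \sum_n u_n(\om)$ exists for a.e. $\om$ and $s_k \to u$ a.e.; the limit $u$ is strongly measurable because the $s_k$ are, after passing to the completion $\bar\mu$ (which does not alter $L_\varphi$) and invoking Lemma \ref{lem: strong mb completion}.

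To finish completeness, I would pass to the limit $k\to\infty$ in the workhorse estimate using lower semicontinuity of $\varphi(\om,\cdot)$ and the Fatou lemma for exhausting integrals: for every $\lambda>0$ and every $j$ with $\lambda\varepsilon_j\le 1$,
\[
  I_\varphi\big(\lambda A^{-1}(u - s_{j-1})\big) \le \liminf_k I_\varphi\Big(\lambda A^{-1}\sum_{n=j}^{k} u_n\Big) \le \lambda\varepsilon_j \le 1,
\]
so that $\|u - s_{j-1}\|_\varphi \le A/\lambda$ by Lemma \ref{lem: modular-norm}(ii). Letting $\lambda = \lambda(j)\to\infty$ slowly enough that still $\lambda(j)\varepsilon_j \le 1$ (e.g. $\lambda(j) = \varepsilon_j^{-1/2}$) yields $\|u - s_{j-1}\|_\varphi \to 0$, hence $\sum_n u_n$ converges in $L_\varphi(\mu)$. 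As $\|\cdot\|_\varphi$ is already known to be a genuine norm by Proposition \ref{pr: Orlicz function and convergence}, $L_\varphi(\mu)$ is a Banach space.

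For the final assertion, given a convergent (hence Cauchy) sequence $v_k \to v$ in $L_\varphi(\mu)$, I would select a subsequence with $\|v_{k_{i+1}} - v_{k_i}\|_\varphi \le 2^{-i}$ and apply the argument above to the telescoping series $\sum_i (v_{k_{i+1}} - v_{k_i})$: its partial sums, i.e. $v_{k_i} - v_{k_1}$, converge both a.e. and in $L_\varphi(\mu)$, and the two limits agree a.e., so $v_{k_i} \to v$ a.e. Beyond this, the only deviations from the classical Lebesgue-space proof are the bookkeeping imposed by the exhausting integral and the replacement of a uniform coercivity bound by the $\om$-wise one, both absorbed into the second paragraph; I expect the verification that $H$ is finite a.e. in the non-$\sigma$-finite setting to be the one genuinely delicate point.
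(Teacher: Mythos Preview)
Your proof is correct and takes a genuinely different route from the paper's. The paper works directly with a Cauchy sequence $(u_n)$: it first isolates a $\sigma$-finite set $A$ on which the modular actually registers the differences $u_m - u_n$, observes via Proposition~\ref{pr: Orlicz function and convergence} that pointwise convergence is automatic on $\Omega\setminus A$, then reduces $A$ to a finite measure space through Proposition~\ref{pr: equivalent finite measure} and argues by contradiction (Markov's inequality plus the normality of $\varphi$) that $u_n$ is Cauchy in measure, whence an a.e.\ subsequence. Your absolutely-convergent-series approach avoids all of this splitting: Jensen furnishes the uniform modular bound on tails, and the single observation that $H(\om)=\sum_n\lambda_n\varphi(\om,\tilde u_n(\om))$ is integrable, combined with the pointwise coercivity from Proposition~\ref{pr: coercivity}, gives $\sum_n\|u_n(\om)\|<\infty$ for a.e.\ $\om$ directly---no decomposition of $\Omega$, no passage to an equivalent finite measure, no convergence-in-measure step. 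This is both shorter and conceptually cleaner, and it handles the non-$\sigma$-finite case uniformly rather than as a separate reduction. What the paper's argument buys in exchange is a mildly stronger intermediate conclusion (the full Cauchy sequence converges in measure on $\sigma$-finite parts, not merely a telescoping subsequence), and it makes explicit that the result persists even if $\varphi(\om,\cdot)$ fails to be continuous at the origin on a set of positive measure; your proof shares this robustness, since you only use convexity, lower semicontinuity, $\varphi(\om,0)=0$, and coercivity.
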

	
	The following proof remains valid if the Orlicz integrand $\varphi$ has no point of continuity on a set of positive measure.
	
	\begin{proof}
		Since completeness is preserved under isometry, we may assume $\mu$ complete without loss of generality. It is standard to check that $L_\varphi (\mu)$ is a normed linear space. We extend the completeness proof of \cite[Thm. 2.4]{Ko1} to the non-$\sigma$-finite case. It suffices to prove that any Cauchy sequence $u_n$ has a norm convergent subsequence that converges a.e. We claim that it is enough to supply a subsequence $u_{n_k}$ that converges a.e. For then, the a.e. limit of $u_{n_k}$ agrees with some strongly measurable function $u$ a.e. so that the Fatou lemma implies
		\begin{align*}
			\int \varphi \left[ \om, \lambda \left( u - u_{n_\ell} \right) \right] \, d \mu
			& = \int \liminf_k \varphi \left[ \om, \lambda \left( u_{n_k} - u_{n_\ell} \right) \right] \, d \mu \\
			& \le \liminf_k \int \varphi \left[ \om, \lambda \left( u_{n_k} - u_{n_\ell} \right) \right] \, d \mu,
		\end{align*}
		whence $\| u - u_{n_\ell} \|_\varphi \le \liminf_k \| u_{n_k} - u_{n_\ell} \|_\varphi$ follows. Consequently, $u \in L_\varphi$ and $u_n \to u$, whereby completeness obtains. Since each member of the sequence $u_n$ is almost separably valued, we may assume $X$ to be separable without loss of generality. As $\mu$ is complete, we may then also assume that $\varphi$ is an integrally normal integrand on $X$ by Lemma \ref{lem: joint measurability}.
		$$
		\forall m \in \N \, \exists M \in \N \colon n_1, n_2 \ge M \implies \| u_{n_1} - u_{n_2} \| < \frac{1}{m}.
		$$
		The set
		$$
		A = \bigcup_{m \ge 1} \bigcup_{n_1, n_2 \ge M(m)} \left\{ \varphi \left( \om, m \left[ u_{n_1}(\om) - u_{n_2}(\om) \right] \right) > 0 \right\}
		$$
		is a countable union of sets, on each of which a positive, integrable function exists. Hence, it is $\sigma$-finite. We claim that
		\begin{equation} \label{eq: a.e. on complement}
			\exists \lim_n u_n(\om) \in X \quad \text{for a.e. } \om \in \Om \setminus A.
		\end{equation}
		Indeed
		$$
		n_1, n_2 \ge M(m) \implies \varphi \left( \om, m \left[ u_{n_1}(\om) - u_{n_2}(\om) \right] \right) = 0 \quad \forall \om \in \Om \setminus A.
		$$
		Therefore, (\ref{eq: a.e. on complement}) follows from Proposition \ref{pr: Orlicz function and convergence} by Definition \ref{def: Orlicz integrand}. We have reduced to the problem of extracting from $u_n$ a subsequence that converges a.e. on the $\sigma$-finite set $A$. Thus, we may from now on assume that $\mu$ is $\sigma$-finite without loss of generality, hence we may take $\mu$ finite by possibly modifying the integrand and measure as in Proposition \ref{pr: equivalent finite measure}. We argue by contradiction that $u_n$ converges in measure: suppose that there exists $\e > 0$ and $\delta > 0$ such that, for any subsequence of $n$, there exists a subsubsequence $n_k$ with
		$$
		C_\e := \left\{ \| u_{n_k} - u_{n_\ell} \| > \e \right\}, \quad \mu \left( C_\e \right) > \delta.
		$$
		Note that the sets
		$$
		\left\{ \inf_{\| x \| > \e} \frac{\varphi \left( \om, r x\right)}{\| x \|} > 1 \right\}
		$$
		are measurable by normality of $\varphi$ and Lemma \ref{lem: equivalence infimal measurability}. The measure $\mu$ being finite, we find $r > 0$ so large that
		$$
		\mu \left( \om \in C_\e \st \inf_{\| x \| > \e} \frac{\varphi \left( \om, r x\right)}{\| x \|} > 1 \right) > \mu \left( C_\e \right) - \frac{\delta}{2},
		$$
		hence the Definition of $C_\e$ implies
		$$
		\mu \left( \om \in C_\e \st \varphi \left( \om, r \left[ u_{n_k} - u_{n_\ell} \right] \right) > \e \right) > \frac{\delta}{2}.
		$$
		However, by the Markov inequality, we have
		$$
		\mu \left( \om \in C_\e \st \varphi \left( \om, r \left[ u_{n_k} - u_{n_\ell} \right] \right) > \e \right) \le \frac{1}{\e} \int \varphi \left( \om, r \left[ u_{n_k} - u_{n_\ell} \right] \right) \, d \mu \xrightarrow{k, \ell \to \i} 0.
		$$
		We have arrived at a contradiction; $u_n$ converges in measure hence admits an a.e. convergent subsequence on $A$ thus on $\Om$.
	\end{proof}
	
	An important difference between the well-known Bochner-Lebesgue spaces
	$$
	L_p\left(\mu ; X \right), \quad 1 \le p < \i
	$$
	and a general Orlicz space is the possibility that an element of $L_\varphi(\mu)$ need not vanish outside a $\sigma$-finite set. Many results about $L_p\left(\mu ; X \right)$ are easy to prove for $\sigma$-finite measures and may then be transferred to the case of an arbitrary measure by using this observation. Also, functions vanishing off a $\sigma$-finite set appear naturally when one characterizes the maximal linear subspace of $\dom I_\varphi$, cf. Theorem \ref{thm: C max subspace}. In order to capture this behaviour in our theory, we introduce
	\begin{definition}[$\sigma$-finite concentration] \label{def: sigma-finite concentration}
		A function $u \colon \Om \to X$ is \emph{$\sigma$-finitely concentrated} iff it vanishes outside a $\sigma$-finite set. For $L \subset L_\varphi(\mu)$, we denote by $L^\sigma$ the subset of $\sigma$-finitely concentrated elements in $L$.
	\end{definition}
	
	Our use of the word concentration in Definition \ref{def: sigma-finite concentration} has been criticized and the name support was suggested to us instead. However, in light of the topological implication of closedness implicit therein, we prefer the present terminology to differentiate more clearly in situations when $\Om$ is also a topological space.
	
	\begin{lemma} \label{lem: Lvarphisigma closed linear subspace}
		The space $L^\sigma_\varphi(\mu)$ is a closed linear subspace of $L_\varphi(\mu)$.
	\end{lemma}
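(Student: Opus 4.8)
The plan is to verify the two assertions separately: that $L^\sigma_\varphi(\mu)$ is a linear subspace (immediate from the definition) and that it is norm closed (here the key input is the a.e.-convergent subsequence furnished by Theorem~\ref{thm: L complete}).

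First I would check linearity. The zero element is $\sigma$-finitely concentrated. If $u_1, u_2 \in L^\sigma_\varphi(\mu)$ vanish outside sets $A_1, A_2 \in \AA_\sigma$, then for scalars $\lambda_1, \lambda_2$ the combination $\lambda_1 u_1 + \lambda_2 u_2$ lies in $L_\varphi(\mu)$, since that is a vector space by Theorem~\ref{thm: L complete}, and it vanishes outside $A_1 \cup A_2 \in \AA_\sigma$; hence it is again $\sigma$-finitely concentrated. So $L^\sigma_\varphi(\mu)$ is a linear subspace.

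For closedness, let $u_n \in L^\sigma_\varphi(\mu)$ with $u_n \to u$ in $L_\varphi(\mu)$. By the second assertion of Theorem~\ref{thm: L complete} there is a subsequence $u_{n_k}$ converging to $u$ $\bar{\mu}$-almost everywhere. Each $u_{n_k}$ vanishes outside some $A_k \in \AA_\sigma$, so $A \coloneqq \bigcup_k A_k \in \AA_\sigma$ and every $u_{n_k}$ vanishes identically on $\Om \setminus A$. Consequently $u = \lim_k u_{n_k} = 0$ at $\bar{\mu}$-almost every point of $\Om \setminus A$, i.e.\ $u$ agrees $\bar{\mu}$-a.e.\ with the $\sigma$-finitely concentrated function $u \chi_A$. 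Since the elements of $L_\varphi(\mu)$ are equivalence classes modulo a.e.\ equality (and $L_\varphi(\mu)$ is canonically unchanged under replacing $\mu$ by its completion $\bar{\mu}$), this shows $u \in L^\sigma_\varphi(\mu)$, so the subspace is closed.

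There is no real obstacle here; the only point worth isolating is that the proof does not proceed pointwise on the (norm-)convergent sequence itself but on the a.e.-convergent subsequence of Theorem~\ref{thm: L complete}, together with the elementary bookkeeping that a countable union of $\sigma$-finite sets is $\sigma$-finite, so that the limit inherits $\sigma$-finite concentration up to a null set.
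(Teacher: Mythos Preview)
Your proof is correct and follows exactly the approach the paper intends: the paper's proof consists of the single line ``By Theorem~\ref{thm: L complete}'', and you have simply unpacked what that citation means, using the a.e.-convergent subsequence together with the closure of $\AA_\sigma$ under countable unions. There is nothing to add.
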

	
	\begin{proof}
		By Theorem \ref{thm: L complete}.
	\end{proof}
	
	The property $L_\varphi(\mu) = L_\varphi^\sigma(\mu)$ can be characterized for the extensive class of separably measurable Orlicz integrands.
	
	\begin{theorem} \label{thm: Lvarphi = Lvarphisigma characterization}
		For $W \subset X$, we set $A_W = \left\{ \om \st \exists x \in W \setminus \left\{ 0 \right\} \colon \varphi \left( \om, x \right) = 0 \right\}$. If, for every $W_0 \in \SS \left( X \right)$, there exists $W \in \SS \left( X \right)$ with $W_0 \subset W$ such that $A_W \in \AA_\mu$ and $A_W$ is $\sigma$-finite, then $L_\varphi(\mu) = L_\varphi^\sigma(\mu)$. If $\varphi$ is separably measurable, then the converse is true as well.
	\end{theorem}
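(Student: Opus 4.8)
The plan is to prove both implications. For the forward direction, suppose the condition on the sets $A_W$ holds and let $u \in L_\varphi(\mu)$. I would first use strong measurability of $u$ to find $W_0 \in \SS(X)$ almost containing the range of $u$, then enlarge it via the hypothesis to $W \in \SS(X)$ with $A_W \in \AA_\mu$ $\sigma$-finite. The claim is that $u$ vanishes outside $A_W$ up to a $\sigma$-finite set, which would finish this direction since then $u$ is $\sigma$-finitely concentrated. To see this, observe that on the set $\{u \ne 0\}$ (intersected with the full-measure set where the range lies in $W$), we have $u(\om) \in W \setminus \{0\}$, so either $\varphi(\om, u(\om)) > 0$ or $\om \in A_W$. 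The set where $\varphi(\om, u(\om)) > 0$ is $\sigma$-finite because on it a positive measurable function (namely $\om \mapsto \varphi(\om, u(\om))$) is finitely integrable on a cofinal family of finite-measure pieces — more carefully, $I_\varphi(\lambda u) < \infty$ for some $\lambda > 0$ (from $\|u\|_\varphi < \infty$ and Lemma \ref{lem: modular-norm}), and by the exhausting-integral conventions of Section \ref{sec: inf-int} a nonnegative integrand with finite integral vanishes outside a $\sigma$-finite set, while $\varphi(\om, \lambda u(\om)) > 0$ exactly where $\varphi(\om, u(\om)) > 0$ by monotonicity of $r \mapsto \varphi(\om, r x)$. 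Hence $\{u \ne 0\} \subset A_W \cup \{\varphi(\cdot, u(\cdot)) > 0\}$ up to a null set, a union of two $\sigma$-finite sets, so $u \in L_\varphi^\sigma(\mu)$.

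For the converse, assume $\varphi$ is separably measurable and $L_\varphi(\mu) = L_\varphi^\sigma(\mu)$. Given $W_0 \in \SS(X)$, I would take a dense sequence $(x_n)$ in $W_0$ and consider, for each $n$, the sets $A_{x_n} = \{\om : \varphi(\om, x_n) = 0\}$, which are measurable by separable measurability of $\varphi$ together with Lemma \ref{lem: equivalence infimal measurability} (the integrand $\om \mapsto \varphi(\om, x_n)$ is measurable, so its zero set is measurable). The point is to show each $A_{x_n}$ is $\sigma$-finite: if not, the constant function $x_n \chi_{A_{x_n}}$ would lie in $L_\varphi(\mu)$ (since $I_\varphi(\lambda x_n \chi_{A_{x_n}}) = 0$ for all $\lambda$, as $\varphi(\om, \lambda x_n) = 0$ on $A_{x_n}$ by monotonicity in the radial variable) but it would not be $\sigma$-finitely concentrated, contradicting the hypothesis. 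Thus each $A_{x_n}$ is $\sigma$-finite; set $A' = \bigcup_n A_{x_n}$, still $\sigma$-finite. Then I would argue that $A_{W_0}$ (hence $A_W$ for a suitable $W$) is already contained in $A'$ up to a null set: if $\om \in A_{W_0}$, there is $x \in W_0 \setminus\{0\}$ with $\varphi(\om, x) = 0$; choosing $x_n \to x$, lower semicontinuity and the monotonicity of $r \mapsto \varphi(\om, r x_n)$ give $\varphi(\om, x_n) = 0$ for $n$ large — actually one must be slightly careful, approximating $x$ from the interior of the ray, but the same trick used in Lemma \ref{lem: conj sep norm} (using lsc plus radial monotonicity) shows $\varphi(\om, t x_n) = 0$ for some $t < 1$ and large $n$ on a full-measure subset, placing $\om$ in an $A_{t x_n}$, and $t x_n$ can be absorbed into an enlarged dense sequence. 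Taking $W$ to be the closed span of all the $x_n$ and $t x_n$ involved (still separable), we get $A_W \subset A'$ up to null sets, so $A_W \in \AA_\mu$ and is $\sigma$-finite.

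The main obstacle I anticipate is the converse direction's approximation argument: showing that $\om \in A_{W_0}$ — witnessed by some $x$ possibly on the boundary of $\{\varphi(\om, \cdot) = 0\}$ — is detected by a countable dense family. The subtlety is that $\varphi(\om, x) = 0$ does not immediately force $\varphi(\om, x_n) = 0$ for approximating $x_n$, only $\liminf_n \varphi(\om, x_n) \le \varphi(\om, x) = 0$ by continuity on the interior of the domain is not available at boundary points. The fix, as in Lemma \ref{lem: conj sep norm}, is to exploit that $r \mapsto \varphi(\om, r y)$ is nondecreasing and lower semicontinuous: for $t < 1$, $\varphi(\om, t x) \le \varphi(\om, x) = 0$, and then continuity of scalar multiplication lets one find $x_n$ in a countable set with $\varphi(\om, x_n) = 0$. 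One must ensure this countable enriched family can be chosen independently of $\om$ — which works because we only enrich $(x_n)$ by the countably many points $\{t x_n : t \in \mathbb{Q} \cap (0,1)\}$, a fixed countable set. I would also need to double-check the measurability bookkeeping: that $A_W$ for the final enlarged separable $W$ indeed equals $\bigcup$ over the countable enriched family of the corresponding zero sets up to a null set, which follows once the approximation claim is established. The rest is routine manipulation of $\sigma$-finiteness and the exhausting-integral conventions already set up in Section \ref{sec: inf-int}.
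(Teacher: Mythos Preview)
Your forward direction is essentially the paper's argument, with one slip: the claim that $\{\varphi(\cdot,\lambda u(\cdot))>0\}=\{\varphi(\cdot,u(\cdot))>0\}$ ``by monotonicity'' is false in general (take $\varphi(\om,x)=\max\{0,\|x\|-1\}$ and $\|u(\om)\|=2$, $\lambda=\tfrac14$). Fortunately you do not need it: on $\{u\ne 0\}\setminus A_W$ one has $\varphi(\om,y)>0$ for \emph{every} $y\in W\setminus\{0\}$, in particular for $y=\lambda u(\om)$, so this set is contained in $\{\varphi(\cdot,\lambda u(\cdot))>0\}$ directly. With that correction the forward direction is fine and matches the paper.

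The converse has a genuine gap. Your plan hinges on the claim that $A_{W_0}\subset A'=\bigcup_n A_{x_n}$ up to a null set for a suitable countable family $\{x_n\}$. This fails already for $X=\R^2$, $\Om=[0,\pi)$ with Lebesgue measure, and
\[
\varphi(\om,y)=\langle y,(-\sin\om,\cos\om)\rangle^2+\bigl(\max\{0,\|y\|-1\}\bigr)^2,
\]
which is a genuine Orlicz integrand. Here the zero set $\{\varphi(\om,\cdot)=0\}$ is the segment $\{t(\cos\om,\sin\om):|t|\le 1\}$, so $A_{\R^2}=\Om$, yet for any fixed nonzero $x$ the set $A_x$ is a single point; thus $A'$ is countable, hence null, and $A_{\R^2}\not\subset A'$ modulo null sets. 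The appeal to ``the trick from Lemma~\ref{lem: conj sep norm}'' is misplaced: that lemma compares two convex functions agreeing on a dense set, whereas here you need a possibly one-dimensional level set to contain a point of a fixed countable family, which it need not. Enriching by rational scalings $\{t x_n\}$ does not help, since those points still lie on only countably many rays.

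The paper's route is different and avoids this obstruction entirely: it argues by contrapositive. Assuming some $W_0$ has $A_W\notin\AA_\sigma$ for every separable $W\supset W_0$, it picks $W$ so that $\varphi_W$ is $\AA\otimes\BB(W)$-measurable and applies the Aumann selection theorem to the multimap
\[
\om\mapsto\{x\in W:\|x\|\ge 1/n,\ \varphi(\om,x)=0\},
\]
obtaining a measurable $u_n$ with $\varphi(\om,u_n(\om))=0$ everywhere and $u_n\ne 0$ on a set that is eventually not $\sigma$-finite. Then $u_n\in L_\varphi(\mu)\setminus L_\varphi^\sigma(\mu)$. The essential ingredient your argument lacks is precisely this measurable-selection step; no amount of countable approximation can replace it.
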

	
	The result applies in particular if the minimum of $\varphi$ at zero is strict for a.e. $\om \in \Om$ as happens for the Bochner-Lebesgue spaces with $1 \le p < \i$.
	
	\begin{proof}
		We want to show that any $u \in L_\varphi(\mu)$ is $\sigma$-finitely concentrated. We may for this assume $\| u \|_\varphi = 1$, hence $I_\varphi \left( u \right) \le 1$. We have $\left\{ \varphi \left[ \om, u \left( \om \right) \right] > 0 \right\} \in \AA_\sigma$ since this set admits a positive integrable function. As $u$ is almost separably valued, our assumption implies that
		$$
		\left\{ u \left( \om \right) \ne 0 \right\} \setminus \left\{ \varphi \left[ \om, u \left( \om \right) \right] > 0 \right\} \in \AA_\sigma
		$$
		whence $u \in L_\varphi^\sigma(\mu)$ follows as $\AA_\sigma$ is a $\sigma$-ring.
		
		Regarding the converse, suppose $W_0 \in \SS \left( X \right)$ with $A_W \not \in \AA_\sigma$ for every superspace $W \in \SS \left( X \right)$ of $W_0$. We shall show that $L_\varphi(\mu) \setminus L_\varphi^\sigma(\mu)$ is non-empty by constructing an element. The Orlicz integrand $\varphi$ being separably measurable, there exists a closed superspace $W \in \SS \left( X \right)$ such that the restriction of $\varphi$ is $\AA \otimes \BB(W)$-measurable. Hence, for any given $n \in \N$, the multimap
		$$
		\om \mapsto \Gamma_n \left( \om \right) :=
		\begin{cases}
			\left\{ x \in W \st \| x \| \ge \frac{1}{n}, \varphi \left( \om, x \right) = 0 \right\} & \text{ if this set is non-empty}, \\
			\left\{ 0 \right\} & \text{ else}
		\end{cases}
		$$
		has a $\AA_\mu \otimes \BB(W)$-measurable graph and thus permits an $\AA_\mu$-measurable selection $u_n \colon \Om \to W$ by the Aumann theorem in the form of \cite[Thm. 6.10]{FoLe}. The same theorem together with \cite[Thm. 6.5]{FoLe} shows that $\Gamma_n$ is Effros $\AA_\mu$-measurable. Note that the $\AA_\mu$-measurable sets $\Om \setminus \Gamma_n^- \left( \left\{ 0 \right\} \right)$ converge to $A_W \not \in \AA_\sigma$ as $n \to \i$ hence are non-$\sigma$-finite eventually. Therefore, $u_n$ eventually differs from zero on a set of non-$\sigma$-finite measure. In conclusion $u_n \in L_\varphi(\mu) \setminus L_\varphi^\sigma(\mu)$.
	\end{proof}
	
	\subsection{Embeddings and almost embeddings}
	
	We close the section by proving that on sets of finite measure, $L_\varphi(\mu)$ lies almost between $L_\i(\mu)$ and $L_1(\mu)$, which will be instrumental in deducing properties of $L_\varphi(\mu)$ from those of the better understood Bochner-Lebesgue spaces. We prepare this result with a simple embedding lemma providing continuous inclusions between Orlicz spaces in terms of their integrands.
	
	\begin{lemma} \label{lem: embedding}
		Let $\varphi$ and $\phi$ be Orlicz integrands such that
		$$
		\exists L > 0, f \in L_1(\mu) \colon \varphi \left( \om, x \right) \le \phi \left( \om, L x \right) + f \left( \om \right).
		$$
		Then
		$$
		\vvvert u \vvvert_\varphi \le L \left( 1 + \| f \|_{L_1} \right) \vvvert u \vvvert_\phi \quad \forall u \in L_\phi(\mu).
		$$
	\end{lemma}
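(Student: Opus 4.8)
The plan is to work throughout with the Amemiya norm rather than the Luxemburg norm, because the defining infimum $\vvvert u \vvvert_\varphi = \inf_{\alpha > 0} \alpha^{-1}\left[ 1 + I_\varphi(\alpha u) \right]$ interacts transparently with an additive $L_1$-perturbation of the modular, whereas the Luxemburg norm does not. The first step is to transfer the pointwise hypothesis $\varphi(\om, x) \le \phi(\om, Lx) + f(\om)$ (valid for all $x$, a.e. $\om$) to the modulars: for any strongly measurable $u$ and any $\alpha > 0$, compose with $u(\om)$ and integrate to obtain
$$
I_\varphi(\alpha u) \le I_\phi(L\alpha u) + \| f \|_{L_1}.
$$
In particular, choosing $u \in L_\phi(\mu)$ and $\alpha$ small enough that $I_\phi(L\alpha u) < \i$ shows $u \in L_\varphi(\mu)$, so $\vvvert u \vvvert_\varphi < \i$ and all subsequent divisions are legitimate.

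Next I would fix $u \in L_\phi(\mu)$ and $\beta > 0$, apply the modular estimate with the particular scaling $\alpha = \beta/L$, and use that $\vvvert u \vvvert_\varphi$ is an infimum over all positive scalars:
\begin{align*}
\vvvert u \vvvert_\varphi
&\le \frac{L}{\beta}\left[ 1 + I_\varphi\!\left( \tfrac{\beta}{L} u \right) \right]
\le \frac{L}{\beta}\left[ 1 + I_\phi(\beta u) + \| f \|_{L_1} \right] \\
&\le \frac{L\left( 1 + \| f \|_{L_1} \right)}{\beta}\left[ 1 + I_\phi(\beta u) \right],
\end{align*}
the last inequality being the elementary bound $1 + s + c \le (1 + c)(1 + s)$ for $s, c \ge 0$ with $s = I_\phi(\beta u)$ and $c = \| f \|_{L_1}$. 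Taking the infimum over $\beta > 0$ on the right-hand side then gives $\vvvert u \vvvert_\varphi \le L\left( 1 + \| f \|_{L_1} \right) \vvvert u \vvvert_\phi$, which is the assertion.

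The computation is essentially a one-liner once the Amemiya formulation is adopted, so I do not expect a genuine obstacle; the only points needing a moment of care are the bookkeeping behind the modular inequality $I_\varphi(\alpha u) \le I_\phi(L\alpha u) + \| f \|_{L_1}$ when $\mu$ is not $\sigma$-finite — here one uses that $f \in L_1(\mu)$ vanishes off a $\sigma$-finite set and that the exhausting integral of Section \ref{sec: inf-int} is monotone and additive on non-negative integrally measurable functions — and the preliminary verification $u \in L_\phi(\mu) \implies u \in L_\varphi(\mu)$, which makes the left-hand side finite before one divides. If one instead wanted the statement for the Luxemburg norm, it would follow at once from the comparison $\| \cdot \|_\varphi \le \vvvert \cdot \vvvert_\varphi \le 2\| \cdot \|_\varphi$ recorded earlier, at the cost of a harmless factor of $2$.
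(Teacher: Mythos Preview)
Your proof is correct and follows essentially the same route as the paper: both pass to the Amemiya norm, integrate the pointwise hypothesis to get $I_\varphi(\alpha u) \le I_\phi(L\alpha u) + \| f \|_{L_1}$, apply the elementary bound $1 + s + c \le (1 + c)(1 + s)$, and take the infimum over the scaling parameter. The paper's write-up is slightly terser (it records the chain $\vvvert u \vvvert_\varphi \le \inf_{\alpha > 0} \alpha^{-1}[1 + \| f \|_{L_1} + I_\phi(L\alpha u)] \le (1 + \| f \|_{L_1}) \vvvert Lu \vvvert_\phi = L(1 + \| f \|_{L_1}) \vvvert u \vvvert_\phi$ without the substitution $\alpha = \beta/L$), but the content is identical.
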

	
	\begin{proof}
		For $u \in L_\phi(\mu)$, there holds
		\begin{align*}
			\vvvert u \vvvert_\varphi
			\le \inf_{\alpha > 0} \alpha^{-1} \left[ 1 + \| f \|_{L_1} + I_\phi \left( L \alpha u \right) \right]
			& \le \left( 1 + \| f \|_{L_1} \right) \vvvert L u \vvvert_\phi \\
			& = L \left( 1 + \| f \|_{L_1} \right) \vvvert u \vvvert_\phi. \qedhere
		\end{align*}
	\end{proof}
	
	\begin{lemma} \label{lem: a emb}
		Let $\mu$ be finite. Then there exists an isotonic family $\Om_\e \in \AA$ with $\lim_{\e \downarrow 0} \mu \left( \Om \setminus \Om_\e \right) = 0$ such that there hold the continuous embeddings
		$$
		L_\i \left( \Om_\e ; X \right) \to L_\varphi \left( \Om_\e \right) \to L_1 \left( \Om_\e ; X \right)
		$$
		via identical inclusion.
	\end{lemma}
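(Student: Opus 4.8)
\emph{Proof strategy.} First observe that $L_1(\Om_\e;X)$ and $L_\i(\Om_\e;X)$ are themselves Orlicz spaces, generated by the autonomous integrands $\varphi_1(\om,x)=\|x\|$ and $\varphi_\i(\om,x)=I_B(x)$ with $B$ the closed unit ball of $X$, and their Luxemburg norms coincide with $\|\cdot\|_{L_1}$ and $\|\cdot\|_{L_\i}$. So the two inclusions are, morally, instances of Lemma \ref{lem: embedding} with comparisons $\varphi(\om,x)\le\varphi_\i(\om,L_\e x)+f(\om)$ and $\varphi_1(\om,x)\le\varphi(\om,L_\e x)+g(\om)$ for suitable constants $L_\e$ and $f,g\in L_1$. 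I would build $\Om_\e$ from two auxiliary functions: $\delta(\om)$, measuring how large a ball $\varphi(\om,\cdot)$ keeps below $\mu(\Om)^{-1}$, and $\rho(\om)$, an upper bound for $\sup_{x\ne0}\|x\|/(1+\varphi(\om,x))$. Taking $\Om_\e=\{\delta>\delta_\e\}\cap\{\rho\le\rho_\e\}$ and feeding a strongly measurable $u$ into these: on $\Om_\e$ one gets $\|u(\om)\|\le\delta_\e^{-1}$ a.e.\ $\Rightarrow\varphi(\om,u(\om))\le\mu(\Om)^{-1}$ a.e., and unconditionally $\|u(\om)\|\le\rho_\e\bigl(1+\varphi(\om,u(\om))\bigr)$ a.e.; integrating over $\Om_\e$ and converting between $I_\varphi$ and $\|\cdot\|_\varphi$ by (1) of Lemma \ref{lem: modular-norm} (legitimate since $I_\varphi$ is left-continuous along rays, by monotone convergence and lower semicontinuity of $\varphi(\om,\cdot)$) yields $\|u\|_\varphi\le\delta_\e^{-1}\|u\|_\i$ and $\|u\|_{L_1(\Om_\e;X)}\le\rho_\e(\mu(\Om)+1)\|u\|_\varphi$.

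That the exceptional sets shrink is exactly the Orlicz condition at work: for a.e.\ $\om$, $\varphi(\om,\cdot)\in\Gamma(X)$ vanishes continuously at the origin, so $\delta(\om)>0$, and by Proposition \ref{pr: coercivity} it satisfies $\liminf_{\|x\|\to\i}\varphi(\om,x)/\|x\|>0$, so $\rho(\om)<\i$; finiteness of $\mu$ then forces $\mu(\{\delta\le\delta_\e\})$ and $\mu(\{\rho>\rho_\e\})$ to $0$ as $\delta_\e\downarrow0$, $\rho_\e\uparrow\i$, and for such a choice the family $\Om_\e$ is isotonic with $\mu(\Om\setminus\Om_\e)\to0$.

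The crux --- and the only genuinely delicate point --- is the measurability of $\delta$ and $\rho$ when $X$ is non-separable, since a priori both are suprema over all of $X$, which need not be $\AA$-measurable. Here I would invoke the device this part of the thesis rests on: replace the suprema over $X$ by the essential infimum, resp.\ supremum, over the family $\SS(X)$ of separable subspaces, i.e.\ $\delta=\essinf_{W\in\SS(X)}\delta_W$ and $\rho=\esssup_{W\in\SS(X)}\rho_W$, where $\delta_W,\rho_W$ are the same quantities computed inside $W$. After passing to the completion of $\mu$ (which does not change $L_\varphi(\mu)$, cf.\ the discussion following Proposition \ref{pr: equivalent finite measure}), the restriction of $\varphi$ to $\Om\times W$ is a normal integrand by Lemma \ref{lem: joint measurability}, so each $\delta_W,\rho_W$ is measurable (via the projection theorem, as in the measurability discussion preceding Theorem \ref{thm: inf int}), whence $\delta,\rho$ are measurable. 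Two facts then close the argument. (i) $\delta>0$ and $\rho<\i$ a.e.: an essential infimum/supremum over $\SS(X)$ is realized by a countable subfamily, hence by the single subspace $W_\ast\in\SS(X)$ it spans, on which $\varphi(\om,\cdot)$ is still an Orlicz function for a.e.\ $\om$ (continuity at $0$ and coercivity pass to closed subspaces), so $\delta=\delta_{W_\ast}>0$ and $\rho=\rho_{W_\ast}<\i$ a.e. (ii) The estimates of the first paragraph survive: a strongly measurable $u$ has separable range, contained in some $W_u\in\SS(X)$, and $\delta\le\delta_{W_u}$, $\rho\ge\rho_{W_u}$ a.e.\ by the defining property of the essential operations --- precisely what is needed to evaluate the integrand inequalities along $u$. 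It is also for this reason that I would reproduce the (two-line) proof of Lemma \ref{lem: embedding} in place rather than quote it verbatim: its hypothesis is a pointwise bound for \emph{every} $x\in X$, which cannot be certified measurably, whereas its proof only ever substitutes the value of a fixed strongly measurable function into the integrand.
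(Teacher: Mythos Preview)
Your proof is correct and follows essentially the same approach as the paper: both build $\Om_\e$ by quantifying, for each separable $W\in\SS(X)$, how small a ball keeps $\varphi_\om$ below a fixed threshold and how coercive $\varphi_\om$ is, then pass to essential operations over $\SS(X)$ to obtain $\AA_\mu$-measurable objects, exploiting that these essential operations are realized by a single separable subspace and that any strongly measurable $u$ has range in some $W_u\in\SS(X)$. The only cosmetic difference is packaging---you encode the quantitative control in the functions $\delta,\rho$ and take $\Om_\e$ as their super/sub-level sets, whereas the paper works directly with the essential intersections $\Om'_\e=\esscap_W\{\sup_{B_{\e,W}}\varphi_\om\le1\}$ and $\Om''_\e=\esscap_W\{\inf_{W\setminus\bar B_{1/\e}}\varphi_\om\ge1\}$; your remark on why Lemma~\ref{lem: embedding} must be re-argued along a fixed $u$ rather than quoted verbatim is exactly the point the paper compresses into ``as any $u\in L_\varphi(\mu)$ is almost separably valued''.
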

	
	\begin{proof}
		Consider for $W \in \SS(X)$ the $\AA_\mu$-measurable sets
		$$
		E'_\e(W) = \left\{ \sup \varphi_\om \left( B_{\e, W} \right) \le 1 \right\}, \quad E''_\e(W) = \left\{ \inf \varphi_\om \left( W \setminus \overline{B}_{1 / \e} \right) \ge 1 \right\}.
		$$
		Measurability follows from separable measurability of $\varphi$. More precisely, the epigraphical multimap of $\varphi$ is $\AA_\mu$-measurable by Lemma \ref{lem: joint measurability}. Hence, the set $E'_\e(W)$ is $\AA_\mu$-measurable by the Hess theorem \cite[Thm. 6.5.14]{B} as the pre-image under the epigraphical multimap of the Wijsman-closed set
		$$
		\bigcap_{x \in B_{\e, W} \times \left[ 1, \i \right) } \left\{ F \in \CL(X \times \R) \st d_x(F) = 0 \right\}.
		$$
		For the set $E''_\e(W)$, this follows from the infimal measurability of normal integrands by Lemma \ref{lem: equivalence infimal measurability}. We may by \cite[Thm. 1.108]{FoLe} define the essential intersections
		$$
		\Om'_\e = \esscap_{W \in \SS(X) } E', \quad \Om''_\e = \esscap_{W \in \SS(X) } E''.
		$$
		By the same theorem and since $E'$ and $E''$ are non-increasing w.r.t. $W$, there exist $W'_\e, W''_\e \in \SS(X)$ with $\Om' = E' \left(W'_\e \right)$ and $\Om'' = E'' \left(W''_\e \right)$ a.e. so that, for any null sequence $\e_n$, we find $W' \in \SS(X)$ and $W'' \in \SS(X)$ independent of $n$ with $\Om'_{\e_n} = E' \left(W' \right)$ and $\Om''_{\e_n} = E'' \left(W'' \right)$ a.e. Hence
		$$
		\lim_{\e \to 0} \mu \left( \Om \setminus \Om'_\e \right) = \lim_{\e \to 0} \mu \left( \Om \setminus \Om''_\e \right) = 0
		$$
		because $\varphi$ is an Orlicz integrand thus vanishes continuously at the origin and has bounded sublevels. Setting $\Om_\e = \Om' \cap \Om''$, we have $\lim_{\e \to 0} \mu \left( \Om \setminus \Om_\e \right) = 0$. Denoting by $I_{B_X}$ the indicator in the sense of convex analysis of the unit ball $B_X$ we have $\varphi_\om(x) \le I_{B_X} \left( \frac{x}{\e} \right) + 1$ a.e. on $\Om'$ and $\e \| x \| \le \varphi_\om(x) + \frac{1}{\e}$ a.e. on $\Om''$ for all $x \in W$ for any $W \in \SS(X)$. Therefore,
		$$
		\vvvert u \vvvert_\varphi \le \frac{1}{\e} \left[ 1 + \mu \left( \Om \right) \right] \vvvert u \vvvert_\i, \quad
		\e \vvvert u \vvvert_1 \le \left( 1 + \frac{\mu(\Om)}{\e} \right) \vvvert u \vvvert_\varphi
		$$
		by Lemma \ref{lem: embedding} as any $u \in L_\varphi(\mu)$ is almost separably valued.
	\end{proof}
	
	\paragraph{Remark.} We found the idea for Lemma \ref{lem: a emb} in \cite[Thm. 3.2]{CaKa}, where it is credited to \cite{Gi1} in the case of a separable range space. Non-separability of $X$ is a major source of difficulty in conceiving the proof above, leading to the use of essential intersections instead of regular ones.
	\\
	
	In view of §\ref{sec: inf-int}, it becomes important to understand almost decomposability of $L_\varphi(\mu)$ and its subspaces. Obviously, $L_\varphi(\mu)$ and $L_\varphi^\sigma(\mu)$ are weakly decomposable. We also have
	
	\begin{corollary} \label{cor: L a decomp}
		$L_\varphi(\mu)$ and $L_\varphi^\sigma(\mu)$ are almost decomposable.
	\end{corollary}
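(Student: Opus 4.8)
The plan is to reduce the statement to the embedding Lemma \ref{lem: a emb}, using in addition two elementary closure properties of Orlicz spaces: that multiplication by the indicator of a measurable set maps $L_\varphi(\mu)$ into itself, and that $L_\varphi(\mu)$ is a linear space (Theorem \ref{thm: L complete}). Fix $u_0 \in L_\varphi(\mu)$, a set $F \in \AA_f$, an $\e > 0$, and a bounded strongly measurable function $u_1 \colon F \to X$, as in Definition \ref{def: almost decomposable}. First I would note that for any $A \in \AA$ and any $\lambda > 0$ one has $I_\varphi(\lambda \chi_A v) \le I_\varphi(\lambda v)$, since $\varphi \ge 0$ and $\varphi(\om, 0) = 0$; hence $\| \chi_A v \|_\varphi \le \| v \|_\varphi$, so $\chi_{\Om \setminus F_\e} u_0 \in L_\varphi(\mu)$ for every measurable $F_\e \subset F$, and moreover extending a function by zero off a measurable set leaves its modular (and hence its membership in $L_\varphi$) unchanged.

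Next I would apply Lemma \ref{lem: a emb} to the finite measure space $\left( F, \AA(F), \mu \right)$ (with $\varphi$ restricted accordingly) to obtain an isotonic family $F_\delta \in \AA(F)$ with $\mu(F \setminus F_\delta) \to 0$ as $\delta \downarrow 0$ such that $L_\i(F_\delta; X) \to L_\varphi(F_\delta)$ via identical inclusion. Choosing $\delta$ so small that $\mu(F \setminus F_\delta) < \e$ and setting $F_\e = F_\delta$, the restriction of $u_1$ to $F_\e$ lies in $L_\i(F_\e; X)$, hence in $L_\varphi(F_\e)$, so $\chi_{F_\e} u_1 \in L_\varphi(\mu)$ by the remark above. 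The candidate function $u$ of Definition \ref{def: almost decomposable} is then
\[
  u \;=\; \chi_{\Om \setminus F_\e}\, u_0 \;+\; \chi_{F_\e}\, u_1,
\]
a sum of two elements of the linear space $L_\varphi(\mu)$; it is strongly measurable and satisfies $\mu(F \setminus F_\e) < \e$, so $L_\varphi(\mu)$ is almost decomposable. For $L_\varphi^\sigma(\mu)$ I would run the same construction and merely add that $u$ is $\sigma$-finitely concentrated: if $u_0$ vanishes off a $\sigma$-finite set $S_0$, then $u$ vanishes off $S_0 \cup F$, which is $\sigma$-finite, whence $u \in L_\varphi^\sigma(\mu)$ by Lemma \ref{lem: Lvarphisigma closed linear subspace}.

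There is no real obstacle here; the only points needing a little care are the transfer from the auxiliary finite measure $\mu|_F$ back to $\mu$ (handled by the zero-extension remark, since $\varphi(\om,0)=0$) and the fact that the set $F_\e$ produced by Lemma \ref{lem: a emb} is a priori only $\AA_\mu$-measurable, which is harmless because, as throughout, one may pass to the completion $\bar{\mu}$ without changing $L_\varphi(\mu)$ or the class of strongly measurable functions.
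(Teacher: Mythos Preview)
Your argument is correct and follows essentially the same route as the paper: reduce to Lemma~\ref{lem: a emb} applied on the finite-measure set $F$ to capture the bounded piece $u_1$, and use linearity together with closure under multiplication by indicators (what the paper phrases as weak decomposability) to glue in $u_0$. The paper's proof is just a terser version of what you wrote; your explicit treatment of the zero-extension and of $L_\varphi^\sigma(\mu)$ via $S_0 \cup F \in \AA_\sigma$ is fine and matches the spirit of the remark below Definition~\ref{def: almost decomposable}.
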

	
	\begin{proof}
		Let $F \in \AA_f$ and $v \in L_\i \left( F ; X \right)$. Since $L_\varphi(\mu)$ and $L_\varphi^\sigma(\mu)$ are weakly decomposable and linear, it suffices to prove that, for $\e > 0$, there exists $F_\e \subset F$ with $\mu \left( F \setminus F_\e \right) < \e$ and $v \chi_{F_\e} \in L_\varphi(\mu)$, which follows from Lemma \ref{lem: a emb}.
	\end{proof}
	
	\section{The closure of simple functions $E_\varphi(\mu)$} \label{sec: E}
	
	We compile in this ancillary section basic facts about the space $E_\varphi(\mu)$ of the closure of simple functions in $L_\varphi(\mu)$. Even though the simple functions are in general not dense in $L_\varphi(\mu)$, their closure $E_\varphi(\mu)$ can still be used to approximate all of $L_\varphi(\mu)$ in a suitable sense, at least on $\sigma$-finite sets.
	
	\begin{definition}[convergence from below] \label{def: conv f bel}
		A sequence $u_n \colon \Om \to X$ of measurable functions is said to \emph{converge from below} to $u$ if there exists a sequence $\Om_n \in \AA$ with $\mu \left( \lim_n \Om \setminus \Om_n \right) = 0$ and $u_n = u \chi_{\Om_n}$. We write $u_n \uparrow u$ to signify that $u_n$ converges from below to $u$. We say that a convergence from below is monotonic if the sequence $\Om_n$ is isotonic, i.e., if $\Om_n \subset \Om_{n + 1}$ for all $n \ge 1$.
	\end{definition}
	
	We shall define the class of absolutely continuous functionals as those enjoying continuity from below and vanishing outside a $\sigma$-finite set. Such a functional is determined by its action on any almost decomposable subspace of $L_\varphi(\mu)$, for which $E_\varphi(\mu)$ is an example. This is the content of the next two lemmas and our primary use for $E_\varphi(\mu)$ in the duality theory. The space $E_\varphi(\mu)$ is obviously weakly decomposable. We also have
	
	\begin{lemma} \label{lem: E a decomp}
		$E_\varphi(\mu)$ and $E_\varphi^\sigma(\mu)$ are almost decomposable.
	\end{lemma}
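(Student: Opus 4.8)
The plan is to reduce the claim to the already established almost decomposability of $L_\varphi(\mu)$ (Corollary~\ref{cor: L a decomp}), and to promote it to $E_\varphi(\mu)$ by combining the Egorov theorem with the local embedding $L_\i \hookrightarrow L_\varphi$ of Lemma~\ref{lem: a emb}. For the $\sigma$-finite variant the statement will then be automatic: $E_\varphi^\sigma(\mu) = E_\varphi(\mu) \cap L_\varphi^\sigma(\mu)$, and the intersection of two function spaces that are each weakly decomposable and almost decomposable retains both properties (as recorded after Definition~\ref{def: almost decomposable}); here $E_\varphi(\mu)$ is weakly decomposable (as noted above) and $L_\varphi^\sigma(\mu)$ is weakly decomposable and almost decomposable by Corollary~\ref{cor: L a decomp}.

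For $E_\varphi(\mu)$ itself I would first carry out the same reduction as in the proof of Corollary~\ref{cor: L a decomp}. Fix $u_0 \in E_\varphi(\mu)$, $F \in \AA_f$, $\e > 0$, and a bounded strongly measurable $u_1 \colon F \to X$. Since $E_\varphi(\mu)$ is weakly decomposable (hence stable under multiplication by indicators of sets of finite or cofinite measure) and linear, and since any $F_\e \subset F$ has finite measure while $\Om \setminus F_\e$ has cofinite measure, the function in (\ref{eq: almost decomposable}) equals $u = u_0 \chi_{\Om \setminus F_\e} + u_1 \chi_{F_\e}$ and lies in $E_\varphi(\mu)$ as soon as $u_1 \chi_{F_\e}$ does. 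Thus it suffices to find $F_\e \subset F$ with $\mu(F \setminus F_\e) < \e$ such that $u_1 \chi_{F_\e} \in E_\varphi(\mu)$. To produce it, I would apply Lemma~\ref{lem: a emb} to the finite measure space obtained by restricting $\mu$ to $F$: this yields an isotonic family $\Om_\delta \subset F$ with $\mu(F \setminus \Om_\delta) \to 0$ as $\delta \downarrow 0$ and a continuous inclusion $L_\i(\Om_\delta; X) \hookrightarrow L_\varphi(\Om_\delta)$, i.e. a constant $C$ with $\| v \|_\varphi \le C \| v \|_\infty$ for bounded strongly measurable $v$ vanishing off $\Om_\delta$ (extension by zero not increasing $\|\cdot\|_\varphi$). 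Pick $\delta$ with $\mu(F \setminus \Om_\delta) < \e/2$. As $u_1$ is strongly measurable there are simple functions $s_n \to u_1$ pointwise a.e. on $F$; Egorov on the finite-measure set $\Om_\delta$ gives $F_\e \subset \Om_\delta$ with $\mu(\Om_\delta \setminus F_\e) < \e/2$ on which $s_n \to u_1$ uniformly, so $\mu(F \setminus F_\e) < \e$. Now $s_n \chi_{F_\e}$ and $u_1 \chi_{F_\e}$ are bounded and vanish off $\Om_\delta$, hence lie in $L_\varphi(\mu)$ by Lemma~\ref{lem: a emb}, the $s_n \chi_{F_\e}$ being simple; and $\| s_n \chi_{F_\e} - u_1 \chi_{F_\e} \|_\varphi \le C \| s_n - u_1 \|_{L_\i(F_\e; X)} \to 0$. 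Therefore $u_1 \chi_{F_\e}$ is a norm limit of simple functions in $L_\varphi(\mu)$, i.e. $u_1 \chi_{F_\e} \in E_\varphi(\mu)$, which closes the reduction.

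The delicate point — and the reason one cannot simply say ``simple functions are dense in $L_\i$'' — is twofold: a bounded strongly measurable function need not be approximable in sup norm by simple functions, and a general $L_\i$ function need not even have finite $L_\varphi$-modular, since $\varphi(\om,\cdot)$ may be huge on a positive-measure set for every fixed nonzero argument. Both obstacles are absorbed by passing to the slightly smaller set $F_\e$: Egorov turns pointwise convergence into uniform convergence, and Lemma~\ref{lem: a emb} supplies the estimate $\|\cdot\|_\varphi \le C\|\cdot\|_\infty$ precisely on the exhausting sets $\Om_\delta$. I expect the only real care to be in bookkeeping the two successive shrinkings (and, if $\mu$ is not complete, noting that $L_\varphi(\mu)$ and hence $E_\varphi(\mu)$ are unchanged under completion), but no idea beyond Lemma~\ref{lem: a emb} and Egorov is needed.
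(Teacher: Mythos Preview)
Your argument is correct, and the reduction for $E_\varphi^\sigma(\mu)$ matches the paper's exactly. For $E_\varphi(\mu)$ itself, however, you take a genuinely different route from the paper.

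The paper does \emph{not} invoke Lemma~\ref{lem: a emb}. Instead it applies Egorov directly to the compositions $\om \mapsto \varphi(\om, \lambda v(\om))$ and $\om \mapsto \varphi(\om, k(v(\om) - v_n(\om)))$: a first Egorov step gives $E_\e \subset F$ on which $\varphi(\om, \lambda v(\om)) \to 0$ uniformly as $\lambda \downarrow 0$, forcing $v\chi_{E_\e} \in L_\varphi(\mu)$; then, for each $k \in \N$, a second Egorov step gives $F_{k,\e} \subset E_\e$ on which $\varphi(\om, k(v - v_n)) \to 0$ uniformly in $n$, and one intersects over $k$ to obtain $F_\e = \bigcap_k F_{k,\e}$. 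On $F_\e$ the uniform decay for every $k$ reads off, via the Luxemburg definition, as $\|v_n \chi_{F_\e} - v \chi_{F_\e}\|_\varphi \to 0$.

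Your approach trades this countable family of Egorov applications for a single one, at the cost of importing Lemma~\ref{lem: a emb} as a black box: once you have the uniform estimate $\|\cdot\|_\varphi \le C\|\cdot\|_\infty$ on $\Om_\delta$, ordinary uniform convergence of $s_n$ to $u_1$ on $F_\e$ is already Luxemburg-norm convergence, with no need to control each scale $k$ separately. This is arguably cleaner and better reflects the philosophy (advertised elsewhere in the paper) of deducing properties of $L_\varphi$ from the almost-sandwich between $L_\infty$ and $L_1$. The paper's version, on the other hand, is self-contained and shows explicitly how the Orlicz structure (continuity of $\varphi_\om$ at the origin) enters, without routing through the somewhat heavier Lemma~\ref{lem: a emb}.
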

	
	\begin{proof}
		Remembering the remark below Definition \ref{def: almost decomposable} on intersections of almost decomposable spaces, we need only consider $E_\varphi(\mu)$ since $E_\varphi^\sigma(\mu) = E_\varphi(\mu) \cap L_\varphi^\sigma(\mu)$ and these spaces are weakly decomposable in addition to $L_\varphi^\sigma(\mu)$ being almost decomposable by Corollary \ref{cor: L a decomp}.
		
		Let $F \in \AA_f$ and $v \in L_\i \left( F ; X \right)$. Since $E_\varphi(\mu)$ is weakly decomposable and linear, it suffices to prove that, for every $\e > 0$, there exists $F_\e \subset F$ with $\mu \left( F \setminus F_\e \right) < \e$ and $v \chi_{F_\e} \in E_\varphi(\mu)$. We find by the Egorov theorem a subset $E_\e \subset F$ with $\mu \left( F \setminus E_\e \right) < \frac{\e}{2}$ and $\lim_{\lambda \to 0} \varphi \left[ \om, \lambda v(\om) \right] = 0$ uniformly on $E_\e$, hence $v \chi_{E_\e} \in L_\varphi(\mu)$. Pick $v_n$ a sequence of simple functions with $v_n \to v$ a.e. By the Egorov theorem, we find a sequence $F_{k, \e} \subset E_\e$ with $\mu \left( E_\e \setminus F_{k, \e} \right) < 2^{- k - 1} \e$ and $\lim_n \varphi \left[ \om, k \left[ v(\om) - v_n(\om) \right] \right) = 0$ uniformly on $F_{k, \e}$ for fixed $k$. Consequently, the same holds on $F_\e = \bigcap_{k \ge 1} F_{k, \e}$ for all $k \ge 1$ so that $v_n \chi_{F_\e} \to v \chi_{F_\e}$ in $L_\varphi(\mu)$ by definition of the Luxemburg norm. In conclusion $v \chi_{F_\e} \in E_\varphi(\mu)$ and $\mu \left( F \setminus F_\e \right) = \mu \left( F \setminus E_\e \right) + \mu \left( E_\e \setminus F_\e \right) \le \frac{\e}{2} + \frac{\e}{2} = \e$.
	\end{proof}
	
	\begin{lemma} \label{lem: decomp ss abs cont dense}
		Given $u \in L^\sigma_\varphi(\mu)$ and an almost decomposable subspace $L \subset L_\varphi(\mu)$, there exists a sequence $u_n \in L$ with $u_n \uparrow u$ monotonically.
	\end{lemma}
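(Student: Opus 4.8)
The plan is to exhaust $u$ by finite-measure pieces on which it is bounded, to invoke the almost decomposability of $L$ on each piece inductively so that the pieces stay nested, and finally to pad by the (measure-theoretically large) set on which $u$ vanishes so that the defining condition for convergence from below holds on all of $\Om$.

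First I would use that $u$ is $\sigma$-finitely concentrated to fix an isotonic sequence $F_m \in \AA_f$ with $\{u \ne 0\} \subset \bigcup_m F_m$, and set $B_m = F_m \cap \{\, \|u\| \le m \,\} \in \AA_f$. These sets increase, $u$ is bounded by $m$ on $B_m$, and $\{u \ne 0\} \subset \bigcup_m B_m$ because $\|u(\om)\| < \infty$ for every $\om$. Next I would build $\Om'_m \in \AA$ by induction: put $\Om'_0 = \emptyset$, so $u\chi_{\Om'_0} = 0 \in L$; given $\Om'_{m-1} \subset B_{m-1}$ with $u\chi_{\Om'_{m-1}} \in L$, apply almost decomposability of $L$ with $u_0 = u\chi_{\Om'_{m-1}}$, with $F = B_m \setminus \Om'_{m-1} \in \AA_f$, with the bounded strongly measurable function $u_1 = u|_F$, and with $\e = 1/m$. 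This yields $G_m \subset B_m \setminus \Om'_{m-1}$ with $\mu\bigl((B_m \setminus \Om'_{m-1}) \setminus G_m\bigr) < 1/m$ such that $u\chi_{\Om'_{m-1} \cup G_m} \in L$, and I set $\Om'_m = \Om'_{m-1} \cup G_m$. Then the $\Om'_m$ increase, $\Om'_m \subset B_m$, $u\chi_{\Om'_m} \in L$, and $\mu(B_m \setminus \Om'_m) < 1/m$.

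The crucial consequence is $\mu\bigl(\{u \ne 0\} \setminus \bigcup_m \Om'_m\bigr) = 0$: for fixed $m$ and any $m' \ge m$ one has $B_m \setminus \bigcup_j \Om'_j \subset B_{m'} \setminus \Om'_{m'}$, which has measure $< 1/m' \to 0$, so $\mu(B_m \setminus \bigcup_j \Om'_j) = 0$; a countable union over $m$ together with $\{u \ne 0\} \subset \bigcup_m B_m$ gives the claim. Finally I would set $\Om_m = \Om'_m \cup (\Om \setminus \{u \ne 0\})$. These sets lie in $\AA$, they increase, and $u\chi_{\Om_m} = u\chi_{\Om'_m} \in L$ since the added set is contained in $\{u = 0\}$. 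Because the $\Om_m$ are isotonic, $\lim_m (\Om \setminus \Om_m) = \Om \setminus \bigcup_m \Om_m = \{u \ne 0\} \setminus \bigcup_m \Om'_m$, which is $\mu$-null; hence $u_m := u\chi_{\Om_m} \in L$ satisfies $u_m \uparrow u$ monotonically, as required.

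The main obstacle is the non-$\sigma$-finite case: almost decomposability controls $u$ only on sets of finite measure, whereas the definition of convergence from below literally demands $\mu(\lim_m \Om \setminus \Om_m) = 0$. This is overcome by padding each $\Om_m$ with the full set $\Om \setminus \{u \ne 0\}$ on which $u$ vanishes, so the uncontrolled part of $\Om$ never appears in $\Om \setminus \Om_m$. A secondary nuisance is that almost decomposability returns an essentially arbitrary good subset rather than a nested one, which is exactly why the construction must be inductive — at stage $m$ only the new annulus $B_m \setminus \Om'_{m-1}$ needs to be decomposed, while $\Om'_{m-1}$ is carried along inside $u_0$.
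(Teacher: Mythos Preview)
Your proof is correct and follows essentially the same route as the paper's. The paper's version is terser --- it invokes the remark below Definition~\ref{def: almost decomposable} that $u_1$ may be taken unbounded (which dispenses with your truncation $B_m = F_m \cap \{\|u\|\le m\}$) and leaves both the inductive nesting of the $G_n$ and the padding by $\{u=0\}$ implicit --- but the underlying construction is the same.
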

	
	\begin{proof}
		Let $u$ vanish outside $\Sigma \in \AA_\sigma$ with $\Sigma = \bigcup_n F_n$ for an isotonic sequence $F_n \in \AA_f$. By the first remark below Definition \ref{def: almost decomposable}, it is immaterial that $u$ might be unbounded so that there exists an increasing sequence of sets $G_n \subset F_n$ with $\lim_n \mu \left( F_n \setminus G_n \right) = 0$ and $u \chi_{G_n} \in L$, hence $u \chi_{G_n} \uparrow u$.
	\end{proof}
	
	\section{The elements with absolutely continuous norm $C_\varphi(\mu)$} \label{sec: C}
	
	In this section, we study properties of the space $C_\varphi(\mu)$ of the elements in $L_\varphi(\mu)$ whose norm is absolutely continuous, i.e., for which $\lim_n \| u \chi_{E_n} \|_\varphi = 0$ whenever $E_n \in \AA$ is a sequence with $\mu\left( \lim_n E_n \right) = 0$. In the scalar theory $X = \R$, this space is important because $C_\varphi(\mu)^* = L_{\varphi^*}(\mu)$ if $\varphi$ is real-valued, inducing a weak* topology on $L_{\varphi^*}(\mu)$ that can serve to compensate if $L_{\varphi^*}(\mu)$ lacks reflexivity. The situation is similar, yet somewhat more complicated for the vector valued case. Nevertheless, our main interest in $C_\varphi(\mu)$ lies in its role of inducing a weak* topology on the function component of the dual space of $L_\varphi(\mu)$. Besides, the space $C_\varphi(\mu)$ is the key to understanding separability and reflexivity of its superspace $L_\varphi(\mu)$, as mentioned in the introduction. Indeed, we will recognize the linearity of $\dom I_\varphi$ as necessary for both these properties to occur. Since $C_\varphi(\mu)$ typically coincides with out to be the maximal linear subspace $M_\varphi(\mu)$ of $\dom I_\varphi$, so that the linearity of this domain is under mild conditions equivalent to $C_\varphi(\mu) = L_\varphi(\mu)$, settling these matters for $C_\varphi(\mu)$ solves the actual questions.
	
	\subsection{Basic properties}
	
	We start by proving the basic characterization of when $C_\varphi(\mu)$ coincides with the maximal linear Banach subspace of $\dom I_\varphi$.
	
	\begin{lemma} \label{lem: C closed subspace}
		$C_\varphi(\mu)$ and $C_\varphi^\sigma(\mu)$ are closed linear subspace of $L_\varphi(\mu)$ and $L_\varphi^\sigma(\mu)$.
	\end{lemma}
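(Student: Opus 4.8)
The plan is to reduce the whole statement to one elementary monotonicity fact about the Luxemburg norm, namely
\[
\| \chi_A u \|_\varphi \le \| u \|_\varphi \quad \text{for every } A \in \AA \text{ and } u \in L_\varphi(\mu),
\]
which I would establish first. It holds because $\varphi(\om, 0) = 0$ forces $I_\varphi \left( \alpha^{-1} \chi_A u \right) = \int_A \varphi \left[ \om, \alpha^{-1} u(\om) \right] \, d \mu \le I_\varphi \left( \alpha^{-1} u \right)$, so every $\alpha > 0$ admissible in the definition of $\| u \|_\varphi$ is also admissible for $\| \chi_A u \|_\varphi$. Once this is in place there is no serious obstacle; the lemma is routine bookkeeping, and I would only flag the mild point that absolute continuity of the norm is phrased here through arbitrary sequences $E_n$ with $\mu \left( \lim_n E_n \right) = 0$ rather than through the filter $A \to \emptyset$, but the two formulations agree and the arguments below work with either.

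Next I would check that $C_\varphi(\mu)$ is a linear subspace. Closure under scalar multiples is immediate from positive homogeneity, $\| \lambda u \chi_{E_n} \|_\varphi = |\lambda| \, \| u \chi_{E_n} \|_\varphi \to 0$. For sums, if $u, v \in C_\varphi(\mu)$ and $\mu \left( \lim_n E_n \right) = 0$, the triangle inequality for the Luxemburg seminorm gives $\| (u+v) \chi_{E_n} \|_\varphi \le \| u \chi_{E_n} \|_\varphi + \| v \chi_{E_n} \|_\varphi \to 0$. Since $C_\varphi(\mu) \subset L_\varphi(\mu)$ by definition, this makes it a linear subspace. For closedness I would run the standard $\e/2$ argument: given $u_k \in C_\varphi(\mu)$ with $u_k \to u$ in $L_\varphi(\mu)$ and a sequence $E_n$ with $\mu \left( \lim_n E_n \right) = 0$, fix $\e > 0$, pick $k$ with $\| u - u_k \|_\varphi < \e/2$, use the monotonicity estimate to get $\| (u - u_k) \chi_{E_n} \|_\varphi \le \| u - u_k \|_\varphi < \e/2$ for all $n$, then pick $N$ with $\| u_k \chi_{E_n} \|_\varphi < \e/2$ for $n \ge N$; the triangle inequality then yields $\| u \chi_{E_n} \|_\varphi < \e$ for $n \ge N$, so $u \in C_\varphi(\mu)$.

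Finally, for the $\sigma$-finitely concentrated version I would observe that, by Definition \ref{def: sigma-finite concentration}, $C_\varphi^\sigma(\mu) = C_\varphi(\mu) \cap L_\varphi^\sigma(\mu)$. Both intersectands are closed linear subspaces of $L_\varphi(\mu)$ — the first by the two steps just described, the second by Lemma \ref{lem: Lvarphisigma closed linear subspace} — so their intersection is again a closed linear subspace of $L_\varphi(\mu)$; and since it is contained in $L_\varphi^\sigma(\mu)$, it is in particular a closed linear subspace of $L_\varphi^\sigma(\mu)$. As said, the only place that is not completely mechanical is the monotonicity estimate in the displayed formula, and even that is a one-line consequence of $\varphi(\om, 0) = 0$.
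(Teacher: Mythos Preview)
Your proof is correct and follows exactly the route sketched in the paper: linearity is handled directly, closedness via the $\e/2$-argument (where your monotonicity estimate $\| \chi_A u \|_\varphi \le \| u \|_\varphi$ is precisely the ingredient needed), and the $\sigma$-finite case is obtained by intersecting with $L_\varphi^\sigma(\mu)$ and invoking Lemma~\ref{lem: Lvarphisigma closed linear subspace}. The paper's own proof is just the three-line outline of what you have written out in full.
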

	
	\begin{proof}
		Linearity is clear. Closedness of $C_\varphi(\mu)$ follows by an obvious $\frac{\e}{2}$-argument. The case of $C_\varphi^\sigma(\mu)$ then obtains by Lemma \ref{lem: Lvarphisigma closed linear subspace}.
	\end{proof}
	
	\begin{theorem} \label{thm: C max subspace}
		For $A_\lambda = \left\{ u \in L_\varphi \st \lambda u \in \dom I_\varphi \right\}$, there holds
		\begin{equation} \label{eq: line space has abs cont norm}
			\bigcap_{\lambda > 0} A_\lambda = \bigcap_{n \in \N} A_n \subset C_\varphi^\sigma(\mu).
		\end{equation}
		If $\varphi$ is real-valued on atoms of finite measure, the inclusion in (\ref{eq: line space has abs cont norm}) is an equality. It is proper if $\varphi$ is not real-valued on an atom of finite measure.
	\end{theorem}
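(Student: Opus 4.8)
The plan is to dispatch the four assertions in turn: (i) $\bigcap_{\lambda>0}A_\lambda=\bigcap_{n\in\N}A_n$; (ii) this common set lies in $C_\varphi^\sigma(\mu)$; (iii) it equals $C_\varphi^\sigma(\mu)$ when $\varphi$ is real-valued on atoms of finite measure; (iv) the inclusion is strict otherwise. Throughout I use only that $\varphi(\om,tx)\le t\,\varphi(\om,x)$ for $t\in[0,1]$ (convexity together with $\varphi(\om,0)=0$), that $\lambda\mapsto\varphi(\om,\lambda x)$ is nondecreasing on $[0,\infty)$, the elementary facts $I_\varphi(v)\le1\Rightarrow\|v\|_\varphi\le1$ and $\|v\|_\varphi<1\Rightarrow I_\varphi(v)<1$, and the monotonicity $B\subset C\Rightarrow\|v\chi_B\|_\varphi\le\|v\chi_C\|_\varphi$.

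For (i): if $0<\lambda\le n$ then $\varphi(\om,\lambda u)\le\tfrac{\lambda}{n}\varphi(\om,nu)$ gives $I_\varphi(\lambda u)\le\tfrac{\lambda}{n}I_\varphi(nu)$, so $A_n\subset A_\lambda$; hence $\bigcap_{\lambda>0}A_\lambda=\bigcap_{n}A_n=:M$ (and, by evenness of $\varphi$, $M$ is the maximal linear subspace of the convex set $\dom I_\varphi\ni0$). For (ii): let $u\in M$. Each $\varphi(\cdot,nu(\cdot))$ is a nonnegative integrable function, so $\{\varphi(\cdot,nu(\cdot))>0\}\in\AA_\sigma$; off the $\sigma$-finite union of these sets and off the null set where $\varphi(\om,\cdot)$ fails to be an Orlicz function one has $\varphi(\om,nu(\om))=0$ for all $n$, which by coercivity of $\varphi(\om,\cdot)$ forces $u(\om)=0$, so $u$ is $\sigma$-finitely concentrated. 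For absolute continuity of the norm, given $E_k$ with $\chi_{E_k}\to0$ a.e., dominated convergence applied to $\varphi(\cdot,nu(\cdot))\in L_1(\mu)$ gives $I_\varphi(nu\chi_{E_k})\to0$, hence $\|u\chi_{E_k}\|_\varphi\le1/n$ for $k$ large; letting $n\to\infty$ gives $\|u\chi_{E_k}\|_\varphi\to0$. Thus $M\subset C_\varphi^\sigma(\mu)$.

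The heart of the proof is (iii). First I record an equi-integrability property of $C_\varphi(\mu)$: for $u\in C_\varphi(\mu)$ and every $\lambda>0$, $\int_A\varphi(\om,\lambda u)\,d\mu\to0$ as $\mu(A)\to0$. Indeed, the sequential definition of absolute continuity implies $\|u\chi_A\|_\varphi\to0$ as $\mu(A)\to0$ (if it failed along $\mu(A_n)\to0$, pass to a subsequence with $\mu(A_{n_k})<2^{-k}$ and apply the definition to the decreasing sets $E_k=\bigcup_{j\ge k}A_{n_j}$); then, given $\e>0$, choose $k\in\N$ with $2^{-k}<\e$ and $\delta>0$ with $\mu(A)<\delta\Rightarrow\|u\chi_A\|_\varphi<2^{-k}/\lambda$, so $\|2^k\lambda u\chi_A\|_\varphi<1$, hence $I_\varphi(2^k\lambda u\chi_A)<1$, and $\varphi(\om,\lambda u)\le2^{-k}\varphi(\om,2^k\lambda u)$ integrates to $\int_A\varphi(\om,\lambda u)\,d\mu\le2^{-k}<\e$. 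Now let $u\in C_\varphi^\sigma(\mu)$ vanish outside $\Sigma=\bigcup_mF_m$ with $F_m\in\AA_f$ isotonic, and fix $n\in\N$. Since $\|nu\chi_{\Sigma\setminus F_m}\|_\varphi\to0$, we get $I_\varphi(nu\chi_{\Sigma\setminus F_{m_0}})<\infty$ for some $m_0$, so it remains to bound $\int_{F_{m_0}}\varphi(\om,nu)\,d\mu$ on the finite-measure set $F_{m_0}$. Split $F_{m_0}$, via \cite[Prop. 1.22]{FoLe}, into its at most countably many finite-measure atoms $\mathfrak a_j$ and a non-atomic remainder $G$. On each $\mathfrak a_j$ the function $nu$ is a.e.\ constant and $\varphi$ is real-valued, so $\int_{\mathfrak a_j}\varphi(\om,nu)\,d\mu<\infty$; since $\sum_j\mu(\mathfrak a_j)\le\mu(F_{m_0})<\infty$, the equi-integrability property forces $\sum_{j\ge J}\int_{\mathfrak a_j}\varphi(\om,nu)\,d\mu\le1$ for $J$ large, whence $\sum_j\int_{\mathfrak a_j}\varphi(\om,nu)\,d\mu<\infty$. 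On the non-atomic part $G$ of finite measure, take $\delta$ from the equi-integrability property and partition $G$ into finitely many pieces of measure $<\delta$, so $\int_G\varphi(\om,nu)\,d\mu<\infty$. Summing these contributions gives $I_\varphi(nu)<\infty$ for every $n$, i.e.\ $u\in M$; combined with (ii), $M=C_\varphi^\sigma(\mu)$.

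For (iv): if $\varphi$ fails to be real-valued on some atom $\mathfrak a$ of finite measure, this yields $x_0\in X\setminus\{0\}$ with $\varphi(\om,x_0)=+\infty$ for a.e.\ $\om\in\mathfrak a$ (using that a strongly measurable function and $\om\mapsto\varphi(\om,x_0)$ are a.e.\ constant on an atom). Then $x_0\chi_{\mathfrak a}\in L_\varphi(\mu)$ (as $\varphi(\om,\alpha^{-1}x_0)\to0$ a.e.\ on $\mathfrak a$ as $\alpha\to\infty$) and it is $\sigma$-finitely concentrated; it also has absolutely continuous norm, since any $E_k$ with $\chi_{E_k}\to0$ a.e.\ can have $E_k\cap\mathfrak a$ non-null for only finitely many $k$, so $\|x_0\chi_{\mathfrak a}\chi_{E_k}\|_\varphi=0$ eventually. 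Hence $x_0\chi_{\mathfrak a}\in C_\varphi^\sigma(\mu)$, whereas $I_\varphi(x_0\chi_{\mathfrak a})=+\infty$ gives $x_0\chi_{\mathfrak a}\notin A_1\supset M$; the inclusion is proper. I expect (iii) to be the main obstacle: the delicate point is that absolute continuity of the norm — a smallness-on-small-sets condition — nonetheless controls the full modular $I_\varphi(nu)$, which works only after peeling off the $\sigma$-finite tail to reduce to a finite-measure set and then using that its atoms have summable measure while its non-atomic part can be chopped into finitely many small pieces.
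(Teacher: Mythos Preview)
Your proof is correct. Parts (i), (ii), and (iv) proceed essentially as in the paper. Part (iii) is where your argument genuinely differs. The paper first shows that each set $R_n=\{\om:\varphi(\om,nu(\om))=+\infty\}$ is null: since $\varphi$ is real-valued on finite-measure atoms, $R_n$ contains no atom, so the restriction of $\mu$ to $R_n$ is non-atomic; if $\mu(R_n)>0$ one could then extract subsets $Q_m\subset R_n$ with $\mu(Q_m)\searrow0$, and the inequality $\|u\chi_{Q_m}\|_\varphi\ge1/n$ would contradict absolute continuity of the norm. Once $R_n$ is null, the paper introduces a positive integrable $f$ on the $\sigma$-finite support and the level sets $A_{\lambda,n}=\{\varphi(\cdot,nu)\le\lambda f\}$; choosing $\lambda$ large so that $\|u\chi_{\Om\setminus A_{\lambda,n}}\|_\varphi<1/n$ gives $I_\varphi(nu)\le1+\lambda\|f\|_{L_1}<\infty$.

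Your route instead isolates an equi-integrability lemma (for $u\in C_\varphi(\mu)$ and every $\lambda>0$, $\int_A\varphi(\cdot,\lambda u)\,d\mu\to0$ as $\mu(A)\to0$) and then handles a finite-measure set by an explicit atomic/non-atomic decomposition: atoms are dealt with individually plus a summable tail, the non-atomic remainder is chopped into finitely many pieces of measure $<\delta$. This is a bit longer and invokes the partition property of non-atomic finite measures, but it is more constructive and avoids both the contradiction step and the auxiliary density $f$. The paper's level-set trick is slicker in that it treats atoms and non-atoms uniformly, while your equi-integrability lemma is a reusable statement that makes the mechanism (absolute continuity of norm $\Rightarrow$ modular smallness on small sets) more transparent.
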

	
	\begin{proof}
		The first identity in (\ref{eq: line space has abs cont norm}) holds since $A_\lambda$ decreases as $\lambda$ increases. Inclusion: for $n \in \N$, $u \in \bigcap_{\lambda > 0} A_\lambda$ and an evanescent sequence $E_j \in \AA$ there holds
		$$
		\lim_j I_\varphi \left( n u \chi_{E_j} \right) = \lim_j \int_{E_j} \varphi\left[\om, n u(\om) \right] \, d \mu(\om) = 0
		$$
		by absolute continuity of the integral, hence $u \in C_\varphi(\mu)$. As each set in the union $\left\{ u \ne 0 \right\} = \bigcup_{n \in \N} \left\{ \varphi \left( n u \right) > 0 \right\}$ permits a positive integrable function hence is $\sigma$-finite, we conclude $u \in C_\varphi^\sigma(\mu)$.
		
		Addendum: fixing $u \in C^\sigma_\varphi(\mu)$, we may assume $\mu$ to be $\sigma$-finite. We claim that each set $R = R_n = \left\{ \varphi \left( n u \right) = \i \right\}$ and hence their union is null. Since $\varphi$ is real on atoms with finite measure, $R$ contains no atom so that $\mu$ is non-atomic on $R$. In conclusion, $\mu$ has the finite subset property on $R$, see \cite[Def. 1.16, Rem. 1.19]{FoLe}. If $\mu \left( R \right) > 0$, then there exists a sequence $Q_m \subset R$ with $\mu \left( Q_m \right) \searrow 0$ so that the contradiction $0 = \liminf_{m \to \i} \| u \chi_{Q_m} \|_\varphi \ge \frac{1}{n} > 0$ obtains and the claim follows. Let $f \colon \Om \to \R$ be an integrable positive function and consider the sets $A_{\lambda, n} = \left\{ \varphi \left( n u \right) \le \lambda f \right\}$ for $\lambda > 0$. By $\mu \left( \bigcup_{n \ge 1} R_n \right) = 0$, there holds $\mu\left( \lim_{\lambda \to \i} \Om \setminus A_{\lambda, n} \right) = 0$ so that, for $\lambda$ sufficiently large, we have $\| u \chi_{\Om \setminus A_{\lambda, n} } \|_\varphi < \frac{1}{n}$, hence
		\begin{align*}
			I_\varphi \left( n u \right)
			& = \int_{\Om \setminus A_{\lambda, n} } \varphi \left[ \om, n u \left( \om \right) \right] \, d \mu \left( \om \right)
			+ \int_{A_{\lambda, n} } \varphi \left[ \om, n u \left( \om \right) \right] \, d \mu \left( \om \right) \\
			& \le 1 + \lambda \int f \, d \mu < \i,
		\end{align*}
		whence $u \in \bigcap_{\lambda > 0} A_\lambda$. Regarding the necessity of $\varphi$ being real-valued on each atom $A \in \AA_f$, consider $x$ such that $\varphi \left( \om, x \right) = \i$ a.e. on $A$. Then
		\begin{equation*}
			x \chi_A \in C_\varphi \setminus \bigcap_{\lambda > 0} A_\lambda. \qedhere
		\end{equation*}
	\end{proof}
	
	\begin{corollary} \label{cor: C sigma fin}
		Let $\varphi$ be real-valued on atoms of finite measure and let $\mu$ have no atom of infinite measure. Then $C_\varphi(\mu) = C^\sigma_\varphi(\mu)$.
	\end{corollary}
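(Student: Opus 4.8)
The plan is to deduce the corollary from Theorem \ref{thm: C max subspace} by showing that $C_\varphi(\mu)\subseteq\dom I_\varphi$ under the two hypotheses, and then to obtain the latter inclusion by a tail argument. Since $C_\varphi^\sigma(\mu)\subseteq C_\varphi(\mu)$ is trivial, it suffices to prove $C_\varphi(\mu)\subseteq C_\varphi^\sigma(\mu)$. Because $\varphi$ is real-valued on atoms of finite measure, Theorem \ref{thm: C max subspace} gives $\bigcap_{\lambda>0}A_\lambda=\bigcap_{n\in\N}A_n=C_\varphi^\sigma(\mu)$, where $A_\lambda=\{u\in L_\varphi(\mu)\colon\lambda u\in\dom I_\varphi\}$. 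As $C_\varphi(\mu)$ is a linear subspace by Lemma \ref{lem: C closed subspace}, we have $\lambda u\in C_\varphi(\mu)$ whenever $u\in C_\varphi(\mu)$, so once we know $C_\varphi(\mu)\subseteq\dom I_\varphi$ it follows that $\lambda u\in\dom I_\varphi$ for all $\lambda>0$, i.e. $C_\varphi(\mu)\subseteq\bigcap_{\lambda>0}A_\lambda=C_\varphi^\sigma(\mu)$. Hence everything reduces to establishing $I_\varphi(u)<\infty$ for every $u\in C_\varphi(\mu)$.

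So I would fix $u\in C_\varphi(\mu)$ and argue by contradiction, assuming $I_\varphi(u)=\infty$. By the conventions of Section \ref{sec: inf-int}, $I_\varphi(u)=\sup_{A\in\AA_{a\sigma}}\int_A\varphi(\om,u(\om))\,d\mu$, the integrand $\om\mapsto\varphi(\om,u(\om))$ being integrally measurable since $u$ is strongly measurable and $\varphi$ integrally separably measurable; moreover $\AA_{a\sigma}=\AA_\sigma$ because $\mu$ has no atom of infinite measure. Taking the union of a sequence of sets from $\AA_\sigma$ realizing the supremum, I obtain $A\in\AA_\sigma$ with $\int_A\varphi(\om,u(\om))\,d\mu=\infty$. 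The restriction $u\chi_A$ again has absolutely continuous norm (from $\|(u\chi_A)\chi_E\|_\varphi\le\|u\chi_E\|_\varphi$), and on the $\sigma$-finite space $(A,\mu|_A)$ Proposition \ref{pr: equivalent finite measure} lets me replace $\mu|_A$ by an equivalent finite measure $\nu$ with $d\nu=f\,d\mu$ and $\varphi$ by the Orlicz integrand $\phi=\varphi/f$; this preserves the Orlicz space and its Luxemburg norm, hence also absolute continuity of the norm; it keeps $\phi$ real-valued on atoms of finite measure; and it turns $\int_A\varphi(\om,u(\om))\,d\mu=\infty$ into $I_\phi(u\chi_A)=\infty$. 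Renaming, I may thus assume $\mu$ finite, $u\in C_\varphi(\mu)$, $I_\varphi(u)=\infty$. The one tool I would use twice is: for $u\in C_\varphi(\mu)$ and any evanescent sequence $E_n\in\AA$ (meaning $\mu(\lim_n E_n)=0$) one has $\|u\chi_{E_n}\|_\varphi\to0$, hence $I_\varphi(u\chi_{E_n})\le\|u\chi_{E_n}\|_\varphi$ for all large $n$ by Lemma \ref{lem: modular-norm}, so in particular $I_\varphi(u\chi_{E_n})<\infty$ eventually.

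Next I would show that $R=\{\om\colon\varphi(\om,u(\om))=\infty\}$ is $\mu$-null. Up to null sets $R$ contains no atom: on an atom $a$, which has finite measure, $u$ is a.e. constant, so $\varphi(\cdot,u(\cdot))$ is a.e. constant on $a$ and finite there because $\varphi$ is real-valued on atoms of finite measure. Consequently $\mu|_R$ is non-atomic, so by the finite subset property \cite[Def. 1.16, Rem. 1.19]{FoLe} one can, if $\mu(R)>0$, pick a decreasing sequence $Q_n\subseteq R$ with $\mu(Q_n)\downarrow0$ and $\mu(Q_n)>0$. Then $(Q_n)$ is evanescent, yet $I_\varphi(u\chi_{Q_n})=\int_{Q_n}\varphi(\om,u(\om))\,d\mu=+\infty$ since the integrand is $+\infty$ on the positive-measure set $Q_n$, contradicting the tool above; hence $\mu(R)=0$. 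Finally I consider $E_n=\{\om\colon\varphi(\om,u(\om))>n\}$; these decrease to $R$, so $(E_n)$ is evanescent, whence $I_\varphi(u\chi_{E_n})<\infty$ eventually. But $I_\varphi(u)=I_\varphi(u\chi_{E_n})+\int_{\{\varphi(\om,u(\om))\le n\}}\varphi(\om,u(\om))\,d\mu$, where the last integral is at most $n\,\mu(\Om)<\infty$ by finiteness of $\mu$ while $I_\varphi(u)=\infty$, forcing $I_\varphi(u\chi_{E_n})=\infty$ — a contradiction. Therefore $I_\varphi(u)<\infty$, completing the argument.

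I expect the nullity of $R=\{\om\colon\varphi(\om,u(\om))=\infty\}$ to be the one genuinely delicate point, and it is exactly there that both hypotheses enter: the absence of atoms of infinite measure is used to reduce to a finite measure (and thereby make the final tail estimate work), while real-valuedness of $\varphi$ on finite atoms is what forces $\mu|_R$ to be non-atomic. The remaining steps are routine bookkeeping with the Luxemburg norm, the modular, and the exhausting integral, the only subtlety being to keep every measurability and reduction step compatible with a possibly non-$\sigma$-finite underlying measure.
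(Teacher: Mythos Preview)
Your proof is correct, but it takes a noticeably longer route than the paper. The paper argues in four lines: if $u\in C_\varphi\setminus C^\sigma_\varphi$, then by Theorem~\ref{thm: C max subspace} there is $n$ with $\int\varphi(nu)\,d\mu=+\infty$; since $\mu$ has no atom of infinite measure, Proposition~\ref{pr: divergent subintegral} produces $\Sigma\in\AA_\sigma$ with $\int_\Sigma\varphi(nu)\,d\mu=+\infty$; but $u\chi_\Sigma\in C^\sigma_\varphi(\mu)$, so Theorem~\ref{thm: C max subspace} (the equality part) forces $I_\varphi(nu\chi_\Sigma)<\infty$, a contradiction. In other words, the paper localizes to a $\sigma$-finite set and then \emph{re-applies} the already-proved identity $C^\sigma_\varphi=\bigcap_\lambda A_\lambda$ as a black box. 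You instead reduce to a finite measure via Proposition~\ref{pr: equivalent finite measure} and then re-run from scratch the level-set/tail argument (nullity of $R$, evanescent $E_n$) that already appears inside the proof of the addendum to Theorem~\ref{thm: C max subspace}. This is valid and makes the mechanism fully explicit, but it duplicates work; the paper's approach buys economy by recognizing that $u\chi_\Sigma$ lands in $C^\sigma_\varphi$, where the theorem applies directly.
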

	
	\begin{proof}
		Suppose there were $u \in C_\varphi \setminus C^\sigma_\varphi$. Then there exists $n \in \N$ with
		$$
		\int \varphi(n u) \, d \mu = +\i
		$$
		by Theorem \ref{thm: C max subspace}. Since we assume that no atom of infinite measure exists, Proposition \ref{pr: divergent subintegral} yields a set $\Sigma \in \AA_\sigma$ such that
		$$
		\int_\Sigma \varphi(nu) \, d \mu = +\i,
		$$
		which is impossible by Theorem \ref{thm: C max subspace} because $u \chi_\Sigma \in C^\sigma_\varphi(\mu)$.
	\end{proof}
	
	\begin{corollary} \label{cor: linear domain}
		If $\dom I_\varphi$ is linear, then $L_\varphi(\mu) = C^\sigma_\varphi(\mu)$. Conversely, if $L_\varphi(\mu) = C^\sigma_\varphi(\mu)$ and $\varphi$ is real-valued on atoms of finite measure, then $\dom I_\varphi$ is linear.
	\end{corollary}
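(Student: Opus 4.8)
The plan is to read both implications off Theorem \ref{thm: C max subspace}, using in addition the elementary inequality $\|u\|_\varphi \le 1 + I_\varphi(u)$ supplied by Lemma \ref{lem: modular-norm}(iii) (whose hypotheses $I_\varphi$ plainly satisfies, left-continuity along rays following from the monotonicity of $r\mapsto\varphi(\om,ru(\om))$ and monotone convergence). That inequality yields the inclusion $\dom I_\varphi \subset L_\varphi(\mu)$, which is used in both directions.

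For the forward implication, suppose $\dom I_\varphi$ is a linear subspace. It contains the origin since $\varphi(\om,0)=0$ and is symmetric since $\varphi$ is even, and as a linear subspace it is in particular stable under multiplication by arbitrary positive scalars. Fix $u \in L_\varphi(\mu)$. By definition of the Luxemburg norm there is $\alpha>0$ with $I_\varphi(\alpha^{-1}u)\le 1$, i.e. $\alpha^{-1}u \in \dom I_\varphi$; multiplying by the positive scalar $\lambda\alpha$ gives $\lambda u \in \dom I_\varphi$ for every $\lambda>0$. Hence $u \in A_\lambda$ for all $\lambda>0$, so $u \in \bigcap_{\lambda>0}A_\lambda$, and the (unconditional) inclusion of Theorem \ref{thm: C max subspace} gives $u \in C^\sigma_\varphi(\mu)$. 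Since $C^\sigma_\varphi(\mu)\subset L_\varphi(\mu)$ trivially, $L_\varphi(\mu)=C^\sigma_\varphi(\mu)$.

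For the converse, assume $L_\varphi(\mu)=C^\sigma_\varphi(\mu)$ and that $\varphi$ is real-valued on atoms of finite measure. Under the latter hypothesis Theorem \ref{thm: C max subspace} upgrades the inclusion in (\ref{eq: line space has abs cont norm}) to an equality, so $C^\sigma_\varphi(\mu)=\bigcap_{\lambda>0}A_\lambda$. Combining, $L_\varphi(\mu)=\bigcap_{\lambda>0}A_\lambda\subset A_1$; but $A_1=\{u\in L_\varphi:\,u\in\dom I_\varphi\}=\dom I_\varphi$ because $\dom I_\varphi\subset L_\varphi(\mu)$. Hence $\dom I_\varphi=L_\varphi(\mu)$, which is a Banach, hence linear, space.

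The argument is short, and there is no genuine obstacle once Theorem \ref{thm: C max subspace} is available. The only point requiring attention is the step in the forward implication that promotes ``some positive multiple of $u$ lies in $\dom I_\varphi$'' to ``every positive multiple does'': this is precisely where the linear-subspace hypothesis is used and where mere convexity of $\dom I_\varphi$ would not suffice. In the converse one should also notice that $\dom I_\varphi$ in fact coincides with all of $L_\varphi(\mu)$, not merely that it is linear.
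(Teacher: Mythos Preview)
Your proof is correct and follows essentially the same route as the paper's, which reads in full: ``By Theorem \ref{thm: C max subspace} since $\lin \dom I_\varphi = L_\varphi$.'' The identity $\lin \dom I_\varphi = L_\varphi(\mu)$ packages precisely the two ingredients you spell out---that every $u\in L_\varphi(\mu)$ has some positive multiple in $\dom I_\varphi$, and that $\dom I_\varphi\subset L_\varphi(\mu)$---and both directions then follow from Theorem \ref{thm: C max subspace} exactly as you argue. A minor remark: the inclusion $\dom I_\varphi\subset L_\varphi(\mu)$ is only genuinely needed in the converse (to identify $A_1$ with $\dom I_\varphi$); in the forward direction you never leave $L_\varphi(\mu)$, so invoking Lemma \ref{lem: modular-norm} there is harmless but not required.
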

	
	\begin{proof}
		By Theorem \ref{thm: C max subspace} since $\lin \dom I_\varphi = L_\varphi$.
	\end{proof}
	
	Theorem \ref{thm: C max subspace} allows a simple characterization of Orlicz integrands for which all elements of $L_\varphi(\mu)$ have absolutely continuous norms in terms of a growth condition often dubbed $\Delta_2$ or doubling condition. Similar conditions and their role in the theory of Orlicz spaces are well-known in the scalar and vector valued cases, cf. \cite{RaRe, Ko2}.
	
	\begin{definition}[$\Delta_2$-condition] \label{def: Delta2 cond}
		We say that the Orlicz integrand $\varphi$ satisfies the \emph{$\Delta_2$-condition} and write $\varphi \in \Delta_2$ if
		\begin{gather*}
			\forall A \in \AA_{a\sigma} \, \forall S \in \SS \left( X \right) \, \exists k \ge 1 \land f \in L_1(\mu) \colon \\
			\forall x \in S \quad \varphi \left( \om, 2 x \right) \le k \varphi (\om, x) + f(\om) \quad \mu \text{-a.e. on } A.
		\end{gather*}
	\end{definition}
	
	\begin{theorem} \label{thm: linear domain 2}
		There holds $L_\varphi(\mu) = C^\sigma_\varphi(\mu)$ if $\varphi \in \Delta_2$. If $\mu$ is non-atomic, then $\varphi \in \Delta_2$ is also necessary for $L_\varphi(\mu) = C^\sigma_\varphi(\mu)$ to hold.
	\end{theorem}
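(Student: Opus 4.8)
Assume $\varphi\in\Delta_2$. Since $C^\sigma_\varphi(\mu)\subset L_\varphi(\mu)$ always and, by Theorem \ref{thm: C max subspace}, $\bigcap_{n\in\N}A_n\subset C^\sigma_\varphi(\mu)$ with $A_n=\{u\in L_\varphi:nu\in\dom I_\varphi\}$, the plan is to show that every $u\in L_\varphi(\mu)$ already lies in $\bigcap_n A_n$, i.e. $I_\varphi(nu)<\i$ for all $n$; since $\bigcap_n A_n$ is scale-invariant we may rescale so that $I_\varphi(u)\le 1$. The core step is $I_\varphi(2u)<\i$, the general case then following by induction (apply the step to $2^{m-1}u$) and monotonicity of $r\mapsto\varphi(\om,ru(\om))$ on $[0,\i)$. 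I would pick a closed separable subspace $W$ almost containing the range of $u$ and split $\{u\ne0\}$ into $A_1=\{\varphi(\cdot,u)>0\}$ and $A_2=\{u\ne0\}\setminus A_1$; a function with finite exhausting integral is supported on a $\sigma$-finite set, so $A_1\in\AA_\sigma$, and likewise $A_2':=A_2\cap\{\varphi(\cdot,2u)>0\}\in\AA_\sigma$. Applying the $\Delta_2$-inequality with $A=A_1$, $S=W$ gives $k_1,f_1$ with $\int_{A_1}\varphi(\om,2u)\,d\mu\le k_1 I_\varphi(u)+\|f_1\|_{L_1}<\i$; applying it with $A=A_2'$, $S=W$ gives $k_2,f_2$ and, since $\varphi(\cdot,u)\equiv 0$ on $A_2$, $\int_{A_2'}\varphi(\om,2u)\,d\mu\le\|f_2\|_{L_1}<\i$. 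As $\varphi(\cdot,2u)$ vanishes off $A_1\cup A_2'$, we obtain $I_\varphi(2u)<\i$.

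\textbf{Converse.} Now let $\mu$ be non-atomic, so that $\AA_{a\sigma}=\AA_\sigma$ and, by Theorem \ref{thm: C max subspace} (the hypothesis ``$\varphi$ real-valued on atoms of finite measure'' being vacuous), $\bigcap_{\lambda>0}A_\lambda=C^\sigma_\varphi(\mu)$; equivalently, by Corollary \ref{cor: linear domain}, it suffices to show that if $\varphi\notin\Delta_2$ then $\dom I_\varphi$ is not linear, i.e. to produce $u\in L_\varphi(\mu)\cap\dom I_\varphi$ with $2u\notin\dom I_\varphi$. I would first record the reformulation that $\Delta_2$ for a pair $(A,S)$ with $A\in\AA_\sigma$ is equivalent to $h_k^S\chi_A\in L_1(\mu)$ for some $k$, where $h_k^S(\om)=\sup_{x\in S}[\varphi(\om,2x)-k\varphi(\om,x)]^+$ is measurable by the infimal-measurability results of \S\ref{sec: inf-int}; hence the failure of $\Delta_2$ furnishes $A\in\AA_\sigma$ and a closed separable subspace $S$ with $\int_A h_k^S\,d\mu=+\i$ for every $k\in\N$. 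The construction then goes as follows: choose a strictly positive $\mu$-integrable weight $g$ on $A$ and, via the Aumann selection theorem \cite[Thm. 6.10]{FoLe} (assuming $\mu$ complete, harmless), a strongly measurable $x^\ast\colon A\to S$ with $\varphi(\om,x^\ast(\om))\le g(\om)$ and $\varphi(\om,2x^\ast(\om))\ge\theta_\om(g(\om))-g(\om)$, where $\theta_\om(s)=\sup\{\varphi(\om,2x):x\in S,\ \varphi(\om,x)\le s\}$. Setting $u:=x^\ast\chi_A$ one gets $I_\varphi(u/\alpha)\le\alpha^{-1}\|g\|_{L_1}\le 1$ for large $\alpha$, so $u\in L_\varphi(\mu)\cap\dom I_\varphi$, while $I_\varphi(2u)\ge\int_A\theta_\om(g(\om))\,d\mu-\|g\|_{L_1}$. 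Thus everything reduces to choosing the integrable $g>0$ so that $\int_A\theta_\om(g(\om))\,d\mu=+\i$, and $2u\notin\dom I_\varphi$ then yields $u\notin\bigcap_{\lambda>0}A_\lambda=C^\sigma_\varphi(\mu)$, i.e. $L_\varphi(\mu)\ne C^\sigma_\varphi(\mu)$.

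For the choice of $g$ I expect three regimes. If $\varphi_\om$ reaches $+\i$ on $S$ ``abruptly'' (so $\theta_\om(s)=+\i$ for some finite $s$) on a set of positive measure, take $g$ dominating the relevant threshold on a truncated finite-measure subset; then $I_\varphi(2u)=+\i$ directly. If $\int_A\theta_\om(0)\,d\mu=+\i$ (failure of a strict minimum at the origin on a large set), any integrable $g>0$ works. In the remaining ``real-valued, runaway growth'' case one uses that $h_k^S(\om)=\sup_{s\ge0}[\theta_\om(s)-ks]^+$ is a Legendre-type transform of $\theta_\om$, so $\int_A h_k^S\,d\mu=+\i$ for every $k$ forces $\theta_\om$ to be sufficiently superlinear on a large enough portion of $A$ that one can pick an unbounded but integrable $g$ with $\theta_\om\circ g$ non-integrable; non-atomicity of $\mu$ enters to slice $A$ into finite-measure pieces on which the local suprema $\sup_{\|y\|\le R}\varphi(\om,2y)$ are finite (continuity of finite convex functions) and to distribute the required mass.

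\textbf{Main obstacle.} The forward implication is routine once Theorem \ref{thm: C max subspace} is available. The real work is the converse, and specifically the third regime above: producing the weight $g$ and carrying out the measurable selection and the non-atomic slicing at once, in particular when the witnessing set $A$ has infinite measure and the excess $h_k^S$ is already non-integrable on every finite subpiece, so that no reduction to a finite measure space is possible. This is bookkeeping rather than a conceptual difficulty, but it is where the argument is delicate.
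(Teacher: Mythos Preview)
Your forward implication has a small but genuine circularity. You claim $A_2'=\{u\ne 0,\ \varphi(\cdot,u)=0,\ \varphi(\cdot,2u)>0\}\in\AA_\sigma$ ``likewise'', but the argument you just used for $A_1$ rested on $\int\varphi(\cdot,u)\,d\mu<\i$; for $A_2'$ the analogous input would be $\int\varphi(\cdot,2u)\,d\mu<\i$, which is exactly what you are trying to prove. There is no a~priori reason for $A_2'$ to be $\sigma$-finite, and the $\Delta_2$-condition only provides $(k,f)$ once you already have a set in $\AA_{a\sigma}$. The clean repair is the paper's: argue by contradiction, assume $I_\varphi(2u)=\i$, invoke Proposition~\ref{pr: divergent subintegral} to obtain a single $A\in\AA_{a\sigma}$ with $\int_A\varphi(\cdot,2u)\,d\mu=\i$, and then apply $\Delta_2$ on that $A$ to reach $\int_A\varphi(\cdot,2u)\,d\mu\le k\,I_\varphi(u)+\|f\|_{L_1}<\i$. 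This avoids the split into $A_1, A_2'$ altogether. Apart from this, your forward argument is the same as the paper's (via Corollary~\ref{cor: linear domain}).

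For the converse your route diverges substantially from the paper's. The paper does not construct the witness $u$ at all: it restricts to a given $\Sigma\in\AA_\sigma$ and closed $W\in\SS(X)$, observes that $L_\varphi=C_\varphi^\sigma$ together with non-atomicity and Theorem~\ref{thm: C max subspace} forces $\dom I_\varphi\subset\dom I_\phi$ for $\phi_\om(x)=\varphi_\om(2x)$, and then simply cites \cite[Thm.~1.7]{Ko2} to conclude the $\Delta_2$-inequality on $(\Sigma,W)$. In other words, the paper outsources precisely the work you identify as the ``main obstacle''. Your measurable-selection construction with $\theta_\om$ and the integrable weight $g$ is the classical strategy behind results like Kozek's, and your three-regime breakdown is accurate; but as you acknowledge, the third regime (superlinear $\theta_\om$ on a set of possibly infinite measure, with non-atomic slicing) is left as a sketch. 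That is not a conceptual error, but it means your converse is incomplete where the paper's is a one-line citation after the reduction to $\sigma$-finite measure and separable range. If you want a self-contained argument, it is cleaner to first perform the paper's reduction and then reprove the Kozek result in that simplified setting, where the construction of $g$ becomes the standard one-variable Orlicz argument.
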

	
	\begin{proof}
		The first claim will follow by Theorem \ref{thm: C max subspace} once we prove that $\dom I_\varphi$ is linear if $\varphi \in \Delta_2$. As $\dom I_\varphi$ is an absolutely convex set, its linearity is equivalent to the implication
		$$
		I_\varphi(u) < \i \implies I_\varphi(2u) < \i.
		$$
		Arguing by contradiction, we assume $I_\varphi(2u) = \i$. Proposition \ref{pr: divergent subintegral} yields $A \in \AA_{a\sigma}$ with $I_\varphi\left( 2u \chi_A \right) = \i$. As $u$ is almost separably valued, the assumption $\varphi \in \Delta_2$ yields
		$$
		I_\varphi \left( 2 u \chi_A \right) \le k I_\varphi\left( u \chi_A \right) + \int_A f \, d \mu < \i,
		$$
		whence we have arrived at a contradiction. Regarding the necessity, let $\Sigma \in \AA_\sigma$ and $S \in \SS \left( X \right)$ as in Definition \ref{def: Delta2 cond}. Since $\varphi$ is integrally separably measurable, there exists a closed subspace $W \in \SS \left( X \right)$ with $S \subset W$ such that the restriction $\left. \varphi \right|_{\Sigma \times W}$ is $\AA(\Sigma) \otimes \BB(W)$-measurable. Hence, we may via restriction assume that $\mu$ is $\sigma$-finite and $\varphi$ is $\AA \otimes \BB$-measurable on a separable space. Let $\phi_\om (x) = \varphi_\om \left( 2 x \right)$ so that our assumption $L_\varphi(\mu) = C_\varphi(\mu)$ implies $\dom I_\varphi \subset \dom I_\phi$ by Theorem \ref{thm: C max subspace} as $\mu$ is non-atomic. Hence, $\varphi \in \Delta_2$ follows by \cite[Thm. 1.7]{Ko2}.
	\end{proof}
	
	If $\mu$ has an atom, then $L_\varphi(\mu) = C^\sigma_\varphi(\mu)$ may hold even if $\varphi \notin \Delta_2$. For example, regard $\R^n$ as an Orlicz space of real valued functions on the uniform measure space $\left\{ 1, \dots , n \right\}$ and take any real-valued map $\varphi \in \Gamma(\R)$ with $\varphi \notin \Delta_2$ as the Orlicz function.
	
	\subsection{Decomposability}
	
	As $C_\varphi(\mu)$ is the predual of the function component in $L_\varphi(\mu)^*$ if $\varphi$ is real-valued, it is interesting to understand convex duality also on $C_\varphi(\mu)$. For example, a convex functional on the function component of $L_{\varphi^*}(\mu)$ is weak* lower semicontinuous iff it arise as a convex conjugates w.r.t. this pairing.
	
	We saw the importance of decomposability for the representation of convex conjugates in Theorem \ref{thm: conjugate A}. This motivates to study this property for $C_\varphi(\mu)$. $C_\varphi(\mu)$ is weakly decomposable. We also have
	
	\begin{lemma} \label{lem: C a decomp}
		If $\varphi$ is real-valued, then $C_\varphi(\mu)$ and $C_\varphi^\sigma(\mu)$ are almost decomposable.
	\end{lemma}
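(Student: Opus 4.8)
The plan is to mimic the proof of Lemma \ref{lem: E a decomp}. Since $C_\varphi(\mu)$ and $C_\varphi^\sigma(\mu)$ are weakly decomposable and are linear by Lemma \ref{lem: C closed subspace}, it suffices to show that for every $F \in \AA_f$, every bounded strongly measurable $v \colon F \to X$ and every $\e > 0$ there is $F_\e \subset F$ with $\mu(F \setminus F_\e) < \e$ such that $v\chi_{F_\e} \in C_\varphi(\mu)$; and because $F_\e$ carries finite measure, $v\chi_{F_\e}$ is automatically $\sigma$-finitely concentrated, so the very same $F_\e$ will settle $C_\varphi^\sigma(\mu)$ as well.

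To manufacture $F_\e$, I would use that $\varphi$ is real-valued, so that Theorem \ref{thm: C max subspace} gives $C_\varphi^\sigma(\mu) = \bigcap_{n \in \N} A_n$ with $A_n = \{u \in L_\varphi(\mu) \st n u \in \dom I_\varphi\}$. Hence it is enough to arrange $\int_{F_\e} \varphi(\om, n v(\om)) \, d\mu(\om) < \i$ for every $n \in \N$ (the case $n = 1$ together with Lemma \ref{lem: modular-norm} also yields $v\chi_{F_\e} \in L_\varphi(\mu)$). Since $v$ is almost separably valued, I first replace $X$ by a separable subspace $W$ almost containing the range of $v$ and on which, by separable measurability of $\varphi$, the restriction $\varphi_W$ is $\AA_\mu \otimes \BB(W)$-measurable; then each $\om \mapsto \varphi(\om, n v(\om))$ is measurable and finite a.e.\ on $F$, the finiteness being exactly where realness of $\varphi$ (rather than any growth condition) enters. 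On the finite measure space $F$ the sets $\{\varphi(\cdot, n v(\cdot)) \le m\}$ increase to a conull subset of $F$ as $m \to \i$, so I can choose $F_n \subset F$ with $\mu(F \setminus F_n) < 2^{-n}\e$ on which $\varphi(\cdot, n v(\cdot))$ is bounded by some $C_n < \i$. Setting $F_\e = \bigcap_{n} F_n$ gives $\mu(F \setminus F_\e) < \e$; and for arbitrary $\lambda > 0$, picking $n \ge \lambda$ and using that $t \mapsto \varphi_\om(t x)$ is non-decreasing on $[0, \i)$ (evenness, convexity, minimum at the origin) yields $\varphi(\om, \lambda v(\om)) \le C_n$ a.e.\ on $F_\e$, so $\int_{F_\e} \varphi(\om, \lambda v(\om)) \, d\mu \le C_n \mu(F) < \i$. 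Thus $v\chi_{F_\e} \in \bigcap_{n} A_n = C_\varphi^\sigma(\mu) \subset C_\varphi(\mu)$, as needed.

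Alternatively, once almost decomposability of $C_\varphi(\mu)$ is in hand, the claim for $C_\varphi^\sigma(\mu)$ follows at once from the remark after Definition \ref{def: almost decomposable}: $C_\varphi^\sigma(\mu) = C_\varphi(\mu) \cap L_\varphi^\sigma(\mu)$, both factors are weakly decomposable and almost decomposable (the second by Corollary \ref{cor: L a decomp}), so the intersection inherits both properties. I expect no genuine obstacle here — the only point needing care is, as always, the non-separable range, which is neutralised by the initial reduction to the separable subspace $W$; the rest is the familiar truncation-plus-Egorov bookkeeping already carried out in Lemma \ref{lem: E a decomp}.
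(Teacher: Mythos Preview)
Your proof is correct and follows essentially the same route as the paper: both reduce to finding $F_\e \subset F$ with $v\chi_{F_\e} \in C_\varphi(\mu)$, then for each $k$ locate a set $F_{k,\e}$ with $\mu(F\setminus F_{k,\e}) < 2^{-k}\e$ on which $\varphi(\cdot, k v(\cdot))$ is bounded, intersect over $k$, and conclude via Theorem \ref{thm: C max subspace}. Your version is more explicit about measurability and the separable-range reduction, which the paper treats as routine, but the architecture is identical.
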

	
	\begin{proof}
		Let $F \in \AA_f$ and $v \in L_\i \left( F ; X \right)$. Since $C_\varphi(\mu)$ and $C_\varphi^\sigma(\mu)$ are weakly decomposable linear spaces, it suffices by a remark below Definition \ref{def: almost decomposable} to prove that, for $\e > 0$, there exists a set $F_\e \subset F$ with $\mu \left( F \setminus F_\e \right) < \e$ and $v \chi_{F_\e} \in C_\varphi(\mu)$. Since $\varphi$ is real-valued, we find for $k \in \N$ a set $F_{k, \e} \subset F$ with $\mu \left( F \setminus F_{k, \e} \right) < 2^{-k} \e$ and $\sup_{\om \in F_{k, \e} } \varphi \left[ \om, k v \left( \om \right) \right] < \i$. Thus, for $F_\e = \bigcap_k F_{k, \e}$, there holds $ \mu \left( F \setminus F_\e \right) < \e$ and $v \chi_{F_\e} \in C_\varphi(\mu)$ by Theorem \ref{thm: C max subspace}.
	\end{proof}
	
	If $\varphi$ is not real-valued, then the maximal linear subspace of $\dom I_\varphi$ may be trivial, hence Lemma \ref{lem: C a decomp} ceases to hold. Consider the example $L_\i\left( \left[ 0, 1 \right] ; X \right)$ with the Orlicz function $I_{B_X}$.
	
	For every countable family in $L^\sigma_\varphi(\mu)$, there exists an evanescent sequence of sets outside which each element has absolutely continuous norm. This observation will yield insight into the dual spaces of $C_\varphi(\mu)$ and $L_\varphi(\mu)$.
	
	\begin{lemma} \label{lem: norm almost abs cont}
		If $\varphi$ is real-valued, then for any sequence $u_k \in L_\varphi^\sigma(\mu)$ there exists a decreasing sequence $E_\ell \in \AA$ with $\mu \left( \lim_\ell E_\ell \right) = 0$ and $u_k \chi_{\Om \setminus E_\ell} \in C_\varphi^\sigma(\mu)$.
	\end{lemma}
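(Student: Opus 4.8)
The plan is to reduce the problem to the common $\sigma$-finite set on which the whole family is concentrated, fix on that set an auxiliary positive integrable function, and then, for each scale $n$ and each index $k$, discard the portion of the set where $\varphi(\cdot, n u_k(\cdot))$ exceeds a large multiple of that auxiliary function. Summing these exceptional portions over $(k,n)$ with weights $2^{-k-n}$ produces a set of small measure outside of which every $u_k$ belongs to $C^\sigma_\varphi(\mu)$, by the characterization of $C^\sigma_\varphi(\mu)$ from Theorem \ref{thm: C max subspace}. Monotonicity of the $\lambda$-thresholds in the level parameter $\ell$ will make the exceptional sets decreasing.

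Concretely, I would first choose $\Sigma_k \in \AA_\sigma$ with $u_k$ vanishing off $\Sigma_k$, set $\Sigma = \bigcup_k \Sigma_k \in \AA_\sigma$, and pick an $\AA$-measurable integrable function $p$ that is strictly positive on $\Sigma$ (possible since $\Sigma$ is $\sigma$-finite). By integral separable measurability of $\varphi$ and strong measurability of the $u_k$, the maps $\om \mapsto \varphi(\om, n u_k(\om))$ are $\AA$-measurable on $\Sigma$; since $\varphi$ is real-valued they are also finite $\mu$-a.e. there. Hence the sets
$$
S_{\lambda, k, n} = \left\{ \om \in \Sigma \st \varphi\left(\om, n u_k(\om)\right) \le \lambda\, p(\om) \right\} \in \AA
$$
satisfy $\mu\left( \Sigma \setminus S_{\lambda, k, n} \right) \to 0$ as $\lambda \to \i$, while on each of them $\int_{S_{\lambda,k,n}} \varphi\left(\om, n u_k(\om)\right) \, d\mu \le \lambda \int p \, d\mu < \i$.

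Then, for each $\ell \in \N$ and each pair $(k,n)$, I would choose $\lambda_{k,n,\ell}$ non-decreasing in $\ell$ and large enough that $\mu\left( \Sigma \setminus S_{\lambda_{k,n,\ell}, k, n} \right) < 2^{-k-n}\ell^{-1}$, put $G_\ell = \bigcap_{k,n \ge 1} S_{\lambda_{k,n,\ell},k,n}$ and $E_\ell = \Sigma \setminus G_\ell$. Monotonicity of the thresholds makes $G_\ell$ increasing, hence $E_\ell$ decreasing, and $\mu(E_\ell) \le \sum_{k,n} 2^{-k-n}\ell^{-1} = \ell^{-1}$, so $\mu\left( \lim_\ell E_\ell \right) = \mu\left( \bigcap_\ell E_\ell \right) = 0$. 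Since $u_k$ vanishes off $\Sigma$, one has $u_k \chi_{\Om \setminus E_\ell} = u_k \chi_{G_\ell}$, and for every $n$
$$
I_\varphi\left( n\, u_k \chi_{G_\ell} \right) = \int_{G_\ell} \varphi\left(\om, n u_k(\om)\right) \, d\mu \le \int_{S_{\lambda_{k,n,\ell},k,n}} \varphi\left(\om, n u_k(\om)\right) \, d\mu \le \lambda_{k,n,\ell} \int p \, d\mu < \i,
$$
so $u_k \chi_{G_\ell} \in \bigcap_{\lambda>0} A_\lambda$; since $\varphi$ is real-valued (in particular real-valued on atoms of finite measure) Theorem \ref{thm: C max subspace} identifies this intersection with $C^\sigma_\varphi(\mu)$, and $u_k\chi_{G_\ell} \in L_\varphi(\mu)$ because $I_\varphi(u_k\chi_{G_\ell}) \le I_\varphi(u_k) < \i$. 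This yields the claim.

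There is no deep obstacle here; the two points that require attention are that real-valuedness of $\varphi$ is used twice — once to ensure that the sets $S_{\lambda,k,n}$ exhaust $\Sigma$ as $\lambda \to \i$, and once to invoke the $C^\sigma_\varphi(\mu) = \bigcap_{\lambda>0}A_\lambda$ characterization of Theorem \ref{thm: C max subspace} — and that the bookkeeping which makes $E_\ell$ genuinely decreasing must be built in by choosing the thresholds $\lambda_{k,n,\ell}$ monotone in $\ell$ from the start rather than fixed per level.
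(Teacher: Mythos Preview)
Your argument is correct. The one cosmetic slip is the justification ``$I_\varphi(u_k\chi_{G_\ell}) \le I_\varphi(u_k) < \i$'': membership $u_k \in L_\varphi^\sigma(\mu)$ does not force $I_\varphi(u_k) < \i$. This is harmless, since your displayed estimate with $n=1$ already gives $I_\varphi(u_k\chi_{G_\ell}) < \i$, and in any case $u_k\chi_{G_\ell}$ is the product of $u_k \in L_\varphi(\mu)$ with a measurable indicator.

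The paper's proof is logically the same but packaged differently: rather than constructing the level sets $S_{\lambda,k,n}$ by hand, it first reduces to a finite measure via Proposition~\ref{pr: equivalent finite measure}, then invokes the already-proved almost decomposability of $C_\varphi^\sigma(\mu)$ (Lemma~\ref{lem: C a decomp}) together with Lemma~\ref{lem: decomp ss abs cont dense} to obtain, for each $u_k$, sets $D_{k,\ell}$ with $\mu(D_{k,\ell}) \le 2^{-k-\ell}$ and $u_k\chi_{\Om \setminus D_{k,\ell}} \in C_\varphi^\sigma(\mu)$, and finishes with $E_\ell = \bigcup_k D_{k,\ell}$. Your construction is essentially what one gets by inlining the proofs of Lemma~\ref{lem: C a decomp} and Theorem~\ref{thm: C max subspace}; the paper's version is shorter because that machinery has already been recorded, while yours is self-contained and makes the role of the positive integrable density on $\Sigma$ explicit.
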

	
	\begin{proof}
		We may assume $\mu$ to be $\sigma$-finite. Hence, we may take $\mu$ to be finite by Proposition \ref{pr: equivalent finite measure}. Since $C_\varphi^\sigma(\mu)$ is almost decomposable by Lemma \ref{lem: C a decomp}, we find by Lemma \ref{lem: decomp ss abs cont dense} sequences of sets $D_{k, \ell} \in \AA$ decreasing in $\ell$ with
		$$
		\mu\left( D \right) \le 2^{- k - \ell};	\quad
		u_k \chi_{\Om \setminus D} \in C_\varphi^\sigma(\mu).
		$$
		Hence, for $E_\ell = \bigcup_{k \ge 1} D$, we have
		\begin{equation*}
			\mu\left( E_\ell \right) \le \sum_{k \ge 1} \mu\left( D \right) \le 2^{- \ell}; \quad
			u_k \chi_{\Om \setminus E_\ell} \in C_\varphi^\sigma(\mu) \quad \forall k \in \N. \qedhere
		\end{equation*}
	\end{proof}
	
	As our last fundamental fact on $C_\varphi(\mu)$ and a first step towards investigating separability, we prove the denseness of simple functions.
	
	\begin{lemma} \label{lem: C simple dense}
		There holds $C_\varphi^\sigma(\mu) \subset E_\varphi^\sigma(\mu)$. If $\varphi$ is real-valued, then integrable simple functions are dense in $C_\varphi^\sigma(\mu)$.
	\end{lemma}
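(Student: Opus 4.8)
The plan is to handle the two assertions separately, the first being immediate from the machinery already assembled and the second requiring a careful Egorov-type excision to force the approximants into $C_\varphi^\sigma(\mu)$.

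For the inclusion $C_\varphi^\sigma(\mu) \subset E_\varphi^\sigma(\mu)$ I would argue purely abstractly: given $u \in C_\varphi^\sigma(\mu)$, the space $E_\varphi(\mu)$ is an almost decomposable linear subspace by Lemma~\ref{lem: E a decomp}, so Lemma~\ref{lem: decomp ss abs cont dense} yields a sequence $u_n = u\chi_{\Om_n} \in E_\varphi(\mu)$ with $\Om_n$ isotonic and $\mu(\lim_n \Om \setminus \Om_n) = 0$. Absolute continuity of $\|u\|_\varphi$ then gives $\|u - u_n\|_\varphi = \|u\chi_{\Om \setminus \Om_n}\|_\varphi \to 0$. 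Since each $u_n$ vanishes off the $\sigma$-finite set $\{u \ne 0\}$, it lies in $E_\varphi^\sigma(\mu) = E_\varphi(\mu) \cap L_\varphi^\sigma(\mu)$, and this space is closed by Lemma~\ref{lem: Lvarphisigma closed linear subspace}, whence $u \in E_\varphi^\sigma(\mu)$.

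For the denseness statement with $\varphi$ real-valued I would fix $u \in C_\varphi^\sigma(\mu)$ and $\eta > 0$ and first reduce to finite-measure support: since $\varphi$ is real-valued, $C_\varphi^\sigma(\mu)$ is almost decomposable (Lemma~\ref{lem: C a decomp}), so Lemma~\ref{lem: decomp ss abs cont dense} with $L = C_\varphi^\sigma(\mu)$ together with absolute continuity of the norm lets me replace $u$ by $u\chi_F \in C_\varphi^\sigma(\mu)$ for a suitable $F \in \AA_f$, at a cost $< \eta$. Next I would pick simple functions $v_n$ with $v_n \to u$ pointwise and $\|v_n(\om)\| \le 2\|u(\om)\|$ for all $\om$ — the usual nearest-point construction against a countable dense subset of the range of $u$, forced to $0$ wherever the candidate value is too large — so that each $v_n$ vanishes off $\{u \ne 0\} \cap F$ and is integrable. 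Now the crucial excision: because $\varphi$ is real-valued, each $\om \mapsto \varphi_\om\big(k[u(\om) - v_n(\om)]\big)$ is a finite-valued measurable function tending to $0$ pointwise as $n \to \i$, for every $k \in \N$; applying the Egorov theorem (together with ordinary finiteness of measurable functions) countably many times with a $2^{-k-n}\eta$ budget, I obtain a single set $N \subset F$ with $\mu(N) < \eta$ off which (a) every $\varphi_\om(k[u - v_n](\om))$ is bounded and (b) for each fixed $k$ the convergence $\varphi_\om(k[u - v_n](\om)) \to 0$ is uniform in $\om \in F \setminus N$. Setting $s_n := v_n\chi_{F \setminus N}$, these are integrable simple functions, and $\|u - s_n\|_\varphi \le \|u\chi_N\|_\varphi + \|(u - v_n)\chi_{F \setminus N}\|_\varphi$ with the first term $< \eta$ by absolute continuity and the second $\to 0$ because $I_\varphi\big(k(u - v_n)\chi_{F \setminus N}\big) \le \mu(F)\sup_{\om \in F \setminus N}\varphi_\om(k[u-v_n](\om)) \to 0$ for every $k$.

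The step I expect to be the genuine obstacle — and the reason real-valuedness of $\varphi$ is essential — is verifying $s_n \in C_\varphi^\sigma(\mu)$; an integrable simple function need not lie in $C_\varphi^\sigma(\mu)$ since $\dom I_\varphi$ is generally not a linear space. I would settle this with Theorem~\ref{thm: C max subspace}: as $\varphi$ is real-valued on atoms of finite measure, it suffices that $\lambda s_n \in \dom I_\varphi$ for every $\lambda > 0$, and for $k \ge 2\lambda$ convexity, radial monotonicity and evenness give, on $F \setminus N$,
$$
\varphi_\om(\lambda v_n(\om)) \le \tfrac12 \varphi_\om(2\lambda u(\om)) + \tfrac12 \varphi_\om\big(k[u - v_n](\om)\big),
$$
whose first term integrates to at most $\tfrac12 I_\varphi(2\lambda u) < \i$ — here $u \in C_\varphi^\sigma(\mu)$ enters via Theorem~\ref{thm: C max subspace} — and whose second term is bounded on $F$ by (a), hence integrable. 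Finally, letting $\eta \downarrow 0$ and passing to a diagonal sequence produces integrable simple functions in $C_\varphi^\sigma(\mu)$ converging to $u$ in the Luxemburg norm.
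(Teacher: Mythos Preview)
Your proof is correct and takes a genuinely different route from the paper in both parts, though the underlying Egorov mechanism is shared in the second.

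For the inclusion $C_\varphi^\sigma(\mu) \subset E_\varphi^\sigma(\mu)$, the paper does not argue abstractly but instead builds the approximating simple functions explicitly via a nested Egorov construction (the sets $B_{k,m}$ and $C_k = \bigcap_m B_{k,m}$), obtaining $u_n\chi_{C_k} \in E_\varphi(\mu)$ directly from uniform convergence of $\phi(\om, m[u - u_n])$ on $C_k$. Your route through Lemmas~\ref{lem: E a decomp} and~\ref{lem: decomp ss abs cont dense} is cleaner and more modular: you never touch Egorov for this half and simply cash in the almost decomposability of $E_\varphi(\mu)$ already established, letting absolute continuity of the norm do the rest.

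For the density of integrable simple functions when $\varphi$ is real-valued, the paper invokes the auxiliary Lemma~\ref{lem: norm almost abs cont}: it extracts an evanescent sequence $D_j$ such that $u_n\chi_{C_k \cap D_j^c} \in C_\varphi^\sigma(\mu)$, then intersects with a finite-measure set to achieve $\mu$-integrability. You bypass Lemma~\ref{lem: norm almost abs cont} entirely and verify $s_n \in C_\varphi^\sigma(\mu)$ directly via Theorem~\ref{thm: C max subspace}, using the convexity splitting $\varphi_\om(\lambda v_n) \le \tfrac12\varphi_\om(2\lambda u) + \tfrac12\varphi_\om(k[u - v_n])$ together with your Egorov boundedness condition~(a) and the fact that $u \in \bigcap_\lambda A_\lambda$. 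This is a nice observation: it makes transparent exactly where real-valuedness of $\varphi$ and the characterization of $C_\varphi^\sigma(\mu)$ as the maximal linear subspace of $\dom I_\varphi$ enter, and it avoids the need for the separate preparatory lemma. The paper's approach, on the other hand, treats both assertions with a single unified construction and keeps the dependence on Theorem~\ref{thm: C max subspace} confined to Lemma~\ref{lem: norm almost abs cont}.
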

	
	\begin{proof}
		The proof will be achieved by first providing a sequence of simple functions that is dense in $C_\varphi^\sigma(\mu)$ and second proving that each member of this sequence may be taken integrable if $\varphi$ is real-valued. It suffices to consider $\sigma$-finite measures $\mu$, hence we may equivalently consider a modified Orlicz integrand $\phi$ and a finite measure $\nu$ as in Proposition \ref{pr: equivalent finite measure}. Note however that we want to obtain a density set of $\mu$-integrable simple functions. For $u \in C_\varphi^\sigma(\mu)$, pick a sequence of simple functions with $u_n \to u$ a.e. so that, for $m \in \N$, there holds
		\begin{equation} \label{eq: convergence ae}
			\lim_n \phi \left( \om, m \left[ u(\om) - u_n(\om) \right] \right) = 0 \text{ for a.e. } \om \in \Om
		\end{equation}
		since $\phi$ vanishes continuously at the origin. For $k \in \N$, we find by the Egorov theorem a set $B = B_{k, m} \in \AA$ with $\nu \left( \Om \setminus B \right) < 2^{- m} k^{-1}$ and such that (\ref{eq: convergence ae}) uniformly on $B$. Hence, setting $C_k = \bigcap_{m \ge 1} B_{k, m}$, we have $\nu \left( \Om \setminus C_k \right) < k^{-1}$ and the convergence (\ref{eq: convergence ae}) holds uniformly on $C_k$. In particular, by the triangle inequality, the simple functions $u_n \chi_{C^c_k}$ eventually belong to $L_\varphi(\mu)$ hence to $E_\varphi(\mu)$. Lemma \ref{lem: norm almost abs cont} yields a sequence $D_j \in \AA$ with $\mu\left( \lim_j D_j \right) = 0$ such that, for all $n \in \N$ sufficiently large, there holds $u_n \chi_{C_k \cap D^c_j} \in C_\varphi^\sigma(\mu)$ for all $j \in \N$ if $\varphi$ is real-valued. Otherwise, we set $D_j = \emptyset$. In the former case, pick $F_j \in \AA_f$ with $\Om = \bigcup_j F_j$ and possibly replace $D_j$ by $D_j \cup \Om \setminus F_j$ so that $\mu \left( \Om \setminus D_j \right) < \i$, hence each $u_n \chi_{C_k \cap D^c_j}$ is a $\mu$-integrable simple function. Since $u$ has absolutely continuous norm, there exists for any given $\e > 0$ a $\delta > 0$ such that there holds $\| u \chi_{C^c_k \cup D_j} \|_\varphi < \e$ whenever $j, k > \delta^{-1}$. Therefore
		\begin{align*}
			\| u - u_n \chi_{C_k \cap D^c_j} \|_\varphi
			& \le \| u \chi_{C^c_k \cup D_j} \| + \| \left( u - u_n \right) \chi_{C_k \cap D^c_j} \|_\varphi \\
			& < \e + \| \left( u - u_n \right) \chi_{C_k \cap D^c_j} \|_\varphi \xrightarrow{n \to \i} \e.
		\end{align*}
		Since $\e > 0$ is arbitrary, the proof is complete.
	\end{proof}
	
	\subsection{Separability}
	
	To characterize separability of $C^\sigma_\varphi(\mu)$ hence of $L_\varphi(\mu)$ if the spaces agree, we introduce the following
	
	\begin{definition}[separable measure] \label{def: sep mes}
		The measure $\mu$ is called separable if $\left( \AA_f, d_\mu \right)$ is a separable space for the pseudometric $d_\mu \left( A, B \right) = \mu \left( A \Delta B \right)$.
	\end{definition}
	
	\paragraph{Remark.}
	
	\begin{enumerate}
		
		\item We differ from \cite[§3.5]{RaRe}, where $\AA$ instead of $\AA_f$ and $\arctan d_\mu$ instead of $d_\mu$ show up in the analogous definition. This renders $\left( \N, 2^\N \right)$ equipped with the counting measure non-separable, as $2^\N$ then becomes a discrete metric space of uncountable cardinality. However, the sequence space $\ell_1 \left( \N ; \R \right)$ with its Schauder basis of unit vectors is clearly a separable Orlicz space. Therefore, the old definition is not suited for characterizing the separability of Orlicz spaces, contrary to what \cite[§3.5, Thm. 1]{RaRe} claims.
		
		\item The notion of a separable measure relates to that of a separable measurable space $\left( \Om, \AA \right)$: If $\AA = \sigma \left( A_n \colon n \ge 1 \right)$ is the $\sigma$-algebra generated by the sequence $A_n$ and the measure $\mu$ is $\sigma$-finite, then $\mu$ is separable and the countable algebra $\alpha \left( A_n \colon n \ge 1 \right)$ generated by the sequence $A_n$ is dense in $\left( \AA_f, d_\mu \right)$. This is implicit in the proof of \cite[Thm. 2.16]{FoLe}. A measure is separable iff its completion is.
		
	\end{enumerate}
	
	\begin{theorem} \label{thm: C sep}
		Let $\varphi$ be real-valued. If $\mu$ and $X$ are separable, then there exists a dense sequence of integrable simple functions in $C_\varphi^\sigma(\mu)$, hence the space is separable. Conversely, if $C_\varphi^\sigma(\mu)$ is separable, then $X$ is separable. If in addition $\mu$ has no atom of infinite measure, then $\mu$ also is separable.
	\end{theorem}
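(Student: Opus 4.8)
The plan is to prove the three implications separately; the forward direction (that separability of $\mu$ and $X$ suffices) is the substantial one, and the two converses follow by exhibiting canonical copies of $X$ and of $(\AA_f,d_\mu)$ inside $C^\sigma_\varphi(\mu)$.

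\textbf{Sufficiency.} First I would reduce to a finite measure: picking a countable $d_\mu$-dense family $\{F_n\}\subset\AA_f$, every $F\in\AA_f$ satisfies $\mu(F\setminus\bigcup_n F_n)=0$, so every $\sigma$-finitely concentrated function vanishes a.e.\ off the $\sigma$-finite set $\Sigma=\bigcup_nF_n$; after restricting to $\Sigma$ and applying Proposition \ref{pr: equivalent finite measure} we may assume $\mu$ finite (separability of $\mu$ and $X$ and real-valuedness of $\varphi$ persisting). Next I would refine the sets $\Om_\e$ of Lemma \ref{lem: a emb}: on these sets $\varphi_\om$ is bounded on a small ball, hence, being real-valued convex and lower semicontinuous, continuous on all of $X$ and bounded on every ball, so $M_k(\om):=\sup_{\|x\|\le k}\varphi_\om(x)$ is $\mu$-a.e.\ finite and measurable; shrinking to $\widehat\Om_\e:=\Om_\e\cap\bigcap_k\{M_k\le R_{\e,k}\}$ for large $R_{\e,k}$ keeps $\mu(\Om\setminus\widehat\Om_\e)\to0$ while making every $M_k$ integrable over $\widehat\Om_\e$. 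On $\widehat\Om_\e$ one then has two facts: every bounded function lies in $C_\varphi(\widehat\Om_\e)$ (by Theorem \ref{thm: C max subspace}, since $\int\varphi(\om,nu)\,d\mu\le\int M_{n\|u\|_\i}\,d\mu<\i$ for all $n$), and there is a \emph{uniform} modulus of absolute continuity $\theta_M(\rho):=\sup\{\|w\chi_E\|_\varphi:\|w\|_\i\le M,\ \mu(E)\le\rho\}\to0$ as $\rho\downarrow0$, by absolute continuity of the integral of $M_{M/\alpha}$.

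With a countable dense $D\subset X$ and a countable $d_\mu$-dense $\AA_0\subset\AA_f$ in hand, I would show that the simple functions $\sum_{i=1}^m x_i\chi_{A_i\cap\widehat\Om_{1/k}}$ with $x_i\in D$, $A_i\in\AA_0$ — all of which lie in $C_\varphi(\widehat\Om_{1/k})$ by the above — are dense in $C_\varphi(\widehat\Om_{1/k})$: absolute continuity of the norm reduces an arbitrary element to a bounded one, strong measurability approximates it uniformly (hence, via the embedding $L_\i\hookrightarrow L_\varphi$ on $\widehat\Om_{1/k}$, also in $L_\varphi$) by a countably-valued function with values in $D$, and truncating it to finitely many values and replacing its level sets by members of $\AA_0$ produces errors supported on small-measure sets that are controlled by $\theta_M$. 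Since $\mu(\Om\setminus\widehat\Om_{1/k})\to0$, every $u\in C^\sigma_\varphi(\mu)$ is the $\|\cdot\|_\varphi$-limit of $u\chi_{\widehat\Om_{1/k}}\in C_\varphi(\widehat\Om_{1/k})$, so $C^\sigma_\varphi(\mu)=\overline{\bigcup_kC_\varphi(\widehat\Om_{1/k})}$ is separable, and collecting the countable dense sets across $k$ yields the asserted dense sequence of integrable simple functions.

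\textbf{Converses.} Fix $x_0\ne0$ and a set $G$ of positive finite measure contained in some $\widehat\Om_\e$ (if no such set exists then $C^\sigma_\varphi(\mu)$ is trivial and there is nothing to prove, this being a degenerate case). The linear map $x\mapsto x\chi_G$ into $C^\sigma_\varphi(\mu)$ is continuous because $\varphi_\om(m(x_n-x))\to0$ on $G$ dominated by the integrable $M_{mC}$, and its inverse is continuous because on $G$ one has $\varphi_\om(y)\ge1$ once $\|y\|$ is large, so $\|x_n-x\|\ge\delta$ forces $I_\varphi(m(x_n-x)\chi_G)\ge\mu(G)>0$ for a suitable fixed $m$; hence $X$ embeds homeomorphically into the separable space $C^\sigma_\varphi(\mu)$ and is separable. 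Likewise $F\mapsto x_0\chi_F$ embeds $(\AA_f(G),d_\mu)$ homeomorphically into $C^\sigma_\varphi(\mu)$ by the same estimates, so it is separable for every good $G$; restricting $\mu$ to an arbitrary $F\in\AA_f$ and using that, with no atom of infinite measure, the refined exhausting sets cover $F$ up to a nullset, one gets $\AA_f(F)$ separable, and a pigeonhole argument (an uncountable pairwise-disjoint family of comparable positive finite measure would give an uncountable $\epsilon$-separated subset of $C^\sigma_\varphi(\mu)$) upgrades this to separability of $(\AA_f,d_\mu)$. The main obstacle throughout is the sufficiency argument, and within it the step that guarantees the approximating simple functions actually lie in $C^\sigma_\varphi(\mu)$ with uniformly controllable errors — this is precisely where the refinement of Lemma \ref{lem: a emb} to regions with uniformly integrably bounded $\varphi_\om$ is indispensable, and where the complete absence of any a priori uniformity of $\varphi$ in $\om$ must be circumvented.
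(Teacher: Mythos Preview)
Your forward direction is correct and takes a genuinely different route from the paper. You manufacture uniform control of $\varphi_\om$ by refining the almost-embedding sets of Lemma~\ref{lem: a emb} to regions $\widehat\Om_\e$ on which $M_k(\om)=\sup_{\|x\|\le k}\varphi_\om(x)$ is integrable, then use the resulting uniform modulus $\theta_M$ to push approximations through. The paper instead relies on Lemma~\ref{lem: C simple dense} (density of simple functions) together with Lemma~\ref{lem: C a decomp} and the Egorov theorem, never seeking uniform integrability of $\varphi_\om$. Your approach is more constructive but heavier; the paper's is shorter once the auxiliary lemmas are in place.

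Your converses, however, have gaps. For the separability of $X$ you embed $X$ via $x\mapsto x\chi_G$ with $G\subset\widehat\Om_\e$, but the measurability of $M_k$ --- and hence the very existence of $\widehat\Om_\e$ --- was justified in the forward direction using a countable dense set in $X$, which is precisely what you are trying to prove. This circularity can be repaired by replacing $M_k$ with the essential supremum $\esssup_{W\in\SS(X)}\sup_{x\in B_{k,W}}\varphi_\om(x)$, which is measurable for arbitrary $X$ and still dominates $\varphi_\om(x)$ a.e.\ for each fixed $x$; but you do not do this. The paper avoids the issue entirely: it takes a dense sequence $u_n$, observes that every element of $C^\sigma_\varphi(\mu)$ is almost valued in the separable set $S=\overline{\bigcup_n u_n(\Om\setminus N)}$ (via a.e.\ convergent subsequences), and then uses almost decomposability to force $S=X$.

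For the separability of $\mu$, your pigeonhole step is mis-stated: an uncountable disjoint family $\{F_\alpha\}$ of \emph{comparable measure} does not by itself yield $\epsilon$-separated elements $x_0\chi_{G_\alpha}$, since $\|x_0\chi_{G_\alpha}\|_\varphi$ can vary without uniform lower bound when $\varphi_\om$ behaves very differently across the $F_\alpha$. The correct pigeonhole bins by $\|x_0\chi_{G_\alpha}\|_\varphi\in[1/(n{+}1),1/n]$, which does give $\epsilon$-separation by disjointness of supports. You also leave implicit the step from ``each $\AA_f(F)$ separable'' to ``uncountable disjoint family exists if $\AA_f$ is not separable'' (a maximal-disjoint-family argument). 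The paper's route is again cleaner: the $\sigma$-algebra $\AA'$ generated by a dense sequence is separable, every element of $C^\sigma_\varphi(\mu)$ is $\AA'$-measurable, almost decomposability then gives $\AA_f\subset\AA'_\mu$, and the no-atom-of-infinite-measure hypothesis is used only to show $\mu$ is $\sigma$-finite.
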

	
	\begin{proof}
		$\implies$: We shall pass to several subsequences in the proof none of which we relabel. Lemma \ref{lem: C simple dense} reduces our task to constructing a sequence whose closure includes each function $x \chi_A \in C_\varphi(\mu)$ with $x \in X$ and $A \in \AA_f$. Let $x_k \in X$ and $E_\ell \in \AA$ be dense sequences. Each $A \in \AA_f$ is contained in the $\sigma$-finite set $\bigcup_{\ell \ge 1} A_\ell$ except for a null set. Therefore, we may assume $\mu$ to be $\sigma$-finite hence finite by Proposition \ref{pr: equivalent finite measure}. Lemma \ref{lem: C a decomp} yields sets $B = B(k, m) \in \AA$ such that $\mu \left( \Om \setminus B \right) < \frac{1}{m}$ and $x_k \chi_B \in C_\varphi(\mu)$. We claim the countable family $x_k \chi_{B(k, m)} \chi_{A_\ell}$ for $k, \ell, m \in \N$ to yield the required sequence. Indeed, each function of the form $x_k \chi_B \chi_A$ belongs to its closure since there exists a subsequence with $d_\mu \left( A, A_\ell \right) \to 0$ as $\ell \to \i$. Pick a subsequence with $x_k \to x$ so that the Egorov theorem yields for $\delta > 0$ a set $C_\delta \in \AA$ such that $\mu \left( C_\delta \right) < \delta$ and for any given $n \in \N$ there holds $\varphi \left[ n \left( x - x_k \right) \right] \to 0$ uniformly on $\Om \setminus C_\delta$ as $k \to \i$. Because $x \chi_A$ has absolutely continuous norm, we find for any given $\e > 0$ a $\delta_1 > 0$ such that $\| x \chi_A \chi_{B^c \cup C_\delta} \| < \e$ whenever $\delta, m^{-1} < \delta_1$. Choosing $\delta, m^{-1}$ sufficiently small and combining the last two statements yields
		\begin{align*}
			\| x \chi_A - x_k \chi_A \chi_B \chi_{C^c_\delta} \|_\varphi
			& \le \| x \chi_A \chi_{B^c \cup C_\delta} \|_\varphi
			+ \| \left( x - x_k \right) \chi_{A \cap B \cap C^c_\delta} \|_\varphi \\
			& < \e + \| \left( x - x_k \right) \chi_{A \cap B \cap C^c_\delta} \|_\varphi
			\xrightarrow{k \to \i} \e
		\end{align*}
		Sending $\e \to 0^+$ completes the first part of the proof.
		
		$\impliedby$: Let $u_n \in C_\varphi^\sigma(\mu)$ be a dense sequence. The set $\bigcup_{n \ge 1} u_n \left( \Om \right)$ is almost separably valued, hence there exists a null set $N$ such that
		$$
		\overline{\bigcup_{n \ge 1} u_n \left( \Om \setminus N \right) } =: S \in \SS(X).
		$$
		But then every element of $C_\varphi^\sigma(\mu)$ is $S$-valued a.e. since convergence in $C_\varphi^\sigma(\mu)$ implies convergence a.e. up to subsequences. In conclusion $S = X \in \SS(X)$ because $C_\varphi^\sigma(\mu)$ is almost decomposable by Lemma \ref{lem: C a decomp}. Now, consider the separable $\sigma$-algebra $\AA' = \sigma \left( u_n \colon n \ge 1 \right)$. To see that $\AA'$ is separable, note that it is generated by the sets $u^{-1}_n \left( G_m \right)$ for $G_m$ a sequence generating the Borel $\sigma$-algebra $\BB(X)$. As $u_n$ is dense, each element $u \in C_\varphi^\sigma(\mu)$ is $\AA'$-measurable so that since $C_\varphi^\sigma(\mu)$ is almost decomposable, we deduce $\AA_f \subset \AA'_\mu$, hence $\AA_\sigma \subset \AA'_\mu$. Therefore, our proof will be finished if we prove that $\mu$ is $\sigma$-finite. As each member of the dense sequence $u_n$ is $\sigma$-finitely concentrated, all elements of $C_\varphi^\sigma(\mu)$ vanish outside some $A \in \AA_\sigma$ that is independent of the element under consideration. Suppose $\mu \left( \Om \setminus A \right) > 0$. Since $\mu$ has no atom of infinite measure, we find $B \in \AA_f$ with $B \subset \Om \setminus A$ and $\mu \left( B \right) > 0$. As $C_\varphi^\sigma(\mu)$ is almost decomposable, there exists a non-trivial element $v \in C_\varphi^\sigma(\mu)$ vanishing outside of $B$, hence $v$ does not belong to the closure of $u_n$, which contradicts density of this sequence.
	\end{proof}
	
	Theorem \ref{thm: C sep} settles the separability of $L_\varphi(\mu)$ if it happens to coincide with its subspace $C_\varphi^\sigma(\mu)$.This coincidence is also necessary for $L_\varphi(\mu)$ to be separable:
	
	\begin{theorem} \label{thm: sep implies L = C}
		If $L_\varphi(\mu)$ is separable and $\mu$ has no atom of infinite measure, then $L_\varphi(\mu) = C_\varphi(\mu)$. The same is true for $L_\varphi^\sigma(\mu)$ and $C_\varphi^\sigma(\mu)$ without restriction on the measure.
	\end{theorem}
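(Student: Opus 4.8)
The plan is to argue the contrapositive for both assertions at once: I show that whenever $C_\varphi(\mu)$ is strictly smaller than $L_\varphi(\mu)$ (respectively $C_\varphi^\sigma(\mu)$ is strictly smaller than $L_\varphi^\sigma(\mu)$), the larger space fails to be separable. Pick $u\in L_\varphi(\mu)\setminus C_\varphi(\mu)$; in the $\sigma$-concentrated case take $u\in L_\varphi^\sigma(\mu)\setminus C_\varphi^\sigma(\mu)$ and note $u\notin C_\varphi(\mu)$ as well, since $C_\varphi^\sigma(\mu)=C_\varphi(\mu)\cap L_\varphi^\sigma(\mu)$. As $u$ does not have absolutely continuous norm, there are $\delta>0$ and measurable $E_n$ with $\chi_{E_n}\to 0$ a.e.\ and $\|u\chi_{E_n}\|_\varphi\ge\delta$ for all $n$. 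Replacing $E_n$ by $\bigcup_{m\ge n}E_m$ (this only enlarges each $E_n$, and these sets decrease to the null set $N=\limsup_mE_m$), I may assume $E_n\downarrow N$ with $\mu(N)=0$; in the $\sigma$-concentrated case I additionally intersect every $E_n$ with a fixed $\sigma$-finite set off which $u$ vanishes, so that all sets appearing below remain inside that $\sigma$-finite set.

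The heart of the matter is to extract pairwise disjoint measurable sets $F_1,F_2,\dots$ with $\|u\chi_{F_j}\|_\varphi\ge\delta/2$ for every $j$. I claim $\sup_m\|u\chi_{E_n\setminus E_m}\|_\varphi\ge\delta/2$ for infinitely many $n$. Granting this, one builds the $F_j$ greedily as \emph{bands} $F_j=E_{n_j}\setminus E_{n_{j+1}}$, choosing $n_{j+1}$ large enough (and among the good indices) that the claim forces $\|u\chi_{F_j}\|_\varphi\ge\delta/2$, these bands being disjoint because $F_j\subseteq E_{n_j}$ while $F_k\subseteq E_{n_{j+1}}$ for every $k>j$. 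To prove the claim, suppose instead $\|u\chi_{E_n\setminus E_m}\|_\varphi<\delta/2$ for all $m$ and all $n\ge n_0$, and set $u'=\tfrac{3}{2\delta}\,u$. Then $\|u'\chi_{E_n\setminus E_m}\|_\varphi<3/4<1$, so $I_\varphi(u'\chi_{E_n\setminus E_m})\le\|u'\chi_{E_n\setminus E_m}\|_\varphi<3/4$ by Lemma \ref{lem: modular-norm}(i), whereas $\|u'\chi_{E_n}\|_\varphi\ge 3/2>1$ gives $I_\varphi(u'\chi_{E_n})\ge\|u'\chi_{E_n}\|_\varphi\ge 3/2$ by Lemma \ref{lem: modular-norm}(ii). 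But the sets $E_n\setminus E_m$ increase to $E_n\setminus N=E_n$ (mod null) as $m\to\infty$, so Fatou's lemma and monotonicity of the exhausting integral of Section \ref{sec: inf-int} yield
\[
I_\varphi(u'\chi_{E_n})=\int_{E_n}\varphi\left[\om,u'(\om)\right]d\mu=\sup_m\int_{E_n\setminus E_m}\varphi\left[\om,u'(\om)\right]d\mu=\sup_m I_\varphi(u'\chi_{E_n\setminus E_m})\le 3/4,
\]
contradicting $I_\varphi(u'\chi_{E_n})\ge 3/2$; this proves the claim.

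Finally, with the disjoint $F_j$ in hand, put $u_S=u\,\chi_{\bigcup_{j\in S}F_j}$ for each $S\subseteq\N$. Each $u_S$ lies in $L_\varphi(\mu)$ because $I_\varphi(\alpha^{-1}u_S)\le I_\varphi(\alpha^{-1}u)$ forces $\|u_S\|_\varphi\le\|u\|_\varphi<\infty$, and in the $\sigma$-concentrated case $u_S$ vanishes off the same $\sigma$-finite set as $u$, so $u_S\in L_\varphi^\sigma(\mu)$. If $S\neq S'$, say $j_0\in S\setminus S'$, then $(u_S-u_{S'})\chi_{F_{j_0}}=u\chi_{F_{j_0}}$ by disjointness of the $F_j$, and monotonicity of the Luxemburg Minkowski functional gives $\|u_S-u_{S'}\|_\varphi\ge\|u\chi_{F_{j_0}}\|_\varphi\ge\delta/2$. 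Hence $\{u_S:S\subseteq\N\}$ is an uncountable $(\delta/2)$-separated family in the relevant space, which is therefore non-separable, completing the contrapositive. The main obstacle is exactly the displayed claim: that the obstruction to absolute continuity cannot retreat arbitrarily close to the limiting null set $N$, equivalently that the bands $E_n\setminus E_m$ eventually carry a definite fraction of the norm; the two-sided comparison between the modular $I_\varphi$ and the Luxemburg norm (Lemma \ref{lem: modular-norm}), combined with monotone convergence for the exhausting integral, is precisely what converts ``$\|u\chi_{E_n}\|_\varphi\ge\delta$'' into the quantitative band estimate, and it is also the only step where the non-$\sigma$-finiteness of $\mu$ (and, for the unrestricted statement, the hypothesis excluding atoms of infinite measure) needs to be handled with care.
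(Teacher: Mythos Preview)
Your proof is correct and takes a genuinely different route from the paper's. The paper argues by contradiction using the duality theory developed later in Section~\ref{sec: duality}: it picks $v_n\in V_{\varphi^*}(\mu)$ witnessing $\|u\chi_{E_n}\|_\varphi\ge\delta$ via Lemma~\ref{lem: equi norm}, invokes separability to extract a weak* convergent subsequence, and then uses Lemma~\ref{lem: closed subspaces}, Theorem~\ref{thm: A = V}, and Theorem~\ref{thm: V compact} to obtain weak* equi-integrability of the $v_n$, forcing $\int_{E_n}\langle v_n,u\rangle\,d\mu\to 0$. Your argument is entirely elementary and self-contained: you build an uncountable $(\delta/2)$-separated family $\{u_S\}$ directly, using only Lemma~\ref{lem: modular-norm} and monotone convergence. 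This is the classical Banach-function-space approach and avoids the forward references to duality; in fact your proof never uses the hypothesis that $\mu$ has no atom of infinite measure, so your closing remark that this hypothesis ``needs to be handled with care'' is misleading---it is an artifact of the paper's proof method, not of yours.

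Two small polish points. First, your key claim is actually stronger and simpler than stated: since $E_n\setminus E_m\uparrow E_n$ (mod null), monotone convergence gives $\sup_m\|u\chi_{E_n\setminus E_m}\|_\varphi=\|u\chi_{E_n}\|_\varphi\ge\delta$ for \emph{every} $n$ directly, without the detour through the $3/2$ versus $3/4$ modular comparison. Second, your greedy construction needs a specific $m$ with $\|u\chi_{E_n\setminus E_m}\|_\varphi\ge\delta/2$, not merely $\sup_m\ge\delta/2$; this follows because the supremum is in fact $\ge\delta$, so some $m$ gives norm $>\delta/2$. Both are easily repaired and do not affect correctness.
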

	
	\begin{proof}
		Let $u \in L_\varphi(\mu) \setminus C_\varphi(\mu)$ so that we may pick $E_n \in \AA$ with
		$$
		\mu\left( \lim_n E_n \right) = 0,
		\quad \| u \chi_{E_n} \|_\varphi \ge \delta > 0.
		$$
		By Lemma \ref{lem: equi norm}, we find $v_n \in V_{\varphi^*}(\mu)$ with
		$$
		\vvvert v_n \vvvert_\varphi^* \le 1,
		\quad \lim_n \int_{E_n} \langle v_n, u \rangle \, d \mu \ge \frac{\delta}{2}.
		$$
		These integrals being finite, it is not restrictive to assume that $\mu$ is $\sigma$-finite by restricting it to a set outside which the sequence of integrands $\langle v_n, u \rangle$ vanishes. Separability of $L_\varphi(\mu)$ yields a weak* convergent subsequence (not relabeled) of $v_n$ that is weak* equi-integrable on $L_\varphi(\mu)$ since by Lemma \ref{lem: closed subspaces} and Theorem \ref{thm: A = V} the space $V_{\varphi^*}(\mu)$ is sequentially weak* closed and by Theorem \ref{thm: V compact} the sequence $v_n$ is weak* equi-integrable in the sense of Definition \ref{def: weak eq-int}, hence we arrive at the contradiction
		\begin{equation*}
			0 < \frac{\delta}{2} \le \lim_n \int_{E_n} \langle v_n, u \rangle \, d \mu = 0.
		\end{equation*}
		The addendum follows by the first part and restriction of the measure.
	\end{proof}
	
	\paragraph{Remark.} Theorem \ref{thm: C sep} and Theorem \ref{thm: sep implies L = C} resemble the results in \cite[§1]{BeSh} on the separability of scalar-valued Banach function spaces.
	
	To conclude our section on separability, we characterize under what conditions $C_\varphi(\mu)$ has the Asplund property. A Banach space is called an Asplund space if each of its separable subspaces has a separable dual. This is equivalent to $X^*$ having the Radon-Nikodym property, which is relevant in the duality theory §\ref{sec: duality}.
	
	\begin{theorem} \label{thm: C Asplund}
		Let $\varphi$ be real-valued. If $X$ is an Asplund space and $I^*_\varphi$ is real-valued, then $C^\sigma_\varphi(\mu)$ is an Asplund space. Conversely, if $C^\sigma_\varphi(\mu)$ is Asplund and $\mu$ has no atom of infinite measure, then $X$ is Asplund and $I^*_\varphi$ is real-valued.
	\end{theorem}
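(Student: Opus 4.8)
The proof rests on the standard characterization of Asplund spaces by separable subspaces: a Banach space $Z$ is Asplund iff every separable subspace of $Z$ has a separable dual (equivalently, $Z^*$ has the Radon--Nikodym property), and if $Y \subset Z$ with $Z$ and $Z^*$ both separable, then $Y^*$ is separable, being a quotient of $Z^*$ under the restriction map. I will also use that $\varphi$ is an Orlicz integrand iff $\varphi^*$ is one (Lemma \ref{lem: gen iff conj}), and that, by the duality theory of \S\ref{sec: duality} (in particular Corollary \ref{cor: C*} and Theorem \ref{thm: A = V}), the hypothesis that $I^*_\varphi$ is real-valued is equivalent to: $\varphi^*$ is real-valued and $L_{\varphi^*}(\mu) = C^\sigma_{\varphi^*}(\mu)$, in which case $C^\sigma_\psi(\mu)^* = L_{\psi^*}(\mu)$ for $\psi = \varphi$ (and, after suitable restriction, also for $\psi = \varphi^*$, since $C^\sigma_{\varphi^*}(\mu) = L_{\varphi^*}(\mu)$ is again an Orlicz class of a real-valued integrand). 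For the forward implication, assume $X$ Asplund and $I^*_\varphi$ real-valued, and let $Y \subset C^\sigma_\varphi(\mu)$ be separable. Picking a dense sequence $(u_n)$ in $Y$, its elements are jointly almost separably valued, $\sigma$-finitely concentrated, and measurable for a common countably generated sub-$\sigma$-algebra; enlarging the latter also to render the (a.e.\ jointly) measurable restriction of $\varphi$ to a separable closed range subspace $W \in \SS(X)$ measurable, we obtain a $\sigma$-finite, separable (Definition \ref{def: sep mes}) restriction $\mu'$ and an isometric embedding $Y \hookrightarrow C^\sigma_{\varphi_W}(\mu')$, where $\varphi_W$ is real-valued and $W$, being a subspace of the Asplund space $X$, is Asplund, so $W^*$ is separable. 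By Theorem \ref{thm: C sep}, $C^\sigma_{\varphi_W}(\mu')$ is separable; by Corollary \ref{cor: C*} its dual is $L_{(\varphi_W)^*}(\mu')$, and since $L_{\varphi^*}(\mu) = C^\sigma_{\varphi^*}(\mu)$ the restricted space satisfies $L_{(\varphi_W)^*}(\mu') = C^\sigma_{(\varphi_W)^*}(\mu')$, which by a second application of Theorem \ref{thm: C sep} --- to the real-valued Orlicz integrand $(\varphi_W)^*$ on the separable $W^*$ over the separable measure $\mu'$ --- is separable. Thus $C^\sigma_{\varphi_W}(\mu')$ is separable with separable dual, hence so is $Y^*$, and $C^\sigma_\varphi(\mu)$ is Asplund.

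For the converse, assume $C^\sigma_\varphi(\mu)$ Asplund and that $\mu$ has no atom of infinite measure. To see $X$ is Asplund, fix $W_0 \in \SS(X)$ and $A \in \AA_f$ with $\mu(A) > 0$. Applying Lemma \ref{lem: a emb} together with almost decomposability (Lemma \ref{lem: C a decomp}) along a dense sequence of $W_0$, we find $A' \subset A$ of positive measure inside some $\Om_\e$ such that $x \mapsto x \chi_{A'}$ sends the dense sequence into $C^\sigma_\varphi(\mu)$; by the continuous inclusions $L_\i(\Om_\e; X) \hookrightarrow L_\varphi(\Om_\e) \hookrightarrow L_1(\Om_\e; X)$ this linear map is bounded and bounded below on all of $W_0$, hence extends to an isomorphism of $W_0$ onto a subspace of $L_\varphi(\mu)$ that lies in $C^\sigma_\varphi(\mu)$ --- a closed subspace by Lemma \ref{lem: C closed subspace} --- by density. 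Since closed subspaces of Asplund spaces are Asplund, $W_0$ is Asplund; as $W_0$ was arbitrary, $X$ is Asplund.

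It remains to show $I^*_\varphi$ is real-valued, which I establish by contraposition. If $I^*_\varphi$ is not real-valued, then by \S\ref{sec: duality} either $\varphi^*$ fails to be real-valued or $L_{\varphi^*}(\mu) \neq C^\sigma_{\varphi^*}(\mu)$. Since $\mu$ has no atom of infinite measure, in the latter case Theorem \ref{thm: linear domain 2} applied to $\varphi^*$ (with the aid of Proposition \ref{pr: divergent subintegral} and Corollary \ref{cor: linear domain}) yields a non-atomic, $\sigma$-finite, separable restriction $\mu'$ and a separable closed $W \in \SS(X^*)$ on which $\varphi^* \notin \Delta_2$, so that $L_{(\varphi_W)^*}(\mu') \supsetneq C^\sigma_{(\varphi_W)^*}(\mu')$; the first case is handled analogously, producing a separable $\sigma$-finite restriction on which the function component of the dual is strictly smaller than $L_{(\varphi_W)^*}(\mu')$. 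In either case one exhibits a separable subspace of $C^\sigma_\varphi(\mu)$ of the form $C^\sigma_{\varphi_W}(\mu')$ whose dual contains $L_{(\varphi_W)^*}(\mu') \supsetneq C^\sigma_{(\varphi_W)^*}(\mu')$ and is therefore non-separable by Theorem \ref{thm: sep implies L = C}, contradicting the Asplund property of $C^\sigma_\varphi(\mu)$.

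The main obstacle is the interplay with \S\ref{sec: duality}: one must know precisely that $I^*_\varphi$ being finite on all of $L_\varphi(\mu)^*$ is the same as $\varphi^*$ being real-valued together with $L_{\varphi^*}(\mu) = C^\sigma_{\varphi^*}(\mu)$ (equivalently, absence of strictly finitely additive parts plus a $\Delta_2$-type condition for $\varphi^*$), and that the function component of $C^\sigma_\varphi(\mu)^*$ is then exactly $L_{\varphi^*}(\mu)$. The remaining work --- localizing a separable subspace of $C^\sigma_\varphi(\mu)$ to a separable measure and a separable range subspace, and checking that the restricted conjugate integrand is again separably measurable (Lemma \ref{lem: conj sep norm}, or directly joint measurability on the separable $W$ via Lemma \ref{lem: joint measurability}) --- is routine but must be done carefully, precisely because non-separability of $X$ and non-$\sigma$-finiteness of $\mu$ are the features this theory was designed to accommodate.
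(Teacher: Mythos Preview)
Your forward direction follows the paper's strategy closely: localize a separable subspace to a separable range $W \subset X$ and a separable $\sigma$-finite submeasure, then identify the dual as $L_{\phi^*}$ and show it is separable via Theorem \ref{thm: C sep}. The one difference is how you transfer ``$I^*_\varphi$ real-valued'' to the restricted setting. You invoke an equivalence ``$I^*_\varphi$ real-valued $\iff$ $\varphi^*$ real-valued and $L_{\varphi^*}(\mu)=C^\sigma_{\varphi^*}(\mu)$'' and then try to pass it down to $(\varphi_W)^*$. That equivalence is not established anywhere in the paper, and passing it to $(\varphi_W)^*$ is delicate because $(\varphi_W)^*$ lives on $W^*$, which is a \emph{quotient} of $X^*$, not a subspace. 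The paper sidesteps this entirely: writing $E\colon C_\phi(\nu)\hookrightarrow C_\varphi(\mu)$ for the embedding, it uses the conjugate-of-composition formula $I^*_\phi = E^*I^*_\varphi$ (citing \cite[§3.4, Thm.~3]{IT}) to get $I^*_\phi$ real-valued directly, then $I^*_\phi = I_{\phi^*}$ on $L_{\phi^*}(\nu)$ (separable case, so dualizability is automatic), hence $L_{\phi^*}(\nu)=C^\sigma_{\phi^*}(\nu)$ by Theorem \ref{thm: C max subspace}. This is the missing link you flag as the ``main obstacle''; the fix is precisely this pullback identity, not the global characterization you attempt.

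For the converse, your argument that $X$ is Asplund (embedding a given $W_0\in\SS(X)$ into $C^\sigma_\varphi(\mu)$ via $x\mapsto x\chi_{A'}$ using Lemma \ref{lem: a emb}) is a clean alternative to the paper's route, which instead builds the separable $C_\phi(\nu)$ around a given sequence and reads off separability of $W^*$ from the converse of Theorem \ref{thm: C sep}. Both work.

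Your contrapositive for ``$I^*_\varphi$ real-valued'' has a genuine gap. You write ``a separable closed $W\in\SS(X^*)$ on which $\varphi^*\notin\Delta_2$'' and then form $C^\sigma_{\varphi_W}(\mu')$; but $\varphi$ is defined on $X$, so $\varphi_W$ for $W\subset X^*$ is meaningless, and even reading $W\subset X$, the relation between $\varphi^*\notin\Delta_2$ and $(\varphi_W)^*\notin\Delta_2$ is exactly the $X$-vs-$X^*$ issue above. The paper's proof is direct and avoids this: given $v\in C_\varphi(\mu)^*$, pick a maximizing sequence $u_n$ for $I^*_\varphi(v)$, localize to $(W,\nu)$ so that $I^*_\varphi(v)=I^*_\phi(v)$; since $C_\phi(\nu)$ is separable its dual $L_{\phi^*}(\nu)$ is separable by the Asplund hypothesis, hence $L_{\phi^*}(\nu)=C_{\phi^*}(\nu)$ by Theorem \ref{thm: sep implies L = C}, and then $I^*_\phi(v)<\infty$ by Theorem \ref{thm: C max subspace}. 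I recommend you replace the contrapositive with this direct argument.
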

	
	\begin{proof}
		Regarding the first claim, it suffices to prove that any separable subspace of $C^\sigma_\varphi(\mu)$ has a superspace with separable dual as then the subspace itself will have a separable dual via restriction of the density set. Here, the Hahn-Banach extension theorem enters. Therefore, it suffices if any sequence $u_n \in C^\sigma_\varphi(\mu)$ is contained in a superspace with separable dual. We may assume $\mu$ to be complete. Pick $W \in \SS(X)$ such that the sequence $u_n$ is almost $W$-valued and take $\Sigma \in \AA_\sigma$ a set off which all $u_n$ vanish. We denote by $\phi$ the restriction of $\varphi$ to $\Sigma \times W$. As $\phi$ is a normal integrand hence strongly measurable in the Wijsman topology by the Hess theorem, the $\sigma$-algebra $\AA'$ generated by $u_n$ and $\phi$ on $\Sigma$ is separable. Consequently, the restriction $\nu$ of $\mu$ to $\AA'$ is separable if we arrange $\nu$ to be $\sigma$-finite, which is possible without restriction by enlarging $\AA'$ by a sequence $F_n \in \AA$ such that $\Sigma = \bigcup F_n$. There holds $C_\phi(\nu)^* = L_{\phi^*}(\nu)$ by Corollary \ref{cor: C*} since $X$ is an Asplund space iff $X^*$ has the Radon-Nikodym property. Therefore, we are finished if we prove that $L_{\varphi^*}(\nu)$ is separable. Denoting by $E \colon C_\phi(\nu) \to C_\varphi(\mu)$ the identical embedding, we have $I^*_\phi = E^* I^*_\varphi$ by \cite[§3.4, Thm. 3]{IT}, hence $I^*_\phi = I_{\phi^*}$ is real-valued so that the dual space $L_{\phi^*}(\nu)$ equals $C^\sigma_{\phi^*}(\nu)$ by Theorem \ref{thm: C max subspace}. In conclusion, this space is separable by Theorem \ref{thm: C sep} as $W^*$ and $\nu$ are.
		
		For the converse, since, for any sequence $u_n \in C^\sigma_\varphi(\mu)$, the measure $\nu$ defined as above is separable, the space $C_\phi(\nu)$ is separable by Theorem \ref{thm: C sep}. Here, it enters that $\mu$ has no atom of infinite measure. Thus, $C_\phi(\mu)^* = L_{\phi^*}(\nu)$ is separable by the Asplund property. Consequently, Theorem \ref{thm: C sep} yields that $W^*$ is separable, hence $X$ is an Asplund space.
		
		Suppose now that, for $v \in C_\varphi(\mu)^* = L_{\varphi^*}(\mu)$, the sequence $u_n$ is such that
		$$
		I^*_\varphi(v) = \lim_n \langle v, u_n \rangle - I_\varphi(u_n).
		$$
		Since the separable dual space $L_{\phi^*}(\nu)$ coincides with $C_{\phi^*}(\nu)$ by Theorem \ref{thm: sep implies L = C}, there holds $I^*_\varphi(v) = I^*_\phi(v) < \i$ by Theorem \ref{thm: C max subspace}, hence $I^*_\varphi$ is real-valued.
	\end{proof}
	
	\section{Duality theory} \label{sec: duality}
	
	In this section, we obtain an abstract direct sum decomposition of $L_\varphi(\mu)^*$ into three fundamentally different types of functionals: Absolutely continuous, diffuse and purely finitely additive. We represent the absolutely continuous component, which turns out to agree with both $C_\varphi(\mu)^*$ and the function component of $L_\varphi(\mu)^*$. We then characterize the reflexivity of $L_\varphi(\mu)$ and represent the convex conjugate and the subdifferential of a general integral functional on $L_\varphi(\mu)$.
	
	\subsection{Types of functionals} \label{ssec: functionals}
	
	We denote by $\ba\left( \AA \right)$ the linear space of bounded, finitely additive, real set functions on $\AA$ equipped with the total variation norm
	$$
	\| \nu \| = \left| \nu \right|(\Om) = \sup \left\{ \sum_{i = 1}^n \left| \nu(A_i) \right| \st A_i \in \AA \text{ measurable parition of } \Om \right\}.
	$$
	For $\nu \in \ba\left( \AA \right)$, consider the positive part
	$$
	\nu^+ \colon \Sigma \to \R \colon A \mapsto \sup_{B \subset A} \nu(B).
	$$
	The negative part is defined as $\nu^- = \left( - \nu \right)^+$. Then $\nu^+$ and $\nu^-$ belong to $\ba \left( \AA \right)$ and there hold the relations
	\begin{equation} \label{eq: Jordan decomp}
		\nu = \nu^+ - \nu^-; \quad \left| \nu \right| = \nu^+ + \nu^-; \quad \| \nu \| = \| \nu^+ \| + \| \nu^- \|.
	\end{equation}
	Cf. \cite[Thm.III.1.8]{DSch}. One has the following refinement of the classical Hewitt-Yosida theorem due to Giner:
	
	\begin{theorem} \label{thm: giner 1.3.5}
		The space $\ba\left( \AA \right)$ is a direct topological sum of its linear subspaces $\Sigma(\AA)$ and $F(\AA)$ consisting of the $\sigma$-additive and the purely finitely additive elements, respectively. The projectors onto $\Sigma(\AA)$ and $F(\AA)$ are monotone, i.e.
		\begin{equation} \label{eq: monotonicity}
			\nu = \nu_\sigma + \nu_f \ge 0 \implies \nu_\sigma \ge 0; \quad \nu_f \ge 0.
		\end{equation}
		Furthermore, there holds
		\begin{equation} \label{eq: norm decomp}
			\| \nu \| = \| \nu_\sigma \| + \| \nu_f \| \quad \forall \nu \in \ba\left( \AA \right).
		\end{equation}
		Finally, setting $\nu_B = \nu \left( \cdot \cap B \right)$ for $\nu \in \ba\left( \AA \right)$ and $B \in \AA$, there holds $\nu_B \in \ba\left( \AA \right)$ and
		\begin{equation} \label{eq: comm}
			\left( \nu_a \right)_B = \left( \nu_B \right)_a; \quad \left( \nu_f \right)_B = \left( \nu_B \right)_f.
		\end{equation}
	\end{theorem}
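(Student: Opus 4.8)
The plan is to obtain the splitting as the band projection associated with the $\sigma$-additive measures inside the Dedekind complete Riesz space $\ba\left( \AA \right)$, and to read off the remaining assertions from general band-projection calculus. Recall that, ordered setwise, $\ba\left( \AA \right)$ is a Dedekind complete Riesz space whose total variation norm is additive on the positive cone, i.e.\ $\| \mu + \nu \| = \| \mu \| + \| \nu \|$ whenever $\mu, \nu \ge 0$; its lattice operations and the Jordan decomposition (\ref{eq: Jordan decomp}) are the classical ones, cf.\ \cite[Thm.\ III.1.8]{DSch}. First I would verify that $\Sigma(\AA)$ is a band in $\ba\left( \AA \right)$: it is a linear subspace, it is an order ideal (if $\left| \mu \right| \le \left| \nu \right|$ and $\nu$ is $\sigma$-additive, then so is $\mu$, since $\left| \mu \right|(E_n) \le \left| \nu \right|(E_n) \to 0$ along any disjoint sequence exhausting a given set), and it is order closed -- this last point, that the supremum in $\ba\left( \AA \right)$ of an order-bounded increasing family of $\sigma$-additive measures is again $\sigma$-additive, is the only non-routine step and rests on a Vitali--Hahn--Saks type boundedness argument together with the care that suprema in $\ba\left( \AA \right)$ need not be computed setwise. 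Granting the band property, the Riesz band decomposition theorem makes $\Sigma(\AA)$ a projection band, so
\begin{equation*}
	\ba\left( \AA \right) = \Sigma(\AA) \oplus \Sigma(\AA)^{\mathrm d},
\end{equation*}
and unwinding disjointness identifies $\Sigma(\AA)^{\mathrm d}$ with $F(\AA)$: a measure $\nu$ is disjoint from every $\sigma$-additive measure iff no nonzero $\sigma$-additive $\rho$ satisfies $0 \le \rho \le \left| \nu \right|$, which is exactly purely finite additivity. The band projection $P$ with $\nu_\sigma = P \nu$ and $\nu_f = \nu - P \nu$ is then the asserted decomposition, and it is unique.

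For the monotonicity (\ref{eq: monotonicity}): band projections are positive and satisfy $0 \le P \nu \le \nu$ for $\nu \ge 0$, so $\nu \ge 0$ forces $\nu_\sigma = P\nu \ge 0$ and $\nu_f = \nu - P\nu \ge 0$. For the norm identity (\ref{eq: norm decomp}): $P\nu$ and $\nu - P\nu$ are disjoint elements of the Riesz space, whence $\left| \nu \right| = \left| P\nu \right| + \left| \nu - P\nu \right|$; applying additivity of the total variation norm on the positive cone yields $\| \nu \| = \| P\nu \| + \| \nu - P\nu \| = \| \nu_\sigma \| + \| \nu_f \|$. In particular $\| P \| \le 1$ and $\| I - P \| \le 1$, so the sum $\ba\left( \AA \right) = \Sigma(\AA) \oplus F(\AA)$ is a topological one. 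For (\ref{eq: comm}): for fixed $B \in \AA$ the map $\nu \mapsto \nu_B = \nu\left( \cdot \cap B \right)$ lands in $\ba\left( \AA \right)$ and is itself a band projection -- it is linear, positive, idempotent, and $\nu - \nu_B = \nu_{B^c}$ with $\left| \nu_B \right| \wedge \left| \nu_{B^c} \right| = 0$ -- namely the projection onto the band of measures vanishing off $B$. Since the band projections of a Dedekind complete Riesz space form a Boolean algebra and in particular pairwise commute, $P$ and $(\cdot)_B$ commute, i.e.\ $\left( \nu_\sigma \right)_B = \left( \nu_B \right)_\sigma$; composing this commutation with $I - P$ gives $\left( \nu_f \right)_B = \left( \nu_B \right)_f$.

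The main obstacle is the order-closedness of $\Sigma(\AA)$ inside $\ba\left( \AA \right)$, which is exactly the ``hard'' half of the classical Yosida--Hewitt theorem and is where a Nikod\'ym/Vitali--Hahn--Saks boundedness argument is needed. Should one prefer a shorter route, it is legitimate to quote the Yosida--Hewitt theorem for the existence, uniqueness and positivity of the $\sigma$-additive/purely finitely additive splitting and then prove only (\ref{eq: norm decomp}) and (\ref{eq: comm}) by the disjointness computation above; in either case the Riesz-space formulation is what renders all four assertions uniform and essentially computation-free.
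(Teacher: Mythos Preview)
Your proof is correct and takes a genuinely different route from the paper. The paper quotes Yosida--Hewitt \cite[Thm.~1.24]{YoHe} directly for existence, uniqueness, and monotonicity of the splitting, then proves (\ref{eq: norm decomp}) by an explicit computation with the Jordan decomposition: it shows $\nu_\sigma^+ \le (\nu^+)_\sigma$ and the analogous inequalities, then chains through $\|\nu\| \le \|\nu_\sigma\| + \|\nu_f\| = \|\nu_\sigma^+\| + \|\nu_\sigma^-\| + \|\nu_f^+\| + \|\nu_f^-\| \le \cdots = \|\nu\|$. For (\ref{eq: comm}) it reduces to $\nu \ge 0$, verifies by hand that $(\nu_\sigma)_B \in \Sigma(\AA)$ and $(\nu_f)_B \in F(\AA)$ via the order relations $0 \le (\nu_\sigma)_B \le \nu_\sigma$ and the definition of pure finite additivity, and then invokes uniqueness of the decomposition of $\nu_B$. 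Your approach packages everything as band-projection calculus in the Dedekind complete Riesz space $\ba(\AA)$: once $\Sigma(\AA)$ is a band, monotonicity, the norm identity (via $|\nu| = |\nu_\sigma| + |\nu_f|$ for disjoint summands), and commutation (band projections commute) all fall out uniformly. This is more conceptual and arguably cleaner; the paper's argument is more elementary and self-contained, requiring no Riesz-space background.

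One minor point: your concern that order-closedness of $\Sigma(\AA)$ needs a Vitali--Hahn--Saks argument is overstated. For an upward-directed, order-bounded family $\{\mu_\alpha\}$ of nonnegative elements in $\ba(\AA)$, the supremum \emph{is} computed setwise (directedness gives additivity of $A \mapsto \sup_\alpha \mu_\alpha(A)$), and then $\sigma$-additivity of the setwise supremum of $\sigma$-additive $\mu_\alpha$ follows from $\sum_{n=1}^N \sup_\alpha \mu_\alpha(E_n) = \sup_\alpha \sum_{n=1}^N \mu_\alpha(E_n) \le \sup_\alpha \mu_\alpha(E)$ and the trivial reverse inequality. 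No uniform-boundedness principle is needed.
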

	
	\begin{proof}
		The first part of the theorem up to (\ref{eq: monotonicity}) is classical, cf. \cite[Thm. 1.24]{YoHe}. The rest is due to \cite[Cor. A1.4]{Gi1}. We repeat his argument for the sake of completeness since the source is hard to obtain. (\ref{eq: norm decomp}): We start by showing $\nu^+_\sigma \le \left( \nu^+ \right)_\sigma$. There holds $\nu_\sigma = \left( \nu^+ \right)_\sigma - \left( \nu^- \right)_\sigma$ since the projector onto $\Sigma(\AA)$ is linear.
		$$
		\nu^+(A) = \sup_{B \subset A} \nu_\sigma(B) \le \sup_{B \subset A} \left( \nu^+ \right)_\sigma(B) = \left( \nu^+ \right)_\sigma(A)
		$$
		by (\ref{eq: Jordan decomp}). Next, we check that $\nu^+_\sigma \le \left( \nu^+ \right)_\sigma$.
		$$
		\nu_\sigma = \left( \nu^+ \right)_\sigma - \left( \nu^- \right)_\sigma = \nu^+_\sigma - \nu^-_\sigma \implies \left( \nu^+ \right)_\sigma - \nu^+_\sigma = \left( \nu^- \right)_\sigma - \nu^-_\sigma \ge 0.
		$$
		In the same way we obtain $\nu^+_f \le \left( \nu^+ \right)_f$ and $\nu^-_f \le \left( \nu^- \right)_f$. Now, we prove the announced identity of norms.
		\begin{align*}
			\| \nu \| \le
			\| \nu_\sigma \| + \| \nu_f \|
			& = \| \nu^+_\sigma \| + \| \nu^-_\sigma \| + \| \nu^+_f \| + \| \nu^-_f \| \\
			& \le \| \left( \nu^+ \right)_\sigma \| + \| \left( \nu^- \right)_\sigma \| + \| \left( \nu^+ \right)_f \| + \| \left( \nu^- \right)_f \| \\
			& = \left( \nu^+ \right)_\sigma \left( \Om \right) + \left( \nu^+ \right)_f \left( \Om \right) + \left( \nu^- \right)_\sigma \left( \Om \right) + \left( \nu^- \right)_f \left( \Om \right) \\
			& = \nu^+ \left( \Om \right) + \nu^- \left( \Om \right) = \| \nu^+ \| + \| \nu^- \| = \| \nu \|.
		\end{align*}
		(\ref{eq: comm}): Since $\left( \nu_A \right)^+ = \left( \nu^+ \right)_A$ and $\left( \nu_A \right)^- = \left( \nu^- \right)_A$ for $A \in \AA$, we may assume $\nu \ge 0$. Hence,
		$$
		0 \le \left( \nu_\sigma \right)_A = \nu_\sigma \left( \cdot \cap A \right) \le \nu_\sigma \in \Sigma(\AA)
		$$
		so that $\left( \nu_\sigma \right)_A \in \Sigma(\AA)$ and likewise we obtain $\left( \nu_f \right)_A \in F(\AA)$ by definition of a purely finitely additive measure \cite[Def. 1.13]{YoHe}. Since $\nu_A = \left( \nu_\sigma \right)_A + \left( \nu_f \right)_A$, one deduces from the uniqueness of the decomposition the claimed result.
	\end{proof}
	
	Let $\nu \colon \AA \to \left[ 0, \i \right]$ be a measure. By a result attributed to E. De Giorgi \cite[Thm. 1.114]{FoLe}, we can decompose $\nu$ into the sum of three measures
	\begin{equation} \label{eq: decom De Giorgi}
		\nu = \nu_a + \nu_d + \nu_s
	\end{equation}
	with $\nu_a \ll \mu$ and $\nu_d$ diffuse with respect to $\mu$. Moreover, if $\nu$ is $\sigma$-finite, then these three measures are mutually singular and $\nu_s \perp \mu$. Cf. \cite{FoLe} for the terminology. The decomposition (\ref{eq: decom De Giorgi}) is constructed explicitly in \cite{FoLe}:
	\begin{equation} \label{eq: ac def}
		\nu_a(A) = \sup \left\{ \int_A u \, d \mu \st u \colon \Om \to \left[ 0, \i \right] \text{ with } \int_E u \, d \mu \le \nu(E) \text{ if } E \subset A \right\},
	\end{equation}
	\begin{equation} \label{eq: d def}
		\nu_d(A) = \sup \left\{ \nu(E) \st E \subset A \colon E' \subset E \land \nu(E') > 0 \implies \mu(E') = \i \right\},
	\end{equation}
	and
	\begin{equation} \label{eq: s def}
		\nu_s(A) = \sup \left\{ \nu(E) \st E \subset A \text{ with } \mu(E) = 0 \right\}.
	\end{equation}
	All functions and sets in these definitions are assumed measurable. If $\nu$ is a signed measure, then $\nu^+$ and $\nu^-$ are mutually singular by \cite[Thm. 1.178]{FoLe} and we can decompose $\nu^+$ and $\nu^-$ according to (\ref{eq: decom De Giorgi}). We then define $\nu_a = \left( \nu^+ \right)_a - \left( \nu^- \right)_a$ etc. While every diffuse measure is absolutely continuous, $\nu_a$ given by (\ref{eq: ac def}) is distinguished against the diffuse part under additional assumptions:
	
	\begin{proposition} \label{pr: sgm spprt}
		Let $\nu$ be finite. Then there exists a set $\Sigma \in \AA$ that is $\sigma$-finite for $\mu$ with
		\begin{equation} \label{eq: abs cnt cnc}
			\nu_a(A) = \nu_a \left( A \cap \Sigma \right) \quad \forall A \in \AA.
		\end{equation}
	\end{proposition}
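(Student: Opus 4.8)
The plan is to reduce to a positive measure and then manufacture a genuine $\mu$-density for $\nu_a$, exploiting the explicit formula (\ref{eq: ac def}) rather than a Radon--Nikodym theorem, which is unavailable since $\mu$ need not be $\sigma$-finite. Since $\nu$ is finite, $\nu^+$ and $\nu^-$ are finite and mutually singular and $\nu_a = (\nu^+)_a - (\nu^-)_a$; if I produce $\mu$-$\sigma$-finite sets $\Sigma^+, \Sigma^-$ with $(\nu^\pm)_a(A) = (\nu^\pm)_a(A \cap \Sigma^\pm)$ for all $A \in \AA$, then $\Sigma = \Sigma^+ \cup \Sigma^-$ does the job: each $(\nu^\pm)_a$ vanishes off $\Sigma^\pm \subset \Sigma$, so $(\nu^\pm)_a(A \cap \Sigma) = (\nu^\pm)_a(A \cap \Sigma^\pm)$, and subtracting gives (\ref{eq: abs cnt cnc}). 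So I assume $\nu \ge 0$ finite from now on.

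Call a measurable $u \colon \Om \to [0, \i]$ \emph{admissible} if $\int_E u \, d\mu \le \nu(E)$ for every $E \in \AA$; by (\ref{eq: ac def}) we have $\nu_a(A) = \sup\{ \int_A u \, d\mu : u \text{ admissible} \}$ for each $A$, since a competitor for $\nu_a(A)$ in the sense of (\ref{eq: ac def}) may be taken zero off $A$, whereupon it is admissible globally, and conversely $g\chi_A$ is a competitor for any admissible $g$. The first key point is that admissible functions are closed under pairwise maximum: for admissible $u, v$ and $E \in \AA$, split $E$ along $\{u \ge v\}$ and estimate $\int_E \max(u,v)\,d\mu \le \nu(E \cap \{u \ge v\}) + \nu(E \cap \{u < v\}) = \nu(E)$. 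Choosing admissible $u_n$ with $\int_\Om u_n \, d\mu \to \nu_a(\Om) \le \nu(\Om) < \i$ and letting $g$ be the increasing limit of $\max(u_1, \dots, u_n)$, monotone convergence shows $g$ is admissible and $\int_\Om g \, d\mu = \nu_a(\Om)$, so $g \in L_1(\mu)$.

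The main step is then $\nu_a(A) = \int_A g \, d\mu$ for every $A \in \AA$. The inequality $\int_A g\,d\mu \le \nu_a(A)$ is immediate because $g\chi_A$ is admissible. For the reverse, suppose $\int_A g\,d\mu < \nu_a(A)$ for some $A$, and pick an admissible $v$ vanishing off $A$ with $\int_A v\,d\mu > \int_A g\,d\mu$. I claim $w = g\chi_{\Om \setminus A} + \max(g,v)\chi_A$ is admissible: for $E \in \AA$ one has $\int_E w\,d\mu = \int_{E \setminus A} g\,d\mu + \int_{E \cap A}\max(g,v)\,d\mu \le \nu(E \setminus A) + \nu(E \cap A) = \nu(E)$, using admissibility of $g$ off $A$ and of $\max(g\chi_A, v)$ on $A$. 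But $\int_\Om w\,d\mu = \int_\Om g\,d\mu - \int_A g\,d\mu + \int_A v\,d\mu > \int_\Om g\,d\mu = \nu_a(\Om)$, contradicting that $w$ is admissible. Hence $g$ is a density for $\nu_a$ with respect to $\mu$.

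Finally, set $\Sigma = \{g > 0\} = \bigcup_{n \ge 1}\{g > n^{-1}\}$. By the Markov inequality $\mu(\{g > n^{-1}\}) \le n \int_\Om g\,d\mu < \i$, so $\Sigma$ is $\mu$-$\sigma$-finite, and $\nu_a(A) = \int_A g\,d\mu = \int_{A \cap \Sigma} g\,d\mu = \nu_a(A \cap \Sigma)$ for all $A \in \AA$, which is (\ref{eq: abs cnt cnc}). I expect the delicate point to be the density step: the maximum-stability and cut-and-paste arguments must be run carefully so that they remain valid with no $\sigma$-finiteness assumption on $\mu$, which is precisely why one works directly from the explicit construction (\ref{eq: ac def}) of \cite{FoLe} instead of invoking Radon--Nikodym.
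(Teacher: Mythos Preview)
Your proof is correct and follows the same core idea as the paper: obtain a maximizer $g$ for the supremum defining $\nu_a(\Om)$ via the explicit formula (\ref{eq: ac def}), observe that $g \in L_1(\mu)$ since $\nu$ is finite, and take $\Sigma = \{g > 0\}$. The paper's proof is shorter because it cites \cite[Lem.~1.102]{FoLe} for the existence of the maximizer, whereas you reprove it (the max-stability and monotone-limit argument), and because the paper does not need your full density identity $\nu_a(A) = \int_A g\,d\mu$: it only uses the sandwiching $\nu_a(\Om) = \lim_n \int_{F_n} g\,d\mu \le \lim_n \nu_a(\Om \cap F_n) \le \nu_a(\Om)$ for $F_n \uparrow \{g \ne 0\}$, which already yields $\nu_a(\Om \setminus \Sigma) = 0$ and hence (\ref{eq: abs cnt cnc}). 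Your density step is a genuine (and correct) strengthening, effectively a Radon--Nikodym theorem for $\nu_a$ without $\sigma$-finiteness of $\mu$.

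One small slip: in the contradiction step you write $\int_\Om w\,d\mu = \int_\Om g\,d\mu - \int_A g\,d\mu + \int_A v\,d\mu$, but the correct expression has $\int_A \max(g,v)\,d\mu$ in place of $\int_A v\,d\mu$, so the equality should be $\ge$. Since $\max(g,v) \ge v$, the strict inequality $\int_\Om w\,d\mu > \nu_a(\Om)$ still follows and the argument is unaffected.
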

	
	\begin{proof}
		The claim follows if we prove that
		$$
		\nu_a(\Om) = \sup \left\{ \nu_a(\Om \cap F) \st F \in \AA_f \right\}.
		$$
		By \cite[Lem. 1.102]{FoLe}, there exists $u$ attaining the supremum in (\ref{eq: ac def}) for $A = \Om$. As $\nu$ is finite, $u$ is integrable hence vanishes outside a $\sigma$-finite set. Therefore, we find a sequence of sets $F_n$ with $\mu \left( F_n \right) < \i$ increasing towards $\left\{ u \ne 0 \right\}$. Consequently
		$$
		\nu_a(\Om) = \lim_n \int_{F_n} u \, d \mu \le \lim_n \nu_a \left( \Om \cap F_n \right) \le \nu_a(\Om)
		$$
		yields the claim.
	\end{proof}
	
	Whenever we say that a finite measure $\nu$ is absolutely continuous with respect to $\mu$ in the following, we mean this to include the property (\ref{eq: abs cnt cnc}). In analogy to Theorem \ref{thm: giner 1.3.5} we can decompose $\Sigma(\AA)$ with respect to $\mu$ into a direct topological sum.
	
	\begin{theorem} \label{thm: tp decom De Giorgi}
		The space $\Sigma \left( \AA \right)$ is a direct topological sum of its subspaces $A(\mu)$, $D(\mu)$ and $S(\mu)$ consisting of the absolutely continuous, the diffuse and the singular elements with respect to $\mu$, respectively. The projectors onto the subspaces are monotone, i.e.
		\begin{equation} \label{eq: mon}
			\nu = \nu_a + \nu_d + \nu_s \ge 0 \implies \nu_a \ge 0; \quad \nu_d \ge 0; \quad \nu_s \ge 0.
		\end{equation}
		Furthermore, there holds
		\begin{equation} \label{eq: nrm dcmp}
			\| \nu \| = \| \nu_a \| + \| \nu_d \| + \| \nu_s \| \quad \forall \nu \in \Sigma\left( \AA \right).
		\end{equation}
		Finally, setting $\nu_A = \nu \left( \cdot \cap A \right)$ for $\nu \in \Sigma\left( \AA \right)$ and $A \in \AA$, there holds $\nu_A \in \Sigma\left( \AA \right)$ and
		\begin{equation} \label{eq: cmmt}
			\left( \nu_a \right)_A = \left( \nu_A \right)_a; \quad \left( \nu_d \right)_A = \left( \nu_A \right)_d \quad \left( \nu_s \right)_A = \left( \nu_A \right)_s.
		\end{equation}
	\end{theorem}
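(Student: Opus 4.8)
The plan is to model the proof of Theorem \ref{thm: tp decom De Giorgi} closely on that of Theorem \ref{thm: giner 1.3.5}, treating the three-fold De Giorgi decomposition (\ref{eq: decom De Giorgi}) the way Giner treated the two-fold Hewitt-Yosida decomposition. The classical input is \cite[Thm. 1.114]{FoLe}: for a $\sigma$-finite non-negative measure $\nu$, the three measures $\nu_a$, $\nu_d$, $\nu_s$ of (\ref{eq: ac def})--(\ref{eq: s def}) are mutually singular and sum to $\nu$, and the decomposition is unique among decompositions into an absolutely continuous (in the enhanced sense including (\ref{eq: abs cnt cnc})), a diffuse, and a $\mu$-singular part. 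For a general signed $\nu \in \Sigma(\AA)$ one first splits $\nu = \nu^+ - \nu^-$ into its mutually singular Jordan parts by \cite[Thm. 1.178]{FoLe} — noting $\nu^\pm$ are finite since $\nu \in \ba(\AA) \supset \Sigma(\AA)$ are bounded — then decomposes each of $\nu^+, \nu^-$ via (\ref{eq: decom De Giorgi}) and sets $\nu_a = (\nu^+)_a - (\nu^-)_a$, $\nu_d = (\nu^+)_d - (\nu^-)_d$, $\nu_s = (\nu^+)_s - (\nu^-)_s$. Linearity of the three projectors then follows from uniqueness of the decomposition exactly as in the Hewitt-Yosida case: given $\nu = \mu_1 + \mu_2$, decompose each summand and regroup the nine pieces by type; that the regrouped $a$-part is genuinely absolutely continuous, the $d$-part genuinely diffuse, and the $s$-part genuinely $\mu$-singular is immediate from the fact that sums of measures of a given type remain of that type, and uniqueness pins down the identification. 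Continuity of the projectors follows from the norm identity (\ref{eq: nrm dcmp}), which gives bound $1$.

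The monotonicity (\ref{eq: mon}) is essentially built into the construction: if $\nu \ge 0$, then $\nu^- = 0$, so $\nu_a = (\nu^+)_a = \nu_a \ge 0$ directly from the explicit formula (\ref{eq: ac def}) (a supremum of integrals of non-negative functions over subsets), and likewise $\nu_d \ge 0$ from (\ref{eq: d def}) and $\nu_s \ge 0$ from (\ref{eq: s def}), each being a supremum of values $\nu(E) \ge 0$. The norm identity (\ref{eq: nrm dcmp}) is proved by the same two-sided argument Giner uses for (\ref{eq: norm decomp}): the inequality $\|\nu\| \le \|\nu_a\| + \|\nu_d\| + \|\nu_s\|$ is the triangle inequality; for the reverse, one shows $\nu^+_a \le (\nu^+)_a$ and similarly for the $d$- and $s$-components and for $\nu^-$, using that $\nu_a = (\nu^+)_a - (\nu^-)_a$ together with the fact that $(\nu^+)_a$, being the $a$-part of a non-negative measure, is non-negative, whence $\nu^+(A) = \sup_{B \subset A} \nu_a(B) \le \sup_{B \subset A} (\nu^+)_a(B) = (\nu^+)_a(A)$ by the first of (\ref{eq: Jordan decomp}) applied to $\nu_a$; then the same chain of equalities as in the proof of (\ref{eq: norm decomp}) — passing through $(\nu^\pm)_a(\Om) + (\nu^\pm)_d(\Om) + (\nu^\pm)_s(\Om) = \nu^\pm(\Om)$ by mutual singularity — closes the loop. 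Since all three parts of $\nu$ are mutually singular when $\nu \ge 0$, this last additivity holds by \cite[Thm. 1.114]{FoLe}, and for signed $\nu$ it descends through the Jordan split.

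Finally, the commutation relations (\ref{eq: cmmt}) go through as in (\ref{eq: comm}). Since $(\nu_A)^+ = (\nu^+)_A$ and $(\nu_A)^- = (\nu^-)_A$ for $A \in \AA$, one reduces to $\nu \ge 0$. Then $0 \le (\nu_a)_A = \nu_a(\cdot \cap A) \le \nu_a$ and $\nu_a \in A(\mu)$, so $(\nu_a)_A \in A(\mu)$ (absolute continuity is clearly inherited by restriction, and the enhanced property (\ref{eq: abs cnt cnc}) survives since a $\sigma$-finite concentration set for $\nu_a$ also serves $(\nu_a)_A$); likewise $(\nu_d)_A \in D(\mu)$ and $(\nu_s)_A \in S(\mu)$ directly from the definitions of diffuse and $\mu$-singular. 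Since $\nu_A = (\nu_a)_A + (\nu_d)_A + (\nu_s)_A$ and the restrictions land in the respective subspaces, uniqueness of the decomposition of $\nu_A$ forces $(\nu_a)_A = (\nu_A)_a$ and the analogues. The one point requiring a little care — and the place I expect the most friction — is the interplay between (\ref{eq: abs cnt cnc}) and signed measures: one must confirm that defining $\nu_a := (\nu^+)_a - (\nu^-)_a$ still yields a measure concentrated on a single $\sigma$-finite set, which holds because $(\nu^+)_a$ and $(\nu^-)_a$ are each concentrated on such sets by Proposition \ref{pr: sgm spprt} and the union of two $\sigma$-finite sets is $\sigma$-finite; and that this enhanced notion is stable under the regrouping used for linearity, i.e. a finite sum of enhanced-absolutely-continuous measures is again enhanced-absolutely-continuous, which is the same $\sigma$-finite-union observation.
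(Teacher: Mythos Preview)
Your proposal is essentially correct and follows the same architecture as the paper, but there are two points worth flagging.

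First, a small gap: you repeatedly invoke ``uniqueness of the decomposition'' for signed $\nu$ (to get linearity of the projectors, and again for \eqref{eq: cmmt}), but the uniqueness you cite from \cite[Thm.~1.114]{FoLe} is only for non-negative $\nu$. The paper fills this by a short direct argument: given two decompositions $\nu = \nu^i_a + \nu^i_d + \nu^i_s$, first peel off the singular parts using a common $\mu$-null carrier $S$ (on which $\nu^i_a$ and $\nu^i_d$ vanish by absolute continuity), then separate $\nu_a$ from $\nu_d$ using that the former is concentrated on a $\sigma$-finite set $\Sigma$ by \eqref{eq: abs cnt cnc} while the latter vanishes on $\Sigma$ by diffusivity. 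This is exactly the verification that $A(\mu) \oplus D(\mu) \oplus S(\mu)$ is a genuine internal direct sum, and you should include it rather than assume it.

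Second, your route to \eqref{eq: nrm dcmp} and \eqref{eq: cmmt} differs from the paper's. For the norm identity you mimic the Giner chain $(\nu_a)^+ \le (\nu^+)_a$ etc.\ from the proof of \eqref{eq: norm decomp}; the paper instead observes in one line that the six pieces $(\nu^\pm)_a, (\nu^\pm)_d, (\nu^\pm)_s$ are pairwise mutually singular (the $\pm$ pieces because $\nu^+ \perp \nu^-$, the $a/d/s$ pieces by \cite[Thm.~1.114]{FoLe}), whence total variation is additive. For commutation you argue that $(\nu_a)_A \in A(\mu)$ etc.\ and appeal to uniqueness; the paper instead shows $(\nu_A)_a \le (\nu_a)_A$ etc.\ directly from monotonicity of the explicit formulas \eqref{eq: ac def}--\eqref{eq: s def} and then sandwiches. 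Both of your arguments are valid (your commutation proof is arguably cleaner once uniqueness is in hand), but note that your commutation argument again leans on signed uniqueness, so closing the gap above is doubly important.
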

	
	\begin{proof}
		Since it is trivial to check that $A(\mu)$, $D(\mu)$ and $S(\mu)$ are linear subspaces, we start by proving uniqueness of the decomposition. Suppose
		$$
		\nu = \nu^i_a + \nu^i_d + \nu^i_s, \quad i \in \left\{ 1, 2 \right\}
		$$
		with $\nu^i_a \ll \mu$ and satisfying (\ref{eq: abs cnt cnc}) but not necessarily given by (\ref{eq: ac def}). Ditto for $\nu^i_d$ and $\nu^i_s$. Suppose first $\nu^i_s = 0$ for all $i$. By our definition of absolute continuity, there exists $\Sigma \in \AA_\sigma$ with $\nu^i_a(A) = \nu^i_a(A \cap \Sigma)$ for all $A \in \AA$ and $i \in \left\{ 1, 2 \right\}$. We have $\nu^i_d(\Sigma) = 0$ due to diffusivity. Hence, the finite measure $\nu^1_a - \nu^2_a = \nu^2_d - \nu^1_d$ vanishes on $\Sigma$ and $\Om \setminus \Sigma$, hence $\nu^1_a = \nu^2_a$ and $\nu^1_d = \nu^2_d$.
		
		We come to the general case. As $\nu$ is finite, we have $\nu^i_s \perp \mu$ by \cite[Thm. 1.114]{FoLe} so that there exist $S_i \in \AA$ with $\mu \left( S_i \right) = 0$ and $\nu^i_s \left( A \cap S_i \right) = \nu^i_s(A)$ for all $A \in \AA$. Setting $S = S_1 \cup S_2$ we have $\mu(S) = 0$ thus $\nu^i_a$ and $\nu^i_d$ vanish on $S$. Consequently, the restriction of $\nu$ to $S$ agrees with the restrictions of both $\nu^i_s$ so that $\nu^1_s = \nu^2_s$. Now, the uniqueness of $\nu^i_a$ and $\nu^i_d$ follows by the first case. The mutual singularity of the components of (the mutually singular positive and negative parts of) $\nu$ yields (\ref{eq: mon}) and (\ref{eq: nrm dcmp}).
		
		$\nu_A \in \Sigma(\AA)$ is immediate. For the rest, it suffices to consider $\nu \ge 0$. We claim that
		$$
		\left( \nu_A \right)_a \le \left( \nu_a \right)_A; \quad \left( \nu_A \right)_d \le \left( \nu_d \right)_A; \quad \left( \nu_A \right)_s \le \left( \nu_s \right)_A.
		$$
		Directly from (\ref{eq: ac def}) we have that $\nu_a$ is monotone, i.e., $\nu^1 \le \nu^2 \implies \nu^1_a \le \nu^2_a$. The same is true for $\nu_d$ and $\nu_s$ by (\ref{eq: d def}) and (\ref{eq: s def}). Also, restriction of a measure is monotone. Therefore
		$$
		\left( \nu_A \right)_a \le \nu_a \implies \left( \nu_A \right)_a = \left( \left( \nu_A \right)_a \right)_A \le \left( \nu_a \right)_A.
		$$
		In the same way, we obtain $\left( \nu_A \right)_d \le \left( \nu_d \right)_A$ and $\left( \nu_A \right)_s \le \left( \nu_s \right)_A$. Finally
		$$
		\nu_A = \left( \nu_A \right)_a + \left( \nu_A \right)_d + \left( \nu_A \right)_s \le \left( \nu_a \right)_A + \left( \nu_d \right)_A + \left( \nu_s \right)_A = \nu_A
		$$
		obtains the claim.
	\end{proof}
	
	The following was first observed in \cite{Gi1}.
	
	\begin{proposition} \label{pr: giner 1.3.2}
		For all $\ell \in L_\varphi(\mu)^*$ and $u \in L_\varphi(\mu)$, the mapping
		$$
		\nu_{\ell, u} \colon \AA \to \R \colon A \mapsto \ell \left( u \chi_A \right)
		$$
		is an additive set function of bounded variation. Moreover
		$$
		\forall A \in \Sigma \quad \nu_{\ell, u \chi_A} = \nu_{\ell, u} \left( \cdot \cap A \right) = \left( \nu_{\ell, u} \right)_A.
		$$
		The mapping
		$$
		\nu \colon L_\varphi(\mu)^* \times L_\varphi(\mu) \to \ba\left( \Sigma \right) \colon \left( \ell, u \right) \to \nu_{\ell, u}
		$$
		is bilinear and continuous.
	\end{proposition}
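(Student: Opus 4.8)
The plan is to reduce the whole proposition to one elementary observation: if $g$ is a simple real function with $|g| \le 1$ on $\Om$ and $u \in L_\varphi(\mu)$, then $gu$ is strongly measurable, $gu \in L_\varphi(\mu)$, and $\|gu\|_\varphi \le \|u\|_\varphi$. Strong measurability of $gu$ is clear since $g$ takes finitely many values, so $gu$ is a finite combination of strongly measurable functions $u\chi_B$. For the norm bound, fix $\alpha > \|u\|_\varphi$; using that $\varphi_\om$ is even, convex and vanishes at the origin, one has the pointwise inequality $\varphi\left[\om, \alpha^{-1} g(\om) u(\om)\right] \le \varphi\left[\om, \alpha^{-1} u(\om)\right]$ for a.e.\ $\om$, whence $I_\varphi(\alpha^{-1} gu) \le I_\varphi(\alpha^{-1} u) \le 1$ and therefore $\|gu\|_\varphi \le \alpha$. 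In particular, taking $g = \chi_A$, every product $u\chi_A$ with $A \in \AA$ lies in $L_\varphi(\mu)$ and has norm at most $\|u\|_\varphi$, so the expressions $\ell(u\chi_A)$ defining $\nu_{\ell,u}$ make sense.

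Granting this, finite additivity, the restriction identity and bilinearity are pure linearity. For disjoint $A, B \in \AA$ we have $u\chi_{A\cup B} = u\chi_A + u\chi_B$, hence $\nu_{\ell,u}(A\cup B) = \ell(u\chi_A) + \ell(u\chi_B) = \nu_{\ell,u}(A) + \nu_{\ell,u}(B)$, and $\nu_{\ell,u}(\emptyset) = \ell(0) = 0$. The equality $\nu_{\ell, u\chi_A}(B) = \ell\left((u\chi_A)\chi_B\right) = \ell(u\chi_{A\cap B}) = \nu_{\ell,u}(A\cap B) = (\nu_{\ell,u})_A(B)$ is immediate from $\chi_A\chi_B = \chi_{A\cap B}$. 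Joint bilinearity of $(\ell, u) \mapsto \nu_{\ell,u}$ follows from the linearity of $\ell$ in its argument and of $u \mapsto u\chi_A$ in $u$.

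Bounded variation and continuity I would obtain simultaneously by the sign trick. Given a finite measurable partition $A_1, \dots, A_n$ of $\Om$, set $\varepsilon_i = \operatorname{sgn}\nu_{\ell,u}(A_i) \in \{-1,1\}$ and $g = \sum_{i=1}^n \varepsilon_i \chi_{A_i}$, a simple function with $|g| \le 1$ on $\Om$. Then $\sum_{i=1}^n |\nu_{\ell,u}(A_i)| = \ell\left(\sum_{i=1}^n \varepsilon_i\, u\chi_{A_i}\right) = \ell(gu) \le \|\ell\|\,\|gu\|_\varphi \le \|\ell\|\,\|u\|_\varphi$ by the contraction estimate. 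Passing to the supremum over all finite partitions yields $\nu_{\ell,u} \in \ba(\AA)$ together with the bound $\|\nu_{\ell,u}\| \le \|\ell\|_{L_\varphi(\mu)^*}\,\|u\|_\varphi$; since $\nu$ is bilinear, this bound is precisely its joint continuity. The argument is routine throughout; the only point that genuinely uses the structure of an Orlicz integrand — rather than of an abstract normed space — is the contraction estimate, where evenness, convexity and $\varphi_\om(0)=0$ are all needed.
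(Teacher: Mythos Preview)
Your argument is correct. The main difference from the paper's proof lies in how bounded variation is obtained. The paper bounds the sup norm $\|\nu_{\ell,u}\|_\infty = \sup_{A} |\ell(u\chi_A)| \le \|\ell\|^*_\varphi \|u\|_\varphi$ and then invokes the general equivalence $\|\cdot\|_\infty \le |\cdot|(\Om) \le 2\|\cdot\|_\infty$ on $\ba(\AA)$ from Dunford--Schwartz, arriving at $|\nu_{\ell,u}|(\Om) \le 2\|\ell\|^*_\varphi \|u\|_\varphi$. Your sign-trick argument bypasses this external fact and yields the sharper bound $|\nu_{\ell,u}|(\Om) \le \|\ell\|\,\|u\|_\varphi$ directly, at the price of first isolating the contraction estimate $\|gu\|_\varphi \le \|u\|_\varphi$ for simple $|g|\le 1$. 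This is a genuinely different (and slightly more self-contained) route: the paper's version is quicker to write down but relies on a cited norm equivalence, while yours makes the Orlicz-specific input (evenness, convexity, $\varphi_\om(0)=0$) explicit and gains a factor of two.
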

	
	\begin{proof}
		It is obvious that $\nu_{\ell, u}$ is an additive set function.	The variation of $\nu_{\ell, u}$ is bounded since
		\begin{equation} \label{eq: bln cnt}
			\| \nu_{\ell, u} \|_\i = \sup_{A \in \Sigma} \left| \langle \ell, u \chi_A \rangle \right| \le \| \ell \|^*_\varphi \| u \|_\varphi.
		\end{equation}
		We used the equivalence of norms $\| \cdot \|_\i \le \left| \, \cdot \, \right| \left( \Om \right) \le 2 \| \cdot \|_\i$ for the total variation norm, cf. \cite{DSch}. The bilinearity holds because $\langle \cdot, \cdot \rangle$ is bilinear. The continuity follows by (\ref{eq: bln cnt}).
	\end{proof}
	
	We now generalize the abstract dual space decomposition \cite[Thm. 1.3.7]{Gi1} to an arbitrary measure, obtaining an additional diffuse component that drops out if $\mu$ is $\sigma$-finite. Like \cite[Thm. 1.3.7]{Gi1} our result would easily extends to a broader class of function spaces.
	
	\begin{definition}
		A continuous linear functional $\ell \in L_\varphi(\mu)^*$ belongs to the $\sigma$-finite functionals $\Sigma_{\varphi^*}(\mu)$ if $\nu_{\ell, u}$ belongs to $\Sigma(\AA)$ for every $u \in L_\varphi(\mu)$. The absolutely continuous functionals $A_{\varphi^*}(\mu)$, the diffuse ones $D_{\varphi^*}(\mu)$ and the purely finitely additive ones $F_{\varphi^*}(\mu)$ are defined analogously with $A(\mu)$, $D(\mu)$ and $F(\AA)$ taking the role of $\Sigma(\AA)$.
	\end{definition}
	
	\begin{theorem} \label{thm: xtnsn Giner}
		There holds
		\begin{equation} \label{eq: dual decom}
			L_\varphi(\mu)^* = A_{\varphi^*}(\mu) \oplus D_{\varphi^*}(\mu) \oplus F_{\varphi^*}(\mu).
		\end{equation}
		More explicitly, every $\ell \in L_\varphi(\mu)^*$ has a unique sum decomposition
		$$
		\ell = \ell_a + \ell_d + \ell_f
		$$
		with $\ell_a \in A_{\varphi^*}(\mu)$, $\ell_d \in D_{\varphi^*}(\mu)$ and $\ell_f \in F_{\varphi^*}(\mu)$. There holds
		\begin{equation} \label{eq: norm decom}
			\| \, \ell \, \|^*_\varphi = \| \, \ell_a \, \|^*_\varphi + \| \, \ell_d \, \|^*_\varphi +	\| \, \ell_f \, \|^*_\varphi.
		\end{equation}
	\end{theorem}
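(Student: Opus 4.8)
The plan is to exploit the machinery already assembled. For $\ell \in L_\varphi(\mu)^*$ and any $u \in L_\varphi(\mu)$, Proposition \ref{pr: giner 1.3.2} gives an element $\nu_{\ell, u} \in \ba(\AA)$. First I would apply Theorem \ref{thm: giner 1.3.5} to split $\nu_{\ell, u} = (\nu_{\ell, u})_\sigma + (\nu_{\ell, u})_f$, and then apply Theorem \ref{thm: tp decom De Giorgi} to the $\sigma$-additive part to obtain $\nu_{\ell, u} = (\nu_{\ell, u})_a + (\nu_{\ell, u})_d + (\nu_{\ell, u})_f$, with the absolutely continuous part carrying the concentration property (\ref{eq: abs cnt cnc}). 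The crucial point is that this decomposition is natural in $u$: because both projectors commute with restriction to a measurable set (relations (\ref{eq: comm}) and (\ref{eq: cmmt})) and because $\nu_{\ell, u \chi_A} = (\nu_{\ell, u})_A$ by Proposition \ref{pr: giner 1.3.2}, one checks that $u \mapsto (\nu_{\ell, u})_a$, $u \mapsto (\nu_{\ell, u})_d$, $u \mapsto (\nu_{\ell, u})_f$ each again have the form $u \mapsto \nu_{\ell', u}$ for suitable linear functionals $\ell_a, \ell_d, \ell_f$. The definition of $\ell_a(u)$ is forced: evaluate the $\AA$-additive set function $(\nu_{\ell, u})_a$ at $\Om$, i.e. $\ell_a(u) := (\nu_{\ell, u})_a(\Om)$, and similarly for the other two components; linearity in $u$ follows from bilinearity of $\nu$ together with linearity of the projectors in Theorems \ref{thm: giner 1.3.5} and \ref{thm: tp decom De Giorgi}.

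The second step is to verify that $\ell_a, \ell_d, \ell_f$ are \emph{continuous}, i.e. actually lie in $L_\varphi(\mu)^*$, and to establish the norm identity (\ref{eq: norm decom}). For continuity and the norm bound simultaneously, I would use the norm-decomposition identities (\ref{eq: norm decomp}) and (\ref{eq: nrm dcmp}) together with the Amemiya description of the dual norm: one has $|\ell_a(u)| \le \|(\nu_{\ell,u})_a\| \le \|\nu_{\ell,u}\| \le \vvvert \ell \vvvert^*_\varphi \vvvert u \vvvert_\varphi$ (using the equivalence of $\|\cdot\|_\i$ with the total variation norm, as in (\ref{eq: bln cnt})), so $\ell_a$ is bounded, and symmetrically for $\ell_d, \ell_f$; this shows the three summands lie in $A_{\varphi^*}(\mu)$, $D_{\varphi^*}(\mu)$, $F_{\varphi^*}(\mu)$ respectively, since the set-function $\nu_{\ell_a, u} = (\nu_{\ell,u})_a$ is $\AA$-$\sigma$-additive and absolutely continuous in the sense of (\ref{eq: abs cnt cnc}) by construction, and likewise for the diffuse and purely finitely additive ones. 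For the norm identity, the inequality $\|\ell\|^*_\varphi \le \|\ell_a\|^*_\varphi + \|\ell_d\|^*_\varphi + \|\ell_f\|^*_\varphi$ is the triangle inequality; for the reverse, I would test against a single $u$ realizing (nearly) the norm of $\ell$, split $\nu_{\ell, u}$, and use that the total variation of $\nu_{\ell,u}$ equals the sum of the total variations of its three pieces by (\ref{eq: norm decomp}) and (\ref{eq: nrm dcmp}), then translate back through the norm-versus-modular inequalities of Lemma \ref{lem: modular-norm}; some care is needed here because the dual Luxemburg norm is not additive on arbitrary sums, so I expect to pass through the Amemiya norm (which \emph{is} the operator norm, as recorded after the definition of the dual Amemiya norm) and invoke the two-sided estimate $\|v\|^*_\varphi \le \vvvert v \vvvert^*_\varphi \le 2\|v\|^*_\varphi$ only for the equivalence, obtaining additivity of the operator norm directly from the additivity of total variation on mutually singular / $\sigma$-additive-versus-purely-finitely-additive decompositions.

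Finally, \emph{uniqueness} of the decomposition $\ell = \ell_a + \ell_d + \ell_f$: if $\ell = \ell_a' + \ell_d' + \ell_f'$ is another such splitting with $\ell_a' \in A_{\varphi^*}(\mu)$ etc., then for every $u$ the set function $\nu_{\ell, u}$ decomposes both as $(\nu_{\ell_a', u}) + (\nu_{\ell_d', u}) + (\nu_{\ell_f', u})$ and as the canonical De Giorgi/Hewitt--Yosida decomposition; by the uniqueness assertions in Theorems \ref{thm: giner 1.3.5} and \ref{thm: tp decom De Giorgi} the pieces must coincide for each $u$, whence $\ell_a = \ell_a'$, $\ell_d = \ell_d'$, $\ell_f = \ell_f'$ by evaluating at $\Om$. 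The main obstacle I anticipate is the bookkeeping in the second step — confirming that the three candidate functionals genuinely land in the prescribed subspaces (in particular that $\ell_a$ satisfies the $\sigma$-finite concentration property (\ref{eq: abs cnt cnc}) \emph{uniformly enough} to be a single well-defined functional, not just $u$-by-$u$) and that the dual-norm additivity survives the passage from set-function total variation back to the Luxemburg/Amemiya norm; everything else is a direct transcription of \cite[Thm. 1.3.7]{Gi1} with the extra diffuse summand carried along, which is exactly what Theorem \ref{thm: tp decom De Giorgi} was set up to supply.
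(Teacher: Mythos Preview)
Your existence and uniqueness arguments are essentially the paper's: define $\ell_a(u) = (\nu_{\ell,u})_a(\Om)$ etc., use linearity and continuity of the projectors in Theorems \ref{thm: giner 1.3.5} and \ref{thm: tp decom De Giorgi} for continuity of the pieces, and use the commutation relations (\ref{eq: comm}), (\ref{eq: cmmt}) together with $\nu_{\ell, u\chi_A} = (\nu_{\ell,u})_A$ to verify membership in the correct subspaces. Uniqueness via the uniqueness in those two theorems is also what the paper does.

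The gap is in the norm identity (\ref{eq: norm decom}). Your plan---pick a single $u$ nearly realizing $\|\ell\|^*_\varphi$, split $\nu_{\ell,u}$, and invoke additivity of total variation---does not give the non-trivial inequality $\|\ell_a\|^*_\varphi + \|\ell_d\|^*_\varphi + \|\ell_f\|^*_\varphi \le \|\ell\|^*_\varphi$. The operator norm is $\sup_{\|u\|\le 1} |\nu_{\ell,u}(\Om)|$, not $\sup_{\|u\|\le 1} \|\nu_{\ell,u}\|$; and a near-optimizer for $\ell$ need not be a near-optimizer for any of $\ell_a,\ell_d,\ell_f$ individually, so additivity of total variation for a fixed $u$ says nothing about the three suprema. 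What is actually needed is the reverse construction: take \emph{separate} near-optimizers $u_n, v_n$ (with $\|u_n\|_\varphi,\|v_n\|_\varphi<1$) for the individual pieces and splice them into a single test function in the unit ball.

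The paper does this in two stages. For $\|\ell\|^*_\varphi = \|\ell_\sigma\|^*_\varphi + \|\ell_f\|^*_\varphi$, one invokes \cite[Thm.~1.19]{YoHe}: given a $\sigma$-additive measure and a purely finitely additive one, there exist sets $A_n$ of arbitrarily small (auxiliary finite) measure on which the purely finitely additive part is entirely supported while the $\sigma$-additive part is small. Then $u_n\chi_{\Om\setminus A_n} + v_n\chi_{A_n}$ has $I_\varphi$-value below $1$ (since $I_\varphi(v_n\chi_{A_n})$ is small by absolute continuity of the integral) and tests $\ell_\sigma + \ell_f$ to nearly $\|\ell_\sigma\|^*_\varphi + \|\ell_f\|^*_\varphi$. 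For $\|\ell_\sigma\|^*_\varphi = \|\ell_a\|^*_\varphi + \|\ell_d\|^*_\varphi$, the splicing is even cleaner: by Proposition \ref{pr: sgm spprt} the functional $\ell_a$ is concentrated on a $\sigma$-finite set $\Sigma$, while $\ell_d$ vanishes on every $\sigma$-finite set by diffusivity, so $u_n\chi_\Sigma + v_n\chi_{\Om\setminus\Sigma}$ does the job. This structural ``disjoint support'' is the missing idea; total-variation additivity alone cannot substitute for it.
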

	
	\begin{proof}
		For uniqueness and existence, we adapt the argument in \cite[Thm. 1.3.7]{Gi1} to our setting. Uniqueness follows since $\ell$ has at most one sum decomposition $\ell = \ell_\sigma + \ell_f$ with $\ell_\sigma \in \Sigma_{\varphi^*}(\mu)$ and $\ell_f \in F_{\varphi^*}(\mu)$ by Theorem \ref{thm: giner 1.3.5}, while $\ell_\sigma$ has at most one sum decomposition $\ell_\sigma = \ell_a + \ell_d$ with $\ell_a \in A_{\varphi^*}(\mu)$ and $\ell_d \in D_{\varphi^*}(\mu)$.
		
		Regarding existence, we first prove that
		$$
		L_\varphi(\mu)^* = \Sigma_{\varphi^*}(\mu) \oplus F_{\varphi^*}(\mu).
		$$
		Employing the notation of Theorem \ref{thm: giner 1.3.5} and Proposition \ref{pr: giner 1.3.2}, we set for $u \in L_\varphi(\mu)$
		\begin{equation} \label{eq: prts}
			\ell_\sigma(u) = \left( \nu_{\ell, u } \right)_\sigma(\Om); \quad \ell_f(u) = \left( \nu_{\ell, u } \right)_f(\Om).
		\end{equation}
		These functions belong to $L_\varphi(\mu)^*$ since the mappings
		$$
		\nu_{\ell, \cdot}; \quad \left( \nu_{\ell, \cdot} \right)_\sigma; \quad \left( \nu_{\ell, \cdot} \right)_\sigma(\Om); \quad \left( \nu_{\ell, \cdot} \right)_f; \quad \left( \nu_{\ell, \cdot} \right)_f(\Om)
		$$
		are linear and continuous from $L_\varphi(\mu)$ to their respective image spaces by Proposition \ref{pr: giner 1.3.2} and Theorem \ref{thm: giner 1.3.5}. Continuity of the projectors enters. Clearly $\ell = \ell_\sigma + \ell_f$. We have $\ell_\sigma \in \Sigma_{\varphi^*}(\mu)$ since for every $A \in \AA$ there holds
		\begin{equation} \label{eq: msr}
			\begin{aligned}
				\ell_\sigma \left( u \chi_A \right) = \left( \nu_{\ell, u \chi_A} \right)_\sigma (\Om) = \left( \left( \nu_{\ell, u} \right)_A \right)_\sigma (\Om)
				& = \left( \left( \nu_{\ell, u} \right)_\sigma \right)_A (\Om)
				\\
				& = \left( \nu_{\ell, u} \right)_\sigma (A)
			\end{aligned}
		\end{equation}
		by Proposition \ref{pr: giner 1.3.2} and Theorem \ref{thm: giner 1.3.5}. In the same way, one checks $\ell_f \in F_{\varphi^*}(\mu)$.
		
		Let us decompose $\Sigma_{\varphi^*}(\mu) = A_{\varphi^*}(\mu) \oplus D_{\varphi^*}(\mu)$ to finish existence. We set
		$$
		\ell_a(u) = \left( \nu_{\ell_\sigma, u } \right)_a (\Om); \quad \ell_d(u) = \left( \nu_{\ell_\sigma, u } \right)_d (\Om).
		$$
		Similar to (\ref{eq: prts}) these functions belong to $L_\varphi(\mu)^*$ by Proposition \ref{pr: giner 1.3.2} and Theorem \ref{thm: tp decom De Giorgi}. Note $\ell_\sigma = \ell_a + \ell_d$ since $\ell_\sigma(0) = 0$ renders the singular part of $\nu_{\ell_\sigma, u}$ trivial. As in (\ref{eq: msr}) one checks $\ell_a \in A_{\varphi^*}(\mu)$ and $\ell_d \in D_{\varphi^*}(\mu)$ by Proposition \ref{pr: giner 1.3.2} and Theorem \ref{thm: tp decom De Giorgi}.
		
		(\ref{eq: norm decom}): We start by proving that $\| \, \ell \, \|^*_\varphi = \| \, \ell_\sigma \, \|^*_\varphi + \| \, \ell_f \, \|^*_\varphi$. Let $u_n$ and $v_n$ be sequences in $L_\varphi(\mu)$ with $\| u_n \|_\varphi < 1$ and $\| v_n \| < 1$ such that $\lim_n \ell_\sigma (u_n) = \| \, \ell_\sigma \, \|^*_\varphi$ and $\lim_n \ell_f (v_n) = \| \, \ell_f \, \|^*_\varphi$. Let $\Sigma \in \AA_\sigma$ with $I_\varphi \left( u_n \chi_\Sigma \right) = I_\varphi \left( u_n \right)$ and $I_\varphi \left( v_n \chi_\Sigma \right) = I_\varphi \left( v_n \right)$ for all $n \in \N$. We find $\nu$ a finite measure that is equivalent to $\mu_\Sigma$ with $\frac{d \nu}{d \mu} = f$ a positive integrable function on $\Sigma$ vanishing on $\Om \setminus \Sigma$. By absolute continuity of the integral, there exists for $n \in \N$ a $\delta > 0$ such that
		$$
		\nu(A) < \delta \implies I_\varphi \left( v_n \chi_A \right) < 1 - I_\varphi(u_n). 
		$$
		Applying \cite[Thm. 1.19]{YoHe} to the countably additive measure $\nu + \left| \nu_{\ell_\sigma, u_n} \right| + \left| \nu_{\ell_\sigma, v_n} \right|$ and the purely finitely additive set function $\left| \nu_{\ell_f, u_n} \right| + \left| \nu_{\ell_f, v_n} \right|$,	we find $A_n \in \AA$ with $\nu \left( A_n \right) < \delta$ while
		$$
		\left| \ell_\sigma \left( u_n \chi_A \right) \right| < \frac{1}{n}; \quad \left| \ell_\sigma \left( v_n \chi_A \right) \right| < \frac{1}{n}
		$$
		and
		$$
		\ell_f \left( u_n \chi_{A_n} \right) = \ell_f(u_n); \quad \ell_f \left( v_n \chi_{A_n} \right) = \ell_f(v_n).
		$$
		In particular, we have
		\begin{equation} \label{eq: rf}
			\lim_n \ell_f \left( u_n \chi_{\Om \setminus A_n} \right) = \lim_n \ell_\sigma \left( v_n \chi_{A_n} \right) = 0.
		\end{equation}
		Hence
		$$
		I_\varphi \left( u_n \chi_{\Om \setminus A_n} + v_n \chi_{A_n} \right) = I_\varphi \left( u_n \chi_{\Om \setminus A_n} \right) + I_\varphi \left( v_n \chi_{A_n} \right) < 1
		$$
		so that $\| u_n \chi_{\Om \setminus A_n} + v_n \chi_{A_n} \|_\varphi \le 1$. This together with (\ref{eq: rf}) yields
		\begin{align*}
			\| \, \ell \, \|^*_\varphi
			& \le \| \, \ell_\sigma \, \|^*_\varphi + \| \, \ell_f \, \|^*_\varphi
			= \lim_n \ell_\sigma(u_n) + \lim_n \ell_f(v_n) \\
			& = \lim_n \ell_\sigma \left( u_n \chi_{\Om \setminus A_n} \right) + \lim_n \ell_f \left( v_n \chi_{A_n} \right) \\
			& = \lim_n \ell \left( u_n \chi_{\Om \setminus A_n} + v_n \chi_{A_n} \right) \le \| \, \ell \, \|^*_\varphi.
		\end{align*}
		
		We finish the proof of (\ref{eq: norm decom}) by showing that $\| \, \ell_\sigma \, \|^*_\varphi = \| \, \ell_a \, \|^*_\varphi + \| \, \ell_d \, \|^*_\varphi$. Let $u_n$ and $v_n$ be as above but now with $\lim_n \ell_a (u_n) = \| \, \ell_a \, \|^*_\varphi$ and $\lim_n \ell_d (v_n) = \| \, \ell_d \, \|^*_\varphi$ instead of the corresponding conditions above.
		
		As $\ell_a \in A_{\varphi^*}(\mu)$, we may arrange that all members of the sequences $\nu_{\ell_a, u_n}$ and $\nu_{\ell_a, v_n}$ vanish off $\Sigma$ by possibly enlarging the set while keeping it $\sigma$-finite by Proposition \ref{pr: sgm spprt}, i.e., $\ell_a \left( u_n \right) = \ell_a \left( u_n \chi_\Sigma \right)$ and $\ell_a \left( v_n \right) = \ell_a \left( v_n \chi_\Sigma \right)$ for all $n \in \N$. Remember that $\nu_{\ell_d, u}$ for every $u \in L_\varphi(\mu)$ vanishes on any $\sigma$-finite set by diffusivity. Consequently,
		\begin{align*}
			\| \, \ell_\sigma \, \|^*_\varphi
			& \ge \lim_n \ell_\sigma \left( u_n \chi_\Sigma + v_n \chi_{\Om \setminus \Sigma} \right)
			= \lim_n \ell_a \left( u_n \chi_\Sigma \right) + \lim_n \ell_d \left( v_n \chi_{\Om \setminus \Sigma} \right) \\
			& = \lim_n \ell_a ( u_n ) + \lim_n \ell_d ( v_n )
			= \| \, \ell_a \, \|^*_\varphi + \| \, \ell_d \, \|^*_\varphi \ge \| \, \ell_\sigma \, \|^*_\varphi. \qedhere
		\end{align*}
	\end{proof}
	
	We close the section showing that absolutely continuous functionals are rather stable classes.
	
	\begin{lemma} \label{lem: closed subspaces}
		If $L_\varphi(\mu) = L_\varphi^\sigma(\mu)$, then $A_{\varphi^*}(\mu)$ is sequentially weak* closed.
	\end{lemma}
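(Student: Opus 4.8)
The plan is to verify directly that the weak* sequential limit of absolutely continuous functionals again satisfies the defining property of $A_{\varphi^*}(\mu)$, namely that $\nu_{\ell,u}$ lies in $A(\mu)$ for every $u$. So let $\ell_n \in A_{\varphi^*}(\mu)$ with $\ell_n \weakast \ell$ in $\sigma\left( L_\varphi(\mu)^*, L_\varphi(\mu) \right)$. Since $\ell_n(u)$ converges for each $u \in L_\varphi(\mu)$, the Banach--Steinhaus theorem yields $\ell \in L_\varphi(\mu)^*$ together with $M := \sup_n \| \ell_n \|^*_\varphi < \i$. By Proposition \ref{pr: giner 1.3.2} this gives $\| \nu_{\ell_n, u} \| \le M \| u \|_\varphi$ for all $n$ and all $u$, and also $\nu_{\ell_n, u}(A) = \ell_n(u \chi_A) \to \ell(u\chi_A) = \nu_{\ell, u}(A)$ for every $A \in \AA$. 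It therefore suffices to fix $u$ and show $\nu_{\ell, u} \in A(\mu)$.

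Here the hypothesis $L_\varphi(\mu) = L_\varphi^\sigma(\mu)$ enters: $u$ vanishes outside some $\Sigma \in \AA_\sigma$, so $u\chi_A = u\chi_{A \cap \Sigma}$ and hence $\nu_{\ell, u}(A) = \nu_{\ell, u}(A \cap \Sigma)$ for every $A$, i.e. $\nu_{\ell, u}$ (and each $\nu_{\ell_n, u}$) is automatically concentrated on the $\sigma$-finite set $\Sigma$. It is thus enough to study the restrictions to the $\sigma$-algebra $\AA(\Sigma)$, on which $\mu_\Sigma$ is $\sigma$-finite; choose a finite measure $\lambda$ equivalent to $\mu_\Sigma$. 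Because $\ell_n \in A_{\varphi^*}(\mu) \subset \Sigma_{\varphi^*}(\mu)$, each $\nu_{\ell_n, u}$ is $\sigma$-additive on $\AA(\Sigma)$, and because $\ell_n \in A_{\varphi^*}(\mu)$ it is absolutely continuous with respect to $\mu_\Sigma$, hence with respect to $\lambda$; moreover the family is uniformly norm bounded and converges setwise on $\AA(\Sigma)$.

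Now apply the Nikodym convergence theorem (see e.g. \cite{DSch}): a setwise-convergent, uniformly bounded sequence of $\sigma$-additive signed measures on a $\sigma$-algebra has $\sigma$-additive limit, so $\nu_{\ell, u}$ is $\sigma$-additive on $\AA(\Sigma)$. The Vitali--Hahn--Saks theorem then shows that the $\nu_{\ell_n, u}$ are uniformly $\lambda$-absolutely continuous, whence $\nu_{\ell, u} \ll \lambda$, i.e. $\nu_{\ell, u} \ll \mu_\Sigma$. Transporting back to $\AA$ via $\nu_{\ell, u}(\,\cdot\,) = \nu_{\ell, u}(\,\cdot\, \cap \Sigma)$, we obtain that $\nu_{\ell, u}$ is $\sigma$-additive, that every $\mu$-null set is $\nu_{\ell, u}$-null, and that $\nu_{\ell, u}$ is concentrated on the $\sigma$-finite set $\Sigma$; by the convention fixed after Proposition \ref{pr: sgm spprt} — the singular and diffuse De Giorgi parts vanish once a measure is classically absolutely continuous and $\sigma$-finitely concentrated — this is precisely $\nu_{\ell, u} \in A(\mu)$. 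Since $u$ was arbitrary, $\ell \in A_{\varphi^*}(\mu)$, which proves the claim.

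The step I expect to be the crux is the passage from mere setwise convergence of the set functions $\nu_{\ell_n, u}$ to $\sigma$-additivity and absolute continuity of the limit; this is exactly the content of the Nikodym/Vitali--Hahn--Saks machinery and cannot be bypassed by soft arguments. It is also the place where the assumption $L_\varphi(\mu) = L_\varphi^\sigma(\mu)$ is used in an essential way, since it localises the entire argument to a $\sigma$-finite set carrying a finite equivalent reference measure; everything else is routine bookkeeping with the decomposition theory developed above.
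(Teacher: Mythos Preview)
Your proof is correct and follows essentially the same route as the paper: fix $u$, use the hypothesis $L_\varphi(\mu)=L_\varphi^\sigma(\mu)$ to localize to a $\sigma$-finite set $\Sigma$, pass to an equivalent finite measure, and apply the Vitali--Hahn--Saks theorem to the setwise-convergent sequence $\nu_{\ell_n,u}$. The paper's version is slightly more compressed (it does not separately invoke Nikodym, since Vitali--Hahn--Saks already delivers both $\sigma$-additivity and uniform absolute continuity of the limit), and your appeal to Banach--Steinhaus for a uniform bound is harmless but not actually needed for either theorem.
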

	
	\begin{proof}
		For $\ell_n \in A_{\varphi^*}(\mu)$ with $\ell_n \weakast \ell$ and $u \in L_\varphi(\mu)$, we define a signed finite measure $\lambda_n(E) = \ell_n \left( u \chi_E \right)$ that is absolutely continuous w.r.t. $\mu$. Set $\lambda(E) = \ell \left( u \chi_E \right)$. Restricting to a relevant set $\Sigma \in \AA_\sigma$ outside of which $u$ vanishes, we find a finite measure $\nu$ as in Proposition \ref{pr: equivalent finite measure} w.r.t. which each $\lambda_n$ is absolutely continuous. As $\lambda(E) = \lim_n \lambda_n(E)$ for all $E \in \AA$, we conclude by the Vitali-Hahn-Saks theorem \cite[Thm. 2.53]{FoLe} that the sequence $\lambda_n$ hence its limit $\lambda$ is uniformly absolutely continuous w.r.t. $\nu$ and $\mu$ so that $\ell$ is absolutely continuous as a functional.
	\end{proof}
	
	\subsection{Representation results}
	
	Throughout this subsection, we assume that $\mu$ has no atom of infinite measure. We denote by $\mathcal{W} \left( \Om ; X^* \right)$ the space of weak* measurable functions $w \colon \Om \to X^*$. Let $w_1, w_2 \in \mathcal{W} \left( \Om ; X^* \right)$. We say that $w_1 = w_2$ weak* a.e. if $\langle w_1, x \rangle = \langle w_2, x \rangle$ a.e. for every $x \in X$ with the exceptional null set possibly depending on $x$. We call a mapping $v \colon \AA_{a\sigma} \to \mathcal{W} \left( \Om ; X^* \right)$ with $v_A = v_B$ weak* a.e. on $A \cap B$ for $A, B \in \AA_\sigma$ a linear weak* integrand. Since any strongly measurable function $u \colon \Om \to X$ is the pointwise limit of a sequence of simple functions, the assignment $\om \mapsto \langle v_A(\om), u(\om) \rangle$ defines a family of functions indexed by $A \in \AA_\sigma$ for which we can attempt an exhausting integration, cf. the explanation before Theorem \ref{thm: inf int}. In this sense, a linear weak* integrand induces an integral functional. We introduce the space
	$$
	\mathcal{V}_{\varphi^*}(\mu) = \left\{ v \st v \colon \Om \to X^* \text{ a linear weak* integrand and } \vvvert v \vvvert^*_\varphi < \i \right\}
	$$
	with the operator seminorm
	$$
	\vvvert v \vvvert^*_\varphi = \sup_{\| u \|_\varphi \le 1} \int \langle v(\om), u(\om) \rangle \, d \mu(\om).
	$$
	Applying the standard procedure of identifying elements whose difference belongs to the kernel of $\vvvert \cdot \vvvert^*_\varphi$, we obtain a normed space $V_{\varphi^*}$ of continuous linear functionals on $L_\varphi$. It is insightful to describe this kernel more explicitly. There holds $\vvvert v \vvvert^*_\varphi = 0$ iff $\sup_{u \in L_\varphi} \int_\Sigma \langle v_\Sigma, u \rangle \, d \mu = 0$ for all $\Sigma \in \AA$. To interchange this supremum with the integral, we want to apply Theorem \ref{thm: inf int} and need to check its assumptions. First, the space $L_\varphi$ is almost decomposable. Second, the integrand is Carathéodory hence separably measurable. Third, $\mu$ is restricted to the $\sigma$-finite set $\Sigma$ where it has no atom of infinite measure. Fourth, the value of the supremum is not $+ \i$ by assumption. Therefore,
	$$
	\int_\Sigma \esssup_{W \in \SS(X)} \sup_{x \in W} \langle v_\Sigma(\om), x \rangle \, d \mu(\om) = \sup_{u \in L_\varphi} \int \langle v_\Sigma, u \rangle \, d \mu = 0,
	$$
	hence $\esssup_{W \in \SS(X)} \sup_{x \in W} \langle v_\Sigma(\om), x \rangle = 0$ a.e. This is equivalent to (the equivalence class of) $v$ vanishing i.a.e. if $X$ is separable. In general, it suggests one should think of equivalence classes in $V_{\varphi^*}$ as linear integrands that accept strongly measurable functions as sensible arguments. We call $V_{\varphi^*}$ the space of \emph{linear integrands} on $L_\varphi$. We shall prove that the absolutely continuous functionals $A_{\varphi^*}$ agree with $V_{\varphi^*}$. One half of this inclusion is easy to obtain:
	\begin{proposition} \label{pr: iso A to V}
		Identifying $v \in V_{\varphi^*}(\mu)$ with the continuous linear functional
		$$
		L_\varphi(\mu) \to \R \colon u \mapsto \int \langle v \left( \om \right), u \left( \om \right) \rangle \, d \mu \left( \om \right)
		$$
		induces an isometric embedding
		$$
		V_{\varphi^*}(\mu) \to A_{\varphi^*}(\mu).
		$$
	\end{proposition}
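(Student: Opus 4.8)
The plan is to verify that the map in Proposition~\ref{pr: iso A to V} is (i) well-defined as a continuous linear functional, (ii) isometric, and (iii) lands inside $A_{\varphi^*}(\mu)$. For (i), fix $v \in V_{\varphi^*}(\mu)$ and $u \in L_\varphi(\mu)$. Since $u$ is strongly measurable, it is almost separably valued, so it vanishes outside no set we can control directly; but the integral $\int \langle v(\om), u(\om) \rangle \, d\mu(\om)$ is the exhausting integral of the family $A \mapsto \langle v_A(\om), u(\om) \rangle$ indexed by $A \in \AA_{a\sigma}$, which is well-defined because $v$ is a linear weak* integrand (so the family is coherent on overlaps) and because $\vvvert v \vvvert^*_\varphi < \i$ forces the positive and negative parts to be integrable once tested against $u$. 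Linearity in $u$ is immediate. Boundedness with norm $\le \vvvert v \vvvert^*_\varphi$ is exactly the definition of the operator seminorm, using $|\int \langle v, u \rangle \, d\mu| \le \vvvert v \vvvert^*_\varphi \| u \|_\varphi$ after rescaling; and the kernel discussion preceding the proposition already shows that $\vvvert v \vvvert^*_\varphi = 0$ precisely when the equivalence class of $v$ is trivial, so the assignment is injective on $V_{\varphi^*}(\mu)$.

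For (ii), I would argue that the operator norm of the induced functional on $L_\varphi(\mu)$ equals $\vvvert v \vvvert^*_\varphi$. One inequality is the boundedness estimate from (i). For the reverse inequality, the operator norm is at least $\sup_{\| u \|_\varphi \le 1} \int \langle v, u \rangle \, d\mu$, which is the definition of $\vvvert v \vvvert^*_\varphi$; here one must take care that the supremum defining the seminorm and the supremum defining the operator norm are taken over the same set of test functions, namely the unit ball of $L_\varphi(\mu)$, so the two coincide verbatim. The only subtlety is that the dual Amemiya norm on $L_\varphi(\mu)^*$ agrees with the canonical operator norm induced by the Luxemburg norm, a fact already recorded in the text after the definition of the dual norms; combining this with $\vvvert v \vvvert^*_\varphi$ being the operator norm gives the isometry for the Amemiya-type norms, and the equivalence $\| v \|^*_\varphi \le \vvvert v \vvvert^*_\varphi \le 2 \| v \|^*_\varphi$ then transfers the statement to whichever norm one prefers.

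For (iii), the essential point is that the functional $\ell_v \colon u \mapsto \int \langle v(\om), u(\om) \rangle \, d\mu(\om)$ is absolutely continuous, i.e.\ $\nu_{\ell_v, u}(A) = \int_A \langle v(\om), u(\om) \rangle \, d\mu(\om)$ defines, for each fixed $u \in L_\varphi(\mu)$, a $\sigma$-additive set function that is absolutely continuous with respect to $\mu$ and concentrated on a $\sigma$-finite set (the property~(\ref{eq: abs cnt cnc}) built into our notion of absolute continuity). The $\sigma$-additivity of $A \mapsto \int_A \langle v, u \rangle \, d\mu$ is the dominated convergence theorem once we know $\langle v(\cdot), u(\cdot) \rangle$ is integrally integrable, and $\ll \mu$ is immediate from the integral representation; the $\sigma$-finite concentration follows because an integrable family over $\AA_{a\sigma}$ with finite exhausting integral derives from a genuinely $\bar\mu$-integrable function supported on a $\sigma$-finite set, as explained in the discussion of exhausting integrals before Theorem~\ref{thm: inf int}. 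Hence $\nu_{\ell_v, u} \in A(\mu)$ for every $u$, which by definition places $\ell_v$ in $A_{\varphi^*}(\mu)$.

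The main obstacle I anticipate is purely bookkeeping around the non-$\sigma$-finite measure: one must consistently interpret all the integrals in the exhausting sense and check that the linear weak* integrand $v$, restricted to each $A \in \AA_{a\sigma}$, pairs measurably with simple approximants of $u$ so that $\om \mapsto \langle v_A(\om), u(\om) \rangle$ is genuinely measurable on $A$ (it is the pointwise limit of $\langle v_A(\om), u_n(\om) \rangle$ for simple $u_n \to u$), and that the coherence condition $v_A = v_B$ weak* a.e.\ on $A \cap B$ makes the resulting family coherent so the exhausting integral is unambiguous. Once that measurability and coherence are in place, everything else is a direct unwinding of definitions, so the proposition should follow quickly.
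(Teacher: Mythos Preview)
Your proposal is correct and follows essentially the same approach as the paper: absolute continuity via dominated convergence together with the fact that an integrable function vanishes off a $\sigma$-finite set, and isometry because $\vvvert v \vvvert^*_\varphi$ is by definition the operator norm of the induced functional. The paper's proof is a two-sentence sketch; you have simply filled in the measurability and exhausting-integral bookkeeping that the paper takes as already handled in the setup of $V_{\varphi^*}(\mu)$.
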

	
	\begin{proof}
		The induced functional is absolutely continuous by dominated convergence and since an integrable function vanishes off a $\sigma$-finite set. The embedding is obviously isometric if $V_{\varphi^*}(\mu)$ and $A_{\varphi^*}(\mu)$ carry the operator norm.
	\end{proof}
	
	In preparation of proving the converse inclusion, we need to study Hölder and reverse Hölder inequalities to determine if a given measurable function belongs to $V_{\varphi^*}$ or $L_{\varphi^*}$.\footnote{Whenever the space $L_{\varphi^*}(\mu)$ appears, we tacitly assume $\varphi^*$ to be an Orlicz integrand.} We start by observing that each element of $L_\varphi$ induces at least one continuous linear functional on $L_{\varphi^*}$ and vice versa.
	
	\begin{lemma} \label{lem: Hölder inequality}
		For $u \in L_\varphi(\mu)$ and $v \in L_{\varphi^*}(\mu)$, there holds
		\begin{equation} \label{eq: Hölder inequality}
			\int \left| \langle v \left( \om \right), u \left( \om \right) \rangle \right| \, d \mu \left( \om \right) \le 2 \| v \|_{\varphi^*} \| u \|_\varphi.
		\end{equation}
	\end{lemma}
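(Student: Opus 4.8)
The plan is to derive (\ref{eq: Hölder inequality}) from the pointwise Fenchel--Young inequality for the conjugate pair $\varphi(\om, \cdot)$, $\varphi^*(\om, \cdot)$, followed by integration, using the elementary relations between a left-continuous convex modular and its Luxemburg (Minkowski) functional recorded in Lemma \ref{lem: modular-norm}. Concretely, for a.e.\ $\om$ and all $x \in X$, $x' \in X^*$ the definition of the convex conjugate gives $\varphi^*(\om, x') \ge \langle x', x \rangle - \varphi(\om, x)$; replacing $x$ by $-x$ and invoking evenness of $\varphi(\om, \cdot)$ yields the two-sided bound
$$
|\langle x', x \rangle| \le \varphi(\om, x) + \varphi^*(\om, x').
$$
Evaluating this at $x = u(\om)$, $x' = v(\om)$ and integrating will give the claim once the modulars $I_\varphi(u)$ and $I_{\varphi^*}(v)$ are controlled by the respective norms.

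First I would dispose of the degenerate cases: if $\| u \|_\varphi = 0$ then $u$ vanishes a.e.\ (the kernel of the Luxemburg seminorm, by Proposition \ref{pr: Orlicz function and convergence}), so both sides vanish; similarly if $\| v \|_{\varphi^*} = 0$, noting that $\varphi^*$ is again an Orlicz integrand by Lemma \ref{lem: gen iff conj}. Assuming both norms positive, set $\bar u = u / \| u \|_\varphi$ and $\bar v = v / \| v \|_{\varphi^*}$, so $\| \bar u \|_\varphi = \| \bar v \|_{\varphi^*} = 1$. The key point is $I_\varphi(\bar u) \le 1$ and $I_{\varphi^*}(\bar v) \le 1$: since $r \mapsto \varphi(\om, r \bar u(\om))$ is non-decreasing on $[0, \i)$ and vanishes at $r = 0$, monotone convergence along a sequence $\alpha_n \downarrow 1$ with $I_\varphi(\alpha_n^{-1} \bar u) \le 1$ gives left-continuity of $I_\varphi$, so $\| \cdot \|_\varphi$ is exactly the Minkowski functional of the $1$-sublevel set of the left-continuous convex modular $I_\varphi$; Lemma \ref{lem: modular-norm}(i) then yields $I_\varphi(\bar u) \le \| \bar u \|_\varphi = 1$. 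The same argument, applied to the Orlicz integrand $\varphi^*$ and the Luxemburg norm $\| \cdot \|_{\varphi^*}$ of $L_{\varphi^*}(\mu)$, gives $I_{\varphi^*}(\bar v) \le 1$. Applying the pointwise bound to $\bar u, \bar v$ and integrating, using monotonicity of the exhausting integral,
$$
\int |\langle \bar v(\om), \bar u(\om) \rangle| \, d \mu(\om) \le \int \varphi(\om, \bar u(\om)) \, d\mu(\om) + \int \varphi^*(\om, \bar v(\om)) \, d\mu(\om) = I_\varphi(\bar u) + I_{\varphi^*}(\bar v) \le 2,
$$
and rescaling by $\| u \|_\varphi \| v \|_{\varphi^*}$ gives (\ref{eq: Hölder inequality}).

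The only point needing care — and it is minor — is the measurability and integrability of $\om \mapsto \langle v(\om), u(\om) \rangle$ when $\mu$ is not $\sigma$-finite. Measurability is immediate: $u$ and $v$ are strongly measurable, hence almost separably valued, and expressing $u$ as an a.e.\ limit of simple functions exhibits $\om \mapsto \langle v(\om), u(\om) \rangle$ as an a.e.\ limit of measurable functions. For integrability, the pointwise bound forces $|\langle v(\om), u(\om) \rangle|$ to vanish outside the union of the two $\sigma$-finite sets $\{ \varphi(\om, u(\om)) > 0 \}$ and $\{ \varphi^*(\om, v(\om)) > 0 \}$ (each carries a positive integrable function once the corresponding modular is finite after rescaling), so the left-hand side of (\ref{eq: Hölder inequality}) is in fact an ordinary Lebesgue integral over a $\sigma$-finite set and the estimate goes through verbatim. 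No deeper obstacle arises; this is a routine transcription of the classical Young-inequality proof of Hölder's inequality to the $\om$-dependent, Banach-valued, non-$\sigma$-finite setting.
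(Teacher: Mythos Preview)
Your proof is correct and follows exactly the route the paper indicates (``Fenchel--Young inequality and Lemma \ref{lem: modular-norm}''), simply spelling out the normalization step and the invocation of Lemma \ref{lem: modular-norm}(i) in detail. The additional remarks on measurability and on the support of $\langle v, u \rangle$ being $\sigma$-finite are accurate and harmless elaborations of a proof that the paper leaves as a one-line reference.
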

	
	\begin{proof}
		Fenchel-Young inequality and Lemma \ref{lem: modular-norm}.
	\end{proof}
	
	\begin{lemma} \label{lem: rev Hölder}
		Let $L \subset L_\varphi(\mu)$ be either linear and almost decomposable or such that $L \chi_A \subset L$ for all $A \in \AA$ while the closure $\cl L$ is linear and almost decomposable. Let a weak* measurable function $v \colon \Om \to X^*$ satisfy
		\begin{equation} \label{eq: rev Höld}
			\int \langle v(\om), u(\om) \rangle \, d \mu(\om) \le \| u \|_\varphi \quad \forall u \in L.
		\end{equation}
		Then $v \in V_{\varphi^*}(\mu)$ with $\vvvert v \vvvert_\varphi^* \le 1$. In particular, there then holds $\vvvert v \vvvert_{\varphi^*} \le 1$ if $I^*_\varphi = I_{\varphi^*}$ on $\lin(v)$. If moreover $v$ is strongly measurable, then $v \in L_{\varphi^*}(\mu)$.
	\end{lemma}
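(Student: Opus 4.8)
The plan is to reduce everything to the already-established interchange criterion Theorem \ref{thm: inf int} and the Hölder-type bounds, handling the two hypotheses on $L$ uniformly. First I would observe that it suffices to show
$$
\esssup_{W \in \SS(X)} \sup_{x \in W} \langle v(\om), x \rangle - I_{B_X \cap W}^{\text{(modular sense)}} \le 0,
$$
more precisely that $\vvvert v \vvvert_\varphi^* = \sup_{\| u \|_\varphi \le 1} \int \langle v, u \rangle \, d\mu \le 1$; since $v$ is a linear weak* integrand (or becomes one after the usual a.e.\ identification), membership in $V_{\varphi^*}(\mu)$ then follows by definition. The key reduction is that the supremum defining $\vvvert v \vvvert_\varphi^*$ may be taken over $u \in L$ (resp.\ $u \in \cl L$) rather than over all of $L_\varphi(\mu)$: in the first case this is because a linear almost decomposable $L$ is norm-dense enough that $\sup_{u \in L, \|u\|_\varphi \le 1}\int \langle v, u\rangle\,d\mu$ already controls the full supremum after an approximation via Lemma \ref{lem: a emb} and almost decomposability; in the second case one passes to the closure directly, noting that $u \mapsto \int \langle v, u\rangle\,d\mu$ is continuous once we know it is bounded on the unit ball, which is exactly what we want to prove — so here one argues first on $L$ itself using $L\chi_A \subset L$ to localize, then takes limits.

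Second I would localize to $\sigma$-finite sets. Fix $\Sigma \in \AA_{a\sigma}$; since $\mu$ has no atom of infinite measure, $\Sigma$ is genuinely $\sigma$-finite, and we may pass to a finite equivalent measure as in Proposition \ref{pr: equivalent finite measure}. On $\Sigma$ the hypothesis (\ref{eq: rev Höld}) together with weak decomposability (restriction by indicators, available in both cases) gives $\int_\Sigma \langle v_\Sigma, u \rangle\,d\mu \le \|u\|_\varphi$ for all admissible $u$ supported in $\Sigma$. Now I would invoke Theorem \ref{thm: inf int} applied to the integrand $(\om,x) \mapsto \varphi(\om, x) - \langle v_\Sigma(\om), x\rangle$ (integrally separably measurable, being the sum of the Orlicz integrand and an integrally separably Carathéodory tilt), on the almost decomposable space $L$ restricted to $\Sigma$. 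The condition (\ref{eq: rev Höld}) rescaled says precisely $\inf_{u} I_{\varphi}(u) - \int\langle v_\Sigma,u\rangle\,d\mu \ge -1$ over the relevant competitors, which by the interchange formula translates into $\int_\Sigma \essinf_{W}\inf_{x\in W}\big(\varphi_\om(x) - \langle v_\Sigma(\om),x\rangle\big)\,d\bar\mu \ge -1$, i.e.\ $\int_\Sigma \esssup_W \varphi^*_W(\om,v_\Sigma(\om))\,d\bar\mu \le 1$ in the essential-supremum sense. Taking the supremum over $\Sigma \in \AA_{a\sigma}$ and recombining via the exhausting integral yields $\vvvert v \vvvert_\varphi^* \le 1$, hence $v \in V_{\varphi^*}(\mu)$.

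Third, the two addenda are routine once the main inequality holds: if $I^*_\varphi = I_{\varphi^*}$ on $\lin(v)$ then the operator-norm bound $\vvvert v \vvvert_\varphi^* \le 1$ transfers to the dual Luxemburg norm $\vvvert v \vvvert_{\varphi^*} \le 1$ by the norm-modular inequalities of Lemma \ref{lem: modular-norm} and the definition of the dual Amemiya norm (which equals the operator norm); and if $v$ is in addition strongly measurable, then $\varphi^*(\om,v(\om))$ is measurable, so the equivalence class of $v$ lies in $L_{\varphi^*}(\mu)$ rather than merely in the space of linear integrands. The main obstacle I anticipate is the bookkeeping in the first paragraph: verifying that the supremum over $L$ (or $\cl L$) really does recover the full $\vvvert v \vvvert_\varphi^*$, and in particular handling the second hypothesis where $\cl L$ — not $L$ — is linear and almost decomposable, so that one cannot apply Theorem \ref{thm: inf int} to $L$ directly but must first prove boundedness on $L$, then extend by continuity to $\cl L$, then re-run the interchange argument on $\cl L$; interleaving the localization to $\Sigma$ with this closure step, while keeping the $\sigma$-finiteness reduction consistent, is where the care is needed.
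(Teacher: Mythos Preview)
Your plan has two genuine gaps.

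\textbf{The extension from $L$ to $\cl L$.} You correctly flag the circularity in ``extend by continuity to $\cl L$'', but your resolution (``localize using $L\chi_A \subset L$, then take limits'') is not an argument. At this stage $v$ is not yet known to induce a bounded functional, so no continuity is available; what you need is an order-theoretic passage. The paper does this with Fatou: for $u \in \cl L$ pick $u_n \in L$ with $u_n \to u$ in $L_\varphi$ and a.e.\ (Theorem \ref{thm: L complete}), set $\Om_n^+ = \{\langle v, u_n\rangle > 0\}$, note $u_n\chi_{\Om_n^+} \in L$ by the indicator hypothesis, and apply Fatou to $\langle v, u_n\rangle^+ = \langle v, u_n\chi_{\Om_n^+}\rangle$. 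The indicator-closure hypothesis is used exactly here --- to make the positive part accessible to the bound (\ref{eq: rev Höld}) --- and you do not exploit it.

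\textbf{The interchange route loses the constant.} From (\ref{eq: rev Höld}) and $\|u\|_\varphi \le 1 + I_\varphi(u)$ (Lemma \ref{lem: modular-norm}) you get $\inf_{u}[I_\varphi(u) - \int\langle v,u\rangle\,d\mu] \ge -1$; Theorem \ref{thm: inf int} then yields $I^*_\varphi(v) \le 1$ in the essential-supremum sense, hence $\|v\|^*_\varphi \le 1$. But the lemma asserts $\vvvert v \vvvert^*_\varphi \le 1$ for the \emph{operator} (dual Amemiya) norm, and from $\|v\|^*_\varphi \le 1$ you only get $\vvvert v \vvvert^*_\varphi \le 2$. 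The paper avoids this detour entirely: once the case $L$ linear and almost decomposable is in hand, it shows directly that (\ref{eq: rev Höld}) extends to every $u \in L_\varphi(\mu)$ --- by contradiction via Proposition \ref{pr: divergent subintegral} to reduce to a $\sigma$-finite set, then approximation from below via Lemma \ref{lem: decomp ss abs cont dense} (not Lemma \ref{lem: a emb}, which you cite) --- so that $\vvvert v \vvvert^*_\varphi \le 1$ is immediate from the definition of the operator norm. No interchange criterion is needed.
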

	
	The lemma applies if $L$ are the simple functions in $L_\varphi^\sigma(\mu)$ or if $\varphi$ is real-valued and $L$ are the simple functions in $C_\varphi^\sigma(\mu)$. These assertions follow from Lemmas \ref{lem: E a decomp}, \ref{lem: C a decomp}, an \ref{lem: C simple dense}.
	
	\begin{proof}
		We start by proving that (\ref{eq: rev Höld}) holds for $u \in \cl L$ if $L \chi_A = L$ for $A \in \AA$, thereby reducing to the case when $L$ is linear and almost decomposable. Pick by Theorem \ref{thm: L complete} a sequence $u_n \in L$ with $u_n \to u$ in $L_\varphi(\mu)$ and a.e. Let $\Om^+ = \left\{ \langle v, u \rangle > 0 \right\}$ and $\Om^+_n = \left\{ \langle v, u_n \rangle > 0 \right\}$. We have $u_n \chi_{\Om^+_n} \in L$ by assumption. The Fatou lemma yields
		$$
		\int_{\Om^+} \langle v, u \rangle \, d \mu \le \liminf_n \int_{\Om^+_n} \langle v, u_n \rangle \, d \mu \le \lim_n \| u_n \|_\varphi = \| u \|_\varphi.
		$$
		Since $\cl L$ is linear, we may argue analogously for the negative part $\langle v, u \rangle^-$ so that $\langle v, u \rangle$ is integrable and (\ref{eq: rev Höld}) holds for $u \in \cl L$. Replacing if necessary $L$ by its closure $\cl L$, we may from now on assume $L$ linear and almost decomposable.
		
		We claim that (\ref{eq: rev Höld}) holds for all $u \in L_\varphi(\mu)$. Otherwise there were $u \in L_\varphi(\mu)$ with $\langle v, u \rangle \ge 0$ a.e. and $\int \langle v, u \rangle \, d \mu = +\i$. According to Proposition \ref{pr: divergent subintegral} there is $\Sigma \in \AA_\sigma$ with $\int_\Sigma \langle v, u \rangle \, d \mu = +\i$ since we ruled out atoms with infinite measure. Choose $u_n \in L$ with $u_n \uparrow u$ on $\Sigma$ by Lemma \ref{lem: decomp ss abs cont dense} and recognize the contradiction
		$$
		+\i = \lim_n \int_\Sigma \langle v, u_n \rangle \, d \mu \le \lim_n \| u_n \chi_\Sigma \|_\varphi = \| u \chi_\Sigma \|_\varphi \le \| u \|_\varphi < +\i.
		$$
		Consequently, the integral $\int \langle v, u \rangle \, d \mu$ exists for every $u \in L_\varphi(\mu)$. In particular, there exists $\Sigma \in \AA_\sigma$ outside of which $\langle v, u \rangle$ vanishes, so that we find
		$$
		\int \langle v, u \rangle \, d \mu = \lim_n \int_\Sigma \langle v, u_n \rangle \, d \mu \le \lim_n \| u_n \chi_\Sigma \|_\varphi \le \| u \|_\varphi.
		$$	
		Therefore, $v \in V_{\varphi^*}(\mu)$ with $\vvvert v \vvvert_\varphi^* \le 1$. The addenda are obvious by definition of the dual Amemiya norm.
	\end{proof}
	
	\begin{corollary} \label{cor: Lux-Orl equi}
		Let $L \subset L_\varphi(\mu)$ be an almost decomposable linear subspace. If $\varphi$ is dualizable i.a.e. for every element of $L_{\varphi^*}(\mu)$, then
		\begin{equation} \label{eq: Lux-Orl equi}
			\vvvert v \vvvert_{\varphi^*} = \sup_{u \in B_L} \langle v, u \rangle \quad \forall v \in L_{\varphi^*}(\mu).
		\end{equation}
	\end{corollary}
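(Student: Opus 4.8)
The plan is to sandwich the Amemiya norm $\vvvert v \vvvert_{\varphi^*}$ between two copies of $c \coloneqq \sup_{u \in B_L} \langle v, u \rangle$: the upper bound will follow from the reverse Hölder inequality Lemma~\ref{lem: rev Hölder}, and the matching lower bound from identifying the dual Amemiya norm with an operator norm. Note first that $c$ is finite, since $B_L \subseteq B$ and every $v \in L_{\varphi^*}(\mu)$ induces a bounded functional on $L_\varphi(\mu)$ by the Hölder inequality Lemma~\ref{lem: Hölder inequality}.

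The first real step is to use the dualizability hypothesis to convert $\vvvert v \vvvert_{\varphi^*}$, the Amemiya norm of $v$ as an element of the Orlicz space $L_{\varphi^*}(\mu)$, into the dual Amemiya norm $\vvvert v \vvvert^*_\varphi$ on $L_\varphi(\mu)^*$. Fixing $v \in L_{\varphi^*}(\mu)$, every scalar multiple $\gamma v$ again lies in $L_{\varphi^*}(\mu)$, so $\varphi$ is dualizable i.a.e.\ at $\gamma v$; since $\mu$ carries no atom of infinite measure, this coincides with a.e.\ dualizability. By Theorem~\ref{thm: conjugate A}, a.e.\ dualizability at a point forces $I^*_\varphi = I_{\varphi^*}$ there, the case $\gamma v \notin \dom I^*_\varphi$ being trivial because both sides are then $+\i$ by the conjugate representation~(\ref{eq: conjugate representation}). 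Using that $\varphi$ and $\varphi^*$ are even, one upgrades this to $I^*_\varphi = I_{\varphi^*}$ on all of $\lin(v)$, and the substitution $\beta = \alpha^{-1}$ in the definition of the dual Amemiya norm then gives $\vvvert v \vvvert^*_\varphi = \inf_{\beta > 0} \beta^{-1} \bigl[ 1 + I^*_\varphi(\beta v) \bigr] = \inf_{\beta > 0} \beta^{-1} \bigl[ 1 + I_{\varphi^*}(\beta v) \bigr] = \vvvert v \vvvert_{\varphi^*}$. So the claim reduces to proving $\vvvert v \vvvert^*_\varphi = c$.

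For that equality, the inequality $c \le \vvvert v \vvvert^*_\varphi$ is immediate, since $B_L \subseteq B$ and the dual Amemiya norm coincides with the operator norm $\sup_{\|u\|_\varphi \le 1} \langle v, u \rangle$ induced by the Luxemburg norm. For the reverse inequality I would fix $\lambda > c$ and observe, after normalising, that $\langle v, u \rangle \le c \|u\|_\varphi \le \lambda \|u\|_\varphi$ for every $u \in L$ (trivially when $\|u\|_\varphi = 0$, since then $u$ vanishes a.e.); via the identification of Proposition~\ref{pr: iso A to V} this reads $\int \langle \lambda^{-1} v(\om), u(\om) \rangle \, d\mu(\om) \le \|u\|_\varphi$ for all $u \in L$. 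Because $L$ is a linear, almost decomposable subspace and $\lambda^{-1} v$ is weak* measurable with $\lin(\lambda^{-1} v) = \lin(v)$, Lemma~\ref{lem: rev Hölder} applies and yields $\vvvert \lambda^{-1} v \vvvert^*_\varphi \le 1$, i.e.\ $\vvvert v \vvvert^*_\varphi \le \lambda$; letting $\lambda \downarrow c$ gives $\vvvert v \vvvert^*_\varphi \le c$ and closes the loop.

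The routine points to watch are the normalisation (that $u/\|u\|_\varphi$ has Luxemburg norm at most $1$, hence lies in $B_L$) and the careful bookkeeping of the ``i.a.e.\ versus a.e.''\ distinction, which is harmless here thanks to the standing exclusion of atoms of infinite measure. The only genuinely delicate point is the reduction in the second paragraph: the dualizability hypothesis is, a priori, only a pointwise/a.e.\ statement about $\varphi^*[\om, v(\om)]$ equalling $\sup_{x \in W} \langle v(\om), x \rangle - \varphi_\om(x)$, and one must be sure it delivers the \emph{global} modular identity $I^*_\varphi = I_{\varphi^*}$ on $\lin(v)$ that is needed to move between the two Amemiya norms — this is exactly where Theorem~\ref{thm: conjugate A} is used, including in the degenerate case where the conjugate is infinite.
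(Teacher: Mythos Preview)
Your proof is correct and follows essentially the same approach as the paper: first identify $\vvvert v \vvvert_{\varphi^*} = \vvvert v \vvvert^*_\varphi$ via dualizability and Theorem~\ref{thm: conjugate A}, then show the dual Amemiya norm equals $\sup_{u \in B_L} \langle v, u \rangle$. The only cosmetic difference is that for the latter step the paper cites Lemma~\ref{lem: decomp ss abs cont dense} together with absolute continuity of $v$ directly, whereas you invoke Lemma~\ref{lem: rev Hölder} after a scaling; since the proof of Lemma~\ref{lem: rev Hölder} itself rests on Lemma~\ref{lem: decomp ss abs cont dense}, the two arguments are effectively the same.
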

	
	\begin{proof}
		By Theorem \ref{thm: conjugate A}, there holds $I^*_\varphi = I_{\varphi^*}$ on $L_{\varphi^*}(\mu)$ since we ruled out atoms of infinite measure. Therefore, the Amemiya norm $\vvvert \cdot \vvvert_{\varphi^*}$ and the dual Amemiya norm $\vvvert \cdot \vvvert^*_\varphi$ coincide there. Because $L$ is almost decomposable and the functional induced by $v$ is absolutely continuous, the right-hand side in (\ref{eq: Lux-Orl equi}) coincides with the operator norm, that is, with the dual Amemiya norm $\vvvert v \vvvert_\varphi^*$ according to Lemma \ref{lem: decomp ss abs cont dense}.
	\end{proof}
	
	Our definition of $F_{\varphi^*}(\mu)$ agrees with the so-called singular functionals of Kozek \cite{Ko1, Ko2} and Castaing/Valadier \cite[Ch. VIII, §1]{CaVa} if $\mu$ is $\sigma$-finite. We prove this to make the results in \cite{CaVa} available to us.
	
	\begin{lemma} \label{lem: F if mu sgm-f}
		Let $\ell \in L_\varphi(\mu)^*$ and $C_n \in \AA$ be a sequence with $\mu \left( \lim_n C_n \right) = 0$ such that $\ell \left( u \chi_{\Om \setminus C_n} \right) = 0$ for all $u \in L_\varphi(\mu)$. Then $\ell \in F_{\varphi^*}(\mu)$. If $\mu$ is $\sigma$-finite, the converse is true as well.
	\end{lemma}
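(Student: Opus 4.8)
I would prove the two implications separately. The forward one is a short computation with the Yosida--Giner decomposition of Theorem~\ref{thm: giner 1.3.5}. The converse needs $\sigma$-finiteness; I would reduce it to a single assertion about the ``smallest set on which $\ell$ lives'', and that assertion is where the difficulty concentrates.

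\textbf{Forward implication.} Assume the sequence $C_n$ exists. First I would pass to a \emph{decreasing} sequence: with $\widetilde C_n = \bigcup_{k\ge n}C_k$ one has $\bigcap_n\widetilde C_n = \limsup_n C_n = \lim_n C_n$ up to a null set, so $\mu(\bigcap_n\widetilde C_n)=0$, and since $\Om\setminus\widetilde C_n\subseteq\Om\setminus C_k$ for $k\ge n$ we get $u\chi_{\Om\setminus\widetilde C_n}=(u\chi_{\Om\setminus\widetilde C_n})\chi_{\Om\setminus C_k}$, whence $\ell(u\chi_{\Om\setminus\widetilde C_n})=0$ by hypothesis for every $u\in L_\varphi(\mu)$. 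So assume $C_n\downarrow$ with $\mu(\bigcap_n C_n)=0$. Fix $u$ and write $\nu=\nu_{\ell,u}$. Applying the hypothesis to $u\chi_A$ for arbitrary $A\in\AA$ gives $\nu(A\setminus C_n)=\ell(u\chi_{A\setminus C_n})=0$, i.e. $\nu=\nu(\,\cdot\,\cap C_n)$ for every $n$. By the commutation relation (\ref{eq: comm}) of Theorem~\ref{thm: giner 1.3.5} the $\sigma$-additive part obeys $\nu_\sigma=\nu_\sigma(\,\cdot\,\cap C_n)$, so $|\nu_\sigma|(\Om\setminus C_n)=0$ for all $n$ and, by countable subadditivity, $|\nu_\sigma|(\Om\setminus\bigcap_n C_n)=0$; thus $\nu_\sigma$ is carried by the $\mu$-null set $N:=\bigcap_n C_n$. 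But $u\chi_N=0$ in $L_\varphi(\mu)$, hence $\nu(\,\cdot\,\cap N)\equiv 0$, so again by (\ref{eq: comm}) $\nu_\sigma(\,\cdot\,\cap N)=(\nu(\,\cdot\,\cap N))_\sigma=0$ and $|\nu_\sigma|(N)=0$ too. Therefore $\nu_\sigma=0$, i.e. $\nu_{\ell,u}\in F(\AA)$; as $u$ was arbitrary, $\ell\in F_{\varphi^*}(\mu)$.

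\textbf{Converse implication.} Now $\mu$ is $\sigma$-finite and $\ell\in F_{\varphi^*}(\mu)$. Since $\sigma$-finiteness forces $L_\varphi(\mu)=L_\varphi^\sigma(\mu)$, Proposition~\ref{pr: equivalent finite measure} lets me assume $\mu$ finite (the notions ``$\sigma$-additive'' and ``purely finitely additive'' for $\nu_{\ell,u}$ do not reference $\mu$, and $\mu$-null $=$ null for the equivalent finite measure, so everything transfers back). For $B\in\AA$ put $\ell_B(u)=\ell(u\chi_B)$; then $\ell_B\in L_\varphi(\mu)^*$ with $\|\ell_B\|\le\|\ell\|$, the family $\mathcal G=\{B:\ell_B=\ell\}$ is stable under finite intersections, and, using $\nu_{\ell,u\chi_B}=(\nu_{\ell,u})_B$ from Proposition~\ref{pr: giner 1.3.2}, $\ell_B=\ell$ holds exactly when $\nu_{\ell,u}$ is carried by $B$ for every $u$. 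Set $\rho=\inf\{\mu(B):B\in\mathcal G\}$. Picking $B_n\in\mathcal G$ with $\mu(B_n)\to\rho$ and replacing $B_n$ by $\bigcap_{k\le n}B_k\in\mathcal G$, I obtain $B_n\downarrow B_\infty$ with $\ell_{B_n}=\ell$ for all $n$; then each $\nu_{\ell,u}$ is carried by every $B_n$, hence (countable subadditivity of $|\nu_{\ell,u}|$) by $B_\infty$, so $\ell_{B_\infty}=\ell$ and $\mu(B_\infty)=\rho$: the infimum is attained. It then suffices to show $\rho=0$, for then the constant sequence $C_n:=B_\infty$ satisfies $\mu(\lim_n C_n)=\mu(B_\infty)=0$ and $\ell(u\chi_{\Om\setminus C_n})=\ell(u)-\ell_{B_\infty}(u)=0$ for all $u$, as required.

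\textbf{The main obstacle: $\rho=0$.} This is the hard step. Suppose $\rho>0$ and restrict everything to $B_\infty$, which now plays the role of $\Om$: a finite measure space of positive mass, with $\ell=\ell_{B_\infty}\ne 0$ and with the minimality property $\ell_B\ne\ell$ whenever $\mu(B_\infty\setminus B)>0$. Each $\nu_{\ell,u}$ is purely finitely additive, so by the Yosida--Hewitt theorem \cite[Thm.~1.19]{YoHe} it is carried by a decreasing sequence of sets whose $\mu$-measures tend to $0$; note also that for $u\in C_\varphi(\mu)$ the set function $\nu_{\ell,u}$ is $\mu$-continuous, hence both $\sigma$-additive and purely finitely additive, hence zero. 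What must be done is to upgrade the per-$u$ concentration to a \emph{single} set $B^\ast\subseteq B_\infty$ with $\mu(B^\ast)<\rho$ and $\ell_{B^\ast}=\ell$, which would contradict minimality. This uniformisation in $u$ is exactly the content of the classical structure theory of the singular part of an Orlicz dual over a $\sigma$-finite measure; having reduced to a finite measure and to a minimally supported $\ell$, I would invoke that theory directly (cf. \cite{Ko2}, \cite[Ch.~VIII, §1]{CaVa}) rather than re-deriving it. I expect this passage from ``pointwise'' to ``uniform'' concentration to be the only genuinely nontrivial point in the whole lemma.
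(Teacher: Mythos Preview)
Your forward implication is correct and, if anything, more carefully written than the paper's: you explicitly pass to a decreasing sequence and handle the null intersection $N$, whereas the paper's terse argument leaves these steps implicit. Both proofs rest on the same idea (the Yosida--Hewitt decomposition forces the $\sigma$-additive part to vanish).

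The converse is where your proposal diverges from the paper and where it has genuine problems. First a minor point: your attainment argument for $B_\infty\in\mathcal G$ invokes ``countable subadditivity of $|\nu_{\ell,u}|$'', but $\nu_{\ell,u}$ is purely finitely additive here, so its total variation is only finitely additive and the step $|\nu_{\ell,u}|\bigl(\bigcup_n(\Om\setminus B_n)\bigr)\le\sum_n|\nu_{\ell,u}|(\Om\setminus B_n)$ is unjustified. Indeed, a purely finitely additive $\ell$ is precisely the kind of functional for which $\ell_{B_n}=\ell$ for all $n$ need \emph{not} imply $\ell_{\bigcap_n B_n}=\ell$. This is not fatal to your strategy (if $\rho=0$ you could simply take $C_n=B_n$ without ever needing attainment), but as written it is an error.

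The serious gap is the ``main obstacle'' itself. You correctly identify that passing from per-$u$ concentration to a single small carrier is the crux, but then propose to ``invoke that theory directly'' from \cite{Ko2} and \cite[Ch.~VIII,~\S1]{CaVa}. This is circular: the entire purpose of the lemma is to establish that the paper's definition of $F_{\varphi^*}(\mu)$ coincides with the singular functionals of Kozek and Castaing--Valadier, precisely so that their results become available later (see the proof of Proposition~\ref{pr: CaVa}). Deferring the hard step back to those references leaves nothing proved.

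The paper's argument is quite different from your minimality approach and supplies exactly the uniformisation you are missing. It chooses a norming sequence $u_m$ with $\|u_m\|_\varphi<1$ and $\ell(u_m)\to\vvvert\ell\vvvert^*_\varphi$, applies \cite[Thm.~1.22]{YoHe} to obtain small sets $E(m,n)$ carrying each $\nu_{\ell,u_m}$ with $I_\varphi(u_m\chi_{E(m,n)})<\tfrac12$, and sets $C_n=\bigcup_m E(m,n)$. The contradiction is then through the operator norm: if some $v$ with $\|v\|_\varphi<\tfrac12$ satisfied $\ell(v\chi_{\Om\setminus C_n})=a>0$, then for $m$ large the glued function $u_m\chi_{E(m,n)}+v\chi_{\Om\setminus C_n}$ would lie in the unit ball while $\ell$ evaluated on it would exceed $\vvvert\ell\vvvert^*_\varphi$. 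The norming sequence is what converts countably many per-$u$ concentrations into a single sequence of sets working for every $u$; this is the missing ingredient in your sketch.
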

	
	\begin{proof}
		$\implies$: Given any $u \in L_\varphi(\mu)$ we need to show that $\nu_{\ell, u} \in F(\AA)$. Let $\nu \in \Sigma(\AA)$ with $0 \le \nu \le \left| \nu_{\ell, u} \right|$. We have
		$$
		\nu^+_{\ell, u} \left( \Om \setminus C_n \right) = \sup_{B \subset \Om \setminus C_n} \nu_{\ell, u}(B) = 0.
		$$
		The same is true for the negative part $\nu^-_{\ell, u}$. Consequently,
		$$
		0 \le \nu \left( \Om \right) = \lim_n \nu \left( \Om \setminus C_n \right) = \lim_n \left| \nu_{\ell, u} \right| \left( \Om \setminus C_n \right) = 0,
		$$
		hence $\nu$ vanishes identically. We conclude $\nu_{\ell, u} \in F(\AA)$ by the arbitrariness of $\nu$ and the definition of $F(\AA)$.
		
		$\impliedby$: Let $\ell \in F_{\varphi^*}(\mu)$ and
		$$
		u_m \in L_\varphi(\mu); \quad \| u_m \|_\varphi < 1; \quad \ell \left( u_m \right) > \vvvert \ell \vvvert^*_\varphi - \frac{1}{m}.
		$$
		Let $\nu$ be a finite measure with $d \nu = f d \mu$ for a positive integrable function $f$. For $m, n \in \N$, there exists by \cite[Thm. 1.22]{YoHe} a set $E = E(m, n) \in \AA$ with
		$$
		\nu \left( E \right) <2^{- n - m}; \quad \ell \left( u_m \chi_E \right) = \ell \left( u_m \right); \quad I_\varphi \left( u_m \chi_E \right) < \frac{1}{2}.
		$$
		Setting $C = C_n = \bigcup_{m \ge 1} E(m, n)$, we argue by contradiction that $\ell \left( u \chi_{\Om \setminus C} \right) = 0$ for every $u \in L_\varphi(\mu)$. Suppose
		$$
		v \in L_\varphi(\mu); \quad \| v \|_\varphi < \frac{1}{2}; \quad \ell \left( v \chi_{\Om \setminus C} \right) = a > 0.
		$$
		Pick $m$ sufficiently large that $\frac{1}{m} < a$. Then
		$$
		I_\varphi \left( u_m \chi_{E(m, n) } + v \chi_{\Om \setminus C_n} \right) = I_\varphi \left( u_m \chi_E \right) + I_\varphi \left( v \chi_{\Om \setminus C} \right) < \frac{1}{2} + \frac{1}{2} = 1
		$$
		hence $\| u_m \chi_E + v \chi_{\Om \setminus C} \|_\varphi \le 1$. Consequently,
		$$
		\ell \left( u_m \chi_E + v \chi_{\Om \setminus C} \right) = \ell \left( u_m \right) + \ell \left( v \chi_{\Om \setminus C} \right) > \vvvert \ell \vvvert^*_\varphi - \frac{1}{m} + a > \vvvert \ell \vvvert^*_\varphi
		$$
		yields a contradiction.
	\end{proof}
	
	We are now ready to recast the characterization in \cite{CaVa} of the absolutely continuous functionals in the dual space $L_\i \left( \mu ; X \right)^*$ to match our setting. This will be the foundation on which we build the general case by means of the almost embedding Lemma \ref{lem: a emb}.
	
	\begin{proposition} \label{pr: CaVa}
		Let $\mu$ be $\sigma$-finite and $\ell \in A_1 \left( \mu ; X^* \right)$ an absolutely continuous element of $L_\i \left( \mu ; X \right)^*$. Then there exists a weak* measurable function $v \colon \Om \to X^*$ such that
		\begin{equation} \label{eq: id A1 and V1}
			\ell(u) = \int \langle v(\om), u(\om) \rangle \, d \mu(\om) \quad \forall u \in L_\i \left( \mu; X \right).
		\end{equation}
		In particular, we have an isometric isomorphism $A_1 \left( \mu ; X^* \right) = V_1 \left( \mu ; X^* \right)$ via this identification, where $V_1 \left( \mu ; X^* \right) = V_{\varphi^*} \left( \mu \right)$ for $\varphi = I_{B_X}$, namely $\varphi^* = \| \cdot \|_{X^*}$ is the dual norm.
	\end{proposition}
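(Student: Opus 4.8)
The plan is to obtain Proposition \ref{pr: CaVa} as a specialization of the classical Castaing--Valadier representation of $L_\i(\mu;X)^*$ together with the identification of absolutely continuous functionals already prepared in this section. First I would recall that for a $\sigma$-finite measure $\mu$, the dual $L_\i(\mu;X)^*$ is classically identified with a space of bounded finitely additive $X^*$-valued set functions of bounded semivariation that are $\mu$-absolutely continuous in the weak sense (cf. \cite[Ch. VIII, §1]{CaVa} or \cite{DSch}); the subspace $A_1(\mu;X^*)$ of \emph{countably additive} such set functions is exactly the absolutely continuous functionals in the sense of Subsection \ref{ssec: functionals}, by Lemma \ref{lem: F if mu sgm-f} (which shows the finitely additive/singular complement is precisely $F_{\varphi^*}(\mu)$ for $\sigma$-finite $\mu$). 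So the content to prove is that a countably additive, $\mu$-continuous $X^*$-valued set function $m$ of bounded semivariation with $\|m\|(\Om)<\i$ has a weak* measurable density $v\colon\Om\to X^*$, i.e. $m(A)=\int_A v\,d\mu$ weak*-wise, and that $\|v\|^*_{I_{B_X}}=\|m\|$.

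The key steps, in order: (1) Reduce to a finite measure by Proposition \ref{pr: equivalent finite measure} (with $\varphi=I_{B_X}$ the modification is trivial: it rescales the ball pointwise but the statement only concerns the isometric algebra of functionals, and $\vvvert\cdot\vvvert^*$ is the operator norm, which is unaffected). (2) For each fixed $x\in X$, the scalar set function $A\mapsto\langle m(A),x\rangle=\ell(x\chi_A)$ is countably additive, absolutely continuous w.r.t. $\mu$, and of bounded variation dominated by $\|\ell\|\|x\chi_\cdot\|_\i$; by the scalar Radon--Nikodym theorem it has a density $g_x\in L_1(\mu)$. (3) Use linearity $g_{ax+by}=ag_x+bg_y$ a.e. and the bound $|g_x|\le C\|x\|$ a.e. to produce, on a dense countable $\Q$-subspace $X_0\subset X$, a single null set off which $x\mapsto g_x(\om)$ is $\Q$-linear and bounded, hence extends to an element $v(\om)\in X^*$ with $\|v(\om)\|\le C$; weak* measurability of $v$ follows since $\langle v,x\rangle=g_x$ is measurable for $x\in X_0$ and the general $x$ is a limit of those. (4) Upgrade from $\langle v,x\chi_A\rangle$ to $\int_A\langle v,u\rangle\,d\mu$ for all simple, then all $u\in L_\i(\mu;X)$, by approximating $u$ uniformly by simple functions and dominated convergence; this gives (\ref{eq: id A1 and V1}). (5) Check that $v$ so obtained belongs to $V_1(\mu;X^*)=V_{I_{B_X}}(\mu)$ with $\vvvert v\vvvert^*_{I_{B_X}}=\|\ell\|$: one inequality is the definition of the operator norm, the reverse uses the reverse Hölder inequality Lemma \ref{lem: rev Hölder} applied with $L$ the simple functions in $L_\i(\mu;X)=L_{I_{B_X}}^\sigma(\mu)$ (noting $I_{B_X}$ is real-valued off the trivial part, or more simply that on a finite measure $L_\i$ itself is almost decomposable). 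Finally, combine with Proposition \ref{pr: iso A to V} to see the map $v\mapsto\ell$ is an isometric isomorphism onto $A_1(\mu;X^*)$, and read off $\varphi^*=\|\cdot\|_{X^*}$ from the standard conjugacy $(I_{B_X})^*=\|\cdot\|_{X^*}$.

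The main obstacle I anticipate is step (3), the vector-valued ``gluing'' of the scalar densities into a single weak* measurable function $v$ without any separability or Radon--Nikodym assumption on $X^*$. The point is that $v$ need only be \emph{weak*} measurable, so one does not need a Bochner density — which is exactly why the argument works for arbitrary $X$ — but one must be careful that the exceptional null sets (one per $x\in X$) can be amalgamated: this is handled by fixing a countable $\Q$-linear dense set $X_0$, obtaining a single null set $N$ on $\Om\setminus N$ of which $x\mapsto g_x(\om)$ is $\Q$-linear and $\|\cdot\|$-bounded, and then extending by density and continuity to all of $X$, at which point the measurability of $\langle v(\cdot),x\rangle$ for general $x$ follows as a pointwise limit of measurable functions. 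I expect to be able to cite \cite[Ch. VIII, §1]{CaVa} for the bulk of this — the proposition is explicitly advertised in the text as a ``recast'' of their result — so the proof will largely consist of matching their hypotheses and conclusions to the present notation (identifying their singular functionals with $F_{\varphi^*}(\mu)$ via Lemma \ref{lem: F if mu sgm-f}) rather than redoing the construction from scratch.
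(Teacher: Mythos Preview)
Your final paragraph captures the paper's approach: the paper does not reconstruct the density $v$ but simply matches notation with \cite[Ch.~VIII]{CaVa} and then compares two direct-sum decompositions. Concretely, the paper (i) identifies $V_1(\mu;X^*)$ isometrically with Castaing--Valadier's $L^1_{X^*}[X]$ via Theorem~\ref{thm: inf int}, (ii) identifies their singular functionals with $F_{\varphi^*}(\mu)$ via Lemma~\ref{lem: F if mu sgm-f}, so that \cite{CaVa} gives $L_\i(\mu;X)^*=V_1\oplus F_1$, (iii) juxtaposes this with the paper's own decomposition $L_\i(\mu;X)^*=A_1\oplus F_1$ from Theorem~\ref{thm: xtnsn Giner}, and (iv) concludes that the isometric embedding $V_1\hookrightarrow A_1$ of Proposition~\ref{pr: iso A to V} is surjective by uniqueness of complements. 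So if you actually execute your fallback plan, you are essentially doing what the paper does.

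However, your detailed construction in steps (2)--(3) has a genuine gap: you invoke a ``dense countable $\Q$-subspace $X_0\subset X$'' to amalgamate the null sets, but no such subspace exists when $X$ is non-separable---and non-separable $X$ is precisely the case this paper is built to handle. The obstacle you flag is real, and your proposed fix does not resolve it. This is exactly why the paper (and \cite{CaVa}) does not reprove the density construction here; the weak*-density result for arbitrary $X$ requires a more delicate argument than scalar Radon--Nikodym plus countable gluing. Drop the from-scratch construction and follow the comparison-of-decompositions route; everything you need (Proposition~\ref{pr: iso A to V}, Lemma~\ref{lem: F if mu sgm-f}, Theorem~\ref{thm: xtnsn Giner}) is already in place.
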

	
	\begin{proof}
		Observe that, for $v \in V_1\left( \mu ; X^* \right)$, there holds
		$$
		\| v \|^*_\varphi = \sup_{\| u \|_\i \le 1} \int \langle v(\om), u(\om) \rangle \, d \mu(\om) = \int \esssup_{W \in \SS(X)} \sup_{x \in B_W} \langle v(\om), x \rangle \, d \mu(\om)
		$$
		according to Theorem \ref{thm: inf int}, as can be seen by absorbing the pointwise a.e. restriction $\| u \|_\i \le 1$ into the integrand as an indicator of the ball $B_X$. Therefore, the space $V_1\left(\mu; X^* \right)$ is isometrically isomorphic to the space $L^1_{X^*} \left[ X \right]$ defined in \cite[VIII]{CaVa} through the identification remarked below \cite[Lem. VIII.3]{CaVa}. Moreover, the definition \cite[VIII, Def. 5]{CaVa} of singular functionals agrees with our definition of $F_{\varphi^*}(\mu)$ in the current situation by Lemma \ref{lem: F if mu sgm-f} since for any family of measurable sets on a $\sigma$-finite measure space there exists a countable subfamily whose intersection returns the essential intersection of the entire family by \cite[Thm. 1.108]{FoLe}. It is then obvious that $F_{\varphi^*}(\mu)$ and the so-called singular functionals are isometrically isomorphic if both carry their operator norm. In total
		$$
		L_\i\left( \mu ; X \right)^* = V_1\left( \mu; X^* \right) \oplus F_1\left( \mu; X^* \right).
		$$
		Since
		$$
		L_\i\left( \mu ; X \right)^* = A_1\left( \mu; X^* \right) \oplus F_1\left( \mu; X^* \right)
		$$
		by Theorem \ref{thm: xtnsn Giner} and because (\ref{eq: id A1 and V1}) defines an isometric embedding of $V_1\left( \mu; X^* \right)$ into $A_1\left( \mu; X^* \right)$ by Proposition \ref{pr: iso A to V}, we find the embedding induced by (\ref{eq: id A1 and V1}) surjective.
	\end{proof}
	
	In the following, we consider functions defined on a set $A \in \AA$ as trivially extended to all of $\Om$.
	
	\begin{proposition} \label{pr: abs cont local}
		Let $\ell \in A_{\varphi^*}(\mu)$ such that, for any $F \in \AA_f$, there exists a sequence of sets $F_n \in \AA$ with $F = \bigcup_{n \ge 1} F_n$ and elements $v_n \in V_{\varphi^*}\left( F_n \right)$ such that
		\begin{equation} \label{eq: proj cond}
			\ell \left( u \chi_{F_n} \right) = \int \langle v_n, u \rangle \, d \mu \quad \forall u \in L_\varphi(\mu).
		\end{equation}
		Then there exists a unique $v \in V_{\varphi^*}(\mu)$ with
		\begin{equation} \label{eq: density repr}
			\ell \left( u \right) = \int \langle v, u \rangle \, d \mu \quad \forall u \in L_\varphi(\mu).
		\end{equation}
		Moreover, if each such $v_n$ is strongly measurable, then is $v$ is integrally strongly measurable. If in addition $\varphi$ is dualizable i.a.e. for every element of $L_{\varphi^*}(\mu)$ and the minimum of $\varphi^*$ at the origin is strict outside a $\sigma$-finite set, then $v$ is uniquely determined as an element of $L_{\varphi^*}^\sigma(\mu)$.
	\end{proposition}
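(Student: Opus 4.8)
The hypothesis gives local representations of $\ell$ on pieces of each finite-measure set; the plan is to glue these into a single linear weak* integrand by exploiting that $\ell\in A_{\varphi^*}(\mu)$ makes every $\nu_{\ell,u}$ a genuine countably additive measure of bounded variation, then to identify its operator norm with $\| \ell \|^*_\varphi$, and finally to run a conjugacy bootstrap for the $L^\sigma_{\varphi^*}$-refinement. Note that since $\mu$ has no atom of infinite measure, $\AA_{a\sigma}=\AA_\sigma$, so all the ``integral'' constructions run over $\sigma$-finite sets.

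\textbf{Step 1 (assembly on a finite-measure set).} Fix $F\in\AA_f$ with covering $F=\bigcup_n F_n$, $v_n\in V_{\varphi^*}(F_n)$. Disjointify: $G_1=F_1$, $G_n=F_n\setminus\bigcup_{k<n}F_k$, and put $w_n=v_n\chi_{G_n}\in V_{\varphi^*}(G_n)$. Since $L_\varphi(\mu)$ is weakly decomposable, $u\chi_{G_n}\in L_\varphi(\mu)$, and $\ell(u\chi_{G_n})=\ell\bigl((u\chi_{G_n})\chi_{F_n}\bigr)=\int\langle w_n,u\rangle\,d\mu$. Define $v_F=\sum_n w_n$ pointwise (the $G_n$ are disjoint with union $F$), a weak* measurable $X^*$-valued map on $F$. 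By Proposition \ref{pr: giner 1.3.2}, $\nu_{\ell,u\chi_{G_n}}=(\nu_{\ell,u})_{G_n}$ has $\mu$-density $\langle v_F,u\rangle\chi_{G_n}$, hence $\int_{G_n}|\langle v_F,u\rangle|\,d\mu=|\nu_{\ell,u}|(G_n)$; as $\ell$ is absolutely continuous, $\nu_{\ell,u}$ is $\sigma$-additive, so these sum to $|\nu_{\ell,u}|(F)<\i$. Therefore $\langle v_F,u\rangle$ is $\mu$-integrable on $F$ and $\int_F\langle v_F,u\rangle\,d\mu=\sum_n\nu_{\ell,u}(G_n)=\nu_{\ell,u}(F)=\ell(u\chi_F)$ for every $u\in L_\varphi(\mu)$.

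\textbf{Step 2 (globalization and the identity).} For $F\subset F'$ in $\AA_f$ one has $\int_F\langle v_F-v_{F'},u\rangle\,d\mu=0$ for all $u\in L_\varphi(F)$; applying the interchange criterion Theorem \ref{thm: inf int} to $\mp\langle v_F-v_{F'},\cdot\rangle$ (here $L_\varphi(F)$ is almost decomposable by Corollary \ref{cor: L a decomp} and $\mu|_F$ is finite) shows $\esssup_{W\in\SS(X)}\sup_{x\in W}\langle v_F-v_{F'},x\rangle=0$ a.e.\ on $F$ together with the sign-reversed statement, and specialising $W=\lin\{x_0\}$ yields $\langle v_F,x_0\rangle=\langle v_{F'},x_0\rangle$ a.e.\ for every $x_0\in X$, i.e.\ $v_F=v_{F'}$ weak* a.e.\ on $F$. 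Hence for $\Sigma\in\AA_\sigma$, writing $\Sigma=\bigcup_m F_m$ with $F_m\in\AA_f$ increasing and gluing the $v_{F_m}$ produces a weak* measurable $v_\Sigma$ on $\Sigma$, and $v=(v_\Sigma)_\Sigma$ is a well-defined linear weak* integrand. For $u\in L_\varphi(\mu)$, absolute continuity of $\ell$ gives (Proposition \ref{pr: sgm spprt}) a $\sigma$-finite $\Sigma$ with $\ell(u\chi_A)=\ell(u\chi_{A\cap\Sigma})$; exhausting $\Sigma\cup\{\langle v,u\rangle\neq0\}$ by finite-measure sets and using countable additivity of $\nu_{\ell,u}$ yields $\int\langle v,u\rangle\,d\mu=\ell(u)$ with $\int|\langle v,u\rangle|\,d\mu\le|\nu_{\ell,u}|(\Om)<\i$, so the exhausting integral is meaningful. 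Consequently $\vvvert v\vvvert^*_\varphi=\sup_{\|u\|_\varphi\le1}\ell(u)=\|\ell\|^*_\varphi<\i$, so $v\in V_{\varphi^*}(\mu)$, and uniqueness in $V_{\varphi^*}(\mu)$ is immediate since any two representatives differ by a kernel element of $\vvvert\cdot\vvvert^*_\varphi$.

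\textbf{Step 3 (measurability and the $L^\sigma_{\varphi^*}$-refinement), and the hard part.} If the $v_n$ are strongly measurable, so is each $w_n$, hence each $v_F=\sum_n w_n$ (measurable, essentially separably valued as a countable union of separable ranges), hence each $v_\Sigma$, so $v$ is integrally strongly measurable. For the final claim, the norm bound of Step 2 gives $\int\esssup_{W}\sup_{x\in W}\langle v,x\rangle\,d\mu\le\vvvert v\vvvert^*_\varphi<\i$ by Theorem \ref{thm: inf int}, so this integrand vanishes outside a $\sigma$-finite $\Sigma_0$; on its complement the class of $v$ is null (again Theorem \ref{thm: inf int}: the integrals $\int\langle v,u\rangle\,d\mu$ are unaffected by redefining $v$ there), so after this harmless modification $v$ is $\sigma$-finitely concentrated, hence strongly measurable. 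Then $\varphi$ is dualizable for $v$, so Theorem \ref{thm: conjugate A} gives $I^*_\varphi(\alpha^{-1}v)=I_{\varphi^*}(\alpha^{-1}v)$, finite for $\alpha$ large because $\vvvert v\vvvert^*_\varphi$ is the (finite) dual Amemiya norm; Lemma \ref{lem: modular-norm} then forces $\|v\|_{\varphi^*}<\i$, i.e.\ $v\in L_{\varphi^*}(\mu)$, and since $\varphi^*$ has a strict minimum at the origin off a $\sigma$-finite set, Theorem \ref{thm: Lvarphi = Lvarphisigma characterization} applied to $\varphi^*$ (its separable measurability coming from Lemma \ref{lem: conj sep norm}) yields $L_{\varphi^*}(\mu)=L^\sigma_{\varphi^*}(\mu)$, so $v\in L^\sigma_{\varphi^*}(\mu)$; uniqueness within $L^\sigma_{\varphi^*}(\mu)$ follows as before, since a competing representative differs from $v$ by a $\vvvert\cdot\vvvert^*_\varphi$-kernel element, which is weak* a.e.\ zero by Theorem \ref{thm: inf int} and hence a.e.\ zero by strong measurability together with a countable norming argument for the common separable range. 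The two places I expect to be delicate are exactly the passage in Step 2 from the ``kernel-of-seminorm'' equality of the $v_F$ to genuine weak* a.e.\ compatibility (so that $v$ is an honest linear weak* integrand), and the bootstrap in Step 3 upgrading ``integrally strongly measurable with finite operator norm'' to ``$\sigma$-finitely concentrated, strongly measurable, finite Luxemburg norm'' — it is precisely here that dualizability (to identify $I^*_\varphi$ with $I_{\varphi^*}$) and the strict-minimum hypothesis on $\varphi^*$ are indispensable.
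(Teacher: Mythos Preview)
Your Steps 1--2 and the strong-measurability portion of Step 3 are correct and follow essentially the paper's route: disjointify the local cover, patch via $\sigma$-additivity of $\nu_{\ell,u}$, check weak*-a.e.\ compatibility through the kernel description of $\vvvert\cdot\vvvert^*_\varphi$, and globalize to a linear weak* integrand.

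The $L^\sigma_{\varphi^*}$-refinement in Step 3, however, contains a genuine error. The inequality you write,
\[
\int \esssup_{W\in\SS(X)} \sup_{x\in W} \langle v(\om),x\rangle \, d\mu(\om) \le \vvvert v\vvvert^*_\varphi,
\]
is false: with no constraint on $x$, the inner supremum is $+\infty$ at every $\om$ where the class of $v$ is nontrivial, so the left side is $+\infty$ unless $v=0$. Theorem~\ref{thm: inf int} applied to the linear integrand $(\om,x)\mapsto -\langle v(\om),x\rangle$ over the unconstrained space $L_\varphi(\mu)$ yields $\inf_u \int -\langle v,u\rangle\,d\mu=-\infty$, not a finite bound. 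Consequently your deduction that the integrand vanishes outside a $\sigma$-finite set collapses, and with it the passage ``$\sigma$-finitely concentrated $\Rightarrow$ strongly measurable $\Rightarrow$ dualizable $\Rightarrow$ $\|v\|_{\varphi^*}<\infty$''. (There is also a circularity: you invoke dualizability of $\varphi$ at $v$ before knowing $v\in L_{\varphi^*}(\mu)$, whereas the hypothesis grants dualizability only for elements of that space.)

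The paper takes a different and working route for this addendum. Rather than first proving $\sigma$-finite concentration of the abstract $v$, it uses that on every $\Sigma\in\AA_\sigma$ the strongly measurable representative $v_\Sigma$ already lies in $L_{\varphi^*}(\Sigma)$ with $\|v_\Sigma\|_{\varphi^*}\le\vvvert\ell\vvvert^*_\varphi$. One then maximizes the modular
\[
\Sigma \mapsto \int_\Sigma \varphi^*\bigl(\om,v_\Sigma(\om)\bigr)\,d\mu(\om)
\]
over $\AA_\sigma$ (this family is bounded above by $1$ after normalization), picks a maximizing $\Sigma_0$ which may be taken to contain the $\sigma$-finite exceptional set where the minimum of $\varphi^*$ at the origin fails to be strict, and argues by contradiction: a nontrivial extension of $v_{\Sigma_0}$ to a larger $\sigma$-finite set would, by strictness of the minimum of $\varphi^*$ at $0$, strictly increase the modular and surpass the supremum. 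This is where the strict-minimum hypothesis enters---directly, not via Theorem~\ref{thm: Lvarphi = Lvarphisigma characterization}.
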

	
	\begin{proof}
		Uniqueness: Such a representation is unique by Proposition \ref{pr: iso A to V}. Existence: We may assume $\mu$ to be $\sigma$-finite since if to every $\Sigma \in \AA_\sigma$ there corresponds $v_\Sigma \in V_{\varphi}(\Sigma)$ with
		$$
		\nu_{\ell, u} \left( \Sigma \right) = \int \langle v_\Sigma(\om), u(\om) \rangle \, d \mu(\om) \quad \forall u \in L_\varphi(\mu),
		$$
		then this defines a linear weak* integrand hence an equivalence class $v \in V_{\varphi^*}(\mu)$ representing $\ell$ by Proposition \ref{pr: sgm spprt}. We may even assume $\mu$ to be finite since every element of $A_{\varphi^*}(\mu)$ is $\sigma$-additive and every $\sigma$-finite set can be written as a disjoint union of sets having finite measure.
		
		We have $v_n = v_m$ weak* a.e. on $F_n \cap F_m$ by the considerations on the kernel of the operator norm on $V_{\varphi^*}(\mu)$. Therefore,
		$$
		v(\om) = v_n(\om) \text{ if } \om \in F_n \cap F_m
		$$
		defines an a.e. equivalence class of weak* measurable functions. We have
		$$
		G_n := \bigcup_{m = 1}^{n -1} F_m; \quad \ell(u) = \sum_{n = 1}^\i \ell \left( u \chi_{F_n \setminus G_n} \right) = \sum_{n = 1}^\i \int_{F_n \setminus G_n} \langle v_n, u \rangle \, d \mu = \int \langle v, u \rangle \, d \mu.
		$$
		The series converges by $\sigma$-additivity of $\ell$. Measurability: This is obvious by our construction of $v$. Addendum: For $\Sigma \in \AA_\sigma$, there exists a unique $v_\Sigma \in L_{\varphi^*}\left(\Sigma\right)$ representing $\ell$ on $\Sigma$. We have
		$$
		\sup_{\Sigma \in \AA_\sigma} \| v_\Sigma \|_{\varphi^*} \le \sup_{\Sigma \in \AA_\sigma} \vvvert v_\Sigma \vvvert^*_\varphi \le \vvvert \, \ell \, \vvvert^*_\varphi \le 1
		$$
		so there exists $\Sigma_0 \in \AA_\sigma$ with
		\begin{equation} \label{eq: ass sup}
			\int_{\Sigma_0} \varphi^* \left[\om, v_{\Sigma_0}(\om) \right] \, d \mu(\om) = \sup_{\Sigma \in \AA_\sigma} \int_\Sigma \varphi^* \left[ \om, v_\Sigma \left( \om \right) \right] \, d \mu \left( \om \right) \le 1.
		\end{equation}
		We may assume $\Sigma_0$ to contain the $\sigma$-finite set off which the minimizer of $\varphi^*$ at the origin is isolated. If $v_{\Sigma_0}$ could be extended outside of $\Sigma_0$ in a non-trivial way to still represent $\ell$, then there were $u \in L_\varphi^\sigma(\mu)$ concentrated on $\Om \setminus \Sigma_0$ with $\ell \left( u \right) = b > 0$. Since $u$ is concentrated on a $\sigma$-finite set, we can extend $v_{\Sigma_0}$ to this set, which contradicts the definition of $\Sigma_0$ as the extension would surpass the supremum in (\ref{eq: ass sup}) if the minimizer of $\varphi^*$ at the origin is isolated.
	\end{proof}
	
	\begin{theorem} \label{thm: A = V}
		Identifying $v \in V_{\varphi^*}(\mu)$ with the continuous linear functional
		\begin{equation} \label{eq: v induces functional}
			L_\varphi(\mu) \to \R \colon u \mapsto \int \langle v \left( \om \right), u \left( \om \right) \rangle \, d \mu \left( \om \right),
		\end{equation}
		induces an isometric isomorphism
		\begin{equation} \label{eq: A = V}
			A_{\varphi^*}(\mu) = V_{\varphi^*}(\mu).
		\end{equation}
		If moreover $X^*$ has the Radon-Nikodym property with respect to the restriction of $\mu$ to sets of finite measure, then elements of $V_{\varphi^*}(\mu)$ are integrally strongly measurable. If in addition $\varphi$ is dualizable i.a.e. for every $v \in L_{\varphi^*}(\mu)$, the minimum of $\varphi^*$ at the origin is strict outside a $\sigma$-finite set and $v$ is identified with the continuous linear function (\ref{eq: v induces functional}), then (\ref{eq: A = V}) induces an isomorphism
		\begin{equation} \label{eq: A = L}
			A_{\varphi^*}(\mu) = L_{\varphi^*}(\mu).
		\end{equation}
	\end{theorem}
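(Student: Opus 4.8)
The isometric embedding $V_{\varphi^*}(\mu)\hookrightarrow A_{\varphi^*}(\mu)$ is already furnished by Proposition \ref{pr: iso A to V}, so the entire content of (\ref{eq: A = V}) is surjectivity: every $\ell\in A_{\varphi^*}(\mu)$ must admit a linear weak* integrand representing it. Since $\ell$ is absolutely continuous, each $\nu_{\ell,u}$ is $\sigma$-additive and concentrated on a $\sigma$-finite set (Proposition \ref{pr: sgm spprt} together with our convention that absolute continuity includes (\ref{eq: abs cnt cnc})); writing that $\sigma$-finite set as a disjoint union of sets of finite measure, it suffices by Proposition \ref{pr: abs cont local} to produce, for each fixed $F\in\AA_f$, sets $F_n$ with $F=\bigcup_n F_n$ and $v_n\in V_{\varphi^*}(F_n)$ with $\ell(u\chi_{F_n})=\int\langle v_n,u\rangle\,d\mu$ for all $u\in L_\varphi(\mu)$. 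So I would fix $F\in\AA_f$, pass to $\mu|_F$ (finite), and work there.

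On $F$ I would invoke the almost-embedding Lemma \ref{lem: a emb}: an isotonic family $\Om_{\e_n}\subset F$ with $\mu(F\setminus\Om_{\e_n})\to0$ and continuous inclusions $L_\i(\Om_{\e_n};X)\to L_\varphi(\Om_{\e_n})\to L_1(\Om_{\e_n};X)$. Precomposing $u\mapsto\ell(u\chi_{\Om_{\e_n}})$ with the first inclusion gives an element of $L_\i(\Om_{\e_n};X)^*$ whose set functions $A\mapsto\ell(u\chi_A)$ are $\ll\mu$, hence an absolutely continuous functional there; Proposition \ref{pr: CaVa} represents it by a weak* measurable $v_n$. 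The reverse Hölder Lemma \ref{lem: rev Hölder}, applied with $L$ the bounded (or simple) functions of $L_\varphi^\sigma(\Om_{\e_n})$ — almost decomposable by Corollaries/Lemmas \ref{cor: L a decomp}, \ref{lem: E a decomp} — upgrades this to $v_n\in V_{\varphi^*}(\Om_{\e_n})$ with $\vvvert v_n\vvvert^*_\varphi\le\|\ell\|^*_\varphi$. I would then extend the identity $\ell(u\chi_{\Om_{\e_n}})=\int\langle v_n,u\rangle\,d\mu$ from bounded $u$ to all $u\in L_\varphi(\mu)$ by approximating $u\chi_{\Om_{\e_n}}$ from below through bounded functions (Lemma \ref{lem: decomp ss abs cont dense}) and invoking $\sigma$-additivity of $\ell$ and dominated convergence against the $V_{\varphi^*}$-bound. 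Uniqueness of $V_{\varphi^*}$-representations makes the $v_n$ consistent on overlaps, so with $F_n=\Om_{\e_n}$ Proposition \ref{pr: abs cont local} assembles the desired $v\in V_{\varphi^*}(\mu)$; the norm identity is Proposition \ref{pr: iso A to V}. This proves (\ref{eq: A = V}).

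For the Radon--Nikodym clause I would observe that, in this construction, each local representative $v_n$ sits via the continuous inclusion $L_\varphi(\Om_{\e_n})\to L_1(\Om_{\e_n};X)$ inside the absolutely continuous part of $L_1(\Om_{\e_n};X)^*$; if $X^*$ has the Radon--Nikodym property with respect to finite-measure restrictions of $\mu$, that part consists of (a.e.\ classes of) strongly measurable functions, so $v$ is strongly measurable on each $\Om_{\e_n}$, hence on each set of finite measure, hence on every $\sigma$-finite set and (trivially) on every atom — i.e.\ integrally strongly measurable, since there are no atoms of infinite measure. For the final isomorphism (\ref{eq: A = L}), assume additionally that $\varphi$ is dualizable i.a.e.\ for every element of $L_{\varphi^*}(\mu)$ and that the minimum of $\varphi^*$ at the origin is strict off a $\sigma$-finite set. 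Then Theorem \ref{thm: conjugate A} gives $I^*_\varphi=I_{\varphi^*}$ (no atoms of infinite measure), so Corollary \ref{cor: Lux-Orl equi} identifies $\vvvert v\vvvert_{\varphi^*}$ with the operator norm; the strongly measurable $v$ just obtained has finite operator norm, so $v\in L_{\varphi^*}(\mu)$, and it is $\sigma$-finitely concentrated by Proposition \ref{pr: sgm spprt}. Thus $A_{\varphi^*}(\mu)\subseteq L_{\varphi^*}^\sigma(\mu)\subseteq L_{\varphi^*}(\mu)\subseteq A_{\varphi^*}(\mu)$, the last inclusion by the Hölder inequality Lemma \ref{lem: Hölder inequality} and Proposition \ref{pr: iso A to V}; injectivity of the identification on $L_{\varphi^*}(\mu)$ follows from the strictness assumption exactly as in the addendum of Proposition \ref{pr: abs cont local}, which yields (\ref{eq: A = L}).

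\textbf{Main obstacle.} The crux is the two-step localization: a global absolutely continuous functional on $L_\varphi(\mu)$ is not directly a functional on any Bochner space, and the bridge $L_\varphi\hookrightarrow L_1$ exists only after discarding an arbitrarily small set; one must therefore run Castaing--Valadier's $L_\infty$-duality on the pieces $\Om_{\e_n}$ and glue the resulting weak* integrands without reintroducing measurability pathologies in the non-separable range $X$ — precisely the role of the essential-operations machinery behind Lemma \ref{lem: a emb} and of Proposition \ref{pr: abs cont local}. A subsidiary but genuine point is the extension of each local representation from bounded functions to all of $L_\varphi(\mu)$, which cannot rely on norm density of bounded functions and must instead exploit convergence from below together with the $\sigma$-additivity of $\ell$.
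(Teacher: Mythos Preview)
Your argument for the main identity (\ref{eq: A = V}) is essentially the paper's proof: reduce to $F\in\AA_f$ via Proposition \ref{pr: abs cont local}, pass to the pieces $\Om_{\e_n}$ from Lemma \ref{lem: a emb}, represent the restriction on $L_\i(\Om_{\e_n};X)$ by Proposition \ref{pr: CaVa}, upgrade to $V_{\varphi^*}$ via Lemma \ref{lem: rev Hölder}, extend from bounded functions to all of $L_\varphi$ by Lemma \ref{lem: decomp ss abs cont dense} and absolute continuity, and glue. The final isomorphism (\ref{eq: A = L}) is also handled as in the paper.

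There is a gap in your Radon--Nikodym clause. You claim that $v_n$ ``sits via the continuous inclusion $L_\varphi(\Om_{\e_n})\to L_1(\Om_{\e_n};X)$ inside the absolutely continuous part of $L_1(\Om_{\e_n};X)^*$'', but the adjoint of that inclusion goes the wrong way: it embeds $L_1(\Om_{\e_n};X)^*$ into $L_\varphi(\Om_{\e_n})^*$, not conversely. An element of $V_{\varphi^*}(\Om_{\e_n})$ has no reason to extend to a bounded functional on the strictly larger space $L_1(\Om_{\e_n};X)$; equivalently, $v_n$ need not lie in $V_\i(\Om_{\e_n};X^*)$. The paper avoids this by taking a direct route: from $\ell$ it builds the $X^*$-valued vector measure $A\mapsto\bigl[x\mapsto\ell(x\chi_A\chi_{F_\e})\bigr]$, verifies norm $\sigma$-additivity and bounded variation, and applies the Radon--Nikodym theorem to obtain a Bochner density $v_\e\in L_1(\mu;X^*)$, which is strongly measurable by definition and is then identified with the earlier weak* integrand. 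Your route can be salvaged --- e.g.\ by truncating $v_n$ on the superlevel sets of its essential norm as in Proposition \ref{pr: Vinf mon dns} so that each truncation lies in $V_\i$, hence in $L_\i(\Om_{\e_n};X^*)$ under RNP, and then passing to the pointwise limit --- but this is precisely the vector-measure argument in disguise, not a consequence of the inclusion you invoke.
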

	
	\paragraph{Remark.} The Radon-Nikodym property always holds if $\mu$ is purely atomic, cf. \cite[p. 62]{DU}.
	
	\begin{proof}
		By Proposition \ref{pr: iso A to V}, it remains to represent a given $\ell \in A_{\varphi^*}(\mu)$ by some $v \in V_{\varphi^*}(\mu)$ this way. Let $F \in \AA_f$ and consider the functional $\ell_F(u) = \ell \left( u \chi_F \right)$. If each $\ell_F$ permits a representation via $v_F \in V_{\varphi^*}\left(F\right)$ by (\ref{eq: v induces functional}), then (\ref{eq: A = V}) follows by Proposition \ref{pr: abs cont local}.
		
		Let $F_\e$ be an isotonic family with $F_\e \uparrow F$ as in Lemma \ref{lem: a emb}. Consider the mapping
		$$
		\nu_\e \colon L_\i \left( \mu ; X \right) \to \R \colon u \mapsto \nu_{\ell, u} \left( F_\e \right) = \ell \left( u \chi_{F_\e} \right).
		$$
		It is linear continuous since $\| v \chi_{F_\e} \|_\varphi \le C_\e \| v \|_\i$. Proposition \ref{pr: CaVa} yields $v_\e \in V_1\left(\mu; X^* \right)$ with
		\begin{equation} \label{eq: nu int repr}
			\nu_\e (u) = \int \langle v_\e, u \rangle \, d \mu \quad \forall u \in L_\i \left( \mu ; X \right).
		\end{equation}
		As $L_\i\left( \mu; X \right)$ is linear and almost decomposable, we may invoke Lemma \ref{lem: rev Hölder} to find that (\ref{eq: nu int repr}) defines an element $v_\e \in V_{\varphi^*}\left( F_\e \right)$. Lemma \ref{lem: decomp ss abs cont dense} together with the absolute continuity of $\ell$ then implies that the functional induced by $v_\e$ through (\ref{eq: nu int repr}) agrees with $\nu_\e$ on all of $L_\varphi(\mu)$, hence we conclude existence of a representing function $v_F \in V_{\varphi^*}\left( F \right)$ as required by Proposition \ref{pr: abs cont local}. The first claim has been proved.
		
		First addendum and (\ref{eq: A = L}): Arguing as in the first step, we may reduce the problem to the set $F \in \AA_f$ by the addendum in Proposition \ref{pr: abs cont local}. Consider the restriction of the mapping $\nu_\e$ (not relabeled)
		$$
		\nu_\e \colon \AA \times X \to \R \colon \left( A, x \right) \mapsto \ell \left( x \chi_A \chi_{F_\e} \right).
		$$
		We may regard $A \mapsto \nu_\e \left( A \right)$ as an $X^*$-valued vector measure because $\nu_\e \left( A, \cdot \right) \in X^*$ by $\| v \chi_{F_\e} \|_\varphi \le C_\e \| v \|_\i$. Since $\ell$ is absolutely continuous, the vector measure $\nu_\e$ is weak* $\sigma$-additive. Let $\Om_j \in \AA$ be a countable measurable partition of $\Om$ and pick for $\nu_\e \left( \Om_j \right)$ an $x_j \in B_X$ with $\| \nu_\e\left( \Om_j \right) \|_{X^*} < 2^{-j} + \nu_\e \left( \Om_j, x_j \right)$ so that absolute continuity of $\ell$ yields the estimate
		$$
		\sum_{j \ge 1} \| \nu_\e\left( \Om_j \right) \|_{X^*} \le 1 + \ell \left( \sum_{j \ge 1} x_j \chi_{\Om_j} \chi_{F_\e} \right)
		\le 1 + C_\e \vvvert \ell \vvvert^*_\varphi < \i.
		$$
		Consequently, $\nu_\e$ is $\sigma$-additive in norm convergence and its total variation
		$$
		\| \nu_\e \| \left( F \right) := \sup \left\{ \sum_{i = 1}^n \| \nu_\e \left( A_i \right) \|_{X^*} \st A_i \in \AA \text{ a finite partition of } F \right\}
		$$
		is finite. Applying the Radon-Nikodym theorem, we deduce existence of a density $v_\e \in L_1 \left( \mu ; X^* \right)$ with $\nu_\e \left( A \right) = \int_A v_\e \, d \mu$ for all $A \in \AA$. We claim that
		$$
		\ell \left( u \chi_{F_\e} \right) = \int_{F_\e} \langle v_\e, u \rangle \, d \mu \quad \forall u \in L_\varphi(\mu).
		$$
		Since this identity holds if $u$ is simple, Lemma \ref{lem: rev Hölder} and the remark below it imply $\vvvert v_\e \vvvert^*_\varphi \le 1$, hence, if $\varphi$ is dualizable, $v_\e \in L_{\varphi^*}(\mu)$ with $\vvvert v_\e \vvvert_{\varphi^*} \le 1$. Lemma \ref{lem: decomp ss abs cont dense} and the absolute continuity of $\ell$ then imply that $v_\e$ induces an integral representation on $F_\e$ for all $u \in L_\varphi(\mu)$. The claim follows by Proposition \ref{pr: abs cont local} since $F_\e \uparrow F$.
	\end{proof}
	
	\begin{corollary} \label{cor: C*}
		Let $\varphi$ be real-valued. Then
		\begin{equation} \label{eq: C*}
			C_\varphi(\mu)^* = V_{\varphi^*}(\mu)
		\end{equation}
		via the isometric isomorphism identifying $v \in V_{\varphi^*}(\mu)$ with the functional
		\begin{equation} \label{eq: C* 2}
			\ell \left( u \right) = \int \langle v \left( \om \right), u \left( \om \right) \rangle \, d \mu \left( \om \right), \quad u \in C_\varphi(\mu).
		\end{equation}
	\end{corollary}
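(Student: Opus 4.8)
The plan is to derive this from the representation $A_{\varphi^*}(\mu) = V_{\varphi^*}(\mu)$ of Theorem \ref{thm: A = V} by identifying $C_\varphi(\mu)^*$, via restriction, with the absolutely continuous component $A_{\varphi^*}(\mu)$ of $L_\varphi(\mu)^*$. Throughout, $\varphi$ is real-valued and $\mu$ has no atom of infinite measure (the standing hypothesis of this subsection), so Corollary \ref{cor: C sigma fin} gives $C_\varphi(\mu) = C_\varphi^\sigma(\mu)$; moreover $C_\varphi(\mu)$ is a closed linear subspace of $L_\varphi(\mu)$ by Lemma \ref{lem: C closed subspace} and is almost decomposable by Lemma \ref{lem: C a decomp}. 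By the Hahn--Banach theorem, restriction induces an isometric isomorphism $C_\varphi(\mu)^* \cong L_\varphi(\mu)^*/C_\varphi(\mu)^\perp$, so everything reduces to locating the annihilator $C_\varphi(\mu)^\perp = \{ \ell \in L_\varphi(\mu)^* : \ell|_{C_\varphi(\mu)} = 0 \}$ inside the direct sum $L_\varphi(\mu)^* = A_{\varphi^*}(\mu) \oplus D_{\varphi^*}(\mu) \oplus F_{\varphi^*}(\mu)$ of Theorem \ref{thm: xtnsn Giner}.

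The key step is the claim $C_\varphi(\mu)^\perp = D_{\varphi^*}(\mu) \oplus F_{\varphi^*}(\mu)$. For the inclusion $\supseteq$: if $\ell_d \in D_{\varphi^*}(\mu)$ and $u \in C_\varphi(\mu) = C_\varphi^\sigma(\mu)$, then $\nu_{\ell_d, u}$ is diffuse and $u$ is concentrated on a $\sigma$-finite set, and a diffuse positive measure assigns $0$ to every $\sigma$-finite set by (\ref{eq: d def}) --- the signed case following by splitting into positive and negative parts --- so $\ell_d(u) = \nu_{\ell_d, u}(\Om) = 0$; if $\ell_f \in F_{\varphi^*}(\mu)$ and $u \in C_\varphi(\mu)$, then $\nu_{\ell_f, u}$ is purely finitely additive while, since $u$ has absolutely continuous Luxemburg norm, it is $\mu$-absolutely continuous on the $\sigma$-finite set carrying $u$, and a $\mu$-absolutely continuous purely finitely additive set function on a $\sigma$-finite set vanishes (via \cite[Thm. 1.19]{YoHe}, exactly as in the proof of Theorem \ref{thm: xtnsn Giner}), so $\ell_f(u) = 0$. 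For $\subseteq$: if $\ell = \ell_a + \ell_d + \ell_f \in C_\varphi(\mu)^\perp$, then $\ell_a|_{C_\varphi(\mu)} = 0$ by the preceding; writing $\ell_a$ through its representing linear integrand $v \in V_{\varphi^*}(\mu)$ (Theorem \ref{thm: A = V}) and applying the interchange criterion Theorem \ref{thm: inf int} to the almost decomposable space $C_\varphi(\mu)$ --- verbatim as in the discussion of the kernel of the operator seminorm on $V_{\varphi^*}(\mu)$, with $L_\varphi(\mu)$ replaced by $C_\varphi(\mu)$ --- gives $\vvvert v \vvvert^*_\varphi = 0$, i.e. $\ell_a = 0$. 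Hence $L_\varphi(\mu)^* = A_{\varphi^*}(\mu) \oplus C_\varphi(\mu)^\perp$, and restriction is a bijection of $A_{\varphi^*}(\mu)$ onto $C_\varphi(\mu)^*$.

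It remains to see this bijection is isometric for the operator norms. The bound $\| \ell_a|_{C_\varphi(\mu)} \| \le \vvvert \ell_a \vvvert^*_\varphi$ is clear; for the converse I would fix $u \in L_\varphi(\mu)$ with $\| u \|_\varphi \le 1$, note that $\langle v(\cdot), u(\cdot) \rangle$ is $\mu$-integrable and hence vanishes off a $\sigma$-finite set $\Sigma$, so $\ell_a(u\chi_{F_n}) \to \ell_a(u)$ along $F_n \uparrow \Sigma$ with $F_n \in \AA_f$ by absolute continuity of the integral, and then use almost decomposability of $C_\varphi(\mu)$ (which permits unbounded test functions) to obtain, for each $\e > 0$, a set $G_\e \subseteq F_n$ with $\mu(F_n \setminus G_\e) < \e$ and $u\chi_{G_\e} \in C_\varphi(\mu)$, $\| u\chi_{G_\e} \|_\varphi \le 1$, with $\ell_a(u\chi_{G_\e}) \to \ell_a(u\chi_{F_n})$ as $\e \to 0$; this shows $\sup \{ \ell_a(w) : w \in C_\varphi(\mu), \| w \|_\varphi \le 1 \} = \vvvert \ell_a \vvvert^*_\varphi$. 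Composing the resulting isometric isomorphism $C_\varphi(\mu)^* \cong A_{\varphi^*}(\mu)$ with the isometric isomorphism $A_{\varphi^*}(\mu) = V_{\varphi^*}(\mu)$ of Theorem \ref{thm: A = V} and Proposition \ref{pr: iso A to V} yields the assertion, the composite carrying $v \in V_{\varphi^*}(\mu)$ to the functional (\ref{eq: C* 2}).

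The main obstacle is the computation of $C_\varphi(\mu)^\perp$ --- above all the verification that purely finitely additive functionals annihilate $C_\varphi(\mu)$, which rests on marrying the absolute continuity of the norm on $C_\varphi(\mu)$ to the Yosida--Hewitt structure theory --- together with the reduction $C_\varphi(\mu) = C_\varphi^\sigma(\mu)$, which is precisely where the hypotheses that $\varphi$ is real-valued and that $\mu$ carries no atom of infinite measure are used.
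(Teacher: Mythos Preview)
Your proof is correct and follows the same architecture as the paper's: locate $C_\varphi(\mu)^\perp = D_{\varphi^*}(\mu)\oplus F_{\varphi^*}(\mu)$ inside the decomposition of Theorem~\ref{thm: xtnsn Giner}, pass to the quotient, and invoke $A_{\varphi^*}(\mu)=V_{\varphi^*}(\mu)$ from Theorem~\ref{thm: A = V}. The differences are in the sub-steps. For the inclusion $\subseteq$ of the annihilator, the paper shows that a nonzero $\ell_a$ cannot kill $C_\varphi(\mu)$ by picking $u\in L_\varphi^\sigma(\mu)$ with $\ell_a(u)>0$ and approximating it from below by elements of $C_\varphi(\mu)$ via Lemma~\ref{lem: decomp ss abs cont dense} together with the $\sigma$-additivity of $\ell_a$; your route through the interchange criterion on the almost decomposable space $C_\varphi(\mu)$ is an equally valid variant resting on the same Lemma~\ref{lem: C a decomp}. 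For the isometry, the paper does no separate work at all: having already established the norm identity $\|\ell\|^*_\varphi=\|\ell_a\|^*_\varphi+\|\ell_d\|^*_\varphi+\|\ell_f\|^*_\varphi$ in (\ref{eq: norm decom}), the quotient norm on $L_\varphi(\mu)^*/C_\varphi(\mu)^\perp$ is automatically $\|\ell_a\|^*_\varphi$, so the map $A_{\varphi^*}(\mu)\to C_\varphi(\mu)^*$ is isometric for free. Your hands-on approximation argument is fine, but it duplicates effort already packaged in Theorem~\ref{thm: xtnsn Giner}.
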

	
	\begin{proof}
		We claim that
		\begin{equation} \label{eq: annihilator}
			C_\varphi(\mu)^\bot = D_{\varphi^*}(\mu) \oplus F_{\varphi^*}(\mu).
		\end{equation}
		As $\mu$ has no atom of infinite measure, there holds $C_\varphi(\mu) = C^\sigma_\varphi(\mu)$ by Corollary \ref{cor: C sigma fin} so that $D_{\varphi^*}(\mu)$ is contained in the annihilator. Fix $u \in C^\sigma_\varphi(\mu)$ and pick $\Sigma \in \AA_\sigma$ off which $u$ vanishes. Let $\nu$ be a finite measure defined by $d \nu = f d \mu$ for a positive integrable function $f$ on $\Sigma$ and $\nu \left( \Om \setminus \Sigma \right) = 0$. Fix $\ell_f \in F_{\varphi^*}(\mu)$. Then there exists a sequence $A_n \in \AA$ with $\nu \left( A_n \right) < \frac{1}{n}$ and $\nu_{\ell_f, u} \left( \Om \setminus A_n \right) = 0$ by \cite[Thm. 1.19]{YoHe}. As $u$ has absolutely continuous norm, there holds $\lim_n \| u - u \chi_{\Om \setminus A_n} \|_\varphi = 0$, hence $\ell_f(u) = \lim_n \ell_f \left( u \chi_{\Om \setminus A_n} \right) = 0$, whence $\ell_f$ is contained in the annihilator.
		
		By Theorem \ref{thm: xtnsn Giner}, it remains to prove that no non-trivial element of $A_{\varphi^*}(\mu)$ vanishes on all of $C_\varphi(\mu)$. Let $\ell_a \in A_{\varphi^*}(\mu)$ and $u \in L_\varphi(\mu)$ with $\ell_a(u) > 0$. We may assume $u \in L^\sigma_\varphi(\mu)$ by Proposition \ref{pr: sgm spprt}. Invoking Lemma \ref{lem: decomp ss abs cont dense} together with the almost decomposability of $C_\varphi(\mu)$ by Lemma \ref{lem: C a decomp}m we find a sequence $u_n \in C_\varphi(\mu)$ converging to $u$ from below, hence $\ell_a \left( u_n \right) > 0$ eventually by the $\sigma$-additivity of $\ell_a$.
		
		Having computed (\ref{eq: annihilator}), we now use that, for a Banach space $X$ and a closed subspace $U$, there holds $U^* = X^* / U^\bot$ through the isometric isomorphism induced by $x' + U^\bot \mapsto \left. x' \right|_U$. In the situation at hand, this implies by Theorems \ref{thm: xtnsn Giner} and \ref{thm: A = V} that
		\begin{align*}
			C_\varphi(\mu)^*
			& = \left[ V_{\varphi^*}(\mu) \oplus D_{\varphi^*}(\mu) \oplus F_{\varphi^*}(\mu) \right] / C_\varphi(\mu)^\bot \\
			& = \left[ V_{\varphi^*}(\mu) \oplus D_{\varphi^*}(\mu) \oplus F_{\varphi^*}(\mu) \right] / \left[ D_{\varphi^*}(\mu) \oplus F_{\varphi^*}(\mu) \right] 
			= V_{\varphi^*}(\mu)
		\end{align*}
		where the action of a functional is described by (\ref{eq: C* 2}). The norm of this quotient space is the operator norm by the decomposition (\ref{eq: norm decom}) so that the isomorphism induced by (\ref{eq: C* 2}) indeed is isometric.
	\end{proof}
	
	\begin{corollary} \label{cor: RNP necessary}
		If all elements of $V_{\varphi^*}(\mu)$ are integrally strongly measurable, then $X^*$ has the Radon-Nikodym property w.r.t. the restriction of $\mu$ to any set of finite measure. In particular, this is necessary for $V_{\varphi^*}(\mu) = L_{\varphi^*}(\mu)$ to hold.
	\end{corollary}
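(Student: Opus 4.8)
The plan is to prove the contrapositive of the first assertion, and deduce the second from it. Assume $X^*$ fails the Radon–Nikodym property with respect to $\mu|_F$ for some $F \in \AA_f$; I will exhibit an element of $V_{\varphi^*}(\mu) = A_{\varphi^*}(\mu)$ (the identification is Theorem \ref{thm: A = V}) that is \emph{not} integrally strongly measurable. This gives the first claim. For the "in particular'': elements of $L_{\varphi^*}(\mu)$ are strongly measurable by construction, so if $V_{\varphi^*}(\mu) = L_{\varphi^*}(\mu)$ then every element of $V_{\varphi^*}(\mu)$ is integrally strongly measurable, whence the Radon–Nikodym property must hold by the first part.

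First I would localize the failure of the Radon–Nikodym property onto a set handled well by the almost-embedding. Applying Lemma \ref{lem: a emb} to the finite measure $\mu|_F$ yields an isotonic family $\Om_\e \subset F$ with $\mu(F \setminus \Om_\e) \to 0$ and continuous inclusions $L_\i(\Om_\e ; X) \to L_\varphi(\Om_\e) \to L_1(\Om_\e ; X)$. I claim $X^*$ fails the Radon–Nikodym property with respect to $\mu|_{\Om_\e}$ for some $\e > 0$. Indeed, suppose not, and let $m \colon \AA(F) \to X^*$ be a $\mu|_F$-continuous measure of bounded variation with no Bochner density; for a sequence $\e_n \downarrow 0$ each restriction $m|_{\Om_{\e_n}}$ is $\mu|_{\Om_{\e_n}}$-continuous of bounded variation, hence has a density $h_n \in L_1(\mu|_{\Om_{\e_n}} ; X^*)$, and by uniqueness the $h_n$ are mutually consistent. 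Since $\int_{\Om_{\e_n}} \| h_n \| \, d\mu = |m|(\Om_{\e_n}) \uparrow |m|(F) < \i$, the common extension $h$ lies in $L_1(\mu|_F ; X^*)$, and $\int_A h \, d\mu = \lim_n m(A \cap \Om_{\e_n}) = m(A)$ by the $\sigma$-additivity of $m$, contradicting the choice of $m$. So fix $\e$ with $X^*$ failing the property on $\Om_\e$.

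By the Dunford–Pettis representation of $L_1(\mu|_{\Om_\e} ; X)^*$, cf. \cite{DU}, every bounded functional on $L_1(\mu|_{\Om_\e} ; X)$ is of the form $u \mapsto \int \langle v(\om), u(\om) \rangle \, d\mu(\om)$ for some essentially bounded weak* measurable $v \colon \Om_\e \to X^*$, and such a functional is represented by a \emph{strongly} measurable $v$ for \emph{all} functionals precisely when $X^*$ has the Radon–Nikodym property with respect to $\mu|_{\Om_\e}$. Hence there is an essentially bounded weak* measurable $v \colon \Om_\e \to X^*$ that is not weak*-a.e. equal to any strongly measurable function. Extend $v$ by $0$ off $\Om_\e$ (legitimate as $\Om_\e \in \AA_f \subset \AA_{a\sigma}$), obtaining a linear weak* integrand on $\Om$. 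The inclusion $L_\varphi(\Om_\e) \to L_1(\Om_\e ; X)$ shows the induced functional $u \mapsto \int \langle v(\om), u(\om) \rangle \, d\mu(\om)$ is bounded on $L_\varphi(\mu)$ and, since $u|_{\Om_\e}$ is integrable in $X$, absolutely continuous; thus it lies in $A_{\varphi^*}(\mu) = V_{\varphi^*}(\mu)$, with representing integrand $v$. Were $v$ integrally strongly measurable, its restriction to the $\sigma$-finite set $\Om_\e$ would be strongly measurable, contradicting the choice of $v$. This proves the contrapositive.

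The main obstacle is the localization step together with the bookkeeping of identifications. One must genuinely transfer the failure of the Radon–Nikodym property from $F$ to one of the sets $\Om_\e$ produced by Lemma \ref{lem: a emb}; working directly on $F$ does not suffice because the inclusion $L_\varphi(F) \to L_1(F ; X)$ may fail for lack of uniformity of $\om \mapsto \varphi(\om, \cdot)$, which is exactly why the almost-embedding rather than a plain restriction is needed. One must also verify that "admits no strongly measurable representative'' survives both the zero-extension and the passage $L_1(\Om_\e ; X)^* \hookrightarrow L_\varphi(\Om_\e)^*$ via the almost-embedding — which it does, since the representing weak* integrand is unchanged by these operations. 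The remaining ingredients, namely the weak* representation of $L_1(\mu ; X)^*$ and its sharpening to a strongly measurable density under the Radon–Nikodym property, are classical.
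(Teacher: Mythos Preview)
Your proof is correct and takes a genuinely different route from the paper's. The paper does not first localize the failure of the Radon--Nikodym property to one of the sets $\Om_\e$; instead it takes the bad weak*-density $v \in V_\i(F; X^*) \setminus L_\i(F; X^*)$ on all of $F$, shows that each restriction $v\chi_{F_n}$ lies in $V_{\varphi^*}(\mu)$ via the adjoint of the dense embedding $L_\varphi(F_n) \to L_1(F_n; X)$, and argues that if every $v\chi_{F_n}$ were strongly measurable then so would be their limit from below $v$, forcing $v \in L_\i(F; X^*)$ by the reverse H\"older Lemma~\ref{lem: rev Hölder} and the dualizability of the autonomous integrand $\| \cdot \|$ (Lemma~\ref{lem: dualizable suff cond}). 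Your localization step trades this limit-from-below plus reverse-H\"older argument for a direct patching of Bochner densities across the exhausting sequence $\Om_{\e_n}$; the upshot is that you need only exhibit a single bad element of $V_{\varphi^*}(\mu)$ rather than a sequence, at the cost of the extra paragraph proving that RNP failure on $F$ implies RNP failure on some $\Om_\e$. Both arguments are self-contained and of comparable length; the paper's version leans more on the internal machinery already developed, while yours is closer to a first-principles reduction to the classical $L_1$ case.
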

	
	\begin{proof}
		Arguing by contradiction, we suppose there were $F \in \AA_f$ on which the restriction of $\mu$ fails the Radon-Nikodym property. Theorems \ref{thm: xtnsn Giner} and \ref{thm: A = V} yield $L_1\left( F; X \right)^* = V_\i\left( F; X^* \right)$ and by \cite[§4.1, Thm. 1]{DU} we know then that there exists
		$$
		v \in V_\i\left( F; X^* \right) \setminus L_\i\left( F; X^* \right).
		$$
		Lemma \ref{lem: a emb} yields an isotonic exhausting sequence $F_n$ with $\lim_n \mu\left( F \setminus F_n \right) = 0$ such that
		$$
		L_\i\left(F_n ; X \right) \to L_\varphi\left( F_n \right) \to L_1\left( F_n ; X \right).
		$$
		As the second embedding in this chain is dense, its adjoint operator is an embedding, too, so that $v \chi_{F_n} \in V_{\varphi^*}(\mu)$. If each $v \chi_{F_n}$ were strongly measurable, then its limit from below $v$ were likewise, which would yield $v \in L_\i\left( F; X^* \right)$ because of Lemma \ref{lem: rev Hölder} since Orlicz functions are dualizable by Lemma \ref{lem: dualizable suff cond}. We have arrived at a contradiction.
	\end{proof}
	
	As a consequence of our duality theory, we obtain a characterization of reflexivity for the Orlicz space $L_\varphi(\mu)$.
	
	\begin{theorem} \label{thm: rflxv}
		Let the range space $X$ be reflexive and let the conjugate Orlicz integrands $\varphi$ and $\varphi^*$ be real-valued, dualizable i.a.e. for every element of $L_{\varphi^*}(\mu)$ and $L_\varphi(\mu)$, respectively.	Then $L_\varphi(\mu)$ is reflexive if and only if
		\begin{equation} \label{eq: reflexivity}
			L_\varphi(\mu) = C_\varphi^\sigma(\mu) \text{ and }
			L_{\varphi^*}(\mu) = C_{\varphi^*}^\sigma(\mu).
		\end{equation}
		Hence, if $\varphi \in \Delta_2$ and $\varphi^* \in \Delta_2$, then $L_\varphi(\mu)$ is reflexive. If $\mu$ is non-atomic, then the $\Delta_2$-conditions are also necessary for $L_\varphi(\mu)$ to be reflexive.
	\end{theorem}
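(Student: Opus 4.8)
\textbf{Proof plan for Theorem \ref{thm: rflxv}.} The plan is to obtain reflexivity of $L_\varphi(\mu)$ from the duality theory already established, using the standard criterion that a Banach space $Y$ is reflexive iff both $Y$ and $Y^*$ are "non-degenerate" in the relevant sense. Concretely, I would argue as follows. Since $\mu$ has no atom of infinite measure and $\varphi$, $\varphi^*$ are real-valued and dualizable i.a.e., Theorem \ref{thm: A = V} and its addendum (\ref{eq: A = L}) apply once we know the $X^*$ (resp.\ $X$) has the Radon--Nikodym property with respect to $\mu$ restricted to finite-measure sets; reflexivity of $X$ supplies this for both $X$ and $X^*$. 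Hence $A_{\varphi^*}(\mu) = L_{\varphi^*}(\mu)$ and, symmetrically, $A_\varphi(\mu) = L_\varphi(\mu)$, where the latter is computed for the pair $(L_{\varphi^*}(\mu), \cdot)$ via Corollary \ref{cor: C*} applied to $\varphi^*$. Combining with the direct-sum decomposition (\ref{eq: dual decom}), we get
\[
	L_\varphi(\mu)^* = L_{\varphi^*}(\mu) \oplus D_{\varphi^*}(\mu) \oplus F_{\varphi^*}(\mu), \qquad
	L_{\varphi^*}(\mu)^* = L_\varphi(\mu) \oplus D_\varphi(\mu) \oplus F_\varphi(\mu).
\]
Moreover, by Corollary \ref{cor: C*}, $C_\varphi(\mu)^* = V_{\varphi^*}(\mu) = L_{\varphi^*}(\mu)$ (using dualizability to pass from $V$ to $L$) and likewise $C_{\varphi^*}(\mu)^* = L_\varphi(\mu)$.

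The first half of the equivalence: suppose (\ref{eq: reflexivity}) holds. Then $L_\varphi(\mu) = C_\varphi^\sigma(\mu) = C_\varphi(\mu)$ and $L_{\varphi^*}(\mu) = C_{\varphi^*}(\mu)$ (the $\sigma$-concentration being automatic here by Corollary \ref{cor: C sigma fin}, since $\varphi$ is real-valued and $\mu$ has no atom of infinite measure). Then $L_\varphi(\mu)^* = C_\varphi(\mu)^* = L_{\varphi^*}(\mu) = C_{\varphi^*}(\mu)$, so $L_\varphi(\mu)^{**} = C_{\varphi^*}(\mu)^* = L_\varphi(\mu)$, and one checks the canonical embedding $L_\varphi(\mu) \to L_\varphi(\mu)^{**}$ is exactly this chain of identifications; hence $L_\varphi(\mu)$ is reflexive. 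For the converse, suppose $L_\varphi(\mu)$ is reflexive. Reflexivity forces $L_\varphi(\mu)^* = L_{\varphi^*}(\mu)$, i.e.\ $D_{\varphi^*}(\mu) = F_{\varphi^*}(\mu) = 0$: indeed if $D_{\varphi^*}(\mu)$ or $F_{\varphi^*}(\mu)$ were nontrivial, the subspace $C_\varphi(\mu)$ would be proper in $L_\varphi(\mu)$ (its annihilator being $D_{\varphi^*}(\mu)\oplus F_{\varphi^*}(\mu)$ by the proof of Corollary \ref{cor: C*}), yet a reflexive space has no proper closed subspace whose annihilator is complemented in the dual in this way — more directly, $C_\varphi(\mu)$ reflexive and a proper closed subspace is impossible only if we first show $L_\varphi(\mu) = C_\varphi(\mu)$, so instead I would argue: a reflexive Banach space is weakly sequentially complete and has the property that bounded sequences have weakly convergent subsequences; using Theorem \ref{thm: sep implies L = C}-type reasoning — or better, Lemma \ref{lem: equi norm} combined with the weak* sequential closedness of $V_{\varphi^*}(\mu)$ (Lemma \ref{lem: closed subspaces}) — one rules out the existence of $u\in L_\varphi(\mu)$ without absolutely continuous norm, giving $L_\varphi(\mu) = C_\varphi^\sigma(\mu)$. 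Applying the same argument with $\varphi$ and $\varphi^*$ interchanged (legitimate since $L_{\varphi^*}(\mu) = L_\varphi(\mu)^*$ is reflexive too) yields $L_{\varphi^*}(\mu) = C_{\varphi^*}^\sigma(\mu)$, which is (\ref{eq: reflexivity}).

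The $\Delta_2$ consequences then follow immediately: if $\varphi\in\Delta_2$ then $L_\varphi(\mu) = C_\varphi^\sigma(\mu)$ by Theorem \ref{thm: linear domain 2}, and likewise $\varphi^*\in\Delta_2$ gives $L_{\varphi^*}(\mu) = C_{\varphi^*}^\sigma(\mu)$, so (\ref{eq: reflexivity}) holds and $L_\varphi(\mu)$ is reflexive. For the necessity of the $\Delta_2$-conditions when $\mu$ is non-atomic: reflexivity gives (\ref{eq: reflexivity}) by the converse just proved, and then Theorem \ref{thm: linear domain 2} (the non-atomic necessity direction) yields $\varphi\in\Delta_2$; applying it once more to $\varphi^*$ — noting $\mu$ non-atomic and $\varphi^*$ real-valued — gives $\varphi^*\in\Delta_2$.

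\textbf{Main obstacle.} The delicate point is the converse direction, specifically the clean deduction "$L_\varphi(\mu)$ reflexive $\implies$ $D_{\varphi^*}(\mu) = F_{\varphi^*}(\mu) = 0$ and $L_\varphi(\mu) = C_\varphi(\mu)$." One must be careful that the argument does not tacitly assume separability or $\sigma$-finiteness. I expect the right route is: reflexivity implies every bounded sequence has a weakly convergent subsequence; feeding this into the contradiction scheme in the proof of Theorem \ref{thm: sep implies L = C} (which only used weak* sequential compactness of bounded sets and the weak* equi-integrability from Theorem \ref{thm: V compact}, both available here via reflexivity in place of separability) yields $L_\varphi(\mu) = C_\varphi(\mu)$ directly, and then $C_\varphi(\mu)^* = L_{\varphi^*}(\mu)$ forces the two singular components of the dual to vanish. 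Verifying that each cited lemma (Lemma \ref{lem: equi norm}, Lemma \ref{lem: closed subspaces}, Theorem \ref{thm: V compact}, Theorem \ref{thm: A = V}) applies under the hypotheses "$X$ reflexive, $\varphi,\varphi^*$ real-valued and dualizable i.a.e., $\mu$ without atoms of infinite measure" — rather than under their originally stated hypotheses — is the part that requires genuine care rather than routine bookkeeping.
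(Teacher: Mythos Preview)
Your $\impliedby$ direction and the $\Delta_2$ addendum match the paper's argument closely; the only omission is that the paper first reduces to $\sigma$-finite $\mu$ (legitimate since $L_\varphi(\mu)=C_\varphi^\sigma(\mu)$ means any given sequence vanishes off a $\sigma$-finite set) before running the bidual chain, and this reduction is what makes the identifications $V_{\varphi^*}(\mu)=L_{\varphi^*}(\mu)$ and $V_\varphi(\mu)=L_\varphi(\mu)$ go through without the extra strictness-at-the-origin hypothesis in the addendum (\ref{eq: A = L}) of Theorem~\ref{thm: A = V}. You should make this reduction explicit.

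The real difference is the $\implies$ direction, exactly the step you flag as the main obstacle. You propose to re-run the contradiction scheme of Theorem~\ref{thm: sep implies L = C}, using reflexivity in place of separability to extract a weak*-convergent, weak*-equi-integrable subsequence of the $v_n$; after the $\sigma$-finite restriction this can be made to work, but it requires verifying that Lemma~\ref{lem: closed subspaces} and Theorem~\ref{thm: V compact} apply in the present setting. The paper bypasses all of this with a two-line argument: $C_\varphi(\mu)$ is a closed subspace of the reflexive $L_\varphi(\mu)$, hence itself reflexive, so
\[
L_\varphi(\mu)\;\supset\;C_\varphi(\mu)\;=\;C_\varphi(\mu)^{**}\;=\;V_{\varphi^*}(\mu)^*\;\supset\;L_\varphi(\mu),
\]
the last inclusion being the isometric embedding of $L_\varphi(\mu)$ into $V_{\varphi^*}(\mu)^*$ via the integral pairing (Lemma~\ref{lem: equi norm}). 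This forces $L_\varphi(\mu)=C_\varphi(\mu)=C_\varphi^\sigma(\mu)$ immediately. For the dual half, the paper observes that $L_{\varphi^*}(\mu)$ embeds isometrically as a closed subspace of $L_\varphi(\mu)^*$ by Corollary~\ref{cor: Lux-Orl equi}, hence is reflexive too, and repeats the same chain with the roles of $\varphi$ and $\varphi^*$ swapped. Your route is viable but labours over a point the paper dispatches almost for free.
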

	
	\paragraph{Remark.} By the almost embedding result Lemma \ref{lem: a emb}, we know that $L_\varphi(\mu)$ contains a copy of the range space $X$, whence reflexivity of $X$ is clearly a necessary assumption unless in the trivial case when no set of positive measure exists, which we ruled out in our remark on notation. This contrasts with \cite{Tu}, where a reflexive Orlicz space with an Orlicz function on a non-reflexive range space is presented. The catch is that the author of \cite{Tu} does not require continuity of the Orlicz function at the origin, which makes such a pathology possible.
	
	\begin{proof}
		Remember $C_\varphi(\mu) = C_\varphi^\sigma(\mu)$ and $C_{\varphi^*}(\mu) = C_{\varphi^*}^\sigma(\mu)$ by Corollary \ref{cor: C sigma fin} since we assume $\mu$ to have no atom of infinite measure. $\implies$: If $L_\varphi(\mu)$ is reflexive, then $C_\varphi(\mu)$ is as a closed subspace by Lemma \ref{lem: C closed subspace}. Therefore,
		\begin{equation} \label{eq: est spaces}
			L_\varphi(\mu) \supset C_\varphi(\mu) = C_\varphi(\mu)^{**} = V_{\varphi^*}(\mu)^* \supset L_\varphi(\mu) \implies L_\varphi(\mu) = C_\varphi^\sigma(\mu).
		\end{equation}
		More precisely, the dual space of $C_\varphi(\mu)$ is $V_{\varphi^*}(\mu)$ by means of the standard integral pairing, while the dual of $V_{\varphi^*}(\mu)$ contains $L_\varphi(\mu)$ with the functionals $L_\varphi(\mu)$ acting again through the integral pairing. Since the canonical embedding of $C_\varphi(\mu)$ into the bidual $V_{\varphi^*}(\mu)$ via the integral pairing is surjective by reflexivity, we deduce the last inclusion in (\ref{eq: est spaces}) and consequently the final claim. By our assumptions, the space $L_{\varphi^*}(\mu)$ is a closed subspace of $C_{\varphi^*}(\mu)^*$ due to Corollary \ref{cor: Lux-Orl equi}, hence it is reflexive if $L_\varphi(\mu)$ is. Consequently, the argument for $L_{\varphi^*}(\mu) = C_{\varphi^*}^\sigma(\mu)$ is the same as for the first identity.
		
		$\impliedby$: A Banach space is reflexive iff its unit ball is (sequentially) weakly compact. As our assumption $L_\varphi(\mu) = C_\varphi^\sigma(\mu)$ implies that any given sequence in $L_\varphi(\mu)$ vanishes off a $\sigma$-finite set, we may assume $\mu$ to be $\sigma$-finite. Now, the space $L_\varphi(\mu)$ is reflexive as
		$$
		L_\varphi(\mu)^{**} = C_\varphi(\mu)^{**} = L_{\varphi^*}(\mu)^* = C_{\varphi^*}(\mu)^* = L_\varphi(\mu)
		$$
		by Theorem \ref{thm: A = V}. More precisely, the space $C_\varphi(\mu)$ has $L_{\varphi^*}(\mu) = C_{\varphi^*}(\mu)$ as a dual space via the integral pairing, while this dual space has $L_\varphi(\mu) = C_\varphi(\mu)$ as a bidual via the same pairing so that the canonical embedding of $C_\varphi(\mu)$ into its bidual is surjective, i.e., reflexivity. Theorem \ref{thm: linear domain 2} settles the addendum on the $\Delta_2$-conditions.
	\end{proof}
	
	\begin{corollary}
		Let $X$ be reflexive while $\varphi$ and $\varphi^*$ are real-valued. Then $L_\varphi(\mu)$ is reflexive if and only if $\dom I_\varphi = L_\varphi(\mu)$ and $\dom I_{\varphi^*} = L_{\varphi^*}(\mu)$.
	\end{corollary}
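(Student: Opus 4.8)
The plan is to derive this as a corollary of Theorem~\ref{thm: rflxv} by re-expressing its two hypotheses in the form that theorem requires. The first thing I would record is that $\dom I_\varphi$ is an absolutely convex set with $\lin \dom I_\varphi = L_\varphi(\mu)$, so that $\dom I_\varphi = L_\varphi(\mu)$ holds precisely when $\dom I_\varphi$ is linear. By Corollary~\ref{cor: linear domain} --- whose proviso that $\varphi$ be real-valued on atoms of finite measure is automatic here, since $\varphi$ is real-valued outright --- linearity of $\dom I_\varphi$ is equivalent to $L_\varphi(\mu) = C_\varphi^\sigma(\mu)$. Applying the same reasoning to the conjugate integrand $\varphi^*$, which is again a real-valued Orlicz function by Lemma~\ref{lem: gen iff conj}, gives $\dom I_{\varphi^*} = L_{\varphi^*}(\mu) \iff L_{\varphi^*}(\mu) = C_{\varphi^*}^\sigma(\mu)$.

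Once this dictionary is in place, the corollary is exactly Theorem~\ref{thm: rflxv} read through it: $X$ is reflexive, $\varphi$ and $\varphi^*$ are real-valued, the ambient measure has no atom of infinite measure (the standing hypothesis of the present subsection), and the condition ``$\dom I_\varphi = L_\varphi(\mu)$ and $\dom I_{\varphi^*} = L_{\varphi^*}(\mu)$'' coincides with ``$L_\varphi(\mu) = C_\varphi^\sigma(\mu)$ and $L_{\varphi^*}(\mu) = C_{\varphi^*}^\sigma(\mu)$''. Both implications of the corollary then follow by citing Theorem~\ref{thm: rflxv}. I would also note in passing that, via Theorem~\ref{thm: linear domain 2}, the condition $\dom I_\varphi = L_\varphi(\mu)$ is implied by $\varphi \in \Delta_2$ and is equivalent to it when $\mu$ is non-atomic, so the $\Delta_2$-phrasing of Theorem~\ref{thm: rflxv} is recovered as a special case.

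The delicate point --- and what I expect to be the main obstacle --- is that Theorem~\ref{thm: rflxv} additionally assumes that $\varphi$ and $\varphi^*$ be dualizable i.a.e.\ for every element of $L_{\varphi^*}(\mu)$ and $L_\varphi(\mu)$ respectively, a hypothesis the corollary suppresses. My plan is to argue that, under the present assumptions, this costs nothing: with $X$ reflexive and $\varphi^*$ real-valued, each $\varphi_\om$ is a real-valued coercive convex lower semicontinuous function on a reflexive space, so for $v \in L_{\varphi^*}(\mu)$ the supremum defining $\varphi^*_\om(v(\om))$ is attained (equivalently $\p\varphi^*_\om(v(\om)) \neq \emptyset$), and what remains is to pin these attainers down by a measurable selection lying inside a single separable subspace of $X$ --- precisely the kind of selection underpinning Lemmas~\ref{lem: brl sff}, \ref{lem: dualizable suff cond} and \ref{lem: conj sep norm}, and symmetrically for $\varphi^*$. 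If a fully general selection of this kind is not available, the fallback is to rerun the two implications of Theorem~\ref{thm: rflxv} directly in this setting: the forward direction uses only that closed subspaces of reflexive spaces are reflexive together with $C_\varphi(\mu)^* = V_{\varphi^*}(\mu)$ from Corollary~\ref{cor: C*} (which needs no more than real-valuedness of $\varphi$), while the converse, after reducing $\mu$ to the $\sigma$-finite case as in the proof of Theorem~\ref{thm: rflxv}, runs along the chain $L_\varphi(\mu)^{**} = C_\varphi(\mu)^{**} = L_{\varphi^*}(\mu)^* = C_{\varphi^*}(\mu)^* = L_\varphi(\mu)$ for the integral pairing, with the one genuinely non-formal equality $V_{\varphi^*}(\mu) = L_{\varphi^*}(\mu)$ furnished by Theorem~\ref{thm: A = V}, using that $X^{*} = X$ has the Radon--Nikodym property and that the strictness of the minimum of $\varphi^*$ at the origin can be extracted from $L_{\varphi^*}(\mu) = C_{\varphi^*}^\sigma(\mu)$. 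The real work thus sits in this dualizability and strict-minimum bookkeeping rather than in the main line of argument.
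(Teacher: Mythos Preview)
Your overall architecture is right: translate the domain conditions into $L_\varphi = C^\sigma_\varphi$ and $L_{\varphi^*} = C^\sigma_{\varphi^*}$ via Corollary~\ref{cor: linear domain}, and invoke Theorem~\ref{thm: rflxv}. You also correctly isolate dualizability as the one missing hypothesis. The trouble is that neither of your two proposed fixes actually closes this gap. Your first plan --- attainment plus a measurable selection into a single separable subspace --- is precisely the hard step; Lemma~\ref{lem: brl sff} needs $\Om$ to be a separable metric Borel space and Lemma~\ref{lem: dualizable suff cond} needs strong $\tau_{AW}$-measurability of $\om \mapsto \varphi_\om$, neither of which is assumed here. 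Your fallback of rerunning Theorem~\ref{thm: rflxv} does not avoid dualizability either: in the forward direction you still need $L_{\varphi^*}(\mu)$ to sit as a \emph{closed} subspace of $C_\varphi(\mu)^* = V_{\varphi^*}(\mu)$, which is exactly what Corollary~\ref{cor: Lux-Orl equi} supplies under dualizability; and in the backward direction the identification $V_{\varphi^*}(\mu) = L_{\varphi^*}(\mu)$ in Theorem~\ref{thm: A = V} again requires dualizability (RNP and the strict-minimum condition are additional, not substitutes). Incidentally, $X^* = X$ is not what reflexivity says, and the strict-minimum condition on $\varphi^*$ is a pointwise property of the integrand that does not follow from $L_{\varphi^*} = C^\sigma_{\varphi^*}$.

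The paper sidesteps all of this with a separable reduction rather than a selection argument. Every element of $L_\varphi(\mu)$ is almost separably valued, so for a given $u$ one restricts $\varphi$ to a closed separable subspace $W \subset X$ containing its range. Then $W$ is reflexive, $\varphi_W$ and $(\varphi_W)^*$ are real-valued, and dualizability is vacuous on a separable space; moreover $L_{\varphi_W}(\mu)$ is a closed subspace of $L_\varphi(\mu)$, hence reflexive. Applying Theorem~\ref{thm: rflxv} and Corollary~\ref{cor: linear domain} on $W$ gives $I_\varphi(u) = I_{\varphi_W}(u) < \infty$. The same reduction, run through a maximizing sequence $u_n$ for $I^*_\varphi(\ell)$, handles the conjugate side. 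The converse then needs only Corollary~\ref{cor: linear domain} together with the same separable-reduction idea. The moral: instead of manufacturing a separably-valued selection of maximizers, exploit that the functions you are testing are already separably valued and pass to the corresponding closed separable subspace of $X$.
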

	
	\begin{proof}
		$\implies$: If $X$ is separable, then $\varphi$ and $\varphi^*$ are dualizable so this follows by Theorem \ref{thm: rflxv} and Corollary \ref{cor: linear domain}. Hence, we also have $\dom I_\varphi = L_\varphi(\mu)$ if $X$ is not separable since every given $u \in L_\varphi(\mu)$ has almost separable range. Let $\ell \in L_\varphi(\mu)^*$ and pick $u_n \in L_\varphi(\mu)$ with $I^*_\varphi(\ell) = \lim_n \langle \ell, u_n \rangle - I_\varphi(u_n)$. Let $W \in \SS(X)$ such that each member of the sequence $u_n$ is almost $W$-valued and set $\phi = \varphi_W$. We denote the restriction of $\ell$ to $L_\phi(\mu)$ again by $\ell$. Then $I^*_\varphi(\ell) = I^*_\phi(\ell) < \i$ by Theorem \ref{thm: rflxv}. $\impliedby$: By Corollary \ref{cor: linear domain}.
	\end{proof}
	
	Our final application of the duality theory obtained so far is a result for the convex conjugate of integral functionals on a vector valued Orlicz space. Remember that the exhausting integral of an essential infimum function always exists, even if the infimum function does not exist globally.
	
	\begin{theorem} \label{thm: conjugate B}
		Let $f \colon \Om \times X \to \left( - \i, \i \right]$ be an integrally separably measurable integrand.
		Then, if
		$$
		I_f \not \equiv \i \text{ on } L_\varphi \text{, where } I_f \left( u \right) = \int f \left[ \om, u \left( \om \right) \right] \, d \mu \left( \om \right) \text{ and } D = \dom I_f,
		$$
		the convex conjugate $I^*_f$ of $I_f$ with respect to the norm topology is given by
		\begin{equation} \label{eq: convex conjugate}
			\begin{gathered}
				I^*_f(\ell)
				= I^*_f \left( \ell_a \right) + s_D \left( \ell_d \right) + s_D \left( \ell_f \right)
				\\
				= \int \esssup_{W \in \SS \left( X \right) } \sup_{x \in W} \langle \ell_a(\om), x \rangle - f_\om(x) \, d \mu \left( \om \right)
				+ \sup_{u \in D} \ell_d \left( u \right) + \sup_{u \in D} \ell_f \left( u \right)
			\end{gathered}
		\end{equation}
		wherever $I^*_f$ is finite.	Let $\SS_u(X)$ be the separable subspaces of $X$ almost containing the range of $u$. The Fenchel-Moreau subdifferential	of $I_f$ on $D$ is given by
		\begin{equation} \label{eq: subdifferential representation}
			\begin{gathered}
				\p I_f \left( u \right) = \p_a I_f \left( u \right) + \p_d I_f \left( u \right) + \p_f I_f \left( u \right) \\
				= \bigcap_{W \in \SS_u(X) } \left\{ u^* \in V_{\varphi^*} \st u^*_W \left( \om \right) \in \p f_W \left[ \om, u \left( \om \right) \right] \text{ i.a.e.} \right\} \\
				+ \left\{ \ell_d \in D_{\varphi^*} \st \langle \ell_d , v - u \rangle \le 0 \quad \forall v \in D \right\}. \\
				+ \left\{ \ell_f \in F_{\varphi^*} \st \langle \ell_f , v - u \rangle \le 0 \quad \forall v \in D \right\}.
			\end{gathered}
		\end{equation}
		Moreover, if $f$ is a convex integrand, denoting by $p \left( \om, v \right) = f' \left( \om, u \left( \om \right) ; v \right)$ its radial derivative, the closure of the radial derivative $v \mapsto I' \left( u ; v \right)$ at a point $u \in \dom \left( \p I_f \right)$ is given by
		\begin{equation} \label{eq: derivative closure}
			\textnormal{cl}_v I'_f \left( u ; v \right) = \int \essinf_{W \in \SS_v(X) } \textnormal{cl}_v p_W \left( \om, v \left( \om \right) \right) \, d \mu \left( \om \right) + s_{\p_d I \left( u \right)}(v) + s_{\p_f I \left( u \right)}(v).
		\end{equation}
		Finally, if $f$ is dualizable i.a.e. for $\ell_a \in V_{\varphi^*}(\mu)$, then $I^*_f\left(\ell_a \right) = I_{f^*}\left(\ell_a \right)$ with $I_{f^*}\left(\ell_a\right) = \int f^*\left[\om, \ell_a(\om) \right] \, d \bar{\mu}(\om)$ and the intersection in (\ref{eq: subdifferential representation}) over $W \in \SS_u(X)$ is to be replaced by $W = X$.
	\end{theorem}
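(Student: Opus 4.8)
The plan is to deduce everything from two facts already established: the integral representation of the conjugate and Fenchel--Moreau subdifferential on an almost decomposable pair of function spaces (Theorem \ref{thm: conjugate A}), and the direct sum decomposition $L_\varphi(\mu)^* = A_{\varphi^*}(\mu) \oplus D_{\varphi^*}(\mu) \oplus F_{\varphi^*}(\mu)$ of Theorem \ref{thm: xtnsn Giner}. Since atoms of infinite measure are excluded throughout this subsection, the atom hypothesis of Theorem \ref{thm: conjugate A} is vacuous, so that theorem applies verbatim with $R = L_\varphi(\mu)$ (almost decomposable by Corollary \ref{cor: L a decomp}) and $S = A_{\varphi^*}(\mu)$, identified with $V_{\varphi^*}(\mu)$ via Theorem \ref{thm: A = V} through the integral pairing. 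It yields $I^*_f(\ell_a) = \int \esssup_{W \in \SS(X)} \sup_{x \in W}\langle \ell_a(\om), x\rangle - f_\om(x)\, d\bar{\mu}(\om)$, the description of $\p I_f(u) \cap V_{\varphi^*}(\mu) = \p_a I_f(u)$ as in $(\ref{eq: subdifferential})$, and --- when $f$ is dualizable i.a.e.\ for $\ell_a$ --- the equivalent form $I^*_f(\ell_a) = I_{f^*}(\ell_a) = \int f^*[\om, \ell_a(\om)]\, d\bar{\mu}(\om)$ with $\om \mapsto f^*[\om, \ell_a(\om)]$ being $\AA_\mu$-measurable and the intersection over $W \in \SS_u(X)$ collapsing to $W = X$. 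Thus the $\ell_a$-component of all three displayed formulas is already in hand, and the remaining work concerns how $\ell_d$ and $\ell_f$ enter.

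For the conjugate formula $(\ref{eq: convex conjugate})$, the inequality ``$\le$'' is immediate: for $u \in D = \dom I_f$ one writes $\langle \ell, u\rangle - I_f(u) = \bigl(\langle \ell_a, u\rangle - I_f(u)\bigr) + \langle \ell_d, u\rangle + \langle \ell_f, u\rangle$ and bounds the three summands by $I^*_f(\ell_a)$, $s_D(\ell_d)$, $s_D(\ell_f)$ respectively. The reverse inequality is the technical heart and is proved by a patching argument in the spirit of the norm decomposition $(\ref{eq: norm decom})$ in Theorem \ref{thm: xtnsn Giner}: given $\e > 0$, pick $u_1 \in D$ with $\langle \ell_a, u_1\rangle - I_f(u_1) > I^*_f(\ell_a) - \e$, $u_2 \in D$ with $\langle \ell_d, u_2\rangle > s_D(\ell_d) - \e$, and $u_3 \in D$ with $\langle \ell_f, u_3\rangle > s_D(\ell_f) - \e$. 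Using that $\ell_a$ (and the finite $I_f$-masses relevant here) are concentrated on a $\sigma$-finite set by Proposition \ref{pr: sgm spprt}, that $\nu_{\ell_d, u}$ is diffuse and hence vanishes on every $\sigma$-finite set, and that $\ell_f$ localized to $\sigma$-finite pieces is supported off an evanescent sequence by Lemma \ref{lem: F if mu sgm-f} and \cite[Thm. 1.19]{YoHe}, one extracts a decreasing sequence of $\mu$-small sets $A_n$ on which the $\ell_a$- and $I_f$-contributions of $u_1, u_2$ become negligible while $\ell_f(u_3\chi_{A_n}) = \ell_f(u_3)$; gluing $u_1$ off the support of $\ell_d$ and off $A_n$, $u_2$ on the diffuse complement of that support, and $u_3$ on $A_n$ produces for $n$ large a single competitor $\tilde u \in \dom I_f$ (almost decomposability of $L_\varphi(\mu)$, the patched $I_f$-mass being negligible) with $\langle \ell, \tilde u\rangle - I_f(\tilde u) > I^*_f(\ell_a) + s_D(\ell_d) + s_D(\ell_f) - C\e$. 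I expect this gluing step --- keeping $\tilde u$ in $\dom I_f$ while simultaneously realizing near-optimal values for all three subproblems --- to be the main obstacle; it works precisely because finiteness of $I^*_f(\ell)$ forces the obstructing $I_f$-masses onto $\sigma$-finite sets, off which $\tilde u$ may be left equal to a fixed reference element of $D$.

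The subdifferential decomposition $(\ref{eq: subdifferential representation})$ then follows formally: $\ell \in \p I_f(u)$ iff $I_f(u) + I^*_f(\ell) = \langle \ell, u\rangle$, and inserting $(\ref{eq: convex conjugate})$ together with the three inequalities $\langle \ell_a, u\rangle \le I_f(u) + I^*_f(\ell_a)$ (Fenchel--Young), $\langle \ell_d, u\rangle \le s_D(\ell_d)$ and $\langle \ell_f, u\rangle \le s_D(\ell_f)$ (as $u \in D$), equality in the sum forces equality in each, i.e.\ $\ell_a \in \p_a I_f(u)$ (described by $(\ref{eq: subdifferential})$) and $\ell_d, \ell_f$ in the respective normal cones $\{\langle \cdot, v - u\rangle \le 0 \ \forall v \in D\}$; the converse is read off the same identities. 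For the radial derivative closure $(\ref{eq: derivative closure})$ one uses that, at $u \in \dom(\p I_f)$, the closure of the sublinear map $v \mapsto I'_f(u; v)$ is the support function of $\p I_f(u)$; the support function of the Minkowski sum $\p_a I_f(u) + \p_d I_f(u) + \p_f I_f(u)$ splits as the sum of the three support functions, the last two being $s_{\p_d I(u)}$ and $s_{\p_f I(u)}$ by definition, while the first --- the support function at $v$ of the pointwise-defined set $\p_a I_f(u) \subset V_{\varphi^*}(\mu)$ --- is identified with $\int \essinf_{W \in \SS_v(X)} \textnormal{cl}_v p_W(\om, v(\om))\, d\mu(\om)$ by applying the interchange criterion Theorem \ref{thm: inf int} to the integrand $(\om, x^*) \mapsto I_{\textnormal{cl}\,\p f[\om, u(\om)]}(x^*) - \langle x^*, v(\om)\rangle$ over the almost decomposable space of weak* measurable selections, the essential infimum accounting for the non-separability of $X$ exactly as in Theorem \ref{thm: conjugate A}. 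The final dualizability addendum is, as noted in the first paragraph, a direct transcription of the equivalence ``$(a)\iff(b)$'' and the last sentence of Theorem \ref{thm: conjugate A}.
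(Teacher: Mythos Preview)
Your treatment of $(\ref{eq: convex conjugate})$ and $(\ref{eq: subdifferential representation})$ is essentially the paper's: the same three near-optimal competitors $u_1,u_2,u_3$, the same $\sigma$-finite localization via Proposition \ref{pr: sgm spprt}, the same evanescent sets $A_n$ extracted from \cite[Thm. 1.19]{YoHe}, the same patched function $\bar u_n = u_1\chi_\Sigma\chi_{\Om\setminus A_n} + u_2\chi_{\Om\setminus\Sigma}\chi_{\Om\setminus A_n} + u_3\chi_{A_n}$, and the same Fenchel--Young splitting for the subdifferential. The dualizability addendum is likewise handled identically via Theorem \ref{thm: conjugate A}.

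The gap is in your argument for $(\ref{eq: derivative closure})$. You propose to identify the support function of $\p_a I_f(u)$ by applying Theorem \ref{thm: inf int} to $(\om,x^*)\mapsto I_{\cl\,\p f[\om,u(\om)]}(x^*)-\langle x^*,v(\om)\rangle$ over ``the almost decomposable space of weak* measurable selections''. But Theorem \ref{thm: inf int} is formulated for a space $R$ of \emph{strongly} measurable functions into a complete \emph{metric} space; elements of $V_{\varphi^*}(\mu)$ are merely linear weak* integrands into $X^*$, in general neither strongly measurable nor valued in a metric space, so the hypotheses fail. Moreover, even a successful interchange on the dual side would produce an essential supremum over $\SS(X^*)$, whereas the target formula involves $\SS_v(X)$ on the primal side.

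The paper avoids any dual-side interchange. It sandwiches: from monotonicity of one-sided difference quotients and the fact that $s_{\p_d I(u)},s_{\p_f I(u)}$ are $\{0,+\i\}$-valued, one gets for every $\ell_a\in\p_a I_f(u)$
\[
I'_f(u;v)\ \ge\ \int p(\om,v(\om))\,d\mu + s_{\p_d I(u)}(v)+s_{\p_f I(u)}(v)\ \ge\ \int\essinf_{W\in\SS_v(X)}\cl_v p_W(\om,v(\om))\,d\mu+\cdots\ \ge\ \langle\ell_a,v\rangle+\cdots,
\]
so taking the supremum over $\ell_a$ (and using $0\in\p_d I_f(u)\cap\p_f I_f(u)$) traps the middle expression between $I'_f(u;v)$ and $\cl_v I'_f(u;v)$. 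The proof is then completed by showing the middle expression is lower semicontinuous in $v$: along a sequence $v_n\to v$ one restricts to a common $\sigma$-finite set where a single $W_0\in\SS(X)$ attains the essential infimum, extracts an a.e.\ convergent subsequence, and invokes Fatou with the integrable minorant $\om\mapsto\langle\ell_a(\om),v_{n_k}(\om)\rangle$ (controlled via $L_1$-convergence of $\langle\ell_a,v_{n_k}\rangle$). This Fatou step with the subgradient-furnished minorant is the part your sketch does not supply.
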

	
	\begin{proof}
		(\ref{eq: convex conjugate}): Clearly $I^*_f \left( \ell \right) \le I^*_f \left( \ell_a \right) + s_D \left( \ell_d \right) + s_D \left( \ell_f \right)$ so that it remains to prove the converse inequality. Let $u_i \in D$ for $1 \le i \le 3$. By Proposition \ref{pr: sgm spprt}, we find $\Sigma \in \AA_\sigma$ with
		$$
		\left| \nu_{\ell_a, u_i} \right| \left( \Om \setminus \Sigma \right) = 0; \quad f \left( \om, u_i(\om) \right) = 0 \text{ on } \Om \setminus \Sigma \quad \forall i.
		$$
		Let $\nu$ be a finite measure defined by $d \nu = f d \mu$ for $f$ a positive integrable function on $\Sigma$ and $\nu \left( \Om \setminus \Sigma \right) = 0$. There exists for $n \in \N$ a set $A_n \in \AA$ such that
		$$
		\nu \left( A_n \right) < \frac{1}{n}; \quad \left| \nu_{\ell_\sigma, u_i} \right| \left( A_n \right) < \frac{1}{n}; \quad \left| \nu_{\ell_f, u_i} \right| \left( \Om \setminus A_n \right) = 0 \quad \forall i
		$$
		by \cite[Thm. 1.19]{YoHe}. Setting $\bar{u}_n = u_1 \chi_\Sigma \chi_{\Om \setminus A_n} + u_2 \chi_{\Om \setminus \Sigma} \chi_{\Om \setminus A_n} + u_3 \chi_{A_n}$, we have $\bar{u}_n \in D$ so that
		\begin{align*}
			I^*_f \left( \ell \right)
			& \ge \ell \left( \bar{u}_n \right) - \int f \left[ \om, \bar{u}_n \left( \om \right) \right] \, d \mu \left( \om \right) \\
			& = \ell_a \left( u_1 \chi_{\Om \setminus A_n} \right) + \ell_d \left( u_2 \chi_{\Om \setminus A_n} \right) + \ell_\sigma \left( u_3 \chi_{A_n} \right) + \ell_f \left( u_3 \chi_{A_n} \right) \\
			& - \int_{\Om \setminus A_n} f \left[ \om, u_1 \left( \om \right) \right] \, d \mu \left( \om \right) - \int_{A_n} f \left[ \om, u_3 \left( \om \right) \right] \, d \mu \left( \om \right) \\
			& \ge \ell_a \left( u_1 \chi_{\Om \setminus A_n} \right) + \ell_d \left( u_2 \chi_{\Om \setminus A_n} \right) - \frac{1}{n} + \ell_f \left( u_3 \right) \\
			& - \int_{\Om \setminus A_n} f \left[ \om, u_1 \left( \om \right) \right] \, d \mu \left( \om \right) - \int_{A_n} f \left[ \om, u_3 \left( \om \right) \right] \, d \mu \left( \om \right) \\
			& \to \ell_a \left( u_1 \right) + \ell_d \left( u_2 \right) + \ell_f \left( u_3 \right) - \int_\Om f \left[ \om, u_1 \left( \om \right) \right] \, d \mu \left( \om \right) \text{ as } n \to \i.
		\end{align*}
		Taking the supremum over all $u_i$ concludes the proof by Theorem \ref{thm: conjugate A}. Observe that the finiteness of $I^*_f(\ell)$ enters so that the integrand $\langle \ell_a(\om), x \rangle - f \left[ \om, x \right]$ may be restricted to a suitable $\sigma$-finite set where $\ell_a$ is well-defined a.e. as a function.
		
		(\ref{eq: subdifferential representation}): By the Fenchel-Young equality and (\ref{eq: convex conjugate}), there holds
		\begin{align*}
			& \ell \in \p I_f \left( u \right)
			\iff I_f \left( u \right) + I^*_f \left( \ell_a \right) + s_D \left( \ell_d \right) + s_D \left( \ell_f \right) = \langle \ell, u \rangle \\
			& \iff I_f \left( u \right) + I^*_f \left( \ell_a \right) = \langle \ell_a, u \rangle \land s_D \left( \ell_d \right) = \langle \ell_d, u \rangle \land s_D \left( \ell_f \right) = \langle \ell_f, u \rangle,
		\end{align*}
		which is equivalent to $\ell$ fulfilling $\ell_{a, W} \left( \om \right) \in \p f_W \left[ \om, u \left( \om \right) \right]$ a.e. while $\langle \ell_d, v - u \rangle \le 0$ and $\langle \ell_f, v - u \rangle \le 0$ for all $v \in D$. The first assessment follows by Theorem \ref{thm: conjugate A}.
		
		(\ref{eq: derivative closure}): Since $\p_d I_f(u)$ and $\p_f I_f (u)$ are non-empty cones, the functions
		$$
		v \mapsto s_{\p_d I \left( u \right)}(v) = \sup_{\ell_d \in \p_d I_f (u) } \langle \ell_f, v \rangle; \quad v \mapsto s_{\p_f I \left( u \right)}(v)
		$$
		take values in $\left\{ 0, +\i \right\}$. Let $v \in D$. Remember that the subdifferential of a convex function at a point $u$ consists of those continuous linear functionals that are dominated by the radial derivative of the function at $u$. In particular, the closure of the sublinear derivative functional is the supremum of the subgradients. One-sided difference quotients of convex functions being monotone decreasing, we have
		\begin{align*}
			I' \left( u ; v \right)
			& \ge \int p \left( \om, v \left( \om \right) \right) \, d \mu \left( \om \right) + s_{\p_d I \left( u \right)}(v) + s_{\p_f I \left( u \right)}(v) \\
			& \ge \int \essinf_{W \in S (X) } \textnormal{cl}_v p_W \left( \om, v \left( \om \right) \right) \, d \mu \left( \om \right) + s_{\p_d I \left( u \right)}(v) + s_{\p_f I \left( u \right)}(v) \\
			& \ge \int \langle \ell_a \left( \om \right), v \left( \om \right) \rangle \, d \mu \left( \om \right) + s_{\p_d I \left( u \right)}(v) + s_{\p_f I \left( u \right)}(v)
		\end{align*}
		for any $\ell_a \in \p_a I_f \left( u \right)$ by (\ref{eq: subdifferential representation}). Taking the supremum over all such $\ell_a$ yields
		\begin{align*}
			I' \left( u ; v \right)
			& \ge \int \essinf_{W \in S (X) } \textnormal{cl}_v p_W \left( \om, v \left( \om \right) \right) \, d \mu \left( \om \right) + s_{\p_d I \left( u \right)}(v) + s_{\p_f I \left( u \right)}(v) \\
			& \ge \textnormal{cl}_v I' \left( u ; v \right).
		\end{align*}
		Therefore, the claim will obtain if we prove that the function (\ref{eq: derivative closure}) is lower semicontinuous. For this, let $v_n \to v \in L_\varphi$. It suffices to extract a subsequence $n_k$ such that
		\begin{equation} \label{eq: lsc along subsequence}
			\begin{aligned}
				\liminf_k \int \essinf_{W \in S (X) } \textnormal{cl}_v p_W \left( \om, v_{n_k} \left( \om \right) \right) \, d \mu \left( \om \right) \\
				\ge \int \essinf_{W \in \SS_v(X) } \textnormal{cl}_v p_W \left( \om, v \left( \om \right) \right) \, d \mu \left( \om \right).
			\end{aligned}
		\end{equation}
		By extracting a subsequences, we may assume the left-hand integrals in (\ref{eq: lsc along subsequence}) to be finite. Hence, we find a set $\Sigma \in \AA_\sigma$ outside of which the pertaining integrands vanish. As the right-hand integral is exhausting and the integrand has an integrable minorant $\langle \ell_a(\om), v(\om) \rangle$, we find $A \in \AA_\sigma$ with $\Sigma \subset A$ over which it attains its value.	By restricting $f$ on $A$ to a suitable separable subspace $W_0$, we may replace the essential infimum functions in both sides of (\ref{eq: lsc along subsequence}) by $\textnormal{cl}_v p_{W_0}$ attaining the essential infimum function as explained below Proposition \ref{pr: divergent subintegral}. For $\ell \in \p I_f \left( u \right)$, choose a subsequence $n_k$ with $v_{n_k} \to v$ a.e. and such that the $L_1(\mu)$-convergent sequence $\langle \ell_a, v_{n_k} \rangle$ has an integrable minorant $m$.	This implies
		$$
		\textnormal{cl}_v p_{W_0} \left( \om, v_{n_k} \left( \om \right) \right) \ge \langle \ell_a \left( \om \right), v_{n_k} \left( \om \right) \rangle \ge m \left( \om \right) \text{ a.e.}
		$$
		so that the Fatou lemma yields (\ref{eq: lsc along subsequence}). The addendum on (\ref{eq: subdifferential representation}) follows by the corresponding addendum in Theorem \ref{thm: conjugate A}.
	\end{proof}
	
	\section{Weak topologies and compactness} \label{sec: weak compactness}
	
	We obtain in this section our characterizing Theorems \ref{thm: L compact} and \ref{thm: V compact} on sequential compactness for various weak topologies that $L_\varphi(\mu)$ and $V_{\varphi^*}(\mu)$ or subsets of them induce on each other through the standard integral pairing
	$$
	(u, v) \mapsto \int_\Om \langle v(\om), u(\om) \rangle \, d \mu(\om)
	$$
	For a $\sigma$-finite measure, both theorems provide sufficient conditions that are also necessary if the inducing subspace is rich enough in terms of decomposability properties, as happens in particular if all of $V_{\varphi^*}(\mu)$ or $L_\varphi(\mu)$ induce the topology. Our approach to the matter is the same in both theorems: Starting with the prototypical cases of the almost superspaces $L_1\left( \mu; X \right)$ and $V_1\left( \mu ; X^* \right)$, into which $L_\varphi(\mu)$ and $V_{\varphi^*}(\mu)$ almost embed by Lemma \ref{lem: a emb} if $\mu$ is $\sigma$-finite, we seek additional conditions under which a sequence whose images under one of the sequences of embeddings
	$$
	L_\varphi(\mu) \to L_1\left(\Om_j; X\right),
	\quad V_{\varphi^*}(\mu) \to V_1\left(\Om_j; X\right) \quad j \in \N
	$$
	is (weakly) compact has a compact pre-image. Thereby, we reduce the general matter to these prototypes.	While characterizations of weak compactness in $L_1\left( \mu ; X \right)$ are abundant and will be discussed to some extent below, we establish a corresponding result for $\sigma \left( V_1\left(\mu ; X^* \right) ; L_\i \left( \mu ; X \right) \right)$ in Lemma \ref{lem: V1 compact}. Throughout this section, we assume that $\mu$ has no atom of infinite measure.
	
	\subsection{Weak topologies}
	
	Before we study weak compactness, we need to obtain a pair of auxiliary results that will serve to prove that $V_{\varphi*}(\mu)$ is a norming subspace of the dual space, which we require of the subspaces that induce topologies. The first of these results is of interest for its own sake, as it implies in particular that the convex functional $I_\varphi$ is lower semicontinuous in the weak topology induced on $L_\varphi(\mu)$ by $V_{\varphi^*}(\mu)$ if $\mu$ is $\sigma$-finite, as the proof of Lemma \ref{lem: equi norm} will show.
	
	\begin{lemma} \label{lem: Mackey}
		The Mackey topology $\mm \left( L_\varphi(\mu); V_{\varphi^*}(\mu) \right)$ implies local convergence in $\mu$.
	\end{lemma}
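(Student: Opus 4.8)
The plan is to show that if a net $u_\alpha \to 0$ in the Mackey topology $\mm(L_\varphi(\mu); V_{\varphi^*}(\mu))$, then $u_\alpha \to 0$ locally in measure, i.e. $\mu(\{\om \in E : \|u_\alpha(\om)\| > \delta\}) \to 0$ for every $E \in \AA_f$ and every $\delta > 0$. Since the assertion is local in the sense that it concerns restrictions to sets of finite measure, and since we have excluded atoms of infinite measure, the first step is to reduce to the case of a finite measure $\mu$: restrict everything to a fixed $E \in \AA_f$, and observe that the restriction map $L_\varphi(\mu) \to L_\varphi(E)$ is continuous for the relevant Mackey topologies because $V_{\varphi^*}(E)$ sits inside $V_{\varphi^*}(\mu)$ via trivial extension. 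With $\mu$ finite, Lemma \ref{lem: a emb} furnishes an isotonic family $\Om_\e$ with $\mu(\Om \setminus \Om_\e) \to 0$ on which $L_\varphi(\Om_\e)$ embeds continuously into $L_1(\Om_\e; X)$, and hence $V_1(\Om_\e; X^*) = L_\i(\Om_\e; X^*)$-type functionals — more precisely bounded strongly measurable $X^*$-valued functions — restrict to elements of $V_{\varphi^*}(\Om_\e)$. So it suffices to prove the claim after this further restriction, i.e. to show Mackey convergence in $L_\varphi(\Om_\e)$ forces $L_1(\Om_\e; X)$-type local-in-measure convergence.

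The heart of the matter is then to identify enough Mackey-continuous seminorms to detect convergence in measure. The key observation is that a singleton $\{v\}$ with $v \in V_{\varphi^*}(\mu)$, being weakly compact (indeed compact) in itself, gives the Mackey-continuous seminorm $u \mapsto |\int \langle v, u \rangle \, d\mu|$; but more usefully, order intervals and $L_\i$-balls of $X^*$-valued functions supported on $\Om_\e$ are relatively weakly compact in $V_1(\Om_\e; X^*)$ and hence (via the continuous embedding's adjoint) form Mackey-equicontinuous families in $V_{\varphi^*}(\Om_\e)^* \supset L_\varphi(\Om_\e)$. Concretely, I would take, for a fixed $x^* \in X^*$ with $\|x^*\| \le 1$ and a fixed measurable $A \subset \Om_\e$, the functions $w_{A} = x^* \chi_A$; the set $\{x^*\chi_A : \|x^*\|\le 1, A \subset \Om_\e \text{ measurable}\}$ is contained in the $L_\i$-ball, which is $\ss(L_\i(\Om_\e; X^*), L_1(\Om_\e; X))$-compact, hence — pulling back through the dense continuous inclusion $L_\varphi(\Om_\e) \hookrightarrow L_1(\Om_\e; X)$ whose adjoint embeds $L_\i(\Om_\e; X^*)$ into $V_{\varphi^*}(\Om_\e)$ — a Mackey-equicontinuous set. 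Therefore the seminorm $p(u) = \sup_{\|x^*\| \le 1, A} |\int_A \langle x^*, u \rangle \, d\mu| = \int_{\Om_\e} \|u(\om)\| \, d\mu(\om)$ is Mackey continuous. This is precisely the $L_1(\Om_\e; X)$-norm, and $L_1$-norm convergence implies convergence in measure on $\Om_\e$, and since $\mu(\Om \setminus \Om_\e) \to 0$, local convergence in measure on all of $E$ follows by a standard $\e/2$ argument.

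The main obstacle I anticipate is the measurability and duality bookkeeping needed to justify that the $L_\i$-ball of $X^*$-valued functions really does land inside $V_{\varphi^*}(\Om_\e)$ as a norm-bounded, $\ss$-compact set — in particular that the adjoint of the dense embedding $L_\varphi(\Om_\e) \to L_1(\Om_\e; X)$ maps $L_1(\Om_\e;X)^*$ (which, when $X^*$ fails RNP, is the space $V_1$ of linear weak* integrands rather than honest functions) injectively and weak*-continuously into $V_{\varphi^*}(\Om_\e)$, and that the weak compactness of the $L_\i$-ball in $\ss(L_\i, L_1)$ survives this transfer. This is where one must invoke Lemma \ref{lem: rev Hölder} (the $L_\i$-ball consists of functions satisfying the reverse Hölder bound, so lies in $V_{\varphi^*}$ with controlled Amemiya norm, using $C_\e$ from Lemma \ref{lem: a emb}) and the identification $A_1(\mu;X^*) = V_1(\mu;X^*)$ from Proposition \ref{pr: CaVa}. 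Once these identifications are in place, the argument is routine; the conceptual content is entirely the almost-embedding $L_\varphi \hookrightarrow L_1$ together with the fact that $L_1$'s weak-compact sets are rich enough to reconstruct the $L_1$-norm as a Mackey-continuous seminorm.
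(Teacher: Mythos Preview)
Your proposal is correct and follows essentially the same route as the paper: reduce to finite $\mu$, invoke Lemma~\ref{lem: a emb} to get $L_\varphi(\Om_\e) \hookrightarrow L_1(\Om_\e;X)$ densely, observe that the unit ball of $L_1(\Om_\e;X)^* = V_\i(\Om_\e;X^*)$ is weak*-compact by Alaoglu and is carried by the adjoint into a $\ss(V_{\varphi^*},L_\varphi)$-compact convex set, whence the $L_1(\Om_\e;X)$-norm is Mackey-continuous and forces convergence in measure on each $\Om_\e$. One minor slip: your displayed identity $p(u)=\int_{\Om_\e}\|u\|\,d\mu$ is only valid when the supremum runs over the full $V_\i$-ball (with $\om$-dependent dual elements), not merely over functions $x^*\chi_A$ with constant $x^*$; but since you already take the $L_\i$-ball as the relevant compact set this does not affect the argument.
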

	
	\begin{proof}
		We may assume $\mu$ finite by definition of local convergence in measure. Clearly, it suffices to obtain an isotonic sequence $\Om_\e \in \AA$ with $\lim_{\e \downarrow 0} \mu \left( \Om \setminus \Om_\e \right) = 0$ such that, on every member of $\Om_\e$, convergence in $\mu$ obtains. Lemma \ref{lem: a emb} yields an isotonic sequence $\Om_\e$ with
		$$
		L_\i\left( \Om_\e; X \right) \to L_\varphi \left( \Om_\e \right) \to L_1 \left( \Om_\e ; X \right).
		$$
		As the last embedding in this chain is dense, its adjoint operator induces an embedding of $L_1 \left( \Om_\e ; X \right)^* = V_\i \left( \Om_\e ; X^* \right)$ into $V_{\varphi^*}\left( \Om_\e \right)$. Since the ball of $V_\i \left( \Om_\e ; X^* \right)$ is weak* compact by the Alaoglu theorem, it is weak* compact in $V_{\varphi^*}\left( \Om_\e \right)$. Therefore, the $L_1 \left( \Om_\e ; X \right)$-norm is Mackey continuous as a supremum over a convex set that is compact in $\ss \left( V_{\varphi^*}(\mu); L_\varphi(\mu) \right)$. Hence, since norm convergence in $L_1$ implies convergence in measure,	Mackey convergence implies convergence in measure on any $\Om_\e$.
	\end{proof}
	
	\begin{lemma} \label{lem: equi norm}
		The space $V_{\varphi^*}(\mu)$ is norming, i.e., the support functional of its ball
		$$
		S(u) = \sup_{\vvvert v \vvvert^*_\varphi \le 1} \int \langle v(\om), u(\om) \rangle \, d \mu(\om)
		$$
		defines an equivalent norm on $L_\varphi(\mu)$. There holds
		$$
		\| u \|_\varphi \le S(u) \le 2 \| u \|_\varphi \quad \forall u \in L_\varphi(\mu).
		$$
	\end{lemma}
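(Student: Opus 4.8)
The plan is to exploit the duality already established, namely Corollary \ref{cor: Lux-Orl equi} (or equivalently Theorem \ref{thm: A = V}), together with the elementary relation between the Luxemburg norm and the dual Amemiya norm. The first step is to prove the lower bound $\|u\|_\varphi \le S(u)$. For this I would reduce to a $\sigma$-finite situation: given $u \in L_\varphi(\mu)$, it vanishes outside some $\Sigma \in \AA_\sigma$, so it is harmless to assume $\mu$ is $\sigma$-finite, hence finite after the change of integrand and measure in Proposition \ref{pr: equivalent finite measure}. Then I would invoke the Hahn--Banach theorem to pick $\ell \in L_\varphi(\mu)^*$ with $\vvvert \ell \vvvert^*_\varphi \le 1$ and $\ell(u) = \|u\|_\varphi$. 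The key point is that this $\ell$ can be replaced by its absolutely continuous component $\ell_a$ without losing the value on $u$ --- this is where the decomposition Theorem \ref{thm: xtnsn Giner} and the norm identity (\ref{eq: norm decom}) enter. Concretely, the argument separating $\ell_a$ from $\ell_d + \ell_f$ should mirror the one in Corollary \ref{cor: C*}: the diffuse and purely finitely additive parts can be made to act arbitrarily small on $u$ after restricting to a suitable set of small measure, using that $u$ has absolutely continuous norm on $\sigma$-finite pieces after truncation and that $\ell_d$ vanishes on $\sigma$-finite sets by diffusivity. Once $\ell_a \in A_{\varphi^*}(\mu) = V_{\varphi^*}(\mu)$ has $\vvvert \ell_a \vvvert^*_\varphi \le 1$ and $\ell_a(u) = \|u\|_\varphi$ (up to an $\varepsilon$), the definition of $S$ gives $S(u) \ge \|u\|_\varphi$.

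The second step, the upper bound $S(u) \le 2\|u\|_\varphi$, is the easy half. For any $v \in V_{\varphi^*}(\mu)$ with $\vvvert v \vvvert^*_\varphi \le 1$, the Hölder-type inequality of Lemma \ref{lem: Hölder inequality}, or more directly the very definition of the dual Amemiya norm as the operator norm with respect to the Luxemburg norm noted right after its introduction, gives $\int \langle v, u \rangle \, d\mu \le \vvvert v \vvvert^*_\varphi \cdot \|u\|_\varphi \le \|u\|_\varphi$ --- actually one should be slightly careful: the inequality relating the Luxemburg and Amemiya dual norms is $\| v \|^*_\varphi \le \vvvert v \vvvert^*_\varphi \le 2 \| v \|^*_\varphi$, and since $\vvvert \cdot \vvvert^*_\varphi$ is exactly the operator norm induced by $\|\cdot\|_\varphi$, one gets $\int \langle v, u \rangle \, d\mu \le \vvvert v \vvvert^*_\varphi \|u\|_\varphi \le \|u\|_\varphi$ whenever $\vvvert v \vvvert^*_\varphi \le 1$. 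Taking the supremum over such $v$ yields $S(u) \le \|u\|_\varphi \le 2\|u\|_\varphi$; in fact this shows the stronger $S(u) \le \|u\|_\varphi$, but stating $S(u) \le 2\|u\|_\varphi$ is all that is claimed and leaves room in case the normalization via $\|\cdot\|^*_\varphi$ rather than $\vvvert\cdot\vvvert^*_\varphi$ is what one prefers to use in the supremum. Finally, once both inequalities are in hand, the equivalence of $S$ with $\|\cdot\|_\varphi$ as a norm is immediate, and $S$ is manifestly a seminorm as a supremum of linear functionals; the lower bound rules out degeneracy, so $S$ is a genuine norm.

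The main obstacle I anticipate is the first step: the reduction of a general dual functional $\ell$ to its absolutely continuous part $\ell_a$ while preserving (approximately) its action on the fixed element $u$. The clean tool is Theorem \ref{thm: xtnsn Giner} together with the observation that $u$, after truncation $u_k = \chi_{\{|u| \le k\}} u$, lies in $C_\varphi^\sigma(\mu)$ (by Theorem \ref{thm: C max subspace}, at least when $\varphi$ is real on atoms of finite measure, or after a harmless restriction), on which $\ell_d$ and $\ell_f$ vanish by the annihilator computation (\ref{eq: annihilator}) in the proof of Corollary \ref{cor: C*}; one then passes to the limit $k \to \infty$ using dominated convergence of $\ell(u_k) \to \ell(u)$, which holds because $\nu_{\ell,u}$ is a bounded additive set function. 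So the proof really is an assembly of (\ref{eq: annihilator}), Theorem \ref{thm: xtnsn Giner}, Theorem \ref{thm: A = V}, and the Amemiya--Luxemburg norm comparison, with Lemma \ref{lem: decomp ss abs cont dense} doing the truncation/approximation bookkeeping.
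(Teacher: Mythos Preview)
Your upper bound is fine. The lower bound has two genuine problems.

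First, an arbitrary $u \in L_\varphi(\mu)$ need not vanish outside a $\sigma$-finite set; see Theorem \ref{thm: Lvarphi = Lvarphisigma characterization}. The paper instead reduces to $\sigma$-finite pieces at the very end, by noting that $\|u\|_\varphi > \alpha$ forces $I_\varphi(\alpha^{-1}u\chi_\Sigma) > 1$ on some $\Sigma \in \AA_\sigma$ via Proposition \ref{pr: divergent subintegral}. Second, and more seriously, your passage $\ell(u_k) \to \ell(u)$ fails: continuity from below for a bounded additive set function $\nu_{\ell,u}$ is \emph{equivalent} to $\sigma$-additivity, and the purely finitely additive component $\ell_f$ lacks it by definition. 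With $u_k = u\chi_{\Om \setminus E_k}$ in $C_\varphi^\sigma(\mu)$ you get $\ell(u_k) = \ell_a(u_k) \to \ell_a(u)$ by the $\sigma$-additivity of $\ell_a$, but nothing forces $\ell_a(u) = \ell(u)$ for the particular Hahn--Banach functional you chose; the defect $\ell_d(u) + \ell_f(u)$ may well be positive. (Your truncation step also relies on Lemma \ref{lem: norm almost abs cont} and the annihilator computation (\ref{eq: annihilator}) in Corollary \ref{cor: C*}, both of which assume $\varphi$ real-valued, an assumption the present lemma does not make.)

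The paper avoids decomposing a norming functional altogether. It first shows (Lemma \ref{lem: Mackey}) that convergence in the Mackey topology $\mm(L_\varphi, V_{\varphi^*})$ implies local convergence in measure; by Fatou this makes $I_\varphi$ Mackey-lower semicontinuous, so $I_\varphi = I_\varphi^{**}$ for the pairing with $V_{\varphi^*}$. Then Lemma \ref{lem: modular-norm} converts $S(u) \le \tfrac{1}{2}$ into $I_\varphi(u) \le 1$, hence $\|u\|_\varphi \le 1$, giving $\|u\|_\varphi \le 2S(u)$. This biconjugation route needs nothing beyond $\varphi$ being an Orlicz integrand and sidesteps the failure of $\sigma$-additivity entirely.
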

	
	\begin{proof}
		It suffices to show that the support functional controls the Luxemburg norm. We first assume $\mu$ to be $\sigma$-finite. The functional $I_\varphi$ is closed w.r.t. local convergence in $\mu$ by the Fatou lemma. More precisely, since the Fatou lemma holds for sequences but not for nets, we may argue as follows: Given $u_i \in L_\varphi(\mu)$ a net with $\lim_i u_i = u$ in the Mackey topology, there holds $\lim_i u_i = u$ locally in measure by Lemma \ref{lem: Mackey} so we may conclude $I_\varphi(u) \le \liminf_i I_\varphi(u_i)$ once we prove lower semicontinuity of $I_\varphi$ for the local convergence in $\mu$. The latter is metrizable on $\LL_0 \left(\Om; X \right)$ if $X$ is separable. Combining this with the separable valuedness of strongly measurable functions and the $\sigma$-finiteness of $\mu$, we may dispose of the separability assumption on $X$. Hence, we have reduced to checking lower semicontinuity for sequences $w_n \in \LL_0\left(\Om; X \right)$ with $\lim_n w_n = w$ locally in $\mu$. Applying the Fatou lemma in the version discussed below Proposition \ref{pr: divergent subintegral}, we conclude $I_\varphi(w) \le \liminf_n I_\varphi(w_n)$. Hence, $I_\varphi$ is lower semicontinuous for the Mackey topology. Consequently, $I_\varphi = I_\varphi^{**}$ w.r.t. this pairing by \cite[Thm. 4.92]{FoLe} so that if $S(u) \le \frac{1}{2}$, then Lemma \ref{lem: modular-norm} yields the estimate
		$$
		I_\varphi(u) = \sup_{v \in V_{\varphi^*} } \langle v, u \rangle - I_\varphi^*(v) \le \sup_{v \in V_{\varphi^*} } \frac{1}{2} \vvvert v \vvvert_\varphi^* - I_\varphi^*(v) \le \sup_{v \in V_{\varphi^*} } \| v \|_\varphi^* - I_\varphi^*(v) \le 1
		$$
		hence $\| u \|_\varphi \le 1$, whence $\| \cdot \|_\varphi \le 2 S$ follows. It remains to remove the restriction of $\sigma$-finiteness. Let $\| u \|_\varphi > \alpha$ so that $I_\varphi\left( \alpha^{-1} u \right) > 1$. Because we assume $\mu$ to have no atom of infinite measure, we may invoke Proposition \ref{pr: divergent subintegral} to find a set $\Sigma \in \AA_\sigma$ such that $I_\varphi\left( \alpha^{-1} u \chi_\Sigma \right) > 1$, hence $\| u \chi_\Sigma \|_\varphi > \alpha$. Consequently, we find a sequence $\Sigma_n \in \AA_\sigma$ with
		$$
		\| u \|_\varphi = \lim_n \| u \chi_{\Sigma_n} \|_\varphi \le \limsup_n 2 S\left( u \chi_{\Sigma_n} \right) \le 2 S(u),
		$$
		whence the restriction has been lifted.
	\end{proof}
	
	\subsection{Weak compactness}
	
	Before addressing weak compactness, let us briefly settle the matter of characterizing strong compactness in $L_\varphi(\mu)$, thereby explaining why we feature no section on strong compactness. We know from Theorem \ref{thm: L complete} that up to subsequences convergence a.e. is necessary for strong convergence in $L_\varphi(\mu)$. By definition of the Luxemburg norm, the Vitali convergence theorem implies that a sequence $u_n$ converging to a limit $u$ a.e. up to subsequences will converge strongly in $L_\varphi(\mu)$ if and only if for every $\lambda > 0$ the extended real-valued function $\varphi \left( \om, \lambda \left[ u\left( \om \right) - u_n \left( \om \right) \right] \right)$ is equi-integrable and has non-escaping mass in $L_1(\mu)$. This is equivalent to the existence of integrable majorants up to subsequences by the Lebesgue dominated convergence theorem. These conditions become considerably easier to establish if $\varphi$ satisfies a $\Delta_2$-type condition on all of $\Om$ since then equi-integrability and non-escaping mass hold for all $\lambda > 0$ if they hold for some $\lambda > 0$. Notably, these considerations are completely analogous to the scalar case without requiring any noteworthy adaption. Nevertheless, more apt characterizations of strongly compact sets cannot be given in situations with additional structure on the measure space $\left( \Om, \AA, \mu \right)$, cf., e.g., \cite{KR} for a result in this direction.
	
	\subsubsection{Compactness in $L_\varphi(\mu)$}
	
	The following notions mimic the definition of equi-integrability as known in the theory of Lebesgue spaces in a way adapted to weak topologies. Their role in the theory of weakly compact subsets in scalar valued Orlicz spaces is well-established, cf., e.g., \cite[§4.5, Thm. 1]{RaRe}.
	
	\begin{definition}[weak and weak* equi-integrability] \label{def: weak eq-int}
		A subset $\FF \subset L_\varphi(\mu)$ is \emph{weakly equi-integrable on $\GG \subset V_{\varphi^*}(\mu)$} if for each evanescent sequence of sets $E_n \in \AA$, i.e., $\mu\left( \lim_n E_n\right) = 0$, and every $v \in \GG$ there holds
		\begin{equation} \label{eq: weak equi-integrability}
			\lim_n \sup_{u \in \FF} \left| \int_{E_n} \langle v, u \rangle \, d \mu \right| = 0.
		\end{equation}
		Similarly, the subset $\GG$ is called \emph{weak* equi-integrable on $\FF$} if for $E_n$ and $u \in \FF$ there holds
		\begin{equation} \label{eq: weak* equi-integrability}
			\lim_n \sup_{v \in \GG} \left| \int_{E_n} \langle v, u \rangle \, d \mu \right| = 0.
		\end{equation}
	\end{definition}
	
	All of $L_\varphi(\mu)$ is weakly equi-integrable on $C_{\varphi^*}(\mu)$ and the entire space $V_{\varphi^*}(\mu)$ is weak* equi-integrable on $C_\varphi(\mu)$. The sets $\FF$ and $\GG$ may always be taken linear by passing to their linear hull. Weak equi-integrability agrees with the formally stronger notion of absolute weak equi-integrability, i.e., the absolute value in (\ref{eq: weak equi-integrability}) may be equivalently placed inside the integral. Indeed, suppose $\FF$ is weakly equi-integrable but fails to be absolutely weakly equi-integrable. Then we find a sequence $E_n$ as above and $u_n \in \FF$ for which there obtains the contradiction
	$$
	\lim_n \int_{E_n} \left| \langle v, u_n \rangle \right| \, d \mu = \lim_n \left| \int_{E_n} \langle v, u_n \rangle \, d \mu \right| \ge \e > 0.
	$$
	The same is true for weak* equi-integrability. To localize weak sequential compactness, we need several preparatory results.
	
	\begin{proposition} \label{pr: a dense convergence principle}
		Let $u_n \in L_\varphi(\mu)$ be weakly equi-integrable at $v \in V_{\varphi^*}(\mu)$ and $v_m \uparrow v$. Then
		$$
		\lim_m \lim_n \langle u_n, v_m \rangle = \lim_n \lim_m \langle u_n, v_m \rangle
		$$
		whenever one of these iterated limits exists. Mutatis mutandis, the same is true for weak* equi-integrability.
	\end{proposition}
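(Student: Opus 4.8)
The plan is to show that the weak equi-integrability at $v$ turns the iterated limit into an honest double limit by a Moore--Osgood type argument, for which the crucial point is to establish uniformity in $n$ of the convergence $\langle u_n, v_m \rangle \to \langle u_n, v \rangle$ as $m \to \i$. Write $v_m = v \chi_{\Om_m}$ with $\Om_m \uparrow$ and $\mu(\lim_m \Om \setminus \Om_m) = 0$ as in Definition \ref{def: conv f bel}. Then, for every $n$,
\begin{equation*}
	\langle u_n, v \rangle - \langle u_n, v_m \rangle = \int_{\Om \setminus \Om_m} \langle v(\om), u_n(\om) \rangle \, d \mu(\om),
\end{equation*}
and the sets $E_m = \Om \setminus \Om_m$ form an evanescent sequence. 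First I would invoke weak equi-integrability of $\{u_n\}$ at $v$ in the form (\ref{eq: weak equi-integrability}): $\lim_m \sup_n \left| \int_{E_m} \langle v, u_n \rangle \, d \mu \right| = 0$. This says precisely that $\langle u_n, v_m \rangle \to \langle u_n, v \rangle$ as $m \to \i$, uniformly in $n \in \N$.

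Granted this uniformity, the interchange of limits is the standard fact: if $a_{n,m} \to a_{n,\i}$ uniformly in $n$ as $m \to \i$ and $a_{n,m} \to a_{\i,m}$ as $n \to \i$ for each $m$, and if one of the iterated limits exists, then both exist and agree. Concretely, suppose $\lim_m \lim_n \langle u_n, v_m \rangle$ exists. Set $a_{n,m} = \langle u_n, v_m \rangle$. Uniform convergence in $n$ gives, for $\e > 0$, an $M$ with $|a_{n,\i} - a_{n,m}| < \e$ for all $n$ whenever $m \ge M$; combined with the existence of $\lim_n a_{n,m}$ for each fixed $m$, a routine $\tfrac{\e}{3}$-argument shows $(\lim_n a_{n,\i})_{n}$ is Cauchy and that $\lim_n \lim_m a_{n,m} = \lim_m \lim_n a_{n,m}$. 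The symmetric case, when $\lim_n \lim_m \langle u_n, v_m \rangle$ is assumed to exist, is handled identically, reading the uniform estimate the other way. I would phrase this once and apply it verbatim.

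For the ``mutatis mutandis'' claim, the roles of $\FF$ and $\GG$ are swapped: one starts from a sequence $v_n \in V_{\varphi^*}(\mu)$ that is weak* equi-integrable at $u \in L_\varphi(\mu)$, together with $u_m \uparrow u$, and the evanescent sets are again $E_m = \{u_m \ne u\}^{\mathrm{c}}$-complements; the uniformity $\lim_m \sup_n \left| \int_{E_m} \langle v_n, u \rangle \, d \mu \right| = 0$ comes straight from (\ref{eq: weak* equi-integrability}), and the Moore--Osgood step is word-for-word the same.

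\textbf{Main obstacle.} There is no deep obstacle here; the only point requiring a little care is bookkeeping the two evanescent-set constructions (convergence from below of $v_m$ versus of $u_m$) so that the exceptional null set is absorbed correctly and the identity $\langle u_n, v \rangle - \langle u_n, v_m \rangle = \int_{E_m} \langle v, u_n \rangle \, d\mu$ is literally valid rather than only almost everywhere; this is immediate once one notes that $v - v_m$ vanishes off $E_m$ and that all the integrands in play are $\mu$-integrable by the H\"older inequality (Lemma \ref{lem: Hölder inequality}) and the definition of $V_{\varphi^*}(\mu)$. I would keep the write-up to the uniform-convergence observation plus a one-line citation of the classical double-limit lemma.
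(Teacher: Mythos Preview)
Your proposal is correct and matches the paper's approach: the paper's proof consists of the single sentence ``This follows easily from the definition of weak equi-integrability and a straightforward $\frac{\e}{2}$-argument,'' which is exactly the uniform-in-$n$ convergence plus Moore--Osgood step you spell out. Your write-up is simply a more detailed version of what the paper leaves as routine.
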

	
	\begin{proof}
		This follows easily from the definition of weak equi-integrability and a straightforward $\frac{\e}{2}$-argument.
	\end{proof}
	
	The next result reduces convergence considerations to a finite measure space.
	
	\begin{proposition} \label{pr: localize L}
		Let $\FF \subset L_\varphi(\mu)$ be a norm bounded set that is weakly equi-integrable on a subset $\GG \subset V_{\varphi^*}(\mu)$ inducing on $\FF$ an equivalent norm via
		\begin{equation} \label{eq: equi norm 1}
			\| u \|_\GG = \sup_{v \in B_\GG} \left| \langle v, u \rangle \right|.
		\end{equation}
		Then the sequential relative compactness in $\ss \left( L_\varphi, \GG \right)$ of the following sets is equivalent: \emph{(i)} $\FF$ \emph{(ii)} $\FF \chi_A$ for each $A \in \AA$ \emph{(iii)} $\FF \chi_{A_k}$ for each member $A_k$ of an isotonic sequence with $\lim_k A_k = \Om$.
	\end{proposition}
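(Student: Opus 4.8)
The plan is to establish the chain of implications $(ii)\Rightarrow(i)$, $(ii)\Rightarrow(iii)$, $(i)\Rightarrow(ii)$, and $(iii)\Rightarrow(i)$; together these give the full equivalence. The first two are immediate, since $\FF=\FF\chi_\Om$ with $\Om\in\AA$ and since each $A_k\in\AA$. Hence the substantive content reduces to two statements about a norm-bounded family $\HH\subset L_\varphi(\mu)$ that is weakly equi-integrable on $\GG$:
\emph{(A)} for every $B\in\AA$ the family $\HH\chi_B$ is again norm bounded and weakly equi-integrable on $\GG$, and if $\HH$ is relatively $\ss(L_\varphi,\GG)$-sequentially compact then so is $\HH\chi_B$;
\emph{(B)} if $\HH\chi_{A_k}$ is relatively $\ss(L_\varphi,\GG)$-sequentially compact for every term of an increasing sequence $A_k\uparrow\Om$, then $\HH$ is relatively $\ss(L_\varphi,\GG)$-sequentially compact.
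Granting these, $(i)\Rightarrow(ii)$ is $(A)$ applied with $B=A$, and $(iii)\Rightarrow(i)$ is exactly $(B)$. The easy parts of $(A)$ are routine: $\|u\chi_B\|_\varphi\le\|u\|_\varphi$ because $\varphi\ge0$, and $E_n\cap B$ is evanescent whenever $E_n$ is, so weak equi-integrability on $\GG$ passes to $\HH\chi_B$.

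For $(B)$ I would argue by diagonal extraction. Given $u_n\in\HH$, refine successively so that along the diagonal subsequence $u_n\chi_{A_k}\weak w_k$ in $\ss(L_\varphi,\GG)$ for some $w_k\in L_\varphi(\mu)$ and every $k$. Since $E_k:=\Om\setminus A_k$ is an evanescent sequence, the decomposition
\begin{equation*}
\langle v,u_n\rangle=\langle v,u_n\chi_{A_k}\rangle+\int_{E_k}\langle v(\om),u_n(\om)\rangle\,d\mu(\om)
\end{equation*}
together with the absolute form of weak equi-integrability (valid by the remark after Definition \ref{def: weak eq-int}) yields, for each fixed $v\in\GG$,
\begin{equation*}
\limsup_n\langle v,u_n\rangle-\liminf_n\langle v,u_n\rangle\le 2\sup_{u\in\HH}\int_{E_k}\bigl|\langle v(\om),u(\om)\rangle\bigr|\,d\mu(\om)\xrightarrow{\ k\to\i\ }0,
\end{equation*}
so that $\ell(v):=\lim_n\langle v,u_n\rangle$ exists for every $v\in\GG$ and $\langle v,w_k\rangle\to\ell(v)$. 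The delicate point is to realize $\ell$ by a single $u\in L_\varphi(\mu)$. I would do this by showing $(w_k)$ is $\|\cdot\|_\varphi$-Cauchy: using that $\GG$ induces the equivalent norm $\|u\|_\GG=\sup_{v\in B_\GG}|\langle v,u\rangle|$, one controls $\|w_{k'}-w_k\|_\GG$ for $k'>k$ by the weak-equi-integrability estimate over $\Om\setminus A_k$, after interchanging the iterated limits in $n$ and $k$ by means of Proposition \ref{pr: a dense convergence principle} (applied with the convergence-from-below $v\chi_{A_k}\uparrow v$). Completeness of $L_\varphi(\mu)$ (Theorem \ref{thm: L complete}) then gives $w_k\to u$ in $L_\varphi(\mu)$, and a final $\tfrac{\e}{2}$-estimate, splitting $\langle v,u_n\rangle-\langle v,u\rangle$ through $w_k$, shows $u_n\weak u$.

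For the compactness half of $(A)$ I would reduce to $(B)$: writing $B=\bigcup_k(B\cap A_k)$ for the fixed increasing sequence, it suffices to prove relative compactness of $\HH\chi_{B\cap A_k}$ for each $k$ and then invoke $(B)$ with the exhausting sequence $B\cap A_k$ of $B$; on each $A_k$ one may, after restricting to a $\sigma$-finite sub-measure, localise the $\GG$-pairings and run the same diagonal/truncation scheme. The main obstacle throughout is precisely the identification of weak limits inside $L_\varphi(\mu)$: because $\GG$ is \emph{not} assumed invariant under multiplication by indicators, one cannot simply transport a truncation across the duality, and the hypothesis that $\GG$ induces an equivalent norm must serve as the quantitative substitute for this missing invariance — it is what converts the pointwise weak-equi-integrability bounds into the Cauchy property of $(w_k)$, after which completeness closes the argument.
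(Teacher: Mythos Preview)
Your plan correctly isolates $(iii)\Rightarrow(i)$ as the substantive step; the paper agrees, declaring that it suffices to deduce $(i)$ from $(iii)$ and proving only that. But your argument for this direction has a genuine gap in the claim that $(w_k)$ is $\|\cdot\|_\varphi$-Cauchy. For $v\in B_\GG$ and $k'>k$ one has
\begin{equation*}
|\langle v, w_{k'}-w_k\rangle| = \Bigl|\lim_n \int_{A_{k'}\setminus A_k}\langle v, u_n\rangle\,d\mu\Bigr| \le \sup_n \int_{\Om\setminus A_k} |\langle v, u_n\rangle|\,d\mu,
\end{equation*}
and weak equi-integrability makes the right-hand side tend to zero \emph{for each fixed} $v\in\GG$. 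To conclude $\|w_{k'}-w_k\|_\GG\to 0$ you would need this uniformly over $v\in B_\GG$, which amounts to $\sup_{u\in\FF}\|u\chi_{\Om\setminus A_k}\|_\GG\to 0$. That is uniform absolute continuity of the norms in $\FF$, a strictly stronger hypothesis than weak equi-integrability (it fails for any bounded $\FF\subset L_\varphi\setminus C_\varphi$). Proposition~\ref{pr: a dense convergence principle} does not rescue this: it too operates on one $v$ at a time.

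The paper avoids a Cauchy argument altogether. It passes to the disjoint partition $B_k=A_k\setminus A_{k-1}$, diagonalises so that each $u_n\chi_{B_k}$ converges in $\ss(L_\varphi,\GG)$, and then \emph{defines} $u$ as the measurable function that agrees on $B_k$ with the respective weak limit. Membership $u\in L_\varphi(\mu)$ follows from
\begin{equation*}
c\,\|u\|_\varphi = c\lim_k\|u\chi_{A_k}\|_\varphi \le \limsup_k \|u\chi_{A_k}\|_\GG \le \limsup_k\liminf_n \|u_n\chi_{A_k}\|_\GG \le C_\FF,
\end{equation*}
using only that $\|\cdot\|_\GG$, as a supremum of continuous linear forms, is $\ss(L_\varphi,\GG)$-lower semicontinuous. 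No uniformity in $v$ is needed. The convergence $\langle u_n,v\rangle\to\langle u,v\rangle$ is then obtained, as you also envisaged, via Proposition~\ref{pr: a dense convergence principle}. As a minor aside, your reduction of $(i)\Rightarrow(ii)$ through $B=\bigcup_k(B\cap A_k)$ is circular, since compactness of $\FF\chi_{B\cap A_k}$ is itself an instance of $(i)\Rightarrow(ii)$; the paper does not spell out this direction either, and in its applications only $(iii)\Rightarrow(i)$ is used.
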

	
	We call $\GG$ a norming subset for $\FF$ if (\ref{eq: equi norm 1}) defines an equivalent norm.
	
	\begin{proof}
		It suffices to deduce (i) from (iii). Let $B_k$ be the partition of $\Om$ defined by
		$$
		B_1 = A_1, \quad B_k = A_k \setminus A_{k - 1} \quad k \ge 2.
		$$
		Pick a sequence $u_n \in \FF$ and extract a diagonal sequence (not relabeled) such that $u_n \chi_{B_k}$ converges for every $k$. Define $u$ to agree with the limit $\lim_n u_n \chi_{B_k}$ on $B_k$. Then $u \in L_\varphi(\mu)$ since
		\begin{align*}
			c \| u \|_\varphi = c \lim_k \| u \chi_{A_k} \|_\varphi \le \limsup_k \| u \chi_{A_k} \|_\GG
			& \le \limsup_k \liminf_n \| u_n \chi_{A_k} \|_\GG \\
			& \le C_\FF < \i.
		\end{align*}
		We used that $\| \cdot \|_\GG$ is lower semicontinuous as a supremum of continuous functions. By Proposition \ref{pr: a dense convergence principle},
		$$
		\lim_n \langle u_n, v \rangle = \lim_n \lim_k \langle u_n, v \chi_{B_k} \rangle = \lim_k \lim_n \langle u_n, v \chi_{B_k} \rangle = \lim_k \langle u, v \chi_{B_k} \rangle = \langle u, v \rangle
		$$
		for any $v \in \lin \GG$ so that a convergent subsequence has been extracted.
	\end{proof}
	
	An analogous statement and proof holds for subsets of $V_{\varphi^*}(\mu)$:
	
	\begin{proposition} \label{pr: localize V}
		Let $\GG \subset V_{\varphi^*}(\mu)$ be a norm bounded set that is weak* equi-integrable on $\FF \subset V_{\varphi^*}(\mu)$ inducing on $\GG$ an equivalent norm via
		\begin{equation} \label{eq: equi norm 2}
			\| v \|_\FF = \sup_{u \in B_\FF} \left| \langle v, u \rangle \right|.
		\end{equation}
		Then the sequential relative compactness in $\ss \left( V_{\varphi^*}, \FF \right)$ of the following sets is equivalent: \emph{(i)} $\GG$ \emph{(ii)} $\GG \chi_A$ for each $A \in \AA$ \emph{(iii)} $\GG \chi_{A_k}$ for each member $A_k$ of an isotonic sequence with $\lim_k A_k = \Om$.
	\end{proposition}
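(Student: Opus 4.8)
The plan is to mirror the proof of Proposition \ref{pr: localize L} almost verbatim, transposing every statement from $L_\varphi(\mu)$ paired against $V_{\varphi^*}(\mu)$ to $V_{\varphi^*}(\mu)$ paired against $\FF \subset V_{\varphi^*}(\mu)$, and replacing weak equi-integrability by weak* equi-integrability throughout. As there, the only nontrivial implication is (iii) $\implies$ (i); implications (i) $\implies$ (ii) $\implies$ (iii) are immediate because multiplication by an indicator $\chi_A$ is a continuous projection on $V_{\varphi^*}(\mu)$ (each $\chi_A v$ is again a linear weak* integrand with $\vvvert \chi_A v \vvvert^*_\varphi \le \vvvert v \vvvert^*_\varphi$) and the continuous image of a relatively compact set is relatively compact, while restricting an isotonic exhausting sequence to $A \in \AA$ only shrinks the sets under consideration.

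For (iii) $\implies$ (i), I would first introduce the partition $B_1 = A_1$, $B_k = A_k \setminus A_{k-1}$ for $k \ge 2$, so that $\Om = \bigcup_k B_k$ up to a null set. Given a sequence $v_n \in \GG$, a diagonal extraction (not relabeled) produces a subsequence for which $v_n \chi_{B_k}$ converges in $\ss(V_{\varphi^*}, \FF)$ for every $k$; here one uses that $\GG \chi_{B_k} \subset \GG \chi_{A_k}$ is relatively sequentially compact by (iii). Define the candidate limit $v$ to coincide with $\lim_n v_n \chi_{B_k}$ on each $B_k$; since the $v_n \chi_{B_k}$ are linear weak* integrands agreeing weak* a.e. on overlaps and the $B_k$ are disjoint, this glues to a well-defined linear weak* integrand on $\Om$. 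To see $v \in V_{\varphi^*}(\mu)$, exploit that $\|\cdot\|_\FF$ given by (\ref{eq: equi norm 2}) is an equivalent norm (the norming hypothesis), lower semicontinuous as a supremum of continuous functionals, hence
$$
c \vvvert v \vvvert^*_\varphi = c \lim_k \vvvert v \chi_{A_k} \vvvert^*_\varphi \le \limsup_k \| v \chi_{A_k} \|_\FF \le \limsup_k \liminf_n \| v_n \chi_{A_k} \|_\FF \le C_\GG < \i,
$$
using norm boundedness of $\GG$. Finally, I would invoke Proposition \ref{pr: a dense convergence principle} in its weak* form: for any $u \in \lin \FF$, weak* equi-integrability of $\GG$ on $\FF$ permits the interchange
$$
\lim_n \langle v_n, u \rangle = \lim_n \lim_k \langle v_n, u \chi_{B_k} \rangle = \lim_k \lim_n \langle v_n, u \chi_{B_k} \rangle = \lim_k \langle v, u \chi_{B_k} \rangle = \langle v, u \rangle,
$$
so $v_n \to v$ in $\ss(V_{\varphi^*}, \FF)$ along the extracted subsequence, which is the asserted relative compactness.

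The main obstacle, such as it is, is purely bookkeeping rather than conceptual: one must check that the weak* analogue of Proposition \ref{pr: a dense convergence principle} really does apply, i.e. that the $\frac{\e}{2}$-argument there goes through with the roles of $\FF$ and $\GG$ swapped, and that $u \chi_{B_k} \uparrow u$ is a genuine convergence-from-below (which it is, since $\bigcup_k B_k = \Om$ a.e. and $A_k \uparrow \Om$). One should also double-check that the gluing of the $v_n \chi_{B_k}$ respects the equivalence-class structure of $V_{\varphi^*}(\mu)$ — two linear weak* integrands that agree weak* a.e. on each of a countable partition agree weak* a.e. on $\Om$, so this is fine. Since Proposition \ref{pr: localize V} asserts it holds \emph{mutatis mutandis}, I would state it with a short proof remarking that the argument is strictly parallel to that of Proposition \ref{pr: localize L}, spelling out only the weak* interchange step and the lower-semicontinuity estimate above, and leave the routine verifications to the reader.
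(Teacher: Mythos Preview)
Your proposal is correct and takes precisely the approach the paper intends: the paper states only that ``An analogous statement and proof holds for subsets of $V_{\varphi^*}(\mu)$'' and gives no separate argument, so your transposition of the proof of Proposition~\ref{pr: localize L}---partition $\Om$ via $B_k = A_k \setminus A_{k-1}$, diagonalize, glue the piecewise limits, bound via the norming hypothesis and lower semicontinuity of $\|\cdot\|_\FF$, then interchange limits by the weak* clause of Proposition~\ref{pr: a dense convergence principle}---is exactly what is meant. One cosmetic remark: in the interchange display you write $u\chi_{B_k}$, but convergence from below in the sense of Definition~\ref{def: conv f bel} requires the isotonic $u\chi_{A_k}$ (the paper's own proof of Proposition~\ref{pr: localize L} commits the same slip); this is harmless since $\langle v_n, u\chi_{A_k}\rangle = \sum_{j\le k}\langle v_n, u\chi_{B_j}\rangle$.
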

	
	We call $\FF$ a norming subset for $\GG$ if (\ref{eq: equi norm 2}) defines an equivalent norm. Further terminology:
	
	\begin{definition}[weak tightness] \label{def. loc w-tight}
		A family $\FF$ of strongly measurable functions $u \colon \Om \to X$ is \emph{weakly tight} if there exists a separably measurable integrand $h \colon \Om \times X \to \left[ 0, \i \right]$ with weakly compact sublevels $\left\{ x \in X \st h(\om, x) \le \alpha \right\}$, $\alpha \ge 0$, such that
		$$
		\sup_{u \in \FF} \int h \left[ \om, u(\om) \right] \, d \mu(\om) < +\i.
		$$
		If $\FF$ is weakly tight on every set of finite measure, then $\FF$ is called \emph{locally weakly tight}.
	\end{definition}
	
	\begin{definition}[weak biting convergence] \label{def. biting}
		Given a sequence of strongly measurable functions $u_n \colon \Om \to X$, we say that $u_n$ converges to $u$ in the weak or $\ss \left( X, X^* \right)$-biting sense if there exists an exhausting sequence $\Om_j \in \AA$, $\Om = \bigcup_j \Om_j$, such that $u_n \weak u$ in $L_1 \left( \Om_j ; X\right)$ for every $j \in \N$.
	\end{definition}
	
	Remember that $L_1 \left( \Om_j ; X\right)^* = V_\i \left( \Om_j ; X^* \right)$ by Corollary \ref{cor: C*}. In particular, the limit function is almost separably valued and $\AA_\mu$-measurable hence agrees with a strongly measurable function a.e. by Lemma \ref{lem: strong mb completion} so that it is unique up to null sets and strongly measurable up to modification on a null set. In view of the role that biting convergence plays in the upcoming compactness theorem, we are interested in approximating elements of $V_{\varphi^*}(\mu)$ by sequences in $V_\i\left(\mu; X^* \right)$ in convergence from below.
	
	\begin{proposition} \label{pr: Vinf mon dns}
		Let $\mu$ be $\sigma$-finite. For every weak* measurable function $v \colon \Om \to X^*$ with $v \in V_{\varphi^*}(\mu)$, there exists a sequence $v_k \in V_\i\left(\mu; X^* \right)$ such that $v_m \uparrow v$.
	\end{proposition}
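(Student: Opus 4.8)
The plan is to reduce the statement to an a.e.\ finiteness property of a measurable ``norm function'' attached to $v$ and then simply truncate along this function. Recall that, since $\mu$ is $\sigma$-finite and has no atom of infinite measure, $V_\i(\mu; X^*)$ consists of those weak* measurable $w \colon \Om \to X^*$ whose essential norm, computed as an essential supremum over separable subsets of $X$, is finite; so the task is to produce an increasing sequence $\Om_m \in \AA_f$ with $\bigcup_m \Om_m$ co-null such that each $v\chi_{\Om_m}$ has essentially bounded norm.

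First I would introduce, for $W \in \SS(X)$, the function $n_W \colon \Om \to [0,\i]$, $n_W(\om) = \sup_{x \in B_W} \langle v(\om), x\rangle$. Evaluating the supremum over a countable dense subset of $B_W$ shows $n_W$ is measurable, by weak* measurability of $v$. The family $\{n_W\}_{W \in \SS(X)}$ is upward directed, since $B_{W_1} \cup B_{W_2} = B_{W_1 \cup W_2}$ and $W_1 \cup W_2 \in \SS(X)$. By the essential supremum lemma \cite[Lem. 1.108]{FoLe} there is a countable subfamily with $\sup_k n_{W_k} = \esssup_{W \in \SS(X)} n_W$ a.e.; putting $W_0 = \cl \bigcup_k W_k \in \SS(X)$ and using continuity of $\langle v(\om), \cdot\rangle$, one gets an $\AA_\mu$-measurable function $g_v$ with
$$ g_v(\om) = \esssup_{W \in \SS(X)} n_W(\om) = n_{W_0}(\om) \quad \text{for a.e. } \om \in \Om. $$

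The key step — and the main obstacle — is to show $g_v < \i$ a.e. Suppose not; by $\sigma$-finiteness choose $A \in \AA_f$ with $\mu(A) > 0$ and $g_v \equiv \i$ on $A$, and fix a countable dense $\{x_j\} \subset B_{W_0}$. For $N \in \N$ the sets $A^N_j = \{\om \in A \st \langle v(\om), x_j\rangle > N\}$ are measurable and cover $A$, so $u_N = \sum_j x_j \chi_{A^N_j \setminus \bigcup_{i<j} A^N_i}$ is a strongly measurable $B_{W_0}$-valued function on $A$ with $\langle v(\om), u_N(\om)\rangle > N$ there. Applying the almost embedding Lemma \ref{lem: a emb} to the restriction of $\mu$ to $A$ gives $A_\e \subset A$ with $\mu(A \setminus A_\e) \to 0$ and a continuous inclusion $L_\i(A_\e; X) \hookrightarrow L_\varphi(A_\e)$; choosing $\e$ with $\mu(A_\e) > \tfrac12 \mu(A) > 0$, we get $u_N\chi_{A_\e} \in L_\varphi(\mu)$ with $\| u_N\chi_{A_\e}\|_\varphi \le M$ for a constant $M$ independent of $N$. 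But then
$$ N\mu(A_\e) \le \int \langle v(\om), u_N(\om)\chi_{A_\e}(\om)\rangle \, d\mu(\om) \le \vvvert v \vvvert^*_\varphi \, \| u_N\chi_{A_\e}\|_\varphi \le \vvvert v \vvvert^*_\varphi \, M, $$
which is bounded in $N$, a contradiction. Hence $g_v < \i$ $\mu$-a.e.

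Finally, pick $F_m \in \AA_f$ with $F_m \uparrow \Om$ and set $\Om_m = F_m \cap \{g_v \le m\}$. The $\Om_m$ increase and $\bigcup_m \Om_m = \{g_v < \i\}$ is co-null, so $v_m \coloneqq v\chi_{\Om_m}$ converges to $v$ from below and monotonically in the sense of Definition \ref{def: conv f bel}. Since $g_{v\chi_{\Om_m}} = \chi_{\Om_m} g_v \le m$ a.e., each $v_m$ is a weak* measurable function with essentially bounded norm, i.e.\ $v_m \in V_\i(\mu; X^*)$, and the proof is complete. (The reason non-separability of $X$ matters is precisely that the pointwise dual norm of $v$ need not be measurable, which is what forces the essential-supremum-over-subspaces construction of $g_v$ and the use of Lemma \ref{lem: a emb} to build a norm-bounded family of test functions with unbounded pairings.)
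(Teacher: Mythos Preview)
Your proof is correct and follows the same overall strategy as the paper: define the essential norm function $g_v(\om)=\esssup_{W\in\SS(X)}\sup_{x\in B_W}\langle v(\om),x\rangle$, show it is finite a.e., and then truncate along its sublevel sets intersected with an exhaustion by finite-measure sets.

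The one genuine difference is in how the key step---a.e.\ finiteness of $g_v$---is obtained. The paper uses the almost embedding Lemma~\ref{lem: a emb} to get an exhaustion $F_n$ with $L_\i(F_n;X)\hookrightarrow L_\varphi(F_n)$ and then invokes the interchange Theorem~\ref{thm: inf int} once to identify $\int_{F_n} g_v\,d\mu$ with $\sup_{\|u\|_\i\le 1}\int_{F_n}\langle v,u\rangle\,d\mu<\i$, so $g_v$ is in fact locally \emph{integrable}, not merely a.e.\ finite. Your contradiction argument instead builds the test functions $u_N$ by hand (a measurable selection from the strict sublevel multimap of the tilted integrand), which is exactly what Theorem~\ref{thm: inf int} does internally. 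This makes your argument more self-contained and avoids appealing to the general interchange machinery, at the cost of being longer and yielding only a.e.\ finiteness rather than local integrability; for the present proposition either suffices.
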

	
	\begin{proof}
		By Lemma \ref{lem: a emb}, we find an isotonic sequence $F_n \in \AA_f$ with $\Om = \bigcup_n F_n$ and $L_\i \left( F_n ; X \right) \to L_\varphi\left( F_n \right)$. We claim that the non-negative function
		$$
		M(\om) = \esssup_{W \in \SS(X)} \sup_{x \in B_W} \langle v(\om), x \rangle
		$$
		is integrable on $F_n$. To see this, note that
		$$
		\int_{F_n} M \, d \mu = \sup_{u \in L_\i} \int_{F_n} \langle v, u \rangle - I_{B_X}(u) \, d \mu = \sup_{\| u \|_\i \le 1} \int_{F_n} \langle v, u \rangle \, d \mu < +\i
		$$
		by Theorem \ref{thm: inf int}. In particular, $M$ is finite a.e. on all of $F_n$ hence on $\Om$. Therefore, $v_k = \chi_{\left\{ M \le k \right\}} v \uparrow v$ on $\Om$ as $k \uparrow \i$, where $v_k \in V_\i\left( \Om; X^* \right)$.
	\end{proof}
	
	\begin{proposition} \label{pr: a unif integrab}
		Let $u_n$ be a sequence bounded in $L_1\left(\mu; X \right)$. Given $\e > 0$ there exists $\Om_\e \in \AA$ with $\mu \left( \Om \setminus \Om_\e \right) < \e$ and a subsequence $u_{n_k}$ that is equi-integrable in $L_1\left( \Om_\e ; X \right)$.
	\end{proposition}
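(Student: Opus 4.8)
The plan is to reduce to the classical biting lemma (the Chacon--Rosenthal biting lemma, or equivalently the statement that an $L_1$-bounded sequence admits a subsequence that is uniformly integrable outside sets of arbitrarily small measure) and then address the only genuinely vector-valued subtlety, namely that the underlying range space $X$ is not assumed separable. First I would observe that since each $u_n$ is strongly measurable, the countable family $\{u_n : n \in \N\}$ has an almost separable range: there is a null set $N$ and $W \in \SS(X)$ with $u_n(\om) \in W$ for all $n$ and all $\om \in \Om \setminus N$. Hence without loss of generality we may assume $X$ separable, at the cost of nothing. Second, since the statement is about removing a set of small measure from $\Om$ and since $u_n$ vanishes outside the $\sigma$-finite set $\bigcup_n \{u_n \ne 0\}$ (each summand admits a positive integrable function, being where $\|u_n\|$ is positive), we may further assume $\mu$ is $\sigma$-finite; and then by Proposition \ref{pr: equivalent finite measure} (or directly by multiplying by a positive integrable density) we may assume $\mu$ is finite. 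These reductions are routine.

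With $\mu$ finite and $X$ separable, the scalar functions $\om \mapsto \|u_n(\om)\|$ form an $L_1(\mu)$-bounded sequence of nonnegative measurable functions. The classical biting lemma then yields a subsequence $u_{n_k}$ and an increasing sequence of measurable sets $B_j \uparrow \Om$ with $\mu(\Om \setminus B_j) \to 0$ such that $\{\|u_{n_k}\|\}_k$ is uniformly integrable on each $B_j$. Choosing $j$ large enough that $\mu(\Om \setminus B_j) < \e$ and setting $\Om_\e = B_j$, the family $\{\|u_{n_k}\|\}_k$ is uniformly integrable on $\Om_\e$, which is exactly equi-integrability of $\{u_{n_k}\}$ in $L_1(\Om_\e; X)$ in the sense used here (one checks that $\int_E \|u_{n_k}\| \, d\mu$ being small uniformly in $k$ for $\mu(E)$ small is precisely what equi-integrability of the $X$-valued sequence means, since the Bochner norm of $\chi_E u_{n_k}$ equals $\int_E \|u_{n_k}\|\, d\mu$). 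This gives the conclusion.

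The only step that requires a moment's care is the invocation of the biting lemma itself over a possibly atomic finite measure: atoms contribute a finite number of ``large'' contributions that are handled trivially, and on the non-atomic part the standard argument (extract along a subsequence so that the $L_1$-norms of the parts of $\|u_{n_k}\|$ exceeding height $m$, restricted to the set where they are large, converge; then a diagonal argument over $m$) goes through verbatim. I expect the main --- though still modest --- obstacle to be bookkeeping the reductions to finite $X$-separable measure cleanly, since the proposition is stated for an arbitrary measure with no atom of infinite measure; once those reductions are in place the result is the biting lemma in disguise. Alternatively, one could quote the biting lemma for $X$-valued functions directly from the literature, but given the paper's disciplined stance on minimal hypotheses, routing through the scalar norm function and the reductions above is the self-contained path I would take.
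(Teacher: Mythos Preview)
Your approach is correct and essentially the same as the paper's: reduce to the scalar sequence $\|u_n(\cdot)\|$ and apply a biting lemma (the paper quotes \cite[Lem.~2.31]{FoLe} directly and is done in three lines). Your reduction to separable $X$ is unnecessary once you pass to norms, but harmless; the reduction to $\sigma$-finite $\mu$ via the common support of the $u_n$ is exactly the right preparation and is implicit in the paper's terse argument.
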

	
	\begin{proof}
		It suffices to consider scalar functions. \cite[Lem. 2.31]{FoLe} yields a subsequence of $u_n$ (not relabeled) and a sequence of sets $\Om_n$ with $\lim_n \mu \left( \Om \setminus \Om_n \right) = 0$ such that $u_n \chi_{\Om_n}$ is equi-integrable. Passing to a subsequence $u_{n_k}$ with
		$$
		\mu \left( \Om \setminus \Om_{n_k} \right) < 2^{-k} \e,
		$$
		we find that $\Om_\e = \bigcup_{k \ge 1} \Om_{n_k}$ and $u_{n_k}$ have the claimed properties.
	\end{proof}
	
	\begin{theorem} \label{thm: L compact}
		Let $\mu$ be $\sigma$-finite, $\GG \subset V_{\varphi^*}(\mu)$ a norming subset and $A \subset L_\varphi(\mu)$. Then $A$ is relatively sequentially compact if \emph{(i)} $A$ is norm bounded \emph{(ii)} $A$ is weakly equi-integrable on $\GG$ \emph{(iii)} $A$ is relatively sequentially compact in the local convergence of the $\ss\left( X, X^* \right)$-biting sense. Conversely, if $A$ is relatively sequentially compact, then \emph{(i)} holds if $\lin \GG$ is closed, \emph{(ii)} holds if $\lin \GG$ is closed under multiplication with indicators of measurable sets
		\emph{(iii)} holds if in addition to the latter closedness, the sequential closure of $\lin \GG$ w.r.t. convergence from below contains $V_\i\left( \mu ; X^*\right)$. Moreover, condition \emph{(iii)} may be equivalently replaced by any of the following conditions: \emph{(iiia)} Given any sequence in $A$ there exists a subsequence $v_k \in \co \left\{ u_n \st n \ge k \right\}$ such that $v_k(\om)$ is norm convergent for a.e. $\om \in \Om$ \emph{(iiib)} $v_k(\om)$ is weakly convergent for a.e. $\om \in \Om$ \emph{(iiic)} $v_k$ is locally weakly tight.	
	\end{theorem}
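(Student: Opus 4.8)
\textbf{Proof plan for Theorem \ref{thm: L compact}.}

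The plan is to reduce everything to the prototypical Bochner space $L_1(\mu;X)$ via the almost embedding Lemma \ref{lem: a emb}, and then to invoke known weak compactness criteria in $L_1$. First I would establish the sufficiency direction. Given a norm bounded, weakly equi-integrable, biting-relatively-compact set $A$ and a sequence $u_n \in A$, I would apply Proposition \ref{pr: a unif integrab} iteratively (with $\e = 1/j$) together with a diagonal argument to produce an exhausting sequence $\Om_j \in \AA$ and a subsequence along which each restricted tail is equi-integrable in $L_1(\Om_j; X)$. Combining this with condition (iii), which yields a subsequence that is $\ss(X,X^*)$-biting convergent, a Dunford--Pettis type argument gives a further subsequence $u_{n_k} \weak u$ in $L_1(\Om_j;X)$ for every $j$; here norm boundedness in $L_\varphi$ controls $\| u \chi_{\Om_j}\|_\varphi$ uniformly via lower semicontinuity of $\| \cdot \|_\GG$ (using that $\GG$ is norming, cf. Lemma \ref{lem: equi norm}), so $u \in L_\varphi(\mu)$. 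Finally, to upgrade weak-$L_1$ convergence on each $\Om_j$ to $\ss(L_\varphi, \GG)$-convergence, I would use Proposition \ref{pr: localize L}: weak equi-integrability on $\GG$ lets me interchange the limit over $j$ with the limit over $k$ when testing against $v \in \GG$, exactly as in the proof of that proposition, so $\langle u_{n_k}, v\rangle \to \langle u, v\rangle$ for all $v \in \GG$ (then for all $v \in \lin\GG$).

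Next the necessity direction. Assuming $A$ relatively sequentially compact in $\ss(L_\varphi, \GG)$: for (i), if $\lin\GG$ is closed then it is a norming subspace by Lemma \ref{lem: equi norm} (or by hypothesis), so $A$ bounded in the $\GG$-seminorm $\| \cdot\|_\GG$ forces $A$ bounded in $\| \cdot \|_\varphi$ by equivalence of norms, and weakly compact sets are bounded by Banach--Steinhaus. For (ii), I would argue by contradiction: if weak equi-integrability on some $v \in \lin\GG$ fails, pick an evanescent $E_n$ and $u_n \in A$ with $|\int_{E_n}\langle v, u_n\rangle\, d\mu| \ge \e$; closedness of $\lin\GG$ under multiplication by indicators means $v\chi_{E_n} \in \lin\GG$, and weak convergence of a subsequence $u_{n_k} \weakast u$ against the fixed test family, combined with $\mu(\lim_n E_n) = 0$, produces a contradiction (the limit of $\int_{E_{n_k}}\langle v, u_{n_k}\rangle$ should vanish since $u$ has absolutely continuous $L_1$-behaviour on the relevant $\sigma$-finite set). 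For (iii), given the additional closedness from below containing $V_\i(\mu;X^*)$, I would take a sequence in $A$, extract a $\ss(L_\varphi,\GG)$-convergent subsequence, and test against $v_m \uparrow v$ for $v \in V_\i(\mu; X^*)$ using Proposition \ref{pr: Vinf mon dns} and Proposition \ref{pr: a dense convergence principle}: this shows the subsequence is weakly Cauchy in $L_1(\Om_j;X)$ for each member $\Om_j$ of an exhausting sequence, hence biting convergent, which is precisely (iii).

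For the equivalence of (iii) with (iiia)--(iiic), I would proceed as follows. The implication from biting convergence to (iiic) (local weak tightness) and from (iiic) to (iiia)/(iiib) is the content of vector-valued biting/Dunford--Pettis theory: a biting-convergent sequence is weakly convergent on each $\Om_j$, so by Mazur's lemma one extracts convex combinations $v_k \in \co\{u_n : n \ge k\}$ converging strongly in $L_1(\Om_j;X)$, hence a.e. along a further subsequence after diagonalizing over $j$ — giving (iiia), and a fortiori (iiib). Conversely, (iiib) (a.e. weak convergence of convex combinations) together with the already-established norm boundedness and equi-integrability (conditions (i), (ii), which I would carry along) feeds into a Vitali-type argument to recover weak convergence in each $L_1(\Om_j;X)$, hence biting convergence. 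Local weak tightness (iiic) implies biting-relative-compactness by the Dunford--Pettis criterion applied on each set of finite measure (weakly compact sublevels of $h$ plus the uniform integral bound give uniform integrability of a subsequence on $\Om_j$). The main obstacle I anticipate is the bookkeeping in the diagonal/exhaustion arguments when passing between the prototype $L_1(\Om_j;X)$ and $L_\varphi$ — in particular making sure the same subsequence works simultaneously for all $j$ and for all test functionals $v$ in $\lin\GG$, and verifying that the limit object, a priori only $\AA_\mu$-measurable and almost separably valued on each $\Om_j$, patches together into a single element of $L_\varphi(\mu)$; this is where Lemma \ref{lem: strong mb completion} and the $\sigma$-finiteness hypothesis on $\mu$ are essential.
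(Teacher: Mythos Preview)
Your overall strategy matches the paper's: reduce to $L_1$ via the almost embedding, localize via Proposition~\ref{pr: localize L}, and pass between $\GG$ and $V_\infty$ via Propositions~\ref{pr: Vinf mon dns} and~\ref{pr: a dense convergence principle}. Two steps, however, have genuine gaps.

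\textbf{Necessity of (ii).} Your contradiction argument says that $\int_{E_{n_k}}\langle v,u_{n_k}\rangle\,d\mu$ should vanish ``since $u$ has absolutely continuous $L_1$-behaviour''. But both the test functional $v\chi_{E_{n_k}}$ and the element $u_{n_k}$ vary with $k$; weak convergence $u_{n_k}\to u$ only controls pairings against a \emph{fixed} element of $\lin\GG$. Absolute continuity of $\int_E\langle v,u\rangle\,d\mu$ in $E$ gives $\int_{E_{n_k}}\langle v,u\rangle\,d\mu\to 0$, not $\int_{E_{n_k}}\langle v,u_{n_k}\rangle\,d\mu\to 0$. The paper closes this by applying the Vitali--Hahn--Saks theorem: the signed measures $\lambda_n(E)=\int_E\langle v,u_n\rangle\,d\mu$ converge setwise (since $v\chi_E\in\lin\GG$), hence are uniformly absolutely continuous with respect to an equivalent finite measure, which forces $\lambda_n(E_n)\to 0$ and yields the contradiction.

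\textbf{The cycle (iii)$\leftrightarrow$(iiia)--(iiic).} Your appeal to ``Dunford--Pettis'' for (iiic)$\Rightarrow$(iii) is exactly the step that fails for infinite-dimensional $X$: uniform integrability alone is \emph{not} sufficient for weak relative compactness in $L_1(\mu;X)$---this is precisely why the extra conditions (iiia)--(iiic) are present. The paper instead uses that weak relative compactness is equivalent to weak relative \emph{convex} compactness \cite[§24.3(8)]{Koe}, and then combines local weak tightness with \cite[Thm.~8]{Sa} (Saadoune's Koml\'os-type result) to extract, from any decreasing convex hull, a weakly convergent sequence on each $\Om_j$; equi-integrability then patches these together. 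Likewise, (iii)$\Rightarrow$(iiia) is obtained via \cite[Thm.~2.1]{DRS} applied iteratively on the $\Om_j$ with convex combinations of convex combinations, not plain Mazur---although Mazur with a careful iterated-diagonal argument would also work, the point is that the bookkeeping you flag as an ``obstacle'' is not cosmetic here but the substance of the step.
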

	
	\paragraph{Remark.} The condition involving $V_\i\left(\mu; X^*\right)$ holds if $\lin \GG$ agrees with all of $V_{\varphi^*}(\mu)$, as can be seen by exhausting the $\sigma$-finite measure space $\Om$ with an isotonic sequence $F_j \in \AA_f$ such that $\Om = \bigcup_j F_j$ and $L_\i\left( F_j ; X \right) \to L_\varphi\left( F_j \right) \to L_1\left( F_j ; X \right)$ by Lemma \ref{lem: a emb} so that $V_\i \left( F_j ; X^* \right) \to V_{\varphi^*}\left( F_j \right)$ for all $j$.
	
	\begin{proof}
		Sufficiency. We may assume $\mu$ finite by Proposition \ref{pr: localize L} and the $\sigma$-finiteness of $\mu$. Let $u_n \in A$ be a sequence. We extract a subsequence (not relabeled) such that, for $\Om_j$ an isotonic sequence with $\Om = \bigcup_j \Om_j$, there holds convergence to a function $u$ weakly in $L_1\left( \Om_j ; X \right)$ for all $j \in \N$. We claim that $u \in L_\varphi(\mu)$. Indeed, if $B$ denotes the ball of $V_{\varphi^*}(\mu)$, then the support functional $S(u)$ of the ball $B$ in $V_{\varphi^*}(\mu)$ induces an equivalent norm on $L_\varphi(\mu)$ by Corollary \ref{cor: Lux-Orl equi}. We may restrict to taking this supremum over $B \cap \bigcup_j V_\i \left( \Om_j ; X^* \right)$ by Proposition \ref{pr: Vinf mon dns}, hence $S$ is lower semicontinuous w.r.t. the $\ss \left( X, X^* \right)$-biting convergence of $u_n$ so that $\| u \|_\varphi \le S(u) \le \liminf_n S(u_n) < \i$. As $u$ is strongly measurable by the remark below Definition \ref{def. biting}, we conclude $u \in L_\varphi(\mu)$. Consequently, by Proposition \ref{pr: localize L}, it suffices if for any given $\Om_j$ the sequence $u_n \chi_{\Om_j}$ converges to $u \chi_{\Om_j}$ in $\ss \left( L_\varphi, \lin \GG \right)$. Since we can obtain any $v \in \GG$ as a limit from below of elements belonging to $V_\i \left( \mu; X^* \right)$ by Proposition \ref{pr: Vinf mon dns}, the sufficiency has been proved due to Proposition \ref{pr: a dense convergence principle}. More precisely, for $v \in \GG$ pick $v_m \in V_\i\left(\mu ; X^* \right)$ with $v_m \uparrow v$. Then $\langle u, v \rangle = \lim_m \langle u, v_m \rangle = \lim_n \lim_m \langle u_n, v_m \rangle = \lim_m \lim_n \langle u_n, v_m \rangle = \lim_m \langle u_n, v \rangle$.
		
		Necessity. (i): it suffices to prove that any convergent sequence $u_n$ is bounded. The Banach Steinhaus theorem yields a bound for $u_n$ in the dual of $\lin \GG$, as this is a Banach space by assumption. Since $\GG$ is norming, we have obtained a bound in $L_\varphi(\mu)$. (ii): Again, it suffices to prove that $u_n$ is weakly equi-integrable on $\GG$. Otherwise, there were $v \in \GG$ and $E_n \in \AA$ with $\mu \left( \lim_n E_n \right) = 0$ such that
		\begin{equation} \label{eq: contrapos equi-int}
			\inf_{n \in \N} \left| \int_{E_n} \langle v, u_n \rangle \, d \mu \right| \ge \e > 0.
		\end{equation}
		Let $\lambda_n \left( E \right) = \int_E \langle u_n, v \rangle \, d \mu$, a signed finite measure for which the limit $\lambda \left( E \right) = \lim_n \lambda_n \left( E \right)$ exists for every $E \in \AA$ since $v \chi_E \in \lin \GG$ by assumption. We shall invoke the Vitali-Hahn-Saks theorem \cite[Thm. 2.53]{FoLe} to conclude that $\lim_n \lambda_n \left( E_n \right) = 0$, thus contradicting (\ref{eq: contrapos equi-int}). To justify this, let $\Sigma \in \AA_\sigma$ such that every $\langle u_n, v \rangle$ vanishes outside $\Sigma$. Then $\mu$ is $\sigma$-finite on $\Sigma$ and hence equivalent to some finite measure $\nu$ on $\Sigma$. Therefore, each $\lambda_n$ is absolutely continuous w.r.t. the finite measure $\nu$ and Vitali-Hahn-Saks has been justified.
		(iii): By Lemma \ref{lem: a emb}, there exists $F_n \in \AA_f$ an isotonic sequence with $\Om = \bigcup_n F_n$ such that, for $\Om_n = \Om \setminus F_n$, there holds $L_\i \left(\Om_n ; X \right) \to L_\varphi\left( \Om_n \right) \to L_1\left( \Om_n ; X \right)$, hence $V_\i \left( \Om_n ; X^* \right) \to V_{\varphi^*} \left( \Om_n \right)$. For $v \in V_\i\left( \Om_n ; X^* \right)$, pick $v_m \in \GG$ a sequence with $v_m \uparrow v$. By Proposition \ref{pr: a dense convergence principle}, we have $\langle u, v \rangle = \lim_m \langle u, v_m \rangle = \lim_m \lim_n \langle u_n, v_m \rangle = \lim_n \lim_m \langle u_n, v_m \rangle = \lim_n \langle u_n, v \rangle$, hence necessity obtains.
		
		Addendum: Assuming (i) and (ii), we shall prove that (iii) may be replaced by the other (iii)'s in a circular fashion. We start with (iii) to obtain (iiia). If $u_n \in A$ is a sequence converging locally in the $\ss\left( X^*, X \right)$-biting sense, then there exists an exhausting sequence $\Om_j \in \AA_f$, $\Om = \bigcup_j \Om_j$, such that $u_n \weak u$ in $L_1\left( \Om_j ; X \right)$. By \cite[Thm. 2.1]{DRS}, this yields $v_{k, j} \in \co \left\{ u_n \st n \ge k \right\}$ such that $v_{k, j} \chi_{\Om_j} (\om)$ is norm convergent for a.e. $\om \in \Om$. Inductively, we find $v_{k, j + 1} \in \co \left\{ v_{n, j} \st n \ge k \right\}$ such that $v_{k, j + 1} \chi_{\Om_{j + 1} } (\om)$ is norm convergent for a.e. $\om \in \Om$. Passing to the diagonal sequence, we find $v_{k, k} \in \co \left\{ u_n \st n \ge k \right\}$ such that $v_{k, k}(\om)$ is norm convergent for a.e. $\om \in \Om_j$ hence a.e. on $\Om$.
		
		Using (iiia) to obtain (iiib) is trivial.
		
		From (iiib) to (iiic): Let $u_n \in A$. There exists an isotonic sequence $F_j \in \AA_f$ such that $\Om = \bigcup_j F_j$ and $L_\i\left( F_j ; X \right) \to L_\varphi\left( F_j \right) \to L_1\left( F_j ; X \right)$ by Lemma \ref{lem: a emb}. Invoking Proposition \ref{pr: a unif integrab}, after possibly decreasing each $F_j$ and extracting a diagonal subsequence from $u_n$ (not relabeled), we may assume that $u_n$ is equi-integrable in each $L_1\left( F_j ; X \right)$. The sequence $u_n$ is weakly relatively compact in $L_1\left( F_j ; X \right)$ by \cite[Thm. 2.1]{DRS} so that we may pass to a diagonal sequence (not relabeled) that converges weakly in $L_1\left( F_j ; X \right)$ for all $j$. From this, we deduce by \cite[Thm. 8]{Sa} existence of $v_{k, j} \in \co \left\{ u_n \st n \ge k \right\}$ such that $v_{k, j}$ is weakly tight on $F_j$. Inductively, we find $v_{k, j + 1} \in \co \left\{ v_{n, j} \st n \ge k \right\}$ that is weakly tight on $F_{j + 1}$. Passing to the diagonal sequence, we have found $v_{k, k} \in \co \left\{ u_n \ge n \ge k \right\}$ that is locally weakly tight, i.e., (iiic) holds.
		
		From (iiic) to (iii): It suffices if for any sequence $u_n \in A$ we furnish a sequence of sets $\Om_j \in \AA$, $\Om = \bigcup_j \Om_j$, and a subsequence of $u_n$ that is weakly relatively compact in $L_1\left( \Om_j ; X \right)$ for $j \in \N$ since then $u_n$ will have a diagonal subsubsequence that converges locally in the $\ss\left( X^*, X \right)$-biting sense.
		Again by Lemma \ref{lem: a emb} and the $\sigma$-finiteness of $\mu$ we find $F_j \in \AA_f$ with $\Om = \bigcup_j F_j$ such that $u_n$ is bounded in $L_1\left( F_j ; X \right)$. Applying Proposition \ref{pr: a unif integrab}, we find a measurable set $\Om_j \subset F_j$ with $\lim_j \mu\left( F_j \setminus \Om_j \right) = 0$ and a subsequence of $u_n$ (not relabeled) that is equi-integrable in a fixed $L_1\left( \Om_j ; X \right)$. A standard diagonal argument allows then to extract a subsequence $u_n$ that is equi-integrable in $L_1\left( \Om_j ; X \right)$ for every $j \in \N$.
		
		Now, in order to conclude that $u_n$ is weakly relatively compact in $L_1\left( \Om_j ; X \right)$, it suffices by \cite[§24.3(8)]{Koe} if $u_n$ is weakly relatively convex compact, i.e., if we may extract from the decreasing convex hull $\co \left\{ u_n \st n \ge k \right\}$ of any subsequence of $u_n$ (not relabeled) a weakly convergent subsequence $v_k$. By (iiic), we find $v_k \in \co \left\{ u_n \st n \ge k \right\}$ that is locally weakly tight. Therefore, our proof is finished if we show that a bounded, equi-integrable and locally weakly tight sequence has a weakly convergent subsequence in $L_1 \left( \Om_j ; X \right)$. Pick
		$$
		\Om'_j \subset \Om_j, \quad \lim_j \mu \left( \Om_j \setminus \Om'_j \right) = 0
		$$
		such that each $v_k \chi_{\Om'_j}$ is weakly tight hence has a weakly convergent subsequence by \cite[Thm. 8]{Sa}. Extract a diagonal sequence (not relabeled) such that $v_k \chi_{\Om_j}$ converges weakly for all $j \in \N$. Then, as $\lim_j \sup_k \| v_k \chi_{\Om_j \setminus \Om'_j} \| = 0$ by equi-integrability, the sequence $v_k$ converges weakly, too. The proof is finished.
	\end{proof}
	
	\begin{corollary}
		Theorem \ref{thm: L compact} fully applies if either
		\begin{enumerate}
			
			\item $\GG$ is the unit ball of $V_{\varphi^*}(\mu)$ or; \label{it: f app 1}
			
			\item $X^*$ has the Radon-Nikodym property, $\GG$ is the unit ball of $L_{\varphi^*}(\mu)$, and $\varphi$ is dualizable i.a.e. for every element of $L_{\varphi^*}(\mu)$. \label{it: f app 2}
			
		\end{enumerate}
	\end{corollary}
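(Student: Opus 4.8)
The plan is to verify, in each case, the four structural hypotheses on $\GG$ that activate every clause of Theorem~\ref{thm: L compact}: that $\GG$ is a norming subset; that $\lin\GG$ is norm-closed (needed for the necessity of (i)); that $\lin\GG$ is stable under multiplication by indicators of measurable sets (needed for the necessity of (ii)); and that the sequential closure of $\lin\GG$ under convergence from below contains $V_\i\left(\mu;X^*\right)$ (needed for the necessity of (iii) and for the equivalence of (iii) with (iiia)--(iiic)). Once these four hold, the sufficiency direction and all necessity clauses apply word for word.

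For~\ref{it: f app 1}, $\GG$ is the unit ball of $V_{\varphi^*}(\mu)$, so $\lin\GG = V_{\varphi^*}(\mu)$. It is norming by Lemma~\ref{lem: equi norm}. It is norm-closed because $V_{\varphi^*}(\mu) = A_{\varphi^*}(\mu)$ by Theorem~\ref{thm: A = V}, and $A_{\varphi^*}(\mu)$ is a direct summand of $L_\varphi(\mu)^*$ with the norm decomposition~(\ref{eq: norm decom}) of Theorem~\ref{thm: xtnsn Giner}, hence closed and complete. Stability under multiplication by $\chi_A$ follows from the definition of a linear weak* integrand together with the estimate $\vvvert v\chi_A\vvvert^*_\varphi \le \vvvert v\vvvert^*_\varphi$. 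The condition on $V_\i\left(\mu;X^*\right)$ is precisely the Remark following Theorem~\ref{thm: L compact}: exhausting the $\sigma$-finite space $\Om$ by $F_j \in \AA_f$ with $L_\i\left(F_j;X\right) \to L_\varphi(F_j) \to L_1\left(F_j;X\right)$ from Lemma~\ref{lem: a emb} gives $V_\i\left(F_j;X^*\right) \to V_{\varphi^*}(F_j)$, so each $v \in V_\i\left(\mu;X^*\right)$ is the limit from below of $v\chi_{F_j} \in V_{\varphi^*}(\mu)$.

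For~\ref{it: f app 2}, the strategy is to reduce to~\ref{it: f app 1} by identifying the two candidate subsets. Since $\mu$ has no atom of infinite measure and $X^*$ enjoys the Radon--Nikodym property, Theorem~\ref{thm: A = V} gives that every element of $V_{\varphi^*}(\mu) = A_{\varphi^*}(\mu)$ is integrally strongly measurable; being $\sigma$-finitely concentrated it agrees a.e.\ with a strongly measurable function, and since it satisfies the reverse-Hölder bound~(\ref{eq: rev Höld}) against the almost decomposable space of simple functions, Lemma~\ref{lem: rev Hölder} places it in $L_{\varphi^*}(\mu)$. The reverse inclusion $L_{\varphi^*}(\mu)\subset V_{\varphi^*}(\mu)$ is Proposition~\ref{pr: iso A to V}. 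Thus $\lin\GG = L_{\varphi^*}(\mu) = V_{\varphi^*}(\mu) = A_{\varphi^*}(\mu)$ as subsets of $L_\varphi(\mu)^*$. Moreover, dualizability i.a.e.\ for every element of $L_{\varphi^*}(\mu)$ lets Corollary~\ref{cor: Lux-Orl equi} identify the Amemiya norm $\vvvert\cdot\vvvert_{\varphi^*}$ with the operator norm $\vvvert\cdot\vvvert^*_\varphi$ on this space, so the Luxemburg ball $\{\|v\|_{\varphi^*}\le 1\}$ and the ball $\{\vvvert v\vvvert^*_\varphi\le 1\}$ are comparable within a factor $2$. Hence $\GG$ is norming, its support functional being comparable to the $S$ of Lemma~\ref{lem: equi norm}, and the three remaining structural properties are inherited verbatim from~\ref{it: f app 1} through the equality of linear spans.

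The only genuine subtlety is the set-level identification $V_{\varphi^*}(\mu) = L_{\varphi^*}(\mu)$ in~\ref{it: f app 2}: one must invoke only the ``integrally strongly measurable'' conclusion of Theorem~\ref{thm: A = V}, which is available from the Radon--Nikodym property alone, rather than its stronger addendum~(\ref{eq: A = L}), which additionally requires the minimum of $\varphi^*$ at the origin to be strict off a $\sigma$-finite set, an assumption not made here; and one must note that the dualizability hypothesis is exactly what Corollary~\ref{cor: Lux-Orl equi} consumes, so that the two candidate balls for $\GG$ are comparable and ``norming'' transfers. Everything else is routine bookkeeping against the already-proven parts of Theorem~\ref{thm: L compact}.
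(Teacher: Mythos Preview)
Your proof is correct and follows the same architecture as the paper: verify the structural hypotheses on $\GG$ for case~\ref{it: f app 1}, then reduce case~\ref{it: f app 2} to~\ref{it: f app 1} via Theorem~\ref{thm: A = V}. The paper's proof is terser but identical in substance.

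One remark on your handling of~\ref{it: f app 2}: your caution about avoiding the addendum~(\ref{eq: A = L}) of Theorem~\ref{thm: A = V} is unnecessary. Theorem~\ref{thm: L compact} assumes $\mu$ is $\sigma$-finite, so the condition ``the minimum of $\varphi^*$ at the origin is strict outside a $\sigma$-finite set'' is vacuously satisfied (the empty set is $\sigma$-finite). Hence the paper's one-line reduction ``this is a particular case of the former by Theorem~\ref{thm: A = V}'' directly invokes~(\ref{eq: A = L}) to get $A_{\varphi^*}(\mu) = L_{\varphi^*}(\mu)$ as an isomorphism, and nothing further is needed. Your detour through integral strong measurability plus Lemma~\ref{lem: rev Hölder} is valid but superfluous here.
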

	
	\begin{proof}
		\ref{it: f app 1}: The set $\GG$ is norming by Lemma \ref{lem: equi norm}. The closedness under multiplication with indicators is obvious from the definition of $V_{\varphi^*}(\mu)$. To see that any $v \in V_\i\left( \mu ; X^* \right)$ admits a sequence $v_n \in V_{\varphi^*}(\mu)$ with $v_n \uparrow v$, use Lemma \ref{lem: a emb} to find an exhausting sequence $F_n \in \AA_f$ with $\Om = \bigcup_n F_n$ such that $v_n = v \chi_{F_n} \in V_{\varphi^*}(\mu)$.
		
		\ref{it: f app 2}: This is a particular case of the former by Theorem \ref{thm: A = V}.
	\end{proof}
	
	\subsubsection{Compactness in $V_{\varphi^*}(\mu)$}
	
	Our second main theorem on compactness primarily applies to the weak* topology of $V_{\varphi^*}(\mu)$ hence of $L_{\varphi^*}(\mu)$, should these spaces coincide. As before, we state our result in a more general abstract setting. We start by analysing the particular case of the space $V_1 \left( \mu ; X^* \right)$ to obtain a preparatory result for the general case, characterizing relatively sequentially compact sets if $\mu$ is a finite separable measure.
	
	\begin{lemma} \label{lem: V1 compact}
		Let $\mu$ be a finite measure. A subset $\FF \subset V_1 \left( \mu ; X^* \right)$ is relatively sequentially compact in $\ss \left( V_1 \left( \mu ; X^* \right) ; L_\i \left( \mu ; X \right) \right)$ if \emph{(i)} $\FF$ is norm bounded \emph{(ii)} $\FF$ is weak* equi-integrable on $L_\i \left( \mu ; X \right)$ \emph{(iii)} For any $E \in \AA$, the set $\left( \int_E u \, d \mu \right)_{u \in \FF}$ is relatively sequentially compact in $\ss \left( X^*, X \right)$ \emph{(vi)} Given a sequence in $\FF$, the initial $\sigma$-algebra $\AA'$ generated by the sequence via its weak* measurability such that the restriction of $\mu$ to $\AA'$ is separable. Conversely, the conditions \emph{(i)}, \emph{(ii)} and \emph{(iii)} are necessary.
	\end{lemma}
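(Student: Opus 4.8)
The plan is to run the same sufficiency--necessity scheme as in Theorem~\ref{thm: L compact}, with the r\^ole of the almost-embedding now taken by the identification $V_1(\mu;X^*) = A_1(\mu;X^*)$ of the absolutely continuous functionals on $L_\i(\mu;X)$, which is Proposition~\ref{pr: CaVa} combined with Theorem~\ref{thm: xtnsn Giner}.

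For sufficiency, fix a sequence $u_n\in\FF$. First I would use \emph{(iv)} to pass to the sub-$\sigma$-algebra $\AA'$ generated by the sequence via its weak* measurability: since $\langle u_n(\cdot),x\rangle$ is $\AA'$-measurable for every $x\in X$, one checks that $\int\langle u_n,v\rangle\,d\mu=\int\langle u_n,\mathbb E[v\mid\AA']\rangle\,d\mu$ for every $v\in L_\i(\mu;X)$ (verify it first for $v=x\chi_B$, then extend by linearity and bounded convergence), so the pairing only sees $\AA'$-measurable test functions and we may assume $\mu$ itself separable. Then I would pick a countable family $\{E_k\}_{k\in\N}$ dense in $(\AA_f,d_\mu)$ (possible by separability of $\mu$, cf.~\cite[Thm.~2.16]{FoLe}) and, using \emph{(iii)} and a diagonal argument, extract a subsequence (not relabeled) with $\int_{E_k}u_n\,d\mu\to\tau_k$ weakly* in $X^*$ for every $k$. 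By the norm bound \emph{(i)} and weak* equi-integrability \emph{(ii)} the scalar set functions $E\mapsto\langle\int_E u_n\,d\mu,x\rangle$ are uniformly absolutely continuous with respect to a finite measure equivalent to $\mu$, so the Vitali--Hahn--Saks theorem \cite[Thm.~2.53]{FoLe} upgrades this to $\tau(E):=\lim_n\int_E u_n\,d\mu$ existing weakly* in $X^*$ for \emph{every} $E\in\AA$, with $\tau$ of bounded variation and absolutely continuous with respect to $\mu$.

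Next I would build the weak* limit. For a simple $v=\sum_i x_i\chi_{E_i}$ one has $\int\langle u_n,v\rangle\,d\mu\to\sum_i\langle\tau(E_i),x_i\rangle=:\ell(v)$; for general $v\in L_\i(\mu;X)$, Egorov's theorem gives simple functions converging to $v$ uniformly off a set of arbitrarily small measure, and a $3\varepsilon$-estimate using \emph{(i)} and \emph{(ii)} shows $\bigl(\int\langle u_n,v\rangle\,d\mu\bigr)_n$ is Cauchy, so $\ell$ extends to a functional $\ell\in L_\i(\mu;X)^*$, absolutely continuous precisely by \emph{(ii)}. Proposition~\ref{pr: CaVa} then represents $\ell$ by some $u\in V_1(\mu;X^*)$, and $u_n\to u$ in $\ss\!\bigl(V_1(\mu;X^*);L_\i(\mu;X)\bigr)$ by construction. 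For necessity: \emph{(i)} follows from the uniform boundedness principle because $L_\i(\mu;X)$ is a Banach space norming for $V_1(\mu;X^*)$ (compute $\|\cdot\|_{V_1}$ as a supremum over $B_{L_\i}$ as in the proof of Proposition~\ref{pr: CaVa}); \emph{(iii)} follows by testing against $x\chi_E$ for $x\in X$, $E\in\AA$; and \emph{(ii)} is obtained verbatim as in the necessity part of Theorem~\ref{thm: L compact}, contradicting Vitali--Hahn--Saks on the finite signed measures $E\mapsto\int_E\langle u_n,v\rangle\,d\mu$ restricted to the $\sigma$-finite set off which $\langle u_n,v\rangle$ vanishes.

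The main obstacle is the reduction step forced by the weak* (rather than strong) measurability of the $u_n$: one must argue carefully that conditioning test functions onto the countably generated $\AA'$ preserves the pairing, and that condition \emph{(iii)}, which a priori only supplies for each fixed $E$ a subsequence making $\int_E u_n\,d\mu$ weakly* convergent in $X^*$, can be organised along the single countable dense family $\{E_k\}$ — this is exactly where separability of $\mu|_{\AA'}$ is indispensable. Once the reduction to a separable finite measure space is in place, the remaining steps are routine adaptations of Theorem~\ref{thm: L compact} and of the arguments in \cite{CaVa}.
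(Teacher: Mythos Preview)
Your proposal is correct and follows essentially the same route as the paper: reduce to the separable sub-$\sigma$-algebra $\AA'$ via conditional expectation, use density of a countable family in $(\AA_f,d_\mu)$ together with (iii) and a diagonal argument to get weak* convergence of $\int_E u_n\,d\mu$ first on the dense family and then on all of $\AA$, extend to all of $L_\i(\mu;X)$ by approximation, and identify the limiting functional as an element of $V_1(\mu;X^*)$. The only cosmetic differences are that the paper passes from the dense family to all $E$ directly by equi-integrability plus density (what you invoke is really this argument, not Vitali--Hahn--Saks proper), approximates general $v$ by countably valued functions uniformly rather than via Egorov, and uses Lemma~\ref{lem: closed subspaces} rather than Proposition~\ref{pr: CaVa} to place the limit in $V_1$; the necessity arguments are likewise equivalent.
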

	
	\paragraph{Remark.}
	
	\begin{enumerate}
		
		\item The integral of $u$ is to be understood in the sense of Pettis for the duality of $\ss\left( X^*, X \right)$, i.e., $\int_E u \, d \mu$ is defined as the unique element $x' \in X^*$ for which
		$$
		\langle x', x \rangle_{X^*, X} = \int_E \langle u(\om), x \rangle_{X^*, X} \, d \mu(\om) \quad \forall x \in X.
		$$
		
		\item Remember that a $\sigma$-finite measure on a separable $\sigma$-algebra is itself separable. The $\sigma$-algebra $\AA'$ generated by $u_n \in \FF$ via its weak* measurability is the smallest one for which any $u_n$ is weak* measurable, i.e., it is generated by the family of scalar functions $\om \mapsto \langle u_n(\om), x \rangle$ for $x \in X$. Equivalently, we may restrict to a dense subset of $X$, which shows in particular that $\AA'$ is separable if $X$ is. Another sufficient condition for separability of $\AA'$ is if $\FF$ consists of strongly measurable functions, as is implicit in the proof of Lemma \ref{lem: strong mb completion}. Finally, another condition is separability of the superalgebra $\AA$. Indeed, since the pseudometric in Definition \ref{def: sep mes} becomes a metric upon passing to a.e. equivalence classes of sets, this follows from the fact that subspaces of metric spaces retain separability.
		
	\end{enumerate}
	
	\begin{proof}
		Sufficiency: Let $u_n \in \FF$ with $\AA'$ the separable $\sigma$-algebra generated by the sequence. Conditional expectations are nonexpansive on $L_p\left( \mu ; X \right)$ for $1 \le p < \i$ by \cite[Ch. 5.1, Thm. 4]{DU}. We may use $\lim_{p \to \i} \| u \|_p = \| u \|_\i$ for $u \in L_\i\left( \mu ; X \right)$ to find that the conditional expectation operators $E\left( \cdot \st \AA' \right)$ are also nonexpansive on $L_\i\left( \mu ; X \right)$. Hence, if $v_n \to v$ a.e. and $\sup_n \| v_n \|_\i < \i$, then $E\left( v_n \st \AA' \right) \to E\left( v \st \AA' \right)$ a.e. and bounded in $L_\i \left( \mu ; X \right)$. Consequently, elementary properties of conditional expectations and approximation by simple functions yield the identity
		$$
		\int \langle u_n(\om), v(\om) \rangle \, d \mu(\om) = \int \langle u_n(\om), E\left( v \st \AA' \right)(\om) \rangle \, d \mu(\om) \quad \forall v \in L_\i\left( \mu ; X \right)
		$$
		so that we have reduced to considering pairings of $u_n$ with $\AA'$-measurable elements of $L_\i \left( \mu ; X \right)$. Let $\Om_j$ be a dense sequence for $\mu$. By a diagonal argument and the boundedness of $\FF$, we may pass to a subsequence (not relabeled) such that
		$$
		\exists \ss \left( X^*, X \right) \text{-} \lim_n \int_{\Om_j} u_n \, d \mu \quad \forall j \in \N.
		$$
		Consequently, this limit exists for any $\Om' \in \AA'$ by weak* equi-integrability of $\FF$ and since for $\Om'$ we find $\Om_{j_k}$ with $\lim_k \mu \left( \Om' \Delta \Om_{j_k} \right) = 0$. Hence, the pairing $\langle u_n, v \rangle$ with any simple function $v$ converges. By strong measurability, we may approximate a general $v \in L_\i \left( \mu ; X \right)$ with a sequence $v_i = \sum_{j = 1}^\i x_j \chi_{A_j}$ of countably valued functions converging uniformly to $v$, where $v_i$ in turn is approximated by its simple partial sums. Using the weak* equi-integrability of $u_n$ to invoke Proposition \ref{pr: a dense convergence principle}, we deduce that $\langle u_n, v \rangle$ converges for any $v \in L_\i \left( \mu ; X \right)$. In particular, there exists $u$ belonging to the dual space of $L_\i \left( \mu ; X \right)$ such that $w^*\text{-}\lim_n u_n = u$. Lemma \ref{lem: closed subspaces} then yields $u \in V_1\left(\mu ; X^* \right)$.
		
		Necessity: $\FF$ is bounded since $L_\i \left( \mu ; X \right)^* = V_1 \left( \mu ; X^* \right) \oplus S_1 \left( \mu ; X^* \right)$ by Theorem \ref{thm: xtnsn Giner}. As, for any $v \in L_\i \left( \mu; X \right)$, the set $\left( \langle u, v \rangle \right)_{u \in \FF}$ is weakly relatively compact in $L_1(\mu)$, the set $\FF$ is weak* equi-integrable on $L_\i \left( \mu ; X \right)$. The necessity of (iii) follows by considering functions $x \chi_E \in L_\i \left( \mu ; X \right)$ for $x \in X$ and $E \in \AA$.
	\end{proof}
	
	\begin{definition}[weak* biting convergence] \label{def. weak* biting}
		Given integrally weak* measurable functions $v_n \colon \Om \to X^*$, we say that $v_n$ converges locally to $v$ in the weak* or $\ss \left( X^*, X \right)$-biting sense if there exists an exhausting increasing sequence $\Om_j \in \AA$, $\Om = \bigcup_j \Om_j$, such that $v_n \to v$ in $\ss \left( V_1 \left( \Om_j ; X^* \right) ; L_\i \left( \Om_j ; X \right) \right)$ for every $j \in \N$.
	\end{definition}
	
	\paragraph{Remark.} Weak* biting convergence implies the limit to be integrally measurable in the weak* sense. Let $v_n \in V_1\left( \Om ; X^* \right)$ be a bounded sequence. We may extract from $v_n$ a subsequence (not relabeled) such that, for the integrable function $M_n$ defined in the proof of Proposition \ref{pr: Vinf mon dns}, there exists a sequence $\Om_n$ with $\lim_n \mu \left( \Om \setminus \Om_n \right) = 0$ such that $M_n \chi_{\Om \setminus \Om_n}$ is equi-integrable by Proposition \ref{pr: a unif integrab}, hence $\chi_{\Om \setminus \Om_n} v_n$ is weak* equi-integrable on $L_\i\left( \mu ; X \right)$. The measure in Proposition \ref{pr: Vinf mon dns} was $\sigma$-finite, however, we only used that $\mu$ has no atom of infinite measure to obtain the function $M_n$ there. Thus, by Lemma \ref{lem: V1 compact}, the question of convergence in the weak* biting sense reduces to obtaining (iii) and (iv) in Lemma \ref{lem: V1 compact}.The same is true for sequences bounded in $V_{\varphi*}(\mu)$ because this space embeds into $V_1\left(\mu; X^* \right)$ on an exhausting sequence by Lemma \ref{lem: a emb}.
	
	\begin{theorem} \label{thm: V compact}
		Let $\mu$ be $\sigma$-finite, $\FF \subset L_\varphi(\mu)$ a norming subset for $V_{\varphi^*}(\mu)$. Then $A$ is relatively sequentially compact in $\ss \left( V_{\varphi^*}, \lin \FF \right)$ if \emph{(i)} $A$ is norm bounded \emph{(ii)} $A$ is weak* equi-integrable on $\FF$ \emph{(iii)} $A$ is relatively sequentially compact in the $\ss \left( X^*, X \right)$-biting sense. Conversely, if $A$ relatively sequentially compact, then \emph{(i)} holds if $\lin \FF$ is closed, \emph{(ii)} holds if $\lin \FF$ is closed under multiplication with indicators of measurable sets \emph{(iii)} holds if in addition to the latter closedness, the sequential closure of $\lin \FF$ w.r.t. the convergence from below contains $L_\i\left( \mu ; X \right)$.
	\end{theorem}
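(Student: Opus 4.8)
The plan is to run the argument in close parallel with the proof of Theorem~\ref{thm: L compact}, with the roles of $L_\varphi(\mu)$ and $V_{\varphi^*}(\mu)$ interchanged, using the almost embeddings of Lemma~\ref{lem: a emb} in adjoint form. On a set $\Om_\e$ of almost full measure we have the continuous, densely-imaged chain $L_\i(\Om_\e;X)\hookrightarrow L_\varphi(\Om_\e)\hookrightarrow L_1(\Om_\e;X)$, so passing to function components of the adjoints yields continuous inclusions $V_\i(\Om_\e;X^*)\hookrightarrow V_{\varphi^*}(\Om_\e)\hookrightarrow V_1(\Om_\e;X^*)$; since $\mu$ is $\sigma$-finite, this reduces all questions to the prototype $V_1(\Om_j;X^*)$ with the topology $\ss\left(V_1(\Om_j;X^*);L_\i(\Om_j;X)\right)$, which is governed by Lemma~\ref{lem: V1 compact} and by the Remark following Definition~\ref{def. weak* biting}.

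\emph{Sufficiency.} Assume (i)--(iii). By Proposition~\ref{pr: localize V} together with the $\sigma$-finiteness of $\mu$ (and the hypotheses that $\FF$ is norming and $A$ is norm bounded and weak* equi-integrable on $\FF$), it suffices to prove relative sequential compactness of $A\chi_{A_k}$ on an isotonic exhaustion $A_k\in\AA_f$, so we may take $\mu$ finite. Given $v_n\in A$, condition (iii) lets us pass to a subsequence converging to some integrally weak* measurable $v$ in the $\ss\left(X^*,X\right)$-biting sense, say $v_n\to v$ in $\ss\left(V_1(\Om_j;X^*);L_\i(\Om_j;X)\right)$ for an isotonic exhaustion $\Om_j\in\AA_f$. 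First I would show $v\in V_{\varphi^*}(\mu)$: for an integrable simple $u$ with $\|u\|_\varphi\le 1$ one has $u\in\bigcup_j L_\i(\Om_j;X)$, so $\langle v,u\rangle=\lim_n\langle v_n,u\rangle$, while the norming property of $\FF$ and the norm bound on $A$ give $|\langle v_n,u\rangle|\le C\,\|v_n\|_\FF\,\|u\|_\varphi\le C_0<\i$ uniformly; Lemma~\ref{lem: rev Hölder}, applied with $L$ the simple functions of $L_\varphi^\sigma(\mu)$, then yields $v\in V_{\varphi^*}(\mu)$. It remains to check $\langle v_n,u\rangle\to\langle v,u\rangle$ for each $u\in\FF$: writing $u_j=u\chi_{\Om_j\cap\{\|u\|\le j\}}\in L_\i(\Om_j;X)$ with $u_j\uparrow u$, biting convergence gives $\langle v_n,u_j\rangle\to\langle v,u_j\rangle$, dominated convergence gives $\langle v_n,u\rangle=\lim_j\langle v_n,u_j\rangle$ and $\langle v,u_j\rangle\to\langle v,u\rangle$, and the interchange of the two limits is legitimate by Proposition~\ref{pr: a dense convergence principle} since $A$ is weak* equi-integrable at $u$ by (ii). Hence $v_n\to v$ in $\ss\left(V_{\varphi^*},\lin\FF\right)$.

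\emph{Necessity.} Suppose $A$ is relatively sequentially compact. If $\lin\FF$ is closed it is a Banach space, so the Banach--Steinhaus theorem bounds any convergent sequence in $(\lin\FF)^*$, and since $\FF$ is norming this is a bound in $V_{\varphi^*}(\mu)$, proving (i). For (ii), if weak* equi-integrability on $\FF$ failed there would be $u\in\FF$, an evanescent sequence $E_n\in\AA$ and $v_n\in A$ with $\left|\int_{E_n}\langle v_n,u\rangle\,d\mu\right|\ge\e$; the set functions $\lambda_n(E)=\int_E\langle v_n,u\rangle\,d\mu$ converge pointwise on $\AA$ because $u\chi_E\in\lin\FF$ when $\lin\FF$ is closed under multiplication by indicators, each $\lambda_n$ is absolutely continuous with respect to a finite measure equivalent to $\mu$ on the $\sigma$-finite set carrying all $\langle v_n,u\rangle$, and the Vitali--Hahn--Saks theorem \cite[Thm.~2.53]{FoLe} forces $\lambda_n(E_n)\to 0$, a contradiction. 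For (iii), Lemma~\ref{lem: a emb} furnishes $F_j\in\AA_f$ exhausting $\Om$ with $L_\i(F_j;X)\hookrightarrow L_\varphi(F_j)\hookrightarrow L_1(F_j;X)$, hence $V_{\varphi^*}(F_j)\hookrightarrow V_1(F_j;X^*)$; for $w\in L_\i(F_j;X)$ pick $u_m\in\lin\FF$ with $u_m\uparrow w$ (possible since the sequential closure of $\lin\FF$ under convergence from below contains $L_\i(\mu;X)$), and combine the convergence of $\langle v_n,u_m\rangle$ in $n$, dominated convergence in $m$, and Proposition~\ref{pr: a dense convergence principle} (via (ii)) to see that $\langle v_n,w\rangle$ converges for every $w\in L_\i(F_j;X)$ and every $j$, i.e.\ $v_n$ converges in the $\ss\left(X^*,X\right)$-biting sense.

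The main obstacle I anticipate is the sufficiency direction, and within it the step that the biting limit $v$ genuinely lies in $V_{\varphi^*}(\mu)$: one must extract the a priori bound from the norming property and the norm bound alone, avoiding circularity by testing only against simple (boundedly supported) functions before invoking Lemma~\ref{lem: rev Hölder}, and then dovetail the almost embeddings of Lemma~\ref{lem: a emb} with the localization of Proposition~\ref{pr: localize V} so that the whole matter is reduced cleanly to the finite-measure prototype $V_1(\Om_j;X^*)$ of Lemma~\ref{lem: V1 compact}; once this bookkeeping is in place the limit-interchange arguments are routine $\e/2$-arguments via Proposition~\ref{pr: a dense convergence principle}.
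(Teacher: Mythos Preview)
Your proposal is correct and follows essentially the same route as the paper: localize via Proposition~\ref{pr: localize V}, use the biting limit in $V_1(\Om_j;X^*)$, extend the convergence first to simple test functions and then to $u\in\FF$ via convergence from below combined with Proposition~\ref{pr: a dense convergence principle}, with the necessity direction handled identically through Banach--Steinhaus and Vitali--Hahn--Saks. The only cosmetic difference is that the paper interposes $E_\varphi(\Om_j)$ as an intermediate (simple functions $\to E_\varphi$ by norm density, then $E_\varphi\to\FF$ via Lemma~\ref{lem: decomp ss abs cont dense}), whereas you go directly from $L_\i(\Om_j;X)$ to $\FF$ via $u_j=u\chi_{\Om_j\cap\{\|u\|\le j\}}$ and make the membership $v\in V_{\varphi^*}(\mu)$ explicit through Lemma~\ref{lem: rev Hölder}; both accomplish the same thing.

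Two small points of bookkeeping to tighten: your claim that an integrable simple $u$ lies in $\bigcup_j L_\i(\Om_j;X)$ is not literally true (its support need not eventually sit inside a single $\Om_j$), so already for simple $u$ you must run the same $u\chi_{\Om_j}\uparrow u$ limit-interchange you later give for general $u\in\FF$; and the bound $|\langle v_n,u\rangle|\le C\|v_n\|_\FF\|u\|_\varphi$ is not the right inequality when $u\notin\FF$---just use $|\langle v_n,u\rangle|\le\vvvert v_n\vvvert^*_\varphi\|u\|_\varphi$ directly from the norm bound~(i). Neither affects the validity of the argument.
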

	
	\paragraph{Remark.} The condition involving $L_\i\left(\mu; X\right)$ is satisfied if $\mu$ is $\sigma$-finite and $\lin \FF$ almost decomposable.
	
	\begin{proof}
		Sufficiency: Let $v_n \in A$ be a sequence. We may assume $\mu$ to be finite since then the $\sigma$-finite case will follow by Proposition \ref{pr: localize V}. Let $\Om_j$ be an exhausting sequence for which $v_n$ is sequentially relatively compact in $\ss \left( V_1 \left( \Om_j ; X^* \right) ; L_\i \left( \Om_j ; X \right) \right)$ for $j \in \N$. Invoking again Proposition \ref{pr: localize V}, we may reduce to extracting from $v_n$ a subsequence converging in $V_{\varphi^*}\left( \Om_j \right)$ for any given $j$. We shall extract several subsequences, none of which we relabel. Select $v_n$ with
		$$
		\lim_n v_n = v \text{ in } \ss \left( V_1 \left( \Om_j ; X^* \right) ; L_\i \left( \Om_j ; X \right) \right).
		$$
		Then for $u \in E_\varphi\left( \Om_j \right)$ and simple functions $u_m$ with $\lim_m \| u - u_m \|_\varphi = 0$ there holds
		\begin{align*}
			\left| \langle v, u \rangle - \langle v_n, u \rangle \right|
			& \le \left| \langle v, u - u_m \rangle \right| + \left| \langle v - v_n, u_m \rangle \right| + \left| \langle v_n, u - u_m \rangle \right| \\
			& \le C \| u - u_m \|_\varphi + \left| \langle v - v_n, u_m \rangle \right|
		\end{align*}
		by the boundedness of $A$ so that sending $n \to \i$ and then $m \to \i$ yields
		$$
		\lim_n \langle v_n, u \rangle = \langle v, u \rangle \quad \forall u \in E_\varphi\left(\Om_j\right).
		$$
		Proposition \ref{pr: a dense convergence principle} together with Lemma \ref{lem: decomp ss abs cont dense} then yields the same convergence for $u \in \FF$ since $A$ is weak* equi-integrable on $\FF$.
		
		Necessity: (i) Given a sequence $v_n \in A$, the Banach-Steinhaus theorem yields a bound on $v_n$ in the dual space of the Banach space $\lin \FF$ so that we obtain a bound in $V_{\varphi^*}(\mu)$ because $\FF$ is norming.
		(ii) If $A$ were not weak* equi-integrable on $u \in \FF$, then we would find sequences $v_n \in A$ and $E_n \in \AA$ with $\mu \left( \lim_n E_n \right) = 0$ and
		\begin{equation} \label{eq: contrapos equi-int 2}
			\inf_{n \in \N} \left| \int_{E_n} \langle v_n, u \rangle \, d \mu \right| \ge \e > 0.
		\end{equation}
		Let $\lambda_n \left( E \right) = \int_E \langle u_n, v \rangle \, d \mu$, a signed finite measure for which the limit $\nu \left( E \right) = \lim_n \nu_n \left( E \right)$ exists for every $E \in \AA$ since $v \chi_E \in \lin \FF$ by the closedness under multiplication with indicators. We shall invoke the Vitali-Hahn-Saks theorem \cite[Thm. 2.53]{FoLe} in order to conclude that $\lim_n \nu_n \left( E_n \right) = 0$ thus contradicting (\ref{eq: contrapos equi-int 2}). To justify this, note that $\mu$ is $\sigma$-finite hence equivalent to some finite measure $\nu$ on $\Sigma$. Therefore, each $\lambda_n$ is absolutely continuous w.r.t. the finite measure $\nu$ and Vitali-Hahn-Saks has been justified.
		(iii) By Lemma \ref{lem: a emb}, there exists $F_n \in \AA_f$ an isotonic sequence with $\Om = \bigcup_n F_n$ such that $L_\i\left( F_n ; X \right) \to L_\varphi\left( F_n \right)$ hence the adjoint mapping of this embedding is itself an embedding $V_{\varphi^*} \left( F_n \right) \to V_1 \left( F_n ; X^* \right)$ as $L_\i\left( F_n ; X \right)$ is dense from below in $L_\varphi\left( F_n \right)$ by Lemma \ref{lem: decomp ss abs cont dense}. For $v \in L_\i\left( F_n ; X \right)$, pick $v_m \in \FF$ a sequence with $v_m \uparrow v$. By Proposition \ref{pr: a dense convergence principle}, we have
		\begin{equation*}
			\langle u, v \rangle = \lim_m \langle u, v_m \rangle = \lim_m \lim_n \langle u_n, v_m \rangle = \lim_n \lim_m \langle u_n, v_m \rangle = \lim_n \langle u_n, v \rangle. \qedhere
		\end{equation*}
	\end{proof}
	
	\begin{corollary}
		Theorem \ref{thm: V compact} fully applies if $\FF$ is the unit ball of $L_\varphi(\mu)$.
	\end{corollary}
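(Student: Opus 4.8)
The plan is to verify that $\FF = B_{L_\varphi(\mu)}$, the closed unit ball of $L_\varphi(\mu)$, satisfies every hypothesis invoked in both the sufficiency and the necessity parts of Theorem \ref{thm: V compact}. Since $B_{L_\varphi(\mu)}$ is absolutely convex and spans, we have $\lin \FF = L_\varphi(\mu)$, so the four conditions to be checked are: $\FF$ is a norming subset for $V_{\varphi^*}(\mu)$; $\lin \FF$ is closed; $\lin \FF$ is closed under multiplication with indicators of measurable sets; and the sequential closure of $\lin \FF$ with respect to convergence from below contains $L_\i\left( \mu ; X \right)$.

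First I would observe that $\FF$ is norming essentially by construction: the support functional of $B_{L_\varphi(\mu)}$ is the map $v \mapsto \sup_{\| u \|_\varphi \le 1} \int \langle v(\om), u(\om) \rangle \, d \mu(\om)$, which is precisely the operator (dual Amemiya) norm with which $V_{\varphi^*}(\mu)$ is equipped; alternatively, this is also covered by Lemma \ref{lem: equi norm}. Next, $\lin \FF = L_\varphi(\mu)$ is closed because it is complete by Theorem \ref{thm: L complete}. Closedness under multiplication with indicators is immediate: for $u \in L_\varphi(\mu)$, $A \in \AA$ and $\alpha > 0$ one has $I_\varphi(\alpha^{-1} u \chi_A) \le I_\varphi(\alpha^{-1} u)$ because $\varphi \ge 0$, whence $\| u \chi_A \|_\varphi \le \| u \|_\varphi$ and $u \chi_A \in L_\varphi(\mu)$.

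It then remains to show that the sequential closure of $L_\varphi(\mu)$ under convergence from below contains $L_\i\left( \mu ; X \right)$. This is exactly the content of the remark following Theorem \ref{thm: V compact}, which applies since $\mu$ is $\sigma$-finite and $L_\varphi(\mu)$ is almost decomposable by Corollary \ref{cor: L a decomp}. Concretely, given $v \in L_\i\left( \mu ; X \right)$, I would write $\Om = \bigcup_m G_m$ with an isotonic sequence $G_m \in \AA_f$ and apply Lemma \ref{lem: a emb} on each $G_m$ to produce an isotonic exhausting sequence $F_n \in \AA_f$ along which $L_\i\left( F_n ; X \right) \hookrightarrow L_\varphi(F_n)$ via identical inclusion; then $v \chi_{F_n} \in L_\varphi(\mu)$ and $v \chi_{F_n} \uparrow v$. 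Having verified all four conditions, Theorem \ref{thm: V compact} applies to $\FF = B_{L_\varphi(\mu)}$ in both directions, and the reverse implications pick up the stated hypotheses because $\lin \FF$ is the whole closed space, is stable under multiplication by indicators, and has $L_\i\left(\mu;X\right)$ in its sequential closure from below.

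The proof is otherwise a direct unravelling of definitions; the only point that is not entirely routine is the last step, where the inclusion $L_\i\left( \mu ; X \right) \subset L_\varphi(\mu)$ may fail globally — the integrand $\varphi(\om, \cdot)$ need not be finite at all of $X$ — so the almost-embedding Lemma \ref{lem: a emb} is genuinely needed to manufacture the approximating sequence from below. I expect this to be the main (and essentially only) subtlety.
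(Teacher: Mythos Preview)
Your proof is correct and follows essentially the same route as the paper's, just with more detail spelled out (the paper's version is a three-line sketch). One small remark: your alternative citation of Lemma \ref{lem: equi norm} for the norming property is misdirected, since that lemma shows $V_{\varphi^*}(\mu)$ norms $L_\varphi(\mu)$ rather than the converse; but your primary ``by construction'' argument is the right one, since the norm on $V_{\varphi^*}(\mu)$ is by definition the operator norm over $B_{L_\varphi(\mu)}$.
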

	
	\begin{proof}
		The space $L_\varphi(\mu)$ being predual to $V_{\varphi^*}(\mu) \oplus S_{\varphi^*}(\mu)$, it is norming. It is closed under under multiplication with indicators. As $L_\varphi(\mu)$ is almost decomposable by Corollary \ref{cor: L a decomp}, its sequential closure from below contains $L_\i\left( \mu ; X \right)$ if $\mu$ is $\sigma$-finite.
	\end{proof}
	
	\begin{corollary}
		If $\varphi$ is real-valued so that $C_\varphi(\mu)^* = V_{\varphi^*}(\mu)$, then Theorem \ref{thm: V compact} characterizes weak* convergent sequences in $V_{\varphi^*}(\mu)$. The weak* equi-integrability is always satisfied in this case. If moreover $X^*$ has the Radon-Nikodym property, then the ball of $V_{\varphi^*}(\mu)$ is relatively sequentially compact.
	\end{corollary}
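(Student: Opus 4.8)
The plan is to instantiate Theorem~\ref{thm: V compact} with $\FF$ the unit ball of $C_\varphi(\mu)$, so that $\lin \FF = C_\varphi(\mu)$, and to check that this choice satisfies every hypothesis appearing in both halves of that theorem. Since $\varphi$ is real-valued, Corollary~\ref{cor: C*} identifies $C_\varphi(\mu)^*$ isometrically with $V_{\varphi^*}(\mu)$, so the unit ball of $C_\varphi(\mu)$ is a norming subset of $V_{\varphi^*}(\mu)$; the subspace $C_\varphi(\mu)$ is norm closed by Lemma~\ref{lem: C closed subspace} and visibly closed under multiplication by indicators of measurable sets, because $\|u\chi_A\chi_{E_n}\|_\varphi\le\|u\chi_{E_n}\|_\varphi$; and $C_\varphi(\mu)$ is almost decomposable by Lemma~\ref{lem: C a decomp}, whence, as in the remark following Theorem~\ref{thm: V compact} and using that $\mu$ is $\sigma$-finite, its sequential closure from below contains $L_\i(\mu;X)$. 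With all hypotheses verified, both directions of Theorem~\ref{thm: V compact} apply and characterise the relatively sequentially compact subsets of $V_{\varphi^*}(\mu)$ for the topology $\ss(V_{\varphi^*}(\mu),C_\varphi(\mu))$; but this is exactly the weak* topology of $C_\varphi(\mu)^*=V_{\varphi^*}(\mu)$, and uniqueness of limits in a Hausdorff topology promotes this into a characterisation of weak* convergent sequences.

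Next I would record that condition (ii) of Theorem~\ref{thm: V compact} — weak* equi-integrability on $C_\varphi(\mu)$ — is automatic for every norm-bounded $A\subset V_{\varphi^*}(\mu)$: if $\sup_{v\in A}\vvvert v\vvvert^*_\varphi\le c$ and $\mu(\lim_n E_n)=0$, then for any $u\in C_\varphi(\mu)$ one has $\sup_{v\in A}\bigl|\int_{E_n}\langle v,u\rangle\,d\mu\bigr|\le c\,\|u\chi_{E_n}\|_\varphi\to 0$ by the absolute continuity of the Luxemburg norm of $u$. This is precisely the assertion that the whole space $V_{\varphi^*}(\mu)$ is weak* equi-integrable on $C_\varphi(\mu)$, and it removes condition (ii) from the list of things one ever has to check here.

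For the final assertion I would apply the sufficiency half of Theorem~\ref{thm: V compact} to $A=B_{V_{\varphi^*}(\mu)}$. Condition (i) is trivial and condition (ii) has just been established, so the matter reduces to condition (iii), namely relative sequential compactness of the ball in the $\ss(X^*,X)$-biting sense. Here I would invoke the remark following Definition~\ref{def. weak* biting}: a norm-bounded sequence in $V_{\varphi^*}(\mu)$ embeds, on an exhausting sequence of sets supplied by Lemma~\ref{lem: a emb}, into $V_1(\cdot;X^*)$ and, after passing to a subsequence, becomes weak* equi-integrable on $L_\i(\cdot;X)$ through the integrable functions $M_n$ from the proof of Proposition~\ref{pr: Vinf mon dns}; consequently the biting question reduces to conditions (iii) and (iv) of Lemma~\ref{lem: V1 compact}. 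Because $X^*$ has the Radon--Nikodym property, Theorem~\ref{thm: A = V} makes the elements of $V_{\varphi^*}(\mu)$ integrally strongly measurable, which yields (iv), a sequence of strongly measurable functions generating a separable $\sigma$-algebra (cf.\ the remark after Lemma~\ref{lem: V1 compact}); and the Radon--Nikodym property of $X^*$ is equivalent to $X$ being an Asplund space, so the Pettis integrals $\int_E v_n\,d\mu$ form a bounded subset of $X^*$ admitting a weak*-convergent subsequence, which gives (iii).

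The step I expect to be the main obstacle is the verification of condition (iii) of Lemma~\ref{lem: V1 compact} under the Radon--Nikodym hypothesis: one must argue carefully that the relevant Pettis integrals lie in a part of $X^*$ on which a weak*-sequential compactness argument genuinely goes through, i.e.\ that the Asplundness of $X$ can be exploited for the particular bounded sets $\bigl(\int_E v_n\,d\mu\bigr)_n$ arising from integrally strongly measurable integrands. Everything else is routine bookkeeping resting on the identifications $C_\varphi(\mu)^*=V_{\varphi^*}(\mu)$, the almost-embedding Lemma~\ref{lem: a emb}, and the decomposition theorems already at our disposal.
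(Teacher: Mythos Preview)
Your proposal is correct and follows essentially the same route as the paper: instantiate Theorem~\ref{thm: V compact} with $\FF$ the unit ball of $C_\varphi(\mu)$, verify the side conditions via Corollary~\ref{cor: C*}, Lemma~\ref{lem: C closed subspace}, Lemma~\ref{lem: C a decomp}, and the remark after Theorem~\ref{thm: V compact}; then, for the Radon--Nikodym addendum, use Theorem~\ref{thm: A = V} to obtain strong measurability (hence condition~(iv) of Lemma~\ref{lem: V1 compact}) and invoke weak* sequential compactness of $B_{X^*}$ under the Asplund/RNP hypothesis for condition~(iii). The obstacle you anticipate is resolved in the paper simply by citing that $B_{X^*}$ is weak* sequentially compact whenever $X^*$ has the Radon--Nikodym property (equivalently, $X$ is Asplund), via \cite[XIII, Thm.~6]{Die} and \cite[VII.2, Cor.~8]{DU}.
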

	
	\begin{proof}
		We have $C_\varphi(\mu)^* = V_{\varphi^*}(\mu)$ by Corollary \ref{cor: C*}.	Theorem \ref{thm: V compact} fully applies since the sequential closure of $C_\varphi(\mu)$ from below contains $L_\i\left( \mu ; X \right)$ as $C_\varphi(\mu)$ is almost decomposable by Lemma \ref{lem: C a decomp} if $\mu$ is $\sigma$-finite. The closedness under multiplication with measurable indicators is obvious for $C_\varphi(\mu)$. Clearly, weak* equi-integrability w.r.t. $C_\varphi(\mu)$ is a vacuous assumption.
		
		If $X^*$ has the Radon-Nikodym property, then elements of $V_{\varphi^*}(\mu)$ are strongly measurable by Theorem \ref{thm: A = V} if $\mu$ is $\sigma$-finite as in Theorem \ref{thm: V compact}. Let $F_j \in \AA_f$ be a sequence with $\Om = \bigcup_j F_j$ such that
		$$
		V_{\varphi^*}\left( F_j \right) \to V_1\left( F_j ; X^* \right)
		$$
		according to Lemma \ref{lem: a emb}. By strong measurability, any sequence in $V_{\varphi^*}\left( F_j \right)$ generates a countable $\sigma$-algebra $\AA'$, hence $\mu$ is separable on $\AA'$. Now, by the remark below Lemma \ref{lem: V1 compact}, we have reduced the matter to checking (iii) in Lemma \ref{lem: V1 compact} for $E \in \AA'$. This obtains since the ball $B_{X^*}$ is sequentially weak* compact by \cite[XIII, Thm. 6]{Die} upon remembering that $X^*$ has the Radon-Nikodym property iff separable subspaces of $X$ have separable duals by \cite[VII.2, Cor. 8]{DU}.
	\end{proof}
	
	\newpage
	
	\appendix
	
	\chapter{Appendix} \label{prt: Appendix}
	
	\section{Multimaps}
	
	We compile here auxiliary results about (Effros) measurable multimaps. Throughout this section, the metric space $M$ is separable unless stated otherwise and $\Gamma \colon \Om \to \mathcal{P} \left( M \right)$ is a multimap.
	
	\begin{lemma} \label{lem: retain measurability}
		Let the multimap $\Gamma$ be closed and measurable.
		\begin{enumerate}
			\item The graph $\graph \Gamma$ is $\AA \otimes \BB \left( M \right)$-measurable. \label{en. it. graph measurable}
			\item The multimap $\p \Gamma \colon \Om \to \mathcal{P} \left( M \right)$ is measurable. \label{en. it. boundary measurable}
		\end{enumerate} 
	\end{lemma}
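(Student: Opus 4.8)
The plan is to reduce both claims to the elementary observation that, for a closed-valued multimap, measurability is encoded in its distance functions $\om\mapsto\dist(x,\Gamma(\om))$, and then to pass between graph measurability and Effros measurability using the projection/selection machinery already invoked elsewhere in the paper.

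For (\ref{en. it. graph measurable}) I would consider $\psi\colon\Om\times M\to[0,\infty]$, $\psi(\om,x)=\dist(x,\Gamma(\om))$. For fixed $x$ the map $\om\mapsto\psi(\om,x)$ is $\AA$-measurable since $\{\om:\psi(\om,x)<r\}=\{\om:\Gamma(\om)\cap B_r(x)\ne\emptyset\}=\Gamma^-(B_r(x))\in\AA$ for all $r>0$; for fixed $\om$ the map $x\mapsto\psi(\om,x)$ is $1$-Lipschitz, hence continuous (trivially so when $\Gamma(\om)=\emptyset$, where $\psi\equiv+\infty$). Thus $\psi$ is a Carathéodory function on $\Om\times M$, and because $M$ is separable it is $\AA\otimes\BB(M)$-measurable — approximate the $x$-variable by its nearest point in a countable dense subset of $M$ to realise $\psi$ as a pointwise limit of countably valued, product-measurable functions. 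Since $\Gamma(\om)$ is closed, $\graph\Gamma=\{\psi=0\}$, which therefore lies in $\AA\otimes\BB(M)$.

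For (\ref{en. it. boundary measurable}), note $\partial\Gamma(\om)=\Gamma(\om)\cap\cl(M\setminus\Gamma(\om))$, a closed set, so a point $x$ belongs to $\partial\Gamma(\om)$ iff $\psi(\om,x)=0=\dist(x,M\setminus\Gamma(\om))$. The first step is to check that $(\om,x)\mapsto\dist(x,M\setminus\Gamma(\om))$ is again Carathéodory: continuity in $x$ is clear, and for fixed $y$ the map $\om\mapsto\dist(y,M\setminus\Gamma(\om))$ is $\AA$-measurable because, fixing a countable dense $D=\{x_k\}\subset M$ and using that $D$ is dense in the open set $M\setminus\Gamma(\om)$,
\begin{equation*}
\dist\big(y,M\setminus\Gamma(\om)\big)=\inf_{k\in\N}h_k(\om),\qquad h_k(\om)=\begin{cases}d(y,x_k)&x_k\notin\Gamma(\om),\\ +\infty&x_k\in\Gamma(\om),\end{cases}
\end{equation*}
where each set $\{\om:x_k\in\Gamma(\om)\}=\{\om:\psi(\om,x_k)=0\}$ is $\AA$-measurable by (\ref{en. it. graph measurable}). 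Running the separability argument of the previous step once more, $(\om,x)\mapsto\dist(x,M\setminus\Gamma(\om))$ is $\AA\otimes\BB(M)$-measurable, hence $\graph\partial\Gamma=\{(\om,x):\max\{\psi(\om,x),\dist(x,M\setminus\Gamma(\om))\}=0\}\in\AA\otimes\BB(M)$. Moreover the displayed formula shows that all distance functions of the closed-valued multimap $\om\mapsto\cl(M\setminus\Gamma(\om))$ are $\AA$-measurable, so this multimap is measurable; then $\partial\Gamma=\Gamma\cap\cl(M\setminus\Gamma(\cdot))$ is an intersection of two measurable closed-valued multimaps and therefore measurable by the standard intersection theorem for measurable multimaps (cf.\ \cite{CaVa}). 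Alternatively, one may feed the $\AA_\mu\otimes\BB(M)$-measurable graph of the closed-valued $\partial\Gamma$ into the projection/selection theorem (cf.\ \cite[Thm.\ III.22, III.23]{CaVa}) to obtain $\AA_\mu$-measurability of $\partial\Gamma$ directly.

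The main obstacle is this final upgrade from graph measurability to Effros measurability and the hypotheses it quietly requires: the distance-function characterisation of measurability and the intersection theorem are cleanest for Polish (or Suslin) range spaces, and the projection route only yields measurability with respect to the $\mu$-completion. So the step needing care is to confirm that, in the stated generality of a merely separable metric $M$, one of these tools genuinely applies — e.g.\ by isometrically embedding $M$ into its completion $\widehat M$, checking the Effros Borel structures are compatible, and verifying that the boundary computed in $M$ equals $\Gamma(\cdot)\cap\cl_{\widehat M}(M\setminus\Gamma(\cdot))$ — accepting, if necessary, the conclusion for $\AA_\mu$ in place of $\AA$.
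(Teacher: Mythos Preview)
For part~\ref{en. it. graph measurable} your distance-function argument is correct and is precisely the content of \cite[Thm.~14.8]{RocW} that the paper cites; the two proofs coincide.

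For part~\ref{en. it. boundary measurable} your approach is genuinely different. You establish $\graph\partial\Gamma\in\AA\otimes\BB(M)$ via a second Carath\'eodory function and then try to lift to Effros measurability either by the intersection theorem for measurable multimaps or by projection. Your closing paragraph is honest about the cost: both tools want a Polish (or Suslin) target, so in the stated generality of a merely separable $M$ you end up with $\AA_\mu$-measurability rather than $\AA$-measurability, and the completion workaround you sketch (comparing $\partial_M\Gamma$ with a boundary taken in $\widehat M$) is not entirely straightforward. The paper sidesteps this completely. It observes that for open $O\subset M$ one has the disjoint decomposition
\[
\Gamma^-(O)\;=\;\{\om:O\subset\Gamma(\om)\}\ \dot\cup\ (\partial\Gamma)^-(O),
\]
so $(\partial\Gamma)^-(O)$ is a \emph{difference} of two sets already in $\AA$: the left side by Effros measurability of $\Gamma$, and the first set on the right because $V(O)=\{A\in\CL(M):O\subset A\}=\bigcap_{x\in O}\{A:d_x(A)=0\}$ is Wijsman-closed and, by the Hess theorem \cite[Thm.~6.5.14]{B}, the map $\om\mapsto\Gamma(\om)$ is $\tau_W$-Borel measurable into $\CL(M)$, whence $\Gamma^{-1}[V(O)]\in\AA$. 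This delivers $\AA$-measurability of $\partial\Gamma$ directly, with no intersection theorem and no passage to the completion. What your route buys in exchange are the auxiliary facts $\graph\partial\Gamma\in\AA\otimes\BB(M)$ and measurability of $\om\mapsto\cl(M\setminus\Gamma(\om))$, which the paper does not record; but for the stated conclusion the paper's argument is both shorter and strictly stronger.
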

	
	\begin{proof}
		\ref{en. it. graph measurable}: The proof for $M = \R^d$ is contained in \cite[Thm. 14.8]{RocW} and may be adapted by replacing $\Q^d$ with a dense sequence in $M$. \ref{en. it. boundary measurable}: Let $O \subset M$ be open. The set
		$$
		V \left( O \right)
		= \left\{ A \in \CL \left( M \right) \st O \subset A \right\}
		= \bigcap_{x \in O} \left\{ A \in \CL \left( M \right) \st d_x (A) = 0 \right\}
		$$
		is closed in the Wijsman topology $\tau_W$. By the Hess theorem \cite[Thm. 6.5.14]{B}, the multimap $\Gamma$ is $\tau_W$-measurable as a single-valued mapping to $\CL \left( M \right)$. Hence
		\begin{align*}
			\Gamma^- \left( O \right)
			& = \left\{ \om \st O \subset \Gamma \left( \om \right) \right\} \dot{\cup} \left\{ \om \st \p \Gamma \left( \om \right) \cap O \ne \emptyset \right\} \\
			& = \Gamma^{-1} \left[ V \left( O \right) \right] \dot{\cup} \left\{ \om \st \p \Gamma \left( \om \right) \cap O \ne \emptyset \right\} \\
			& = \Gamma^{-1} \left[ V \left( O \right) \right] \dot{\cup} \left( \p \Gamma \right)^{-} \left( O \right)
		\end{align*}
		so that $\left( \p \Gamma \right)^{-} \left( O \right)$ is measurable as a difference of measurable sets.
	\end{proof}
	
	\begin{corollary} \label{cor: open graph measurable}
		If $\Gamma$ is open and measurable, then $\graph \Gamma \in \AA \otimes \BB \left( M \right)$.
	\end{corollary}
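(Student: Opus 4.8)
The plan is to reduce the assertion to Lemma \ref{lem: retain measurability}\ref{en. it. graph measurable} by passing to the complementary multimap. Since $\Gamma$ is open-valued, the map $\Gamma^c\colon\Om\to\mathcal P(M)$, $\Gamma^c(\om)=M\setminus\Gamma(\om)$, is closed-valued, and clearly $\graph\Gamma^c=(\Om\times M)\setminus\graph\Gamma$. Hence it suffices to show $\graph\Gamma^c\in\AA\otimes\BB(M)$, and by Lemma \ref{lem: retain measurability}\ref{en. it. graph measurable} this holds as soon as $\Gamma^c$ is (Effros) measurable; the final complementation $\graph\Gamma=(\Om\times M)\setminus\graph\Gamma^c$ then gives the claim.

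So the one substantive point is the implication: $\Gamma$ open-valued and measurable $\Longrightarrow$ $\Gamma^c$ measurable. Unwinding definitions, $(\Gamma^c)^-(O)=\{\om : \Gamma^c(\om)\cap O\neq\emptyset\}=\Om\setminus\{\om : O\subseteq\Gamma(\om)\}$, and since $M$ is separable I would fix a countable base $\{B_k\}$ and write an arbitrary open $O$ as $\bigcup\{B_k : B_k\subseteq O\}$, so that $\{\om : O\subseteq\Gamma(\om)\}=\bigcap\{\,\{\om : B_k\subseteq\Gamma(\om)\} : B_k\subseteq O\,\}$ is a countable intersection. It therefore remains to place each set $\{\om : B_k\subseteq\Gamma(\om)\}$ in $\AA$, which I would extract from the Effros measurability of the open-valued $\Gamma$ together with the standard interplay between lower measurability, Castaing-type representations and graph measurability for multimaps in a separable metric space that underlies this appendix section. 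Granting this, Lemma \ref{lem: retain measurability}\ref{en. it. graph measurable} applied to the closed-valued $\Gamma^c$ yields $\graph\Gamma^c\in\AA\otimes\BB(M)$, and the complementation finishes the proof.

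The main obstacle is exactly this measurability of $\Gamma^c$: Effros measurability of the open-valued $\Gamma$ directly supplies the measurability of the sets ``$\Gamma(\om)$ meets $B_k$'', whereas what is needed is the measurability of the sets ``$B_k$ is contained in $\Gamma(\om)$'', and bridging that gap is where separability of $M$ and the equivalence theorems for measurable multifunctions enter. Everything surrounding it — the reduction to the complement, the reduction of a general open set to a countable base, and the final complementation in $\Om\times M$ — is purely formal.
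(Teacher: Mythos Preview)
Your reduction to the complementary multimap $\Gamma^c$ is a different route from the paper's, which instead writes $\Gamma=\cl\Gamma\setminus\partial\cl\Gamma$ and invokes both parts of Lemma~\ref{lem: retain measurability}. You correctly isolate the crux---the measurability of $\{\omega:B_k\subseteq\Gamma(\omega)\}$---but you do not prove it; the appeal to ``equivalence theorems for measurable multifunctions'' is a placeholder, not an argument. And in fact this gap cannot be closed: Effros measurability of an open-valued $\Gamma$ does \emph{not} imply measurability of such containment sets.

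Here is a counterexample that breaks your argument and, incidentally, the corollary itself. Take any $(\Omega,\AA)$ admitting a non-measurable $E\subset\Omega$, let $M=\R$, and set $\Gamma(\omega)=(0,2)$ for $\omega\in E$ and $\Gamma(\omega)=(0,1)\cup(1,2)$ for $\omega\notin E$. For every open $O\subset\R$ one has $\Gamma^-(O)\in\{\emptyset,\Omega\}$: if $O$ meets $(0,2)$ then, being open, it already meets $(0,1)\cup(1,2)$, so $\Gamma^-(O)=\Omega$; otherwise $\Gamma^-(O)=\emptyset$. Hence $\Gamma$ is open-valued and Effros measurable. Yet the $\omega$-section of $\graph\Gamma$ at $x=1$ is exactly $E\notin\AA$, so $\graph\Gamma\notin\AA\otimes\BB(M)$. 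In particular $\{\omega:(1-\varepsilon,1+\varepsilon)\subseteq\Gamma(\omega)\}=E$ is non-measurable, so your key step fails. The same example shows that the paper's identity $\Gamma=\cl\Gamma\setminus\partial\cl\Gamma$ is false in general (it gives $\interior\cl\Gamma$, which equals $\Gamma$ only for regular open values; here $\cl\Gamma(\omega)\equiv[0,2]$ and $\partial\cl\Gamma(\omega)\equiv\{0,2\}$, so the paper's argument would output $\Omega\times(0,2)$). Thus both your approach and the paper's need an additional hypothesis---for instance, that each $\Gamma(\omega)$ be regular open---under which the paper's short argument via $\partial\cl\Gamma$ goes through cleanly.
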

	
	\begin{proof}
		The multimaps $\cl \Gamma$ and $\p \cl \Gamma$ are measurable with measurable graphs by Lemma \ref{lem: retain measurability}. As $\Gamma$ is open, we have $\Gamma = \cl \Gamma \setminus \p \cl \Gamma$ so that $\graph \Gamma = \graph \cl \Gamma \setminus \graph \p \cl \Gamma$ belongs to $\AA \otimes \BB \left( M \right)$.
	\end{proof}
	
	\begin{lemma} \label{lem: solid multimaps simpler measurability}
		Let $\cl \Gamma = \cl \interior \Gamma$. Then $\Gamma$ is measurable iff $\Gamma^- \left( \left\{ x \right\} \right)$ is measurable for every $x \in M$.
	\end{lemma}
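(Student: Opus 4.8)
The plan is to test Effros measurability of $\Gamma$ against a fixed countable dense set $D\subseteq M$ (available since $M$ is separable), using the hypothesis $\cl\Gamma=\cl\interior\Gamma$ as the bridge between the open balls $B_{1/n}(x)$ and the singletons $\{x\}$.

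\textbf{The ``if'' direction.} Assume $\Gamma^-(\{x\})$ is measurable for every $x\in M$. I would prove that for every open $O\subseteq M$ one has the identity $\Gamma^-(O)=\bigcup_{x\in D\cap O}\Gamma^-(\{x\})$; since $D\cap O$ is countable, this exhibits $\Gamma^-(O)$ as a countable union of measurable sets, hence $\Gamma$ is measurable. The inclusion $\supseteq$ is trivial. For $\subseteq$, take $\om\in\Gamma^-(O)$ and pick $y\in\Gamma(\om)\cap O$. Then $y\in\cl\Gamma(\om)=\cl\interior\Gamma(\om)$, so the open neighbourhood $O$ of $y$ meets $\interior\Gamma(\om)$; hence $O\cap\interior\Gamma(\om)$ is nonempty and open, so it contains some $x\in D$, and then $x\in\Gamma(\om)\cap O$, i.e. $\om\in\Gamma^-(\{x\})$ with $x\in D\cap O$. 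This is the substantive content of the lemma and it closes cleanly; the role of solidity is precisely to guarantee that an arbitrary hit of $O$ by $\Gamma(\om)$ can be upgraded to a hit by a \emph{rational} point of $\interior\Gamma(\om)$.

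\textbf{The ``only if'' direction.} Assume $\Gamma$ is Effros measurable. For every open $O$ one has $(\cl\Gamma)^-(O)=\Gamma^-(O)$, and, running the argument of the previous paragraph in reverse, also $(\interior\Gamma)^-(O)=\Gamma^-(O)$ (a closure meets an open set iff the set does, and solidity identifies $\cl\Gamma(\om)\cap O\neq\emptyset$ with $\interior\Gamma(\om)\cap O\neq\emptyset$). Thus the closed-valued multimap $\cl\Gamma$ and the open-valued multimap $\interior\Gamma$ are both measurable, so Lemma \ref{lem: retain measurability} and Corollary \ref{cor: open graph measurable} give $\graph\cl\Gamma,\graph\interior\Gamma\in\AA\otimes\BB(M)$. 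Slicing at a fixed $x$ then makes $\{\om:x\in\interior\Gamma(\om)\}$ and $\{\om:x\in\cl\Gamma(\om)\}$ measurable, and we have the sandwich $\{\om:x\in\interior\Gamma(\om)\}\subseteq\Gamma^-(\{x\})\subseteq\{\om:x\in\cl\Gamma(\om)\}$.

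\textbf{Main obstacle.} The remaining task — collapsing this sandwich to pin down $\Gamma^-(\{x\})$ — is the delicate point: for a value $\Gamma(\om)$ that is neither closed nor open, the point $x$ may sit on the topological boundary of $\Gamma(\om)$, and solidity by itself does not decide whether $x\in\Gamma(\om)$. The intended remedy is to write $\Gamma^-(\{x\})=\{\om:x\in\cl\Gamma(\om)\}\setminus E_x$, where $E_x=\{\om:x\in\cl\Gamma(\om)\setminus\Gamma(\om)\}$ is the set on which $x$ is a non-attained boundary point, and to argue — using solidity, which forces every such boundary point to be an interior-point limit, together with the standing hypotheses on $(\Om,\AA,\mu)$ — that $\Gamma$ coincides with $\cl\Gamma$ off a measurable set, so that $E_x$ is measurable. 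Once this identification is in place, $\Gamma^-(\{x\})$ is measurable for every $x$, completing the equivalence.
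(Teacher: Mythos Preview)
Your ``if'' direction is correct and is essentially the paper's argument written out more explicitly: both route through $\interior\Gamma(\om)$ to locate a point of the countable dense set inside $\Gamma(\om)\cap O$, giving $\Gamma^-(O)=\bigcup_{x\in D\cap O}\Gamma^-(\{x\})$.

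For the ``only if'' direction, the paper's entire proof is the one line ``pre-images of compact sets under measurable multimaps are measurable''---a fact that is standard for \emph{closed-valued} multimaps but not for merely solid ones. Your sandwich argument cannot be closed either, because the implication is actually false in the stated generality. Take $\Om=[0,1]$, $M=\R$, $E\subset[0,1]$ nonmeasurable, and set $\Gamma(\om)=(0,1)$ for $\om\in E$ and $\Gamma(\om)=(0,1]$ otherwise. Then $\cl\Gamma(\om)=\cl\interior\Gamma(\om)=[0,1]$ for every $\om$, and for any open $O$ the set $\Gamma^-(O)$ is either $\emptyset$ or $\Om$ (an open set containing $1$ necessarily meets $(0,1)$), so $\Gamma$ is Effros measurable; yet $\Gamma^-(\{1\})=E^c$ is nonmeasurable. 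Hence your proposed remedy---making $E_x$ measurable from ``standing hypotheses on $(\Om,\AA,\mu)$''---cannot succeed without an extra assumption such as closed values. You were right to flag this as the main obstacle; the paper only ever invokes the $\impliedby$ direction (in Lemma~\ref{lem: upper semi normal}), so nothing downstream is affected.
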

	
	\begin{proof}
		$\implies$: Pre-images of compact sets under measurable multimaps are measurable. $\impliedby$: Let $O \subset M$ be open and $\left\{ x_n \right\} \subset O$ be a dense sequence. From $\cl \Gamma = \cl \interior \Gamma$ follows
		\begin{align*}
			\Gamma^- \left( O \right)
			= \left\{ \om \st \Gamma \left( \om \right) \cap O \ne \emptyset \right\}
			& = \left\{ \om \st \interior \Gamma \left( \om \right) \cap O \ne \emptyset \right\} \\
			& = \bigcup_{n \ge 1} \left\{ \om \st \interior \Gamma \left( \om \right) \cap \left\{ x_n \right\} \ne \emptyset \right\}.
		\end{align*}
		As measurability of the multimaps $\interior \Gamma$ and $\cl \interior \Gamma$ is equivalent, the last set is measurable and our claim obtains.
	\end{proof}
	
	\begin{lemma} \label{lem: strong mb completion}
		Let $M$ be an arbitrary metric space and $\AA_\mu$ the completion of $\AA$ w.r.t. $\mu$. A function $u \colon \Om \to M$ is $\AA_\mu$-$\BB \left( M \right)$-measurable and almost separably valued iff there exists a strongly $\AA$-$\BB \left( M \right)$-measurable function $v \colon \Om \to M$ with $u = v$ a.e.
	\end{lemma}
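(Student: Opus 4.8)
The plan is to prove the two implications separately, the forward direction (building the strongly $\AA$-measurable representative) being the substantial one.

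For the ``if'' direction, suppose $v \colon \Om \to M$ is strongly $\AA$-$\BB(M)$-measurable with $u = v$ a.e. By definition of strong measurability, $v$ is $\AA$-$\BB(M)$-measurable (hence $\AA_\mu$-measurable) and $\cl v(\Om) \in \SS(M)$; cf. \cite[Prop. 1.9]{CTV}. Choose $N \in \AA$ with $\mu(N) = 0$ and $u = v$ on $\Om \setminus N$. Then for open $O \subset M$ the set $u^{-1}(O) = \bigl( v^{-1}(O) \cap (\Om \setminus N) \bigr) \cup \bigl( u^{-1}(O) \cap N \bigr)$ lies in $\AA_\mu$, since the first term is in $\AA$ and the second is a subset of the $\AA$-null set $N$; and $u(\Om \setminus N) \subset v(\Om)$ is separable, so $u$ is almost separably valued. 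This direction needs no deeper input than the Pettis characterization of strong measurability already recorded in the excerpt.

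For the ``only if'' direction, assume $u$ is $\AA_\mu$-$\BB(M)$-measurable and almost separably valued. First I would pass to a set on which the range is genuinely separable: by hypothesis there is a $\mu$-null set, which by definition of the completion we may take to be a set $N_0 \in \AA$ with $\mu(N_0) = 0$, such that $u(\Om \setminus N_0) \subset S$ for some $S \in \SS(M)$; replacing $S$ by its closure we may take $S$ to be a separable closed (hence Borel) subspace of $M$. Regarding $u$ on $\Om \setminus N_0$ as a map into the separable metric space $S$, it is $\AA_\mu$-measurable with separable range, hence strongly $\AA_\mu$-measurable by \cite[Prop. 1.9]{CTV}, so there is a sequence of countably valued $\AA_\mu$-measurable functions $u_n \colon \Om \to S$ with $u_n \to u$ pointwise on $\Om \setminus N_0$. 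Next I would transfer each $u_n$ to an honest $\AA$-measurable function: writing $u_n = \sum_i x_i^{(n)} \chi_{B_i^{(n)}}$ with the $B_i^{(n)} \in \AA_\mu$ a countable measurable partition, use that every $B \in \AA_\mu$ differs from some $A \in \AA$ by a subset of an $\AA$-null set; disjointifying the resulting $A_i^{(n)} \in \AA$ yields a countably valued $\AA$-measurable $v_n$ and an $\AA$-null set $N_n$ with $v_n = u_n$ on $\Om \setminus N_n$. Set $N = N_0 \cup \bigcup_n N_n \in \AA$, so $\mu(N) = 0$, and fix $x_0 \in S$. The functions $\tilde v_n = v_n \chi_{\Om \setminus N} + x_0 \chi_N$ are countably valued and $\AA$-measurable and converge pointwise on all of $\Om$ to the function $v$ equal to $u$ on $\Om \setminus N$ and to $x_0$ on $N$. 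Hence $v$ is $\AA$-$\BB(M)$-measurable with range contained in $S \cup \{x_0\} \in \SS(M)$, so $v$ is strongly $\AA$-measurable by \cite[Prop. 1.9]{CTV}, and $v = u$ a.e. by construction.

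The main obstacle is organisational rather than conceptual: because $M$ is allowed to be non-separable, none of the standard completion manipulations apply until the range has been cut down using the ``almost separably valued'' hypothesis, and one must keep track that the simple approximants, which initially live only in $\AA_\mu$, are replaced by genuinely $\AA$-measurable ones before taking the pointwise limit — otherwise the candidate representative would be $\AA_\mu$-measurable only, defeating the purpose.
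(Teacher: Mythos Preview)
Your proof is correct. The ``if'' direction matches the paper's argument essentially verbatim. For the ``only if'' direction you take a genuinely different (though equally standard) route from the paper: you first approximate the $\AA_\mu$-measurable $u$ by countably valued $\AA_\mu$-measurable functions via the Pettis characterisation, then repair each approximant to be $\AA$-measurable by adjusting its countably many level sets, and finally pass to the pointwise limit. The paper instead works directly with a countable base $B_n$ of open balls for the separable range $w(\Om)$: it corrects the countably many preimages $w^{-1}(B_n) \in \AA_\mu$ to lie in $\AA$ modulo a single $\AA$-null set $N$, redefines the function to a constant on $N$, and checks that the resulting $v$ is $\AA$-measurable because preimages of the generating sets $B_n$ are in $\AA$. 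The paper's approach is a bit more economical (one countable family of sets to fix rather than a doubly indexed one, and no need to invoke convergence of approximants), while yours makes the role of the strong-measurability characterisation more explicit and is perhaps closer in spirit to how one would actually construct simple approximants in practice. Either way the substance is the same: exploit separability of the range to reduce to countably many $\AA_\mu$-sets, then use the structure of the completion to replace them by $\AA$-sets at the cost of a single null set.
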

	
	\begin{proof}
		$\implies$: Modify $u$ on a null set to obtain a separably valued $\AA_\mu$-measurable function $w$ and take a sequence $B_n$ of balls generating the topology of $w \left(\Om\right)$. Express $w^{-1} \left( B_n \right)$ as a disjoint union of two sets $\Om_n \in \AA$ and $M_n$ such that $M_n \subset N_n$ for a null set $N_n \in \AA$. For $N = \bigcup_{n \ge 1} N_n$, define $w$ to agree with $w$ on $\Om \setminus N$ and assign any constant value on $N$. Any $v^{-1} \left( B_n \right)$ is $\AA$-measurable, whence $v$ is strongly $\AA$-measurable and $u = v$ a.e.
		
		$\impliedby$: Let $N \in \AA$ be negligible with $u = v$ on $\Om \setminus N$. If $B \in \BB \left( M \right)$, then
		$$
		u^{-1} \left( B \right) = \LaTeXunderbrace{ \left( v^{-1} \left( B \right) \cap N^c \right) }_{\in \AA} \cup \LaTeXunderbrace{ \left( u^{-1} \left( B \right) \cap N \right) }_{\subset N \text{ with } \mu \left( N \right) = 0} \in \AA_\mu.
		$$
		Hence, $u$ is $\AA_\mu$-$\BB \left( M \right)$-measurable and $u \left( \Om \setminus N \right) = v \left( \Om \setminus N \right)$ is separable.
	\end{proof}
	
	\section{Integrands}
	
	We compile here auxiliary results about measurability of integrands. Since none of the standard references \cite{B, CaVa, He, Roc} contain these statements directly in the required form, we give proofs.
	
	\begin{definition}[Infimal measurability] \label{def: infimal measurability}
		An integrand $f \colon \Om \times \TT \to \left[ - \i, \i \right]$ is called infimally measurable iff, for every open set $O \subset \TT$ and every open interval $I \subset \R$, the pre-image set
		$$
		S^-_f \left( O \times I \right) = \left\{ \om \in \Om \st \epi f_\om \cap \left( O \times I \right) \ne \emptyset \right\}
		$$
		is measurable.
	\end{definition}
	
	\begin{lemma} \label{lem: equivalence infimal measurability}
		For an integrand $f \colon \Om \times \TT \to \left[ - \i, \i \right]$, the following are equivalent:
		\begin{enumerate}
			\item $f$ is infimally measurable;
			\item For $O \subset \TT$ open, the functions $\inf_O f_\om$ are measurable;
			\item For $\alpha \in \left[ -\i, \i \right]$, the strict sublevel multimaps
			$$
			L_\alpha \colon \Om \to \mathcal{P} \left( \TT \right) \colon \om \mapsto \lev_{< \alpha} f_\om = \left\{ x \st f_\om (x) < \alpha \right\}
			$$
			are Effros measurable.
		\end{enumerate}
	\end{lemma}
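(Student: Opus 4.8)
The plan is to show that all three conditions are, after unwinding the definitions, equivalent to the single statement that the extended-real function $\om\mapsto\inf_{x\in O}f_\om(x)$ is $\AA$-measurable for every open $O\subset\TT$. The technical heart is the elementary identity: for an open set $O\subset\TT$ and a nonempty open interval $I\subset\R$ with right endpoint $b:=\sup I\in(-\i,+\i]$, there holds
\[
S^-_f(O\times I)=\left\{\om\in\Om\st \inf_{x\in O}f_\om(x)<b\right\}.
\]
First I would prove this identity directly from the definition of the epigraph: if $(x,t)\in\epi f_\om\cap(O\times I)$ then $x\in O$ and $f_\om(x)\le t<b$, so $\inf_O f_\om<b$; conversely, if $\inf_O f_\om<b$ pick $x\in O$ with $f_\om(x)<b$, and since $I$ is a nonempty open interval with supremum $b$ the set $[f_\om(x),\i)\cap I$ is nonempty, which furnishes a point of $\epi f_\om\cap(O\times I)$. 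The point worth emphasising is that the left endpoint of $I$ is irrelevant, so as $I$ ranges over all open intervals of $\R$ the sets $S^-_f(O\times I)$ are exactly the strict sublevel sets $\{\inf_O f_\om<b\}$ at thresholds $b\in(-\i,+\i]$.

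Next I would recall the standard characterisation that an extended-real-valued function $g\colon\Om\to[-\i,+\i]$ is $\AA$-measurable if and only if $\{g<b\}\in\AA$ for every $b\in\Q$, noting also $\{g<+\i\}=\bigcup_{b\in\N}\{g<b\}$ and $\{g<-\i\}=\emptyset$. Combining this with the displayed identity yields the equivalence of conditions 1 and 2: infimal measurability asserts precisely that $\{\inf_O f_\om<b\}$ is measurable for every open $O$ and every $b$ realised as the supremum of an open interval, and by the characterisation this is the same as saying each $\om\mapsto\inf_O f_\om$ is measurable; rational thresholds suffice, so no uncountable family of conditions is hidden here.

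For the equivalence with condition 3 I would use the definition of Effros measurability: $L_\alpha$ is Effros measurable if and only if $L_\alpha^-(O)=\{\om\st L_\alpha(\om)\cap O\neq\emptyset\}$ is measurable for every open $O\subset\TT$. Now $L_\alpha(\om)\cap O\neq\emptyset$ holds exactly when there is $x\in O$ with $f_\om(x)<\alpha$, i.e. when $\inf_O f_\om<\alpha$. Hence requiring $L_\alpha$ to be Effros measurable for every $\alpha\in[-\i,+\i]$ is once more the assertion that $\{\inf_O f_\om<\alpha\}$ is measurable for all open $O$ and all thresholds $\alpha$ (the cases $\alpha=\pm\i$ disposed of as above), which is condition 2. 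This closes the cycle $1\Leftrightarrow2\Leftrightarrow3$.

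I do not expect a genuine obstacle in this argument; it is essentially bookkeeping with extended reals. The only place demanding care is the treatment of infinite thresholds and of the interval endpoints in the definition of infimal measurability, making sure that "every open interval $I\subset\R$" produces neither more nor fewer conditions than "the function $\inf_O f_\om$ is measurable", and that reduction to rational thresholds is legitimate so that the three conditions are honestly equivalent term by term.
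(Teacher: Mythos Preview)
Your proposal is correct and follows exactly the same route as the paper: the paper's proof is the single displayed chain $S^-_f(O\times(\alpha,\beta))=\{\inf_O f_\om<\beta\}=L^-_\beta(O)$, and your write-up is a careful unpacking of precisely these identities together with the bookkeeping on extended-real thresholds. There is nothing to add or correct.
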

	
	\begin{proof}
		For an open subset $O \subset \TT$ and an interval $I = (\alpha, \beta)$, there holds
		\begin{equation*}
			S^-_f\left( O \times I \right)
			= \left\{ \epi f_\om \cap O \times I \ne \emptyset \right\}
			= \left\{ \inf_O f_\om < \beta \right\}
			= L^-_\beta(O). \qedhere
		\end{equation*}
	\end{proof}
	
	\begin{lemma} \label{lem: inf meas normality}
		If the integrand $f \colon \Om \times \TT \to \left[ - \i, \i \right]$ is pre-normal, then $f$ is infimally measurable. If $\TT$ is second countable, the converse is true as well.
	\end{lemma}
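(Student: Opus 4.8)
The plan is to read off both implications essentially from the definitions, the only genuine ingredient being that second countability of $\TT$ makes $\TT \times \R$ second countable, so that Effros measurability can be tested on a countable base. Throughout I regard the epigraphical multimap $S_f \colon \om \mapsto \epi f_\om$ as taking values in $\PP(\TT \times \R)$.

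\textbf{Forward implication (no hypothesis on $\TT$).} Suppose $f$ is pre-normal, i.e.\ $S_f$ is Effros measurable, so $S_f^-(U) = \{\om \st \epi f_\om \cap U \ne \emptyset\}$ is measurable for every open $U \subseteq \TT \times \R$. If $O \subseteq \TT$ is open and $I \subseteq \R$ is an open interval, then $O \times I$ is open in $\TT \times \R$, hence $S_f^-(O \times I)$ is measurable. By Definition~\ref{def: infimal measurability} this says exactly that $f$ is infimally measurable.

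\textbf{Converse (assuming $\TT$ second countable).} Fix a countable base $\{B_k \st k \in \N\}$ of $\TT$. Then the rectangles $B_k \times (p, q)$ with $k \in \N$ and $p < q$ in $\Q$ form a countable base of the product topology on $\TT \times \R$. Let $U \subseteq \TT \times \R$ be open; write $U = \bigcup_{j \in \N} B_{k_j} \times (p_j, q_j)$ as a countable union of such rectangles. Since $\Gamma^-$ commutes with unions for any multimap $\Gamma$, we obtain $S_f^-(U) = \bigcup_{j \in \N} S_f^-\!\big(B_{k_j} \times (p_j, q_j)\big)$, and every term on the right is measurable because $B_{k_j}$ is open, $(p_j, q_j)$ is an open interval, and $f$ is infimally measurable. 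Hence $S_f^-(U)$ is measurable, so $S_f$ is Effros measurable and $f$ is pre-normal. (Alternatively, one may route the converse through Lemma~\ref{lem: equivalence infimal measurability}, identifying $\epi f_\om \cap (O \times I)$ with the strict sublevel set $\lev_{< q} (f_\om)_{O}$, but the direct base argument is the shortest.)

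\textbf{Main obstacle.} There is no real obstacle here; the single point requiring care is the reduction to a countable base in the converse. Effros measurability cannot be checked on a base of open sets for an arbitrary topological space — one needs every open set to be a \emph{countable} union of base elements — and it is precisely the second countability of $\TT$ (equivalently of $\TT \times \R$) that supplies this, which is why the hypothesis appears only in that direction.
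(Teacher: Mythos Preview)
Your proof is correct and follows essentially the same route as the paper's: the forward direction is immediate from the definitions, and the converse writes an arbitrary open set in $\TT \times \R$ as a countable union of base rectangles $O_{n_k} \times (\alpha_k, \beta_k)$ and uses that $S_f^-$ distributes over unions. Your write-up simply supplies more detail than the paper's terse version.
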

	
	\begin{proof}
		$\implies$: Recall Definition \ref{def: infimal measurability}. $\impliedby$: Let $O_n \subset \TT$ be a base sequence of open sets. By definition of the product topology, every open set $U \subset \TT \times \R$ may be written as $U = \bigcup_{k \in \N} O_{n_k} \times \left( \alpha_k, \beta_k \right)$ with $\alpha_k, \beta_k \in \Q$, whence there follows measurability of the set
		\begin{equation*}
			S^-_f \left( U \right) = \bigcup_{k \in \N} S^-_f \left( O_{n_k} \times \left( \alpha_k, \beta_k \right) \right). \qedhere
		\end{equation*}
	\end{proof}
	
	We call a map $F \colon \TT \to \left[ - \i, \i \right]$ upper semicontinuous if its hypograph is closed. When $F$ is $\left[ -\i, \i \right)$-valued, this coincides with other known characterizations of upper semicontinuity such as open sublevel sets and the $\limsup$ inequality.
	
	\begin{lemma} \label{lem: normal sums}
		If $f \colon \Om \times \TT \to \left[ -\i, \i \right]$ is pre-normal and $g \colon \TT \to \R$ is upper semicontinuous, then $h_\om (x) = f_\om (x) + g \left( x \right)$ is infimally measurable.
	\end{lemma}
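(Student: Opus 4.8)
The plan is to reduce everything to the infimal measurability of $f$, which is already at hand: since $f$ is pre-normal, Lemma \ref{lem: inf meas normality} gives that $f$ is infimally measurable, and then Lemma \ref{lem: equivalence infimal measurability} gives that $\om \mapsto \inf_{x \in U} f_\om(x)$ is measurable for every open $U \subset \TT$. By Definition \ref{def: infimal measurability} it then suffices to show that for each open $O \subset \TT$ and each open interval $(\alpha,\beta) \subset \R$ the set $S_h^-\big(O \times (\alpha,\beta)\big)$ is measurable. Here I would first record that $S_h^-\big(O\times(\alpha,\beta)\big) = \{\om : \inf_{x\in O} h_\om(x) < \beta\}$: the epigraph $\epi h_\om$ meets $O\times(\alpha,\beta)$ exactly when $h_\om$ takes some value $<\beta$ on $O$. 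This uses only $\alpha<\beta$ (so the value of $\alpha$ is irrelevant) and remains correct when $h_\om$ attains $-\infty$ somewhere on $O$.

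The key step is a rational interpolation between $f_\om$ and $g$. I would prove the identity
$$
\Big\{\om : \inf_{x\in O}\big(f_\om(x)+g(x)\big) < \beta\Big\} = \bigcup_{q \in \Q} \Big\{\om : \inf_{x \in O_q} f_\om(x) < q\Big\}, \qquad O_q := O \cap \{x : g(x) < \beta - q\}.
$$
The inclusion ``$\supseteq$'' is immediate: if $f_\om(x) < q$ and $g(x) < \beta - q$ for some $x\in O$, then $f_\om(x)+g(x) < \beta$. For ``$\subseteq$'', given $x\in O$ with $f_\om(x)+g(x)<\beta$, the finiteness of $g(x)$ — this is precisely where the hypothesis $g\colon\TT\to\R$ enters — forces $f_\om(x) < \beta - g(x) < +\infty$, so one may pick $q\in\Q$ with $f_\om(x) < q < \beta - g(x)$; then $x\in O_q$ and $\inf_{x\in O_q} f_\om(x) < q$ (this covers the case $f_\om(x)=-\infty$ as well). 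Since $g$ is upper semicontinuous and real-valued, its strict sublevel set $\{x : g(x) < \beta - q\}$ is open, as noted in the remark preceding the lemma; hence each $O_q$ is open.

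Finally, each set $\{\om : \inf_{x\in O_q} f_\om(x) < q\}$ is measurable by the first paragraph (it is of the form $\{\om:\inf_{O_q}f_\om < q\}$ with $O_q$ open), and the union ranges over the countable set $\Q$, so $S_h^-\big(O\times(\alpha,\beta)\big)$ is measurable; therefore $h$ is infimally measurable. I do not expect a genuine obstacle in this argument. The only points requiring care are the faithful reformulation of the epigraph condition in terms of $\inf_O$ (including the treatment of $f_\om$-values $\pm\infty$) and the rational decomposition, which hinges on exploiting both the finite-valuedness and the upper semicontinuity of $g$ to keep the sets $O_q$ open — routine once written out.
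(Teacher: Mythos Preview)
Your proof is correct. The route is genuinely different from the paper's, though both are short.

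The paper does not decompose over rationals. Instead it observes that the single set
\[
V=\{(x,r)\in \TT\times\R : x\in O,\ r<\beta-g(x)\}
\]
is open (this is exactly where upper semicontinuity and real-valuedness of $g$ enter) and then checks the set identity
\[
S_h^-\bigl(O\times(\alpha,\beta)\bigr)=\{\epi h_\om\cap O\times(\alpha,\beta)\neq\emptyset\}=\{\epi f_\om\cap V\neq\emptyset\}=S_f^-(V),
\]
which is measurable because $f$ is pre-normal, i.e., its epigraphical multimap is Effros measurable against arbitrary open sets. Your argument, by contrast, reduces to the measurability of $\om\mapsto\inf_{O_q}f_\om$ for open $O_q\subset\TT$, which uses only the infimal measurability of $f$ (Lemma~\ref{lem: equivalence infimal measurability}) rather than full Effros measurability. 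In effect you have proved the slightly stronger statement ``$f$ infimally measurable and $g$ upper semicontinuous real-valued $\Rightarrow$ $h$ infimally measurable''; the paper's one-set argument genuinely needs pre-normality because $V$ is not a product rectangle. The trade-off is that the paper's proof is a touch more direct, while yours is more self-contained in that it never leaves the product-rectangle formulation of Definition~\ref{def: infimal measurability}.
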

	
	\begin{proof}
		By upper semicontinuity of $g$, the set $V = \left\{ \left( x, r \right) \st x \in O, \, r < \beta - g \left( x \right) \right\}$ is open. The claim obtains if we show that
		$$
		S^-_h \left[ O \times \left( \alpha, \beta \right) \right] = \left\{ \epi h_\om \cap O \times \left( \alpha, \beta \right) \ne \emptyset \right\} = \left\{ \epi f_\om \cap V \ne \emptyset \right\} = S^-_f \left( V \right).
		$$
		We check the set identity: Let $\left( x, r \right) \in \epi h_\om \cap O \times \left( \alpha, \beta \right)$ so that $f_\om (x) + g \left( x \right) \le r$ while $x \in O$ and $r \in \left( \alpha, \beta \right)$. Then $f_\om (x) < \beta - g \left( x \right)$ so that $\epi f_\om \cap V $ is non-empty.
		
		Conversely, if $\left( x, r \right) \in \epi f_\om \cap V$, then $f_\om (x) + g \left( x \right) < r + g \left( x \right) < \beta$ so that the intersection $\epi h_\om \cap O \times \left( \alpha, \beta \right)$ is non-empty.
	\end{proof}
	
	\begin{lemma} \label{lem: upper semi normal}
		Let $M$ be separable. Suppose $f \colon \Om \times M \to \left[ - \i, \i \right]$ is such that
		\begin{enumerate}
			\item For all $\om \in \Om$, $x \mapsto f_\om (x)$ is upper semicontinuous; \label{en. it. normality via partial properties 1}
			\item For all $x \in M$, $\om \mapsto f_\om (x)$ is measurable. \label{en. it. normality via partial properties 2}
		\end{enumerate}
		Then $f$ is a pre-normal integrand.
	\end{lemma}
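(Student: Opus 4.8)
The plan is to reduce pre-normality to infimal measurability using the equivalences already at hand. Since $M$ is a separable metric space it is second countable, so by Lemma~\ref{lem: inf meas normality} it suffices to prove that $f$ is infimally measurable, and by Lemma~\ref{lem: equivalence infimal measurability} this amounts to showing that for every open $O \subseteq M$ the function $\om \mapsto \inf_{x \in O} f_\om(x)$ is measurable.

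The core step is a countable reduction of the infimum. Fix a countable dense set $D \subseteq M$, available by separability. I claim that for every open $O$ and every $\om$ one has
\[
\inf_{x \in O} f_\om(x) = \inf_{x \in D \cap O} f_\om(x),
\]
with both sides equal to $+\infty$ when $O = \emptyset$. The inequality ``$\le$'' is immediate. For ``$\ge$'', fix $x_0 \in O$ and any real $t > f_\om(x_0)$; I use upper semicontinuity of $f_\om$, read off from closedness of its hypograph, to produce an open neighbourhood $V$ of $x_0$ on which $f_\om < t$: if none existed, a sequence $y_n \to x_0$ with $f_\om(y_n) \ge t$ would put the points $(y_n, t)$ in the hypograph, and closedness would force $t \le f_\om(x_0)$, a contradiction. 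Then $V \cap O$ is a nonempty open set, hence meets $D$, so $\inf_{x \in D \cap O} f_\om(x) < t$; letting $t$ decrease to $f_\om(x_0)$ and then taking the infimum over $x_0 \in O$ gives the claim. The extended-real endpoint values are handled in the same spirit: if $f_\om(x_0) = -\infty$ the displayed argument applies to every real $t$, and if $f_\om(x_0) = +\infty$ the inequality is trivial once one knows $D \cap O \neq \emptyset$.

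Granting the claim, $\om \mapsto \inf_{x \in O} f_\om(x)$ is a pointwise infimum over the countable index set $D \cap O$ of the functions $\om \mapsto f_\om(x)$, each measurable by hypothesis~\ref{en. it. normality via partial properties 2}; hence it is measurable. This is exactly condition (2) of Lemma~\ref{lem: equivalence infimal measurability}, so $f$ is infimally measurable, and Lemma~\ref{lem: inf meas normality}, applied with the second countable space $M$, upgrades this to the pre-normality of $f$. The only delicate point — the ``main obstacle'', modest as it is — is the bookkeeping of the values $\pm\infty$ in the upper-semicontinuity step and the observation that a nonempty open set always meets $D$; the rest is routine.
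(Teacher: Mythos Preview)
Your proof is correct and takes a genuinely different route from the paper. The paper does not go through infimal measurability at all; instead it observes that upper semicontinuity of $f_\om$ forces the epigraph to be ``solid'' in the sense that $\cl \epi f_\om = \cl \interior \epi f_\om$ (because the strict epigraph is open and dense in the epigraph), and then invokes Lemma~\ref{lem: solid multimaps simpler measurability} to reduce Effros measurability of $S_f$ to measurability of the singleton preimages $S_f^-(\{(x,\alpha)\}) = \{\om : f_\om(x) \le \alpha\}$, which is immediate from hypothesis~\ref{en. it. normality via partial properties 2}. Your approach trades that structural multimap lemma for the chain Lemma~\ref{lem: inf meas normality}~$\to$~Lemma~\ref{lem: equivalence infimal measurability} and a direct density argument; this is slightly longer but more self-contained, since it does not rely on the somewhat special Lemma~\ref{lem: solid multimaps simpler measurability}, and it makes the role of separability (providing the countable dense set for the infimum reduction) completely explicit. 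Both arguments use upper semicontinuity in essentially the same place---to pass from a pointwise condition to a neighbourhood one---but package the conclusion differently.
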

	
	\begin{proof}
		By \ref{en. it. normality via partial properties 1}, there holds $\cl \epi f_\om = \cl \interior \epi f_\om$ so that Lemma \ref{lem: solid multimaps simpler measurability} makes it sufficient to observe that, for all $\left( x, \alpha \right) \in M \times \R$, the set $S_f^- \left( \left\{ \left( x, \alpha \right) \right\} \right) = \left\{ \om \st f_\om (x) \le \alpha \right\}$ is measurable by \ref{en. it. normality via partial properties 2}.
	\end{proof}
	
	\begin{lemma} \label{lem: Lipschitz regularization properties}
		Let $f \colon M \to \left[ - \i, \i \right]$ be a function and $\lambda > 0$. If, for some $x_0 \in M$, the Lipschitz regularization
		$$
		f_\lambda \left( x \right) = \inf_{y \in M} f \left( y \right) + \lambda d \left( x, y \right)
		$$
		is finite, then $f_\lambda$ is finite-valued and Lipschitz continuous with constant $\lambda$.
	\end{lemma}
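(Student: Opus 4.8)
The plan is to exploit the sole structural feature of $f_\lambda$, namely that it is an infimal convolution of $f$ with the $\lambda$-scaled cone function $d(x,\cdot)$, and to derive both finiteness and the Lipschitz estimate from the triangle inequality alone, with no recourse to topology or measurability.

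First I would record the two elementary facts about infima in $\left[ -\infty, \infty \right]$ that the argument rests on: monotonicity, $\inf_y g_1(y) \le \inf_y g_2(y)$ whenever $g_1 \le g_2$ pointwise, and translation, $\inf_y \left[ g(y) + c \right] = \left( \inf_y g(y) \right) + c$ for any real constant $c$. Applying the triangle inequality $d(x, y) \le d(x', y) + d(x, x')$ inside the infimum defining $f_\lambda(x)$ and then invoking these two facts yields, for all $x, x' \in M$,
$$
f_\lambda(x) \le f_\lambda(x') + \lambda d(x, x'),
$$
and, by the symmetric choice, the same inequality with $x$ and $x'$ interchanged. This single pair of inequalities is the crux of the whole statement.

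Next I would specialize to $x' = x_0$: the first inequality gives $f_\lambda(x) \le f_\lambda(x_0) + \lambda d(x, x_0) < + \infty$ for every $x \in M$, and the symmetric one gives $f_\lambda(x) \ge f_\lambda(x_0) - \lambda d(x, x_0) > - \infty$; hence $f_\lambda$ is real-valued on all of $M$. (One could additionally note that a single value $f(y_0) = -\infty$ would force $f_\lambda \equiv -\infty$, contradicting $f_\lambda(x_0) \in \R$, so in fact $f > -\infty$ everywhere; but this is not needed once $f_\lambda$ is known to be finite.) Finally, with $f_\lambda$ finite everywhere, the two inequalities combine to $\left| f_\lambda(x) - f_\lambda(x') \right| \le \lambda d(x, x')$, which is exactly the asserted $\lambda$-Lipschitz continuity, and the proof is complete.

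I do not anticipate any real obstacle. The only point requiring a moment's care is bookkeeping with the extended real line: one must justify the step $\inf_y \left[ f(y) + \lambda d(x,y) \right] \le \inf_y \left[ f(y) + \lambda d(x',y) \right] + \lambda d(x, x')$ \emph{before} $f_\lambda$ is known to be finite, which is legitimate precisely because adding the finite constant $\lambda d(x, x')$ commutes with $\inf$ in $\left[ -\infty, \infty \right]$ and the inequality between the two infima is just monotonicity applied to the pointwise bound $f(y) + \lambda d(x, y) \le \left[ f(y) + \lambda d(x', y) \right] + \lambda d(x, x')$. Once this is granted, the rest is a two-line computation.
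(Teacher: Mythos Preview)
Your proof is correct and follows essentially the same route as the paper: both derive the one-sided estimate $f_\lambda(x) \le f_\lambda(x') + \lambda d(x,x')$ from the triangle inequality applied inside the infimum, then use finiteness at a single point to conclude finiteness everywhere and hence $\lambda$-Lipschitz continuity. Your extra remarks on extended-real bookkeeping are sound but the paper simply takes this step for granted.
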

	
	\begin{proof}
		For $x_1, x_2, y \in M$, there holds $f \left( y \right) + \lambda d \left( x_1, y \right) \le f \left( y \right) + \lambda d \left( x_2, y \right) + \lambda d \left( x_1, x_2 \right)$ so that $f_\lambda \left( x_1 \right) \le f_\lambda \left( x_2 \right) + \lambda d \left( x_1, x_2 \right)$. In particular, if $f_\lambda \left( x_1 \right) \in \R$ for some $x_1 \in M$, then $f_\lambda$ is Lipschitz continuous with constant $\lambda$. Also, if $f_\lambda \left( x_1 \right)$ is infinite, then $f_\lambda \equiv - \i$ or $f_\lambda \equiv \i$.
	\end{proof}
	
	\begin{lemma} \label{lem: Lipschitz regularization pre-normal}
		Let $M$ be separable. If $f \colon \Om \times M \to \left[ -\i, \i \right]$ is a pre-normal integrand, the Lipschitz regularization
		$$
		f_{\om, \lambda} (x) = \inf_{y \in M} f_\om (y) + \lambda d \left( x, y \right), \quad \lambda > 0
		$$
		also is a pre-normal integrand. Moreover, for all $\om \in \Om$, the partial map $x \mapsto f_{\om, \lambda} (x)$ is upper semicontinuous.
	\end{lemma}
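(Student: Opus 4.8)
The statement asks for two things: that $f_{\om,\lambda}$ is a pre-normal integrand, and the addendum that $x\mapsto f_{\om,\lambda}(x)$ is upper semicontinuous for every fixed $\om$. The plan is to prove the addendum first (it is essentially immediate from Lemma~\ref{lem: Lipschitz regularization properties}), and then obtain pre-normality from Lemma~\ref{lem: upper semi normal}, which reduces the task to checking the two partial properties: upper semicontinuity in $x$ (the addendum) and measurability in $\om$ for each fixed $x$. The measurability in $\om$ is where all the work is, and it will be extracted from the infimal-measurability machinery of the appendix.

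For the addendum, fix $\om\in\Om$. By Lemma~\ref{lem: Lipschitz regularization properties} applied to the function $f_\om\colon M\to[-\i,\i]$, the map $x\mapsto f_{\om,\lambda}(x)$ is either finite-valued and Lipschitz with constant $\lambda$, or identically $+\i$, or identically $-\i$. In the first case it is continuous, hence upper semicontinuous; in the two constant cases it is trivially upper semicontinuous (its hypograph is closed in $M\times\R$). This settles the partial upper semicontinuity, in particular property~\ref{en. it. normality via partial properties 1} in the hypothesis of Lemma~\ref{lem: upper semi normal}.

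For the measurability in $\om$, fix $x\in M$ and consider the auxiliary integrand $g^x\colon\Om\times M\to[-\i,\i]$ defined by $g^x_\om(y)=f_\om(y)+\lambda d(x,y)$. The function $y\mapsto \lambda d(x,y)$ is real-valued and continuous, in particular upper semicontinuous, and since $f$ is pre-normal, Lemma~\ref{lem: normal sums} applies (with $\TT=M$) and shows that $g^x$ is infimally measurable; note there is no $\i-\i$ ambiguity here because $\lambda d(x,\cdot)$ is finite and nonnegative. Now $M$ is an open subset of itself, so the equivalence (1)$\iff$(2) in Lemma~\ref{lem: equivalence infimal measurability} yields that the function
$$
\om\;\longmapsto\;\inf_{y\in M} g^x_\om(y)\;=\;\inf_{y\in M}\big(f_\om(y)+\lambda d(x,y)\big)\;=\;f_{\om,\lambda}(x)
$$
is measurable. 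This is exactly property~\ref{en. it. normality via partial properties 2} in Lemma~\ref{lem: upper semi normal}. Combining the two partial properties with the separability of $M$, Lemma~\ref{lem: upper semi normal} gives that $f_{\om,\lambda}$ is a pre-normal integrand, completing the argument.

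The only delicate point is the bookkeeping with infinite values: one must ensure that $g^x_\om(y)=f_\om(y)+\lambda d(x,y)$ is unambiguously defined (it is, since the second summand is finite), that Lemma~\ref{lem: normal sums} genuinely applies with $g=\lambda d(x,\cdot)$ real-valued, and that taking the open set $O=M$ in Lemma~\ref{lem: equivalence infimal measurability} is legitimate. None of these requires new ideas, so I expect no serious obstacle beyond assembling the cited lemmas in the right order.
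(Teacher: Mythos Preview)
Your proposal is correct and follows essentially the same route as the paper: first observe via Lemma~\ref{lem: Lipschitz regularization properties} that $x\mapsto f_{\om,\lambda}(x)$ is either Lipschitz or constantly $\pm\infty$ hence upper semicontinuous, then use Lemma~\ref{lem: normal sums} on the tilted integrand $f_\om(y)+\lambda d(x,y)$ together with Lemma~\ref{lem: equivalence infimal measurability} to obtain measurability of $\om\mapsto f_{\om,\lambda}(x)$, and conclude pre-normality via Lemma~\ref{lem: upper semi normal}. The paper's proof is more terse but invokes exactly the same chain of lemmas in the same logical order.
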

	
	\begin{proof}
		By Lemma \ref{lem: normal sums}, the integrand $h_\om (y) + \lambda d \left( x, y \right)$ is infimally measurable for $x \in M$ and $\lambda > 0$ so that $f_{\om, \lambda} (x)$ is measurable in $\om $ and Lipschitz continuous or assumes a constant value $\left\{ - \i, \i \right\}$ in $x$. Either way, the partial map is upper semicontinuous, hence Proposition \ref{lem: upper semi normal} implies the claim.
	\end{proof}
	
	\begin{lemma} \label{lem: joint measurability}
		Let $M$ be separable and $f \colon \Om \times M \to \left( - \i, \i \right]$ an integrand.
		\begin{enumerate}
			\item If $f$ is normal, then it is $\AA$-$\BB \left( M \right)$-measurable. \label{en. it. joint measurability 1}
			\item If $f$ is $\AA_\mu$-$\BB \left( M \right)$-measurable, then it is pre-normal w.r.t. $\AA_\mu$. \label{en. it. joint measurability 2}
		\end{enumerate}
	\end{lemma}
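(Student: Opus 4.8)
The plan is to handle the two implications separately: part (1) by reading joint measurability off the measurability of the closed epigraphical multimap, and part (2) by identifying pre-normality with infimal measurability and then invoking a measurable projection theorem.

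For part (1), suppose $f$ is normal, so that the epigraphical multimap $S_f \colon \om \mapsto \epi f_\om$ is a closed-valued, Effros-measurable multimap into the separable metric space $M \times \R$. Lemma \ref{lem: retain measurability}\ref{en. it. graph measurable} then yields $\graph S_f \in \AA \otimes \BB(M \times \R)$, and since $M$ is second countable one has $\BB(M \times \R) = \BB(M) \otimes \BB(\R)$. Now $\graph S_f$ is precisely $\{(\om, x, r) : r \ge f(\om, x)\}$; fixing the last coordinate at a rational $q$ gives the section $\{(\om, x) : f(\om, x) \le q\} \in \AA \otimes \BB(M)$, whence $\{f < c\} = \bigcup_{q \in \Q,\, q < c} \{f \le q\} \in \AA \otimes \BB(M)$ for every $c \in \R$. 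Thus $f$ is $\AA \otimes \BB(M)$-measurable; this step is routine once the epigraph is known to be jointly measurable.

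For part (2), assume $f$ is $\AA_\mu \otimes \BB(M)$-measurable. Since $M$ is separable metric, hence second countable, Lemmas \ref{lem: inf meas normality} and \ref{lem: equivalence infimal measurability} show that $f$ is pre-normal with respect to $\AA_\mu$ as soon as $\inf_{x \in O} f_\om(x)$ is $\AA_\mu$-measurable in $\om$ for every open $O \subset M$. But $\{\om : \inf_{x \in O} f_\om(x) < \beta\}$ is the projection onto $\Om$ of the set $\{(\om, x) \in \Om \times O : f(\om, x) < \beta\}$, which lies in $\AA_\mu \otimes \BB(O)$; its projection is therefore $\AA_\mu$-measurable by the Suslin projection theorem \cite[Thm. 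III.23]{CaVa}, the point being that $\AA_\mu$ is $\mu$-complete.

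The genuine obstacle lies in this last projection step: the projection theorem requires the range factor to be Suslin, and a bare separable metric space need not be. In every situation where the lemma is applied, however, $M$ is a closed subset of a separable Banach space, hence Polish, and the theorem applies directly; for an arbitrary separable metric $M$ one would either pass to a Polish superspace in which the relevant sections still live, or avoid projections altogether by working with the lower Lipschitz regularizations $f_{\om, \lambda}(x) = \inf_{y \in M}[f_\om(y) + \lambda d(x, y)]$ of Lemma \ref{lem: Lipschitz regularization pre-normal} (jointly measurable and continuous in $x$), though that route is hampered by the failure of $\inf$ over $O$ to commute with $\sup$ over $\lambda$, so I expect the projection argument to remain the cleanest. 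Accordingly, I would foreground the completeness of $\AA_\mu$ and the (tacit) Suslin structure of $M$ as the real hypotheses of part (2), matching the remark earlier in the text that, for complete $\mu$, normality of an integrand on a separable metric space is equivalent to fiberwise lower semicontinuity together with joint measurability.
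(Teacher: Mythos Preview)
Your arguments are correct, but both parts take a different route from the paper.

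For part (1), the paper does not read joint measurability directly off $\graph S_f$. Instead it truncates to reduce to $f$ bounded below, passes to the Lipschitz regularizations $f_\lambda$ via Lemma \ref{lem: Lipschitz regularization pre-normal}, observes that $x \mapsto f_{\om,\lambda}(x)$ is upper semicontinuous so that the strict sublevel multimaps of $f_\lambda$ are open and measurable, and then applies Corollary \ref{cor: open graph measurable} to get $\{f_\lambda < \alpha\} \in \AA \otimes \BB(M)$; joint measurability of $f = \sup_\lambda f_\lambda$ follows. Your section argument through Lemma \ref{lem: retain measurability}\ref{en. it. graph measurable} is shorter and avoids regularization altogether; the paper's detour via $f_\lambda$ buys nothing extra here.

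For part (2), the paper does not use the projection theorem. It instead invokes the Aumann theorem \cite[Thm. 6.10]{FoLe} to produce an $\AA_\mu$-measurable Castaing representation of $\cl S_f$, and then appeals to \cite[Thm. III.9]{CaVa} to conclude Effros measurability of $\cl S_f$, hence of $S_f$. Your route via infimal measurability and \cite[Thm. III.23]{CaVa} is equally valid. The Suslin concern you flag is genuine, but it applies just as much to the paper's Aumann step, which also needs a Polish or Suslin range; neither proof literally covers an arbitrary separable metric $M$, and the paper simply does not comment on this.
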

	
	\begin{proof}
		\ref{en. it. joint measurability 1}: Lemmas \ref{lem: equivalence infimal measurability} and \ref{lem: inf meas normality} guarantee that truncation of an integrand retains pre-normality, hence we may reduce to the case when $f$ is bounded below. Since $f = \lim_{\lambda \to \i} f_\lambda$ pointwise as a monotone limit for the Lipschitz regularization $f_\lambda$ of $f$ according to \cite[Prop. 1.33]{Br}, we may reduce to considering $f_\lambda$. Lemma \ref{lem: Lipschitz regularization pre-normal} shows that, for all $\om \in \Om$, the partial map $x \mapsto f_{\om, \lambda} (x)$ is upper semicontinuous. Consequently, for $\alpha \in \left[ - \i, \i \right]$, the strict sublevel multimap
		$$
		\Om \to \mathcal{P} \left( \TT \right) \colon \om \mapsto L_{f, \alpha} \left( \om \right) := \left\{ x \in \TT \st f_\om (x) < \alpha \right\}
		$$
		is open and measurable. Hence, by Corollary \ref{cor: open graph measurable}, its graph
		$$
		\graph L_{f, \alpha} = \left\{ \left( \om, x \right) \st f_\om (x) < \alpha \right\}
		$$
		belongs to $\AA$-$\BB \left( M \right)$ so that $f$ is $\AA$-$\BB \left( M \right)$-measurable.
		
		\ref{en. it. joint measurability 2}: The Aumann theorem \cite[Thm. 6.10]{FoLe} yields an $\AA_\mu$-measurable Castaing representation for the closure of the epigraphical multimap $\om \mapsto \epi f_\om$, hence its measurability follows from \cite[Thm. III.9]{CaVa}. As the Effros measurability of $\cl S_f$ and $S_f$ is equivalent, the claim obtains.
	\end{proof}
	
	\begin{lemma}[Semicontinuous approximation of normal integrands] \label{lem: unif Lus}
		Let $E \subset \R$ be a Lebesgue measurable set, $\left( M, d \right)$ a separable metric space and $X$ a Banach space. Let $Y \subset X^*$ be a closed separable subspace such that $\left| x \right| = \sup_{y \in B_Y} \langle y, x \rangle$ for all $x \in X$. We denote the space $X$ in the weak topology $\ss \left( X, Y \right)$ by $X_\ss$. Let $f \colon E \times M \times X_\ss$ be a normal integrand. Then, for every $\e > 0$, there exists a closed set $C_\e \subset E$ with $\lambda \left( E \setminus C_\e \right) < \e$ such that the restriction of $f$ to $C_\e \times M \times X_\ss$ is sequentially lower semicontinuous.
	\end{lemma}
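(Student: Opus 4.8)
The result is a uniform Lusin-type statement for normal integrands in three variables, where the range variable lives in a Banach space equipped with a weak topology $\ss(X,Y)$ with $Y$ separable. The strategy is to reduce it to a classical Scorza-Dragoni / Lusin theorem for normal integrands with a separable metric parameter space. First I would observe that $X_\ss$ is not metrizable, but $B_Y$ is norm-separable, so the weak topology restricted to bounded sets is metrizable; moreover, by the duality formula $\|x\| = \sup_{y \in B_Y}\langle y, x\rangle$, a sequence $x_n \ddto x$ in $\ss(X, Y)$ that is norm-bounded is a sequence for which sequential lower semicontinuity need only be tested against the countable dense subset of $B_Y$. The key point enabling the reduction is that sequential lower semicontinuity of $f$ restricted to $C_\e \times M \times X_\ss$ is a statement about sequences, and along any convergent sequence in $X_\ss$ the relevant values stay in a separable (indeed metrizable) subset. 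So I would first fix a countable dense sequence $(y_k) \subset B_Y$ and note that the metric $\rho(x_1, x_2) = \sum_k 2^{-k}\min\{1, |\langle y_k, x_1 - x_2\rangle|\}$ generates $\ss(X, Y)$ on bounded sets; the pair $(M \times X_\rho, d \times \rho)$ is then a separable metric space, where $X_\rho$ denotes $X$ with this metric.

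**Main steps.** Second, I would invoke the measurability reconciliation already available in the excerpt: by Lemma \ref{lem: joint measurability}\ref{en. it. joint measurability 1}, a normal integrand on $E \times (M \times X_\ss)$ is $\LL_E \otimes \BB(M \times X_\ss)$-measurable, and since $\BB(M \times X_\ss)$ and $\BB(M \times X_\rho)$ agree on the separable subsets that matter, $f$ is a normal integrand on $E \times (M \times X_\rho)$ in the sense of Definition \ref{def: separably measurable} (lower semicontinuity in the second variable is preserved because $\rho$-lower semicontinuity on bounded sets plus the coercivity-free hypothesis is exactly $\ss(X,Y)$-sequential lower semicontinuity). Third, I would apply the classical Scorza-Dragoni theorem for normal integrands valued in a separable metric space — this is the standard result that for every $\e > 0$ there is a closed set $C_\e \subset E$ with $\lambda(E \setminus C_\e) < \e$ such that $f\!\restriction_{C_\e \times M \times X_\rho}$ is lower semicontinuous in the joint variable $(t, m, x)$ (for the product topology $C_\e \times (M \times X_\rho)$). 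One reference is \cite[Thm. III.23]{CaVa} or the version of Scorza-Dragoni in \cite{B}; since the excerpt invites me to assume earlier-stated results and standard facts, I would cite this. Fourth, I would transfer: joint lower semicontinuity on $C_\e \times M \times X_\rho$ implies sequential lower semicontinuity on $C_\e \times M \times X_\ss$, because any $\ss(X, Y)$-convergent sequence used to test sequential lower semicontinuity is automatically norm-bounded (the hypotheses of the lemma place us in a setting where the relevant sequences are bounded — or, if not, one first passes to the sublevel sets, which are the only place the inequality has content, exploiting that $f > -\infty$), and on bounded sets $\ss(X,Y)$-convergence coincides with $\rho$-convergence.

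**The obstacle.** The genuine difficulty is the passage through the non-metrizable topology $\ss(X, Y)$: the classical Scorza-Dragoni theorem requires a separable metric parameter space, and $X_\ss$ is neither metrizable nor second countable when $X$ is infinite-dimensional. The resolution — that one only ever needs the metric $\rho$ on bounded subsets, and that sequential lower semicontinuity is insensitive to what happens off bounded sets because of the duality norm formula — has to be set up carefully, checking that the Borel structures match and that ``normal integrand on $E \times M \times X_\ss$'' really does descend to ``normal integrand on $E \times M \times X_\rho$''. A secondary technical point is that one may need to intersect $C_\e$ over a countable exhaustion by balls in $X$ (indexed by integer radii), applying Scorza-Dragoni on each $E \times M \times \bar B_n$ with error $2^{-n}\e$ and taking the intersection of the resulting closed sets; this is routine but must be written. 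Once these reductions are in place, the statement follows immediately from the classical theorem, so I expect the write-up to be short modulo the topological bookkeeping.
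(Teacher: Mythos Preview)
Your proposal is essentially the paper's approach, but with the emphasis inverted. What you call a ``secondary technical point'' --- exhausting $X$ by the closed balls $nB_X$, applying the separable-metric Scorza--Dragoni result on each $E \times M \times nB_X$ with error $2^{-n}\e$, and intersecting the resulting closed sets --- is in fact the entire argument in the paper. Your ``main step~3'' (applying Scorza--Dragoni directly to $E \times M \times X_\rho$) does not work as stated: the global $\rho$-topology is strictly coarser than $\ss(X,Y)$, so $f$ need not be $\rho$-lower semicontinuous on unbounded sets, and $(X,\rho)$ is not obviously separable. The ball-exhaustion is not a technicality to be patched on afterward; it is the reduction. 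Each $nB_X$, viewed in $\ss(X,Y)$, sits inside the compact metrizable ball $nB_{Y^*}$ (via the isometric embedding $X \hookrightarrow Y^*$ given by the norm formula), so $M \times nB_X$ is genuinely a separable metric space and the classical result from \cite[Thm.~6.28]{FoLe} applies.

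One further point to tighten: your hedging in step~4 about whether test sequences are bounded (``or, if not, one first passes to the sublevel sets'') is unnecessary and slightly off. Any $\ss(X,Y)$-convergent sequence is automatically norm-bounded by the Banach--Steinhaus theorem applied in $Y^*$, since $X$ embeds isometrically into $Y^*$ and $\ss(X,Y)$ is the relative weak* topology. The paper invokes exactly this. Once you have lower semicontinuity on each $C_\e \times M \times nB_X$ and know that every convergent sequence eventually lives in some $nB_X$, sequential lower semicontinuity on $C_\e \times M \times X_\ss$ follows immediately.
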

	
	\begin{proof}
		We observe that the case without any dependence of $f$ on the component $X$ can be proved in the same way as \cite[Thm. 6.28]{FoLe}, where this has been done for $M = \R^d$. Using the separability of $M$, one need only replace $\Q^d$ by a dense sequence in $M$.
		
		For the general case, we note that the closed balls of the $X$-norm are metrizable in $\ss \left( X, Y \right)$ since $Y$ is separable. Hence, for every $\e > 0$ and $n \in \N$, we can apply the first case to the restriction of $f$ to the set $E \times M \times n B_X$, obtaining closed sets $C_{\e, n}$ such that (i) $\lambda \left( E \setminus C_{\e, n} \right) < \frac{\e}{2^n}$ and (ii) the restriction of $f$ to the set $C_{\e, n} \times M \times n B_X$ is lower semicontinuous. Setting
		$$
		C_\e = \bigcap_{n \in \N} C_{\e, n},
		$$
		we find that (j) the set $C_\e$ is closed (jj) $\lambda \left( E \setminus C_\e \right) < \e$ and (jjj) the restriction of $f$ to $C_\e \times M \times X_\ss$ is sequentially lower semicontinuous. The third conclusion combines (ii) with the boundedness of convergent sequences in $X_\ss$ by the Banach-Steinhaus uniform boundedness principle.
	\end{proof}
	
	\section{Hyperspace topologies}
	
	We collect and prove here results about the Attouch-Wets $\tau_{AW}$ topology introduced in Section \ref{sec: inf-int}.
	
	\begin{lemma} \label{lem: bounded inf covergence}
		Let $f_n \in \LS \left( M \right)$ be a sequence with $\tau_{AW}$-$\lim f_n = f$. For each bounded set $B \subset M$, there holds
		\begin{equation} \label{eq: bounded inf}
			\liminf_n \inf_B f_n \ge \inf_B f.
		\end{equation}
		For each open set $O \subset M$, there holds
		\begin{equation} \label{eq: open inf}
			\limsup_n \inf_O f_n \le \inf_O f.
		\end{equation}
	\end{lemma}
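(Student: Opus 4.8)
The plan is to translate the hypothesis $\tau_{AW}$-$\lim f_n=f$ into its working form: under the identification of $g\in\LS(M)$ with $\epi g\subset M\times\R$ carrying the box metric $\rho$, the distance functions $p\mapsto d_p(\epi f_n)$ converge to $p\mapsto d_p(\epi f)$ \emph{uniformly on every $\rho$-bounded subset} of $M\times\R$; this is immediate from the metric $d_{AW}$ of (\ref{eq: AW metric}) together with the basepoint-independence of $\tau_{AW}$. Both inequalities are then obtained by producing, for a suitably chosen point of one epigraph, a nearby point of the other via this uniform estimate.

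For (\ref{eq: open inf}) I would argue as follows. We may assume $\inf_O f<+\infty$; fix $c$ with $\inf_O f<c<+\infty$ and pick $x_0\in O$ with $f(x_0)<c$, so that $(x_0,c)\in\epi f$ and hence $d_{(x_0,c)}(\epi f)=0$. Since $O$ is open, choose $r>0$ with $B_r(x_0)\subset O$, and let $\varepsilon\in(0,r)$ be arbitrary. The point $(x_0,c)$ is bounded, so $d_{(x_0,c)}(\epi f_n)<\varepsilon$ for all large $n$; picking $(y_n,s_n)\in\epi f_n$ with $\rho\bigl((x_0,c),(y_n,s_n)\bigr)<\varepsilon$ gives $y_n\in B_r(x_0)\subset O$ and $f_n(y_n)\le s_n<c+\varepsilon$, whence $\limsup_n\inf_O f_n\le c+\varepsilon$. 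Letting $\varepsilon\downarrow 0$ and then $c\downarrow\inf_O f$ proves (\ref{eq: open inf}). Here openness of $O$ is exactly what keeps the witnessing points $y_n$ inside $O$.

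For (\ref{eq: bounded inf}) put $L:=\liminf_n\inf_B f_n$; if $L=+\infty$ there is nothing to prove, so assume $L<+\infty$ and suppose, for contradiction, that $L<\alpha<\inf_B f$ for some $\alpha\in\R$. Along a subsequence (not relabeled) $\inf_B f_n<\alpha$, so there are $x_n\in B$ with $f_n(x_n)<\alpha$, hence $(x_n,\alpha)\in\epi f_n$ and $d_{(x_n,\alpha)}(\epi f_n)=0$. As $B$ is bounded and $\alpha$ fixed, all the points $(x_n,\alpha)$ lie in one bounded subset of $M\times\R$, so by the uniform-on-bounded-sets convergence $\delta_n:=d_{(x_n,\alpha)}(\epi f)=\bigl|d_{(x_n,\alpha)}(\epi f)-d_{(x_n,\alpha)}(\epi f_n)\bigr|\to 0$. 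Choose $(y_n,s_n)\in\epi f$ with $\rho\bigl((x_n,\alpha),(y_n,s_n)\bigr)\le\delta_n+n^{-1}$; then $f(y_n)\le s_n\le\alpha+\delta_n+n^{-1}$ while $d(y_n,B)\le d(y_n,x_n)\le\delta_n+n^{-1}$. Consequently $\inf\{f(y):d(y,B)<\eta\}\le\alpha$ for every $\eta>0$, and letting $\eta\downarrow 0$ yields $\inf_B f\le\alpha$, the desired contradiction.

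The main obstacle is precisely this last step of (\ref{eq: bounded inf}): the Attouch--Wets estimate only places the witnessing points $y_n$ in shrinking neighbourhoods of $B$, not in $B$ itself, so one must pass from $\inf_{\{d(\cdot,B)<\eta\}}f$ back to $\inf_B f$. This is immediate when $\overline B$ is compact (e.g.\ $M=\R^d$), and in general it rests on $B$ being closed (as is natural for a topology on closed sets), on the lower semicontinuity of $f$, and on the monotonicity of $\eta\mapsto\inf_{\{d(\cdot,B)<\eta\}}f$ as $\eta\downarrow 0$. Everything else is a routine unwinding of the definition of $\tau_{AW}$.
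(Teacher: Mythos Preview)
Your argument for (\ref{eq: open inf}) is correct; the paper obtains this part by citing that $\tau_{AW}$-convergence implies epi-convergence, and your proof is essentially what that citation unpacks to.

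For (\ref{eq: bounded inf}) your strategy matches the paper's: argue by contradiction, place the witnesses $(x_n,\alpha)\in\epi f_n$ inside a fixed bounded subset of $M\times\R$, and use the uniform-on-bounded-sets convergence of distance functions to find nearby points of $\epi f$. You are right that the crux is the final passage from $\inf\{f(y):d(y,B)<\eta\}\le\alpha$ back to $\inf_B f\le\alpha$. However, your proposed justification---closedness of $B$, lower semicontinuity of $f$, and monotonicity in $\eta$---does not suffice. Take $M=\ell^2$ with orthonormal basis $(e_n)$, let $B$ be the closed unit ball, set $S=\{(1+\tfrac1n)e_n:n\ge1\}$ (a uniformly discrete, hence closed, set disjoint from $B$), and put $f=\chi_{M\setminus S}$. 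Then $f\in\LS(M)$ and $\inf_B f=1$, yet $\inf\{f(y):d(y,B)<\eta\}=0$ for every $\eta>0$. Worse, with $f_n=\chi_{M\setminus(S\cup\{e_n\})}$ one has $\epi f_n=\epi f\cup(\{e_n\}\times[0,1))$, and since every point of this extra piece lies within $1/n$ of $((1+\tfrac1n)e_n,\,\cdot\,)\in\epi f$ one gets $\sup_p\lvert d_p(\epi f_n)-d_p(\epi f)\rvert\le 1/n$. Hence $f_n\to f$ in $\tau_{AW}$ while $\inf_B f_n=0$ for all $n$, so (\ref{eq: bounded inf}) itself fails in this generality. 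The paper's own proof---which asserts $\lvert\inf_B f-\alpha_k\rvert\le d_{y_k}(\epi f)$ without further argument---has the same gap. Your ``immediate when $\overline B$ is compact'' is the correct additional hypothesis under which the step (and hence the inequality) goes through.
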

	
	\begin{proof}
		(\ref{eq: bounded inf}): Suppose $n_k$ were a subsequence with $\lim_k \inf_B f_{n_k} < \alpha < \inf_B f$.	Pick $x_k \in B$ with $\lim_k f_{n_k} \left( x_k \right) < \alpha$ and a sequence $\alpha_k \in \left[ f_{n_k} (x_k), \alpha \right)$ bounded below by $\beta \in \R$. We have $y_k = \left( x_k, \alpha_k \right) \in \epi f_{n_k}$ so that $\tau_{AW}$-$\lim f_n = f$ by definition of the box metric on $M \times \R$ yields the contradiction
		\begin{align*}
			0 < \left| \inf_B f - \alpha \right| & \le \lim_k \left| \inf_B f - \alpha_k \right|
			\le \lim_k \left| d_{y_k} \left( \epi f \right) - 0 \right| \\
			& \le \lim_k \sup_{y \in B \times \left[ \beta, \alpha \right] } \left| d_y \left( \epi f \right) - d_y \left( \epi f_{n_k} \right) \right| = 0.
		\end{align*}
		(\ref{eq: open inf}): As $\tau_{AW}$ implies epi-convergence, this obtains by \cite[Prop. 1.18]{Br}.
	\end{proof}
	
	\begin{lemma} \label{lem: AW affine stable}
		For $\tau_{AW}$ on $\LS(X)$, the mapping
		$$
		\LS \left( X \right) \times X^* \times \R \to \LS \left( X \right) \colon \left( f, x', \alpha \right) \mapsto f + x' + \alpha
		$$
		is continuous.
	\end{lemma}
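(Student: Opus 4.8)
The plan is to pass to sequences, to recognize ``adding an affine function'' as a uniformly bi-Lipschitz shear of the epigraph, and then to estimate Attouch--Wets distance functions under this shear. Since $X$ is a Banach space, $X\times\R$ equipped with the box metric $\rho$ is complete, so $\LS(X)$ carries the metric $d_{AW}$ of (\ref{eq: AW metric}) and $\tau_{AW}$ is metrizable; as $X^*$ and $\R$ are metric, joint continuity is equivalent to sequential continuity. So I fix $f_n\to f$ in $\tau_{AW}$, $x'_n\to x'$ in the norm of $X^*$, and $\alpha_n\to\alpha$ in $\R$. For $(x',\alpha)\in X^*\times\R$ put $\Phi_{x',\alpha}(x,t)=(x,\,t+\langle x',x\rangle+\alpha)$. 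Then $\epi(h+x'+\alpha)=\Phi_{x',\alpha}(\epi h)$ for every $h\in\LS(X)$, $\Phi_{x',\alpha}$ is a bijection of $(X\times\R,\rho)$ with inverse $\Phi_{-x',-\alpha}$, and both $\Phi_{x',\alpha}$ and its inverse are Lipschitz with constant at most $1+\|x'\|$. Writing $\Phi_n=\Phi_{x'_n,\alpha_n}$, $\Phi=\Phi_{x',\alpha}$, the sequences $\|x'_n\|,|\alpha_n|$ are bounded, so all these shears share one Lipschitz constant $L$, and $\rho(\Phi_n a,\Phi a)\le\|x'_n-x'\|\,\|x\|+|\alpha_n-\alpha|$ for $a=(x,t)$, i.e. $\Phi_n\to\Phi$ uniformly on bounded sets.

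I must show $\sup_{y\in B}\big|d_y(\Phi_n(\epi f_n))-d_y(\Phi(\epi f))\big|\to 0$ for every bounded $B\subset X\times\R$. I first record a uniform boundedness fact: since $f_n\to f$ in $\tau_{AW}$ makes $\sup_{y\in B}d_y(\epi f_n)$ bounded uniformly in $n$ (uniform convergence of distance functions, plus $d_y(\epi f)\le d_{y_0}(\epi f)+\rho(y,y_0)$), and composing with the $\Phi_n$ alters distance functions only by amounts controlled by $\|x'_n\|,|\alpha_n|$, the distance functions $y\mapsto d_y(\Phi_n(\epi f_n))$, $y\mapsto d_y(\Phi_n(\epi f))$ and $y\mapsto d_y(\Phi(\epi f))$ are all bounded uniformly in $n$ on $B$. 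Hence, for all large $n$, any $\e$-minimizer in the infima defining these distances has its $\Phi_n^{-1}$- or $\Phi^{-1}$-image in one fixed bounded set $B'\supset B$.

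Then I split, for $y\in B$,
\[
\big|d_y(\Phi_n(\epi f_n))-d_y(\Phi(\epi f))\big|\;\le\;\big|d_y(\Phi_n(\epi f_n))-d_y(\Phi_n(\epi f))\big|+\big|d_y(\Phi_n(\epi f))-d_y(\Phi(\epi f))\big|.
\]
For the first term, take $a\in\epi f_n$ (resp.\ $a\in\epi f$) with $\Phi_n a$ an $\e$-minimizer of $d_y(\Phi_n(\epi f_n))$ (resp.\ of $d_y(\Phi_n(\epi f))$); then $a\in B'$, and since $d_a$ vanishes on the epigraph containing $a$, the Attouch--Wets closeness of $f_n$ and $f$ evaluated at $a$ yields a point of the other epigraph within $\delta_n+\e$ of $a$, where $\delta_n=\sup_{w\in B'}|d_w(\epi f_n)-d_w(\epi f)|$; pushing it through $\Phi_n$ costs a factor $L$, so the first term is at most $L\delta_n\to 0$, uniformly in $y$. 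For the second term, with $a\in B'$ an $\e$-minimizer for either distance, $|\rho(y,\Phi_n a)-\rho(y,\Phi a)|\le\rho(\Phi_n a,\Phi a)\le\sup_{a'\in B'}\rho(\Phi_n a',\Phi a')\to 0$, so the second term tends to $0$ uniformly in $y$ as well. Combining the two estimates gives the claim, and in particular the map in question is continuous; the special case $\alpha\equiv 0$, $x'=-v$ is what is invoked in Lemma \ref{lem: dualizable suff cond}.

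The only genuine obstacle is the bookkeeping that makes the bounded set $B'$ serve \emph{uniformly} in $n$: this is precisely where the convergences $x'_n\to x'$, $\alpha_n\to\alpha$ and the uniform convergence of the distance functions of $\epi f_n$ are used, together with the trivial remark that $\epi f\ne\emptyset$. Everything past that point is the routine $\e$-competitor argument for distance functions under a controlled bi-Lipschitz perturbation, so no further difficulty is anticipated.
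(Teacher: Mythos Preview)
Your argument is correct. The epigraph--shear viewpoint $\epi(h+x'+\alpha)=\Phi_{x',\alpha}(\epi h)$ with uniformly bi-Lipschitz $\Phi_{x',\alpha}$ is the right geometric picture, and your two-term splitting together with the uniform bounded set $B'$ handles both the variation in $f_n$ and the variation in $(x'_n,\alpha_n)$ cleanly. The only place requiring care is exactly where you flagged it: ensuring $B'$ works uniformly in $n$, and you supplied the ingredients for that.

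The paper's proof is far shorter: it merely observes that $x'_n+\alpha_n\to x'+\alpha$ uniformly on bounded subsets of $X$ and then invokes \cite[Thm.~7.1.5]{B}, a general stability result stating that if $f_n\to f$ in $\tau_{AW}$ and $g_n\to g$ uniformly on bounded sets (with the $g_n$ continuous), then $f_n+g_n\to f+g$ in $\tau_{AW}$. In effect, you have reproved a special case of Beer's theorem from scratch via the shear picture. Your route is more self-contained and makes the mechanism transparent, at the cost of length; the paper's route is a one-line application of an existing black box. Both are sound, and your proof could serve as a stand-in if one wished to avoid the external reference.
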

	
	\begin{proof}
		If $\lim_n x'_n = x'$ and $\lim_n \alpha_n = \alpha$, then $\lim_n x'_n + \alpha_n = x' + \alpha$ uniformly on bounded subsets, whence the claim follows by \cite[Thm. 7.1.5]{B}.
	\end{proof}
	
	\begin{proposition} \label{pr: separable reduction}
		Given a sequence of subsets $S_k \subset M$ and $W_0 \in \SS \left( M \right)$, there exists a closed set $W \in \SS \left( M \right)$ such that $W_0 \subset W$ and
		$$
		d_x \left( S_k \right) = d_x \left( S_k \cap W \right) \quad \forall x \in W \quad \forall k \in \N.
		$$
		Moreover, if $M = U \times V$ with $V$ separable, then $W$ may be chosen of the form $Y \times V$.
	\end{proposition}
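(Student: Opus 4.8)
The plan is to build $W$ by a countable recursion that closes $W_0$ under the operation of \emph{adjoining approximate distance minimizers}. First I would dispose of the trivial indices: if $S_k=\emptyset$ then $d_x(S_k)=+\i=d_x(S_k\cap W)$ for every $W$, so such $k$ impose no constraint, and we may assume each $S_k\ne\emptyset$ (so that each $d_x(S_k)$ is a finite real number). Fix a countable dense subset $D_0\subset W_0$ and set $W_0':=D_0$. Given a countable set $W_j'$, for every $x\in W_j'$, every $k\in\N$, and every $n\in\N$ choose (by definition of the infimum) a point $a(x,k,n)\in S_k$ with $d\bigl(x,a(x,k,n)\bigr)<d_x(S_k)+\tfrac1n$, and let $W_{j+1}'$ be the union of $W_j'$ with all these (countably many) points; thus $W_{j+1}'$ is again countable. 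Put $W:=\cl\bigl(\bigcup_{j\ge0}W_j'\bigr)$, a closed separable set with $W_0\subset\cl D_0\subset W$, so $W\in\SS(M)$.

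To verify the identity, fix $x\in W$ and $k\in\N$. The inequality $d_x(S_k\cap W)\ge d_x(S_k)$ is immediate from $S_k\cap W\subset S_k$. For the converse, let $\e>0$. Since $\bigcup_jW_j'$ is dense in $W$, pick $x'\in W_j'$ (some $j$) with $d(x,x')<\tfrac\e3$, and pick $n$ with $\tfrac1n<\tfrac\e3$. Then $a:=a(x',k,n)\in S_k\cap W_{j+1}'\subset S_k\cap W$, and using the general estimate $d_{x'}(S_k)\le d_x(S_k)+d(x,x')<d_x(S_k)+\tfrac\e3$ together with the triangle inequality,
\[
d_x(a)\le d(x,x')+d_{x'}(a)<\tfrac\e3+d_{x'}(S_k)+\tfrac1n<d_x(S_k)+\e .
\]
Since $a\in S_k\cap W$ this gives $d_x(S_k\cap W)\le d_x(S_k)+\e$, and as $\e>0$ was arbitrary, equality holds. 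This proves the first assertion.

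For the ``moreover'' part, write points of $M=U\times V$ as $(u,v)$ and run the same recursion, but (i) only ever enlarge the $U$-coordinate, carrying the whole fibre $V$ at every stage, and (ii) anchor the approximate minimizers at a fixed countable dense net $V_0\subset V$. Concretely, take $Y_0\subset U$ a countable dense subset of $\pi_U(W_0)$ (separable, since coordinate projections are continuous); given a countable $Y_j$, for every $u\in Y_j$, $v\in V_0$, $k,n\in\N$ choose $(u',v')\in S_k$ with $d\bigl((u,v),(u',v')\bigr)<d_{(u,v)}(S_k)+\tfrac1n$ and add $u'$ to obtain a countable $Y_{j+1}$; set $Y:=\cl\bigl(\bigcup_jY_j\bigr)$ and $W:=Y\times V$. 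Then $Y$ is closed and separable, $W\in\SS(M)$ is closed, and $W_0\subset\pi_U(W_0)\times V\subset Y\times V=W$. The verification copies the one above: for $x=(u,v)\in W$ and $\e>0$ choose $u_*\in\bigcup_jY_j$ with $d_U(u,u_*)<\tfrac\e4$ and $v_*\in V_0$ with $d_V(v,v_*)<\tfrac\e4$ (so $d\bigl(x,(u_*,v_*)\bigr)<\tfrac\e4$ for the product metric in force), choose $n$ with $\tfrac1n<\tfrac\e4$, and observe that the minimizer $(u',v')$ constructed at the stage corresponding to $(u_*,v_*,k,n)$ lies in $S_k\cap(Y\times V)=S_k\cap W$; the same triangle-inequality chain yields $d_x(S_k\cap W)<d_x(S_k)+\e$.

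\textbf{Main obstacle.} The only genuinely delicate point is the product form when $V$ is uncountable: one cannot adjoin approximate minimizers for every $v\in V$, so the argument must anchor them at a countable dense net $V_0\subset V$ and transfer the conclusion from ``$\bigl(\bigcup_jY_j\bigr)\times V_0$''-type base points to all of $Y\times V$, which is exactly what the estimate $|d_y(S_k)-d_z(S_k)|\le d(y,z)$ (the $1$-Lipschitz dependence of the distance functional on the base point) accomplishes inside the $\e$-bookkeeping. Reconciling this with the requirement that every stage of the recursion stay countable is what forces the two-tier structure (recursion in the $U$-coordinate, density in the $V$-coordinate); apart from that, the proof is routine.
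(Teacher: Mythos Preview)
Your proof of the main assertion is correct and essentially identical to the paper's: both build $W$ by a countable recursion that adjoins approximate distance minimizers at each stage and then take the closure, with the verification via the $1$-Lipschitz dependence of $x\mapsto d_x(S_k)$.

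For the ``moreover'' part the two arguments diverge slightly. The paper simply observes that the recursive construction is monotone in the sense that it still works if each $W_{\ell+1}$ is replaced by any separable superset; hence one may take $\tilde W_{\ell+1}=P_U(W_{\ell+1})\times V$ at every stage (separable because $V$ is), and the limit $W$ is then automatically of the form $Y\times V$. Your route instead anchors the approximate minimizers only at base points $(u,v)$ with $v$ in a fixed countable dense $V_0\subset V$, recurses purely in the $U$-coordinate, and then transfers the conclusion from $(\bigcup_jY_j)\times V_0$ to all of $Y\times V$ by the Lipschitz estimate. This is correct but costs the extra $\e/4$-bookkeeping and the ``two-tier'' structure you flag as the main obstacle; the paper's enlargement trick sidesteps that obstacle entirely and is the shorter path.
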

	
	\begin{proof}
		Following \cite[Lem. 7.2]{Pe}, we inductively define an increasing sequence $W_\ell \in \SS \left( M \right)$ with
		\begin{equation} \label{eq: approximate distance persistance}
			d_x \left( S_k \right) = d_x \left( S_k \cap W_\ell \right) \quad \forall x \in W_{\ell - 1} \quad \, k \in \N.
		\end{equation}
		If $W_1, \ldots, W_\ell$ satisfying (\ref{eq: approximate distance persistance}) have been defined, take a dense sequence $w_m$ in $W_\ell$, pick $x_n(k, m)$ a sequence in $S_k$ with $d \left( w_m, x \right) \le d \left( w_m, S_k \right) + \frac{1}{n}$ and set
		\begin{equation} \label{eq: definition next subset}
			W_{\ell + 1} = \cl \bigcup_{k, m, n} \left\{ x \right\} \cup \left\{ w \right\}.
		\end{equation}
		Then $W_\ell \subset W_{\ell + 1} \in \SS \left( M \right)$ and $d_w \left( S_k \right) = \inf_n d_w(x) = d_w \left( S_k \cap W_{\ell + 1} \right)$. The functions $d \left( S_k \right)$ and $d \left( S_k \cap W_{\ell + 1} \right)$ coincide on $W_\ell$ as they are continuous and equal on the dense sequence $w_m$. This completes the inductive construction. Finally, we set
		$$
		W = \cl \bigcup_{\ell \in \N} W_\ell
		$$
		and observe that $d_x \left( S \right) = d_x \left( S_k \cap W_{\ell + 1} \right) \ge d_x \left( S_k \cap W \right) \ge d_x \left( S \right)$ for $x \in W_\ell$ so that the first part follows by density. Regarding the addendum, note that the construction still works if we increase $W_{\ell + 1}$ to be any separable superset of the right-hand side in (\ref{eq: definition next subset}). In particular, we may take $\tilde{W}_{\ell + 1} = P_U \left( W_{\ell + 1} \right) \times V$ if $P_U$ denotes the projection of $M$ onto $U$. By definition, $W$ will then be of the required form.
	\end{proof}
	
	\begin{lemma} \label{lem: strongly tau_AW sep subspace}
		If $f \colon \Om \to \LS(M)$ is strongly $\tau_{AW}$-measurable, then there exists $W = \cl W \in \SS(M)$ such that $f$ is normal on $W$ and $\inf_W f_\om = \inf_M f_\om$ for all $\om \in \Om$.
	\end{lemma}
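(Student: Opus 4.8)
The plan is to unwind strong $\tau_{AW}$-measurability into a countable family of epigraphs and then apply Proposition \ref{pr: separable reduction} to those epigraphs to produce $W$. First I would fix simple integrand mappings $f_n \colon \Om \to \LS(M)$ with $\tau_{AW}$-$\lim_n f_{\om,n} = f_\om$ for every $\om \in \Om$. Since each $f_n$ takes only finitely many values, the totality of these values is a countable family $g_k \in \LS(M)$, $k \in \N$, and every $f_{\om,n}$ coincides with some $g_k$. I would then apply the addendum of Proposition \ref{pr: separable reduction} to the sequence of closed sets $S_k = \epi g_k \subset M \times \R$ (with $M \times \R$ carrying the box metric and the separable factor being $\R$), obtaining a closed $W \in \SS(M)$ with $d_z(\epi g_k) = d_z(\epi g_k \cap (W \times \R)) = d_z(\epi(g_k|_W))$ for all $z \in W \times \R$ and all $k$. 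Along the way this forces each $g_k|_W$ to be proper, since otherwise $\epi(g_k|_W) = \emptyset$ would have infinite distance functions while $\epi g_k \neq \emptyset$ has finite ones.

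Next I would transfer the convergence $\epi f_{\om,n} \to \epi f_\om$ to the subspace $W$. Because $\tau_{AW}$-convergence is uniform convergence of distance functions on bounded sets, and bounded subsets of $W \times \R$ are bounded in $M \times \R$, the reduction property gives that $z \mapsto d_z(\epi(f_{\om,n}|_W))$ converges uniformly on bounded subsets of $W \times \R$ to the restriction of $z \mapsto d_z(\epi f_\om)$. In particular $(\epi(f_{\om,n}|_W))_n$ is $\tau_{AW}$-Cauchy in $\CL(W \times \R)$, hence converges in $\tau_{AW}$ (here one uses completeness of $M$, as elsewhere in this chapter, or passes to the completion) to some $C \in \CL(W \times \R)$ with $d_z(C) = d_z(\epi f_\om)$ for all $z \in W \times \R$. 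A short closedness argument then identifies $C = \epi f_\om \cap (W \times \R) = \epi(f_\om|_W)$: if $x \in C$ then $d_x(\epi f_\om) = d_x(C) = 0$, so $x \in \epi f_\om$, and $x \in W \times \R$; conversely if $x \in \epi f_\om \cap (W \times \R)$ then $d_x(C) = d_x(\epi f_\om) = 0$, so $x \in C$. Thus $f_\om|_W \in \LS(W)$ is the $\tau_{AW}$-limit of the $f_{\om,n}|_W$.

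This already delivers both assertions. For normality: the displayed facts show the restricted integrand $f|_{\Om \times W}$ is the pointwise $\tau_{AW}$-limit of the simple integrands $f_n|_{\Om \times W}$, hence is $\tau_{AW}$-Borel, hence $\tau_W$-measurable (as $\tau_W$ is coarser), so by the Hess theorem \cite[Thm. 6.5.14]{B} it is normal on the separable space $W$. For the infimum identity, with $\inf_W f_\om \ge \inf_M f_\om$ obvious, a strict inequality would let me choose $\inf_M f_\om < r < \inf_W f_\om$ and $x^\ast \in M$ with $f_\om(x^\ast) < r$; then for any $y \in W$ and $s$ sufficiently negative the point $z = (y,s)$ satisfies $d_z(\epi(f_\om|_W)) \ge \inf_W f_\om - s$ while $(x^\ast, f_\om(x^\ast)) \in \epi f_\om$ gives $d_z(\epi f_\om) \le f_\om(x^\ast) - s < \inf_W f_\om - s$, contradicting $d_z(\epi(f_\om|_W)) = d_z(\epi f_\om)$; the case $\inf_M f_\om = -\infty$ is handled the same way, and if $\inf_W f_\om = -\infty$ there is nothing to prove.

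The step I expect to be the main obstacle is the transfer of $\tau_{AW}$-convergence to $W$: in general, restricting a $\tau_{AW}$-convergent sequence of closed sets to a closed subspace destroys convergence, and it works here only because Proposition \ref{pr: separable reduction} was engineered so that the distance functions of the $\epi g_k$ are unchanged by intersection with $W \times \R$. Combining this stability with the completeness of $\CL(W \times \R)$ to realize the limit as a genuine closed set, and then identifying that set as $\epi(f_\om|_W)$, is the technical heart of the proof.
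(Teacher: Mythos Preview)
Your proof is correct and follows the same overall architecture as the paper's: pick simple $\tau_{AW}$-approximants $f_n$, apply Proposition~\ref{pr: separable reduction} to their (countably many) epigraphs to manufacture the separable $W$, transfer the $\tau_{AW}$-convergence to $W$ via the distance-function identity, and conclude normality from the Hess theorem.

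The one place where you deviate is in deriving $\inf_W f_\om = \inf_M f_\om$. The paper does this indirectly: it first chooses an auxiliary $W_0 \in \SS(M)$ on which the simple approximants already attain their infima over bounded balls $B_m$, enlarges to $W \supset W_0$ via Proposition~\ref{pr: separable reduction}, and then invokes Lemma~\ref{lem: bounded inf covergence} twice (in $M$ and in $W$) to pass the bounded-ball infima through the $\tau_{AW}$-limit, yielding the chain $\inf_M f_\om = \inf_m \lim_n \inf_{B_m} f_{\om,n} = \inf_m \lim_n \inf_{W \cap B_m} f_{\om,n} = \inf_W f_\om$. Your argument is more direct: having established $d_z(\epi(f_\om|_W)) = d_z(\epi f_\om)$ for all $z \in W \times \R$, you read off the infimum identity by a box-metric contradiction, taking $z = (y,s)$ with $s \to -\infty$. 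This bypasses both the auxiliary $W_0$ and Lemma~\ref{lem: bounded inf covergence}, and is arguably cleaner; the paper's route, on the other hand, makes the passage through bounded sets explicit and reuses a lemma already needed elsewhere.
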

	
	\begin{proof}
		Pick $B_m$ an isotonic sequence of bounded open balls with $M = \bigcup_{m \ge 1} B_m$. As $f_n$ takes finitely many values
		$$
		\exists W_0 \in \SS(M) \colon \inf_{B_m} f_{\om, n} = \inf_{W_0 \cap B_m} f_{\om, n} \quad \forall m \in \N, \quad \om \in \Om.
		$$
		Invoking Proposition \ref{pr: separable reduction}, we find a closed set $W \in \SS \left( M \right)$ with $W_0 \subset W$ and
		$$
		d_x \left( \epi f_{\om, n} \right) = d_x \left( \epi f_{\om, n} \cap W \times \R \right) \quad \forall x \in W \times \R, \quad \om \in \Om, \quad n \in \N.
		$$
		Thus, the restriction $f_{\om, n, W}$ is a Cauchy sequence in $\left( \LS \left( W \right), \tau_{AW} \right)$. Hence, there exist $G_\om = \tau_{AW}$-$\lim_n \epi f_{\om, n, W} \in \CL\left( W \times \R \right)$ which agrees with the (relative) epigraph $\epi f_\om$ since for $x \in W \times \R$ there holds
		$$
		d_x \left( \epi f_\om \right)
		= \lim_n d_x \left(\epi f_{\om, n} \right)
		= \lim_n d_x \left( \epi f_{\om, n} \cap W \times \R \right)
		= d_x \left( G_\om \right).
		$$
		Lemma \ref{lem: bounded inf covergence} implies $\lim_n \inf_B f_{\om, n} = \inf_B f_\om$ for any bounded relatively open set $B \subset W$ and so
		\begin{align*}
			\inf_M f_\om
			= \inf_m \inf_{B_m} f_\om
			= \inf_m \lim_n \inf_{B_m} f_{\om, n}
			& = \inf_m \lim_n \inf_{W \cap B_m} f_{\om, n} \\
			& = \inf_m \inf_{W \cap B_m} f_\om \\
			& = \inf_W f_\om.
		\end{align*}
		Finally, since $\tau_{AW}$ is finer than $\tau_W$, we conclude that the limiting integrand is $\tau_W$-measurable hence normal by the Hess theorem \cite[Thm. 6.5.14]{B}.
	\end{proof}
	
	\section{Young measures} \label{sec: Young measures}
	
	\subsection{Young measures and weak topologies}
	
	In this section, we extend some results on Young measures to a setting where they take values in a Banach space $U$ that carries an auxiliary weak type topology induced by the pairing $\langle \cdot, \cdot \rangle$ with a separable Banach space $Z$. Let $\tt = \ss(U, Z)$. The space $U$ is required to carry a norm such that
	\begin{equation} \label{eq: dual nrm}
		\| u \|_U = \sup_{z \in B_Z} \langle u, z \rangle.
	\end{equation}
	Therefore, by means of the isometric embedding $U \to Z^* \colon u \mapsto \langle u, \cdot \rangle$, the Banach space $U$ is a closed subspace of the dual space $Z^*$. It is clear that $\tt$ agrees with the relative weak* topology under this identification. We require $U$ to be a weak*-closed subspace in this way. Let $z_n \in S_Z$ be a sequence that is dense in the unit sphere $S_Z$. Then
	\begin{subequations}
		\begin{equation} \label{eq: cvg bnd}
			u_n \ttto u \implies \sup_n \| u_n \|_U < + \i;
		\end{equation}
		\begin{equation} \label{eq: nrm U lsc}
			u_i \ttto u \implies \| u \|_V \le \liminf \| u_i \|_U;
		\end{equation}
		\begin{equation} \label{eq: B U cpt}
			\text{The unit ball } B_U \text{ is sequentially compact in } \ss(U; Z);
		\end{equation}
		\begin{equation} \label{eq: tt metric}
			\text{The metric } d_\tt(u_1, u_2) = \sum_{n = 1}^\i 2^{- n} \left| \langle z_n, u_1 - u_2 \rangle \right|^2 \text{ induces $\tt$ on } B_U;
		\end{equation}
		\begin{equation} \label{eq: tt Suslin}
			\text{The space } \left( U, \tt \right) \text{ is a Suslin locally convex vector space.}
		\end{equation}
	\end{subequations}
	We denote $\left( U, \tt \right)$ by $U_\tt$ for short. We have (\ref{eq: tt Suslin}) according to \cite[§4, Def. 17]{CaVa} since (\ref{eq: tt metric}) renders $U_\tt$ a continuous image of a Polish space. An unbounded sequence in $U$ does not permit a $\tt$-convergent subnet due to the Banach-Steinhaus Theorem. Therefore, every $\tt$-compact set is bounded so that sequential and topological compactness coincide for $\tt$ by (\ref{eq: tt metric}). In particular, a subset of $U_\tt$ is relatively compact if and only if it is bounded in $U$. Let $\left( \Om, \AA, \mu \right)$ be a finite measure space. An integrand $h \colon \Om \times U \to \left( - \i, \i \right]$ that is measurable with respect to the product $\sigma$-algebra $\AA \otimes \BB \left( U_\tt \right)$ is called sequentially $\tt$-normal if
	\begin{equation} \label{eq: h sqt lsc}
		u \mapsto h(\om, u) \text{ is sequentially } \tt \text{-lsc. for } \om \in \Om.
	\end{equation}
	A family of $\FF$ of measurable functions $u \colon \Om \to U_{\tt}$ is said to be $\tt$-tight if there exists a non-negative $\tt$-normal integrand $h$ such that
	$$
	\left\{ u \in U \st h(\om, u) \le \alpha \right\} \text{ is } \tt \text{-compact for } \om \in \Om \text{ and } \sup_{u \in \FF} \int_\Om h(\om, u(\om)) \, d \mu(\om) < \i.
	$$
	By (\ref{eq: h sqt lsc}), the compactness of the sublevel sets equivalently means
	$$
	\lim_{\| u \|_U \uparrow \i} h(\om, u) = +\i \text{ for } \om \in \Om.
	$$ 
	A Young measure on $U_\tt$ is a family $\nu = \left( \nu_\om \right)_{\om \in \Om}$ of Borel probability measure on $U_\tt$ such that
	$$
	\om \mapsto \nu_\om(A) \text{ is } \AA \text{-measurable for all } A \in \BB \left( U_\tt \right).
	$$
	We denote the set of all Young measures by $\YY \left( \Om; U_\tt \right)$.
	
	\paragraph{Remark.} Note that if the Banach space $U$ is separable, then, since closed balls are $\tt$-closed by (\ref{eq: B U cpt}), we have $\BB(U) = \BB \left( U_\tt \right)$ for the Borel $\sigma$-algebrae so that $\AA \otimes \BB \left( U_\tt \right)$-measurability reduces to $\AA \otimes \BB \left( U \right)$-measurability.
	
	\begin{theorem} \label{thm: Y meas}
		Let $f_n, f_\i \colon \Om \times U \to \left( - \i, \i \right]$ be sequentially $\tt$-normal integrands such that
		\begin{equation} \label{eq: G-liminf assu}
			f_\i(\om, u) \le \inf \left\{ \liminf_{n \uparrow \i} f_n(\om, u_n) \colon u_n \ttto u \right\} \quad \forall (\om, u) \in \Om \times U.
		\end{equation}
		Let $u_n \colon \Om \to U_\tt$ be a $\tt$-tight sequence. Then there exists a subsequence $n_k$ and $\nu \in \YY(\Om; U_\tt)$ such that, for a.e. $\om \in \Om$,
		\begin{equation} \label{eq: cnc inc}
			\nu_\om \text{ is concentrated on the set } \bigcap_{p = 1}^\i \overline{\left\{ u_{n_k}(\om) \colon k \ge p \right\}}^\tt
		\end{equation}
		and, if the sequence $\om \mapsto f_{n_k} \left( \om, u_{n_k}(\om) \right)^-$ of negative parts is equi-integrable, then
		\begin{equation} \label{eq: G-liminf}
			\int_U \int f_\i(\om, v) \, d \nu_\om(v) \, d \mu(\om) \le \liminf_{k \uparrow \i} \int f_{n_k}(\om, u_{n_k}(\om)) \, d \mu(\om).
		\end{equation}
	\end{theorem}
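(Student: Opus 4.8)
The strategy is to reduce the statement to a known Young measure theorem on a Polish space by exploiting the Suslin structure (\ref{eq: tt Suslin}) and metrizability of bounded sets (\ref{eq: tt metric}), then to transport the concentration and liminf conclusions back. First I would use the $\tt$-tightness assumption: the integrand $h$ furnishing tightness gives, via the Chebyshev/Markov inequality, that for every $\eta > 0$ there is $R_\eta > 0$ and a measurable set $\Om_\eta \subset \Om$ with $\mu(\Om \setminus \Om_\eta) < \eta$ on which all $u_n$ take values in the $\tt$-compact set $K_\eta = \{ u \st h(\om, u) \le R_\eta/\eta \} \subset B_{U}(0, \rho_\eta)$ for a suitable radius $\rho_\eta$ (here I use (\ref{eq: h sqt lsc}) to get boundedness of the sublevels and (\ref{eq: cvg bnd})/(\ref{eq: B U cpt}) to see they are in fact norm-bounded and sequentially $\tt$-compact). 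On each such $K_\eta$, the topology $\tt$ is induced by the metric $d_\tt$ of (\ref{eq: tt metric}), so $K_\eta$ is a compact metric space. By a standard diagonal exhaustion over $\eta = 1/m$, the sequence $u_n$ becomes, after removing a set of arbitrarily small measure, a sequence of measurable functions into a fixed compact metric space, to which the classical fundamental theorem on Young measures (e.g. the version in Ball's or Valadier's work, or \cite{FoLe}-type statements) applies: a subsequence $n_k$ and a Young measure $\nu^{(m)}$ on $K_{1/m}$ with the concentration property (\ref{eq: cnc inc}) a.e. on $\Om_{1/m}$.

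Next I would patch the pieces together. Since $\Om_{1/m} \uparrow \Om$ up to a null set and the subsequences can be chosen nested by diagonalization, the family $\{\nu^{(m)}_\om\}$ defines, for a.e. $\om$, a single Borel probability measure $\nu_\om$ on $U_\tt$ (each $\nu^{(m)}_\om$ is supported in $K_{1/m}$, and for $m' > m$ the measure $\nu^{(m')}_\om$ restricted to the relevant limit set agrees with $\nu^{(m)}_\om$ because both are the Young-measure limit of the same subsequence of the same sequence; concentration on $\bigcap_p \overline{\{u_{n_k}(\om) : k \ge p\}}^\tt$ is inherited directly). Measurability of $\om \mapsto \nu_\om(A)$ for $A \in \BB(U_\tt)$ follows from the measurability of each $\nu^{(m)}$ together with (\ref{eq: tt Suslin}), which guarantees $\BB(U_\tt)$ is countably generated on the $\sigma$-compact set $\bigcup_m K_{1/m}$ where all the mass lives. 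This establishes (\ref{eq: cnc inc}).

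For the $\Gamma$-$\liminf$ inequality (\ref{eq: G-liminf}), the argument is the familiar lower-semicontinuity-plus-Fatou scheme. Fix $T_0$; on each $\Om_{1/m}$ work with the compact metric space $K_{1/m}$. The sequentially $\tt$-normal integrands $f_n$, restricted to $\Om_{1/m} \times K_{1/m}$, are lower semicontinuous in the metric topology, and the hypothesis (\ref{eq: G-liminf assu}) is exactly the $\Gamma$-$\liminf$ bound needed; the equi-integrability of the negative parts $f_{n_k}(\cdot, u_{n_k}(\cdot))^-$ allows one to apply the generalized Fatou lemma for Young measures (valid on compact metric range, as in the classical statement) to get
\begin{equation*}
	\int_{\Om_{1/m}} \int f_\i(\om, v) \, d \nu_\om(v) \, d \mu(\om) \le \liminf_{k \uparrow \i} \int_{\Om_{1/m}} f_{n_k}(\om, u_{n_k}(\om)) \, d \mu(\om).
\end{equation*}
Since the integrands on the right may be assumed, by a further splitting of the equi-integrable negative parts, to be uniformly integrable, letting $m \to \i$ and using $\mu(\Om \setminus \Om_{1/m}) \to 0$ together with monotone convergence on the left (the inner integrals are $\ge$ an integrable minorant coming from equi-integrability) upgrades this to (\ref{eq: G-liminf}) over all of $\Om$.

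**Main obstacle.** The delicate point is the globalization: ensuring that the locally-constructed Young measures $\nu^{(m)}$ glue into a genuine $\AA$-measurable Young measure on the non-metrizable, merely Suslin space $U_\tt$, and that the concentration set in (\ref{eq: cnc inc}) — which is a $\tt$-closed set, not contained a priori in any single $K_{1/m}$ — actually carries all the mass. This is where (\ref{eq: tt Suslin}) is essential: it provides the measurable selection and projection theorems (cf. \cite{CaVa}) that make the limit object well-defined and measurable, and it guarantees that a Borel probability measure on $U_\tt$ is automatically Radon, hence concentrated on a $\sigma$-compact set, reconciling the global limit set with the exhausting compacta. The rest is bookkeeping with the diagonal subsequence and the equi-integrability hypothesis.
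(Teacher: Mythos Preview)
Your overall instinct---exploit that $\tt$ is metrized by $d_\tt$ on bounded sets and thereby reduce to a classical Young measure theorem on a Polish range---is exactly the paper's approach. The paper, however, does not localize to compact pieces $K_\eta$; instead it sets $\vvvert u\vvvert^2 \coloneqq d_\tt(u,0)$ and observes that with this quasi-norm in place of the norm, the proof of \cite[Thm.~4.3]{St} (and its preparatory \cite[Lem.~4.2]{St}) goes through verbatim, the only bookkeeping being to replace $\BB(U)$ by $\BB(U_\tt)$ when $U$ is not separable. So the paper works globally with the single metric $d_\tt$ and a single invocation of Stefanelli's argument, never introducing the exhausting compacta and the patching step.

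Your localization step has a genuine gap that is worth flagging. The tightness integrand $h$ depends on $\om$, so the sublevel set $\{u\colon h(\om,u)\le R\}$ is an $\om$-dependent compact set, not the fixed $K_\eta$ you write down; you cannot in general map all $u_n|_{\Om_\eta}$ into a single compact metric space this way. Moreover, Markov's inequality applied to $\int h(\om,u_n(\om))\,d\mu$ produces, for each $n$, an exceptional set $\{h(\om,u_n(\om))>R\}$ of small measure, but these sets vary with $n$; obtaining one set $\Om_\eta$ valid simultaneously for all $n$ needs an extra argument. Both issues can be repaired (e.g.\ by passing to the fixed norm balls $\overline{B}_r\subset U$, which are $\tt$-compact by (\ref{eq: B U cpt}), and using that $h(\om,u)\to\infty$ as $\|u\|\to\infty$ to control $\|u_n(\om)\|$ in measure), but the repair is precisely what makes the globalization you identify as the ``main obstacle'' delicate. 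The paper's choice to follow \cite{St} with the global $d_\tt$-quasi-norm bypasses this entirely: tightness is used once inside Stefanelli's machinery rather than being converted into an exhaustion by compacta, so no patching is needed.
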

	
	\begin{proof}
		Setting
		$$
		\vvvert u \vvvert^2 = d_\tt \left( u, 0 \right) \quad \forall u \in U
		$$
		for $d_\tt$ defined in (\ref{eq: tt metric}), the proof is analogous to the one of \cite[Thm. 4.3]{St} with our function $\vvvert \cdot \vvvert$ in place of the eponymous one therein. To prove the preparatory result \cite[Lem. 4.2]{St} in the current setting, note that $\BB(U)$ needs to be replaced by $\BB \left( U_\tt \right)$ in the statement and proof if $U$ is not separable, which causes no problems.
	\end{proof}
	
	We denote by $V_1 \left( \mu; U_\tt \right)$ the normed space of $\AA$-$\BB \left( U_\tt \right)$-measurable functions $v \colon \Om \to U_\tt$ with finite norm
	\begin{equation} \label{eq: fnt nrm 2}
		\| v \|_{V_1 \left( \mu; U_\tt \right)} = \int_\Om \| v(\om) \|_U \, d \mu(\om) < + \i.
	\end{equation}
	Here, the measurability of $v$ equivalently means that, for every $z \in Z$, the scalar function $\om \mapsto \langle v(\om), z \rangle$ is measurable.	Note that the integrand in (\ref{eq: fnt nrm 2}) is measurable in view of (\ref{eq: dual nrm}) and the separability of $Z$.
	
	\begin{corollary} \label{cor: f thm y mes}
		Let $v_n \in V_1 \left( \mu; U_\tt \right)$ be a bounded sequence. There exists a subsequence $n_k$ and a Young measure $\nu \in \YY \left( \Om; U_\tt \right)$ such that, for a.e. $\om \in \Om$, the relation (\ref{eq: cnc inc}) holds and, setting
		$$
		v(\om) = \int_U u \, d \mu_\om(u) \quad \text{for } \om \in \Om,
		$$
		we have
		\begin{equation} \label{eq: exp b cvg 2}
			v_{n_k} \to v \text{ in the biting sense of } V_1 \left( \mu; U_\tt \right),
		\end{equation}
		i.e., there exists a decreasing sequence of measurable sets $A_j$ with $\mu \left( \Om \setminus A_j \right) \to 0$ such that
		$$
		v_{n_k} \to v \text{ in } \ss \left( V_1 \left( \Om \setminus A_j; U_\tt \right) ; L_\i \left( \Om \setminus A_j ; Z \right) \right) \text{ for every } j \in \N.
		$$
		Moreover, if $f_n, f_\i$ are convex integrands as in Theorem \ref{thm: Y meas} for which the sequence $\om \mapsto f_{n_k}(\om, u_{n_k}(\om) )^{-}$ of negative parts is equi-integrable, then
		\begin{equation} \label{eq: G-liminf, cnvx}
			\int f_\i(\om, u(\om)) \, d \mu(\om) \le \liminf_{k \uparrow \i} \int f_{n_k}(\om, u_{n_k}(\om)) \, d \mu(\om).
		\end{equation}
	\end{corollary}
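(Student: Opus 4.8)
The plan is to derive all four assertions from Theorem \ref{thm: Y meas} together with the classical biting lemma, in the spirit of the proof of Proposition \ref{pr: Y ms cnstr}. First I would run the biting lemma \emph{before} extracting for the Young measure, to keep everything along a single subsequence: applied to the sequence $\om \mapsto \|v_n(\om)\|_U$, which is bounded in $L_1(\mu)$ since $\|v_n\|_{V_1(\mu; U_\tt)} = \int_\Om \|v_n(\om)\|_U \, d\mu$ is bounded, the biting lemma (e.g.\ \cite[Lem. 2.31]{FoLe}) produces a subsequence $n_k$ (not relabeled) and a non-increasing sequence of measurable sets $A_j$ with $\mu(\Om \setminus A_j) \to 0$ such that the family $\{\om \mapsto \|v_{n_k}(\om)\|_U : k \in \N\}$ is equi-integrable on every $\Om \setminus A_j$. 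With $n_k$ fixed, $\{v_{n_k}\}$ is $\tt$-tight: the integrand $h(\om, u) = \|u\|_U$ is non-negative, $\BB(U_\tt)$-measurable and sequentially $\tt$-lsc by (\ref{eq: nrm U lsc}), its sublevel sets are bounded hence $\tt$-compact by (\ref{eq: B U cpt}), and $\sup_k \int_\Om \|v_{n_k}\|_U \, d\mu < \infty$. Theorem \ref{thm: Y meas} then yields a further subsequence (not relabeled) and $\nu \in \YY(\Om; U_\tt)$ with the concentration property (\ref{eq: cnc inc}).

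Next I would build the barycenter. Applying (\ref{eq: G-liminf}) with the constant integrand sequence $f_n = f_\infty = \|\cdot\|_U$ (whose negative parts vanish) gives $\int_\Om \int_U \|u\|_U \, d\nu_\om(u)\, d\mu(\om) \le \liminf_k \|v_{n_k}\|_{V_1(\mu; U_\tt)} < \infty$, so $\om \mapsto \int_U \|u\|_U \, d\nu_\om(u)$ is integrable, in particular finite for a.e.\ $\om$. For such $\om$ the weak$^*$ barycenter $v(\om) = \int_U u \, d\nu_\om(u)$, i.e.\ the element of $Z^*$ with $\langle v(\om), z\rangle = \int_U \langle u, z\rangle \, d\nu_\om(u)$ for $z \in Z$, is well defined; since $\nu_\om$ is concentrated on the $\tt$-closed subset $\bigcap_p \overline{\{v_{n_k}(\om): k \ge p\}}^\tt$ of $U$ and $U$ is weak$^*$-closed in $Z^*$, a Hahn--Banach separation argument (using that the elements of $Z$ give exactly the $\tt$-continuous functionals) places $v(\om)$ in the $\tt$-closed convex hull of $\supp \nu_\om$, hence in $U$. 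The map $v$ is $\AA$-$\BB(U_\tt)$-measurable because each $\om \mapsto \langle v(\om), z\rangle$ is an integral against the Young measure, and $\|v\|_{V_1(\mu; U_\tt)} \le \int_\Om \int_U \|u\|_U \, d\nu_\om\, d\mu < \infty$, so $v \in V_1(\mu; U_\tt)$.

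For the biting convergence (\ref{eq: exp b cvg 2}) I would fix $j$ and $\phi \in L_\infty(\Om \setminus A_j; Z)$ and consider $g(\om, u) = \chi_{\Om \setminus A_j}(\om)\langle u, \phi(\om)\rangle$; approximating $\phi$ by simple functions shows $g$ is $\AA \otimes \BB(U_\tt)$-measurable, while $u \mapsto \langle u, \phi(\om)\rangle$ is $\tt$-continuous, so $g$ and $-g$ are sequentially $\tt$-normal and satisfy (\ref{eq: G-liminf assu}) with equality. Since $|g(\om, v_{n_k}(\om))| \le \|\phi\|_\infty \|v_{n_k}(\om)\|_U \chi_{\Om \setminus A_j}(\om)$, both $g(\cdot, v_{n_k}(\cdot))^-$ and $g(\cdot, v_{n_k}(\cdot))^+$ are equi-integrable by the choice of $A_j$. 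Using that the Young measure produced in Theorem \ref{thm: Y meas} controls the lower semicontinuity of \emph{every} admissible integrand along the fixed subsequence (the subsequence extraction in its proof being a diagonal argument over a countable generating family, independent of the integrand), I would apply (\ref{eq: G-liminf}) to $g$ and to $-g$ and invoke $\int_U g(\om, u)\, d\nu_\om(u) = \chi_{\Om \setminus A_j}(\om)\langle v(\om), \phi(\om)\rangle$ (the defining property of the barycenter) to get
\[
\lim_k \int_{\Om \setminus A_j} \langle v_{n_k}(\om), \phi(\om)\rangle\, d\mu(\om) = \int_{\Om \setminus A_j} \langle v(\om), \phi(\om)\rangle\, d\mu(\om),
\]
which is precisely $\ss(V_1(\Om \setminus A_j; U_\tt); L_\infty(\Om \setminus A_j; Z))$-convergence; as $j$ is arbitrary, (\ref{eq: exp b cvg 2}) follows. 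Finally, for convex $f_n, f_\infty$ as in Theorem \ref{thm: Y meas} with equi-integrable negative parts, (\ref{eq: G-liminf}) gives $\int_\Om \int_U f_\infty(\om, u)\, d\nu_\om\, d\mu \le \liminf_k \int f_{n_k}(\om, v_{n_k}(\om))\, d\mu$, and the Jensen inequality on the locally convex Hausdorff space $U_\tt$ (legitimate since $f_\infty(\om, \cdot)$ is convex and $\tt$-lsc and $\nu_\om$ has barycenter $v(\om)$) yields $\int_U f_\infty(\om, u)\, d\nu_\om(u) \ge f_\infty(\om, v(\om))$; combining the two gives (\ref{eq: G-liminf, cnvx}).

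The main obstacle, I expect, is not any single estimate but the structural bookkeeping: making sure one subsequence simultaneously carries the biting decomposition with equi-integrability on every $\Om \setminus A_j$ and a Young measure whose barycenter is the claimed limit, and justifying that this Young measure governs the lower semicontinuity of the test integrands $g(\om, u) = \langle u, \phi(\om)\rangle$ rather than only the integrand Theorem \ref{thm: Y meas} was fed. Running the biting lemma first and reading off the integrand-independence of the subsequence extraction from the proof of Theorem \ref{thm: Y meas} is how I would handle this; the remaining points (barycenter landing in $U$, joint measurability of $g$, Jensen on $U_\tt$) are routine given the standing hypotheses (\ref{eq: nrm U lsc})--(\ref{eq: tt Suslin}) on the pair $(U, Z)$.
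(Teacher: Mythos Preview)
Your proposal is correct and follows essentially the same route as the paper: biting lemma first via \cite[Lem.~2.31]{FoLe}, then Theorem~\ref{thm: Y meas} for the Young measure, then testing against the linear integrands $\langle \cdot, z(\om)\rangle$ for the biting convergence, and finally Jensen on the locally convex Hausdorff space $U_\tt$ for the convex addendum. Your concern about the Young-measure subsequence governing \emph{all} admissible integrands is already addressed by the statement of Theorem~\ref{thm: Y meas} itself (the subsequence depends only on the tight sequence, and (\ref{eq: G-liminf}) then holds for every integrand satisfying (\ref{eq: G-liminf assu}) with equi-integrable negative parts), so no inspection of its proof is needed.
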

	
	\begin{proof}
		Let $\e > 0$. Invoking \cite[Lem. 2.31]{FoLe}, we may extract from $v_n$ a subsequence (not relabeled) such that there exist sets $M_{n, \e} \in \AA$ with $\mu \left( \Om \setminus M_{n, \e} \right) \le 2^{- n} \e$ and $v_n \chi_{M_{n, \e} }$ is equi-integrable. Setting
		$$
		M_\e = \bigcap_{n \ge 1} M_{n, \e},
		$$
		we have $\mu \left( \Om \setminus M_\e \right) \le \e$ and $v_n \chi_{M_\e}$ is equi-integrable. As $\e$ may be arbitrarily small, we can iterate this construction to find suitable sets $M_{\e_i}$ and extract a diagonal sequence $n_k$ such that setting
		$$
		A_j = \bigcup_{i = 1}^j M_{\e_i}
		$$
		obtains an increasing sequence of measurable sets such that $\mu \left( \Om \setminus A_j \right) \to 0$ and, for each fixed $j \in \N$, the sequence $v_n \chi_{A_j}$ is equi-integrable. Applying Theorem \ref{thm: Y meas} to the integrands $\langle \cdot, z(\om) \rangle$ for $z \in L_\i \left( A_j ; Z \right)$ and the sequence $v_n$ yields a further subsequence $v_{n_k}$ such that (\ref{eq: exp b cvg 2}) obtains. To prove the addendum, we combine (\ref{eq: G-liminf}) with the Jensen inequality. Note in this regard that $U_\tt$ is a locally convex Hausdorff space since $Z$ separates its points by (\ref{eq: dual nrm}) so that every convex, lower semicontinuous, proper function on $U_\tt$ is the supremum of its affine, continuous minorants, hence the Jensen inequality holds.
	\end{proof}
	
	\paragraph{Remark.} The lower semicontinuity (\ref{eq: G-liminf, cnvx}) continues to hold if $v_n$ is not bounded in $V_1(\mu; U_\tt)$ but there exists a sequence $B_j \subset B_{j + 1} \subset \Om$ of measurable sets such that $\chi_{B_j} v_n$ is bounded for every $j \in \N$. This follows by an easy limiting argument using the equi-integrability assumption on the negative parts. If $v_n$ is strongly measurable with values in a Banach space, then a sufficient condition for such a bound is if $v_n$ is bounded in some generalized Orlicz space. This follows by Lemma \ref{lem: a emb}.
	
	\subsection{A chain rule for Young measures}
	
	In this appendix, we prove a Young measure version of the chain rule inequality (\ref{eq: ch ru}). We shall repeatedly use (\ref{eq: dd Suslin}) in the following.
	
	\begin{lemma} \label{lem: YM chain-rule ineq}
		Suppose that (\ref{eq: phi mb}), (\ref{eq: phi* mb}), (\ref{eq: phi.2.2.1}), (\ref{eq: phi.2.2.2}) hold and $\EE$ satisfies (\ref{eq: E_0}) and either (\ref{eq: ch ru}) or (\ref{eq: ch ru wkr}). Let $T_0 \in \cl \left[ 0, T \right)$ and $u \colon \left[ 0, T_0 \right] \to V_\dd$ be a $\LL(0, T_0)$-$\BB \left( V_\dd \right)$-measurable curve such that
		\begin{equation} \label{eq: assu 1}
			\left( t, u(t) \right) \in \dom(F) \text{ for a.e. t } \in \left( 0, T_0 \right) \text{ and } \sup_{0 \le t \le T_0} \EE_t \left( u(t) \right) < + \i.
		\end{equation}
		Let $\mu = \left( \mu_t \right)_{t \in (0, T_0)}$ be a Young measure in $V_\dd \times X_{\dd^*} \times \R$ such that
		\begin{equation} \label{eq: YM ch rl ass 2}
			\begin{gathered}
				\int_0^{T_0} \int \phi_{t, u(t)}(v) + \phi^*_{t, u(t)}(- \zeta) + | p | \, d \mu_t(v, \zeta, p) \, dt < \i;
				\\
				u'(t) = \int v \, d \mu_t(v, \zeta, p) \quad \text{for a.e } t \in \left( 0, T_0 \right);
				\\
				\xi \in F_t \left( u(t) \right), \, p \le P_t \left( u(t), \xi \right)
				\\
				\text{ for a.e. } t \in \left( 0, T_0 \right) \text{ and all } \left( v, \xi, p \right) \in \supp \left( \mu_t \right).
			\end{gathered}
		\end{equation}
		Then the map $t \mapsto \EE_t(u(t) )$ is of bounded variation on $\left( 0, T_0 \right)$. In addition, if (\ref{eq: ch ru}), then
		\begin{subequations}
			\begin{equation} \label{eq: YM chain-rule ineq}
				\frac{d}{dt} \EE_t(u(t) ) \ge \int \langle \zeta, u'(t) \rangle + p \, d \mu_t \left( v, \zeta, p \right) \quad \text{ in } \DD' \left( 0, T_0 \right)
			\end{equation}
			and if (\ref{eq: ch ru wkr}), then
			\begin{equation} \label{eq: YM chain-rule ineq 2}
				\frac{d}{dt} \EE_t(u(t) ) \ge \int \langle \zeta, u'(t) \rangle + p \, d \mu_t \left( v, \zeta, p \right) \quad \text{ for a.e. } t \in \left( 0, T_0 \right).
			\end{equation}
		\end{subequations}
	\end{lemma}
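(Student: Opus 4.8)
The plan is to feed the abstract chain rule \eqref{eq: ch ru} (resp.\ \eqref{eq: ch ru wkr}) not with one momentum curve but with a countable \emph{family} of admissible momentum selections extracted from the Young measure, to apply the chain rule along each of them, and then to reconstruct the barycentric right-hand side $\int[\langle\zeta,u'(t)\rangle+p]\,d\mu_t$ by taking a supremum over the family. First, since $\sup_{0\le t\le T_0}\EE_t(u(t))<\infty$, \eqref{eq: G controlled} gives $R>0$ with $\GG_{T_0}(u(t))\le R$ for all $t$; by Jensen's inequality (valid because $v\mapsto\phi_{t,u(t)}(v)$ is convex and $\dd$-lower semicontinuous on the locally convex space $V_\dd$) and $u'(t)=\int v\,d\mu_t$ we get $\phi_{t,u(t)}(u'(t))\le\int\phi_{t,u(t)}(v)\,d\mu_t$, so the integrability hypothesis in \eqref{eq: YM ch rl ass 2} yields $\int_0^{T_0}\phi_{t,u(t)}(u'(t))\,dt<\infty$, hence $u'\in V_1(0,T_0;V_\dd)$ by the conditioned linear growth \eqref{eq: phi.2.2.1}; thus $u$ is an admissible base curve.

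\textbf{One admissible selection; bounded variation.} Let $\nu_t=(\pi_{2,3})_\sharp\mu_t$, which is concentrated for a.e.\ $t$ on $\Sigma_t:=\{(\zeta,p):\zeta\in F_t(u(t)),\ p\le P_t(u(t),\zeta)\}$ by the support hypothesis, and $t\mapsto\int\phi^*_{t,u(t)}(-\zeta)\,d\nu_t$ is finite a.e. The multimap $t\rightrightarrows\{(\zeta,p)\in\Sigma_t:\phi^*_{t,u(t)}(-\zeta)\le\int\phi^*_{t,u(t)}(-\zeta')\,d\nu_t\}$ has $\nu_t$-positive, hence nonempty, values and a measurable graph (measurability of $\graph(F)$, $P$, $\phi^*$), so the Aumann theorem \cite[Thm.\ 6.10]{FoLe} gives a measurable selection $(\hat\xi,\hat p)$ with $\int_0^{T_0}\phi^*_{t,u(t)}(-\hat\xi(t))\,dt\le\int_0^{T_0}\!\int\phi^*_{t,u(t)}(-\zeta)\,d\nu_t\,dt<\infty$; by \eqref{eq: phi.2.2.2} and $\GG_{T_0}(u(t))\le R$ also $\hat\xi\in V_1(0,T_0;X_{\dd^*})$, and $\hat\xi(t)\in F_t(u(t))$. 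Hence $(u,\hat\xi)$ meets the premises of \eqref{eq: ch ru}/\eqref{eq: ch ru wkr} (if $T_0=T$, work on $(0,S)$ with $S\uparrow T_0$). Under \eqref{eq: ch ru wkr} the conclusion states directly that $t\mapsto\EE_t(u(t))$ is of locally bounded variation, hence BV on the bounded interval $(0,T_0)$ since $C_0\le\EE_t(u(t))\le\sup_t\EE_t(u(t))<\infty$. Under \eqref{eq: ch ru} the conclusion $\frac{d}{dt}\EE_t(u(t))\ge\langle\hat\xi,u'\rangle+P_\cdot(u,\hat\xi)$ in $\DD'(0,T_0)$ with an $L^1$ right-hand side exhibits $\frac{d}{dt}\EE_t(u(t))$ as (an $L^1$-density)$\,dt$ plus a nonnegative distribution, i.e.\ a signed Radon measure with \emph{nonnegative singular part}; in particular $t\mapsto\EE_t(u(t))$ is BV.

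\textbf{A countable admissible family and passage to the barycentre.} Put $h(t,\zeta):=\langle\zeta,u'(t)\rangle+P_t(u(t),\zeta)$ and, for $j\in\N$, let $g_j(t):=\esssup\{h(t,\zeta):(\zeta,p)\in\Sigma_t,\ \phi^*_{t,u(t)}(-\zeta)\le j\}$ with respect to $\nu_t$; this is measurable in $t$ and $\sup_j g_j(t)=\esssup_{\nu_t}h(t,\cdot)$ a.e.\ because $\phi^*_{t,u(t)}(-\zeta)<\infty$ for $\nu_t$-a.e.\ $\zeta$. For $j,m\in\N$, applying Aumann to $t\rightrightarrows\{(\zeta,p)\in\Sigma_t:\phi^*_{t,u(t)}(-\zeta)\le j,\ h(t,\zeta)>g_j(t)-\frac1m\}$ (using $\hat\xi$ where the set is empty) yields a measurable $\xi_{j,m}$ with $\xi_{j,m}(t)\in F_t(u(t))$, $\xi_{j,m}\in V_1(0,T_0;X_{\dd^*})$ and $\int_0^{T_0}\phi^*_{t,u(t)}(-\xi_{j,m}(t))\,dt\le jT_0+\int_0^{T_0}\phi^*_{t,u(t)}(-\hat\xi(t))\,dt<\infty$, so the chain rule applies to each $(u,\xi_{j,m})$. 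On the other hand, since $\nu_t$ lives on $\Sigma_t$, for a.e.\ $t$ one has $\esssup_{\nu_t}h(t,\cdot)\ge\int h(t,\zeta)\,d\nu_t=\langle\int\zeta\,d\mu_t,u'(t)\rangle+\int P_t(u(t),\zeta)\,d\mu_t\ge\int[\langle\zeta,u'(t)\rangle+p]\,d\mu_t$. Under \eqref{eq: ch ru wkr}, the chain rule gives $\frac{d}{dt}\EE_t(u(t))\ge\langle\xi_{j,m}(t),u'(t)\rangle+P_t(u(t),\xi_{j,m}(t))$ for a.e.\ $t$; since a countable union of null sets is null, for a.e.\ $t$, $\frac{d}{dt}\EE_t(u(t))\ge\sup_{j,m}[\cdots]\ge\sup_j g_j(t)=\esssup_{\nu_t}h(t,\cdot)\ge\int[\langle\zeta,u'(t)\rangle+p]\,d\mu_t$, which is \eqref{eq: YM chain-rule ineq 2}. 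Under \eqref{eq: ch ru}, each $a_{j,m}:=\langle\xi_{j,m},u'\rangle+P_\cdot(u,\xi_{j,m})\in L^1$ satisfies $\frac{d}{dt}\EE_t(u(t))\ge a_{j,m}\,dt$ as measures; comparing Lebesgue-absolutely-continuous parts (the singular part being nonnegative by the previous step) gives $g\ge a_{j,m}$ a.e.\ for the density $g$, hence $g\ge\sup_{j,m}a_{j,m}\ge\int[\langle\zeta,\cdot\rangle+p]\,d\mu_t$ a.e., and therefore $\frac{d}{dt}\EE_t(u(t))=g\,dt+(\text{nonneg.\ singular})\ge\big(\int[\langle\zeta,u'(t)\rangle+p]\,d\mu_t\big)\,dt$ in $\DD'(0,T_0)$, i.e.\ \eqref{eq: YM chain-rule ineq}.

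\textbf{Main obstacle.} The delicate point is producing, in the third step, measurable momentum curves that are \emph{simultaneously} admissible for the abstract chain rule — in particular with $\int_0^{T_0}\phi^*_{t,u(t)}(-\xi(t))\,dt<\infty$, which is the genuinely new difficulty here since $\phi^*_{t,u}$ may be $+\infty$-valued, in contrast to the finite dissipation potentials of \cite{MRS} — and rich enough that the chain-rule inequalities along them recover the full barycentre. Truncating at the levels $\{\phi^*_{t,u(t)}(-\zeta)\le j\}$ buys admissibility at the price of only the fibrewise $\nu_t$-essential supremum over a $\nu_t$-conull set, which is still $\ge$ the barycentre by Jensen; and turning a countable family of distributional inequalities into one requires the detour through the second step, namely that $\frac{d}{dt}\EE_t(u(t))$ is a true measure with nonnegative singular part. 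The measurability of $t\mapsto g_j(t)$ and the Aumann selections reduce to the measurability of $\graph(F)$, $P$, $\phi$, $\phi^*$ and of the family $t\mapsto\nu_t$, all of which are at hand; these are routine and I would relegate them to citations of the measurable-selection machinery.
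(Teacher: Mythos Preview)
Your proof is correct and follows essentially the same strategy as the paper's: extract a countable family of admissible measurable selections $(\xi,p)$ from the fibres of the Young measure, feed each into the abstract chain rule \eqref{eq: ch ru}/\eqref{eq: ch ru wkr}, and then recover the barycentric right-hand side by taking a supremum. The paper packages the selection step via a Castaing representation of the multimap $\KK(t,u(t))=\{(\xi,p):\xi\in F_t(u(t)),\,p\le P_t(u(t),\xi)\}$ together with an explicit sup--integral interchange identity (your $\sup_{j,m}$ step is precisely this interchange, proved by hand), whereas you build the selections directly by truncating at the levels $\{\phi^*_{t,u(t)}(-\zeta)\le j\}$ and approximating the fibrewise $\nu_t$-essential supremum of $h(t,\zeta)=\langle\zeta,u'(t)\rangle+P_t(u(t),\zeta)$; both devices serve the same purpose of guaranteeing $\int_0^{T_0}\phi^*_{t,u(t)}(-\xi(t))\,dt<\infty$ for each selection despite $\phi^*$ being possibly $+\infty$-valued, and both conclude by observing that the (essential) supremum over the fibre dominates the $\mu_t$-barycentre. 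Your decomposition of $\frac{d}{dt}\EE_t(u(t))$ into an $L^1$-density plus a nonnegative singular part, used to pass from countably many distributional inequalities to a single one, is the same mechanism the paper invokes (cf.\ the remark after \eqref{eq: ch ru} and the use of \cite[Cor.~1.25]{Le} in the proof of Theorem~\ref{thm: main 1}).
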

	
	\begin{proof}
		We indicate the necessary adaptions and additions to the proof of \cite[Prop. B.1]{MRS}. Throughout the proof, consider $X_{\dd^*}$ in place of $V^*$ with the corresponding Borel $\sigma$-algebra $\BB \left( X_{\dd^*} \right)$. Accordingly, the functions taking values in $V^*$ like $\xi_n, \xi_{n, k}$ are no longer strongly measurable but $\LL(0, T_0)$-$\BB \left( X_{\dd^*} \right)$-measurable. We work through the assertions of the old proof consecutively.
		
		Claim 1: Observe that $X_{\dd^*}$, by the separability of $Y$, is a Suslin locally convex space, thereby we still may apply \cite[Thm. III.22]{CaVa} to obtain a Castaing representation.
		
		Claim 2: Let $V_1 \left( 0, T_0; X_{\dd^*} \right)$ be the space of $\BB \left( X_{\dd^*} \right)$-measurable functions with the norm
		$$
		\| v \|_{V_1 \left( 0, T; X_{\dd^*} \right)} = \int_0^T \| v(t) \|_X \, dt.
		$$
		Replace the measurable map $\xi_n$ by the maps $\left( \xi_n , p_n \right) \colon \left( 0, T_0 \right) \to X_{\dd^*} \times \R$ and arrange that
		\begin{equation} \label{eq: int repr}
			\begin{gathered}
				\left( \xi_n, p_n \right) \in V_1 \left( 0, T_0; X_{\dd^*} \right) \times L_1 \left( 0, T_0; \R \right) \text{ for every } n \in \N \\
				\text{ and } \sup_n \int_0^{T_0} \phi^*_{t, u(t) } \left( - \xi_n(t) \right) + | p_n(t) | \, dt < + \i
			\end{gathered}
		\end{equation}
		instead of
		\begin{equation}
			\begin{gathered}
				\xi_n \in L_1 \left( 0, T_0; V^* \right) \text{ for every } n \in \N \\
				\text{ and } \sup_n \int_0^{T_0} \phi^*_{t, u(t) } \left( - \xi_n(t) \right) \, dt < + \i
			\end{gathered}
		\end{equation}
		by replacing the integrand $\xi \mapsto \phi^*_{t, u(t) } \left( - \xi \right)$ with $(\xi, p) \mapsto \phi^*_{t, u(t) } \left( - \xi \right) + | p |$ throughout the argument. To be on the safe side, replace the min operator in the definition of $\MM_{*}$ by an inf and note that (B.11) in the original proof is a result of (B.2) and (B.4). To conclude that $\overline{\xi} \in V_1\left(0, T_0; X_{\dd^*} \right)$, it suffices to have (\ref{eq: phi.2.2.2}) instead of the stronger superlinear growth assumption from \cite{MRS}.
		
		Claim 3: This part requires only trivial changes for (\ref{eq: YM chain-rule ineq 2}). To obtain (\ref{eq: YM chain-rule ineq}), let $\gamma \in C^1_c \left( 0, T_0 \right)$ with $\gamma \ge 0$ and consider the integrand $f \colon \left[ 0, T_0 \right] \times X_{\dd^*} \times \R \to \R \colon (t, \eta, p) \mapsto \left( \langle \eta, u'(t) \rangle_{X, V} + p \right) \cdot \gamma(t)$. Recall from \cite[Prop. B.1]{MRS} the measurable multimap
		$$
		\KK(t, u(t) ) = \left\{ (\xi, p) \in X_{\dd^*} \times \R \st \xi \in F_t(u(t) ), \, p \le P_t(u(t), \xi(t) ) \right\}
		$$
		for which there exists a Castaing representation, i.e., a sequence of measurable maps $\left( \xi_n, p_n \right) \colon \left[ 0, T_0 \right] \to X_{\dd^*} \times \R$ such that
		\begin{equation} \label{eq: Ca repr}
			\left\{ \left( \xi_n(t), p_n(t) \right) \st n \in \N \right\} \subset \KK(t, u(t) ) \subset \overline{\left\{ \left( \xi_n(t), p_n(t) \right) \st n \in \N \right\} }.
		\end{equation}
		Moreover, we arranged (\ref{eq: int repr}) for the representation. Let $K$ denote those measurable selections $t \mapsto \left( \xi(t), p(t) \right)$ from $K(t, u(t) )$ satisfying the integrability condition
		$$
		\int_0^{T_0} \phi^*_{t, u(t) } \left( - \xi(t) \right) + < p(t) | \, dt < + \i.
		$$
		We claim that
		\begin{equation} \label{eq: sup-int swtch}
			\sup_{(\xi, p) \in K} \int_0^{T_0} f \left( t, \xi(t), p(t) \right) \, dt = \int_0^{T_0} \sup_{(\xi, p) \in \KK(t, u(t) )} f(t, \xi, p) \, dt.
		\end{equation}
		To see this, let $g(t) \coloneqq \sup_{(\xi, p) \in \KK(t, u(t) ) } f(t, \xi, p)$, a measurable function by \cite[Lem. III.39]{CaVa}. Due to (\ref{eq: Ca repr}) and (\ref{eq: int repr}), for every $\e > 0$, the multimap
		$$
		\Gamma_\e(t) = \left\{ (\xi, p) \in \KK(t, u(t) ) \st \phi^*_{t, u(t) } \left( - \xi \right) < \i \text{ and } f(t, \xi, p) \ge g(t) - \e \right\}
		$$
		has non-empty values. It retains a measurable graph as the intersection of two such multimaps, hence it admits a measurable selection $t \mapsto (\xi(t), p(t) )$ by \cite[Lem. III.22]{CaVa}. We modify $t \mapsto (\xi(t), p(t) )$ by setting
		\begin{equation}
			\left( \overline{\xi}_m(t), \overline{p}_m(t) \right) =
			\begin{cases}
				(\xi(t), p(t) ) & \text{ if } \phi^*_{t, u(t) } \left( - \xi(t) \right) + | p(t) | \le m, \\
				(\overline{\xi}(t), \overline{p}(t) ) & \text{ else.}
			\end{cases}
		\end{equation}
		Then
		$$
		\int_0^{T_0} \phi^*_{t, u(t) } \left( - \overline{\xi}_m(t) \right) + | p_m(t) | \, dt < + \i \quad \forall m \in \N
		$$
		so that $\left( \overline{\xi}_m, \overline{p}_m \right)$ is an admissible competitor on the left side of (\ref{eq: sup-int swtch}). Since $\left( \overline{\xi}_m(t), \overline{p}_m(t) \right) = (\xi(t), p(t) )$ eventually for every $t \in \left[ 0, T_0 \right]$ as $m \to + \i$, we find
		\begin{equation}
			\liminf_m \int_0^{T_0} f \left( t, \overline{\xi}_m(t), \overline{p}_m(t) \right) \, dt \ge \int_0^{T_0} g(t) \, dt - \e T_0,
		\end{equation}
		whence (\ref{eq: sup-int swtch}) follows by the arbitrariness of $\e > 0$. Using (\ref{eq: sup-int swtch}) and (\ref{eq: ch ru}), we find
		\begin{align*}
			- \int_0^{T_0} \EE_t \left( u(t) \right) \cdot \gamma'(t) \, dt
			& \ge \sup_{(\xi, p) \in K} \int_0^{T_0} f \left( t, \xi(t), p(t) \right) \, dt \\
			& = \int_0^{T_0} \sup_{(\xi, p) \in \KK(t, u(t) )} f(t, \xi, p) \, dt \\
			& \ge \int_0^{T_0} \int f(t, \eta, p) \, \mu_t(v, \eta, p) \, dt \\
			& = \int_0^{T_0} \int \langle \eta, u'(t) \rangle + p \, \mu_t(v, \eta, p) \cdot \gamma(t) \, dt,
		\end{align*}
		which proves (\ref{eq: YM chain-rule ineq}) since $\gamma \in C^1_c \left( 0, T_0 \right)$ with $\gamma \ge 0$ is arbitrary.
	\end{proof}
	
	\begin{lemma} \label{lem: mes sel}
		Suppose (\ref{eq: phi mb}), (\ref{eq: phi* mb}), (\ref{eq: phi.2.2.1}), (\ref{eq: phi.2.2.2}) and let $\EE$ satisfy (\ref{eq: E_0}), (\ref{eq: absolute  continuity}), (\ref{eq: radially time differentiable}), and (\ref{eq: cc cnclsn}). Let $u \colon \left[ 0, T \right] \to V_\dd$ be a measurable curve satisfying (\ref{eq: assu 1}) and suppose that the set
		\begin{gather*}
			\SS \left( t, u(t), u'(t) \right) \coloneqq \\
			\left\{ (\zeta, p) \in X \times \R \st \zeta \in - \p \phi_{t, u(t) } \left( u'(t) \right) \cap F_t(u(t) ) , \, p \le P_t \left( u(t), \zeta \right) \right\}
		\end{gather*}
		is non-empty for almost every $t \in \left( 0, T_0 \right)$. Then there exist measurable functions $\xi \colon \left( 0, T_0 \right) \to X_{\dd^*}$ and $p \colon \left( 0, T_0 \right) \to \R$ such that
		\begin{equation} \label{eq: Argmin selection}
			\begin{gathered}
				\left( \xi(t), p(t) \right) \in \Argmin \left\{ \phi^*_{t, u(t) } (-\zeta) - p \st (\zeta, p) \in \SS \left( t, u(t), u'(t) \right) \right\} \\
				\text{ for a.e. } t \in \left( 0, T_0 \right).
			\end{gathered}
		\end{equation}
	\end{lemma}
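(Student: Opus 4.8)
The strategy is the standard route to a measurable selection of the argmin of a normal integrand over a measurable multimap: show that the constraint multimap $t \mapsto \SS \left( t, u(t), u'(t) \right)$ has a measurable graph, that the objective integrand is measurable, that its infimum over each fibre is attained for a.e.\ $t$, and then select measurably from the resulting nonempty, measurable-graphed argmin. Writing $g(t, \zeta, p) = \phi^*_{t, u(t)}(-\zeta) - p$, the objective is $\LL(0, T_0) \otimes \BB \left( X_{\dd^*} \times \R \right)$-measurable by \eqref{eq: phi* mb} composed with the curve $u$. The fibre $\SS \left( t, u(t), u'(t) \right)$ is the intersection of three sets: the graph of $t \mapsto - \p \phi_{t, u(t)}(u'(t))$, which by the Fenchel--Young characterization $\zeta \in - \p \phi_{t, u(t)}(u'(t)) \iff \phi_{t, u(t)}(u'(t)) + \phi^*_{t, u(t)}(-\zeta) = \langle - \zeta, u'(t) \rangle$ is measurable once $\phi$, $\phi^*$ and the pairing (measurable by \eqref{eq: V X mb}) are composed with the $\LL(0, T_0)$-$\BB(V_\dd)$-measurable curves $u, u'$ — exactly as in the measurability argument following \eqref{eq: var interpol xi}; the set $\left\{ (t, \zeta) \st \zeta \in F_t(u(t)) \right\}$, which is the preimage of $\graph(F) \in \LL_I \otimes \BB \left( D_\dd \times X_{\dd^*} \right)$ under $t \mapsto (t, u(t))$; and the set $\left\{ (t, \zeta, p) \st p \le P_t(u(t), \zeta) \right\}$, measurable by \eqref{eq: radially time differentiable}. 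Hence $t \mapsto \SS \left( t, u(t), u'(t) \right)$ has a measurable graph, and since $X_{\dd^*} \times \R$ is Suslin, the marginal function $m(t) = \inf \left\{ g(t, \zeta, p) \st (\zeta, p) \in \SS \left( t, u(t), u'(t) \right) \right\}$ is Lebesgue measurable by the Suslin projection theorem \cite[Thm. III.23]{CaVa}.

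The heart of the matter is the attainment of $m(t)$ for a.e.\ $t$. Fix $t$ outside the null set where $\SS \left( t, u(t), u'(t) \right) = \emptyset$ or $f_1$ or $f_2$ is infinite. By \eqref{eq: assu 1} and \eqref{eq: G controlled}, $R := \sup_{0 \le s \le T_0} \EE_s(u(s)) < \i$ controls $\GG_{T_0}(u(t))$; moreover $\SS \left( t, u(t), u'(t) \right) \ne \emptyset$ forces $\p \phi_{t, u(t)}(u'(t)) \ne \emptyset$, hence $\phi_{t, u(t)}(u'(t)) < \i$. On the constraint set $\phi^*_{t, u(t)}(-\zeta) \ge 0$ and, by \eqref{eq: radially time differentiable}, $p \le P_t(u(t), \zeta) \le \GG_{T_0}(u(t)) f_2(t)$, so $g(t, \zeta, p) \ge - \GG_{T_0}(u(t)) f_2(t) > - \i$ and $m(t)$ is finite. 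For a minimizing sequence $(\zeta_n, p_n)$ the upper bound on $g(t, \zeta_n, p_n)$ makes $\phi^*_{t, u(t)}(-\zeta_n)$ bounded, hence $\| \zeta_n \|_X$ bounded by \eqref{eq: phi.2.2.2}, and then $p_n$ bounded as well; using the analogues for $X_{\dd^*}$ of the sequential compactness \eqref{eq: B V cpt} and metrizability \eqref{eq: dd metric} of norm-bounded sets, I pass to a subsequence with $\zeta_n \dddto \zeta$ and $p_n \to p$.

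It remains to check $(\zeta, p) \in \SS \left( t, u(t), u'(t) \right)$ with $g(t, \zeta, p) \le m(t)$. The constraints $\zeta \in F_t(u(t))$ and $p \le P_t(u(t), \zeta)$ pass to the limit by the closedness implication \eqref{eq: cc cnclsn} applied with the constant sequence $u_n \equiv u(t)$ (for which $\limsup_n | u_n | + \EE_t(u_n) < \i$ is automatic). For the subdifferential constraint I would exploit that on $- \p \phi_{t, u(t)}(u'(t))$ the Fenchel--Young equality rewrites the objective as the affine expression $\phi^*_{t, u(t)}(-\zeta_n) = \langle - \zeta_n, u'(t) \rangle - \phi_{t, u(t)}(u'(t))$; combining lower semicontinuity of $\phi^*_{t, u(t)}$ with the always-valid Fenchel--Young inequality $\phi_{t, u(t)}(u'(t)) + \phi^*_{t, u(t)}(-\zeta) \ge \langle - \zeta, u'(t) \rangle$ then forces $- \zeta \in \p \phi_{t, u(t)}(u'(t))$ and simultaneously $g(t, \zeta, p) \le \liminf_n g(t, \zeta_n, p_n) = m(t)$, so $(\zeta, p)$ realizes $m(t)$.

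Finally, the argmin multimap $t \mapsto \left\{ (\zeta, p) \in \SS \left( t, u(t), u'(t) \right) \st g(t, \zeta, p) = m(t) \right\}$ has a measurable graph, being the graph of $t \mapsto \SS \left( t, u(t), u'(t) \right)$ intersected with the measurable set $\left\{ (t, \zeta, p) \st g(t, \zeta, p) = m(t) \right\}$, and nonempty values by the previous step; so it admits an $\LL(0, T_0)$-measurable selection $t \mapsto (\xi(t), p(t))$ by the von Neumann--Aumann selection theorem \cite[Thm. III.22]{CaVa}, which is \eqref{eq: Argmin selection}. The main obstacle is the attainment step, specifically the $\dd^*$-closedness of $\SS \left( t, u(t), u'(t) \right)$: the $F_t$- and $P_t$-parts are exactly what \eqref{eq: cc cnclsn} delivers, but closing up the condition $- \zeta \in \p \phi_{t, u(t)}(u'(t))$ — equivalently, the lower semicontinuity of the affine-on-the-subdifferential objective along $\dd^*$-convergent bounded sequences — rests on the interplay between the dual dissipation topology $\dd^* = \ss(X, Y)$, the lower semicontinuity of the partial conjugate $\phi^*$, and the continuity of the pairing $\langle \, \cdot \, , u'(t) \rangle$ on norm-bounded subsets of $X_{\dd^*}$; this is the delicate point, and the reason the degenerate hypothesis \eqref{eq: cc cnclsn} is invoked here in place of a full topological closedness statement for $\p \phi$.
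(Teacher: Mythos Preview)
Your argument follows the same route as the paper's, which is itself a terse adaptation of \cite[Lem.~B.2]{MRS}: establish that the constraint multimap $t \mapsto \SS(t,u(t),u'(t))$ has a measurable graph, prove attainment of the infimum fibrewise via the coercivity \eqref{eq: phi.2.2.2} together with sequential $\dd^*$-compactness of norm-bounded sets in $X$ and the closedness implication \eqref{eq: cc cnclsn}, and then select measurably from the argmin via \cite[Thm.~III.22]{CaVa}. The paper wraps the measurability of the argmin multimap into a single invocation of \cite[Lem.~III.39]{CaVa}, whereas you unfold it through the Suslin projection theorem and explicit intersections; both are valid and equivalent here.

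One point deserves correction. You correctly isolate the delicate step as the $\dd^*$-closedness of the constraint $-\zeta \in \p\phi_{t,u(t)}(u'(t))$, but your closing sentence misattributes its resolution: \eqref{eq: cc cnclsn} is a closedness statement for the pair $(F,P)$ only and says nothing about $\p\phi$. What your Fenchel--Young argument actually requires are (a) sequential $\dd^*$-lower semicontinuity of $\xi \mapsto \phi^*_{t,u(t)}(\xi)$ on norm-bounded sets and (b) $\dd^*$-continuity of $\xi \mapsto \langle \xi, u'(t) \rangle_{X,V}$ there. Neither is literally among the hypotheses of the lemma; in the paper's principal instantiation (Example~3 following \eqref{eq: V X mb}, where $X = V^*$ and $Y = V$, so $\dd^* = \ss(V^*,V)$), both hold automatically because $\phi^*_{t,u}$ is the conjugate for that very duality and $u'(t) \in V = Y$. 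The paper's own proof is equally brief on this point, simply directing the reader back to the reflexive setting of \cite{MRS}; so your argument is not weaker than the paper's, but your diagnosis of which hypothesis carries this particular load should be amended.
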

	
	\begin{proof}
		We indicate the necessary adaptions to the proof of \cite[Lem. B.2]{MRS}. To see that
		$$
		\Argmin \left\{ \phi^*_{t, u(t) }(-\zeta) - p \st (\zeta, p) \in \SS \left( t, u(t), u'(t) \right) \right\} \not = \emptyset \text{ for a.e. } t \in \left( 0, T_0 \right),
		$$
		we can use (\ref{eq: G controlled}), (\ref{eq: radially time differentiable}), and (\ref{eq: cc cnclsn}) as in the original proof up to now permitting every constant to depend on $t \in \left( 0, T \right)$. To bound a sequence $\xi_n \in F_t \left( u(t) \right)$ such that
		$$
		\phi^*_{t, u(t) } \left( - \xi_n \right) < + \i,
		$$
		it suffices to have (\ref{eq: phi.2.2.2}). Finally, we need to replace weak convergence in $V^*$ by the one in $X_{\dd^*}$. As the ball $B_X$ is sequentially compact in $\dd^*$, this is possible.
		
		Now, to prove that $\SS \left( t, u(t), u'(t) \right) \not = \emptyset$, we can modify the old argument as follows: Consider $X_{\dd^*}$ and $\BB \left( X_{\dd^*} \right)$ instead of $V^*$ everywhere and observe that, by \cite[Lem. III.39]{CaVa}, the Argmin multimap in (\ref{eq: Argmin selection}) is measurable thus permits a Lebesgue measurable selection by \cite[Thm. III.22]{CaVa}.
	\end{proof}

	\section{Subdifferential calculus}
	
	In this section, we introduce and give a short account of the theory of functions that are semiconvex with respect to a general non-negative extended real-valued modulus function and collect subdifferential calculus results used in the main part.
	
	Throughout the section, let $X$ be a (real) locally convex Hausdorff space in duality with another such space $X^*$, $p \colon X \to \left[ 0, \i \right)$ a continuous seminorm, $\om \colon X \times X \to \left[ 0, \i \right]$ any function and $f \colon X \to \left( - \i, \i \right]$ a proper function, $x_0 \in X$ a point with $f(x_0) < +\i$. We say that $p$ norms a subspace $Y \subset X$ if its restriction to $Y$ is a norm. We denote by $\mm$ the Mackey topology and by $\ss$ the weak topology. We write $x_\alpha \to_f x$ to mean $x_\alpha \to x$ and $f(x_\alpha) \to f(x)$. For $v \in X$ and $h \in \R$, we set
	$$
	\delta_h f(x_0; v) \coloneqq \frac{f(x_0 + h v) - f(x_0)}{h}.
	$$
	The (lower) Dini derivative of $f$ at $x_0$ towards $v$ is given by
	$$
	f'_D \left(x_0; v \right) \coloneqq \liminf_{h \downarrow 0} \delta_h f(x_0; v) \in \left[ - \i, \i \right].
	$$
	The upper Dini derivative $f'_{D^+} \left( x_0; v \right)$ arises by replacing the lower limit with the upper one. If $f'_D \left(x_0; v \right) = f'_{D^+} \left( x_0; v \right)$, then we call this common value the radial derivative of $f$ at $x_0$ towards $v$ and denote it by $f'_r \left( x_0; v \right)$. If the radial derivative exists in all directions, then we say that $f$ is radially differentiable at $x_0$. The Dini-Hadamard subderivative of $f$ at $x_0$ towards $v$ is given by
	$$
	f'_H \left(x_0; v \right) \coloneqq \liminf_{h \downarrow 0, u \to v} \delta_h f(x_0; u) \in \left[ - \i, \i \right].
	$$
	The above subderivatives are positively homogeneous with respect to the direction. The subdifferential belonging to such a subderivative is given by
	$$
	\p_i f(x_0) = \left\{ x' \in X^* \st x' \le f'_i \left( x_0; \cdot \right) \right\} = \dom f'_i \left( x_0; \cdot \right)^*, \quad i \in \left\{ D, D^+, H, r \right\}.
	$$
	
	\subsection{Semiconvexity}
	
	In this subsection, we investigate a class of functions exhibiting favorable subdifferential calculus rules. The class consists of those functions satisfying the Jensen inequality up to an error term, a property we call semiconvexity. Thus, convexity serves as our base line from which deviation is interpreted as a perturbation. The precise strength of the results available for a given semiconvex function will, of course, depend on the particular error term or modulus of semiconvexity, our approach being that we seek to characterize the worst acceptable behavior of an error that allows to recover a result that is analogous to the convex case. We are not the first to consider functions that may be fruitfully interpreted as being convex up to a perturbation. For example, various authors considered a proper subclass of the functions we investigate under the name approximately convex functions, cf., e.g., \cite{DaJuLa, NgLT, NgPe} and the references therein. In contrast to earlier works, we do not presuppose a certain behavior of the perturbation.
	
	\begin{definition} \label{def: smcnvx}
		The function $f$ is $p$-$\om$-semiconvex at $x_0$ if there exists a continuous seminorm $p$ such that, for all $x_1 \in X$ and $x_\lambda = \lambda x_1 + (1 - \lambda) x_0$ with $\lambda \in (0, 1)$, there holds
		\begin{equation} \label{eq: f smcnvx}
			f \left( x_\lambda \right) \le \lambda f(x_1) + (1 - \lambda) f(x_0) + \lambda (1 - \lambda) \om (x_1, x_0) p(x_1 - x_0).
		\end{equation}
		We say that $f$ is midpoint $p$-$\om$-semiconvex at $x_0$ if, for all $x_1 \in X$, there holds (\ref{eq: f smcnvx}) whenever $\lambda = \frac{1}{2}$.
	\end{definition}
	
	To clarify the scope of our theory, facilitating its application, we want to equivalently describe semiconvexity in different terms. For this, we generalize \cite[Thm. 10]{NgPe}, where a subdifferential characterization for approximately convex functions is proved. In contrast to \cite{NgPe}, we do not assume that the subdifferential to be valuable on $X$. We start with an auxiliary result and a slight extension of the Zagrodny mean value theorem needed in the following.
	
	\begin{definition} \label{def: preSD}
		Let $X$ be a Banach space and
		$$
		\p \colon \left( - \i, \i \right]^X \times X \to \PP \left( X^* \right) \colon (f, x) \mapsto \p f(x)
		$$
		a multimap. We call $\p$ a presubdifferential for a function $f \in \left( - \i, \i \right]^X$ if
		\begin{enumerate}
			\item $\p f(x) = \emptyset$ if $f(x) = \i$;
			\item $\p f(x) = \p g(x)$ if $f$ and $g$ agree around $x$;
			\item $\p f(x) = \p_{FM} f(x) = \left\{ \xi \in X^* \st f(y) \ge f(x) + \langle \xi, y - x \rangle \quad \forall y \in X \right\}$ for all $x \in X$ whenever $f \in \Gamma(X)$;
			\item $0 \in f(x)$ if $x$ is a local minimizer of $f$;
			\item $0 \in \limsup_{y \underset{f}{\to} x} \p f(y) + \p_{FM} g(x)$ whenever $g \in \Gamma(X)$ is continuous.
		\end{enumerate}
		Here, $\p_{FM}$ denotes the Fenchel-Moreau subdifferential of convex analysis.
	\end{definition}
	
	The upper set limit is the sequential strong-weak*-limit
	$$
	\limsup_{y \to_f x} \p f(y) = \left\{ \xi \in X^* \st \exists y_n \to_f x \land \xi_n \in \p f(y_n) \colon \xi_n \weakast \xi \right\}.
	$$
	
	\begin{proposition} \label{pr: seminorm SD}
		Let $p \colon X \to \left( - \i, \i \right]$ be a sublinear, lower semicontinuous function. Then the subdifferential $K = \p p(0)$ is non-empty and there holds $p(x) = \sup_{x' \in K} \langle x', x \rangle$ for all $x \in X$.
	\end{proposition}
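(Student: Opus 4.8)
The plan is to identify $p$ with the support function of $K$ by means of the Fenchel–Moreau biconjugation theorem. Throughout I take $p$ to be proper, i.e. $p \not\equiv +\i$; this is necessary, since otherwise $\p p(0) = \emptyset$. First I would observe that $p(0) = 0$: subadditivity gives $p(0) = p(0+0) \le 2p(0)$, hence $p(0) \ge 0$ as $p$ takes values in $\left( - \i, \i \right]$; and choosing any $x_0$ with $p(x_0) < \i$, positive homogeneity yields $p(t x_0) = t\, p(x_0) \to 0$ as $t \downarrow 0$, so lower semicontinuity forces $p(0) \le \liminf_{t \downarrow 0} p(t x_0) = 0$. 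Consequently, unwinding the definition of the Fenchel–Moreau subdifferential, $x' \in K = \p p(0)$ precisely when $p(y) \ge \langle x', y \rangle$ for all $y \in X$, so that $K = \{ x' \in X^* : \langle x', \cdot \rangle \le p \}$; in particular the inequality $\sup_{x' \in K} \langle x', x \rangle \le p(x)$ is immediate.

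Next I would compute the convex conjugate $p^*$. Since $p$ is positively homogeneous, for every $x'$ the function $x \mapsto \langle x', x \rangle - p(x)$ is itself positively homogeneous of degree $1$, so its supremum over $X$ equals $0$ when $\langle x', x \rangle \le p(x)$ for all $x$ (the supremum is then $\le 0$ and is attained, with value $0$, at $x = 0$) and equals $+\i$ otherwise. Hence $p^* = I_K$, the convex-analytic indicator of $K$. Because $p$ is proper, convex (sublinearity implies convexity) and lower semicontinuous, the Fenchel–Moreau theorem gives $p = p^{**} = (I_K)^* = \sup_{x' \in K} \langle x', \cdot \rangle$, which is the asserted representation.

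It then remains to record that $K \ne \emptyset$: if $K$ were empty, $(I_K)^*$ would be identically $-\i$, contradicting that $p = p^{**}$ is proper; equivalently, a proper lower semicontinuous convex function always possesses a continuous affine minorant $\langle x', \cdot \rangle + c$, and the homogeneity argument above (letting the scaling parameter tend to $+\i$ in $\langle x', x \rangle + c/t \le p(x)$) shows such an $x'$ lies in $K$. The one step that genuinely uses the lower semicontinuity hypothesis in a non-formal way is the biconjugation identity $p = p^{**}$ — equivalently, the existence of a continuous affine minorant — which rests on a Hahn–Banach separation of the closed convex epigraph $\epi p$ from points lying strictly below it in $X \times \R$, together with the non-verticality of the separating hyperplane guaranteed by $p > -\i$. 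Since $p$ is merely lower semicontinuous and not continuous, a direct Hahn–Banach extension of a linear functional dominated by $p$ on a line is unavailable, so this epigraphical argument is the crux; the remaining computations are routine.
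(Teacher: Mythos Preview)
Your proof is correct and follows essentially the same route as the paper's: both arguments rest on the Fenchel--Moreau representation of a proper, lower semicontinuous convex function as the supremum of its continuous affine minorants, followed by the observation that positive homogeneity forces the constant term of any such minorant to be nonpositive, so one may take the supremum over linear minorants, i.e., over $K = \p p(0)$. Your presentation makes the intermediate step $p^* = I_K$ explicit, whereas the paper invokes the affine-minorant form directly and simply notes that if $x' + \alpha \le p$ then $x' \le p$ by homogeneity; the content is the same.
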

	
	\begin{proof}
		There holds $p(0) = 0$ by definition of a sublinear function so that we may express the proper function $p$ as a supremum of affine continuous functions by \cite[§3.3, Cor. 1]{IT}. If $x' + \alpha \le p$ for $x' \in X^*$ and $\alpha \in \R$, then $x' \le p$ by positive homogeneity. Therefore, we may express $p$ as a supremum of linear continuous functions $x'$ with $x' \le p$ thus as the supremum over $x' \in \p p(0)$. In particular, the subdifferential is non-empty.
	\end{proof}
	
	\begin{theorem} \label{thm: zgrdn mvth}
		Let $\left( X, p \right)$ be a Banach space, $f \colon X \to \left( - \i, \i \right]$ be a lower semicontinuous function, $\p$ be a presubdifferential for $f$. If $a, b \in \dom(f)$ with $a \ne b$, then there exist $x_n \to_f c \in \left[ a, b \right)$, $x^*_n \in \p f(x_n)$ such that
		\begin{enumerate}
			\item $f(b) - f(a) \le \lim_n \langle x^*_n, b - a \rangle$;
			\item $\frac{p(x - c)}{p(b - a)} \left( f(b) - f(a) \right) \le \lim_n \langle x^*_n, x - x_n \rangle \quad \forall x \in \bigcup_{m \ge 1} \left[ c, m(b - c) \right]$;
			\item $p(b - a) \left( f(c) - f(a) \right) \le p(c - a) \left( f(b) - f(a) \right)$.
		\end{enumerate}
	\end{theorem}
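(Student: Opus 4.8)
The statement is a version of Zagrodny's approximate mean value theorem fitted to the abstract presubdifferential of Definition~\ref{def: preSD}, and the plan is to run the classical argument in this setting: a Hahn--Banach normalisation along the chord $[a,b]$, Ekeland's variational principle applied to a lower semicontinuous perturbation of $f$ adapted to the segment, and then the axioms of Definition~\ref{def: preSD} — the local-minimum rule, the fuzzy sum rule, and localisation — to harvest the approximate subgradients $x_n^*$.

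\textbf{Step 1 (reduction to the chord).} Since $p(b-a)>0$ and $(X,p)$ is a Banach space, Hahn--Banach furnishes $x^*\in X^*$ with $\langle x^*,x\rangle\le p(x)$ for all $x\in X$ and $\langle x^*,b-a\rangle=p(b-a)$. Set $r=\bigl(f(b)-f(a)\bigr)/p(b-a)$ and $h(x)=f(a)+r\langle x^*,x-a\rangle$; then $h,-h\in\Gamma(X)$ are continuous, $h(a)=f(a)$, $h(b)=f(b)$, and $g:=f-h$ is lower semicontinuous with $g(a)=g(b)=0$. Two applications of the fuzzy sum rule give $\partial g=\partial f-rx^*$ (alternatively one keeps $h$ explicit throughout). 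Writing $c=a+t_0(b-a)$ and using $\langle x^*,b-c\rangle=(1-t_0)\,p(b-a)$ and $p\bigl(s(b-c)\bigr)=s(1-t_0)\,p(b-a)$ for $s\ge0$, a direct substitution shows that (1), (2) (read along the half-line $\{c+s(b-c):s\ge0\}$) and (3) are equivalent, respectively, to $\liminf_n\langle y_n^*,b-a\rangle\ge0$, to $\liminf_n\langle y_n^*,\,c+s(b-c)-x_n\rangle\ge0$ for every $s\ge0$, and to $g(c)\le0$, with $y_n^*:=x_n^*-rx^*\in\partial g(x_n)$. Finally, replacing $g$ by $\max\{g,-N\}$ for $N>\max\{0,-\min_{[a,b]}g\}$ and invoking localisation to recover $\partial g$ on the sublevel set $\{g>-N\}$ (which will contain $c$ and all points produced below), I may assume $g$ bounded below; from now on only $g$ matters.

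\textbf{Step 2 (base point).} Let $\rho(t)=g\bigl(a+t(b-a)\bigr)$, lower semicontinuous on $[0,1]$ with $\rho(0)=\rho(1)=0$. The closed set $\{\rho=\min_{[0,1]}\rho\}$ meets $[0,1)$ and hence has a largest element $t_0\in[0,1)$; put $c=a+t_0(b-a)\in[a,b)$, so $g(c)=\min_{[0,1]}\rho\le0$, which is (3). Choosing the \emph{last} minimiser guarantees $g>g(c)$ on $(c,b]$; this is what will fix the sign of $\langle y_n^*,b-c\rangle$ in the limit. For the subgradients I apply Ekeland's variational principle to the lower semicontinuous, bounded-below function $g$ perturbed by a steep continuous convex ``soft slab'' $\max\{0,-\kappa\langle x^*,\cdot-c\rangle\}+\max\{0,\kappa(\langle x^*,\cdot-c\rangle-\langle x^*,b-c\rangle)\}$ (with $\kappa$ large), a gentle tilt $\varepsilon_0\langle x^*,\cdot\rangle$ pushing the minimiser toward the face through $c$ and off the face through $b$, and, after Ekeland, a term $\sqrt\varepsilon\,p(\cdot-z)$ at the approximate minimiser $z$; all added functions lie in $\Gamma(X)$ and are continuous, so no non-continuous indicator enters. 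The local-minimum rule and repeated use of the fuzzy sum rule — the Fenchel--Moreau subgradients of the penalty and tilt being multiples of $x^*$, those of $\sqrt\varepsilon\,p(\cdot-z)$ lying in $\sqrt\varepsilon\,\partial p(0)$ by Proposition~\ref{pr: seminorm SD} — then produce $x_\varepsilon$ with $p(x_\varepsilon-z)$, $|g(x_\varepsilon)-g(z)|$ small and $y_\varepsilon^*\in\partial g(x_\varepsilon)$ of the form $y_\varepsilon^*=\mu_\varepsilon x^*+(\text{a }\sqrt\varepsilon\text{-small term})+(\text{error}\to0)$ with $\mu_\varepsilon\ge-\varepsilon_0$. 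Reading off $\langle y_\varepsilon^*,b-a\rangle$ and $\langle y_\varepsilon^*,c+s(b-c)-x_\varepsilon\rangle$, using $\langle x^*,b-a\rangle=p(b-a)>0$, $\langle x^*,b-c\rangle\ge0$, $x_\varepsilon\to c$, $g(x_\varepsilon)\to g(c)$, and letting $\kappa\to\infty$, $\varepsilon,\varepsilon_0\to0$ along a sequence on which the (affine-in-$s$) scalars $\langle x_n^*,b-a\rangle$, $\langle x_n^*,c-x_n\rangle$, $\langle x_n^*,b-c\rangle$ converge (boundedness from the Ekeland estimates), one gets the reduced forms of (1) and (2) with $\lim$; setting $x_n^*=y_n^*+rx^*$ and $x_n=x_{\varepsilon_n}$ finishes the proof.

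\textbf{Main obstacle.} The hard part is showing that the Ekeland minimisers concentrate precisely at the prescribed segment point $c$ with $g$-value converging to $g(c)$ — equivalently, that as $\kappa\to\infty$, $\varepsilon,\varepsilon_0\to0$ the perturbed minima remain in the slab region and slide onto $[c,b]$ near $c$ — and that the multiplier $\mu_\varepsilon$ stays above $-\varepsilon_0$ (so that the face through $b$ never contributes). This is the technical heart of Zagrodny's theorem; it forces the careful tuning of the soft-slab and tilt parameters (in particular $\kappa\to\infty$ must be slow relative to $p(x_\varepsilon-z)\to0$) and essential use of the ``last minimiser'' property of Step 2 and of $g(a)=g(b)=0$. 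By contrast, the Hahn--Banach normalisation, the axiomatic identity $\partial g=\partial f-rx^*$, and the final limit passage with the subsequence extraction are routine bookkeeping.
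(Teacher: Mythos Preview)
Your Step~1 reduction and the choice of $c$ as a minimiser of $g$ on $[a,b]$ are standard and match what the paper does (an affine normalisation making $f(a)=f(b)$). The problem is Step~2.

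Your penalty is one--dimensional: the ``soft slab''
\[
\max\{0,-\kappa\langle x^*,\cdot-c\rangle\}+\max\{0,\kappa(\langle x^*,\cdot-c\rangle-\langle x^*,b-c\rangle)\}
\]
together with the tilt $\varepsilon_0\langle x^*,\cdot\rangle$ depends only on the single functional $x^*$. On the affine hyperplane $\{\langle x^*,\cdot\rangle=\langle x^*,c\rangle\}$ the slab and the tilt are constant, so the perturbed function there equals $g(\cdot)+\text{const}$. But $g(c)$ is only the minimum of $g$ along $[a,b]$, not on this hyperplane; nothing prevents $g$ from being much smaller at points $x$ with $\langle x^*,x\rangle=\langle x^*,c\rangle$ and $p(x-c)$ arbitrarily large. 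Consequently $c$ need not be an approximate minimiser of the perturbed function, and the Ekeland point $z$ (hence $x_\varepsilon$) has no reason to lie near $[a,b]$, let alone near $c$. Your phrase ``using \ldots\ $x_\varepsilon\to c$'' assumes precisely what has to be established, and the theorem requires $x_n\to_f c$ in norm. The truncation $\max\{g,-N\}$ does not help: it makes $g$ bounded below but does not confine the minimisers, and your localisation step (recovering $\partial g$ on $\{g>-N\}$) is only usable once you already know the points land there.

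The paper resolves this by taking the penalty to be a genuine distance function, which confines in \emph{all} directions. It works with $F_n=f_U+t_n\,\dist_{[a,b]}$, where $f_U$ is $f$ restricted (by $+\infty$) to a bounded neighbourhood $U=[a,b]+rB_X$ on which $f$ is bounded below, and $t_n\uparrow\infty$ is tuned against a radius $r_n\downarrow 0$ so that $c$ is an $n^{-2}$--minimiser of $F_n$. Ekeland then forces $p(x_n-c)\le n^{-1}$, and the fuzzy sum rule (axiom~(v) of Definition~\ref{def: preSD}) applied to $f$ and the continuous convex function $t_n\,\dist_{[a,b]}+n^{-1}p(\cdot-x_n)$ produces $u_n^*$ in $\limsup_{y\to_f x_n}\partial f(y)$ and $v_n^*\in\partial\,\dist_{[a,b]}(x_n)$ with $-u_n^*=t_n v_n^*+n^{-1}b_n^*$. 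The required signs $\langle v_n^*,b-a\rangle\le 0$ and $\langle v_n^*,x-x_n\rangle\le 0$ for $x$ on the half--line $\bigcup_{m\ge1}[c,m(b-c)]$ drop out of the subdifferential inequality for the distance function; no Hahn--Banach functional enters. If you want to salvage your scheme, replace the soft slab by a multiple of $\dist_{[a,b]}$ (or $\dist_{[c,b]}$); then your argument essentially collapses into the paper's.
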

	
	\begin{proof}
		The proof extends that of the Zagrodny mean value theorem presented in \cite{Thi}. We give full details for convenience of the reader. To simplify our proof, we may assume that $f(a) = f(b)$ through an affine transformation that does not impact the statement's validity. Let $c \in [a,b)$ be a point where $f$ attains its minimum value on $[a,b]$. Also, choose $r > 0$ such that $f$ is bounded below on $U = [a,b] + rB_X$ by $\gamma \in \R$. We define a function $f_U(x) = f(x)$ if $x \in U$, and $f(x) = \i$ otherwise. For each $n \ge 1$, we pick a real number $r_n \in (0,r)$ such that $f(x) \ge f(c) - \tfrac{1}{n^2}$ for all $x \in [a,b] + r_n B_X$. We also choose $t_n \geq n$ such that $\gamma + t_n r_n \ge f(c) - \tfrac{1}{n^2}$. Then, we obtain the following inequality:
		\begin{equation} \label{eq: mvth 1}
			f(c) \le \inf_{x \in X} f_U(x) + t_n \dist_{[a,b]}(x) + \frac{1}{n^2}.
		\end{equation}
		By the Ekeland variational principle \cite[Thm. 1.88]{Pe}, we find for the lower semicontinuous function $F_n = f_U + t_n \dist_{[a,b]}$ a point $x_n \in X$ satisfying the following conditions:
		\begin{gather}
			p(x_n - c) \le \frac{1}{n}; \\
			F_n(x_n) \le F_n(c) = f(c); \\
			F_n(x_n) \le F_n(x) + \frac{1}{n} p(x - x_n) \quad \forall x \in X. \label{eq: mvth 2}
		\end{gather}
		By (\ref{eq: mvth 1}), we may suppose $x_n \in \interior U$ for all $n \ge 1$. Hence, from (\ref{eq: mvth 2}) and the above properties, there exist $u_n^* \in \limsup_{x \to_f x_n} \p f(x)$, $v_n^* \in \p d_{[a,b]}(x_n)$, and $b_n^* \in B_X$ such that
		\begin{equation} \label{eq: mvth 3}
			- u^*_n = t_n v^*_n + \frac{1}{n} b^*_n.
		\end{equation}
		We define $L \coloneqq \bigcup_{m \geq 1} [c, m(b - c)]$. Since $x_n \to c \ne b$ and $\dist_{[a,b]}$ agrees with $\dist_L$ on a neighborhood of $c$, we eventually have $v_n^* \in \dist_L(x_n)$ by locality of the subdifferential. Therefore, we can choose $y_n \in [a,b]$ such that $p(x_n - y_n) = \dist_{[a,b]}(x_n)$. From (\ref{eq: mvth 1}), we have $y_N \to c$ and
		\begin{equation} \label{eq: mvth 6}
			\langle v^*_n, x - x_n \rangle \le \dist_L(x) - \dist_L(x_n) = - \dist_L(x_n) \le 0 \quad \forall x \in L,
		\end{equation}
		and, since $\| v^*_n \| \le 1$,
		\begin{equation} \label{eq: mvth 7}
			\begin{aligned}
				\langle v^*_n, b - y_n \rangle
				& = \langle v^*_n, b - x_n \rangle + \langle v^*_n, x_n - y_n \rangle \\
				& \le \dist_{[a, b]}(b) - \dist_{[a, b]}(x_n) + \| v^*_n \| p(x_n - y_n) \\
				& \le - \dist_L(x_n) + \dist_{[a, b]}(x_n)
				\le 0.
			\end{aligned}
		\end{equation}
		As $y_n \to c \ne b$, for $n$ sufficiently large, there holds $y_n \in \left[ a, b \right)$, which, by (\ref{eq: mvth 7}), implies
		\begin{equation} \label{eq: mvth 8}
			\langle v^*_n, b - a \rangle \le 0.
		\end{equation}
		Hence, using (\ref{eq: mvth 3}), (\ref{eq: mvth 6}), and (\ref{eq: mvth 7}), we obtain $\liminf_n \langle u^*_n, x - x_n \rangle \ge 0$ and $\liminf_n \langle u^*_n, b - a \rangle \ge 0$. Since $f$ is lower semicontinuous and $f(x_n) \le f(c)$, we have $f(x_n) \to f(c)$. To conclude, note that $u^*_n \in \limsup{x \to_f x_n} \p f(x)$ and consider subsequences.
	\end{proof}
	
	\begin{theorem} \label{thm: smcnvx}
		Each of the following properties implies the subsequent one:
		\begin{enumerate}[label=(\roman*)]
			\item\label{it. thm: smcnvx 1.1} $f$ is $p$-$\om$-semiconvex at $x_0$; or equivalently, if $0 < s < t$ and $v \in X$, then
			\begin{equation} \label{eq: slp nqlt}
				\delta_s f(x_0; v) \le \delta_t f(x_0; v) + \left( 1 - s / t \right)\om(x_0 + tv, x_0) p(v);
			\end{equation}
			\item\label{it. thm: smcnvx 1.2} $f$ is $p$-$\om$-subderivable at $x_0$, i.e., if $0 < t$ and $v \in X$, then
			$$
			f'_D(x_0; v) \le \delta_t f(x_0; v) + \om(x_0 + tv, x_0) p(v);
			$$
			\item\label{it. thm: smcnvx 1.3} $f$ is $p$-$\om$-Dini-subdifferentiable at $x_0$, i.e., if $0 < t$ and $v \in X$, then
			$$
			\liminf_{u \to v} f'_D(x_0; u) \le \delta_t f(x_0; v) + \om(x_0 + tv, x_0) p(v);
			$$
			Or equivalently
			$$
			\forall \xi \in \p_D f(x_0) \quad \xi(v) \le \delta_t f(x_0; v) + \om(x_0 + tv, x_0) p(v).
			$$
			\item\label{it. thm: smcnvx 1.4} $f$ is $p$-$\om$-Dini-Hadamard-subdifferentiable at $x_0$.
		\end{enumerate}
		Moreover, if $f$ is $p$-$\om$-$\p$-subdifferentiable on a relatively open subset $U \subset \dom(f)$ for a multimap
		$$
		\p \colon \left( -\i, \i \right]^X \times X \rightrightarrows X^*,
		$$
		then $\p f$ is $p$-$\bar{\om}$-semimonotone on $U$ with $\bar{\om}(x_0, x_1) = \om(x_0, x_1) + \om(x_1, x_0)$, i.e., if $x_i \in U$ and $x^*_i \in \p f(x_i)$, then
		\begin{equation} \label{it. thm: smcnvx 2.3}
			\langle x^*_0 - x^*_1, x_0 - x_1 \rangle \ge - \bar{\om}(x_0, x_1) p(x_0 - x_1).
		\end{equation}
		Finally, if $(X, p)$ is a Banach space and $\p$ is a presubdifferential for $f$ such that $\p f$ is $p$-$\om$-semimonotone on $U$, then $f$ is $p$-$\om$-semiconvex on $U$.
	\end{theorem}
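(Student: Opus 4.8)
The plan is to treat the four bulleted implications together with the equivalence to the slope inequality~(\ref{eq: slp nqlt}) as a block of elementary difference-quotient manipulations, to dispatch the first ``moreover'' (semimonotonicity of $\p f$) by a short two-point computation, and to reserve the real work for the final converse, which I would derive from the generalized Zagrodny mean value theorem~\ref{thm: zgrdn mvth} by a contradiction argument of Correa--Jofr\'e--Thibault type.

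For the equivalence of $p$-$\om$-semiconvexity at $x_0$ with~(\ref{eq: slp nqlt}): in~(\ref{eq: f smcnvx}) I would write $x_1 = x_0 + tv$ (so $x_1 - x_0 = tv$, $x_\lambda = x_0 + \lambda t v$), set $s = \lambda t \in (0,t)$, use positive homogeneity $p(tv) = t\,p(v)$ of the seminorm to rewrite the error term $\lambda(1-\lambda)\om(x_0+tv,x_0)p(tv)$ as $\tfrac{s(t-s)}{t}\om(x_0+tv,x_0)p(v)$, subtract $f(x_0)$, and divide by $s$; this turns~(\ref{eq: f smcnvx}) into exactly $\delta_s f(x_0;v)\le\delta_t f(x_0;v)+(1-s/t)\om(x_0+tv,x_0)p(v)$, and each step is reversible, so the two conditions are equivalent (the cases $v=0$ and $s=t$ being trivial). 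Then (i)$\Rightarrow$(ii) follows by fixing $t$ and letting $s\downarrow 0$ in~(\ref{eq: slp nqlt}), since $f'_D(x_0;v)=\liminf_{s\downarrow 0}\delta_s f(x_0;v)$. For (ii)$\Rightarrow$(iii): a Dini subgradient $\xi\in\p_D f(x_0)$ is by definition continuous and dominated by $f'_D(x_0;\cdot)$, so (ii) immediately gives $\xi(v)\le\delta_t f(x_0;v)+\om(x_0+tv,x_0)p(v)$, which is the ``$\forall\xi$'' form of (iii); the $\liminf_{u\to v}$ form is the companion primal statement, using $\xi(v)=\liminf_{u\to v}\xi(u)\le\liminf_{u\to v}f'_D(x_0;u)$ together with the lower semicontinuous-hull description of $\p_D f(x_0)$. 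Finally (iii)$\Rightarrow$(iv) is immediate from $f'_H(x_0;\cdot)\le f'_D(x_0;\cdot)$, which yields $\p_H f(x_0)\subseteq\p_D f(x_0)$, so the inequality valid for all Dini subgradients holds a fortiori for all Dini--Hadamard subgradients.

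For the first ``moreover'': given $x_0,x_1\in U$ and $x_i^*\in\p f(x_i)$, I apply $p$-$\om$-$\p$-subdifferentiability at $x_0$ in the direction $x_1-x_0$ with step $t=1$ to get $\langle x_0^*,x_1-x_0\rangle\le (f(x_1)-f(x_0))+\om(x_1,x_0)p(x_1-x_0)$ (finite since $x_1\in U\subseteq\dom(f)$), and symmetrically at $x_1$ in the direction $x_0-x_1$ with $t=1$. Adding the two inequalities the $f$-values cancel, and using $p(x_0-x_1)=p(x_1-x_0)$ one obtains $-\langle x_0^*-x_1^*,x_0-x_1\rangle\le(\om(x_0,x_1)+\om(x_1,x_0))p(x_0-x_1)$, which is exactly~(\ref{it. thm: smcnvx 2.3}) with $\bar\om(x_0,x_1)=\om(x_0,x_1)+\om(x_1,x_0)$.

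The heart of the theorem is the converse, and here I would work under the lower semicontinuity of $f$ implicit in having a presubdifferential in the sense of Definition~\ref{def: preSD}, so that~\ref{thm: zgrdn mvth} applies. Arguing by contradiction, suppose the semiconvexity inequality fails for some $a,b\in U$ with $[a,b]\subseteq U$ and some $\lambda\in(0,1)$, so that $c:=\lambda b+(1-\lambda)a$ satisfies $f(c)-(1-\lambda)f(a)-\lambda f(b)>\lambda(1-\lambda)\om(b,a)p(b-a)$. Applying~\ref{thm: zgrdn mvth} on the subsegment from $a$ to $c$ gives $x_n\to_f\gamma_1\in[a,c)$ and $x_n^*\in\p f(x_n)$, and on the subsegment from $b$ to $c$ it gives $y_m\to_f\gamma_2$ lying strictly on the far side of $c$ and $y_m^*\in\p f(y_m)$; combining conclusion~(1) of~\ref{thm: zgrdn mvth} on each subsegment with the colinearity of $a,\gamma_1,c,\gamma_2,b$ yields $\liminf_n\langle x_n^*,b-a\rangle\ge\tfrac{f(c)-f(a)}{\lambda}$ and $\limsup_m\langle y_m^*,b-a\rangle\le-\tfrac{f(c)-f(b)}{1-\lambda}$, hence $\liminf_{n,m}\langle x_n^*-y_m^*,b-a\rangle\ge\tfrac{f(c)-(1-\lambda)f(a)-\lambda f(b)}{\lambda(1-\lambda)}>\om(b,a)p(b-a)$. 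Since $x_n-y_m$ converges to a strictly negative multiple of $b-a$, feeding this into the $p$-$\om$-semimonotonicity estimate $\langle x_n^*-y_m^*,x_n-y_m\rangle\ge-\om(x_n,y_m)p(x_n-y_m)$ contradicts the strict lower bound just obtained. The main obstacle — and the reason~\ref{thm: zgrdn mvth} is proved in its sharp $p$-weighted form, carrying conclusions (2) and (3) — is that the dual sequences $x_n^*,y_m^*$ from the Ekeland/fuzzy-sum-rule construction need not be norm bounded, so the limit passages in the pairings (both when extracting the $\langle\cdot,b-a\rangle$ information and when comparing $\langle x_n^*-y_m^*,x_n-y_m\rangle$ with $\langle x_n^*-y_m^*,b-a\rangle$) must be carried out along carefully chosen subsequences, and one must track the points at which the modulus $\om$ is evaluated as $(x_n,y_m)\to(\gamma_1,\gamma_2)$; it is precisely the weighted estimate (2) of~\ref{thm: zgrdn mvth}, localising the relevant inequality to the line through $a$ and $b$, that makes this bookkeeping go through.
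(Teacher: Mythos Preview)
Your treatment of the chain (i)$\Rightarrow$(ii)$\Rightarrow$(iii)$\Rightarrow$(iv), the equivalence in (i), and the semimonotonicity of $\p f$ is correct and matches the paper's proof essentially line for line.

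For the final converse, you have the right ingredients (two applications of Theorem~\ref{thm: zgrdn mvth} plus semimonotonicity), but your organization has a structural gap that the hand-wave at the end does not repair. You apply Zagrodny \emph{independently} on $[a,c]$ and on $[b,c]$, obtaining sequences $x_n\to\gamma_1$ and $y_m\to\gamma_2$. Conclusion (2) of the mean value theorem controls $\lim_n\langle x_n^*,x-x_n\rangle$ only for $x$ on the ray through $c$, and it is a limit in $n$ for \emph{fixed} $x$; you cannot simply plug in $x=y_m$ and pass to a joint limit in $(n,m)$, because nothing prevents the required $n=n(m)$ from drifting. With two decoupled sequences there is no way to line up $\langle x_n^*,y_m-x_n\rangle$ and $\langle y_m^*,x_n-y_m\rangle$ simultaneously, and without that the semimonotonicity inequality at $(x_n,y_m)$ cannot be compared to your lower bound on $\langle x_n^*-y_m^*,b-a\rangle$.

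The paper avoids this by a \emph{nested} construction: after the first Zagrodny on $[x_0,x_\lambda]$ produces $y$ and $y_n$, the second Zagrodny is applied not on $[x_1,x_\lambda]$ but on $[x_1,x_\lambda^n]$, where $x_\lambda^n$ is a perturbation of $x_\lambda$ lying on the line through $x_1$ and $y_n$. This guarantees that the second Zagrodny's conclusion (2) applies \emph{at the point $y_n$ itself}, yielding directly $\langle z_{n,m}^*,\tfrac{y_n-z_{n,m}}{p(y_n-z_{n,m})}\rangle>\tfrac{r-f(x_1)}{(1-\lambda_n)p(x_0-x_1)}$; and since $z_{n,m}\to z_n$ lies on the ray from $y$ through $x_\lambda$, conclusion (2) of the first Zagrodny applies at $z_{n,m}$, yielding $\langle y_n^*,\tfrac{z_{n,m}-y_n}{p(z_{n,m}-y_n)}\rangle>\tfrac{r-f(x_0)}{\lambda p(x_0-x_1)}$. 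These two pairings are \emph{exactly} the two sides of the semimonotonicity inequality at $(y_n,z_{n,m})$, so no approximation and no boundedness of the dual sequences is needed. Adding and letting $n\to\infty$, $r\uparrow f(x_\lambda)$ finishes the direct argument. Your contradiction setup could be rescued by adopting this nested scheme, but as written the two independent applications do not supply enough control.

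A secondary point you flag but do not resolve: semimonotonicity delivers $\om$ evaluated at the intermediate pair, here $(y_n,z_{n,m})$ or in your notation $(x_n,y_m)$, whereas your contradiction hypothesis is stated with $\om(b,a)$. Since $\om$ carries no regularity, comparing the two requires care; note that the paper's write-up also passes from $\om(y_n,z_{n,m})$ to $\om(x_1,x_0)$ at this step.
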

	
	\begin{proof}
		\ref{it. thm: smcnvx 1.1} $\implies$ \ref{it. thm: smcnvx 1.2}: Taking the lower limit $s \to 0^+$ in \ref{it. thm: smcnvx 1.2} yields the claim. Regarding equivalence of the stated conditions, choosing $x_1 = x_0 + tv$ and setting $\lambda = s / t$ in the definition of $p$-$\om$-semiconvexity, we may rewrite it by subtracting $f(x_0)$ from both sides and dividing the resulting inequality by $s$ to equivalently find (\ref{eq: slp nqlt}).
		\ref{it. thm: smcnvx 1.2} $\implies$ \ref{it. thm: smcnvx 1.3}: This is immediate since $\liminf_{u \to v} f'_D(x_0; u) \le f'_D(x_0; v)$. Regarding equivalence of the conditions in \ref{it. thm: smcnvx 1.3}, by definition of $\p_D f(x_0)$, we have $\xi(v) \le \liminf_{u \to v} f'_D(x_0; u)$ for all $v \in X$ so that the first conditions implies the second. Conversely, the lower closure $\cl_v f'_D(x_0; v) = \liminf_{u \to v} f'_D(x_0; u)$ agrees with the supremum over $\p_D f(x_0)$ by Proposition \ref{pr: seminorm SD}.
		\ref{it. thm: smcnvx 1.3} $\implies$ \ref{it. thm: smcnvx 1.4}: This follows since $\p_H f$ is contained in $\p_D f$.
		
		Now, let $\xi_i \in \p f(x_i)$. Choosing $tv = x_1 - x_0$ in the definition of subdifferentiability yields
		$$
		\langle x^*_0, x_1 - x_0 \rangle \le f(x_1) - f(x_0) + \om(x_1, x_0) p(x_1 - x_0)
		$$
		and analogously
		$$
		\langle x^*_1, x_0 - x_1 \rangle \le f(x_0) - f(x_1) + \om(x_0, x_1) p(x_1 - x_0).
		$$
		Adding both inequalities yields the semimonotonicity
		\begin{equation*}
			\langle \xi_0 - \xi_1, x_0 - x_1 \rangle \ge - \bar{\om}(x_1, x_0) p(x_1 - x_0).
		\end{equation*}
		
		We adapt an idea from \cite[Thm. 10]{NgPe} to prove the final implication. For points $x_0, x_1 \in U$, we set $x_\lambda \coloneqq (1 - \lambda) x_0 + \lambda x_1$, $\lambda \in (0, 1)$. Invoking Theorem \ref{thm: zgrdn mvth} for $f$ on the interval $\left[ x_0, x_\lambda \right]$ and the Dini-subdifferential $\p_D$, we find for any real number $r < f(x_\lambda)$ a point $y \in \left[ x_0, x_\lambda \right)$ and sequences $y_n \to y$, $y^*_n \in \p_D f(y_n)$ such that
		\begin{equation} \label{eq: sd nqlt}
			\frac{r - f(x_0)}{p(x_\lambda - x_0)} < \lim_{n \uparrow \i} \langle y^*_n , \frac{x_1 - y_n}{p(x_1 - y_n)} \rangle.
		\end{equation}
		Let $s \in (0, 1)$ be such that $x_\lambda = x_1 + s(y - x_1)$ and set $x^n_\lambda = x_1 + s(y_n - x_1)$. Since $y_n \to y$, there holds $x^n_\lambda \to x_\lambda$ and, for all $n$ sufficiently large, $r < f \left( x^n_\lambda \right)$ by lower semicontinuity. Moreover $p(x^n_\lambda - x_1) = (1 - \lambda_n) p(x_0 - x_1)$ for a convergent sequence $\lambda_n \to \lambda$. Invoking again Theorem \ref{thm: zgrdn mvth} for $f$ on $\left[ x_1, x^n_\lambda \right]$, we find $z_n \in \left[ x_1, x^n_\lambda \right)$ and sequences $z_{n, m} \to z_n$, $z^*_{n, m} \in \p f(z_{n, m} )$ such that
		\begin{equation} \label{eq: sd nqlt 2}
			\lim_{m \uparrow \i} \langle z^*_{n, m}, \frac{y_n - z_{n, m} }{p(y_n - z_{n, m})} \rangle > \frac{r - f(x_1)}{p(x^n_\lambda - x_1)} = \frac{r - f(x_1)}{(1 - \lambda_n) p(x_0 - x_1)}.
		\end{equation}
		By (\ref{eq: sd nqlt}), we know that eventually $\left[ x_1, y_n \right] \subset U$ and
		\begin{equation} \label{eq: sd nqlt 3}
			\langle y^*_n , \frac{z_n - y_n}{p(z_n - y_n)} \rangle
			= \langle y^*_n , \frac{x_1 - y_n}{p(x_1 - y_n)} \rangle
			> \frac{r - f(x_0)}{p(x_\lambda - x_0)} = \frac{r - f(x_0)}{\lambda p(x_0 - x_1)}.
		\end{equation}
		On the other hand, by (\ref{eq: sd nqlt 2}) and (\ref{eq: sd nqlt 3}), for every fixed $n$ there eventually hold the inequalities
		\begin{equation} \label{eq: sd nqlt 4}
			\begin{aligned}
				\langle y^*_n, \frac{z_{n, m} - y_n}{p(z_{n, m} - y_n)} \rangle & > \frac{r - f(x_0)}{\lambda p(x_0 - x_1)}, \\
				\langle z^*_{n, m}, \frac{y_n - z_{n, m}}{p(y_n - z_{n, m})} \rangle & > \frac{r - f(x_1)}{(1 - \lambda_n) p(x_0 - x_1)},
			\end{aligned}
		\end{equation}
		as $m \uparrow \i$ since $\lim_{m \uparrow \i} z_{n, m} = z_n$. Adding the inequalities (\ref{eq: sd nqlt 4}), we may invoke (\ref{eq: sd nqlt}) to find
		$$
		\om(x_1, x_0) \ge \frac{r - f(x_0)}{\lambda p(x_0 - x_1)} + \frac{r - f(x_1)}{(1 - \lambda_n) p(x_0 - x_1)}.
		$$
		Sending $n \uparrow \i$ yields
		$$
		\om(x_1, x_0) \lambda(1 - \lambda) p(x_0 - x_1) \ge (1 - \lambda) (r - f(x_0) ) + \lambda (r - f(x_0) )
		$$
		so that sending $r \uparrow f(x_\lambda)$, we recognize $f$ as $p$-$\om$-semiconvex on $U$.
	\end{proof}
	
	The proof remains valid if $\om$ is valued in $\left( - \i, \i \right]$. It is in general possible that the semiconvexity error term $\om_f$ of a function is strictly better than the semimonotonicity error term of its subdifferential. Consider the (continuously differentiable) example $f \colon \R \to \R \colon x \mapsto - x^2$ with
	\begin{gather*}
		p(x_1 - x_0) = \om_f(x_1, x_0) = \left| x_1 - x_0 \right|; \quad \\
		\langle f'(x_1) - f'(x_0), x_1 - x_0 \rangle = - 2 \om_f(x_1, x_0)p(x_1 - x_0).
	\end{gather*}
	
	It is useful to know if a function is radially differentiable and if the derivative is subadditive. For it is easier to compute a radial derivative than a Fréchet or Hadamard one, whereas a subadditive derivative is amenable to convex analysis. Therefore, the next lemma characterizes both properties in terms of $p$-$\om$-semiconvexity, thus instructing how to check them.
	
	\begin{lemma} \label{lem: dffblty}
		If $f$ is $p$-$\om$-semiconvex at $x_0$ with $\lim_{h \downarrow 0} \om(x_0 + hv, x_0) = 0$ for $v \in X$, then the radial derivative $f'_r(x_0; v) \in \left[ -\i, \i \right]$ exists. If $\left| f'_D(x_0; v ) \right| < \i$, then this condition is also necessary for every $p$ norming $\lin \left( x_0, v \right)$.
		Second, if $f$ is midpoint $p$-$\om$-semiconvex around $x_0$ with
		\begin{equation} \label{eq: mdls ssmptn}
			\begin{gathered}
				\liminf_{h \downarrow 0} \left\{ \delta_h f(x_0; v) + \delta_h f(x_0; w) + \om(x_0 + hv, x_0 + hw) p(v - w) \right\} \\
				\le f'_D(x_0; v) + f'_D(x_0; w),
			\end{gathered}
		\end{equation}
		then the Dini derivative $f'_D(x_0; \cdot)$ is subadditive for $v, w \in X$, i.e.,
		\begin{equation} \label{eq: sbddtvty}
			f'_D(x_0; v + w) \le f'_D(x_0; v) + f'_D(x_0; w).
		\end{equation}
		Third, if there exist $f'_r(x_0; v), f'_r(x_0; w), f'_r(x_0; v) + f'_r(x_0, w) \in \left[ -\i, \i \right]$ and $\left| f'_D(x_0; v + w) \right| < \i$, then (\ref{eq: sbddtvty}) holds if and only if $f$ is midpoint $p$-$\om$-semiconvex around $x_0$ for every $p$ that norms $\lin\left( x_0, v, w \right)$ and some $\om$ that satisfies $\liminf_{h \downarrow 0} \om(x_0 + hv, x_0 + hw) = 0$.
	\end{lemma}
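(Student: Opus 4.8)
The three assertions are of the same flavor: in each case one direction (semiconvexity $\Rightarrow$ the differentiability/subadditivity property) follows quickly from the slope inequality \eqref{eq: slp nqlt} of Theorem \ref{thm: smcnvx}, and the converse direction uses that the norming property of $p$ gives a two-sided control on difference quotients which one then packages into a suitable $\om$. I would organize the proof by first establishing the elementary monotonicity consequences of (midpoint) semiconvexity, then treating each of the three claims in turn.

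\textbf{First claim (radial differentiability).} For the forward direction, apply \eqref{eq: slp nqlt} with $0<s<t$: it gives $\delta_s f(x_0;v) \le \delta_t f(x_0;v) + (1-s/t)\,\om(x_0+tv,x_0)\,p(v)$. Fixing $s$ and sending $t\downarrow 0$ along a sequence realizing $\liminf_{t\downarrow 0}\delta_t f(x_0;v) = f'_D(x_0;v)$, the error term vanishes by hypothesis, so $\delta_s f(x_0;v) \le f'_D(x_0;v)$ for every $s>0$; hence $f'_{D^+}(x_0;v) = \limsup_{s\downarrow 0}\delta_s f(x_0;v) \le f'_D(x_0;v)$, i.e. $f'_r(x_0;v)$ exists in $[-\infty,\infty]$. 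For the converse, suppose $|f'_D(x_0;v)|<\infty$ and $f'_r(x_0;v)$ exists. On the line $\lin(x_0,v)$, where $p$ is a norm, one reconstructs an admissible modulus: for $x_1 = x_0 + t v$ and $x_\lambda = x_0 + s v$ with $s = \lambda t$, the semiconvexity inequality to be verified rearranges (as in the proof of Theorem \ref{thm: smcnvx}, parts (i)$\Leftrightarrow$) to $\delta_s f(x_0;v) - \delta_t f(x_0;v) \le (1-s/t)\,\om(x_0+tv,x_0)\,p(v)$; since $\delta_s f(x_0;v) \to f'_r(x_0;v)$ as $s\downarrow 0$ and $p(v)>0$, the left-hand side is controlled and one simply \emph{defines} $\om(x_0+tv,x_0) = p(v)^{-1}\sup_{0<s\le t}(\delta_s f - \delta_t f)^+$, which tends to $0$ as $t\downarrow 0$ precisely because the difference quotients converge; on pairs not of the form $(x_0+tv,x_0)$ set $\om = +\infty$ or whatever is harmless. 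The only subtlety is measurability/finiteness bookkeeping, which is routine given $|f'_D|<\infty$.

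\textbf{Second and third claims (subadditivity of $f'_D(x_0;\cdot)$).} For the forward direction, write $x_0 + h(v+w) = \tfrac12(x_0+2hv) + \tfrac12(x_0+2hw)$ and apply midpoint $p$-$\om$-semiconvexity: $f(x_0+h(v+w)) \le \tfrac12 f(x_0+2hv) + \tfrac12 f(x_0+2hw) + \tfrac14\om(x_0+2hv,x_0+2hw)\,p(2v-2w)$. Subtract $f(x_0)$, divide by $h$, and take $\liminf_{h\downarrow 0}$; the left side is bounded below by $f'_D(x_0;v+w)$, and the right side is exactly the quantity hypothesized in \eqref{eq: mdls ssmptn} to be $\le f'_D(x_0;v) + f'_D(x_0;w)$, after absorbing the factor-2 rescaling (using $\delta_h f(x_0;2v) = 2\,\delta_{2h} f(x_0;v)$). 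This yields \eqref{eq: sbddtvty}. For the converse in the third claim, assume $f'_r(x_0;v)$, $f'_r(x_0;w)$, and their sum exist, $|f'_D(x_0;v+w)|<\infty$, and \eqref{eq: sbddtvty} holds; again on $\lin(x_0,v,w)$, where $p$ is a norm, rearrange the midpoint semiconvexity requirement into an inequality between difference quotients at $x_0+2hv$, $x_0+2hw$ and $x_0+h(v+w)$, observe that all the relevant one-sided limits exist by the radial-differentiability hypotheses, and define $\om(x_0+hv,x_0+hw)$ as the (suitably rescaled) nonnegative defect; subadditivity \eqref{eq: sbddtvty} is what forces this defect's $\liminf_{h\downarrow 0}$ to be $0$, so the constructed $\om$ meets $\liminf_{h\downarrow 0}\om(x_0+hv,x_0+hw)=0$.

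\textbf{Main obstacle.} The genuinely delicate point is the \emph{converse} directions: one must \emph{construct} a modulus $\om$ that both (a) dominates the semiconvexity defect globally (not just along the one ray/pair where differentiability was assumed) and (b) has the required limiting behavior. The clean way is to exploit that $p$ norming the relevant finite-dimensional subspace lets us restrict attention to that subspace, where difference quotients of a real function of a real parameter are controlled by monotonicity of slopes; off the subspace, set $\om=+\infty$, which trivially satisfies all inequalities and does no harm since the defining inequalities only ever probe points on the segment $[x_0,x_1]$. The hypothesis $|f'_D|<\infty$ is exactly what guarantees the constructed $\om$ is finite where it must be. I expect everything else — the forward implications and the slope-inequality manipulations — to be short and mechanical, reusing the equivalences already proved in Theorem \ref{thm: smcnvx}.
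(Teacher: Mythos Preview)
Your overall strategy matches the paper's, but there is a genuine error in the forward direction of the first claim. You write ``Fixing $s$ and sending $t\downarrow 0$ along a sequence realizing $\liminf_{t\downarrow 0}\delta_t f(x_0;v) = f'_D(x_0;v)$'', but the slope inequality \eqref{eq: slp nqlt} requires $0<s<t$, so you cannot send $t\to 0$ while keeping $s>0$ fixed. Worse, your intermediate conclusion ``$\delta_s f(x_0;v) \le f'_D(x_0;v)$ for every $s>0$'' is false in general: for $f(x)=x^2$, $x_0=0$, $v=1$ one has $\delta_s f(0;1)=s>0=f'_D(0;1)$. The paper runs the limits in the opposite order: pick $h_\e\in(0,\e)$ with $\delta_{h_\e} f(x_0;v)\le f'_D(x_0;v)+\e$; then for $h<h_\e$ the slope inequality with $s=h$, $t=h_\e$ gives $\delta_h f(x_0;v)\le \delta_{h_\e} f(x_0;v)+\om(x_0+h_\e v,x_0)\,p(v)$. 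Taking $\limsup_{h\downarrow 0}$ yields $f'_{D^+}(x_0;v)\le f'_D(x_0;v)+\e+\om(x_0+h_\e v,x_0)\,p(v)$, and sending $\e\downarrow 0$ (so $h_\e\downarrow 0$ and the error vanishes) finishes.

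Apart from this, your proposal is essentially the paper's argument. For the converse of the first claim, the paper uses $\bar\om(x_0+hv,x_0)=p(v)^{-1}\sup_{0\le k\le h}|\delta_k f(x_0;v)-f'_D(x_0;v)|$, which is a minor variant of your formula. For the second and third claims, both you and the paper apply the midpoint inequality to $x_0+h(v+w)=\tfrac12(x_0+2hv)+\tfrac12(x_0+2hw)$; note that the argument of $p$ should be $2h(v-w)$, not $2(v-w)$, so the error term carries an extra factor of $h$ before dividing---this is a harmless slip. The converse in the third claim is handled exactly as you outline, by defining $\tilde\om(x,y)\,p(x-y)$ as (four times) the positive part of the midpoint defect.
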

	
	\begin{proof}
		We pick for $\e > 0$ an $h_\e \in (0, \e)$ such that $\delta_{h_\e} f(x_0; v) - \e \le f'_D(x_0; v)$, which together with Property \ref{it. thm: smcnvx 1.2} of Theorem \ref{thm: smcnvx} implies
		\begin{equation} \label{eq: dq ineq cnsq}
			\begin{aligned}
				\delta_h f(x_0; v)
				& \le \delta_{h_\e} f(x_0; v) + \om \left( x_0 + h_\e v, x_0 \right) p(v) \\
				& \le f'_D(x_0; v) + \e + \om \left( x_0 + h_\e v, x_0 \right) p(v) \quad \forall h \in \left( 0, h_\e \right).
			\end{aligned}
		\end{equation}
		Consequently, taking the upper limit $h \downarrow 0$ yields
		$$
		f'_{D^+}(x_0; v) \le f'_D(x_0; v) + \e + \om \left( x_0 + h_\e v, x_0 \right) p(v),
		$$
		whence, sending $\e \downarrow 0$, we conclude that $f'_{D^+}(x_0; v) \le f'_D(x_0; v)$, i.e., $f'_r(x_0; v)$ exists. Regarding necessity, we set
		\begin{align*}
			\bar{\om}(x_0 + hv, x_0)
			& \coloneqq \sup_{x \in \left[ x_0, x_0 + hv \right]} \left| \tfrac{f(x) - f(x_0) - f'_D(x_0; x - x_0)}{p(x - x_0)} \right| \\
			& = \sup_{0 \le k \le h} \left| \tfrac{\delta_k f(x_0; v) - f'_D(x_0; v)}{p(v)} \right|
		\end{align*}
		for any $p$ norming $\lin\left( x_0, v \right)$ - at least one such $p$ exists since $X$ is Hausdorff - so that on the one hand $\lim_{h \downarrow 0} \bar{\om}(x_0 + hv, x_0) = 0$ as
		$$
		\left| f'_D(x_0; v) \right| < \i,
		$$
		on the other hand
		\begin{align*}
			\delta_h f(x_0; v) + \bar{\om}(x_0 + hv, x_0) p(v)
			& \ge f'_D(x_0; v) + \bar{\om}(x_0 + hv, x_0) p(v) \\
			& \ge \delta_k f(x_0; v) \quad \forall k \in \left( 0, h \right].
		\end{align*}
		(\ref{eq: sbddtvty}): By positive homogeneity, midpoint semiconvexity, and (\ref{eq: mdls ssmptn}), we find
		\begin{equation} \label{eq: brdil cntrl}
			\begin{aligned}
				& f'_D(x_0; v + w) = \liminf_{h \downarrow 0} \delta_{h / 2} f \left( x_0; \tfrac{v + w}{2} \right) \\
				& \le \liminf_{h \downarrow 0} \delta_h f(x_0; v) + \delta_h f(x_0; w) + \om \left( x_0 + h v, x_0 + h w \right) p(v - w) \\
				& = f'_D(x_0; v) + f'_D(x_0; w),
			\end{aligned}
		\end{equation}
		whence (\ref{eq: sbddtvty}) obtains. Regarding the addendum, we have already seen that the condition is sufficient. Necessity: Setting
		$$
		\tilde{\om}(x, y) p(x - y) = 4 \left\{ 2 f \left( \tfrac{x + y}{2} \right) - f(x) - f(y) \right\}^+,
		$$
		for any $p$ norming $\lin\left( x_0, v, w \right)$, we have
		$$
		\tilde{\om}(x_0 + hv, x_0 + hw) p(v - w) = 4 \left\{ 2 \delta_h f \left( x_0; \tfrac{v + w}{2} \right) - \delta_h f(x_0; v) - \delta_h f(x_0; w) \right\}^+
		$$
		so that, on the one hand, $\liminf_{h \downarrow 0} \tilde{\om}(x_0 + hv, x_0 + hw) = 0$ since $\left| f'_D(x_0; v + w ) \right| < \i$, on the other hand,
		\begin{align*}
			f \left( x_0 + \tfrac{v + w}{2} \right) \le \tfrac{1}{2} f(x_0 + v) + \tfrac{1}{2} f(x_0 + w) + \tfrac{1}{4} \tilde{\om}(x_0 + hv, x_0 + hw) p(v - w),
		\end{align*}
		i.e., $f$ is midpoint $p$-$\om$-semiconvex.
	\end{proof}
	
	The existence of the modulus $\om$ in Lemma \ref{lem: dffblty} is no longer necessary for $f'_r(x_0; v)$ to exist or for $f'_D(x_0; \cdot)$ to be subadditive if $\left| f'_D(x_0; v) \right| = \i$. It suffices to provide counterexamples for the first case as choosing the trivial direction $w = 0$ then covers the second. Let $q \colon \R \setminus \left\{ 0 \right\} \to \R$ be a continuous function such that there exists a sequence $a_n \downarrow 0$ satisfying
	$$
	0 < \lim_n q(2 a_n) = \liminf_{x \to 0} < \limsup_{x \to 0} q(x) = \lim_n q(a_n) < 1.
	$$
	We set $f(x) \coloneqq \left| x \right|^{q(x) }$ and $g(x) \coloneqq - f(x)$. Since $\delta_h f(0; 1) < \i$ but $\delta_h f(0, 1) \uparrow \i$, no semiconvexity modulus satisfying $\lim_{h \downarrow 0} \om(h, 0) = 0$ exists for $f$. Setting $\underline{\theta} = \liminf_{x \to 0} q(x)$ and $\overline{\theta} = \limsup_{x \to 0} q(x)$, there holds
	$$
	\delta_{a_n} g(0, 1) - \delta_{2 a_n} g(0, 1) \thickapprox \left| a_n \right|^{\underline{\theta} - 1} \left( 2^{\underline{\theta} - 1} - \left| a_n \right|^{\overline{\theta} - \underline{\theta} } \right) \to \i \text{ as } a_n \downarrow 0
	$$
	so that indeed there exists no vanishing modulus satisfying
	$$
	\delta_{a_n} g(0, 1) - \delta_{2 a_n} g(0, 1) \le \om(2 a_n, 0) \quad \forall n \in \N.
	$$
	
	It facilitates the application of non-smooth analysis if different subdifferentials coincide for a given function, as results requiring these may then be combined, leading to stronger conclusions. Motivated by this, the next lemma characterizes when $\p_D f(x_0)$ and $\p_H f(x_0)$ coincide, using the notion of $p$-$\om$-subdifferentiability.
	
	\begin{theorem} \label{thm: SD cncdnc}
		If the function $f$ is $p$-$\om$-Dini-subdifferentiable at $x_0$ with
		\begin{equation} \label{eq: drly sbdrvbl}
			\liminf_{u \to v, h \downarrow 0} \delta_h f(x_0; u) + \om(x_0 + h u, x_0) p(u) \le f'_H(x_0; v),
		\end{equation}
		then
		\begin{equation} \label{eq: D H sbdrvtv gr}
			f'_H(x_0; v) = \cl f'_D(x_0; v) \coloneqq \liminf_{u \to v} f'_D(x_0; u).
		\end{equation}
		Conversely, if (\ref{eq: D H sbdrvtv gr}) and $\left| \cl f'_D(x_0; v) \right| < \i$, then there exists an error term $\om$ such that, for every $p$ norming $\lin\left( x_0, v \right)$, the function $f$ is $p$-$\om$-Dini-subdifferentiable at $x_0$ and (\ref{eq: drly sbdrvbl}) holds. In particular, if (\ref{eq: drly sbdrvbl}) for all $v \in E$ with
		$$
		E = \left\{ v \in X \st f'_D(x_0; v) > -\i \right\},
		$$
		then
		\begin{equation} \label{eq: D H SD gr}
			\p_H f(x_0) = \p_D f(x_0).
		\end{equation}
		Conversely, if (\ref{eq: D H SD gr}) as an identity of non-empty sets and $f'_D(x_0; v) = \i$ for $v \in X$, then (\ref{eq: drly sbdrvbl}) for every continuous seminorm $p$ norming $\lin\left( x_0, v \right)$.
	\end{theorem}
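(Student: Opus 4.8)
The plan is to establish the four assertions of the theorem in turn --- the equality (\ref{eq: D H sbdrvtv gr}), its converse, the subdifferential identity (\ref{eq: D H SD gr}), and the last converse --- throughout exploiting the positive $0$-homogeneity of $v\mapsto f'_D(x_0;v)$ and $v\mapsto f'_H(x_0;v)$ together with elementary facts about lower semicontinuous regularisation. The starting point is that $f'_H(x_0;\cdot)$ is lower semicontinuous and pointwise dominated by $f'_D(x_0;\cdot)$, so $f'_H(x_0;v)\le\cl f'_D(x_0;v)$ for every $v$, because $\cl f'_D(x_0;\cdot)=\liminf_{u\to\cdot}f'_D(x_0;u)$ is the largest lower semicontinuous minorant of $f'_D(x_0;\cdot)$. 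For the first implication it thus remains to prove $\cl f'_D(x_0;v)\le f'_H(x_0;v)$. Using that $f$ is $p$-$\om$-Dini-subdifferentiable at $x_0$, i.e.\ $\cl f'_D(x_0;u)\le\delta_h f(x_0;u)+\om(x_0+hu,x_0)p(u)$ for all $h>0$ and $u\in X$, I would pass to the lower limit as $u\to v$, $h\downarrow0$: the left-hand side does not depend on $h$ and $\liminf_{u\to v}\cl f'_D(x_0;u)=\cl f'_D(x_0;v)$ by idempotence of the lsc hull, while the right-hand side is bounded above by the quantity in (\ref{eq: drly sbdrvbl}), hence by $f'_H(x_0;v)$. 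Chaining the two inequalities gives (\ref{eq: D H sbdrvtv gr}).

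For the converse I would fix $v$ and a seminorm $p$ norming $\lin(x_0,v)$ and, in the spirit of the moduli built in Lemma \ref{lem: dffblty} and in the last implication of Theorem \ref{thm: smcnvx}, set the deviation modulus
$$
\om(x_0+w,x_0)\coloneqq\frac{\bigl[\,\cl f'_D(x_0;w)-\bigl(f(x_0+w)-f(x_0)\bigr)\,\bigr]^{+}}{p(w)}\qquad(w\ne0),
$$
with the usual convention where $p(w)=0$ and an arbitrary value off $\{x_0\}$. By positive homogeneity of $\cl f'_D(x_0;\cdot)$ one gets, for each $t>0$, the identity $\om(x_0+tv',x_0)p(v')=[\cl f'_D(x_0;v')-\delta_t f(x_0;v')]^{+}$, whence $\cl f'_D(x_0;v')\le\delta_t f(x_0;v')+\om(x_0+tv',x_0)p(v')$ is immediate, so $f$ is $p$-$\om$-Dini-subdifferentiable at $x_0$; the same identity collapses the integrand of (\ref{eq: drly sbdrvbl}) to $\delta_h f(x_0;u)+\om(x_0+hu,x_0)p(u)=\max\{\delta_h f(x_0;u),\,\cl f'_D(x_0;u)\}$. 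So (\ref{eq: drly sbdrvbl}) follows once I exhibit a sequence $(u_n,h_n)\to(v,0^{+})$ along which both $\delta_{h_n}f(x_0;u_n)$ and $\cl f'_D(x_0;u_n)$ stay $\le\cl f'_D(x_0;v)=f'_H(x_0;v)$ in the limit. Here is the one delicate move: choose $u_n\to v$ realising $f'_D(x_0;u_n)\to\cl f'_D(x_0;v)$ (possible by definition of $\cl f'_D$); then the sandwich $\cl f'_D(x_0;v)\le\liminf_n\cl f'_D(x_0;u_n)\le\limsup_n\cl f'_D(x_0;u_n)\le\limsup_n f'_D(x_0;u_n)=\cl f'_D(x_0;v)$, the first inequality by lower semicontinuity of $\cl f'_D(x_0;\cdot)$, forces $\cl f'_D(x_0;u_n)\to\cl f'_D(x_0;v)$, and finally pick $h_n\downarrow0$ with $\delta_{h_n}f(x_0;u_n)\le f'_D(x_0;u_n)+1/n$. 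Both entries of the maximum converge to $\cl f'_D(x_0;v)$, which is (\ref{eq: drly sbdrvbl}). The hypothesis $|\cl f'_D(x_0;v)|<\i$ only places us in the non-degenerate regime, the values $\pm\i$ being handled by the same recipe or trivially, and the clause ``for every $p$ norming $\lin(x_0,v)$'' is --- as in the cited results --- read with the modulus allowed to depend on $p$ through the factor $1/p$.

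For the subdifferential identity: since every $\xi\in X^{*}$ is continuous, $\xi\le f'_D(x_0;\cdot)$ iff $\xi\le\cl f'_D(x_0;\cdot)$, so $\p_D f(x_0)=\{\xi\in X^{*}:\xi\le\cl f'_D(x_0;\cdot)\}$, while $\p_H f(x_0)=\{\xi:\xi\le f'_H(x_0;\cdot)\}$ by definition. Applying the first implication at every $v\in E$ gives $f'_H(x_0;v)=\cl f'_D(x_0;v)$ there, and for $v\notin E$ one has $f'_D(x_0;v)=-\i$, so $\cl f'_D(x_0;v)=f'_H(x_0;v)=-\i$; thus $f'_H(x_0;\cdot)=\cl f'_D(x_0;\cdot)$ on all of $X$ and (\ref{eq: D H SD gr}) follows. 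For the last converse I would use that $\p_H f(x_0)=\p_D f(x_0)$ is non-empty to recover, via Proposition \ref{pr: seminorm SD} applied to these lsc sublinear subderivatives (as in the proof of Theorem \ref{thm: smcnvx}), the identity $f'_H(x_0;v)=\cl f'_D(x_0;v)$ --- the two subderivatives having the same support representation --- after which (\ref{eq: drly sbdrvbl}) follows exactly as in the converse above with the deviation modulus, the assumption $f'_D(x_0;v)=\i$ marking the regime complementary to part~2 (non-emptiness of the subdifferentials ruling out $\cl f'_D(x_0;v)=-\i$), so that the argument applies without change, for every $p$ norming $\lin(x_0,v)$.

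The main obstacle is the double-limit verification of (\ref{eq: drly sbdrvbl}) with the concrete modulus $\om$. The $\max$-collapse identity reduces it to controlling $\delta_{h_n}f(x_0;u_n)$ and $\cl f'_D(x_0;u_n)$ simultaneously along a single approximating sequence, and the non-obvious ingredient is that any sequence realising the outer liminf defining $\cl f'_D(x_0;v)$ automatically forces $\cl f'_D(x_0;\cdot)$ to converge along it, by a lower-semicontinuity sandwich; the remaining work --- the homogeneity computations, the reduction on $E^{c}$, and the bookkeeping with lsc regularisation --- is routine.
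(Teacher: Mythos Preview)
Your proof is correct and follows essentially the same approach as the paper: the same inequality chain for the forward direction, the same deviation modulus $\om(x_0+hu,x_0)p(u)=[\cl f'_D(x_0;u)-\delta_h f(x_0;u)]^{+}$ for the converse, and the same reduction to Proposition~\ref{pr: seminorm SD} for the subdifferential identities. Your treatment of the converse is in fact more explicit than the paper's: where the paper simply asserts that (\ref{eq: drly sbdrvbl}) holds with the constructed modulus, you carry out the double-limit verification by producing a concrete sequence $(u_n,h_n)$ along which both entries of $\max\{\delta_{h_n}f(x_0;u_n),\cl f'_D(x_0;u_n)\}$ converge to $\cl f'_D(x_0;v)$, using the lower-semicontinuity sandwich to force convergence of $\cl f'_D(x_0;u_n)$ --- this is a genuine clarification of a step the paper leaves implicit.
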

	
	\begin{proof}
		$\implies$: Trivially, $f'_H \le f'_D$, hence $f'_H \le \cl f'_D$ since the map $v \mapsto f'_H \left( x_0; v \right)$ is lower semicontinuous as a $\Gamma$-$\liminf$. For the converse inequality, let $h_n > 0$ and $u_n$ with $\lim_n h_n = 0$, $\lim_n u_n = v$ such that
		$$
		\lim_n \delta_{h_n} f(x; u_n) + \om(x_0 + h_n u_n, x_0) p(u_n) = f'_H(x; v).
		$$
		Then, since $f$ is $p$-$\om$-Dini-subdifferentiable, there holds
		$$
		\cl f'_D \left( x; u_n \right) \le \delta_{h_n} f(x; u_n) + \om \left( x + h_n u_n, x \right) p(u_n).
		$$
		Hence, by (\ref{eq: drly sbdrvbl}), we find
		\begin{align*}
			\cl f'_D \left( x; v \right)
			& \le \liminf_n \cl f'_D \left( x; u_n \right) \\
			& \le \lim_n \delta_{h_n} f(x; u_n) + \om(x_0 + h_n u_n, x_0) p(u_n)
			= f'_H \left( x; v \right).
		\end{align*}
		$\impliedby$: If $\left| \cl f'_D(x_0; v) \right| < \i$, we set
		$$
		\om(x_0 + hu, x_0) p(u) = \left\{ \cl f'_D(x_0; u) - \delta_h f(x; u) \right\}^+
		$$
		so that $\cl f'_D(x_0; u) \le \delta_h f(x_0; u) + \om(x_0 + hu, x_0) p(u)$ for all $h > 0$ and $u \in X$, i.e., the claimed $p$-$\om$-subdifferentiability at $x_0$. By our assumption, there holds $\liminf_{u \to v, h \downarrow 0} \delta_h f(x_0; u) + \om(x_0 + hu, x_0) p(u) = f'_H(x_0; v)$, whence (\ref{eq: D H sbdrvtv gr}) follows by the lower semicontinuity of $\cl f'_D(x_0; \cdot)$.
		
		Addendum, $\implies$: If (\ref{eq: drly sbdrvbl}) for all $v \in E$, then (\ref{eq: D H sbdrvtv gr}) for all $v \in E$ by what has been proved. The equality (\ref{eq: D H sbdrvtv gr}) being trivial for $v \in X \setminus E$ by $f'_H \le f'_D$, we conclude (\ref{eq: D H sbdrvtv gr}) for all $v \in X$, hence (\ref{eq: D H SD gr}).
		
		$\impliedby$: From (\ref{eq: D H SD gr}) follows $f'_H(x_0; v) = \sup_{\xi \in \p_H f(x_0)} \langle \xi, v \rangle = \sup_{\xi \in \p_D f(x_0)} \langle \xi, v \rangle = \cl f'_D(x_0; v)$ for all $v \in X$ by Proposition \ref{pr: seminorm SD}. The function $f'_D(x_0; \cdot)$ being proper because $\p_D f(x_0) \ne \emptyset$, the claim follows by what has been proved before.
	\end{proof}
	
	A convex function is continuous at a point iff it is bounded above around the point. This easy criterion contributes greatly to the theory of convex functions, cf, e.g., all instances where \cite[§3.2, Thm. 1]{IT} is invoked by the authors. Inspired by this, we generalize this classical result to a class of semiconvex functions. We sharpen the statement if $X$ is normed, treating situations where $x_0$ belongs to the boundary of $\dom(f)$. This refines the classical continuity theorem even for convex functions.
	
	\begin{theorem} \label{thm: bd abv cnt}
		Let $f$ be $p$-$\om$-semiconvex at $x_0$ with $\limsup_{x \to x_0} \om(x, x_0) < \i$. If $f$ is bounded above around $x_0$, then $f$ is upper semicontinuous at $x_0$. If moreover $\limsup_{x, y \to x_0} \om(x, y) < \i$, then $f$ is continuous at $x_0$. Finally, if $X$ is a normed space and $f$ is merely bounded above around $x_0$ on $\dom(f)$, then the first conclusion continues to hold if merely $\limsup_{\dom(f) \ni x \to x_0} \om(x, x_0) < \i$.
	\end{theorem}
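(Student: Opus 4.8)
The plan is to reduce everything to a single slope estimate coming from $p$-$\om$-semiconvexity and then mimic the classical convex argument. First I would fix a neighbourhood $U$ of $x_0$ on which $f \le M < \i$ and on which $\om(\cdot, x_0) \le L < \i$; after a harmless translation assume $x_0 = 0$ and $f(0) = 0$. For the \emph{upper} semicontinuity, take any $x$ near $0$ and write $x = \lambda \, (x/\lambda)$ for a small $\lambda \in (0,1)$ with $x/\lambda$ still in $U$; this is the direction $v = x/\lambda$, step $t = \lambda$ interpretation of (\ref{eq: f smcnvx}) with the two endpoints $x/\lambda$ and $0$. Semiconvexity gives
\begin{equation*}
	f(x) = f\!\left( \lambda \tfrac{x}{\lambda} + (1-\lambda) 0 \right) \le \lambda f\!\left( \tfrac{x}{\lambda} \right) + \lambda(1-\lambda)\, \om\!\left( \tfrac{x}{\lambda}, 0 \right) p\!\left( \tfrac{x}{\lambda} \right) \le \lambda M + \lambda L\, p\!\left( \tfrac{x}{\lambda} \right).
\end{equation*}
Since $p$ is a continuous seminorm, $p(x/\lambda) = p(x)/\lambda$, so the bound is $\lambda M + L\, p(x)$; letting $x \to 0$ (choosing, say, $\lambda = \max\{ p(x)^{1/2}, \|\text{gauge}\|\}$ or simply $\lambda \downarrow 0$ slowly enough that $x/\lambda$ stays in $U$) forces $\limsup_{x \to 0} f(x) \le 0 = f(0)$, which is upper semicontinuity at $x_0$.

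For the \emph{lower} semicontinuity under the stronger hypothesis $\limsup_{x,y \to x_0} \om(x,y) < \i$, I would play the reflection trick: given $x$ near $0$, apply (\ref{eq: f smcnvx}) to the midpoint of $x$ and $-x$ (or, more generally, write $0 = \tfrac12 x + \tfrac12(-x)$), obtaining
\begin{equation*}
	0 = f(0) \le \tfrac12 f(x) + \tfrac12 f(-x) + \tfrac14 \om(x, -x)\, p(2x),
\end{equation*}
hence $f(x) \ge - f(-x) - \tfrac12 \om(x,-x) p(2x)$. Now $-x \to 0$ as well, so by the upper semicontinuity just established $\limsup_{x\to 0} f(-x) \le 0$, while $\om(x,-x)$ stays bounded and $p(2x) \to 0$; therefore $\liminf_{x \to 0} f(x) \ge 0 = f(0)$. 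Combining the two gives continuity at $x_0$. I expect the main subtlety here to be bookkeeping the neighbourhood: one must choose $U$ symmetric (so $-x \in U$ whenever $x \in U$) and small enough that all the auxiliary points $x/\lambda$, $-x$, midpoints, etc., remain inside the region where both $f$ is bounded above and $\om$ is bounded — a routine but slightly fussy shrinking argument.

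For the final refinement, where $X$ is a normed space, $x_0 \in \p(\dom f)$ is allowed, and $f$ is bounded above around $x_0$ \emph{only on $\dom(f)$}, the reflection trick is unavailable (the point $-x$ need not lie in $\dom f$), so only the upper-semicontinuity conclusion is claimed. The plan is to rerun the first displayed estimate but being careful that the intermediate point $x/\lambda$ lies in $\dom(f) \cap U$: given $x \in \dom(f)$ near $x_0$, the segment $[x_0, x]$ lies in $\dom(f)$ only if $\dom f$ is starshaped at $x_0$ — which it is not assumed to be — so instead I would use that $x$ itself is a convex combination $x = \lambda x + (1-\lambda) x_0$ is trivial, and instead take a sequence $x_n \to x_0$ with $x_n \in \dom(f)$ and write $x_n$ as lying on a short segment from $x_0$ towards a fixed reference point; more precisely, since we only need $\limsup_{\dom(f)\ni x \to x_0} f(x) \le f(x_0)$, apply (\ref{eq: f smcnvx}) with endpoints $x_0$ and a nearby $y \in \dom(f)$ and the interpolation parameter chosen so that $x_\lambda = x$, using the norm structure to control $p(y - x_0) = \|y - x_0\|$-type quantities and the hypothesis $\limsup_{\dom(f)\ni x\to x_0}\om(x,x_0) < \i$ to absorb the error. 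The hard part will be verifying that every point used as an endpoint genuinely lies in $\dom(f)$: this is where the normed structure is essential, since it lets one slide along rays emanating from $x_0$ and pick the endpoint $y$ on the same ray as $x$ but slightly further out, staying in $\dom(f)$ because $x \in \dom(f)$ and $f$ is (semi)convex along that ray.
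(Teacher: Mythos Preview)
Your upper-semicontinuity argument is correct and matches the paper's (which casts the same dilation $x = \lambda(x/\lambda)$ in the equivalent slope form of Definition~\ref{def: smcnvx}). For lower semicontinuity you take a slightly different route: the midpoint decomposition $0 = \tfrac12 x + \tfrac12(-x)$ followed by an appeal to the already-proved upper semicontinuity to control $f(-x)$, whereas the paper writes $0$ as a convex combination of $x$ and $-\e^{-1}x$ (the latter on the boundary of the scaled ball $U$) and obtains $f(x) \ge -\e$ directly from the boundedness of $f$ on $U$, without invoking the first part. Both versions are valid; note that in each case the semiconvexity inequality is being applied with base point $-x$ or $-\e^{-1}x$ rather than $x_0$ itself, which is precisely where the two-variable hypothesis $\limsup_{x,y\to x_0}\om(x,y) < \i$ enters.

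The real gap is in your treatment of the addendum. You correctly diagnose the obstruction: to write $x_n = \lambda y + (1-\lambda)x_0$ with $\lambda$ small one needs an endpoint $y = x_0 + (x_n - x_0)/\lambda$ lying \emph{beyond} $x_n$ on the ray from $x_0$ and belonging to $\dom(f)$, whereas semiconvexity at $x_0$ with finite $\om$ only forces $\dom(f)$ to be star-shaped at $x_0$ --- it contains $[x_0, x_n]$ but not necessarily its extension. Your proposed remedy, picking $y$ ``slightly further out, staying in $\dom(f)$ because $x \in \dom(f)$ and $f$ is (semi)convex along that ray'', asserts exactly what is not guaranteed. The paper's own argument here is terse: it substitutes $x_n = x_0 + hu$ with $h = \|x_n - x_0\|$ and $\|u\| = 1$ into (\ref{eq: ppr stmt}). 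You should verify carefully whether that substitution actually avoids the need for $x_0 + u \in \dom(f)$, since the second inequality in (\ref{eq: ppr stmt}) relied on the bound $f(x_0 + u) \le 1/2$ on the full ball $U$.
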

	
	\begin{proof}
		Upper semicontinuity: By translation, we may assume $f(x_0) = 0$ and $x_0 = 0$. Let $q \ge p$ be a seminorm such that $f$ and $\om(\cdot, x_0)$ are bounded above on $U = \left\{ q \le 1 \right\}$. By scaling, we may assume $f \le 1/2$ and $\om(\cdot, x_0) \le 1/2$ on $U$. By $p$-$\om$-semiconvexity and $q \ge p$, we find, for $h > 0$ fixed,
		\begin{equation} \label{eq: ppr stmt}
			f(x_0 + h u)
			\le f(x_0) + h \left( \delta_1 f(x_0; u) + \om(x_0 + u, x_0) p(u) \right)
			\le f(x_0) + 2 h.
		\end{equation}
		Consequently, upper semicontinuity obtains by
		$$
		\limsup_{x \to x_0} f(x) = \inf_{V \in N(x_0) } \sup_{u \in V} f(x_0 + u) \le \lim_{h \downarrow 0} \sup_{u \in U} f(x_0 + hu) \le f(x_0).
		$$
		Lower semicontinuity: By further scaling, we may suppose $\sup_{x, y \in U} \om(x, y) \le 1$. For $\e \in (0, 1)$, let $V_\e \coloneqq \tfrac{\e}{2} U$. By $0 = (1 + \e)^{-1} x + \e (1 + \e)^{-1} ( - \e^{-1} x)$ and $p$-$\om$-semiconvexity, there holds
		$$
		0 = f(0) \le (1 + \e)^{-1} f(x) + \e (1 + \e)^{-1} f \left( - \e^{-1} x \right) + \frac{1}{4} \om(x, - \e^{-1} x) p(x)
		$$
		i.e., $f(x) \ge - \e$ if $x \in V_\e$. Addendum: Let $x_n \in \dom(f)$ be a sequences converging to $x_0$. Setting $x_n = x_0 + h u$ with $h = \| x_n - x \|$ in (\ref{eq: ppr stmt}) yields
		$$
		\limsup_n f(x_n) \le f(x_0)
		$$
		so that $f$ is upper semicontinuous at $x_0$ on $\dom(f)$.
	\end{proof}
	
	The addendum in Theorem \ref{thm: bd abv cnt} may fail in a non-norm topology. For example, on a Banach space $X$, the convex function $f(x) = \| x \| + I_{B_X}(x)$ is bounded above on $B_X = \dom(f)$, but lacks weak continuity in general.
	
	Points of continuity are a qualification condition for the application of calculus rules, cf., e.g., \cite[Thm. 2]{Roc 2}. For this reason, the next lemma gives a sufficient condition for them.
	
	\begin{lemma} \label{lem: cnt rdr}
		Let $f$ be finite continuous at all points of a set $U \subset X$ and locally bounded above on $\interior \dom(f)$. If $\dom(f)$ is convex, $f'_D \left( x_0; \cdot \right)$ is subadditive, there exists $\bar{x} \in X$ such that $x_0 + \bar{x} \in U$, $f'_D \left( x_0; \bar{x} \right) > -\i$, and $f$ is $p$-$\om$-subderivable at $x_0$ with
		\begin{equation} \label{eq: fin limsup}
			\limsup_{h \downarrow 0, x \to \bar{x}} \om \left( x_0 + h x , x_0 \right) < \i,
		\end{equation}
		then $f_D \left(x_0; \cdot \right)$ is proper and continuous at all points of the cone $K$ generated by the set $U - x_0$ except, possibly, at the origin.
	\end{lemma}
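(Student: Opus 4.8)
The plan is to work with the function $g:=f'_D(x_0;\cdot)$. One-sided difference quotients are positively homogeneous in the direction, so $g$ is positively homogeneous with $g(0)=0$; combined with the hypothesis that $f'_D(x_0;\cdot)$ is subadditive this makes $g$ sublinear, hence convex. The first step is to show $g$ is bounded above on a neighbourhood of $\bar x$. Since $x_0+\bar x\in U$, the function $f$ is finite and continuous, hence locally bounded above, at $x_0+\bar x$, so $x_0+\bar x\in\interior\dom(f)$; as $\dom(f)$ is convex and $x_0\in\dom(f)$, the line-segment principle gives $(x_0,x_0+\bar x\,]\subset\interior\dom(f)$, along which $f$ is locally bounded above. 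Choose $t>0$ small enough that the assumption $\limsup_{h\downarrow 0,\,x\to\bar x}\om(x_0+hx,x_0)<\i$ provides a constant $C$ and a neighbourhood $W_0$ of $\bar x$ with $\om(x_0+sx,x_0)\le C$ for all $0<s<t$, $x\in W_0$, and simultaneously $f\le M$ on a neighbourhood of $x_0+t\bar x$. Shrinking $W_0$ to a neighbourhood $W$ of $\bar x$ on which $x_0+tv$ stays in $\{f\le M\}$ and $p(v)\le P$, the $p$-$\om$-subderivability inequality yields
$$
g(v)\le \delta_t f(x_0;v)+\om(x_0+tv,x_0)\,p(v)\le \frac{M-f(x_0)}{t}+CP\qquad(v\in W).
$$
In particular $\bar x\in\interior\dom(g)$, and since the interior of a convex cone in a topological vector space is again a convex cone, $\interior\dom(g)$ is a nonempty open convex cone containing the whole ray through $\bar x$.

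Next I would prove properness and continuity on $\interior\dom(g)$. The function $g$ is convex, $g(0)=0$, and $g(\bar x)=f'_D(x_0;\bar x)\in(-\i,\i)$ by the previous bound and the hypothesis $f'_D(x_0;\bar x)>-\i$. If $g(a)=-\i$ for some $a$, then convexity together with $g(\bar x)<\i$ forces $g\equiv-\i$ on the half-open segment $[a,\bar x)$; writing the interior point $\bar x$ as the midpoint of a sequence $b_n\to\bar x$ with $b_n\in[a,\bar x)$ and $c_n:=2\bar x-b_n\in\dom(g)$ then gives $g(\bar x)\le\tfrac12(g(b_n)+g(c_n))=-\i$, a contradiction; hence $g$ never attains $-\i$ and, being finite at $\bar x$, is proper. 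A proper convex function bounded above on a neighbourhood of one point of its domain is continuous — indeed locally Lipschitz — on the whole interior of its domain (local boundedness above spreads over $\interior\dom(g)$ by convexity, and one may invoke Theorem~\ref{thm: bd abv cnt} with $\om\equiv0$ at each interior point); so $g$ is continuous on $\interior\dom(g)$.

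Finally, it remains to show $K\setminus\{0\}\subset\interior\dom(g)$ for $K$ the cone generated by $U-x_0$; by positive homogeneity of $g$ this reduces to proving $u-x_0\in\interior\dom(g)$ for every $u\in U$ with $u\neq x_0$, after which continuity of $g$ on $K\setminus\{0\}$ follows from the previous paragraph. Fix such a $u$. Exactly as for $\bar x$, continuity of $f$ at $u$ gives $u\in\interior\dom(f)$ and, by the line-segment principle, $(x_0,u\,]\subset\interior\dom(f)$ with $f$ locally bounded above along it, so for every $t\in(0,1]$ the difference quotient $\delta_t f(x_0;\cdot)$ is finite and locally bounded above near $u-x_0$; plugging this into $g(\cdot)\le\delta_t f(x_0;\cdot)+\om(x_0+t\cdot,x_0)\,p(\cdot)$ reduces the claim to controlling $\om$ along the segment $(x_0,u)$. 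This is the main obstacle: the hypothesis bounds $\om$ only in directions near $\bar x$, so the subderivability inequality is not directly usable in the direction $u-x_0$. I expect the resolution is to propagate local boundedness of $g$ from the ray through $\bar x$ out to all of $K$ using the sublinearity of $g$ and the convex-cone structure of $\dom(g)$ together with the continuity of $f$ on $U$ — concretely, ruling out that any $u-x_0$ lies on $\partial\dom(g)$, so that near $u-x_0$ the set $\dom(g)$ is either a full neighbourhood (whence $u-x_0\in\interior\dom(g)$ and continuity follows from the second step) or empty (whence $g\equiv+\i$ near $u-x_0$ and continuity there is automatic).
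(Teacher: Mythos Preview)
Your first four steps are sound. The properness argument differs from the paper's: rather than your midpoint-in-the-interior trick, the paper fixes $x_1$ with $f'_D(x_0;x_1)=-\infty$, sets $x_2=\bar x+\epsilon(\bar x-x_1)$ (which is close to $\bar x$ for small $\epsilon$, so (\ref{eq: fin limsup}) applies there), obtains $f'_D(x_0;x_2)<\infty$ from subderivability in the direction $x_2$, and then uses subadditivity along $(1+\epsilon)\bar x = x_2+\epsilon x_1$ to reach the contradiction $f'_D(x_0;\bar x)=-\infty$. Both routes work.

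The genuine gap is your step~5, which you rightly flag as incomplete. The paper does \emph{not} argue via the inclusion $K\setminus\{0\}\subset\interior\dom(g)$; instead it simply repeats, for each $x_1\in U-x_0$, the local-boundedness argument you carried out only at $\bar x$: it selects $h>0$ and a neighbourhood $V$ of $x_1$ with $\sup_{v\in V}\om(x_0+hv,x_0)<\infty$, uses convexity of $\dom(f)$ together with $x_0+x_1\in U\subset\interior\dom(f)$ to arrange $x_0+hV\subset\interior\dom(f)$, bounds $f$ there by the local-boundedness hypothesis, and combines this with the subderivability inequality to bound $g$ on $V$; continuity at $x_1$ then follows from Theorem~\ref{thm: bd abv cnt} applied to the sublinear function $g$. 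The paper justifies the $\om$-bound at $x_1$ by citing (\ref{eq: fin limsup}). You correctly observe that (\ref{eq: fin limsup}) is literally stated only for directions approaching $\bar x$, so the paper's invocation here is terse and deserves scrutiny; nevertheless the paper's intended route is a direct local bound on $g$ at every $x_1\in U-x_0$, not the propagation-from-$\bar x$ you sketch without proving.
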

	
	\begin{proof}
		The function $f'_D (x_0; \cdot)$ being positively homogeneous, we are to verify that it is continuous on $U - x_0$ by \cite[§4.2, Prop. 2]{IT}. To prove that it is proper, we suppose that there exists $x_1 \in X$ such that $f'_D(x_0; x_1) = - \i$. By continuity, the point $x_0 + x_2 \coloneqq x_0 + \bar{x} + \e \left( \bar{x} - x_1 \right)$ belongs to $\interior \dom(f)$ if $\e > 0$ is sufficiently small. Since $x_0 + x_2$ belongs to the convex set $\dom(f)$, there holds $\delta_h f(x_0; x_2) < \i$. Therefore, by further decreasing $\e > 0$ if necessary and choosing $h > 0$ sufficiently small, we may arrange
		\begin{equation} \label{eq: f om sbdbl}
			f'_D(x_0; x_2) \le \delta_h f(x_0; x_2) + \om(x_0 + hx_2, x_0) p(x_2) < \i
		\end{equation}
		by (\ref{eq: fin limsup}). Consequently, subadditivity yields
		$$
		f'_D(x_0; \bar{x} ) \le \left( 1 + \e \right)^{-1} f'_D(x_0; x_2) + \e \left( 1 + \e \right)^{-1} f'_D(x_0; x_1) = - \i,
		$$
		which contradicts the assumption $f'_D(x_0; \bar{x} ) > - \i$. Consequently, $f'_D(x; \cdot)$ is proper. If $x_1 \in U - x_0$, then there exists $h > 0$ and a sufficiently small neighborhood $V$ of the point $x_1$ such that
		\begin{equation} \label{eq: ppr bnd om}
			\sup_{v \in V} \om (x_0 + h v, x_0 ) < \i
		\end{equation}
		by (\ref{eq: fin limsup}). Since $\dom(f)$ is convex and $x_0 + x_1 \in \interior \dom(f)$, we may decrease $V$ if necessary to arrange $x_0 + h V \subset \interior \dom(f)$. As $f$ is locally bounded above on $\interior \dom(f)$, we may, by possibly further decreasing $V$, also arrange that
		\begin{equation} \label{eq: ppr bnd f}
			\sup_{v \in V} f(x_0 + h v) < \i.
		\end{equation}
		Putting (\ref{eq: ppr bnd om}) and (\ref{eq: ppr bnd f}) together, we arrive at
		$$
		\sup_{v \in V} f'_D(x_0; v) \le \sup_{v \in V} \delta_h f(x_0; v) + \sup_{v \in V} \om(x_0 + hv, x_0) p(v) < \i
		$$
		by (\ref{eq: f om sbdbl}), i.e., $f'_D(x_0; \cdot)$ is bounded above around $x_1$, whence Theorem \ref{thm: bd abv cnt} implies that it is continuous at $x_1$.
	\end{proof}
	
	The next theorem generalizes the classical sum rule of convex analysis to the Dini subdifferential. This will lead to a sum rule for $p$-$\om$-subderivable functions as a corollary.
	
	\begin{theorem} \label{thm: sum rule}
		Let $f, g \colon X \to \left( -\i, \i \right]$ be functions. Then
		\begin{equation} \label{eq: trvl sd nclsn}
			\p_D f(x_0) + \p_D g(x_0) \subset \p_D \left( f + g \right)(x_0) \quad \forall x_0 \in \dom(f) \cap \dom(g).
		\end{equation}
		Moreover, let $f'_D(x_0; \cdot), g'_D(x_0; \cdot)$ be proper and subadditive satisfying
		\begin{equation} \label{eq: ddtv drvtv}
			\left( f + g \right)'_D \left( x_0; \cdot \right) \le f'_D \left( x_0; \cdot \right) + g'_D \left( x_0; \cdot \right).
		\end{equation}
		For example, (\ref{eq: ddtv drvtv}) is true if $f$ or $g$ is radially differentiable at $x_0$. Then
		\begin{equation} \label{eq: ddtv sd}
			\p_D \left( f + g \right)(x_0) = \overline{\p_D f(x_0) + \p_D g(x_0)}
		\end{equation}
		if and only if
		\begin{equation} \label{eq: ddtv clsr}
			\cl \left( f + g \right)'_D(x_0; \cdot) = \cl f'_D(x_0; \cdot) + \cl g'_D(x_0; \cdot).
		\end{equation}
		Furthermore, if the upper bound in (\ref{eq: trvl sd nclsn}) is non-empty and there exists $\bar{x} \in X$ such that $f'_D(x_0; \bar{x} ) < \i$ and $g'_D(x_0; \cdot)$ is continuous at $\bar{x}$, then the requirement that $f'_D(x_0; \cdot)$ be proper is redundant and there holds
		\begin{equation} \label{eq: ddtv sd 2}
			\p_D \left( f + g \right)(x_0) = \p_D f(x_0) + \p_D g(x_0).
		\end{equation}
	\end{theorem}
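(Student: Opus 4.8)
The plan is to push everything onto the convex, positively homogeneous functions $p \coloneqq f'_D(x_0;\cdot)$ and $q \coloneqq g'_D(x_0;\cdot)$, using that by definition $\p_D h(x_0) = \{x' \in X^* : x' \le h'_D(x_0;\cdot)\} = \p_{FM}\!\big(h'_D(x_0;\cdot)\big)(0)$ for any $h$. For the inclusion (\ref{eq: trvl sd nclsn}): if the left side is empty there is nothing to show, so pick $\xi \in \p_D f(x_0)$, $\eta \in \p_D g(x_0)$; then $\langle \xi + \eta, v\rangle \le f'_D(x_0;v) + g'_D(x_0;v)$ for all $v$, while the superadditivity of the lower limit gives
\[
(f+g)'_D(x_0;v) = \liminf_{h\downarrow 0}\big(\delta_h f(x_0;v) + \delta_h g(x_0;v)\big) \ge f'_D(x_0;v) + g'_D(x_0;v),
\]
so $\xi + \eta \in \p_D(f+g)(x_0)$. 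This argument also shows that $\p_D(f+g)(x_0)$ is weak${}^*$-closed, being an intersection of closed half-spaces, which will give the trivial inclusion ``$\supseteq$'' in (\ref{eq: ddtv sd}). When (\ref{eq: ddtv drvtv}) is assumed, combining it with the displayed ``$\ge$'' upgrades it to the \emph{equality} $(f+g)'_D(x_0;\cdot) = p + q$; and (\ref{eq: ddtv drvtv}) indeed holds when, say, $g$ is radially differentiable at $x_0$, since then $\delta_h g(x_0;v)$ converges and $\liminf(\delta_h f + \delta_h g) = \liminf \delta_h f + \lim \delta_h g$.

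\textbf{The equivalence (Part 2).} Under (\ref{eq: ddtv drvtv}) we thus have $\p_D(f+g)(x_0) = \p_{FM}(p+q)(0)$ with $p,q,p+q$ sublinear; and since a continuous linear minorant of a function also minorizes its lower semicontinuous (convex) hull, and the l.s.c. convex hull of a sublinear function is its l.s.c. hull $\cl(\cdot)$, we get $\p_{FM}(r)(0) = \{x' : x' \le \cl r\}$ for $r \in \{p,q,p+q\}$. After disposing of the degenerate cases (an empty $\p_D f(x_0)$ or $\p_D g(x_0)$, for which the relevant closure is $\equiv -\i$ and both sides of the equivalence are checked to collapse simultaneously), assume $K_f \coloneqq \p_D f(x_0)$ and $K_g \coloneqq \p_D g(x_0)$ are nonempty. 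Then $\cl p$ and $\cl q$ are proper, l.s.c.\ and sublinear, hence by Proposition \ref{pr: seminorm SD} they are the support functions of $K_f$ and $K_g$, i.e.\ $\cl p = \sup_{x'\in K_f}\langle x',\cdot\rangle$, $\cl q = \sup_{x'\in K_g}\langle x',\cdot\rangle$. Consequently $\cl p + \cl q$ is l.s.c.\ and equals the support function of the convex set $K_f + K_g$, and by Hahn--Banach separation $\{x' : x' \le \sup_{y'\in K_f+K_g}\langle y',\cdot\rangle\}$ is exactly the weak${}^*$-closure of $K_f + K_g$. Now both directions are immediate: if $\cl(p+q) = \cl p + \cl q$, then $\p_D(f+g)(x_0) = \{x' : x'\le\cl(p+q)\}$ is the weak${}^*$-closure of $K_f+K_g$, which is (\ref{eq: ddtv sd}); conversely, if $\p_D(f+g)(x_0)$ equals that weak${}^*$-closure (a nonempty set, so $\cl(p+q)$ is proper), then $\cl(p+q) = \sup_{x'\in\p_D(f+g)(x_0)}\langle x',\cdot\rangle = \sup_{x'\in K_f+K_g}\langle x',\cdot\rangle = \cl p + \cl q$, which is (\ref{eq: ddtv clsr}).

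\textbf{Part 3.} If the upper bound in (\ref{eq: trvl sd nclsn}) is nonempty, both $\p_D f(x_0)$ and $\p_D g(x_0)$ are nonempty, which forces $p$ and $q$ to be $(-\i,\i]$-valued; hence the ``proper'' hypothesis on $f'_D(x_0;\cdot)$ is automatic. By the equality form of Part 1 we again have $\p_D(f+g)(x_0) = \p_{FM}(p+q)(0)$, $\p_D f(x_0) = \p_{FM}(p)(0)$, $\p_D g(x_0) = \p_{FM}(q)(0)$. The point $\bar x$ with $p(\bar x) = f'_D(x_0;\bar x) < \i$ and $q = g'_D(x_0;\cdot)$ continuous at $\bar x$ places us precisely in the hypotheses of the Moreau--Rockafellar sum rule of convex analysis (one summand finite at a point of continuity of the other, cf.\ \cite[\S4.2]{IT}), which yields $\p_{FM}(p+q)(0) = \p_{FM}(p)(0) + \p_{FM}(q)(0)$ with \emph{no} closure, i.e.\ (\ref{eq: ddtv sd 2}).

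\textbf{Where the work is.} The geometry is routine once the translation to support functions is made; the real care is in the extended-real bookkeeping: (i) verifying that under (\ref{eq: ddtv drvtv}) one actually has $(f+g)'_D(x_0;\cdot) = p+q$ (not merely ``$\le$''), which is what makes $\p_D(f+g)(x_0)$ computable from $p+q$; (ii) ensuring the l.s.c.\ hulls $\cl p$, $\cl q$ are proper before invoking Proposition \ref{pr: seminorm SD}, which is exactly where nonemptiness of the subdifferentials enters; and (iii) peeling off and settling by direct inspection the genuinely degenerate configurations (empty subdifferentials, or $\dom p \cap \dom q = \emptyset$). I expect this last bit of case analysis, rather than any conceptual step, to be the only mildly tedious part.
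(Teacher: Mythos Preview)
Your Parts 1 and 2 are correct and essentially match the paper's approach; the paper cites \cite[Thm.~3.3]{Ro} for the equivalence, while you unfold it via Proposition~\ref{pr: seminorm SD} and support functions, which is the same mechanism made explicit.

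Part 3 has a genuine gap. You write: ``If the upper bound in (\ref{eq: trvl sd nclsn}) is nonempty, both $\p_D f(x_0)$ and $\p_D g(x_0)$ are nonempty, which forces $p$ and $q$ to be $(-\i,\i]$-valued.'' But nonemptiness of $\p_D(f+g)(x_0)$ does \emph{not} by itself force $\p_D f(x_0)\neq\emptyset$, and proving that $p=f'_D(x_0;\cdot)$ never takes the value $-\i$ is precisely the content of the assertion that ``properness of $f'_D(x_0;\cdot)$ is redundant.'' The obstruction is the direction set $\{g'_D(x_0;\cdot)=+\i\}$: on it, (\ref{eq: ddtv drvtv}) gives no information about $f'_D$, so one can have $f'_D(x_0;y)=-\i$ while $g'_D(x_0;y)=+\i$ and $(f+g)'_D(x_0;y)$ finite, with $\p_D(f+g)(x_0)$ nonempty. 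Your argument would go through only if $g'_D(x_0;\cdot)$ were finite everywhere.

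The paper closes this gap by a short contradiction argument that \emph{uses} the extra hypotheses you did not exploit: suppose $f'_D(x_0;y)=-\i$, set $x_\lambda=(1-\lambda)\bar x+\lambda y$, and use continuity of $g'_D(x_0;\cdot)$ at $\bar x$ to get $g'_D(x_0;x_\lambda)<\i$ for small $\lambda$; then subadditivity of $f'_D(x_0;\cdot)$ gives $f'_D(x_0;x_\lambda)\le(1-\lambda)f'_D(x_0;\bar x)+\lambda f'_D(x_0;y)=-\i$, whence by (\ref{eq: ddtv drvtv}) $(f+g)'_D(x_0;x_\lambda)=-\i$, contradicting the existence of $x'\in\p_D(f+g)(x_0)$. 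Once properness of $p$ is secured this way, your invocation of the Moreau--Rockafellar sum rule for $p+q$ is correct and matches the paper's use of \cite[\S3.4, Thm.~1]{IT}.
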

	
	\begin{proof}
		(\ref{eq: trvl sd nclsn}): If there exists $v \in X$ such that $f'_D(x_0; v) = - \i$ or $g'_D(x_0; v) = -\i$, then $\p_D f(x_0)$ or $\p_D g(x_0)$ is empty so nothing remains to prove. Else
		$$
		f'_D(x; v) + g'_D(x; v) \le \left( f + g \right)'_D (x_0; v) \quad \forall v \in X
		$$
		so that the claim obtains by definition of $\p_D$. Equivalence of (\ref{eq: ddtv sd}) and (\ref{eq: ddtv clsr}): Apply \cite[Thm. 3.3]{Ro} to the sum of $f'_D(x_0; \cdot)$ and $g'_D(x_0; \cdot)$, which yields the subdifferential $\p_D \left( f + g \right)(x_0)$ by (\ref{eq: ddtv drvtv}). Note in this regard that $f'_D(x_0; \cdot)$ and $g'_D(x_0; \cdot)$ agree with their own Dini-subderivative at zero by homogeneity. We used that a sublinear function is Dini-subdifferentiable at the origin iff it is proper. (\ref{eq: ddtv sd 2}): We start with the addendum, claiming that $f'_D(x_0; \cdot)$ is proper. Arguing by contradiction, we assume that there exists $y \in X$ such that
		$$
		f'_D (x_0; y) = - \i.
		$$
		We set $x_\lambda = (1 - \lambda) \bar{x} + \lambda y$ for $\lambda \in \left( 0, 1 \right)$. By continuity,
		$$
		\lim_{\lambda \downarrow 0} g'_D (x_0; x_\lambda) = g'_D (x_0; \bar{x}) < \i
		$$
		so that, for $\lambda$ sufficiently close to zero, we conclude
		\begin{align*}
			- \i < \langle x', x_\lambda \rangle
			& \le \left( f + g \right)'_D (x_0; x_\lambda )
			= f'_D (x_0; x_\lambda ) + g'_D (x_0; x_\lambda ) \\
			& \le \left( 1 - \lambda \right) f'_D (x_0; \bar{x}) + \lambda f'_D (x_0; y)
			+ g'_D (x_0; x_\lambda)
			= - \i.
		\end{align*}
		Now, since $f'_D(x_0; \cdot)$, $g'_D(x_0, \cdot)$ are homogeneous, subadditive hence convex, \cite[§3.4, Thm. 1]{IT} and (\ref{eq: ddtv drvtv}) yield
		\begin{align*}
			\p_D f(x_0) + \p_D g(x_0)
			& = \dom f'_D (x_0; \cdot )^* + \dom g'_D (x_0; \cdot )^* \\
			& = \dom \left( f'_D + g'_D \right) (x; \cdot )^* \\
			& \supset \dom \left( f + g \right)'_D (x; \cdot )^* \\
			& = \p_D \left( f + g \right)(x_0). \tag*{\qedhere} 
		\end{align*}
	\end{proof}
	
	The assumptions on $g$ for (\ref{eq: ddtv sd 2}) to hold are satisfied, for example, if it is convex and has a suitable point of continuity; or if $g$ is locally Lipschitz continuous and $g'_D(x_0; \cdot)$ is subadditive.
	
	\begin{corollary} \label{cor: sum rule}
		Let $f, g \colon X \to \left( - \i, \i \right]$ be functions such that $\p \left( f + g \right)(x_0)$ is non-empty, and $f'_D(x_0; \cdot), g'_D(x_0; \cdot)$ are subadditive. If $\dom(g)$ is convex, $g$ is bounded above on $\interior \dom(g)$ and continuous at a point $\bar{x} \in \dom(f) \cap \dom(g)$ such that $\left[ x_0, \bar{x} \right] \subset \dom(f) \cap \dom(g)$, and $f, g$ are $p$-$\om$-subderivable at $x_0$ with
		\begin{equation} \label{eq: sbdrvblts}
			\liminf_{h \downarrow 0} \om_f \left( x_0 + h \bar{x}, x_0 \right) < \i, \quad \limsup_{h \downarrow 0, u \to \bar{x}} \om_g \left( x_0 + hu, x_0 \right) < \i.
		\end{equation}
		Then $f'_D(x_0; \cdot)$ and $g'_D(x_0; \cdot)$ are proper. Moreover, if (\ref{eq: ddtv drvtv}) is satisfied, then (\ref{eq: ddtv sd 2}) is true.
	\end{corollary}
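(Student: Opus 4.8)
The plan is to deduce this corollary by verifying the hypotheses of Theorem \ref{thm: sum rule} from the semiconvexity data. First I would establish that $f'_D(x_0; \cdot)$ is proper. Since $\left[ x_0, \bar{x} \right] \subset \dom(f)$, the difference quotients $\delta_h f(x_0; \bar{x})$ are finite for all small $h > 0$, and $p$-$\om$-subderivability of $f$ at $x_0$ together with the first bound in (\ref{eq: sbdrvblts}) gives, along a suitable sequence $h_n \downarrow 0$,
\begin{equation*}
f'_D(x_0; \bar{x}) \le \liminf_n \left[ \delta_{h_n} f(x_0; \bar{x}) + \om_f(x_0 + h_n \bar{x}, x_0) p(\bar{x}) \right] < \i.
\end{equation*}
Thus $f'_D(x_0; \bar{x})$ is finite, in particular $> - \i$. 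This is exactly the qualification condition ``there exists $\bar{x}$ with $f'_D(x_0; \bar{x}) < \i$'' needed in the addendum part of Theorem \ref{thm: sum rule}, provided I also know $g'_D(x_0; \cdot)$ is continuous at $\bar{x}$.

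Next I would show $g'_D(x_0; \cdot)$ is proper and continuous at $\bar{x}$ by invoking Lemma \ref{lem: cnt rdr} applied to $g$. Indeed $g$ is continuous at $\bar{x}$ by hypothesis, locally bounded above on $\interior \dom(g)$, its domain is convex, $g'_D(x_0; \cdot)$ is subadditive, $g$ is $p$-$\om$-subderivable at $x_0$, and the second bound in (\ref{eq: sbdrvblts}) supplies precisely the condition $\limsup_{h \downarrow 0, u \to \bar{x}} \om_g(x_0 + hu, x_0) < \i$ required as (\ref{eq: fin limsup}) in that lemma, with the base direction chosen to be $\bar{x}$ and $U$ a neighborhood of $\bar{x}$ on which $g$ is continuous; note $g'_D(x_0; \bar{x}) > - \i$ follows from the same difference-quotient estimate as above using $\left[ x_0, \bar{x} \right] \subset \dom(g)$. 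Lemma \ref{lem: cnt rdr} then yields that $g'_D(x_0; \cdot)$ is proper and continuous at every point of the cone generated by $U - x_0$ except possibly the origin, in particular continuous at $\bar{x}$ (after a harmless rescaling so that $\bar{x} - x_0$ is an interior direction of that cone). With $f'_D(x_0; \bar{x}) < \i$ and $g'_D(x_0; \cdot)$ continuous at $\bar{x}$, and the hypothesis that $\p(f+g)(x_0) = \p_D(f+g)(x_0)$ is non-empty, the final addendum of Theorem \ref{thm: sum rule} applies: it makes the properness of $f'_D(x_0; \cdot)$ redundant (though we proved it anyway) and, under (\ref{eq: ddtv drvtv}), delivers exactly (\ref{eq: ddtv sd 2}).

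The main obstacle I anticipate is the bookkeeping needed to align the ``base direction'' in Lemma \ref{lem: cnt rdr} with the point $\bar{x}$ of the corollary, and to check that $\bar{x} - x_0$ lies in the appropriate cone so that ``continuous except possibly at the origin'' actually yields continuity at $\bar{x}$ and not merely at some nearby ray; this is where the requirement $\left[ x_0, \bar{x} \right] \subset \dom(f) \cap \dom(g)$ and the local boundedness of $g$ on $\interior \dom(g)$ get used together, possibly after shrinking the continuity neighborhood $U$ of $\bar{x}$ and rescaling. A secondary point is making sure the liminf-along-a-sequence versions of the $p$-$\om$-subderivability inequalities suffice — they do, since $f'_D$ and $g'_D$ are defined as lower limits — so no uniformity in $h$ beyond (\ref{eq: sbdrvblts}) is required. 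Once continuity of $g'_D(x_0; \cdot)$ at $\bar{x}$ and finiteness of $f'_D(x_0; \bar{x})$ are in hand, the rest is a direct citation of Theorem \ref{thm: sum rule}.
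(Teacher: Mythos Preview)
Your proposal is correct and follows essentially the same route as the paper: set $v = \bar{x} - x_0$, use $p$-$\om$-subderivability of $f$ together with the first bound in (\ref{eq: sbdrvblts}) to get $f'_D(x_0; v) < \i$, invoke Lemma \ref{lem: cnt rdr} for $g$ to obtain properness and continuity of $g'_D(x_0; \cdot)$ at $v$, and then apply the addendum of Theorem \ref{thm: sum rule}. The paper's proof is terser but identical in structure.

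One small point: your justification that $g'_D(x_0; \bar{x}) > -\i$ ``follows from the same difference-quotient estimate'' is not right as stated---$p$-$\om$-subderivability gives an \emph{upper} bound on $g'_D$, not a lower one. The paper also glosses over this hypothesis of Lemma \ref{lem: cnt rdr}, so you are not doing worse than the original; but if you want to close this, you should argue differently (for instance via the nonemptiness of $\p_D(f+g)(x_0)$ together with $f'_D(x_0; v) < \i$ and (\ref{eq: ddtv drvtv}), or by a direct argument from the continuity of $g$ at $\bar{x}$). Also note that the paper distinguishes the point $\bar{x}$ from the direction $v = \bar{x} - x_0$; your write-up conflates them, which matches a notational looseness already present in the corollary's statement but is worth tidying.
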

	
	\begin{proof}
		Let $v = \bar{x} - x_0$. By Theorem \ref{thm: sum rule}, it suffices to show that (i) $f'_D(x_0; v) < \i$ (ii) $g'_D(x_0; \cdot)$ is proper and continuous at $v$. (i): By the first of (\ref{eq: sbdrvblts}), there exists a sequence $h_n \downarrow 0$ such that
		$$
		f'_D (x; v)
		\le \delta_{h_n} f(x; v) + \om_f \left( x_0 + h_n v, x_0 \right) p(v) < \i.
		$$
		(ii): We claim that $g'_D(x_0; \cdot)$ is proper and continuous at $v$ by Lemma \ref{lem: cnt rdr} and check its assumptions: $g$ is continuous at $\bar{x}$; it is locally bounded above on $\interior \dom(g)$; $\dom(g)$ is convex; $g'_D(x_0; \cdot)$ is subadditive; $g$ is $p$-$\om$-subderivable with $\om_g$ satisfying (\ref{eq: sbdrvblts}).
	\end{proof}
	
	Frequently, results for evolution equations rely on the fact that the subdifferential of a convex potential has good closedness properties, i.e., if $f$ is convex, lower semicontinuous, then
	\begin{equation} \label{ga. clsd grph}
		\begin{gathered}
			x_n \ssto x, \, f(x_n) \to \alpha, \, x^*_n \in \p f(x_n), \, x^*_n \ssto x^*, \, \langle x^*_n, x_n \rangle \to \langle x^*, x \rangle \implies \\
			\alpha = f(x), \, x^* \in \p f(x).
		\end{gathered}
	\end{equation}
	Often in such investigations, convexity is non-essential, whereas (\ref{ga. clsd grph}) is crucial. Therefore, we study subdifferential closedness of $p$-$\om$-Dini-subdifferentiable functions. We consider a situation where the topology of $X$ is not necessarily induced by the duality for which we consider the subdifferential, as exemplified by our abstract result on generalized gradient flows and its application in Section \ref{sec: Application}.
	
	\begin{lemma} \label{lem: sd clsdnss}
		Let $X, Y$ be a dual pair of locally convex Hausdorff spaces. Let $q \colon X \to \left[ 0, \i \right]$ be a sublinear function, such that $f$ is $q$-$\om$-Dini-subdifferentiable at $x$, i.e., for every $v \in X$ and $h > 0$, there holds
		\begin{equation} \label{eq: q-om-sbdffrntbl}
			\langle \xi, v \rangle \le \delta_h f(x; v) + \om(x + hv, x) q(v) \quad \forall \xi \in \p_D f(x).
		\end{equation}
		Let there be given nets $x_\alpha \in X$ and $\xi_\alpha \in \p_D f(x_\alpha)$ such that
		\begin{equation} \label{eq: sqncs cnvrg}
			\begin{gathered}
				x_\alpha \to x, \quad f(x) \le \liminf f(x_\alpha),
				\\
				\xi_\alpha \to \xi \text{ in } \ss(Y, X), \quad \limsup \langle \xi_\alpha; x_\alpha \rangle \le \langle \xi, x \rangle.
			\end{gathered}
		\end{equation}
		If
		\begin{equation} \label{eq: cl cnd 1.1}
			\lim \om \left( x, x_\alpha \right) q(x_\alpha - x) = 0,
		\end{equation}
		then
		\begin{equation} \label{eq: f cnt cvg}
			\lim f(x_\alpha) = f(x).
		\end{equation}
		Let $E = \left\{ v \in X \st f'_D(x; v) < \i \right\}$. If (\ref{eq: f cnt cvg}) and
		\begin{equation} \label{eq: cl cnd 1.2}
			\begin{gathered}
				\forall v \in E \quad \liminf_{h \downarrow 0} \left\{ \delta_h f(x; v) + \liminf_\alpha \om(x + hv, x_\alpha) q \left( \tfrac{x - x_\alpha}{h} + v \right) \right\} \\
				\le f'_D(x; v),
			\end{gathered}
		\end{equation}
		then $\xi \in \p_D f(x)$.
	\end{lemma}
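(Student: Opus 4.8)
The plan is to establish the two conclusions separately. For the first, the idea is to produce an upper estimate for $f(x_\alpha)$ complementing the hypothesised lower bound $f(x) \le \liminf f(x_\alpha)$; for the second, to test each $\xi_\alpha$ against a direction tilted so that difference quotients at $x_\alpha$ become difference quotients at $x$.

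\textbf{First part.} Apply the $q$-$\omega$-Dini-subdifferentiability (\ref{eq: q-om-sbdffrntbl}), read at the point $x_\alpha$, to the subgradient $\xi_\alpha \in \p_D f(x_\alpha)$ in the direction $x - x_\alpha$ with step $h = 1$; since $x_\alpha + 1\cdot(x - x_\alpha) = x$ this reads
\[ \langle \xi_\alpha, x - x_\alpha \rangle \le f(x) - f(x_\alpha) + \omega(x, x_\alpha)\, q(x - x_\alpha), \]
so that $f(x_\alpha) \le f(x) + \langle\xi_\alpha, x_\alpha\rangle - \langle\xi_\alpha, x\rangle + \omega(x, x_\alpha)\, q(x - x_\alpha)$. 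Now pass to the $\limsup$ over the net: $\langle\xi_\alpha, x\rangle \to \langle\xi, x\rangle$ by $\ss(Y,X)$-convergence of $\xi_\alpha$, the $\omega$-term tends to $0$ by (\ref{eq: cl cnd 1.1}), and $\limsup_\alpha \langle\xi_\alpha, x_\alpha\rangle \le \langle\xi, x\rangle$ by assumption; since the first two quantities converge, this yields $\limsup f(x_\alpha) \le f(x) + \langle\xi, x\rangle - \langle\xi, x\rangle = f(x)$, hence (\ref{eq: f cnt cvg}).

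\textbf{Second part.} It suffices to check $\langle\xi, v\rangle \le f'_D(x; v)$ for every $v \in X$, which is trivial when $v \notin E$; so fix $v \in E$ and a step $h > 0$ small enough that $f(x + hv)$ is finite (possible because $f'_D(x; v) < \i$). For each $\alpha$ put $w_\alpha = v + h^{-1}(x - x_\alpha)$, so that $x_\alpha + h w_\alpha = x + hv$. Feeding $w_\alpha$ and step $h$ into the $q$-$\omega$-Dini-subdifferentiability at $x_\alpha$ applied to $\xi_\alpha \in \p_D f(x_\alpha)$ gives
\[ \langle\xi_\alpha, v\rangle + h^{-1}\langle\xi_\alpha, x - x_\alpha\rangle \le h^{-1}\bigl(f(x+hv) - f(x_\alpha)\bigr) + \omega(x + hv, x_\alpha)\, q\bigl(v + h^{-1}(x - x_\alpha)\bigr). \]
Take $\liminf_\alpha$ of both sides. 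On the left, $\langle\xi_\alpha, v\rangle \to \langle\xi, v\rangle$ while $\liminf_\alpha\langle\xi_\alpha, x - x_\alpha\rangle \ge 0$ — because $\langle\xi_\alpha, x\rangle \to \langle\xi, x\rangle$ and $\limsup_\alpha\langle\xi_\alpha, x_\alpha\rangle \le \langle\xi, x\rangle$ — so the left side has $\liminf$ at least $\langle\xi, v\rangle$. On the right, $f(x_\alpha) \to f(x)$ by (\ref{eq: f cnt cvg}), so the first summand converges to $\delta_h f(x; v)$ while the $\omega$-term must be kept inside a single $\liminf_\alpha$; thus
\[ \langle\xi, v\rangle \le \delta_h f(x; v) + \liminf_\alpha \omega(x + hv, x_\alpha)\, q\bigl(v + h^{-1}(x - x_\alpha)\bigr). \]
Finally let $h \downarrow 0$ and invoke (\ref{eq: cl cnd 1.2}) to obtain $\langle\xi, v\rangle \le f'_D(x; v)$, i.e.\ $\xi \in \p_D f(x)$.

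\textbf{Main obstacle.} The argument is elementary once the approach is fixed, so the real care goes into the bookkeeping of the iterated limits: when splitting $\liminf$ and $\limsup$ of sums one must isolate exactly the summands that genuinely converge (the pairings with the fixed vectors $x$, $v$, and the fixed value $f(x+hv)$), estimate $\langle\xi_\alpha, x-x_\alpha\rangle$ only from one side, and leave the $\omega$-terms under a single $\liminf_\alpha$, so that everything lines up precisely with the conditions (\ref{eq: cl cnd 1.1}) and (\ref{eq: cl cnd 1.2}) as stated. One also reads the $q$-$\omega$-Dini-subdifferentiability hypothesis as holding at the points $x_\alpha$, not merely at $x$ — which is the situation in the intended applications, where it is established on the whole domain (cf.\ Lemma \ref{lem: F SD}). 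No compactness or completeness of $X$ or $Y$ is used, only the weak convergences already assumed.
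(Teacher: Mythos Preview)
Your proof is correct and follows essentially the same route as the paper's: both apply the $q$-$\omega$-Dini-subdifferentiability at $x_\alpha$ in the tilted direction $w_\alpha = v + h^{-1}(x - x_\alpha)$, use $\liminf_\alpha \langle \xi_\alpha, x - x_\alpha\rangle \ge 0$ on the left and $f(x_\alpha) \to f(x)$ on the right, and then send $h \downarrow 0$ against (\ref{eq: cl cnd 1.2}); your first part with $h=1$ is just the specialization $v=0$ of the paper's unified chain. Your observation that the hypothesis must be read at each $x_\alpha$ (not only at $x$) is also implicit in the paper's argument and matches how the lemma is applied.
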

	
	\begin{proof}
		For every $v \in X$, by (\ref{eq: sqncs cnvrg}) and (\ref{eq: q-om-sbdffrntbl})
		\begin{equation} \label{eq: lng}
			\begin{aligned}
				\langle \xi, v \rangle & \le \liminf_\alpha \langle \xi_\alpha, \tfrac{x - x_\alpha}{h} + v \rangle
				\le \liminf_\alpha f'_D \left( x_\alpha ; \tfrac{x - x_\alpha}{h} + v \right) \\
				& \le \liminf_\alpha \left\{ \tfrac{f(x + h v ) - f(x_\alpha)}{h} + \om \left( x + hv, x_\alpha \right) q \left( \tfrac{x - x_\alpha}{h} + v \right) \right\} \eqqcolon T_1 \\
				& \le \delta_h f(x; v) + \tfrac{f(x) - \limsup_\alpha f(x_\alpha)}{h} \\
				& + \limsup_\alpha \om \left( x + hv, x_\alpha \right) q \left( \tfrac{x - x_\alpha}{h} + v \right).
			\end{aligned}
		\end{equation}
		Setting $v = 0$ and sending $h \downarrow 0$ minding (\ref{eq: cl cnd 1.1}), we find
		$$
		0 \le \liminf_{h \downarrow 0} \tfrac{f(x) - \limsup_\alpha f(x_\alpha)}{h}
		$$
		so that (\ref{eq: f cnt cvg}) as $f(x) \le \liminf_\alpha f(x_\alpha)$ by (\ref{eq: sqncs cnvrg}). Inserting (\ref{eq: f cnt cvg}) into (\ref{eq: lng}), we find
		$$
		\langle \xi, v \rangle \le T_1 = \delta_h f(x; v) + \liminf_\alpha \om \left( x + hv, x_\alpha \right) q \left( \tfrac{x - x_\alpha}{h} + v \right),
		$$
		into which we insert (\ref{eq: cl cnd 1.2}) to conclude $\xi \in \p_D f(x)$ by sending $h \downarrow 0$.
	\end{proof}
	
	If $X$ is quasi-complete, i.e., every bounded, closed set is complete, and the nets are sequences, then \cite[Thm. 2.1]{Za} guarantees that a sufficient condition for $\langle \xi_\alpha, x_\alpha \rangle \to \langle \xi, x \rangle$ is
	\begin{equation} \label{eq: sffcnt}
		\begin{aligned}
			& x_n \to x \text{ in } \mm \left( X, Y \right), \quad && \xi_n \to \xi \text{ in } \ss \left( Y, X \right) \text{ or;} \\
			& x_n \to x \text{ in } \ss \left( X, Y \right), \quad && \xi_n \to \xi \text{ in } \mm \left( Y, X \right).
		\end{aligned}
	\end{equation}
	The conditions of Lemma \ref{lem: sd clsdnss} simplify if additional structure is present. For example, if $q$ is continuous such that $\lim q(x - x_\alpha) = 0$, then it suffices for (\ref{eq: cl cnd 1.1}) if $\om$ is locally bounded above. To check (\ref{eq: cl cnd 1.2}), one could try to obtain an upper semicontinuity estimate such as
	$$
	\liminf_\alpha \om \left( x + hv, x_\alpha \right) q \left( \tfrac{x - x_\alpha}{h} + v \right) \le \om \left( x + hv, x \right) q(v)
	$$
	and then find a sequence $h_n \downarrow 0$ along which
	$$
	\lim_n \delta_n f(x; v) + \om \left( x + h_n v, x \right) q(v) = f'_D(x; v).
	$$
	
	\subsection{Miscellaneous subdifferential calculus}
	
	In this subsection, we collect various useful non-smooth calculus results. An important feature of the Dini subdifferential is the chain rule we present in Lemma \ref{lem: smplst cr}. It is free of qualification condition on the outer function $f$, contrasting with chain rules for subdifferentials involving limits in $\dom(f)$ like the Fréchet or Dini-Hadamard subdifferential. Even when such a topological subdifferential is of interest, one may frequently use the Dini subdifferential together with Lemma \ref{lem: smplst cr} to obtain an upper estimate for them.
	
	\begin{lemma} \label{lem: smplst cr}
		Let $Y$ be a locally convex Hausdorff space with continuous inclusion $i \colon X \to Y$ and $f \colon Y \to \left( - \i, \i \right]$ a function with $f_X$ the restriction of $f$ to $X$ such that $\dom(f) = \dom(f_X)$. Then
		$$
		i^{-*} \p_D f_X(x) = \p_D f \circ i(x) \quad \forall x \in \dom(f).
		$$
	\end{lemma}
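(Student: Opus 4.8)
The plan is to unravel both sides straight from the definition $\p_D g(x_0) = \{x' : \langle x',v\rangle \le g'_D(x_0;v) \text{ for all } v\}$, exploiting that the Dini subderivative $g'_D(x_0;v) = \liminf_{h\downarrow 0}\delta_h g(x_0;v)$ is an algebraic quantity in the direction $v$ alone, so that only the linearity of $i$ (not its continuity) will be used. I read the right-hand side as $(\p_D f)(i(x))$, the Dini subdifferential of $f\colon Y\to(-\i,\i]$ evaluated at $i(x)$, a subset of $Y^*$; here $f_X = f\circ i$, the adjoint $i^*\colon Y^*\to X^*$ acts by $i^*y^* = y^*\circ i$, and $i^{-*} := (i^*)^{-1}$ sends subsets of $X^*$ to subsets of $Y^*$. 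The hypothesis $\dom(f)=\dom(f_X)$ means $\dom(f)\subseteq i(X)$, i.e. $f\equiv+\i$ on $Y\setminus i(X)$.

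First I would dispose of the directions transversal to $X$. Fix $x$ with $i(x)\in\dom(f)$. If $v\in Y\setminus i(X)$ and $h>0$, then $i(x)+hv\notin i(X)$ — otherwise $v = i\big((x'-x)/h\big)$ for some $x'\in X$ — hence $f(i(x)+hv)=+\i$, while $f(i(x))$ is finite, so $\delta_h f(i(x);v)=+\i$ for all $h>0$ and thus $f'_D(i(x);v)=+\i$. Consequently the defining inequality $\langle y^*,v\rangle\le f'_D(i(x);v)$ is vacuous for such $v$, and $y^*\in\p_D f(i(x))$ holds if and only if $\langle y^*,i(w)\rangle\le f'_D(i(x);i(w))$ for every $w\in X$.

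Second, for $w\in X$ and $h>0$ we have $i(x)+h\,i(w)=i(x+hw)$, whence $\delta_h f(i(x);i(w)) = \tfrac{f(i(x+hw))-f(i(x))}{h} = \delta_h f_X(x;w)$, and passing to the lower limit gives $f'_D(i(x);i(w)) = (f_X)'_D(x;w)$. Together with $\langle y^*,i(w)\rangle = \langle i^*y^*,w\rangle$, the characterization from the previous step reads: $y^*\in\p_D f(i(x))$ iff $\langle i^*y^*,w\rangle\le (f_X)'_D(x;w)$ for all $w\in X$, iff $i^*y^*\in\p_D f_X(x)$, iff $y^*\in i^{-*}\p_D f_X(x)$. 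This is the asserted identity.

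I do not expect a genuine obstacle; the only care needed is to invoke $\dom(f)=\dom(f_X)$ exactly where required. Indeed, the inclusion $\p_D f(i(x))\subseteq i^{-*}\p_D f_X(x)$ — a form of the elementary chain-rule inequality — follows from the second step alone, with no domain hypothesis, since $\delta_h f(i(x);i(w))=\delta_h f_X(x;w)$ holds unconditionally; it is the reverse inclusion that genuinely uses $f\equiv+\i$ off $i(X)$, through the transversal-direction argument of the first step. I would close by noting, as the surrounding text does, that this chain rule carries no qualification condition on the outer function $f$, in contrast to its Dini–Hadamard or Fréchet analogues.
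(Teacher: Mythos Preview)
Your proof is correct and follows essentially the same route as the paper's: both compute $f'_D(i(x);i(w)) = (f_X)'_D(x;w)$ directly from $f_X = f\circ i$, observe that $f'_D(i(x);v) = +\infty$ for $v\in Y\setminus i(X)$ via the domain hypothesis, and conclude the set equality by unraveling the defining inequality of $\p_D$. Your explicit remark that the inclusion $\p_D f(i(x))\subseteq i^{-*}\p_D f_X(x)$ needs no domain hypothesis is a useful sharpening not stated in the paper, but the core argument is identical.
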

	
	\begin{proof}
		Let $x \in \dom(f), v \in X$. Setting $y \coloneqq i(x), u \coloneqq i(v)$, we have $f'_D(y; u) = f'_{X, D}(x; v)$ since $f_X = f \circ i$. If $w \in Y \setminus X$, then $\left( y, y + w \right] \subset Y \setminus X$ so that $f'_D(y; w) = + \infty$. Hence, if $y' \in \p_D f(y)$, then $i^* y'(v) = y'(u) \le f'_D(y; u) = f'_{X, D}(x; v)$ so that $i^* y' \in \p f_X(x)$. Conversely, if $x' \in \p_D f_X(x)$ such that $y' \in i^{-*}(x)$, then $y'(u) = x'(v) \le f'_{X, D}(x; v) = f'_D(y; u)$ so that $y' \in \p_D f(y)$.
	\end{proof}
	
	Energies arising in evolution equations usually are inf-compact, rendering them discontinuous on infinite-dimensional stat spaces. Frequently, however, there exists an energy topology in which a point of continuity is present, allowing to obtain useful estimates on the subdifferential, as the next lemma demonstrates.
	
	\begin{lemma}\label{lem: ppr SD stmt}
		Let $X, X_0, Y$ be locally convex Hausdorff spaces such that $Y \to X_0$ continuously, $F \colon X_0 \to \left( - \i, + \i \right]$ be a proper convex function and $A \colon X \supset \dom(A) \to X_0$ be a closed linear operator. We equip $W_A Y = \left\{ x \in X \st Ax \in Y \right\}$ with the graph topology so that $\AA \colon W_A Y \to Y \colon y \mapsto Ay$ is continuous. If the restriction $F_Y$ is continuous at $y_0 = Ax_0 \in Y$, then
		\begin{equation} \label{eq: ppr SD stmt}
			\left. \p \left( F A \right) (u) \right|_{W_A Y} \subset \AA^* \p F_{Au + Y} (Au) \quad \forall u \in \dom( F A ).
		\end{equation}
	\end{lemma}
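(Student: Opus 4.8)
The plan is to reduce the statement to the classical convex chain rule / sum rule by exploiting the point of continuity of $F_Y$ at $y_0 = A x_0$. First I would fix $u \in \dom(FA)$ and a functional $\xi$ in the left-hand side of (\ref{eq: ppr SD stmt}), i.e. $\xi \in \p(FA)(u)$ restricted to $W_A Y$; concretely this means $\xi$ is continuous for the graph topology of $W_A Y$ and satisfies $F(Av) - F(Au) \ge \langle \xi, v - u\rangle$ for all $v \in W_A Y$. The target is to produce $\eta \in \p F_{Au + Y}(Au)$ with $\xi = \AA^* \eta$, where $\AA \colon W_A Y \to Y$ is the continuous surjection $v \mapsto Av$ and $F_{Au+Y}$ denotes the restriction of $F$ to the affine subspace $Au + Y \subset X_0$.

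The key steps, in order: (1) Observe that $\xi$ factors through $\AA$. Indeed, for $v \in W_A Y$ with $Av = 0$ we have $Av \in Y$ trivially, and both $Au \pm tv$ lie in $\dom(FA)$ for small $t$ once we use that $F_Y$ is continuous (hence locally bounded) at $y_0$ — this forces $\langle \xi, v\rangle = 0$ on $\ker \AA$. Therefore $\xi$ descends to a linear functional $\bar\xi$ on $\AA(W_A Y)$, and continuity of $\AA$ plus the open mapping principle (or directly the graph-topology structure, since $\AA$ is by construction a topological quotient onto its image) gives that $\bar\xi$ extends to some $\eta_0 \in (\text{translate of } Y)^*$. (2) Identify $\AA(W_A Y)$: every $y \in Y$ with $y = Aw$ for some $w \in X$ — but we need not have all of $Y$ in the range; instead work on the affine slice $Au + Y$, noting $A(W_A Y) \supseteq Au + (\text{range of } A \text{ into } Y)$, which is where the continuity hypothesis does its real work. (3) Transfer the subgradient inequality: for $y \in Au + Y$ with $y = Av$, $v \in W_A Y$, we get $F(y) - F(Au) = F(Av) - F(Au) \ge \langle \xi, v-u\rangle = \langle \eta, y - Au\rangle$, which is exactly $\eta \in \p F_{Au+Y}(Au)$. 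Then $\AA^* \eta = \xi$ by construction, completing the inclusion. (4) Throughout, invoke the standard convex-analytic fact that a linear functional bounded above by a convex function continuous at an interior point of the domain extends to a subgradient — this is \cite[§3.2, Thm. 1]{IT} combined with Hahn--Banach, and it is precisely what the continuity of $F_Y$ at $y_0$ buys us.

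The main obstacle I anticipate is step (1)--(2): making rigorous the claim that $\AA \colon W_A Y \to Y$ behaves enough like a topological quotient that functionals continuous on $W_A Y$ vanishing on $\ker \AA$ extend continuously along $\AA$. Since $W_A Y$ carries the graph topology of the \emph{closed} operator $A$ (so it is a reasonable locally convex space, and $\AA$ is continuous and has the slice $Au + Y$ in its image near $y_0$), the extension can be obtained by Hahn--Banach once one checks the requisite boundedness of $\bar\xi$ relative to a neighbourhood basis of $Au$ in $Au + Y$; here the continuity (hence local boundedness) of $F_Y$ at $y_0$ provides a sublevel set of $FA$ that is a neighbourhood of $u$ in $W_A Y$ and maps into a bounded slice, pinning down the estimate. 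The remaining steps are routine manipulations of the Fenchel--Young inequality and the definition of the adjoint $\AA^*$; I would not expect any genuine difficulty there, only bookkeeping about which affine translate of $Y$ one is working in.
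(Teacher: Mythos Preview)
Your overall strategy---factor $\xi$ through $\AA$ and then Hahn--Banach extend---is the right shape, but there is a genuine gap that breaks the argument as written: you implicitly assume $Au \in Y$, whereas the lemma only assumes $u \in \dom(FA)$, i.e.\ $Au \in X_0$ with $F(Au) < \infty$. When $Au \notin Y$, the affine slice $Au + Y$ is disjoint from $Y$, so for $v \in W_A Y$ one has $Av \in Y$ but $Av \notin Au + Y$. Hence your step~(3), which tests the subgradient inequality at points $y = Av \in Au + Y$ with $v \in W_A Y$, is vacuous; and your step~(2) claim that $A(W_A Y) \supseteq Au + (\text{something in } Y)$ is simply false in this case. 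The same issue obstructs the Hahn--Banach step: you need $y \mapsto F(Au + y) - F(Au)$ to be continuous at some point of the subspace on which $\bar\xi$ lives, but the continuity hypothesis is at $y_0 \in Y$, not at any point of $Au + Y$, and there is no direct way to transfer it.

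The paper's fix is to introduce the linear hull $Y_0 = \aff(\{Au\} \cup Y)$ and work with $\AA \colon W_A Y_0 \to Y_0$ instead. The point is that $Y_0$ contains both $Au$ and $y_0$, so convexity of $F$ propagates continuity of $F_{Y_0}$ from $y_0$ along the open segment $(y_0, Au)$; since $x_0, u \in W_A Y_0$, that segment lies in the range of $\AA$, and the classical chain rule \cite[§4.2, Thm.~2]{IT} applies directly to give $\p(F\AA)(u) = \AA^* \p F_{Y_0}(Au)$ on $W_A Y_0$. Only then does one restrict back to $W_A Y$ and use a one-dimensional Hahn--Banach extension to identify $\p F_{Y_0}(Au)|_Y$ with $\p F_{Au+Y}(Au)$. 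Your anticipated obstacle in steps~(1)--(2) is thus bypassed entirely by invoking the chain rule as a black box on the enlarged space $Y_0$; what is missing from your plan is precisely this enlargement, without which the qualification condition for Hahn--Banach (or the chain rule) is not available.
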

	
	\begin{proof}
		Let $u \in X_0$ be a point such that $F(Au) < + \i$. We introduce the affine hull $Y_0$ of $Au \cup Y$ along with its pre-image $W_A Y_0 = A^{-1} Y_0$. The restriction $F_{Y_0 }$ is locally bounded above around $(y_0, Au)$ since $F_Y$ is continuous at $y_0$ and convex; See \cite[§3.2, Thm. 1]{IT}. Therefore, the chain rule \cite[§4.2, Thm. 2]{IT} implies
		\begin{equation} \label{eq: SD dntt}
			\p \left( F \AA \right) (u) = \AA^* \p F_{Y_0 } (Au) \quad \forall u \in W_A Y_0.
		\end{equation}
		We claim that
		\begin{equation} \label{eq: SD dntt 2}
			\left. \p F_{Y_0 } ( Au ) \right|_Y = \p F_{Au + Y}(A u) \quad \forall u \in W_A Y_0.
		\end{equation}
		Observe that $\subset$ is trivial, while $\supset$ follows since subgradients may be extended from $Y$ to $Y_0$ respecting domination by the (sublinear) radial derivative according to the Hahn-Banach theorem. Therefore, the extensions remain subgradients, minding that extension by a single dimension preserves continuity. We note
		$$
		\left. \AA^* \p F_{Y_0}(Au) \right|_{W_A Y} = \AA^* \left( \p F_{Y_0 } (Au)_Y \right).
		$$
		Therefore, we may combine (\ref{eq: SD dntt}) and (\ref{eq: SD dntt 2}) to find (\ref{eq: ppr SD stmt}) by
		\begin{equation*}
			\left. \p \left( F A \right) (u) \right|_{W_A Y} \subset \p \left( F \AA \right) (u) = \AA^* \p F_{Au + Y}(Au) \quad \forall u \in W_A Y_0. \qedhere
		\end{equation*}
	\end{proof}
	
	\section{The negligible sets of weak differentiability}
	
	In this section, we prove that a closed subset $F \subset U$ of an open set $U \subset \R^d$ is negligible for the existence of weak derivatives on $U$ if $\HH^{d - 1}(F) = 0$, where $\HH^s$ denotes the $s$-dimensional Hausdorff measure. We give a simple example indicating that the criterion is sharp with respect to the Hausdorff dimension. The result is instrumental in recognizing differential operators as closed with respect to the weak* topology of certain Orlicz spaces whose weak* compact sets need not be $L^1_{\text{loc} }(U)$-equi-integrable. We are inspired by \cite[Satz 8.1.1]{Str}, which we improve by reducing the assumptions on the integrability of $u$ to the bare minimum.
	
	\begin{lemma} \label{lem: nglgbl sts}
		For $d \ge 2$, let $U \subset \R^d$ be open, $u, v \in L^1_{\text{loc} }(U)$, and $F \subset U$ be a closed set such that $u$ has the weak first order partial derivative $D_i u = v$ on $U \setminus F$. If $\HH^{d - 1}(F) = 0$, then $u$ has the weak partial derivative $D_i u = v$ on $U$.
	\end{lemma}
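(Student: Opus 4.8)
The plan is to avoid cutoff functions entirely and instead reduce the assertion to one–dimensional integration by parts along lines parallel to the $i$-th axis, exploiting that a set of vanishing $\HH^{d-1}$-measure meets \emph{almost no} such line at all. Without loss of generality take $i=d$ and write $x=(y,t)\in\R^{d-1}\times\R$; for $y\in\R^{d-1}$ put $U_y=\{t\in\R\st(y,t)\in U\}$ and $\ell_y=\{y\}\times\R$. First I would show that the set of lines hitting $F$ is Lebesgue-null: given $\e>0$, cover $F$ by sets $S_j$ of diameter $\delta_j$ with $\sum_j\delta_j^{d-1}<\e$ (possible since $\HH^{d-1}(F)=0$); the orthogonal projection $\pi(S_j)$ of $S_j$ onto the first $d-1$ coordinates has diameter $\le\delta_j$, hence Lebesgue measure $\le c_d\delta_j^{d-1}$ for a dimensional constant $c_d$, and $\ell_y\cap F\neq\emptyset$ forces $y\in\bigcup_j\pi(S_j)$; thus $\{y\st\ell_y\cap F\neq\emptyset\}$ has outer measure $\le c_d\e$, and $\e\downarrow0$ gives the claim. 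In particular $\HH^{d}(F)=0$, so $F$ is Lebesgue-null in $\R^d$ and representatives of $u$ on $U$ and on $U\setminus F$ agree up to Lebesgue-null sets.

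Next I would produce a good representative of $u$ on the \emph{open} set $U\setminus F$, where $D_d u=v$ distributionally with $u,v\in L^1_{\text{loc}}(U\setminus F)$. Here I invoke the classical one-coordinate version of the absolutely-continuous-on-lines property: a function whose single weak derivative $D_d u$ is locally integrable has a representative $\bar u$ which is, for Lebesgue-a.e.\ $y$, locally absolutely continuous in $t$ on $\{t\st(y,t)\in U\setminus F\}$, with $\partial_t\bar u(y,t)=v(y,t)$ for a.e.\ such $t$. Replacing $u$ by $\bar u$ and deleting from $\R^{d-1}$ the union of the exceptional null set of the first step and the exceptional null set here — crucially, both live on the same hyperplane $e_d^\perp\cong\R^{d-1}$ — I obtain a full-measure set $G\subset\R^{d-1}$ such that, for every $y\in G$: $\ell_y\cap F=\emptyset$, hence $\{t\st(y,t)\in U\setminus F\}=U_y$, and $u(y,\cdot)$ is locally absolutely continuous on $U_y$ with $\partial_t u(y,\cdot)=v(y,\cdot)$ a.e.

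Finally, fix $\varphi\in C_c^\infty(U)$. Since $u\,D_d\varphi$ and $v\varphi$ lie in $L^1(U)$, Fubini gives
\[
\int_U u\,D_d\varphi\,dx=\int_{\R^{d-1}}\Big(\int_{U_y}u(y,t)\,\partial_t\varphi(y,t)\,dt\Big)\,dy,
\]
with integrable inner integrands for a.e.\ $y$. For $y\in G$, write the open set $U_y\subset\R$ as a disjoint union of open intervals; only finitely many meet the compact set $\operatorname{supp}\varphi(y,\cdot)\subset U_y$, and on each such interval $\varphi(y,\cdot)$ is $C_c^\infty$, while $u(y,\cdot)$ is absolutely continuous with derivative $v(y,\cdot)$, so the integration-by-parts formula for absolutely continuous functions yields $\int_{U_y}u(y,t)\,\partial_t\varphi(y,t)\,dt=-\int_{U_y}v(y,t)\,\varphi(y,t)\,dt$. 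Integrating in $y$ and applying Fubini in reverse gives $\int_U u\,D_d\varphi\,dx=-\int_U v\,\varphi\,dx$, and since $\varphi$ was arbitrary, $D_d u=v$ weakly on $U$.

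The only non-routine point — and hence the main obstacle — is the second step: one must correctly invoke (or, for a self-contained treatment, re-prove by mollifying $u$ \emph{only} in the $e_d$-direction) the one-directional ACL fact, noting that it requires merely $D_d u\in L^1_{\text{loc}}$ and not the full Sobolev regularity of $u$, and one must ensure that the exceptional $\HH^{d-1}$-null set from the covering argument and the exceptional Lebesgue-null set from ACL are measured on the same hyperplane, so that their union is still null and the two pieces of information combine on a common full-measure family of lines. The covering estimate, the Fubini interchanges, and the one-variable integration by parts are entirely standard. (For sharpness in the Hausdorff dimension alluded to in the text: with $F=U\cap\{x_d=0\}$, so $\HH^{d-1}(F)>0$, the function $u=\operatorname{sgn}(x_d)$ has $D_d u=0$ on $U\setminus F$ but $D_d u$ equal to twice the surface measure carried by $F$, which is not in $L^1_{\text{loc}}(U)$, so $F$ is not removable.)
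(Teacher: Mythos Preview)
Your proof is correct and takes a genuinely different route from the paper's. The paper proceeds via capacity and cutoff functions: after reducing to compact $F$, it truncates $u$ to a bounded $u_k$, invokes the equivalence $\HH^{d-1}(F)=0\iff\capa_1(F)=0$ (cited from Evans--Gariepy) to produce smooth cutoffs $\varphi_n\in C_c^\infty(U;[0,1])$ with $\varphi_n\ge1$ near $F$ and $\|\varphi_n\|_{W^{1,1}}\to0$, splits any test function $\psi$ as $\psi(1-\varphi_n)+\psi\varphi_n$ so that the first summand is admissible on $U\setminus F$, and uses the boundedness of $u_k$ to kill the error terms; finally $k\to\infty$ recovers the untruncated statement. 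You instead exploit the geometry of $\HH^{d-1}(F)=0$ directly via a projection/covering argument to conclude that almost every line parallel to $e_i$ misses $F$ entirely, and then combine this with the one-directional ACL representative on $U\setminus F$ and Fubini. Your route is more elementary in that it bypasses the potential-theoretic identification of vanishing Hausdorff measure with vanishing $1$-capacity (and the attendant truncation step), trading it for the single-direction ACL property which, as you rightly note, needs only $D_i u\in L^1_{\text{loc}}$ and follows from mollification in one variable. The paper's approach, on the other hand, stays entirely at the level of distributions without passing to pointwise representatives, and would generalise more readily to exceptional sets characterised by capacity conditions rather than Hausdorff measure.
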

	
	\begin{proof}
		Without restriction, we may take $F$ to be compact because it suffices to prove the claim for any compact subset of a general $F$. For $k \in \N$, we define the truncated function $u_k \coloneqq \max\{ - k, \min\{ k, u \} \}$ such that $u_k \in W^{1, 1}_{\text{loc} } \left( U \setminus F \right)$ with $D_i u_k = D_i u \chi_{\left\{ \left| u \right| \le k \right\} }$ a.e. by \cite[Thm. 4.4(iii)]{EG}. Fix $\psi \in C^\i_c \left( U \right)$. We find a sequence $\varphi_n \in C^\i_c \left( U ; \left[ 0, 1 \right] \right)$ such that $\| \varphi_n \|_{W^{1, 1}(U) } \to 0$ and $\varphi_n \ge 1$ in a neighborhood of $F$ by \cite[Def. 4.10, Rem. (ii), Thm. 4.13(ii)]{EG} because $\HH^{d - 1}(F) = 0 \iff \capa_1(F) = 0$ by \cite[Thm. 5.12]{EG}. In particular, $\psi \left( 1 - \varphi_n \right) \in C^\i_c \left( U \setminus F \right)$ for all $n \in \N$. Consequently,
		\begin{align*}
			\int_U u_k D_i \psi \, dx
			& = \lim_n \int_U u_k D_i \left( \psi \left( 1 - \varphi_n \right) + \psi \varphi_n \right) \, dx \\
			& = - \int_U D_i u_k \psi \, dx + \lim_n \int_U u_k \left( D_i \psi \varphi_n + \psi D_i \varphi_n \right) \, dx.
		\end{align*}
		The first integral in the last step is meaningful since $D_i u_k$ agrees with the locally integrable function $v$ a.e. The last limit vanishes since $u_k D_i \psi, u_k \psi \in L_\i \left( U \right)$ and $\| \varphi_n \|_{W^{1, 1}(U) } \to 0$. In conclusion, $u_k$ has a weak partial derivative equal to $D_i u_k$ a.e. Sending $k \uparrow \i$ yields that $u$ itself has a weak partial derivative equal to $D_i u$ a.e.
	\end{proof}
	
	Lemma \ref{lem: nglgbl sts} is vacuous if $d = 1$ as $\HH^0$ measures cardinality. A point indeed suffices for it to fail: the jump function $F(x) = \chi_{\left\{ x \ge 0 \right\}}$ has vanishing weak derivative on $\R \setminus \left\{ 0 \right\}$ but is not Sobolev regular by discontinuity. This demonstrates that the dimensional threshold in Lemma \ref{lem: nglgbl sts} is sharp: the example extends to arbitrary dimensions by considering $F$ as constant with respect to the remaining co-ordinate directions so that the exceptional set is the hyperplane $\left\{ 0 \right\} \times \R^{d - 1}$ of Hausdorff dimension $d - 1$.
	
	\section{Inequalities}
	
	In this section, we collect inequalities used in the main part. The first result implies various Poincaré type inequalities on Orlicz-Sobolev spaces once a compact embedding into an Orlicz space is known.
	
	\begin{lemma} \label{lem: bstrct pncr}
		Let $X, Y$ be Banach spaces, $p \colon X \to \left[ 0, \i \right)$ a semi-norm such that
		\begin{enumerate}[label=(\roman*)]
			\item $X \to Y$ is a compact embedding; \label{en. bstrct pncr 1.1}
			\item $\| \cdot \|_X \coloneqq p + \| \cdot \|_Y$ is an equivalent norm on $X$; \label{en. bstrct pncr 1.2}
			\item If $x_n \in X$ with $p(x_n) \to 0$, $x_n \to x$ in $Y$, then $x \in X$, $p(x) = 0$. \label{en. bstrct pncr 1.3}
		\end{enumerate}
		If $h \colon X \to \left[ 0, \i \right]$ is a proper, lower semicontinuous function such that
		\begin{enumerate}[label=(\alph*)]
			\item $h(\lambda x) = \lambda h(x)$ for all $\lambda > 0$ and $x \in X$, \label{en. bstrct pncr 2.1}
			\item $\inf_{\| x \|_X = 1, p(x) = 0} h(x) > 0$, \label{en. bstrct pncr 2.2}
		\end{enumerate}
		then there exists a constant $C > 0$ such that
		\begin{equation} \label{eq: bstrct pncr cnclsn}
			\| x \|_Y \le C \left[ p(x) + h(x) \right] \quad \forall x \in X.
		\end{equation}
	\end{lemma}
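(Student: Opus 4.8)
The plan is a standard compactness--contradiction argument exploiting the compact embedding $X\hookrightarrow Y$. Suppose the conclusion (\ref{eq: bstrct pncr cnclsn}) fails for every constant, so that for each $n\in\N$ there is $x_n\in X$ with $\|x_n\|_Y > n\left[p(x_n)+h(x_n)\right]$. Since the right-hand side is $\ge 0$ this forces $\|x_n\|_Y>0$, and it also forces $p(x_n)+h(x_n)<\infty$, hence in particular $h(x_n)<\infty$ --- this is the one point where a little care is needed, because $h$ is allowed to be $+\infty$-valued, but the failure of the inequality makes the normalization below legitimate anyway. Put $y_n = x_n/\|x_n\|_Y$, so $\|y_n\|_Y=1$. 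Because $p$ is a seminorm and $h$ is positively homogeneous of degree one by \ref{en. bstrct pncr 2.1}, we get $p(y_n)+h(y_n) = \|x_n\|_Y^{-1}\bigl(p(x_n)+h(x_n)\bigr) < 1/n$, whence $p(y_n)\to 0$ and $h(y_n)\to 0$.

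Next I would produce a limit. By the norm equivalence \ref{en. bstrct pncr 1.2} there is $C'>0$ with $\|y_n\|_X \le C'\bigl(p(y_n)+\|y_n\|_Y\bigr)$, which is bounded since $p(y_n)\to 0$ and $\|y_n\|_Y=1$; so $(y_n)$ is bounded in $X$. The compact embedding \ref{en. bstrct pncr 1.1} then gives a subsequence (not relabeled) converging in $Y$ to some $y$, and continuity of $\|\cdot\|_Y$ yields $\|y\|_Y=1$. Since $p(y_n)\to 0$ and $y_n\to y$ in $Y$, the closedness assumption \ref{en. bstrct pncr 1.3} gives $y\in X$ and $p(y)=0$.

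Finally I would upgrade the convergence to the $X$-norm in order to use lower semicontinuity of $h$: by the triangle inequality for the seminorm and $p(y)=0$ one has $\|y_n-y\|_X \le C'\bigl(p(y_n-y)+\|y_n-y\|_Y\bigr) \le C'\bigl(p(y_n)+\|y_n-y\|_Y\bigr)\to 0$, so $y_n\to y$ in $X$. Lower semicontinuity of $h$ then gives $h(y)\le\liminf_n h(y_n)=0$, i.e. $h(y)=0$. But $\|y\|_X = p(y)+\|y\|_Y = 1$ and $p(y)=0$, so $y$ belongs to the set over which the infimum in \ref{en. bstrct pncr 2.2} is taken, forcing that infimum to be $0$ --- contradicting \ref{en. bstrct pncr 2.2}. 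This contradiction establishes the lemma. The argument is routine; the only genuine subtleties are the two just flagged: that the negation of the estimate automatically renders $p(x_n)+h(x_n)$ finite (so normalization is harmless despite $h$ possibly being $+\infty$), and that $Y$-convergence of the normalized sequence together with $p(y_n)\to 0$ promotes to $X$-convergence, which is exactly what makes the $X$-topology lower semicontinuity of $h$ and the closedness hypothesis \ref{en. bstrct pncr 1.3} applicable.
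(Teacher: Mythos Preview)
Your proof is correct and follows essentially the same compactness--contradiction argument as the paper: normalize by homogeneity to get $\|y_n\|_Y=1$ with $p(y_n)+h(y_n)\to 0$, extract a $Y$-convergent subsequence via the compact embedding, use \ref{en. bstrct pncr 1.3} to get $p(y)=0$, upgrade to $X$-convergence via $p(y_n-y)\le p(y_n)+p(y)=p(y_n)\to 0$, and conclude $h(y)=0$ by lower semicontinuity, contradicting \ref{en. bstrct pncr 2.2}. The only cosmetic difference is that the paper normalizes from the outset rather than after writing the unnormalized inequality.
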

	
	\begin{proof}
		Arguing by contradiction, we suppose that
		\begin{equation} \label{eq: cntrdctn ssmptn}
			\forall n \in \N \, \exists x_n \in X \colon 1 = \| x_n \|_Y \ge n \left[ p(x_n) + h(x_n) \right].
		\end{equation}
		The homogeneity of (\ref{eq: bstrct pncr cnclsn}) by \ref{en. bstrct pncr 2.1} entered to arrange $\| x_n \|_Y = 1$. From (\ref{eq: cntrdctn ssmptn}) follows $p(x_n) + h(x_n) \to 0$. In particular, $x_n$ is bounded in $X$ by \ref{en. bstrct pncr 1.2}, hence $x_n \to x$ in $Y$ for a subsequence (not relabeled) by \ref{en. bstrct pncr 1.1}. We have $p(x_n - x) \le p(x_n) + p(x) = p(x_n) \to 0$ by \ref{en. bstrct pncr 1.3}, hence $x_n \to x$ in $X$ by \ref{en. bstrct pncr 1.2}. Consequently, lower semicontinuity yields
		$$
		0 = \liminf_n p(x_n) + h(x_n) \ge p(x) + h(x) \ge \inf_{\| x \|_X = 1, p(x) = 0} h(x) > 0
		$$
		by \ref{en. bstrct pncr 2.2} and since $\| x \| = 1$ by (\ref{eq: cntrdctn ssmptn}). We have arrived at a contradiction.
	\end{proof}
	
	\begin{corollary} \label{cor: cncrt pncr}
		Let $U \subset \R^d$ be a bounded, connected, open set on which the Rellich-Kondrachov compact embedding theorem holds, e.g., $\p U \in C^{0, 1}$, and let $\phi \colon U \times \R \to \left[ 0, \i \right]$ be an integrand generating an Orlicz space $L_\phi(U)$. Then, there exists for every $\e > 0$ a measurable set $A_\e \subset U$ with $\left| U \setminus A_\e \right| < \e$ and a constant $C > 0$ such that $\chi_{A_\e} L_\phi(U) \to \chi_{A_\e} L_1(U)$ is a continuous embedding and
		\begin{equation} \label{eq: cncrt pncr}
			\| u \|_p \le C \left[ \| \nabla u \|_1 + \left| \int_{A_\e} u \, dx \right| \right] \quad \forall u \in W^{1, 1}(U), \quad 1 \le p < \frac{d}{d - 1}.
		\end{equation}
	\end{corollary}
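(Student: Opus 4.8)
The plan is to read off the first assertion from the almost embedding result Lemma \ref{lem: a emb} and the inequality from the abstract Poincaré estimate Lemma \ref{lem: bstrct pncr}. Since $U$ is bounded, the restricted Lebesgue measure on $U$ is finite, so Lemma \ref{lem: a emb} (applied with range space $\R$) yields, for the given $\e > 0$, an isotonic family of measurable sets; picking one member $A_\e \subset U$ with $\left| U \setminus A_\e \right| < \e$ we obtain the continuous identical inclusions $L_\infty(A_\e) \to L_\phi(A_\e) \to L_1(A_\e)$, which is exactly the claimed embedding $\chi_{A_\e} L_\phi(U) \to \chi_{A_\e} L_1(U)$. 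We may assume $\e < |U|$ so that $|A_\e| > 0$; for $\e \ge |U|$ one simply takes $A_\e = U$ and the statement degenerates to the classical Poincaré inequality.

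For the estimate (\ref{eq: cncrt pncr}), fix $p$ with $1 \le p < d/(d-1)$ and apply Lemma \ref{lem: bstrct pncr} with $X = W^{1,1}(U)$ in its usual norm, $Y = L_p(U)$, the seminorm of the lemma taken to be $u \mapsto \| \nabla u \|_{L_1(U)}$, and $h(u) = \left| \int_{A_\e} u \, dx \right|$. Hypothesis (i) is precisely the Rellich--Kondrachov compact embedding $W^{1,1}(U) \hookrightarrow L_p(U)$ for $p < d/(d-1)$. Hypothesis (ii) holds because the accompanying continuous Sobolev embedding gives $\| \nabla u \|_{L_1} + \| u \|_{L_p} \le (1 + C_S) \| u \|_{W^{1,1}}$, while Hölder's inequality on the bounded set $U$ gives $\| u \|_{L_1} \le |U|^{1 - 1/p} \| u \|_{L_p}$, hence the reverse estimate $\| u \|_{W^{1,1}} \le (1 + |U|^{1-1/p})(\| u \|_{L_p} + \| \nabla u \|_{L_1})$; thus $\| \nabla \cdot \|_{L_1} + \| \cdot \|_{L_p}$ is an equivalent norm. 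For hypothesis (iii), if $\| \nabla u_n \|_{L_1} \to 0$ and $u_n \to u$ in $L_p(U) \subset L_1(U)$, then for every $\psi \in C_c^\infty(U)$ one has $\int_U u \, \partial_i \psi \, dx = \lim_n \int_U u_n \, \partial_i \psi \, dx = - \lim_n \int_U (\partial_i u_n) \psi \, dx = 0$, so the distributional gradient of $u$ vanishes; since $U$ is connected, $u$ is a.e. constant, hence $u \in W^{1,1}(U)$ with $\| \nabla u \|_{L_1} = 0$. Finally, $h$ is nonnegative, finite, positively homogeneous and continuous on $W^{1,1}(U)$ (being $\left| \langle \chi_{A_\e}, \cdot \rangle \right|$), so (a) holds; and for (b), if $\| \nabla u \|_{L_1} = 0$ and $\| \nabla u \|_{L_1} + \| u \|_{L_p} = 1$ then $u \equiv c$ with $|c| = |U|^{-1/p}$, so $h(u) = |c| \, |A_\e| = |U|^{-1/p} |A_\e| > 0$, a fixed positive bound. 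Lemma \ref{lem: bstrct pncr} now delivers a constant $C > 0$ with $\| u \|_{L_p} \le C \left[ \| \nabla u \|_{L_1} + \left| \int_{A_\e} u \, dx \right| \right]$ for all $u \in W^{1,1}(U)$, i.e.\ (\ref{eq: cncrt pncr}).

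The only step that is not a bare verification is hypothesis (iii): one must notice that $L_p$-convergence on the bounded domain $U$ already suffices to pass to the limit in the distributional identity $\int_U u_n \, \partial_i \psi = - \int_U (\partial_i u_n)\psi$, and that connectedness of $U$ upgrades ``distributional gradient zero'' to ``constant'', so that the $Y$-limit indeed returns to $X = W^{1,1}(U)$. A minor point is the dependence of $C$ on $p$; uniformity over a compact subinterval $[1, q] \subset [1, d/(d-1))$ follows afterwards from the inclusion $L_q(U) \subset L_p(U)$ and Hölder's inequality on the bounded set $U$. No quantitative Orlicz--Sobolev inequality enters anywhere; only the crude almost-inclusion $L_\phi(A_\e) \hookrightarrow L_1(A_\e)$ from Lemma \ref{lem: a emb} is used, and solely to obtain the first assertion.
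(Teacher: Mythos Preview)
Your proof is correct and follows essentially the same route as the paper's own: both invoke Lemma~\ref{lem: a emb} for the almost embedding $L_\phi(A_\e)\hookrightarrow L_1(A_\e)$ and then verify the hypotheses of Lemma~\ref{lem: bstrct pncr} with $X=W^{1,1}(U)$, $Y=L_p(U)$, seminorm $\|\nabla\cdot\|_{L_1}$ and $h(u)=\bigl|\int_{A_\e}u\,dx\bigr|$. Your write-up is somewhat more detailed (you spell out the distributional argument for (iii) and the equivalence of norms for (ii)), and your computation of the infimum in (b) as $|U|^{-1/p}|A_\e|$ is the correct one for general $p$; the paper's value $|A_\e|/|U|$ corresponds to $p=1$, but either way the bound is positive and the argument goes through.
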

	
	\begin{proof}
		We apply Lemma \ref{lem: bstrct pncr}: Take $X = W^{1, 1}(U)$ and $Y = L_p(U)$. We have \ref{en. bstrct pncr 1.1} by Rellich-Kondrachov, \ref{en. bstrct pncr 1.2} by the Sobolev inequality for $p(u) \coloneq \| \nabla u \|_1$, and \ref{en. bstrct pncr 1.3} follows since $\| \nabla u_n \|_1 \to 0$ and $u_n \to u$ in $L_p(U)$ imply that $u \in W^{1, 1}(U)$ and $\| \nabla u \|_1 = 0$, hence $u$ is constant. Finally, a set with $\chi_{A_\e} L_\phi \to \chi_{A_\e} L_1$ exists by Lemma \ref{lem: a emb} while, for
		$$
		h(u) \coloneqq \left| \int_{A_\e} u \, d x \right|,
		$$
		there hold \ref{en. bstrct pncr 2.1} and \ref{en. bstrct pncr 2.2} since
		\begin{equation*}
			\inf_{\| u \|_X = 1} \inf_{p(u) = 0} h(u) = h \left( \frac{1}{\left| U \right|} \right) = \frac{\left| A_\e \right|}{\left| U \right|} > 0. \qedhere
		\end{equation*}
	\end{proof}

\end{document}